\DeclareMathSymbol{\mlq}{\mathord}{operators}{``}
\DeclareMathSymbol{\mrq}{\mathord}{operators}{`'}
\newcommand\HUGE{\@setfontsize\Huge{30}{40}}
\newcommand*{\justifyheading}{\raggedleft}
\titleformat{\chapter}[display]
  {\normalfont\huge\bfseries\justifyheading}{\chaptertitlename\ \thechapter}
  {20pt}{\HUGE}
\providecommand{\boldsymbol}[1]{\mbox{\boldmath $#1$}}
\newtheorem{theo}{Theorem}[section]
\newtheorem{defi}[theo]{Definition}
\newtheorem{coro}[theo]{Corollary}
\newtheorem{lemm}[theo]{Lemma}
\newtheorem{conj}[theo]{Conjecture}
\newtheorem*{theom}{Theorem}
\newtheorem*{conje}{Conjecture}
\newtheorem*{defin}{Definition}
\newtheorem*{corol}{Corollary}
\newtheorem*{lemme}{Lemma}
\makeindex\usepackage{babel}
\newcommand\underrel[2]{\mathrel{\mathop{#2}\limits_{#1}}}
\newcommand{\twolines}[2][c]{
  \begin{tabular}[#1]{@{}c@{}}#2\end{tabular}}  
 \def\makenomenclature{%
\newwrite\@nomenclaturefile
\immediate\openout\@nomenclaturefile=\jobname\@outputfileextension
\def\@nomenclature{%
 \@bsphack
\begingroup
\@sanitize
 \@ifnextchar[%
{\@@@nomenclature}{\@@@nomenclature[\nomprefix]}}%
\typeout{Writing nomenclature file \jobname\@outputfileextension}%
 \let\makenomenclature\@empty}
\begin{document}
{\large

\vspace*{1cm}

\begin{center}

{\large  UNIVERSIT\'E PIERRE ET MARIE CURIE}

\vspace*{0.5cm}

\'Ecole Doctorale de Sciences Math\'ematiques de Paris Centre

\end{center}
\vspace{1cm}

\begin{center}
 \textsc{\textbf{{\LARGE Th\`ese de doctorat}}}
\end{center}
\vspace{0.3cm}
\begin{center}
{\large Discipline: Math\'ematiques}
\end{center}
\vspace{1cm}

\noindent\makebox[\linewidth]{\rule{\paperwidth}{0.4pt}}
\begin{center}
{\Large \textbf{Periods of the motivic fundamental groupoid of $\boldsymbol{\mathbb{P}^{1} \diagdown \lbrace 0,  \mu_{N}, \infty \rbrace}$.}}  
\noindent\makebox[\linewidth]{\rule{\paperwidth}{0.4pt}}

\end{center}
\vspace{1cm}
\begin{center}
Pr\'esent\'ee par \\

\textbf{{\Large Claire GLANOIS}}
\end{center}
\vspace{0.5cm}

\begin{center}
Dirig\'ee par {\Large Francis BROWN.}
\end{center}

\vspace*{3cm} 
\begin{center}
Pr\'{e}sent\'{e}e et soutenue publiquement le 6 janvier 2016 devant le jury compos\'{e} de :
\end{center}
\begin{center}
  \begin{tabular}{lll}
Francis {\sc Brown} & Directeur de th\`{e}se & Oxford University\\
Don {\sc Zagier} & Rapporteur & Max Planck Institute, Bonn\\
Jianqiang {\sc Zhao} & Rapporteur & ICMAT, Madrid\\
Yves {\sc Andr\'{e}} & Examinateur & UPMC (IMJ), Paris\\ 
Pierre {\sc Cartier} & Examinateur & IH\'{E}S, Paris-Saclay\\
Herbert {\sc Gangl} & Examinateur  & Durham University\\
\end{tabular} 
\end{center}  

\newpage
\vspace*{3cm}
\begin{flushright}
\textit{Aux inconnues},\\
\textit{Aux variables},\\
\textit{A} Elle,
\end{flushright}
\epigraph{$\mlq$\textit{Prairie de tous mes instants, ils ne peuvent me fouler.}\\ \textit{Leur voyage est mon voyage et je reste obscurit\'e.}$\mrq$}{R. Char}

\newpage
\begin{center}
{\Huge \texttt{Abstract}}
\end{center}
In this thesis, following F. Brown's point of view, we look at the Hopf algebra structure of motivic cyclotomic multiple zeta values, which are motivic periods of the fundamental groupoid of $\mathbb{P}^{1} \diagdown \lbrace 0,  \mu_{N}, \infty \rbrace $. By application of a surjective \textit{period} map (which, under Grothendieck's period conjecture, is an isomorphism), we deduce results (such as generating families, identities, etc.) on cyclotomic multiple zeta values, which are complex numbers. The coaction of this Hopf algebra (explicitly given by a combinatorial formula from A. Goncharov and F. Brown's works) is the dual of the action of a so-called \textit{motivic} Galois group on these specific motivic periods. This entire study was actually motivated by the hope of a Galois theory for periods, which should extend the usual Galois theory for algebraic numbers.

In the first part, we focus on the case of motivic multiple zeta values ($N = 1$) and Euler sums ($N = 2$). In particular,  we present new bases for motivic multiple zeta values: one via motivic Euler sums, and another (depending on an analytic conjecture) which is known in the literature as the Hoffman $\star$ basis; under a general motivic identity that we conjecture, these bases are identical.

In the second part, we apply some Galois descents ideas to the study of these periods, and examine how periods of the fundamental groupoid of $\mathbb{P}^{1} \diagdown \lbrace 0,  \mu_{N'}, \infty \rbrace $ are embedded into periods of $\pi_{1}(\mathbb{P}^{1} \diagdown \lbrace 0,  \mu_{N}, \infty \rbrace )$, when $N'\mid N$. After giving some general criteria for any $N$, we focus on the cases $N=2,3,4,\mlq 6 \mrq, 8$, for which the motivic fundamental group generates the category of mixed Tate motives on $\mathcal{O}_{N}[\frac{1}{N}]$ (unramified if $N=6$). For those $N$, we are able to construct Galois descents explicitly, and extend P. Deligne's results.\\
\\
\texttt{Key words}:\textit{ Periods, Polylogarithms, multiple zeta values, Mixed Tate Motives, cyclotomic field, Hopf algebra, Motivic fundamental group, Galois Descent.}

\begin{center}
{\Huge \texttt{R\'{e}sum\'{e}.}}
\end{center}
A travers ce manuscrit, en s'inspirant du point de vue adopt\'{e} par F. Brown, nous examinons la structure d'alg\`{e}bre de Hopf des multiz\^{e}tas motiviques cyclotomiques, qui sont des p\'{e}riodes motiviques du groupo\"{i}de fondamental de $\mathbb{P}^{1} \diagdown \lbrace 0,  \mu_{N}, \infty \rbrace $. Par application d'un morphisme \textit{p\'{e}riode} surjectif (isomorphisme sous la conjecture de Grothendieck), nous pouvons d\'{e}duire des r\'{e}sultats (tels des familles g\'{e}n\'{e}ratrices, des identit\'{e}s, etc.) sur ces nombres complexes que sont les multiz\^{e}tas cyclotomiques. La coaction de cette alg\`{e}bre de Hopf (explicite par une formule combinatoire due aux travaux de A.B. Goncharov et F. Brown) est duale \`{a} l'action d'un d\'{e}nomm\'{e} \textit{groupe de Galois motivique} sur ces p\'{e}riodes motiviques. Ces recherches sont ainsi motiv\'{e}es par l'espoir d'une th\'{e}orie de Galois pour les p\'{e}riodes, \'{e}tendant la th\'{e}orie de Galois usuelle pour les nombres alg\'{e}briques.

Dans un premier temps, nous nous concentrons sur les multiz\^{e}tas ($N=1$) et les sommes d'Euler ($N=2$) motiviques. En particulier, de nouvelles bases pour les multizetas motiviques sont pr\'{e}sent\'{e}es: une via les sommes d'Euler motiviques, et une seconde (sous une conjecture analytique) qui est connue sous le nom de \textit{Hoffman} $\star$; soulignons que sous une identit\'{e} motivique g\'{e}n\'{e}rale que nous conjecturons \'{e}galement, ces bases sont identiques.

Dans un second temps, nous appliquons des id\'{e}es de descentes galoisiennes \`{a} l'\'{e}tude de ces p\'{e}riodes, en regardant notamment comment les p\'{e}riodes du groupo\"{i}de fondamental de $\mathbb{P}^{1} \diagdown \lbrace 0,  \mu_{N'}, \infty \rbrace $ se plongent dans les p\'{e}riodes de $\pi_{1}(\mathbb{P}^{1} \diagdown \lbrace 0,  \mu_{N}, \infty \rbrace )$, lorsque $N'\mid N$. Apr\`{e}s avoir fourni des crit\`{e}res g\'{e}n\'{e}raux (quel que soit $N$), nous nous tournons vers les cas $N=2,3,4,\mlq 6 \mrq, 8$, pour lesquels le groupo\"{i}de fondamental motivique engendre la cat\'{e}gorie des motifs de Tate mixtes sur $\mathcal{O}_{N}[\frac{1}{N}]$ (non ramifi\'{e} si $N=6$). Pour ces valeurs, nous sommes en mesure d'expliciter les descentes galoisiennes, et d'\'{e}tendre les r\'{e}sultats de P. Deligne.\\
\\
\texttt{Mots cl\'{e}s}: \textit{P\'{e}riodes, Polylogarithmes, multiz\^{e}tas, corps cyclotomiques, Motifs de Tate Mixtes, alg\`{e}bre de Hopf, groupe motivique fondamental, descente galoisienne.}
\newpage
\strut 
\newpage
\tableofcontents
\addtocontents{toc}{\par}

\newpage
\strut 
\newpage

\chapter{Introduction}

\epigraph{$\mlq$\textit{Qui est-ce ? Ah, tr\`{e}s bien, faites entrer l'infini.}$\mrq$}{Aragon}

\section{Motivation}

\subsection{Periods}
A \textbf{\textit{period}} \footnote{For an enlightening survey, see the reference article $\cite{KZ}$.} denotes a complex number that can be expressed as an integral of an algebraic function over an algebraic domain.\footnote{We can equivalently restrict to integral of rational functions over a domain in $\mathbb{R}^{n}$ given by polynomial inequalities with rational coefficients, by introducing more variables.} They form the algebra of periods $\mathcal{P}$\nomenclature{$\mathcal{P}$}{the algebra of periods, resp. $\widehat{\mathcal{P}}$ of extended periods}, fundamental class of numbers between algebraic numbers $\overline{\mathbb{Q}}$ and complex numbers.\\
The study of these integrals is behind a large part of algebraic geometry, and its connection with number theory, notably via L-functions \footnote{One can associate a L$-$ function to many arithmetic objects such as a number field, a modular form, an elliptic curve, or a Galois representation. It encodes its properties, and has wonderful (often conjectural) meromorphic continuation, functional equations, special values, and non-trivial zeros (Riemann hypothesis).}; and many of the constants which arise in mathematics, transcendental number theory or in physics turn out to be periods, which motivates the study of these particular numbers.\\
\\
\texttt{Examples:}
\begin{itemize}
\item[$\cdot$] The following numbers are periods:
$$\sqrt{2}= \int_{2x^{2} \leq 1} dx \text{  ,  }  \quad \pi= \int_{x^{2}+y^{2} \leq 1} dx dy \quad\text{  and  }  \quad \log(z)=\int_{1}^{z} \frac{dx}{x}, z>1, z\in \overline{\mathbb{Q}}.$$
\item[$\cdot$] Famous -alleged transcendental- numbers which conjecturally are not periods:
$$e =\lim_{n \rightarrow \infty} \left( 1+ \frac{1}{n} \right)^{n} \text{ , } \quad \gamma=\lim_{n \rightarrow \infty} \left( -ln(n) + \sum_{k=1}^{n} \frac{1}{k} \right)  \text{  or  } \quad \frac{1}{\pi}.$$
It can be more useful to consider the ring of extended periods, by inverting $\pi$:
$$\widehat{\mathcal{P}}\mathrel{\mathop:}=\mathcal{P} \left[  \frac{1}{\pi} \right].$$
\item[$\cdot$] Multiple polylogarithms at algebraic arguments (in particular cyclotomic multiple zeta values), by their representation as iterated integral given below, are periods. Similarly, special values of Dedekind zeta function $\zeta_{F}(s)$ of a number field, of L-functions, of hypergeometric series, modular forms, etc. are (conjecturally at least) periods or extended periods.
\item[$\cdot$] Periods also appear as Feynman integrals: Feynman amplitudes $I(D)$ can be written as a product of Gamma functions and meromorphic functions whose coefficients of its Laurent series expansion at any integer $D$ are periods (cf. $\cite{BB}$), where D is the dimension of spacetime.
\end{itemize}
Although most periods are transcendental, they are constructible; hence, the algebra $\mathcal{P}$ is countable, and any period contains only a finite amount of information. Conjecturally (by Grothendieck's conjecture), the only relations between periods comes from the following rules of elementary integral calculus\footnote{However, finding an algorithm to determine if a real number is a period, or if two periods are equal seems currently out of reach; whereas checking if a number is algebraic, or if two algebraic numbers are equal is rather \say{easy} (with \say{LLL}-type reduction algorithm, resp. by calculating the g.c.d of two vanishing polynomials associated to each).}:
\begin{itemize}
\item[$(i)$] Additivity (of the integrand and of the integration domain)
\item[$(ii)$] Invertible changes of variables
\item[$(iii)$] Stokes's formula.\\ 
\end{itemize} 

Another way of viewing a period $ \boldsymbol{\int_{\gamma} \omega }$ is via a comparison between two cohomology theories: the algebraic \textit{De Rham} cohomology, and the singular (\textit{Betti}) cohomology. More precisely, let $X$ a smooth algebraic variety defined over $\mathbb{Q}$ and $Y$ a closed subvariety over $\mathbb{Q}$.
\begin{itemize}
\item[$\cdot$] On the one hand, the \textit{algebraic} De Rham cohomology $H^{\bullet}_{dR}(X)$\nomenclature{$H^{\bullet}_{dR}$}{the \textit{algebraic} De Rham cohomology} is the hypercohomology of the sheaf of algebraic (K{\"a}hler) differentials on $X$. If $X$ is affine, it is defined from the de Rham complex $\Omega^{\bullet}(X)$ which is the cochain complex of global algebraic (K{\"a}hler) differential forms on X, with the exterior derivative as differential. Recall that the \textit{classical} $k^{\text{th}}$ de Rham cohomology group is the quotient of smooth closed $k$-forms on the manifold X$_{\diagup \mathbb{C}}$ modulo the exact $k$-forms on $X$.\\
Given $\omega$ a closed algebraic $n$-form on $X$ whose restriction on $Y$ is zero, it defines an equivalence class $[\omega]$ in the relative de Rham cohomology groups $H^{n}_{dR}(X,Y)$, which are finite-dimensional $\mathbb{Q}-$ vector space. 
\item[$\cdot$] On the other hand, the Betti homology $H_{\bullet}^{B}(X)$\nomenclature{$H_{\bullet}^{B}(X)$}{the Betti homology} is the homology of the chain complex induced by the boundary operation of singular chains on the manifold $X(\mathbb{C})$; Betti cohomology groups $H_{B}^{n}(X,Y)= H^{B}_{n}(X,Y)^{\vee}$ are the dual $\mathbb{Q}$ vector spaces (taking here coefficients in $\mathbb{Q}$, not $\mathbb{Z}$).\\
Given $\gamma$ a singular $n$ chain on $X(\mathbb{C}) $ with boundary in $ Y(\mathbb{C})$, it defines an equivalence class $[\gamma]$ in the relative Betti homology groups $ H_{n}^{B}(X,Y)=H^{n}_{B}(X,Y)^{\vee}$.\footnote{Relative homology can be calculated using the following long exact sequence:
 $$ \cdots \rightarrow H_{n}(Y) \rightarrow H_{n}(X) \rightarrow H_{n}(X,Y) \rightarrow H_{n-1} (Y) \rightarrow \cdots.$$
 }
\end{itemize}
Furthermore, there is a comparison isomorphism\nomenclature{$\text{comp}_{B,dR}$}{the comparison isomorphism between de Rham and Betti cohomology} between relative de Rham and relative Betti cohomology (due to Grothendieck, coming from the integration of algebraic differential forms on singular chains):
$$\text{comp}_{B,dR}: H^{\bullet}_{dR} (X,Y) \otimes_{\mathbb{Q}} \mathbb{C} \rightarrow H^{\bullet}_{B} (X,Y) \otimes_{\mathbb{Q}} \mathbb{C}.$$
By pairing a basis of Betti homology to a basis of de Rham cohomology, we obtain the \textit{matrix of periods}, which is a square matrix with entries in $\mathcal{P}$ and determinant in $\sqrt{\mathbb{Q}^{\ast}}(2i\pi)^{\mathbb{N}^{\ast}}$\nomenclature{$\mathbb{N}^{\ast}$}{the set of positive integers, $\mathbb{N}:=\mathbb{N}^{\ast}\cup\lbrace 0\rbrace$ }; i.e. its inverse matrix has its coefficients in $\widehat{\mathcal{P}}$. Then, up to the choice of these two basis:
\begin{framed}
The period $ \int_{\gamma} \omega $ is the coefficient of this pairing $\langle [\gamma], \text{comp}_{B,dR}([\omega]) \rangle$.
\end{framed}
\noindent
\texttt{Example}: Let $X=\mathbb{P}^{1}\diagdown \lbrace 0, \infty \rbrace$, $Y=\emptyset$ and $\gamma_{0}$ the counterclockwise loop around $0$:
 $$H^{B}_{i} (X)= \left\lbrace \begin{array}{ll}
\mathbb{Q} & \text{ if } i=0 \\
\mathbb{Q}\left[\gamma_{0}\right]  & \text{ if } i=1 \\
0 & \text{ else }.
\end{array} \right.  \quad \text{ and } \quad H^{i}_{dR} (X)= \left\lbrace \begin{array}{ll}
\mathbb{Q} & \text{ if } i=0 \\
\mathbb{Q}\left[ \frac{d}{dx}\right]  & \text{ if } i=1 \\
0 & \text{ else }.
\end{array} \right. $$
Since $\int_{\gamma_{0}} \frac{dx}{x}= 2i\pi$, $2i\pi$ is a period; as we will see below, it is a period of the Lefschetz motive $\mathbb{L}\mathrel{\mathop:}=\mathbb{Q}(-1)$.\\
\\

Viewing periods from this cohomological point of view naturally leads to the definition of \textbf{\textit{motivic periods}} given below \footnote{The definition of a motivic period is given in $\S 2.4$ in the context of a category of Mixed Tate Motives. In general, one can do with Hodge theory to define $ \mathcal{P}^{\mathfrak{m}}$, which is not strictly speaking \textit{motivic}, once we specify that the mixed Hodge structures considered come from the cohomology of algebraic varieties.}, which form an algebra $\mathcal{P}^{\mathfrak{m}}$, equipped with a period homomorphism:
$$\text{per}: \mathcal{P}^{\mathfrak{m}} \rightarrow \mathcal{P}.$$
A variant of Grothendieck's conjecture, which is a presently inaccessible conjecture in transcendental number theory, predicts that it is an isomorphism.\\
There is an action of a so-called motivic Galois group $\mathcal{G}$ on these motivic periods as we will see below in $\S 2.1$. If Grothendieck's period conjecture holds, this would hence extend the usual Galois theory for algebraic numbers to periods (cf. $\cite{An2}$). \\

In this thesis, we will focus on motivic (cyclotomic) multiple zeta values, defined in $\S 2.3$, which are motivic periods of the motivic (cyclotomic) fundamental group, defined in $\S 2.2$. Their images under this period morphism are the (cyclotomic) multiple zeta values; these are fascinating examples of periods, which are introduced in the next section (see also $\cite{An3}$). 
$$ \quad $$

\subsection{Multiple zeta values}

The Zeta function is known at least since Euler, and finds itself nowadays, in its various generalized forms (multiple zeta values, Polylogarithms, Dedekind zeta function, L-functions, etc), at the crossroad of many different fields as algebraic geometry (with periods and motives), number theory (notably with modular forms), topology, perturbative quantum field theory (with Feynman diagrams, cf. $\cite{Kr}$), string theory, etc. Zeta values at even integers are known since Euler to be rational multiples of even powers of $\pi$:

\begin{lemme}
$$\text{For } n \geq 1, \quad \zeta(2n)=\frac{\mid B_{2n}\mid (2\pi)^{2n}}{2(2n)!}, \text{ where } B_{2n}  \text{ is the } 2n^{\text{th}} \text { Bernoulli number.} $$
\end{lemme}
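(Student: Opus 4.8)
The plan is to compare two power-series expansions of the function $z\cot z$ about $z=0$: one obtained from the generating series of the Bernoulli numbers, the other from the classical partial-fraction (Mittag-Leffler) expansion of the cotangent.

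First I would recall the generating function $\frac{t}{e^{t}-1}=\sum_{n\geq 0}\frac{B_{n}}{n!}t^{n}$ and rewrite $z\cot z = iz+\frac{2iz}{e^{2iz}-1}$. Substituting $t=2iz$ and using $B_{1}=-\tfrac12$ together with $B_{2n+1}=0$ for $n\geq 1$, the linear terms cancel and one gets, for $|z|$ small,
\[ z\cot z=\sum_{n\geq 0}\frac{(-1)^{n}2^{2n}B_{2n}}{(2n)!}\,z^{2n}. \]
Next I would invoke the partial-fraction expansion $\pi\cot(\pi z)=\frac1z+\sum_{k\geq 1}\bigl(\frac{1}{z-k}+\frac{1}{z+k}\bigr)=\frac1z+2z\sum_{k\geq 1}\frac{1}{z^{2}-k^{2}}$; after the rescaling $z\mapsto z/\pi$ this reads $z\cot z=1-2\sum_{k\geq 1}\frac{z^{2}}{\pi^{2}k^{2}-z^{2}}$. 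Expanding each summand as a geometric series in $z^{2}/(\pi^{2}k^{2})$ and interchanging the order of summation gives, for $|z|<\pi$,
\[ z\cot z=1-2\sum_{n\geq 1}\frac{\zeta(2n)}{\pi^{2n}}\,z^{2n}. \]

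Finally I would compare the coefficients of $z^{2n}$ for $n\geq 1$ in the two expansions, obtaining $\frac{(-1)^{n}2^{2n}B_{2n}}{(2n)!}=-\frac{2\zeta(2n)}{\pi^{2n}}$, hence $\zeta(2n)=\frac{(-1)^{n+1}B_{2n}(2\pi)^{2n}}{2(2n)!}$; since the sign of $B_{2n}$ is $(-1)^{n+1}$, one has $(-1)^{n+1}B_{2n}=|B_{2n}|$, which is the stated identity. (Alternatively, the same conclusion falls out of evaluating the Fourier expansion of the Bernoulli polynomial $B_{2n}(x)$ at $x=0$, or of applying Parseval's identity to that expansion.)

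The only genuinely analytic point — the main obstacle in a fully rigorous write-up — is justifying the partial-fraction expansion of $\cot$ and the term-by-term manipulations: one needs the Mittag-Leffler theorem (or a direct estimate showing that $\sum_{k}\frac{z}{z^{2}-k^{2}}$ converges locally uniformly on $\mathbb{C}\setminus\mathbb{Z}$) and the absolute convergence of the double series $\sum_{k,n}z^{2n}/(\pi^{2n}k^{2n})$ for $|z|<\pi$ to legitimize swapping the two sums. Everything else is formal power-series bookkeeping, together with the elementary determination of the sign of $B_{2n}$.
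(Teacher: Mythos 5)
Your proof is correct: the comparison of the two expansions of $z\cot z$ (Bernoulli generating series on one side, Mittag--Leffler expansion of the cotangent on the other) is the standard classical derivation of Euler's formula, and your bookkeeping of the sign $(-1)^{n+1}B_{2n}=\mid B_{2n}\mid$ and of the convergence issues (local uniform convergence of the partial-fraction series, absolute convergence of the double series for $\mid z\mid<\pi$) is exactly what is needed. The paper states this lemma as a classical fact due to Euler and gives no proof, so there is nothing to compare against; your argument is a complete and appropriate justification.
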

However, the zeta values at odd integers already turn out to be quite interesting periods:
\begin{conje}
$\pi, \zeta(3), \zeta(5), \zeta(7),\cdots $ are algebraically independent.
\end{conje}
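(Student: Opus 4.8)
The plan is not to attack this over $\mathbb{C}$ directly: the transcendence tools currently available --- Apéry-style rational approximations, the Ball--Rivoal linear independence theorems, Nesterenko-type methods --- do not come close to algebraic independence (even $\zeta(5)\notin\mathbb{Q}$ is unknown). Instead I would lift the statement into the category $\mathrm{MT}(\mathbb{Z})$ of mixed Tate motives over $\mathbb{Z}$, prove algebraic independence unconditionally for the motivic avatars, and then descend along the period homomorphism $\mathrm{per}\colon\mathcal{P}^{\mathfrak{m}}\to\mathcal{P}$.

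First, I would fix motivic periods $\pi^{\mathfrak{m}}$ (a generator of the period torsor of the Lefschetz motive $\mathbb{L}=\mathbb{Q}(-1)$, so that $\mathrm{per}(\pi^{\mathfrak{m}})\in\pi\cdot\mathbb{Q}^{\times}$) and $\zeta^{\mathfrak{m}}(2n+1)\in\mathcal{P}^{\mathfrak{m}}$ for $n\geq 1$, all living inside the ring of motivic periods of $\mathrm{MT}(\mathbb{Z})$, the Tannakian category generated by $\pi_1^{\mathrm{mot}}(\mathbb{P}^1\setminus\{0,1,\infty\})$. The motivic core of the argument is then the statement that $\pi^{\mathfrak{m}},\zeta^{\mathfrak{m}}(3),\zeta^{\mathfrak{m}}(5),\dots$ are algebraically independent over $\mathbb{Q}$. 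To prove this I would: (i) invoke Borel's computation of $K_{\bullet}(\mathbb{Z})\otimes\mathbb{Q}$, which gives $\mathrm{Ext}^1_{\mathrm{MT}(\mathbb{Z})}(\mathbb{Q}(0),\mathbb{Q}(n))\cong\mathbb{Q}$ for odd $n\geq 3$ and $0$ otherwise; (ii) deduce, following Deligne--Goncharov, that the graded Lie algebra of the prounipotent radical of the motivic Galois group of $\mathrm{MT}(\mathbb{Z})$ is \emph{free}, with exactly one generator in each degree $-(2n+1)$; (iii) conclude that the corresponding ring of motivic periods is a polynomial algebra in which $\zeta^{\mathfrak{m}}(2),\zeta^{\mathfrak{m}}(3),\zeta^{\mathfrak{m}}(5),\dots$ form part of a free generating family --- here one needs that each $\zeta^{\mathfrak{m}}(2n+1)$ is nonzero in the relevant cogenerator space, which follows from non-vanishing of the Borel regulator (equivalently $\zeta(2n+1)\neq 0$) together with Brown's theorem identifying motivic MZVs with the full period ring of $\mathrm{MT}(\mathbb{Z})$; and (iv) note that $\zeta^{\mathfrak{m}}(2)\in\mathbb{Q}^{\times}\cdot(\pi^{\mathfrak{m}})^{2}$, so that adjoining $\pi^{\mathfrak{m}}$ preserves algebraic independence. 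Equivalently: any homogeneous polynomial relation among these elements would be a relation of motivic periods, and acting by the motivic Galois group $\mathcal{G}$ and comparing with the known graded dimensions $d_n=d_{n-2}+d_{n-3}$ forces it to be trivial.

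Finally, apply $\mathrm{per}$: a polynomial relation $P(\pi,\zeta(3),\zeta(5),\dots)=0$ over $\mathbb{Q}$ reads $\mathrm{per}\bigl(P(\pi^{\mathfrak{m}},\zeta^{\mathfrak{m}}(3),\dots)\bigr)=0$, and injectivity of $\mathrm{per}$ would force $P(\pi^{\mathfrak{m}},\dots)=0$, hence $P=0$ by the motivic statement. This is exactly where the one real obstacle sits: injectivity of $\mathrm{per}$ on $\mathcal{P}^{\mathfrak{m}}$ --- i.e.\ Grothendieck's period conjecture, even restricted to $\mathrm{MT}(\mathbb{Z})$ --- is wide open and, as stressed in the introduction, presently inaccessible; no unconditional route to it is known. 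So the honest status of the proposal is that the conjecture is a theorem in the motivic (and Hodge-theoretic) world and would follow from Grothendieck's conjecture, but an unconditional proof over $\mathbb{C}$ is beyond all current methods.
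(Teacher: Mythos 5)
The statement you are asked to prove is stated in the paper as a \emph{conjecture}, and the paper gives no proof of it: immediately after stating it, the author remarks that it ``raises difficult transcendental questions, rather out of reach'', the only known results being Ap\'ery's $\zeta(3)\notin\mathbb{Q}$ and Rivoal-type irrationality statements; later, in the examples on motivic Galois theory, the paper notes precisely that ``Grothendieck's conjecture implies that $\pi,\zeta(3),\zeta(5),\ldots$ are algebraically independent.'' Your proposal reproduces exactly this conditional reasoning. The motivic half of your argument is sound and standard: by Borel's computation of $K_{\bullet}(\mathbb{Z})\otimes\mathbb{Q}$ and the Deligne--Goncharov/Brown description $\mathcal{H}^{1}\cong\mathbb{Q}\langle f_{3},f_{5},\ldots\rangle\otimes\mathbb{Q}[f_{2}]$, the elements $\zeta^{\mathfrak{m}}(2),\zeta^{\mathfrak{m}}(3),\zeta^{\mathfrak{m}}(5),\ldots$ are indeed algebraically independent in the ring of motivic periods of $\mathcal{MT}(\mathbb{Z})$ (single letters are Lyndon words, hence free polynomial generators of the shuffle algebra), and their non-vanishing follows from $\zeta(2n+1)\neq 0$ via the period map.

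The genuine gap is the one you yourself flag: the descent from the motivic statement to the numerical one requires injectivity of $\mathrm{per}$ on $\mathcal{P}^{\mathfrak{m}}_{\mathcal{MT}(\mathbb{Z})}$, i.e.\ (a case of) Grothendieck's period conjecture, which is wide open and which no technique in this thesis or elsewhere currently reaches. Consequently your text is not a proof of the conjecture but a correct reduction of it to the period conjecture --- which is precisely the status the paper already assigns to it. So there is nothing wrong in your reasoning as a conditional argument, but it should not be presented as a ``proof'' of the stated conjecture; an unconditional argument would have to attack the transcendence problem directly, and, as you note, even $\zeta(5)\notin\mathbb{Q}$ is unknown.
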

This conjecture raises difficult transcendental questions, rather out of reach; currently we only know $\zeta(3)\notin \mathbb{Q}$ (Ap\'{e}ry), infinitely many odd zeta values are irrational (Rivoal), or other quite partial results (Zudilin, Rivoal, etc.); recently, F. Brown paved the way for a pursuit of these results, in $\cite{Br5}$.\\
\\
\textbf{Multiple zeta values relative to the $\boldsymbol{N^{\text{th}}}$ roots of unity} $\mu_{N}$, \nomenclature{$\mu_{N}$}{$N^{\text{th}}$ roots of unity}which we shall denote by \textbf{MZV}$\boldsymbol{_{\mu_{N}}}$\nomenclature{\textbf{MZV}$\boldsymbol{_{\mu_{N}}}$}{Multiple zeta values relative to the $\boldsymbol{N^{\text{th}}}$ roots of unity, denoted $\zeta()$}  are defined by: \footnote{Beware, there is no consensus on the order for the arguments of these MZV: sometimes the summation order is reversed.}
\begin{framed}
\begin{equation}\label{eq:mzv} \text{     }  \zeta\left(n_{1}, \ldots , n_{p} \atop  \epsilon_{1} , \ldots ,\epsilon_{p} \right)\mathrel{\mathop:}= \sum_{0<k_{1}<k_{2} \cdots <k_{p}} \frac{\epsilon_{1}^{k_{1}} \cdots \epsilon_{p}^{k_{p}}}{k_{1}^{n_{1}} \cdots k_{p}^{n_{p}}} \text{, } \epsilon_{i}\in \mu_{N} \text{, } n_{i}\in\mathbb{N}^{\ast}\text{, } (n_{p},\epsilon_{p})\neq (1,1).
\end{equation}
\end{framed}
The \textit{weight}, often denoted $w$ below, is defined as $\sum n_{i}$, the \textit{depth} is the length $p$, whereas the \textit{height}, usually denoted $h$, is the number of $n_{i}$ greater than $1$. The weight is conjecturally a grading, whereas the depth is only a filtration. Denote also by $\boldsymbol{\mathcal{Z}^{N}}$\nomenclature{$\mathcal{Z}^{N}$}{the $\mathbb{Q}$-vector space spanned by the multiple zeta values relative to $\mu_{N}$} the $\mathbb{Q}$-vector space spanned by these multiple zeta values relative to $\mu_{N}$.\\
These MZV$_{\mu_{N}}$ satisfy both \textit{shuffle} $\shuffle$ relation (coming from the integral representation below) and \textit{stuffle} $\ast$ relation (coming from this sum expression), which turns $\mathcal{Z}^{N}$ into an algebra. These relations, for $N=1$, are conjectured to generate all the relations between MZV if we add the so-called Hoffman (regularized double shuffle) relation; cf. \cite{Ca}, \cite{Wa} for a good introduction to this aspect. However, the literature is full of other relations among these (cyclotomic) multiple zeta values: cf. $\cite{AO},\cite{EF}, \cite{OW}, \cite{OZa}, \cite{O1}, \cite{Zh2}, \cite{Zh3}$. Among these, we shall require the so-called pentagon resp. \textit{hexagon} relations (for $N=1$, cf. $\cite{Fu}$), coming from the geometry of moduli space of genus $0$ curves with $5$ ordered marked points $X=\mathcal{M}_{0,5}$ resp. with $4$ marked points $X=\mathcal{M}_{0,4}=\mathbb{P}^{1}\diagdown\lbrace 0,1, \infty\rbrace$ and corresponding to a contractible path in X; hexagon relation (cf.  Figure $\ref{fig:hexagon}$) is turned into an \textit{octagon} relation (cf. Figure $\ref{fig:octagon}$) for $N>1$ (cf. $\cite{EF}$) and is used below in $\S 4.2$.\\
\\

One crucial point about multiple zeta values, is their \textit{integral representation}\footnote{Obtained by differentiating, considering there variables $z_{i}\in\mathbb{C}$, since:
$$\frac{d}{dz_{p}} \zeta \left( n_{1}, \ldots, n_{p} \atop z_{1}, \ldots, z_{p-1}, z_{p}\right)  = 
\left\lbrace \begin{array}{ll} 
\frac{1}{z_{p}} \zeta \left( n_{1}, \ldots, n_{p}-1 \atop z_{1}, \ldots, z_{p-1}, z_{p}\right)  & \text{ if }  n_{p}\neq 1\\
\frac{1}{1-z_{p}} \zeta \left( n_{1}, \ldots, n_{p-1} \atop z_{1}, \ldots, z_{p-1} z_{p}\right)  & \text{ if } n_{p}=1.
\end{array} \right. $$}, which makes them clearly \textit{periods} in the sense of Kontsevich-Zagier. Let us define first the following iterated integrals and differential forms, with $a_{i}\in \lbrace 0, \mu_{N} \rbrace$:\nomenclature{$I(0; a_{1}, \ldots , a_{n} ;1)$}{particular iterated integrals, with $a_{i}\in \lbrace 0, \mu_{N} \rbrace$}
\begin{equation}\label{eq:iterinteg}
\boldsymbol{I(0; a_{1}, \ldots , a_{n} ;1)}\mathrel{\mathop:}=  \int_{0< t_{1} < \cdots < t_{n} < 1} \frac{dt_{1} \cdots dt_{n}}{ (t_{1}-a_{1}) \cdots (t_{n}-a_{n}) }=\int_{0}^{1}  \omega_{a_{1}} \ldots \omega_{a_{n}} \text{, with } \omega_{a}\mathrel{\mathop:}=\frac{dt}{t-a}.
\end{equation}\nomenclature{$\omega_{a}$}{$\mathrel{\mathop:}=\frac{dt}{t-a}$, differential form.}
In this setting, with  $\eta_{i}\mathrel{\mathop:}=  (\epsilon_{i}\ldots \epsilon_{p})^{-1}\in\mu_{N}$, $n_{i}\in\mathbb{N}^{\ast}$\footnote{The use of bold in the iterated integral writing indicates a repetition of the corresponding number, as $0$ here.}:
\begin{framed}
\begin{equation}\label{eq:reprinteg}
\zeta \left({ n_{1}, \ldots , n_{p} \atop \epsilon_{1} , \ldots ,\epsilon_{p} }\right) = (-1)^{p} I(0; \eta_{1}, \boldsymbol{0}^{n_{1}-1}, \eta_{2},\boldsymbol{0}^{n_{2}-1}, \ldots, \eta_{p}, \boldsymbol{0}^{n_{p}-1} ;1).
\end{equation}
\end{framed}
\nomenclature{$\epsilon_{i}$, $\eta_{i}$}{\textit{(Usually)} The roots of unity corresponding to the MZV resp. to the iterated integral, i.e. $\eta_{i}\mathrel{\mathop:}= (\epsilon_{i}\cdots \epsilon_{p})^{-1}$.}   
\textsc{Remarks}:
\begin{itemize}
\item[$\cdot$] Multiple zeta values can be seen as special values of generalized multiple polylogarithms, when $\epsilon_{i}$ are considered in $\mathbb{C}$\footnote{The series is absolutely convergent for $ \mid \epsilon_{i}\mid <1$, converges also for $\mid \epsilon_{i}\mid =1$ if $n_{p} >1$. Cf. \cite{Os} for an introduction.}. First, notice that in weight $2$, $Li_{1}(z)\mathrel{\mathop:}=\zeta\left( 1\atop z\right) $ is the logarithm $-\log (1-z)$. Already the dilogarithm, in weight $2$, $Li_{2}(z)\mathrel{\mathop:}=\zeta \left( 2\atop z\right) =\sum_{k>0} \frac{z^{k}}{k^{2}}$, satisfies nice functional equations\footnote{As the functional equations with $Li_{2}\left( \frac{1}{z}\right) $ or $Li_{2}\left( 1-z \right) $ or the famous five terms relation, for its sibling, the Bloch Wigner function $D(z)\mathrel{\mathop:}= Im\left( Li_{2}(z) +\log(\mid z\mid) \log(1-z) \right)$:
$$D(x)+D(y)+ D\left( \frac{1-x}{1-xy}\right) + D\left( 1-xy\right)+ D\left( \frac{1-y}{1-xy}\right)=0. $$} and arises in many places such as in the Dedekind zeta value $\zeta_{F}(2)$ for F an imaginary quadratic field, in the Borel regulator in algebraic K-theory, in the volume of hyperbolic manifolds, etc.; cf. $\cite{GZ}$; some of these connections can be generalized to higher weights.
\item[$\cdot$] Recall that an iterated integral of closed (real or complex) differential $1-$forms $\omega_{i}$ along a path $\gamma$ on a 1-dimensional (real or complex) differential manifold $M$ is homotopy invariant, cf. $\cite{Ch}$. If $M=\mathbb{C}\diagdown \lbrace a_{1}, \ldots , a_{N}\rbrace$ \footnote{As for cyclotomic MZV, with $a_{i}\in \mu_{N}\cup\lbrace 0\rbrace$; such an $I=\int_{\gamma}\omega_{1}\ldots \omega_{n}$ is a multivalued function on $M$.} and $\omega_{i}$ are meromorphic closed $1-$forms, with at most simple poles in $a_{i}$, and $\gamma(0)=a_{1}$, the iterated integral $I=\int_{\gamma}\omega_{1}\cdots \omega_{n}$ is divergent. The divergence being polynomial in log $\epsilon$ ($\epsilon \ll 1$) \footnote{More precisely, we can prove that $\int_{\epsilon}^{1} \gamma^{\ast} (\omega_{1}) \cdots \gamma^{\ast} (\omega_{n}) = \sum_{i=0} \alpha_{i} (\epsilon) \log^{i} (\epsilon)$, with $\alpha_{i} (\epsilon)$ holomorphic in $\epsilon=0$; $\alpha_{0} (\epsilon) $ depends only on $\gamma'(0)$.}, we define the iterated integral I as the constant term, which only depends on $\gamma'(0)$. This process is called \textit{regularization}, we need to choose the tangential base points to properly define the integral. Later, we will consider the straight path $dch$ from $0$ to $1$, with tangential base point $\overrightarrow{1}$ at $0$ and $\overrightarrow{-1}$ at $1$, denoted also $\overrightarrow{1}_{0}, \overrightarrow{-1}_{1}$ or simply $\overrightarrow{01}$ for both.
\end{itemize}
\texttt{Notations}: In the case of \textit{multiple zeta values} (i.e. $N=1$) resp. of  \textit{Euler sums} (i.e. $N=2$), since $\epsilon_{i}\in \left\{\pm 1\right\}$, the notation is simplified, using $ z_{i}\in \mathbb{Z}^{\ast}$:\nomenclature{ES}{Euler sums, i.e. multiple zeta values associated to $\mu_{2}=\lbrace\pm 1\rbrace$}
 \begin{equation}\label{eq:notation2} \zeta\left(z_{1},  \ldots, z_{p} \right) \mathrel{\mathop:}= \zeta\left(n_{1}, \ldots , n_{p} \atop  \epsilon_{1} , \ldots ,\epsilon_{p} \right)\text{ with  } \left( n_{i}  \atop \epsilon_{i} \right)\mathrel{\mathop:}=\left( \mid z_{i} \mid \atop sign(z_{i} ) \right) .
 \end{equation}
Another common notation in the literature is the use of \textit{overlines} instead of negative arguments, i.e.: $ z_{i}\mathrel{\mathop:}=\left\lbrace \begin{array}{ll}
n_{i} &\text{ if } \epsilon_{i}=1\\
\overline{n_{i}} &\text{ if } \epsilon_{i}=-1
\end{array} \right. .$\nomenclature{$\overline{n}$}{another notation to denote a negative argument in the Euler sums: when the corresponding root of unity is $\epsilon=-1$.}

\section{Contents}

In this thesis, we mainly consider the \textit{\textbf{motivic}} versions of these multiple zeta values, denoted $\boldsymbol{\zeta^{\mathfrak{m}}}(\cdot)$ and shortened \textbf{MMZV}$\boldsymbol{_{\mu_{N}}}$ and defined in $\S 2.3$\nomenclature{\textbf{MMZV}$\boldsymbol{_{\mu_{N}}}$}{the motivic multiple zeta values relative to $\mu_{N}$, denoted $\zeta^{\mathfrak{m}}(\ldots)$, defined in $\S 2.3$}. They span a $\mathbb{Q}$-vector space $\boldsymbol{\mathcal{H}^{N}}$ of motivic multiple zetas relative to $\mu_{N}$\nomenclature{$\mathcal{H}^{N}$}{the $\mathbb{Q}$-vector space $\boldsymbol{\mathcal{H}^{N}}$ of motivic multiple zetas relative to $\mu_{N}$}. There is a surjective homomorphism, called the \textit{period map}, which is conjectured to be an isomorphism (this is a special case of the period conjecture)\nomenclature{per}{the surjective period map, conjectured to be an isomorphism.}:
\begin{equation}\label{eq:per} \textbf{per} : w:\quad \mathcal{H}^{N} \rightarrow \mathcal{Z}^{N} \text{ ,  } \quad \zeta^{\mathfrak{m}} (\cdot) \mapsto \zeta ( \cdot ).
\end{equation}

Working on the motivic side, besides being conjecturally identical to the complex numbers side, turns out to be somehow simpler, since motivic theory provides a Hopf Algebra structure as we will see throughout this thesis. Notably, each identity between motivic MZV$_{\mu_{N}}$ implies an identity for their periods; a motivic basis for MMZV$\mu_{N}$ is hence a generating family (conjecturally basis) for MZV$\mu_{N}$.

Indeed, on the side of motivic multiple zeta values, there is an action of a \textit{motivic Galois group} $\boldsymbol{\mathcal{G}}$\footnote{Later, we will define a category of Mixed Tate Motives, which will be a tannakian category: consequently equivalent to a category of representation of a group $\mathcal{G}$; cf. $\S 2.1$. }, which, passing to the dual, factorizes through a \textit{coaction} $\boldsymbol{\Delta}$ as we will see in $\S 2.4$. This coaction, which is given by an explicit combinatorial formula (Theorem $(\ref{eq:coaction})$, [Goncharov, Brown]), is the keystone of this PhD. In particular, it enables us to prove linear independence of MMZV, as in the theorem stated below (instead of adding yet another identity to the existing zoo of relations between MZV), and to study Galois descents. From this, we deduce results about numbers by applying the period map.
\\
\\
This thesis is structured as follows:
\begin{description}
\item[Chapter $2$] sketches the background necessary to understand this work, from Mixed Tate Motives to the Hopf algebra of motivic multiple zeta values at $\mu_{N}$, with some specifications according the values of $N$, and results used throughout the rest of this work. The combinatorial expression of the coaction (or of the weight graded \textit{derivation operators} $D_{r}$ extracted from it, $(\ref{eq:Der})$) is the cornerstone of this work. We shall also bear in mind Theorem $2.4.4$ stating which elements are in the kernel of these derivations), which sometimes allows to lift identities from MZV to motivic MZV, up to rational coefficients, as we will see throughout this work.\\
\texttt{Nota Bene}: A \textit{motivic relation} is indeed stronger; it may hence require several relations between MZV in order to lift an identity to motivic MZV. An example of such a behaviour occurs with some Hoffman $\star$ elements, in Lemma $\ref{lemmcoeff}$.\\
\item[Chapter $3$] explains the main results of this PhD, ending with a wider perspective and possible future paths. 
\item[Chapter $4$] focuses on the cases $N=1$, i.e. multiple zeta values and $N=2$, i.e. Euler sums, providing some new bases: 
\begin{itemize}
\item[$(i)$] First, we introduce \textit{Euler $\sharp$ sums}, variants of Euler sums, defined in  $\S 2.3$ as in $(\ref{eq:reprinteg})$, replacing each $\omega_{\pm 1}$ by $\omega_{\pm \sharp}\mathrel{\mathop:}=2 \omega_{\pm 1}-\omega_{0}$, except for the first one and prove:
\begin{theom}
Motivic Euler $\sharp$ sums with only positive $odd$ and negative $even$ integers as arguments are unramified: i.e. motivic multiple zeta values. 
\end{theom}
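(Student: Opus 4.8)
The plan is to descend from level $2$ to level $1$ by means of the explicit infinitesimal coaction. By the Galois-descent criterion behind Theorem~$2.4.4$, a motivic Euler sum $x$ of weight $n$ is unramified (i.e.\ a motivic multiple zeta value) exactly when every left-hand factor produced by its (iterated) coaction is itself unramified; graded by weight, this reads $D_{r}x\in\mathcal{L}^{1}_{r}\otimes\mathcal{H}^{2}$ for all $1\le r<n$, where $\mathcal{L}_{r}$ is the weight-$r$ part of the Lie coalgebra of the coaction and $\mathcal{L}^{1}\subseteq\mathcal{L}^{2}$ its level-$1$ subspace. As $\mathcal{L}^{1}_{1}=0$, the case $r=1$ is simply the equation $D_{1}x=0$; for $r\ge 2$ one must see that the left-hand factor of $D_{r}x$, a priori only in $\mathcal{L}^{2}_{r}$, actually lands in $\mathcal{L}^{1}_{r}$. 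I would establish both by one induction on the weight, the base cases being immediate --- in particular in depth one, where the first letter is never $\sharp$-ified, so that $\zeta^{\sharp,\mathfrak{m}}(n)=\zeta^{\mathfrak{m}}(n)$ and $\zeta^{\sharp,\mathfrak{m}}(\overline{2k})=\zeta^{\mathfrak{m}}(\overline{2k})$ is a rational multiple of $\zeta^{\mathfrak{m}}(2k)$, hence unramified and trivial in $\mathcal{L}$.

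The genuinely new point is the vanishing of $D_{1}$ on this family. I would expand $\zeta^{\sharp,\mathfrak{m}}(\cdots)$, via $\omega_{\pm\sharp}=2\omega_{\pm1}-\omega_{0}$ at each $\sharp$-slot, as a $\mathbb{Z}$-linear combination of ordinary motivic Euler sums $I^{\mathfrak{m}}(0;a_{1},\dots,a_{m};1)$ with $a_{i}\in\{0,\pm1\}$, and apply the weight-one derivation $D_{1}$ of Goncharov's coaction formula,
$$D_{1}\,I^{\mathfrak{m}}(a_{0};a_{1},\dots,a_{m};a_{m+1})=\sum_{i=1}^{m}I^{\mathfrak{l}}(a_{i-1};a_{i};a_{i+1})\otimes I^{\mathfrak{m}}(a_{0};a_{1},\dots,\widehat{a_{i}},\dots,a_{m};a_{m+1}),$$
where $I^{\mathfrak{l}}(a;b;c)$ is the class of $\int_{a}^{c}\omega_{b}$ in $\mathcal{L}_{1}$, which vanishes except for a short explicit list of triples $(a,b,c)\in\{0,\pm1\}^{3}$, where it is a nonzero rational multiple of $\log^{\mathfrak{m}}2$. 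It then remains to show that, after summing over the positions $i$ and over the independent ``$2\omega_{\pm1}$ versus $-\omega_{0}$'' choices at the $\sharp$-slots, all surviving multiples of $\log^{\mathfrak{m}}2$ cancel in pairs. Morally this is built in: $\omega_{\pm\sharp}$ is tailored so that $D_{1}$, which records only the ``$\mu_{2}$-letter'' $\pm1$, is blind to the $\sharp$-combination --- away from the first letter the $\sharp$-word is essentially pulled back along the degree-two map $t\mapsto t^{2}$ of $\mathbb{P}^{1}\setminus\{0,\pm1,\infty\}$ onto $\mathbb{P}^{1}\setminus\{0,1,\infty\}$ --- but I would prove the cancellation directly, by a telescoping/pairing argument organising the word-sum according to the position of the deleted letter relative to the $\sharp$-slots.

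For $r\ge 2$ in the inductive step, I would use Goncharov's block-deletion formula: $D_{r}$ turns $I^{\mathfrak{m}}(0;a_{1},\dots,a_{m};1)$ into a sum, over admissible contiguous blocks $(a_{i+1},\dots,a_{i+r})$, of $I^{\mathfrak{l}}(a_{i};a_{i+1},\dots,a_{i+r};a_{i+r+1})\otimes I^{\mathfrak{m}}(0;a_{1},\dots,a_{i},a_{i+r+1},\dots,a_{m};1)$. Summing over the $\sharp$-expansions, the $\mathcal{L}_{r}$-classes of the left-hand factors should reorganise into classes of motivic Euler $\sharp$ sums --- after normalising an $I(a;w;b)$ with $a,b\in\{0,\pm1\}$ to a standard ``$0$ to $1$'' integral via the Möbius involutions that permute $\{0,\pm1,\infty\}$ and preserve the family; being of weight $r<n$, these are unramified by the induction hypothesis, hence in $\mathcal{L}^{1}_{r}$, which is what the criterion demands. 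Here the ``positive odd / negative even'' hypothesis is used essentially: it is what keeps the lower-weight pieces inside the same family, and it confines the even-weight depth-one Euler sums --- rational multiples of powers of $\pi$, trivial in $\mathcal{L}$ --- to a harmless role.

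The main obstacle is the $D_{1}$-cancellation above, together with the bookkeeping surrounding it: the shuffle-regularisation of the divergent words coming from the $\omega_{0}$-terms and from the tangential base points $\overrightarrow{01}$; the distinguished, non-$\sharp$-ified first letter, which is precisely what would defeat a naive ``$D_{1}$ is pullback-blind'' argument and which forces the pairing in the telescoping to be carried out by hand; and the signs from $\eta_{i}=(\epsilon_{i}\cdots\epsilon_{p})^{-1}$ and from the coefficients $2,-1$ in $\omega_{\pm\sharp}$. To keep the induction self-contained, I would run it on a fixed $\mathbb{Q}$-vector space $V\subseteq\mathcal{H}^{2}$ spanned by these motivic Euler $\sharp$ sums --- enlarged if the coaction demands it by the auxiliary regularised words it produces --- and prove simultaneously $D_{1}|_{V}=0$ and that the left-hand $\mathcal{L}_{r}$-factors of $D_{r}V$ remain within the classes of $V$; the descent criterion then upgrades this to $V\subseteq\mathcal{H}^{1}$, which is the assertion.
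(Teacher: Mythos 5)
Your overall toolkit (the Galois-descent criterion, induction on the weight, Goncharov's formula for $D_r$, and the need to keep lower-weight pieces inside the family) is the right one, and your treatment of $D_1$ would work — in fact it is simpler than your telescoping: by linearity the coaction applies directly to the letters $0,\pm\sharp$, and the weight-one integrals $I^{\mathfrak{l}}(a;b;c)$ vanish because in this family a sign change is always separated by at least one $0$, so no adjacent letters of opposite sign ever occur. But there is a genuine gap in how you operationalise the criterion for $r\geq 2$: you constrain the \emph{left}-hand factors of $D_r$, requiring them to land in $\mathcal{L}^{1}_r$, and your induction is built to prove exactly that. For $N=2$ this condition is vacuous: $\mathcal{L}^{2}_{2r}=0$ and $\mathcal{L}^{2}_{2r+1}=\mathbb{Q}\,\zeta^{\mathfrak{l}}(\overline{2r+1})=\mathbb{Q}\,\zeta^{\mathfrak{l}}(2r+1)=\mathcal{L}^{1}_{2r+1}$ for $r\geq 1$, so every left factor of every $D_{2r+1}$ of every motivic Euler sum is automatically ``unramified''. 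The substantive condition in the descent criterion (Corollary $5.1.2$ of the paper) is on the \emph{right}-hand factors: $D_1(\mathfrak{Z})=0$ and $D_{2r+1}(\mathfrak{Z})\in\mathcal{L}_{2r+1}\otimes\mathcal{H}^{1}$, applied recursively. Your verbal formulation via the iterated coaction is correct, but to iterate you must control the right factor of each single step (the next step's left factors come from it), and nothing in your plan does this. Concretely, $\zeta^{\mathfrak{m}}(3,3,\overline{3})$ satisfies everything you propose to prove — it has $D_1=0$ and all its left factors lie in the one-dimensional $\mathcal{L}_{2r+1}$ — yet it is not a motivic MZV: its $D_5$ has right factor proportional to the ramified element $\zeta^{\mathfrak{m}}(3,\overline{1})$, which is why the paper must correct it by terms such as $\tfrac{774}{191}\zeta^{\mathfrak{m}}(1,5,\overline{3})$, etc., to land in $\mathcal{H}^{1}$ (Appendix $A.2$).

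So the real work, which your proposal leaves untouched, is to show that the family $\zeta^{\sharp,\mathfrak{m}}(\{\overline{\mathrm{even}},\mathrm{odd}\}^{\times})$ is \emph{stable on the right of the coaction}: a priori the cuts producing a subsequence $\epsilon\,0^{2a+1}\,\epsilon$ or $\epsilon\,0^{2a}\,{-\epsilon}$ (the ``unstable cuts'') create an even or a $\overline{\mathrm{odd}}$ entry in the right factor, taking you out of the family. In the paper these are cancelled pairwise using the antipode relations and the hybrid relation of $\S 4.2$ (coming from the linearised octagon), together with the parity constraint $w\equiv p-s\pmod 2$ built into the $\{\overline{\mathrm{even}},\mathrm{odd}\}$ pattern; that is the content of Lemma $A.1.3$/$A.1.4$, after which the recursion on weight plus the criterion gives unramifiedness. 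Redirect your inductive statement from the left factors to the right factors and supply that cancellation argument; as written, the final step ``the descent criterion then upgrades this to $V\subseteq\mathcal{H}^{1}$'' does not follow from what you prove.
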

By application of the period map above:
\begin{corol}
Each Euler $\sharp$ sums with only positive $odd$ and negative $even$ integers as arguments is unramified, i.e. $\mathbb{Q}$ linear combination of multiple zeta values.
\end{corol}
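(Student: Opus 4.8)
The plan is to deduce the Corollary directly from the Theorem by applying the period homomorphism $\mathrm{per}$ of $(\ref{eq:per})$. First I would recall that the motivic Euler $\sharp$ sums are, by their very definition in $\S 2.3$ (obtained from $(\ref{eq:reprinteg})$ by substituting $\omega_{\pm\sharp}:=2\omega_{\pm 1}-\omega_{0}$ for every letter except the leftmost one), particular $\mathbb{Q}$-linear combinations of motivic Euler sums, hence elements of $\mathcal{H}^{2}$. By the same token, each ordinary Euler $\sharp$ sum is the image under $\mathrm{per}$ of the corresponding motivic Euler $\sharp$ sum, since $\mathrm{per}$ is a morphism of $\mathbb{Q}$-algebras with $\mathrm{per}(\zeta^{\mathfrak{m}}(\cdot))=\zeta(\cdot)$ and the $\sharp$-combination of forms is literally the same on the motivic and the complex side.

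Next I would invoke the Theorem: a motivic Euler $\sharp$ sum all of whose arguments are positive odd or negative even integers is unramified, i.e. lies in the subspace $\mathcal{H}^{1}\subseteq\mathcal{H}^{2}$; concretely it can be written as a $\mathbb{Q}$-linear combination of motivic multiple zeta values $\zeta^{\mathfrak{m}}(n_{1},\ldots,n_{p})$ (all $\epsilon_{i}=1$). This is an identity in $\mathcal{H}^{2}$. Applying $\mathrm{per}$ to it, the left-hand side becomes the Euler $\sharp$ sum (a complex number) and the right-hand side becomes the same $\mathbb{Q}$-linear combination of multiple zeta values $\zeta(n_{1},\ldots,n_{p})$; hence the Euler $\sharp$ sum is a $\mathbb{Q}$-linear combination of multiple zeta values, as claimed.

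The only points requiring care — essentially bookkeeping rather than genuine obstacles — are: (i) the compatibility of the period map with the inclusions $\mathcal{H}^{1}\hookrightarrow\mathcal{H}^{2}$ and $\mathcal{Z}^{1}\hookrightarrow\mathcal{Z}^{2}$, which follows from the functoriality of the period pairing under the morphism of motivic fundamental groupoids $\mathbb{P}^{1}\diagdown\lbrace 0,\mu_{1},\infty\rbrace\hookrightarrow\mathbb{P}^{1}\diagdown\lbrace 0,\mu_{2},\infty\rbrace$; and (ii) that the weight grading is preserved throughout, so the resulting combination of multiple zeta values is homogeneous of the expected weight. The substantive content lies entirely in the Theorem, whose proof relies on the coaction and the derivation operators $D_{r}$ of $\S 2.4$ (together with Theorem $2.4.4$ on their kernels); the Corollary itself is a formal consequence of it via $\mathrm{per}$, so I expect no real difficulty here beyond making these two compatibilities explicit.
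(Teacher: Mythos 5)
Your argument is exactly the paper's: the Corollary is obtained from the Theorem on motivic Euler $\sharp$ sums by applying the period homomorphism $\mathrm{per}$ of $(\ref{eq:per})$, which sends each motivic identity in $\mathcal{H}^{1}\subseteq\mathcal{H}^{2}$ to the corresponding identity between real Euler $\sharp$ sums and multiple zeta values. The compatibility remarks you add are harmless bookkeeping; the proposal is correct and matches the paper's deduction.
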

Moreover, we can extract a basis from this family:
\begin{theom}
$ \lbrace \zeta^{\sharp, \mathfrak{m}} \left( 2a_{0}+1, 2a_{1}+3, \ldots, 2a_{p-1}+3 , -(2 a_{p}+2)\right) , a_{i} \geq 0  \rbrace$ is a graded basis of the space of motivic multiple zeta values.
\end{theom}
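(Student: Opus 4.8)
The plan is to deduce the statement from an exact dimension count together with a linear--independence argument powered by the motivic coaction, in the spirit of F. Brown's proof that the $\{2,3\}$--words form a basis of $\mathcal{H}^{1}$. First, by the preceding theorem each $\zeta^{\sharp,\mathfrak{m}}(2a_{0}+1,2a_{1}+3,\ldots,2a_{p-1}+3,-(2a_{p}+2))$ is unramified, hence lies in $\mathcal{H}^{1}$, and one has the a priori bound $\dim_{\mathbb{Q}}\mathcal{H}^{1}_{w}\le d_{w}$ with $\sum_{w\ge 0}d_{w}t^{w}=(1-t^{2}-t^{3})^{-1}$ (Deligne--Goncharov; Chapter $2$). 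So it suffices to check that in each weight the number of admissible tuples equals $d_{w}$, and that the corresponding motivic periods are $\mathbb{Q}$--linearly independent: the two cardinalities then agree, and independence becomes equivalent to spanning all of $\mathcal{H}^{1}$. The cardinality count is a generating--series computation --- the leading odd argument, each interior odd argument $\ge 3$, and the trailing negative even argument contribute respectively $\frac{t}{1-t^{2}}$, $\frac{t^{3}}{1-t^{2}}$, $\frac{t^{2}}{1-t^{2}}$, and summing the product over the number of interior arguments (together with the one--argument members $\zeta^{\sharp,\mathfrak{m}}(-(2a+2))$) telescopes to $(1-t^{2}-t^{3})^{-1}-1=\sum_{w\ge 1}d_{w}t^{w}$.

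I would prove the linear independence by induction on the weight $w$, using the infinitesimal coaction. By Theorem $2.4.4$ the map $D_{<w}=\bigoplus_{3\le r<w,\ r\ \mathrm{odd}}D_{r}:\mathcal{H}^{1}_{w}\to\bigoplus_{r}\mathcal{L}_{r}\otimes\mathcal{H}^{1}_{w-r}$ has a one--dimensional kernel, spanned by $\zeta^{\mathfrak{m}}(w)$ if $w$ is odd and by $\zeta^{\mathfrak{m}}(2)^{w/2}$ if $w$ is even. By the induction hypothesis the family is a basis of $\mathcal{H}^{1}_{w-r}$ for all $r<w$, and each $\mathcal{L}_{r}$ ($r$ odd $\ge 3$) is one--dimensional; so applying $D_{<w}$ to the weight--$w$ members and re--expanding the right--hand tensor factors in this lower--weight basis produces a \emph{scalar} matrix $M$, rows indexed by the weight--$w$ tuples and columns by the pairs $(r,\ \text{tuple of weight }w-r)$. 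It then remains to show $M$ has corank exactly $1$, and that the combination of weight--$w$ members killed by $D_{<w}$ is a nonzero multiple of $\zeta^{\mathfrak{m}}(w)$ (resp. $\zeta^{\mathfrak{m}}(2)^{w/2}$) --- which one checks by evaluating one distinguished member, e.g. the element of smallest depth.

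To compute $M$ I would apply the combinatorial coaction formula: $D_{r}$ acts on the iterated integral underlying $\zeta^{\sharp,\mathfrak{m}}$ by extracting consecutive subsequences of length $r$, keeping track that each interior $\omega_{\pm 1}$ has been replaced by $\omega_{\pm\sharp}=2\omega_{\pm 1}-\omega_{0}$. The expectation is that, modulo products, the principal contributions come from subsequences that delete exactly one interior odd block together with an adjacent run of $\omega_{0}$'s, returning a sequence of the same shape but shorter, with coefficient a signed, $2$--power--weighted binomial number. Ordering the weight--$w$ tuples --- first by depth $p$, then lexicographically on $(a_{0},\ldots,a_{p})$ --- one aims to show $M$ is block lower--triangular in $p$ and (uni)triangular within each block, with diagonal entries nonzero rational multiples of the generator of $\mathcal{L}_{r}$; this forces corank $1$, the single missing relation being carried by the $p\le 1$ member, which one identifies with $\zeta^{\mathfrak{m}}$ of a single argument.

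The hard part is precisely this last step: carrying out $D_{r}$ on the $\sharp$--sums and proving $M$ is genuinely invertible up to the forced corank $1$. Expanding the $\omega_{\pm\sharp}=2\omega_{\pm 1}-\omega_{0}$'s inside the subsequence extraction produces many terms that must be shown to telescope or to be strictly lower in the chosen order, and one must rule out accidental cancellation among the diagonal contributions. In Brown's treatment of the $\{2,3\}$--basis this rested on Zagier's closed evaluation of $\zeta(\{2\}^{a},3,\{2\}^{b})$; the analogue needed here should be an explicit evaluation --- or at least a non--vanishing statement, provable by a $2$--adic valuation estimate on the underlying matrix of binomial coefficients --- for the depth $\le 2$ Euler $\sharp$--sums $\zeta^{\sharp}(2a+1,-(2b+2))$, which both pins down the diagonal of $M$ and identifies the kernel direction. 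The remaining ingredients (the low--weight base cases and the bound $\dim_{\mathbb{Q}}\mathcal{H}^{1}_{w}\le d_{w}$) are already available.
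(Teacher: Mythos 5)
Your overall strategy (cardinality count plus linear independence via the coaction, with $2$-adic dominance of deconcatenation terms) is the right one, but the step you flag as ``the hard part'' is a genuine gap, and the remedy you propose is not the one that works. Because you run the induction on weight only and use the full operators $D_{<w}$ on $\mathcal{H}^{1}_{w}$, every entry of your matrix $M$ requires writing the \emph{left} factor of a cut --- an element of $\mathcal{L}_{2r+1}$ which can be an Euler $\sharp$ sum of arbitrary depth modulo products --- as an explicit rational multiple of $\zeta^{\mathfrak{l}}(2r+1)$. Producing those rationals is essentially the decomposition problem itself, and you have no $2$-adic control over them; in particular an evaluation of the depth $\leq 2$ sums $\zeta^{\sharp}(2a+1,\overline{2b+2})$ would not suffice, since higher-depth left factors occur in every weight. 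You would also be left with the corank-$1$ bookkeeping (identifying the kernel of $D_{<w}$ inside the span), which your argument only sketches.

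The paper removes both difficulties by filtering by depth rather than only by weight. One first checks (Lemma A.1.4, via the antipodal and hybrid relations) that the $\mathbb{Q}$-span $\mathcal{H}^{odd\sharp}$ of the family is stable under all $D_{2r+1}$, and then works with the depth-graded, projected operators $\partial_{<n,p}=\oplus_{2r+1<n}D^{-1}_{2r+1,p}\colon gr^{\mathfrak{D}}_{p}\mathcal{H}^{odd\sharp}_{n}\to\oplus_{r} gr^{\mathfrak{D}}_{p-1}\mathcal{H}^{odd\sharp}_{n-2r-1}$. In the depth graded only cuts whose left factor has depth $1$ survive, and these reduce to $\zeta^{\mathfrak{l}}(\overline{2r+1})$ with completely explicit coefficients, namely $\tfrac{2^{2r+1}}{1-2^{2r}}\binom{2r}{\cdot}$ for the interior cuts and $2\binom{2r}{2a_{p}+1}$ for the deconcatenation cuts, using only the depth-$1$ identities $\zeta^{\mathfrak{l}}(2r+1)=\tfrac{-2^{2r}}{2^{2r}-1}\zeta^{\mathfrak{l}}(\overline{2r+1})$ and $\zeta^{\mathfrak{l}}_{2r+1-a}(a)=(-1)^{a+1}\binom{2r}{a-1}\zeta^{\mathfrak{l}}(2r+1)$; no analytic input at all is needed (this is precisely what distinguishes this basis from the Hoffman $\star$ one). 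Dividing each $D_{2r+1}$-row by $2^{2r}$, the deconcatenation entries are the only ones of non-positive $2$-adic valuation, so the matrix is unitriangular modulo $2$ for the lexicographic order on $(a_{p},\ldots,a_{0})$, hence invertible --- a genuine bijection, with no corank to track, because the depth-$0$ elements $\zeta^{\sharp,\mathfrak{m}}(\overline{2a+2})\in\mathbb{Q}\,\zeta^{\mathfrak{m}}(2)^{a+1}$ form the base of the depth recursion. Linear independence across different depths then follows by applying $\partial_{<n,p}$ to a putative relation of maximal depth, and your dimension count (which is correct) finishes the proof. So the missing idea in your write-up is exactly this passage to the depth filtration; without it the matrix you need cannot be computed, and with it no Zagier-type evaluation is required.
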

By application of the period map:
\begin{corol}
Each multiple zeta value is a $\mathbb{Q}$ linear combination of elements of the same weight in $\lbrace \zeta^{ \sharp} \left( 2a_{0}+1, 2a_{1}+3, \ldots, 2a_{p-1}+3 , -(2 a_{p}+2)\right) , a_{i} \geq 0 \rbrace$.
\end{corol}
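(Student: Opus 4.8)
The plan is to deduce the Corollary from the corresponding \emph{motivic} statement and to prove the latter by a coaction argument. Concretely, the Corollary is the image under the period map of the assertion that the family
$$\mathcal{B}\mathrel{\mathop:}= \left\lbrace \zeta^{\sharp,\mathfrak{m}}\left( 2a_{0}+1, 2a_{1}+3, \ldots, 2a_{p-1}+3, -(2a_{p}+2)\right) : p\geq 0,\ a_{i}\geq 0 \right\rbrace$$
is a graded basis of $\mathcal{H}^{1}$: applying the surjective, weight-preserving map $\mathbf{per}$ of $(\ref{eq:per})$ turns that basis into a weight-homogeneous spanning set of $\mathcal{Z}^{1}$, which is exactly what the Corollary asserts. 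So it remains to prove the motivic statement, and I describe how I would do that.

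\emph{Step 1 (membership and counting).} By the first $\sharp$-theorem above, every element of $\mathcal{B}$ is an unramified motivic Euler $\sharp$-sum --- it has only positive odd and negative even arguments --- and hence already lies in $\mathcal{H}^{1}$. It therefore suffices to show that $\mathcal{B}$ has, in each weight $w$, cardinality $\dim_{\mathbb{Q}}\mathcal{H}^{1}_{w}$, and that it is linearly independent. For the count: in weight $w$ the first argument ranges over the odd integers $\geq 1$, each of the $p-1$ middle arguments over the odd integers $\geq 3$, and the last argument over the even integers $\geq 2$ in absolute value, so the number of elements of $\mathcal{B}$ of weight $w$ has generating series
$$1+\frac{t^{2}}{1-t^{2}}+\sum_{p\geq 1}\frac{t}{1-t^{2}}\left(\frac{t^{3}}{1-t^{2}}\right)^{p-1}\frac{t^{2}}{1-t^{2}}=\frac{1}{1-t^{2}-t^{3}},$$
which is precisely the Deligne--Goncharov series of $\dim_{\mathbb{Q}}\mathcal{H}^{1}_{w}$. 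Hence it is enough to prove that $\mathcal{B}$ is linearly independent.

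\emph{Step 2 (independence via the coaction).} I would prove independence by induction on the weight, using the weight-graded derivations $D_{2r+1}$ of $(\ref{eq:Der})$ extracted from the combinatorial coaction $(\ref{eq:coaction})$, by the same mechanism as in Brown's proof that $\zeta^{\mathfrak{m}}(n_{1},\ldots,n_{d})$ with $n_{i}\in\{2,3\}$ is a basis. Filter the index set of $\mathcal{B}$ by a \emph{level} $\ell$ --- say the number of middle arguments equal to $3$ (the number of $i$ with $a_{i}=0$), refined if necessary by whether $a_{0}=0$ and whether $a_{p}=0$ --- and compute $D_{2r+1}$ on a level-$\ell$ generator. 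The whole role of the $\sharp$-normalisation $\omega_{\pm\sharp}=2\omega_{\pm1}-\omega_{0}$ is that the cuts of the coaction which would produce a tuple outside the shape defining $\mathcal{B}$ cancel in pairs, so that, modulo terms of strictly smaller level, $D_{2r+1}$ realises an invertible linear map from the level-$\ell$ span onto the level-$(\ell-1)$ span of weight $w-2r-1$, triangular with explicit nonvanishing (essentially binomial) diagonal entries. Granting this, a nontrivial relation in $\mathcal{B}$ of weight $w$ would, after applying all $D_{2r+1}$ with $2r+1<w$ and invoking the inductive hypothesis in lower weight, force its top-level component to vanish, then the next, and so on down to level $0$; the level-$0$ part is killed by all the $D_{2r+1}$, hence lies in $\mathbb{Q}\,\zeta^{\mathfrak{m}}(w)$ by Theorem $2.4.4$, and a direct check of the single relevant generator eliminates it too. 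So $\mathcal{B}$ is free, hence a graded basis of $\mathcal{H}^{1}$, and the Corollary follows by applying $\mathbf{per}$.

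\emph{Main obstacle.} The real work is entirely in Step 2: one must choose the level statistic so that $D_{2r+1}$ is genuinely level-decreasing on all of the error terms; one must control the asymmetry between the odd first argument $2a_{0}+1$ and the even negative last argument $-(2a_{p}+2)$ under the coaction cuts --- it is here that the $\sharp$-cancellations have to be verified by a careful, if ultimately routine, expansion of $(\ref{eq:coaction})$ on these sums --- and one must check that the diagonal coefficients of the transition matrix never vanish. The dimension count of Step 1 and the passage from ``free of the correct dimension'' to ``basis'' are then formal.
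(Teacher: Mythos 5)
Your overall architecture is exactly the paper's: membership of the family in $\mathcal{H}^{1}$ via the preceding unramifiedness theorem, the weight-count $\frac{1}{1-t^{2}-t^{3}}$ (which you compute correctly), linear independence via the coaction, and then the period map. The gap is that Step 2, the only non-formal step, is not carried out, and the mechanism you propose for it would not work as stated. First, your level statistic (the number of $a_{i}$ equal to $0$) is not decreasing under the coaction: in the depth-graded formula (\ref{eq:dgrderiv}) the deconcatenation cut merges the last two arguments into $\overline{2\alpha+2}$ with $\alpha$ ranging over $0\leq\alpha\leq a_{p}$, leaving every other $a_{i}$ untouched, so it can \emph{create} a new zero index and raise your level. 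The paper's induction is instead on the depth (the motivic depth of this family is depth minus one), working in $gr^{\mathfrak{D}}_{p}\mathcal{H}$, where every term of $D_{2r+1}$ drops the depth by exactly one; your ``refined if necessary'' hedge does not repair this.

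Second, the invertibility is not obtained by ``bad cuts cancelling in pairs'' plus a triangular matrix with binomial diagonal. The stability of the family $\zeta^{\sharp,\mathfrak{m}}(\overline{\text{even}},\text{odd})$ under the $D_{2r+1}$ is a substantive fact requiring the hybrid relation coming from the linearized octagon (Theorem \ref{hybrid}) together with the antipodal relations (this is the content of the appendix computations behind Lemma 4.3.3), and even after that the transition matrix of $\partial_{<n,p}$ on the basis elements is \emph{not} triangular over $\mathbb{Q}$: non-deconcatenation terms of type \textsc{(a),(b)} genuinely occur on both sides of the diagonal. What saves the argument in the paper is a $2$-adic comparison: after dividing the $D_{2r+1}$-row by $2^{2r}$, all non-deconcatenation entries lie in $2\mathbb{Z}_{\text{odd}}$ (coefficients $\tfrac{2^{2r+1}}{1-2^{2r}}\binom{\cdot}{\cdot}$), while the deconcatenation entries $2\binom{2r}{2a_{p}+1}2^{-2r}$ are $2$-adic units, so with the lexicographic order the matrix is unitriangular modulo $2$ and hence invertible. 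Without this valuation argument (or a substitute), asserting nonvanishing diagonal entries does not establish invertibility, and this is precisely the point your sketch leaves open.
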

\item[$(ii)$] We also prove the following, where Euler $\star$ sums are defined (cf. $\S 2.3$) as in $(\ref{eq:reprinteg})$, replacing each $\omega_{\pm 1}$ by $\omega_{\pm \star}\mathrel{\mathop:}=\omega_{\pm 1}-\omega_{0}$, except the first:
\begin{theom}
If the analytic conjecture ($\ref{conjcoeff}$) holds, then the motivic \textit{Hoffman} $\star$ family $\lbrace \zeta^{\star,\mathfrak{m}} (\lbrace 2,3 \rbrace^{\times})\rbrace$ is a basis of $\mathcal{H}^{1}$, the space of MMZV.
\end{theom}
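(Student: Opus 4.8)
The plan is to reduce the statement to a dimension count together with a linear–independence statement, and then to establish linear independence by feeding the iterated–integral ($\omega_{\star}$) representation of $\zeta^{\star,\mathfrak{m}}$ into the combinatorial coaction. First one settles the numerology: the words on the two letters $2$ and $3$ of total weight $w$ are counted by the sequence $d_w$ with $d_0=1$, $d_1=0$, $d_2=1$ and $d_w=d_{w-2}+d_{w-3}$, and this is exactly $\dim_{\mathbb{Q}}\mathcal{H}^1_w$ — the equality being F.~Brown's theorem that $\lbrace\zeta^{\mathfrak{m}}(\lbrace 2,3\rbrace^{\times})\rbrace$ is a basis of $\mathcal{H}^1$. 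So the motivic Hoffman $\star$ family has the correct cardinality in each weight, and it is a basis as soon as it is $\mathbb{Q}$-linearly independent (equivalently, as soon as it spans $\mathcal{H}^1$).

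To prove linear independence I would run the coaction machinery of Chapter $2$. Put the \emph{level filtration} $L_{\bullet}$ on the $\mathbb{Q}$-span of the Hoffman $\star$ words, the level of a word being (essentially) its number of $3$'s, and use the weight–graded derivations $D_r\colon\mathcal{H}^1\to\mathcal{L}_r\otimes\mathcal{H}^1$ for odd $r\geq 3$ extracted from the coaction (Theorem (\ref{eq:coaction}), formula (\ref{eq:Der})), recalling that $\dim\mathcal{L}_r=1$, generated by $\zeta^{\mathfrak{m}}_r$. The heart of the matter is to apply the combinatorial coaction to the iterated integral in the forms $\omega_0$ and $\omega_{\pm\star}=\omega_{\pm1}-\omega_0$ representing $\zeta^{\star,\mathfrak{m}}(\mathbf{w})$, and to simplify the outcome modulo products and modulo $L_{<\ell}$, $\ell=\ell(\mathbf{w})$. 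One expects the leading term to be a $\mathbb{Q}$-linear combination $\sum \zeta^{\mathfrak{m}}_r\otimes\zeta^{\star,\mathfrak{m}}(\mathbf{w}')$ over sub-words $\mathbf{w}'$ obtained by deleting a $3$-block of $\mathbf{w}$ and merging its neighbouring $2$'s, with coefficients exactly the rational numbers appearing in Conjecture (\ref{conjcoeff}); a single such coefficient, detected by the depth–dropping derivation $D_{w-1}$ together with the kernel criterion Theorem $2.4.4$ (the common kernel of the $D_r$ in weight $w$ being $\mathbb{Q}\,\zeta^{\star,\mathfrak{m}}(w)$), governs the level–$1$ step. This turns the statement into linear algebra: on $\mathrm{gr}^L$ the total derivation $\bigoplus_r D_r$ is block–triangular with diagonal blocks the matrices assembled from those coefficients, so by induction on the level, linear independence of the whole family follows from non-singularity of these matrices — and that non-singularity is precisely what Conjecture (\ref{conjcoeff}) grants, concretely via the $2$-adic valuation of the coefficients computed in Lemma (\ref{lemmcoeff}), which keeps them away from $0$.

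The expected main obstacle is the coaction computation just described: since the substitution $\omega_{\pm1}\rightsquigarrow\omega_{\pm1}-\omega_0$ replaces each Hoffman $\star$ word by a sum of words of varying length, showing that all the product terms and all the deeper–level terms genuinely lie in $L_{<\ell}$ — so that the problem really does collapse onto the explicit coefficient matrices — is markedly more delicate than in the non-$\star$ case handled by Brown, and making the level filtration and the induction mesh is the crux. Beyond that lies the genuine, unremovable obstruction, which is why the theorem is only conditional: the non-vanishing of those coefficients is itself an open analytic problem, namely Conjecture (\ref{conjcoeff}); granting it, the remaining argument is structural.

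An alternative packaging I would also explore is to prove a unitriangularity statement directly, namely that $\zeta^{\star,\mathfrak{m}}(\mathbf{w})$ equals a nonzero rational multiple of $\zeta^{\mathfrak{m}}(\mathbf{w})$ plus Hoffman $\star$ words of strictly lower level; this would deduce the theorem from Brown's at once, at the cost of re-expanding in Brown's basis the non-$\lbrace 2,3\rbrace$ words (such as $\zeta^{\mathfrak{m}}(5)$ in weight $5$) produced by the star-to-non-star expansion. This variant is conceptually shorter but bottoms out at the same coefficient non-vanishing, so it carries no more than the first route.
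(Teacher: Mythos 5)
Your plan coincides with the paper's own proof: the same level filtration by the number of $3$'s, the same level-graded derivations whose left-hand $\zeta^{\star\star}$ factors collapse to rational multiples of $\zeta^{\mathfrak{l}}(2r+1)$ via Lemma~$\ref{lemmcoeff}$ (where Conjecture~$\ref{conjcoeff}$ enters), the same $2$-adic triangularity argument making the deconcatenation terms dominant, and the same induction on level and weight combined with the cardinality count against $\dim\mathcal{H}^{1}_{n}$. So the proposal is correct in approach and essentially identical to the paper's argument, differing only in that the simplification of the star-regularized cuts (carried out in the paper via the antipode and hybrid relations of \S 4.2 and Appendix A.1) is left as the acknowledged technical step.
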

\item[$(iii)$] Conjecturally, the two previous basis, namely the Hoffman $^{\star}$ family and the Euler$^{\sharp}$ family, are the same. Indeed, we conjecture a generalized motivic Linebarger-Zhao equality (Conjecture $\ref{lzg}$) which expresses each motivic multiple zeta $\star$ value as a motivic Euler $\sharp$ sum. It extends the Two One formula [Ohno-Zudilin], the Three One Formula [Zagier], and Linebarger Zhao formula, and applies to motivic MZV. If this conjecture holds, then $(i)$ implies that the Hoffman$^{\star}$ family is a basis.
\end{itemize}
Such results on linear independence of a family of motivic MZV are proved recursively, once we have found the \textit{appropriate level} filtration on the elements; ideally, the family considered is stable under the derivations \footnote{If the family is not \textit{a priori} \textit{stable} under the coaction, we need to incorporate in the recursion an hypothesis on the coefficients which appear when we express the right side with the elements of the family.}; the filtration, as we will see below, should correspond to the \textit{motivic depth} defined in $\S 2.4.3$, and decrease under the derivations \footnote{In the case of Hoffman basis ($\cite{Br2}$), or Hoffman $\star$ basis (Theorem $4.4.1$) it is the number of $3$, whereas in the case of Euler $\sharp$ sums basis (Theorems $4.3.2$), it is the depth minus one; for the \textit{Deligne} basis given in Chapter $5$ for $N=2,3,4, \mlq  6 \mrq ,8$, it is the usual depth. The filtration by the level has to be stable under the coaction, and more precisely, the derivations $D_{r}$ decrease the level on the elements of the conjectured basis, which allows a recursion.}; if the derivations, modulo some spaces, act as a deconcatenation on these elements, linear independence follows naturally from this recursion. Nevertheless, to start this procedure, we need an analytic identity\footnote{Where F. Brown in $\cite{Br2}$, for the Hoffman basis, used the analytic identity proved by Zagier in $\cite{Za}$, or $\cite{Li}$.}, which is left here as a conjecture in the case of the Hoffman $\star $ basis. This conjecture is of an entirely different nature from the techniques developed in this thesis. We expect that it could be proved using analytic methods along the lines of $\cite{Za}, \cite{Li}$.\\
\item[Chapter $5$] applies ideas of Galois descents on the motivic side. Originally, the notion of Galois descent was inspired by the question: which linear combinations of Euler sums are \textit{unramified}, i.e. multiple zeta values?\footnote{This was already a established question, studied by Broadhurst (which uses the terminology \textit{honorary}) among others. Notice that this issue surfaces also for motivic Euler sums in some results in Chapter $3$ and $5$.}  More generally, looking at the motivic side, one can ask which linear combinations of MMZV$_{\mu_{N}}$ lie in MMZV$_{\mu_{N'}}$ for $N'$ dividing $N$. This is what we call \textit{descent} (the first level of a descent) and can be answered by exploiting the motivic Galois group action. General descent criteria are given; in the particular case of $N=2,3,4,\mlq 6 \mrq,8$\footnote{\texttt{Nota Bene}: $N=\mlq 6 \mrq$ is a special case; the quotation marks indicate here that we restrict to \textit{unramified} MMZV cf. $\S 2.1.1$.}, Galois descents are made explicit and our results lead to new bases of MMZV relative to $\mu_{N'}$ in terms of a basis of MMZV relative to $\mu_{N}$, and in particular, a new proof of P. Deligne's results $\cite{De}$.\\
Going further, we define ramification spaces which constitute a tower of intermediate spaces between the elements in MMZV$_{\mu_{N}}$ and the whole space of MMZV$_{\mu_{N'}}$. This is summed up in $\S 3.2$ and studied in detail Chapter $5$ or article $\cite{Gl1}$.\\
Moreover, as we will see below, these methods enable us to construct the motivic periods of categories of mixed Tate motives which cannot be reached by standard methods: i.e. are not simply generated by a motivic fundamental group.
\item[Chapter $6$] gathers some applications of the coaction, from maximal depth terms, to motivic identities, via unramified motivic Euler sums; other potential applications of these Galois ideas to the study of these periods are still waiting to be further investigated. \\
\\
\\
\end{description}
\texttt{\textbf{Consistency:}}\\
Chapter $2$ is fundamental to understand the tools and the proofs of both Chapter $4$, $5$ and $6$ (which are independent between them), but could be skimmed through before the reading of the main results in Chapter $3$. The proofs of Chapter $4$ are based on the results of Annexe $A.1$, but could be read independently.

\chapter{Background} 

\section{Motives and Periods}

Here we sketch the motivic background where the motivic iterated integrals (and hence this work) mainly take place; although most of it can be taken as a black box. Nevertheless, some of the results coming from this rich theory are fundamental to our proofs. 

\subsection{Mixed Tate Motives}

\paragraph{Motives in a nutshell.}
Motives are supposed to play the role of a universal (and algebraic) cohomology theory (see $\cite{An}$). This hope is partly nourished by the fact that, between all the classical cohomology theories (de Rham, Betti, $l$-adique, crystalline), we have comparison isomorphisms in characteristic $0$ \footnote{Even in positive characteristic, $\dim H^{i}(X)$ does not depend on the cohomology chosen among these.}. More precisely, the hope is that there should exist a tannakian (in particular abelian, tensor) category of motives $\mathcal{M}(k)$, and a functor $\text{Var}_{k} \xrightarrow{h} \mathcal{M}(k) $ such that:\\ 
\textit{For each Weil cohomology}\footnote{This functor should verify some properties, such as Kunneth formula, Poincare duality, etc. as the classic cohomology theories. \\
If we restrict to smooth projective varieties, $\text{SmProj}_{k}$, we can construct such a category, the category of pure motives $ \mathcal{M}^{pure}(k)$ starting from the category of correspondence of degree $0$. For more details, cf. $\cite{Ka}$.}: $\text{Var}_{k} \xrightarrow{H} \text{Vec}_{k}$, \textit{there exists a realization map $w_{H}$ such that the following commutes}:\nomenclature{$\text{Var}_{k}$}{the category of varieties over k}\nomenclature{$\text{Vec}_{k}$}{the category of $k$-vector space of finite dimension}\nomenclature{$\text{SmProj}_{k}$}{the category of smooth projective varieties over k.}
$$\xymatrix{
\text{Var}_{k} \ar[d]^{\forall H} \ar[r]^{ h} &  \mathcal{M}(k) \ar[dl]^{\exists  w_{H}}\\
\text{Vec}_{K}   &   },$$
\textit{where $h$ satisfy properties such as }$h(X\times Y)=h(X)\oplus h(Y)$, $h(X \coprod Y)= h(X)\otimes h(Y)$. The realizations functors are conjectured to be full and faithful (conjecture of periods of Grothendieck, Hodge conjecture, Tate conjecture, etc.)\footnote{In the case of Mixed Tate Motives over number fields as seen below, Goncharov proved it for Hodge and l-adique Tate realizations, from results of Borel and Soule.}.\\
To this end, Voedvosky (cf. $\cite{Vo}$) constructed a triangulated category of Mixed Motives $DM^{\text{eff}}(k)_{\mathbb{Q}}$, with rational coefficients, equipped with tensor product and a functor:
\begin{center}
$M_{gm}: \text{Sch}_{\diagup k} \rightarrow DM^{\text{eff}}$   satisfying some properties such as:
\end{center}
\begin{description}
\item[Kunneth] $M_{gm}(X \times Y)=M_{gm}(X)\otimes M_{gm}(Y)$.
\item[$\mathbb{A}^{1}$-invariance]  $M_{gm}(X \times \mathbb{A}^{1})= M_{gm}(X)$.
\item[Mayer Vietoris] $M_{gm}(U\cap V)\rightarrow M_{gm}(U) \otimes M_{gm}(V) \rightarrow M_{gm}(U\cup V)\rightarrow M_{gm}(U\cap V)[1] $, $U,V$ open, is a distinguished triangle.\footnote{Distinguished triangles in $DMT^{\text{eff}}(k)$, i.e. of type Tate, become exact sequences in $\mathcal{MT}(k)$.}
\item[Gysin] $M_{gm}(X\diagdown Z)\rightarrow M_{gm}(X)  \rightarrow M_{gm}(Z)(c)[2c]\rightarrow M_{gm}(X\diagdown Z)[1] $, $X$ smooth, $Z$ smooth, closed, of codimension $c$, is a distinguished triangle.
\end{description}
We would like to extract from the triangulated category $DM^{\text{eff}}(k)_{\mathbb{Q}}$ an abelian category of Mixed Motives over k\footnote{ A way would be to define a $t$ structure on this category, and the heart of the t-structure, by Bernstein, Beilinson, Deligne theorem is a full admissible abelian sub-category.}. However, we still are not able to do it in the general case, but it is possible for some triangulated tensor subcategory of type Tate, generated by $\mathbb{Q}(n)$ with some properties.\\
\\
\textsc{Remark}: $\mathbb{L}\mathrel{\mathop:}=\mathbb{Q}(-1)= H^{1}(\mathbb{G}_{\mathfrak{m}})=H^{1}(\mathbb{P}^{1} \diagdown \lbrace 0, \infty\rbrace)$\nomenclature{$\mathbb{G}_{\mathfrak{m}}$}{the multiplicative group} which is referred to as the \textit{Lefschetz motive}, is a pure motive, and has period $(2i\pi)$. Its dual is the so-called \textit{Tate motive} $\mathbb{T}\mathrel{\mathop:}=\mathbb{Q}(1)=\mathbb{L}^{\vee}$. More generally, let us define $\mathbb{Q}(-n)\mathrel{\mathop:}= \mathbb{Q}(-1)^{\otimes n}$ resp. $\mathbb{Q}(n)\mathrel{\mathop:}= \mathbb{Q}(1)^{\otimes n}$ whose periods are in $(2i\pi)^{n} \mathbb{Q}$ resp. $(\frac{1}{2i\pi})^{n} \mathbb{Q}$, hence extended periods in $\widehat{P}$; we have the decomposition of the motive of the projective line: $h(\mathbb{P}^{n})= \oplus _{k=0}^{n}\mathbb{Q}(-k)$.  

\paragraph{Mixed Tate Motives over a number field.}
Let's first define, for $k$ a number field, the category $\mathcal{DM}(k)_{\mathbb{Q}}$ from $\mathcal{DM}^{\text{eff}}(k)_{\mathbb{Q}}$ by formally \say{inverting} the Tate motive $\mathbb{Q}(1)$, and then $\mathcal{DMT}(k)_{\mathbb{Q}}$ as the smallest triangulated full subcategory of $\mathcal{DM}(k)_{\mathbb{Q}}$ containing $\mathbb{Q}(n), n\in\mathbb{Z}$ and stable by extension.\\
By the vanishing theorem of Beilinson-Soule, and results from Levine (cf. $\cite{Le}$), there exists:\footnote{A \textit{tannakian} category is abelian, $k$-linear, tensor rigid (autoduality), has an exact faithful fiber functor, compatible with $\otimes$ structures, etc. Cf. $\cite{DM}$ about Tannakian categories.}
\begin{framed}
A tannakian \textit{category of Mixed Tate motives} over k with rational coefficients, $\mathcal{MT}(k)_{\mathbb{Q}}$\nomenclature{$\mathcal{MT}(k)$}{category of Mixed Tate Motives over $k$} and equipped with a weight filtration $W_{r}$ indexed by even integers such that $gr_{-2r}^{W}(M)$ is a sum of copies of $\mathbb{Q}(r)$ for $M\in \mathcal{MT}(k)$, i.e.,\\
 Every object $M\in \mathcal{MT}(k)_{\mathbb{Q}}$ is an iterated extension of Tate motives $\mathbb{Q}(n), n\in\mathbb{Z}$.
\end{framed}
such that (by the works of Voedvodsky, Levine $\cite{Le}$, Bloch, Borel (and K-theory), cf. $\cite{DG}$):
$$\begin{array}{ll}
\text{Ext}^{1}_{\mathcal{MT}(k)}(\mathbb{Q}(0),\mathbb{Q}(n) )\cong  K_{2n-1}(k)_{\mathbb{Q}} \otimes \mathbb{Q} \cong  & \left\lbrace  \begin{array}{ll}
k^{\ast}\otimes_{\mathbb{Z}} \mathbb{Q}  & \text{ if } n=1 .\\
 \mathbb{Q}^{r_{1}+r_{2}}  & \text{ if } n>1 \text{ odd }\\
\mathbb{Q}^{r_{2}}  & \text{ if } n>1 \text{ even }
\end{array} \right. . \\
\text{Ext}^{i}_{\mathcal{MT}(k)}(\mathbb{Q}(0),\mathbb{Q}(n) )\cong 0 & \quad \text{ if } i>1 \text{ or } n\leq 0.\\
\end{array}$$
Here, $r_{1}$ resp $r_{2}$ stand for the number of real resp. complex (and non real, up to conjugate) embeddings from $k$ to $\mathbb{C}$.\\
In particular, the weight defines a canonical fiber functor\nomenclature{$\omega$}{the canonical fiber functor}:
$$\begin{array}{lll}
\omega: & \mathcal{MT}(k) \rightarrow \text{Vec}_{\mathbb{Q}} &    \\
 &M \mapsto \oplus \omega_{r}(M) & \quad  \quad \text{ with  } \left\lbrace  \begin{array}{l}
  \omega_{r}(M)\mathrel{\mathop:}= \text{Hom}_{\mathcal{MT}(k)}(\mathbb{Q}(r), gr_{-2r}^{W}(M))\\
  \text{ i.e. }  \quad  gr_{-2r}^{W}(M)= \mathbb{Q}(r)\otimes \omega_{r}(M).
\end{array} \right. 
\end{array} $$
\\
The category of Mixed Tate Motives over $k$, since tannakian, is equivalent to the category of representations of the so-called \textit{\textbf{motivic Galois group}} $\boldsymbol{\mathcal{G}^{\mathcal{MT}}}$\nomenclature{$\mathcal{G}^{\mathcal{M}}$}{the motivic Galois group of the category of Mixed Tate Motives $\mathcal{M}$ } of $\mathcal{MT}(k)$ \footnote{With the equivalence of category between $A$ Comodules and Representations of the affine group scheme $\text{Spec}(A)$, for $A$ a Hopf algebra. Note that $\text{Rep}(\mathbb{G}_{m})$ is the category of $k$-vector space $\mathbb{Z}$-graded of finite dimension.}:\nomenclature{$\text{Rep}(\cdot)$}{the category of finite representations}
\begin{framed}
\begin{equation}\label{eq:catrep}
\mathcal{MT}(k)_{\mathbb{Q}}\cong \text{Rep}_{k} \mathcal{G}^{\mathcal{MT}} \cong \text{Comod } (\mathcal{O}(\mathcal{G}^{\mathcal{MT}})) \quad \text{ where } \mathcal{G}^{\mathcal{MT}}\mathrel{\mathop:}=\text{Aut}^{\otimes } \omega . 
\end{equation}
\end{framed}
The motivic Galois group $\mathcal{G}^{\mathcal{MT}}$ decomposes as, since $\omega$ is graded:
\begin{center}
$\mathcal{G}^{\mathcal{MT}}= \mathbb{G}_{m} \ltimes \mathcal{U}^{\mathcal{MT}}$, $\quad  \text{i.e. }  \quad 1 \rightarrow \mathcal{U}^{\mathcal{MT}} \rightarrow \mathcal{G}^{\mathcal{MT}} \leftrightarrows \mathbb{G}_{m} \rightarrow 1 \quad \textit{ is an exact sequence, }$
\end{center}
\begin{center}
\textit{where} $\mathcal{U}^{\mathcal{MT}}$\nomenclature{$\mathcal{U}^{\mathcal{M}}$}{the pro-unipotent part of the motivic Galois group $\mathcal{G}^{\mathcal{M}}$.} \textit{is a pro-unipotent group scheme defined over  }$\mathbb{Q}$.
\end{center}
The action of $\mathbb{G}_{m}$ is a grading, and $\mathcal{U}^{\mathcal{MT}}$ acts trivially on the graded pieces $\omega(\mathbb{Q}(n))$.\\
\\
Let $\mathfrak{u}$ denote the completion of the pro-nilpotent graded Lie algebra of $\mathcal{U}^{\mathcal{MT}}$ (defined by a limit); $\mathfrak{u}$ is free and graded with negative degrees from the $\mathbb{G}_{m}$-action. Furthermore\footnote{Since $\text{Ext}^{2}_{\mathcal{MT}} (\mathbb{Q}(0), \mathbb{Q}(n))=0$, which implies $\forall M$, $H^{2}(\mathfrak{u},M)=0$, hence $\mathfrak{u}$ free. Moreover, $(\mathfrak{u}^{ab})= \left( \mathfrak{u} \diagup [\mathfrak{u}, \mathfrak{u}]\right)= H_{1}(\mathfrak{u}; \mathbb{Q})$, then, for $\mathcal{U}$ unipotent: $$\left( \mathfrak{u}^{ab}\right)^{\vee}_{m-n} \cong \text{Ext}^{1}_{\text{Rep} _{\mathbb{Q}}} (\mathbb{Q}(n), \mathbb{Q}(m)).$$}:
\begin{equation}
\label{eq:uab}
\mathfrak{u}^{ab} \cong \bigoplus \text{Ext}^{1}_{\mathcal{MT}} (\mathbb{Q}(0), \mathbb{Q}(n))^{\vee} \text{ in degree } n.
\end{equation}
Hence the \textit{fundamental Hopf algebra} is\nomenclature{$\mathcal{A}^{\mathcal{M}}$}{the fundamental Hopf algebra of $\mathcal{M}$} \footnote{Recall the anti-equivalence of Category, between Hopf Algebra and Affine Group Schemes:
$$\xymatrix@R-1pc{ k-Alg^{op} \ar[r]^{\sim}   &  k-\text{AffSch }   \\
k-\text{HopfAlg}^{op} \ar@{^{(}->}[u]  \ar[r]^{\sim} & k-\text{ AffGpSch } \ar@{^{(}->}[u]\\
A  \ar@{|->}[r] & \text{ Spec } A \\
\mathcal{O}(G)  & G: R \mapsto \text{Hom}_{k}(\mathcal{O}(G),R)  \ar@{|->}[l] }. $$
It comes from the fully faithful Yoneda functor $C^{op} \rightarrow \text{Fonct}(C, \text{Set})$, leading to an equivalence of Category if we restrict to Representable Functors: $k-\text{AffGpSch } \cong \text{RepFonct } (\text{Alg }^{op},Gp)$. Properties for Hopf algebra are obtained from Affine Group Scheme properties by 'reversing the arrows' in each diagram.\\
Remark that $G$ is unipotent if and only if $A$ is commutative, finite type, connected and filtered.}:
\begin{equation}
\label{eq:Amt}
\mathcal{A}^{\mathcal{MT}}\mathrel{\mathop:}=\mathcal{O}(\mathcal{U}^{\mathcal{MT}})\cong (U^{\wedge} (\mathfrak{u}))^{\vee} \cong T(\oplus_{n\geq 1} \text{Ext}^{1}_{\mathcal{MT}_{N}} (\mathbb{Q}(0), \mathbb{Q}(n))^{\vee} ).
\end{equation}
Hence, by the Tannakian dictionary $(\ref{eq:catrep})$: $\mathcal{MT}(k)_{\mathbb{Q}}\cong \text{Rep}_{k}^{gr} \mathcal{U}^{\mathcal{MT}} \cong \text{Comod}^{gr} \mathcal{A}^{\mathcal{MT}} .$ \\
\\
Once an embedding $\sigma: k \hookrightarrow \mathbb{C}$ is fixed, Betti cohomology leads to a functor \textit{Betti realization}:\nomenclature{$\omega_{B_{\sigma}}$}{Betti realization functor}
$$\omega_{B_{\sigma}}: \mathcal{MT}(k) \rightarrow \text{Vec}_{\mathbb{Q}} ,\quad  M \mapsto M_{\sigma}.$$
De Rham cohomology leads similarly to the functor \textit{de Rham realization}\nomenclature{$\omega_{dR}$}{de Rham realization functor}:
$$\omega_{dR}: \mathcal{MT}(k) \rightarrow \text{Vec}_{k} , \quad M \mapsto M_{dR} \text{ , } \quad M_{dR} \text{ weight graded}.$$
Beware, the de Rham functor $\omega_{dR}$ here is not defined over $\mathbb{Q}$ but over $k$ and $\omega_{dR}=\omega \otimes_{\mathbb{Q}} k$, so the de Rham realization of an object $M$ is $M_{dR}=\omega(M)\otimes_{\mathbb{Q}} k$.\\
Between all these realizations, we have comparison isomorphisms, such as: 
$$ M_{\sigma}\otimes_{\mathbb{Q}} \mathbb{C} \xrightarrow[\sim]{\text{comp}_{dR, \sigma}}  M_{dR} \otimes_{k,\sigma} \mathbb{C} \text{ with its inverse } \text{comp}_{\sigma,dR}.$$
$$ M_{\sigma}\otimes_{\mathbb{Q}} \mathbb{C} \xrightarrow[\sim]{\text{comp}_{\omega, B_{\sigma}}}  M_{\omega} \otimes_{\mathbb{Q}} \mathbb{C} \text{ with its inverse } \text{comp}_{B_{\sigma}, \omega}.$$
Define also, looking at tensor-preserving isomorphisms:
$$\begin{array}{lll}
\mathcal{G}_{B}\mathrel{\mathop:}=\text{Aut}^{\otimes}(\omega_{B}), & \text{resp. } \mathcal{G}_{dR}\mathrel{\mathop:}=\text{Aut}^{\otimes}(\omega_{dR})  & \\
P_{\omega, B}\mathrel{\mathop:}=\text{Isom}^{\otimes}(\omega_{B},\omega), &  \text{resp. } P_{B,\omega}\mathrel{\mathop:}=\text{Isom}^{\otimes}(\omega,\omega_{B}), & (\mathcal{G}^{\mathcal{MT}}, \mathcal{G}_{B}) \text{ resp. } (\mathcal{G}_{B}, \mathcal{G}^{\mathcal{MT}}) \text{ bitorsors }.
\end{array}$$
Comparison isomorphisms above define $\mathbb{C}$ points of these schemes: $\text{comp}_{\omega,B}\in P_{B,\omega} (\mathbb{C})$.\\
\\
\textsc{Remarks}: By $(\ref{eq:catrep})$:\footnote{The different cohomologies should be viewed as interchangeable realizations. Etale chomology, with the action of the absolute Galois group $\text{Gal}(\overline{\mathbb{Q}}\diagup\mathbb{Q})$ (cf $\cite{An3}$) is related to the number $N_{p}$ of points of reduction modulo $p$. For Mixed Tate Motives (and conjecturally only for those) $N_{p}$ are polynomials modulo $p$, which is quite restrictive.}
\begin{framed}
A Mixed Tate motive over a number field is uniquely defined by its de Rham realization, a vector space $M_{dR}$, with an action of the motivic Galois group $\mathcal{G}^{\mathcal{MT}}$.
\end{framed}

\texttt{Example:} For instance $\mathbb{Q}(n)$, as a Tate motive, can be seen as the vector space $\mathbb{Q}$ with the action $\lambda\cdot x\mathrel{\mathop:}= \lambda^{n} x$, for $\lambda\in\mathbb{Q}^{\ast}=\text{Aut}(\mathbb{Q})=\mathbb{G}_{m}(\mathbb{Q})$.

\paragraph{Mixed Tate Motives over $\mathcal{O}_{S}$.}

Before, let's recall for $k$ a number field and $\mathcal{O}$\nomenclature{$\mathcal{O}_{k}$}{ring of integers of $k$} its ring of integers, \textit{archimedian values} of $k$ are associated to an embedding $k \xhookrightarrow{\sigma} \mathbb{C}$, such that:
$$\mid x \mid \mathrel{\mathop:}= \mid\sigma(x)\mid_{\infty}\text{ , where } \mid\cdot \mid_{\infty} \text{ is the usual absolute value},$$ 
and \textit{non archimedian values} are associated to non-zero prime ideals of $\mathcal{O}$\footnote{$ \mathcal{O}$ is a Dedekind domain, $ \mathcal{O}_{\mathfrak{p}}$ a discrete valuation ring whose prime ideals are prime ideals of $\mathcal{O}$ which are included in $(\mathfrak{p})\mathcal{O}_{\mathfrak{p}}$.}:
$$v_{\mathfrak{p}}: k^{\times} \rightarrow \mathbb{Z}, \quad v_{\mathfrak{p}}(x) \text{ is the integer such that  }  x \mathcal{O}_{\mathfrak{p}} = \mathfrak{p}^{v_{\mathfrak{p}}(x)} \mathcal{O}_{\mathfrak{p}} \text{ for } x\in k^{\times}.$$
For $S$ a finite set of absolute values in $k$ containing all archimedian values, the \textit{ring of S-integers}\nomenclature{$\mathcal{O}_{S}$}{ring of $S$-integers}:
$$\mathcal{O}_{S}\mathrel{\mathop:}= \left\lbrace x\in k \mid v(x)\geq 0 \text{  for all valuations } v\notin S \right\rbrace. $$
Dirichlet unit's theorem generalizes for $\mathcal{O}^{\times}_{S}$, abelian group of type finite:\nomenclature{$\mu(K)$}{the finite cyclic group of roots of unity in $K$} \footnote{ It will be used below, for dimensions, in $\ref{dimensionk}$. Here, $\text{ card } (S)= r_{1}+r_{2}+\text{ card }(\text{non-archimedian places}) $; as usual, $r_{1}, r_{2}$ standing for the number of real resp. complex (and non real, and up to conjugate) embeddings from $k$ to $\mathbb{C}$; $\mu(K) $ is the finite cyclic group of roots of unity in $K$.}
$$\mathcal{O}_{S}^{\times} \cong \mu(K) \times \mathbb{Z}^{\text{ card }(S)-1}.$$
\texttt{Examples}:
\begin{itemize}
\item[$\cdot$] Taking S as the set of the archimedian values leads to the usual ring of integers $\mathcal{O}$, and would lead to the unramified category of motives $\mathcal{MT}(\mathcal{O})$ below.
\item[$\cdot$] For $k=\mathbb{Q}$, $p$ prime, with $S=\lbrace v_{p}, \mid \cdot \mid_{\infty}\rbrace$, we obtain $\mathbb{Z}\left[  \frac{1}{p} \right] $. Note that the definition does not allow to choose $S= \lbrace \cup_{q \text{prime} \atop q\neq p } v_{q}, \mid \cdot \mid_{\infty}\rbrace$, which would lead to the localization $\mathbb{Z}_{(p)}\mathrel{\mathop:}=\lbrace x\in\mathbb{Q} \mid v_{p}(x) \geq 0\rbrace$.
\end{itemize}
Now, let us define the categories of Mixed Tate Motives which interest us here:\nomenclature{$\mathcal{MT}_{\Gamma}$}{a tannakian category associated to $\Gamma$ a sub-vector space of $\text{Ext}^{1}_{\mathcal{MT}(k)}( \mathbb{Q}(0), \mathbb{Q}(1))$}
\begin{defin}\label{defimtcat}
\begin{description}
\item[$\boldsymbol{\mathcal{MT}_{\Gamma}}$:] For $\Gamma$ a sub-vector space of $\text{Ext}^{1}_{\mathcal{MT}(k)}( \mathbb{Q}(0), \mathbb{Q}(1)) \cong k^{\ast}\otimes \mathbb{Q}$:
\begin{center}
$\mathcal{MT}_{\Gamma}:$ the tannakian subcategory formed by objects $M$ such that each subquotient $E$ of $M$:
$$0 \rightarrow \mathbb{Q}(n+1) \rightarrow E \rightarrow \mathbb{Q}(n) \rightarrow 0  \quad \Rightarrow \quad [E]\in \Gamma \subset  \text{Ext}^{1}_{\mathcal{MT}(k)}( \mathbb{Q}(0), \mathbb{Q}(1)) \footnote{$\text{Ext}^{1}_{\mathcal{MT}(k)}( \mathbb{Q}(0), \mathbb{Q}(1)) \cong \text{Ext}^{1}_{\mathcal{MT}(k)}( \mathbb{Q}(n), \mathbb{Q}(n+1)).$}.$$
\end{center}

\item[$\boldsymbol{\mathcal{MT}(\mathcal{O}_{S})}$:] The category of mixed Tate motives unramified in each finite place $v\notin S$:
\begin{center}
$\mathcal{MT}(\mathcal{O}_{S})\mathrel{\mathop:}=\mathcal{MT}_{\Gamma}, \quad $ for $\Gamma=\mathcal{O}_{S}^{\ast}\otimes \mathbb{Q}$. 
\end{center}
\end{description}
\end{defin}
Extension groups for these categories are then identical to those of $\mathcal{MT}(k)$ except:

\begin{equation} \label{eq:extension}
\text{Ext}^{1}_{\mathcal{MT}_{\Gamma}}( \mathbb{Q}(0), \mathbb{Q}(1))= \Gamma, \quad  \text{   resp.    } \quad 
\text{Ext}^{1}_{\mathcal{MT}(\mathcal{O}_{S})}( \mathbb{Q}(0), \mathbb{Q}(1))= K_{1}(\mathcal{O}_{S})\otimes \mathbb{Q}=\mathcal{O}^{\ast}_{S}\otimes \mathbb{Q}.
\end{equation}

\paragraph{Cyclotomic Mixed Tate Motives.}
In this thesis, we focus on the cyclotomic case and consider the following categories, and sub-categories, for $k_{N}$\nomenclature{$k_{N}$}{the $N^{\text{th}}$ cyclotomic field} the $N^{\text{th}}$ cyclotomic field, $\mathcal{O}_{N}\mathrel{\mathop:}= \mathbb{Z}[\xi_{N}]$\nomenclature{$\mathcal{O}_{N}$}{the ring of integers of $k_{N}$ i.e. $ \mathbb{Z}[\xi_{N}]$ } its ring of integers, with $\xi_{N}$\nomenclature{$\xi_{N}$}{a primitive $N^{\text{th}}$ root of unity} a primitive $N^{\text{th}}$ root of unity:\nomenclature{$\mathcal{MT}_{N,M}$}{the tannakian Mixed Tate subcategory of $\mathcal{MT}(k_{N})$ ramified in $M$}
\begin{framed}
$$\begin{array}{ll}
\boldsymbol{\mathcal{MT}_{N,M}} &  \mathrel{\mathop:}= \mathcal{MT} \left( \mathcal{O}_{N} \left[ \frac{1}{M}\right] \right).\\
\boldsymbol{\mathcal{MT}_{\Gamma_{N}}}, & \text{ with $\Gamma_{N}$ the $\mathbb{Q}$-sub vector space of } \left( \mathcal{O}\left[ \frac{1}{N} \right] \right) ^{\ast} \otimes \mathbb{Q}\\
&   \text{ generated by $\lbrace 1-\xi^{a}_{N}\rbrace_{0< a < N}$ (modulo torsion). }
\end{array}$$
\end{framed}
\noindent
Hence:
$$ \mathcal{MT} \left( \mathcal{O}_{N}  \right) \subsetneq \mathcal{MT}_{\Gamma_{N}}  \subset \mathcal{MT} \left( \mathcal{O}_{N} \left[ \frac{1}{N}\right] \right)$$
The second inclusion is an equality if and only if $N$ has all its prime factors inert\footnote{I.e. each prime $p$ dividing $N$, generates $(\mathbb{Z} /m \mathbb{Z})^{\ast}$, for $m$ such as $N=p^{v_{p}(N)} m$.\nomenclature{$v_{p}(\cdot)$}{$p$-adic valuation} It could occur only in the following cases: $N= p^{s}, 2p^{s}, 4p^{s}, p^{s}q^{k}$, with extra conditions in most of these cases such as: $2$ is a primitive root modulo $p^{s}$ etc.}, since:
\begin{equation}\label{eq:gamma}
\Gamma_{N}= \left\lbrace \begin{array}{ll}
\left( \mathcal{O}\left[ \frac{1}{p} \right] \right) ^{\ast} \otimes \mathbb{Q} & \text{ if } N = p^{r} \\
(\mathcal{O} ^{\ast} \otimes \mathbb{Q} )\oplus \left(  \oplus_{ p \text{ prime } \atop p\mid N} \langle p \rangle\otimes \mathbb{Q} \right)   &\text{ else .}
\end{array} \right. .
\end{equation}
The motivic cyclotomic MZVs lie in the subcategory $\mathcal{MT}_{\Gamma_{N}}$, as we will see more precisely in $\S 2.3$.\\
\\
\texttt{Notations:} We may sometimes drop the $M$ (or even $N$), to lighten the notations:\footnote{For instance, $\mathcal{MT}_{3}$ is the category $\mathcal{MT} \left( \mathcal{O}_{3} \left[  \frac{1}{3} \right]  \right) $.}:\nomenclature{$\mathcal{MT}_{N}$}{a tannakian Mixed Tate subcategory of $\mathcal{MT}(k_{N})$}\\
$$\mathcal{MT}_{N}\mathrel{\mathop:}= \left\lbrace \begin{array}{ll}
\mathcal{MT}_{N,N} & \text{ if } N=2,3,4,8\\
\mathcal{MT}_{6,1} & \text{ if } N=\mlq 6 \mrq. \\
\end{array} \right. $$

\subsection{Motivic periods}
Let $\mathcal{M}$ a tannakian category of mixed Tate motives. Its \textit{algebra of motivic periods} is defined as (cf. $\cite{D1}$, $\cite{Br6}$, and $\cite{Br4}$, $\S 2$):\nomenclature{$\mathcal{P}_{\mathcal{M}}^{\mathfrak{m}}$}{the algebra of motivic period of a tannakian category of MTM $\mathcal{M}$}
$$\boldsymbol{\mathcal{P}_{\mathcal{M}}^{\mathfrak{m}}}\mathrel{\mathop:}=\mathcal{O}(\text{Isom}^{\otimes}_{\mathcal{M}}(\omega, \omega_{B}))=\mathcal{O}(P_{B,\omega}).$$
\begin{framed}
A \textbf{\textit{motivic period}} denoted as a triplet  $\boldsymbol{\left[M,v,\sigma \right]^{\mathfrak{m}}}$,\nomenclature{$\left[M,v,\sigma \right]^{\mathfrak{m}}$}{a motivic period} element of $\mathcal{P}_{\mathcal{M}}^{\mathfrak{m}}$,  is constructed from a motive $M\in \text{ Ind } (\mathcal{M})$, and classes $v\in\omega(M)$, $\sigma\in\omega_{B}(M)^{\vee}$. It is a function $P_{B,\omega} \rightarrow \mathbb{A}^{1}$, which, on its rational points, is given by:
\begin{equation}\label{eq:mper}  \quad  P_{B,\omega} (\mathbb{Q}) \rightarrow \mathbb{Q}\text{  ,  } \quad  \alpha \mapsto \langle \alpha(v), \sigma\rangle .
\end{equation} 
Its \textit{period} is obtained by the evaluation on the complex point $\text{comp}_{B, dR}$:
\begin{equation}\label{eq:perm} 
\begin{array}{lll}
\mathcal{P}_{\mathcal{M}}^{\mathfrak{m}} & \rightarrow & \mathbb{C} \\
\left[M,v,\sigma \right]^{\mathfrak{m}} & \mapsto &  \langle \text{comp}_{B,dR} (v\otimes 1), \sigma \rangle .
\end{array}
\end{equation}
\end{framed}
\noindent
\texttt{Example}: The first example is the \textit{Lefschetz motivic period}: $\mathbb{L}^{\mathfrak{m}}\mathrel{\mathop:}=[H^{1}(\mathbb{G}_{m}), [\frac{dx}{x}], [\gamma_{0}]]^{\mathfrak{m}}$, period  of the Lefschetz motive $\mathbb{L}$; it can be seen as the \textit{motivic} $(2 i\pi)^{\mathfrak{m}}$; this notation appears below.\nomenclature{$\mathbb{L}$}{Lefschetz motive, $\mathbb{L}^{\mathfrak{m}}$ the Lefschetz motivic period}\\
\\
This construction can be generalized for any pair of fiber functors $\omega_{1}$, $\omega_{2}$ leading to:
\begin{center}
\textit{Motivic periods of type} $(\omega_{1},\omega_{2})$, which are in the following algebra of motivic periods: 
$$\boldsymbol{\mathcal{P}_{\mathcal{M}}^{\omega_{1},\omega_{2}}}\mathrel{\mathop:}= \mathcal{O}\left( P_{\omega_{1},\omega_{2}}\right) = \mathcal{O}\left( \text{Isom}^{\otimes}(\omega_{2}, \omega_{1})\right).$$
\end{center}
\textsc{Remarks}:
\begin{itemize}
\item[$\cdot$]  The groupoid structure (composition) on the isomorphisms of fiber functors on $\mathcal{M}$, by dualizing, leads to a coalgebroid structure on the spaces of motivic periods:
$$ \mathcal{P}_{\mathcal{M}}^{\omega_{1},\omega_{3}} \rightarrow \mathcal{P}_{\mathcal{M}}^{\omega_{1},\omega_{2}} \otimes \mathcal{P}_{\mathcal{M}}^{\omega_{2},\omega_{3}}.$$
\item[$\cdot$] Any structure carried by these fiber functors (weight grading on $\omega_{dR}$, complex conjugation on $\omega_{B}$, etc.) is transmitted to the corresponding ring of periods.
\end{itemize}
\texttt{Examples}: 
\begin{itemize}
\item[$\cdot$ ] For $(\omega,\omega_{B})$, it comes down to (our main interest) $\mathcal{P}_{\mathcal{M}}^{\mathfrak{m}}$ as defined in $(\ref{eq:mper})$. By the last remark, $\mathcal{P}_{\mathcal{M}}^{\mathfrak{m}}$ inherits a weight grading and we can define (cf. $\cite{Br4}$, $\S 2.6$):\nomenclature{$\mathcal{P}_{\mathcal{M}}^{\mathfrak{m},+}$}{the ring of geometric motivic periods of $\mathcal{M}$}
\begin{framed}
\begin{center}
$\boldsymbol{\mathcal{P}_{\mathcal{M}}^{\mathfrak{m},+}} \subset \mathcal{P}_{\mathcal{M}}^{\mathfrak{m}}$, the ring of \textit{geometric periods}, is generated by periods of motives with non-negative weights:  $\left\lbrace  \left[M,v,\sigma \right]^{\mathfrak{m}}\in  \mathcal{P}_{\mathcal{M}}^{\mathfrak{m}} \mid W_{-1} M=0 \right\rbrace $.
\end{center}
\end{framed}
\item[$\cdot$] The ring of periods of type $(\omega,\omega)$ is $\mathcal{P}_{\mathcal{M}}^{\omega}\mathrel{\mathop:}= \mathcal{O} \left( \text{Aut}^{\otimes}(\omega)\right)= \mathcal{O} \left(\mathcal{G}^{\mathcal{MT}}\right)$.\footnote{ In the case of a mixed Tate category over $\mathbb{Q}$, as $\mathcal{MT}(\mathbb{Z})$, this is equivalent to the \textit{De Rham periods} in $\mathcal{P}_{\mathcal{M}}^{\mathfrak{dR}}\mathrel{\mathop:}= \mathcal{O} \left( \text{Aut}^{\otimes}(\omega_{dR})\right)$, defined in $\cite{Br4}$; however, for other cyclotomic fields $k$ considered later ($N>2$), we have to consider the canonical fiber functor, since it is defined over $k$.} \\
\textit{Unipotent variants} of these periods are defined when restricting to the unipotent part $\mathcal{U}^{\mathcal{MT}}$ of $\mathcal{G}^{\mathcal{MT}}$, and appear below (in $\ref{eq:intitdr}$):\nomenclature{$\mathcal{P}_{\mathcal{M}}^{\mathfrak{a}}$}{the ring of unipotent periods}
$$\boldsymbol{\mathcal{P}_{\mathcal{M}}^{\mathfrak{a}}}\mathrel{\mathop:}=\mathcal{O} \left( \mathcal{U}^{\mathcal{MT}}\right)= \mathcal{A}^{\mathcal{MT}}, \quad \text{ the fundamental Hopf algebra}.$$
They correspond to the notion of framed objects in mixed Tate categories, cf. $\cite{Go1}$. By restriction, there is a map:
$$ \mathcal{P}_{\mathcal{M}}^{\omega} \rightarrow  \mathcal{P}_{\mathcal{M}}^{\mathfrak{a}}.$$

\end{itemize} 
 By the remark above, there is a coaction:
$$\boldsymbol{\Delta^{\mathfrak{m}, \omega}}:\mathcal{P}_{\mathcal{M}}^{\mathfrak{m}}  \rightarrow \mathcal{P}_{\mathcal{M}}^{\omega} \otimes \mathcal{P}_{\mathcal{M}}^{\mathfrak{m}}.$$
Moreover, composing this coaction by the augmentation map $\epsilon: \mathcal{P}_{\mathcal{M}}^{\mathfrak{m},+} \rightarrow (\mathcal{P}_{\mathcal{M}}^{\mathfrak{m},+})_{0} \cong \mathbb{Q}$, leads to the morphism (details in $\cite{Br4}$, $\S 2.6$):
\begin{equation}\label{eq:projpiam}
\boldsymbol{\pi_{\mathfrak{a},\mathfrak{m}}}: \quad \mathcal{P}_{\mathcal{M}}^{\mathfrak{m},+}  \rightarrow \mathcal{P}_{\mathcal{M}}^{\mathfrak{a}}, 
\end{equation}
which is, on periods of a motive $M$ such that $W_{-1} M=0$:  $\quad \left[M,v,\sigma \right]^{\mathfrak{m}}  \rightarrow \left[M,v,^{t}c(\sigma) \right]^{\mathfrak{a}},  $
$$ \text{ where } c \text{ is defined as the composition}: \quad M_{\omega} \twoheadrightarrow gr^{W}_{0}M_{\omega}= W_{0} M_{\omega} \xrightarrow{\text{comp}_{B,\omega}}   W_{0} M_{B}  \hookrightarrow M_{B} .$$
Bear in mind also the non-canonical isomorphisms, compatible with weight and coaction ($\cite{Br4}$, Corollary $2.11$) between those $\mathbb{Q}$ algebras:
\begin{equation} \label{eq:periodgeom}
 \mathcal{P}_{\mathcal{M}}^{\mathfrak{m}} \cong \mathcal{P}_{\mathcal{M}}^{\mathfrak{a}} \otimes_{\mathbb{Q}} \mathbb{Q} \left[ (\mathbb{L}^{\mathfrak{m}} )^{-1} ,\mathbb{L}^{\mathfrak{m}} \right], \quad \text{and} \quad \mathcal{P}_{\mathcal{M}}^{\mathfrak{m},+} \cong \mathcal{P}_{\mathcal{M}}^{\mathfrak{a}} \otimes_{\mathbb{Q}} \mathbb{Q} \left[ \mathbb{L}^{\mathfrak{m}} \right]. 
\end{equation}
In particular, $\pi_{\mathfrak{a},\mathfrak{m}}$ is obtained by sending $\mathbb{L}^{\mathfrak{m}}$ to $0$.\\
\\
In the case of a category of mixed Tate motive $\mathcal{M}$ defined over $\mathbb{Q}$, \footnote{As, in our concerns, $\mathcal{MT}_{N}$ above with $N=1,2$; in these exceptional (real) cases, we want to keep track of only even Tate twists.} the complex conjugation defines the \textit{real Frobenius} $\mathcal{F}_{\infty}: M_{B} \rightarrow M_{B}$, and induces an involution on motivic periods $\mathcal{F}_{\infty}: \mathcal{P}_{\mathcal{M}}^{\mathfrak{m}} \rightarrow\mathcal{P}_{\mathcal{M}}^{\mathfrak{m}}$. Furthermore, $\mathbb{L}^{\mathfrak{m}}$ is anti invariant by $\mathcal{F}_{\infty}$ (i.e.  $\mathcal{F}_{\infty} (\mathbb{L}^{\mathfrak{m}})=-\mathbb{L}^{\mathfrak{m}}$). Then, let us define:\nomenclature{$\mathcal{F}_{\infty}$}{real Frobenius}
\begin{framed}
\begin{center}
$\boldsymbol{\mathcal{P}_{\mathcal{M}, \mathbb{R} }^{\mathfrak{m},+}}$ the subset of $\mathcal{P}_{\mathcal{M}}^{\mathfrak{m},+}$ invariant under the real Frobenius $\mathcal{F}_{\infty}$, \text{ which, by }  $(\ref{eq:periodgeom})$ \text{ satisfies }:\nomenclature{$\mathcal{P}_{\mathcal{M}, \mathbb{R} }^{\mathfrak{m},+}$}{the ring of the Frobenius-invariant geometric periods}
\end{center}
\begin{equation}\label{eq:periodgeomr}
\mathcal{P}_{\mathcal{M}}^{\mathfrak{m},+}\cong  \mathcal{P}_{\mathcal{M}, \mathbb{R}}^{\mathfrak{m},+} \oplus  \mathcal{P}_{\mathcal{M}, \mathbb{R}}^{\mathfrak{m},+}. \mathbb{L}^{\mathfrak{m}} \quad \text{and} \quad  \mathcal{P}_{\mathcal{M}, \mathbb{R}}^{\mathfrak{m},+}\cong  \mathcal{P}_{\mathcal{M}}^{\mathfrak{a}} \otimes_{\mathbb{Q}} \mathbb{Q}\left[ (\mathbb{L}^{\mathfrak{m}})^{2} \right] .
\end{equation}
\end{framed}

\paragraph{Motivic Galois theory.}
The ring of motivic periods $\mathcal{P}_{\mathcal{M}}^{\mathfrak{m}} $ is a bitorsor under Tannaka groups $(\mathcal{G}^{\mathcal{MT}}, \mathcal{G}_{B})$. If Grothendieck conjecture holds, via the period isomorphism, there is therefore a (left) action of the motivic Galois group $\mathcal{G}^{\mathcal{MT}}$ on periods. \\
More precisely, for each period $p$ there would exist:
\begin{itemize}
\item[$(i)$] well defined conjugates: elements in the orbit of $\mathcal{G}^{\mathcal{MT}}(\mathbb{Q})$. 
\item[$(ii)$] an algebraic group over $\mathbb{Q}$, $\mathcal{G}_{p}= \mathcal{G}^{\mathcal{MT}} \diagup Stab(p)$, where $Stab(p)$ is the stabilizer of $p$; $\mathcal{G}_{p}$, the Galois group of $p$, transitively permutes the conjugates.
\end{itemize}
\texttt{Examples}:
\begin{itemize}
\item[$\cdot$] For $\pi$ for instance, the Galois group corresponds to $\mathbb{G}_{m}$. Conjugates of $\pi$ are in fact $\mathbb{Q}^{\ast} \pi$, and the associated motive would be the Lefschetz motive $\mathbb{L}$, motive of $\mathbb{G}_{m}=\mathbb{P}^{1}\diagdown \lbrace0,\infty\rbrace$, as seen above.
\item[$\cdot$] For $\log t$, $t>0$, $t\in\mathbb{Q} \diagdown \lbrace -1, 0, 1\rbrace$, this is a period of the Kummer motive in degree $1$:\footnote{Remark the short exact sequence: $ 0 \rightarrow \mathbb{Q}(1) \rightarrow H_{1}(X, \lbrace 1,t \rbrace) \rightarrow \mathbb{Q}(0)  \rightarrow 0 .$}
 $$K_{t}\mathrel{\mathop:}=M_{gm}(X, \lbrace 1,t \rbrace)\in \text{Ext}^{1}_{\mathcal{MT}(\mathbb{Q})}(\mathbb{Q}(0),\mathbb{Q}(1)) \text{ , where } X\mathrel{\mathop:}=\mathbb{P}^{1}\diagdown \lbrace 0, \infty \rbrace.$$ 
 Since a basis of $H^{B}_{1}(X, \lbrace 1,t \rbrace)$ is $[\gamma_{0}]$, $[\gamma_{1,t}]$ with $\gamma_{1,t}$ the straight path from $1$ to $t$, and a basis of $H^{1}_{dR}(X, \lbrace 1,t \rbrace) $ is $[dx], \left[ \frac{dx}{x} \right] $, the period matrix is:
 $$ \left( \begin{array}{ll}
\mathbb{Q} & 0\\
\mathbb{Q} \log(t) & 2i\pi \mathbb{Q} \\
 \end{array} \right). $$
The conjugates of $\log t$ are $\mathbb{Q}^{\ast}\log t+\mathbb{Q}$, and its Galois group is $\mathbb{Q}^{\ast}  \ltimes \mathbb{Q}$. 
\item[$\cdot$] Similarly for zeta values $\zeta(n)$, $n$ odd in $\mathbb{N}^{\ast}\diagdown\lbrace 1 \rbrace$ which are periods of a mixed Tate motive over $\mathbb{Z}$ (cf. below): its conjugates are $\mathbb{Q}^{\ast}\zeta(n)+\mathbb{Q}$, and its Galois group is $\mathbb{Q}^{\ast}  \ltimes \mathbb{Q}$. Grothendieck's conjecture implies that $\pi,\zeta(3), \zeta(5), \ldots$ are algebraically independent.\\
More precisely, $\zeta(n)$ is a period of $E_{n}\in \mathcal{MT}(\mathbb{Q})$, where:
$$ 0\rightarrow \mathbb{Q}(n) \rightarrow E_{n} \rightarrow \mathbb{Q}(0) \rightarrow 0.$$
Notice that for even $n$, by Borel's result, $\text{Ext}_{\mathcal{MT}(\mathbb{Q})}^{1}(\mathbb{Q}(0),\mathbb{Q}(n))=0$, which implies $E_{n}= \mathbb{Q}(0)\oplus \mathbb{Q}(n)$, and hence $\zeta(n)\in (2i\pi)^{n}\mathbb{Q}$.
\item[$\cdot$] More generally, multiple zeta values at roots of unity $\mu_{N}$ occur as periods of mixed Tate motives over $\mathbb{Z}[\xi_{N}]\left[ \frac{1}{N}\right] $, $\xi_{N}$ primitive $N^{\text{th}}$ root of unity. The motivic Galois group associated to the algebra $\mathcal{H}^{N}$  generated by MMZV$_{\mu_{N}}$ is conjectured to be a quotient of the motivic Galois group $\mathcal{G}^{\mathcal{MT}_{N}}$, equal for some values of $N$: $N=1,2,3,4,8$ for instance, as seen below. We expect MZV to be simple examples in the conjectural Galois theory for transcendental numbers.
\end{itemize}
\textsc{Remark}: By K-theory results above, non-zero Ext groups for $\mathcal{MT}(\mathbb{Q})$  are:
$$\text{Ext}^{1}_{\mathcal{MT}(\mathbb{Q})} (\mathbb{Q}(0), \mathbb{Q}(n))\cong \left\lbrace  \begin{array}{ll}
\mathbb{Q}^{\ast}\otimes_{\mathbb{Z}} \mathbb{Q} \cong \oplus_{p \text{ prime} } \mathbb{Q} & \text{ if } n=1\\
\mathbb{Q} & \text{ if } n \text{ odd} >1.\\
\end{array}\right. $$
Generators of these extension groups correspond exactly to periods $\log(p)$, $p$ prime in degree 1 and $\zeta(odd)$ in degree odd $>1$, which are periods of $\mathcal{MT}(\mathbb{Q})$.

\section{Motivic fundamental group}

\paragraph{Prounipotent completion.}

Let $\Pi$ be the group freely generated by $\gamma_{0}, \ldots, \gamma_{N}$. The completed Hopf algebra $\widehat{\Pi}$ is defined by:
$$\widehat{\Pi}\mathrel{\mathop:}= \varprojlim \mathbb{Q}[\Pi] \diagup I^{n} , \quad \text{ where }I\mathrel{\mathop:}= \langle \gamma-1 , \gamma\in \Pi \rangle \text{ is the augmentation ideal} .$$
Equipped with the completed coproduct $\Delta$ such that the elements of $\Pi$ are primitive, it is isomorphic to the Hopf algebra of non commutative formal series:\footnote{Well defined inverse since the log converges in $\widehat{\Pi}$; $exp(e_{i})$ are then group-like for $\Delta$. Notice that the Lie Algebra of the group of group-like elements is formed by the primitive elements and conversely; besides, the universal enveloping algebra of primitive elements is the whole Hopf algebra.} 
$$\widehat{\Pi} \xrightarrow[\gamma_{i}\mapsto \exp(e_{i}) ]{\sim} \mathbb{Q} \langle\langle e_{0}, \ldots, e_{N}\rangle\rangle. $$
\\
The \textit{prounipotent completion} of  $\Pi$ is an affine group scheme $\Pi^{un}$:\nomenclature{$\Pi^{un}$}{prounipotent completion of $\Pi$}
\begin{equation}\label{eq:prounipcompletion}
\boldsymbol{\Pi^{un}}(R)=\lbrace x\in \widehat{\Pi} \widehat{\otimes} R \mid \Delta x=x\otimes x\rbrace \cong \lbrace S\in R\langle\langle e_{0}, \ldots, e_{N} \rangle\rangle^{\times}\mid \Delta S=S\otimes S, \epsilon(S)=1\rbrace ,
\end{equation}
i.e. the set of non-commutative formal series with $N+1$ generators which are group-like for the completed coproduct for which $e_{i}$ are primitive. \\
It is dual to the shuffle $\shuffle$ relation between the coefficients of the series $S$\footnote{It is a straightforward verification that the relation $\Delta S= S\otimes S$ implies the shuffle $\shuffle$ relation between the coefficients of S.}. Its affine ring of regular function is the Hopf algebra (filtered, connected) for the shuffle product, and deconcatenation coproduct:
\begin{equation}
\boldsymbol{\mathcal{O}(\Pi^{un})}= \varinjlim \left(  \mathbb{Q}[\Pi] \diagup I^{n+1} \right) ^{\vee} \cong \mathbb{Q} \left\langle e^{0}, \ldots, e^{N} \right\rangle .
\end{equation}
$$\boldsymbol{\mathcal{O}(\Pi^{\mathfrak{m}}(X_{N},x,y))}\in\mathcal{MT}(k_{N}).$$

\paragraph{Motivic Fundamental pro-unipotent groupoid.}\footnote{ \say{\textit{Esquisse d'un programme}}$\cite{Gr}$, by Grothendieck, vaguely suggests to study the action of the absolute Galois group of the rational numbers $ \text{Gal}(\overline{\mathbb{Q}} \diagup \mathbb{Q} ) $ on the \'{e}tale fundamental group $\pi_{1}^{et}(\mathcal{M}_{g,n})$, where $\mathcal{M}_{g,n}$ is the moduli space of curves of genus $g$ and $n$ ordered marked points. In the case of $\mathcal{M}_{0,4}= \mathbb{P}^{1}\diagdown \lbrace 0, 1, \infty \rbrace$, Deligne proposed to look instead (analogous) at the pro-unipotent fundamental group $\pi_{1}^{un}(\mathbb{P}^{1}\diagdown \lbrace 0, 1, \infty \rbrace)$. This motivates also the study of multiple zeta values, which arose as periods of this fundamental group. } The previous construction can be applied to $\pi_{1}(X,x)$, resp. $\pi_{1}(X,x,y)$, if assumed free, the fundamental group resp. groupoid of $X$ with base point $x$, resp. $x,y$, rational points of $X$, an algebraic variety over $\mathbb{Q}$; the groupoid $\pi_{1}(X,x,y)$, is a bitorsor formed by the homotopy classes of path from $x$ to $y$. \\

From now, let's turn to the case $X_{N}\mathrel{\mathop:}=\mathbb{P}^{1}\diagdown \lbrace 0, \infty, \mu_{N} \rbrace$. There, the group $\pi_{1}(X_{N}, x)$ is freely generated by $\gamma_{0}$ and $(\gamma_{\eta})_{\eta\in\mu_{N}}$, the loops around $0$ resp. $\eta\in\mu_{N}$.\footnote{Beware, since $\pi_{1}(X,x,y)$ is not a group, we have to pass first to the dual in the previous construction:
$$ \pi^{un}_{1}(X,x,y)\mathrel{\mathop:}= \text{Spec} \left( \varinjlim \left(  \mathbb{Q}[\pi_{1}] \diagup I^{n+1} \right) ^{\vee} \right) .$$}\\ 
Chen's theorem implies here that we have a perfect pairing: 
\begin{equation}\label{eq:chenpairing}
\mathbb{C}[\pi_{1} (X_{N},x,y)] \diagup I^{n+1} \otimes \mathbb{C}\langle \omega_{0}, (\omega_{\eta})_{\eta\in\mu_{N}} \rangle_{\leq n}  \rightarrow   \mathbb{C}  .
\end{equation}
In order to define the motivic $\pi^{un}_{1}$, let us introduce (cf. $\cite{Go2}$, Theorem $4.1$):
\begin{equation}\label{eq:y(n)}
Y^{(n)}\mathrel{\mathop:}=\cup_{i} Y_{i}, \text{ where } \quad  \begin{array}{ll}
Y_{0}\mathrel{\mathop:}= \lbrace x\rbrace \times X^{n-1}& \\
Y_{i}\mathrel{\mathop:}= X^{i-1}\times \Delta \times X^{n-i-1}, & \Delta \subset X \times X \text{ the diagonal} \\
 Y_{n}\mathrel{\mathop:}=   X^{n-1} \times \lbrace y\rbrace &
\end{array}.
\end{equation}
Then, by Beilinson theorem ($\cite{Go2}$, Theorem $4.1$), coming from $\gamma \mapsto [\gamma(\Delta_{n})]$:
$$H_{k}(X^{n},Y^{(n)}) \cong \left\lbrace \begin{array}{ll}
 \mathbb{Q}[\pi_{1}(X,x,y)] \diagup I^{n+1}& \text{ for } k=n \\
0  & \text{ for } k<n
\end{array} \right.  .$$
The left side defines a mixed Tate motive and:
\begin{equation} \label{eq:opiunvarinjlim}
 \mathcal{O}(\pi_{1}^{un}(X,x,y)) \xrightarrow{\sim} \varinjlim_{n} H^{n}(X^{n}, Y^{(n)}).
\end{equation}
By $(\ref{eq:opiunvarinjlim})$, $\mathcal{O}\left( \pi_{1}^{un}(X,x,y)\right)$ defines an Ind object \footnote{Ind objects of a category $\mathcal{C}$ are inductive filtered limit of objects in $\mathcal{C}$.} in the category of Mixed Tate Motives over $k$, since $Y_{I}^{(n)}\mathrel{\mathop:}=\cap Y_{i}^{(n)}$ is the complement of hyperplanes, hence of type Tate:
\begin{framed}
\begin{equation}\label{eq:pi1unTate0}
\mathcal{O}\left( \pi_{1}^{un}(\mathbb{P}^{1}\diagdown \lbrace 0, \infty, \mu_{N} \rbrace,x,y)\right)  \in  \text{Ind } \mathcal{MT}(k).
\end{equation}
\end{framed}
We denote it $\boldsymbol{\mathcal{O}\left( \pi_{1}^{\mathfrak{m}}(X,x,y)\right) }$, and $\mathcal{O}\left( \pi_{1}^{\omega}(X,x,y)\right)$, $\mathcal{O}\left( \pi_{1}^{dR}(X,x,y)\right)$, $\mathcal{O}\left( \pi_{1}^{B}(X,x,y)\right)$ its realizations, 
resp. $\boldsymbol{\pi_{1}^{\mathfrak{m}}(X)}$ for the corresponding $\mathcal{MT}(k)$-groupoid scheme, called the \textit{\textbf{motivic fundamental groupoid}}, with the composition of path. \\
\\
\textsc{Remark: } The pairing $(\ref{eq:chenpairing})$ can be thought in terms of a perfect pairing between homology and de Rham cohomology, since (Wojtkowiak $\cite{Wo2}$):
$$H_{dR}^{n}(X^{n},Y^{(n)}) \cong k_{N}\langle \omega_{0}, \ldots, \omega_{N} \rangle_{\leq n}.$$ 
\\
The construction of the prounipotent completion and then the motivic fundamental groupoid would still work for the case of \textit{tangential base points }, cf. $\cite{DG}$, $\S 3$\footnote{I.e. here non-zero tangent vectors in a point of $\lbrace 0, \mu_{N}, \infty\rbrace $ are seen as \say{base points at infinite}. Deligne explained how to replace ordinary base points with tangential base points.}. Let us denote $\lambda_{N}$ the straight path between $0$ and $\xi_{N}$, a primitive root of unity. In the following, we will particularly consider the tangential base points $\overrightarrow{0\xi_{N}}\mathrel{\mathop:}=(\overrightarrow{1}_{0}, \overrightarrow{-1}_{\xi_{N}})$, defined as $(\lambda_{N}'(0), -\lambda_{N}'(1))$; but similarly for each $x,y\in \mu_{N}\cup \lbrace 0, \infty\rbrace $, such that $_{x}\lambda_{y}$ the straight path between $x,y$ in in $\mathbb{P}^{1} (\mathbb{C} \diagdown \lbrace 0, \mu_{N}, \infty \rbrace)$, we associate the tangential base points $\overrightarrow{xy}\mathrel{\mathop:}= (_{x}\lambda_{y}'(0), - _{x}\lambda_{y}'(1))$\footnote{In order that the path does not pass by $0$, we have to exclude the case where $x=-y$ if $N$ even.}. Since the motivic torsor of path associated to such tangential basepoints depends only on $x,y$ (cf. $\cite{DG}$, $\S 5$) we will denote it $_{x}\Pi^{\mathfrak{m}}_{y}$. This leads to a groupoid structure via $_{x}\Pi^{\mathfrak{m}}_{y} \times _{y}\Pi^{\mathfrak{m}}_{z} \rightarrow _{x}\Pi^{\mathfrak{m}}_{z}$: cf. Figure $\ref{fig:Pi}$ and $\cite{DG}$.\\
In fact, by Goncharov's theorem, in case of these tangential base points, the motivic torsor of path corresponding has good reduction outside N and (cf. $\cite{DG}, \S 4.11$):
\begin{equation}\label{eq:pi1unTate}
\mathcal{O}\left( _{x}\Pi^{\mathfrak{m}}_{y} \right)  \in  \text{ Ind }  \mathcal{MT}_{\Gamma_{N}} \subset \text{ Ind }  \mathcal{MT}\left( \mathcal{O}_{N}\left[ \frac{1}{N} \right] \right) .
\end{equation}
The case of ordinary base points, lying in $\text{ Ind }  \mathcal{MT}(k)$, has no such good reduction.\\
In summary, from now, we consider, for $x,y\in \mu_{N}\cup\lbrace 0\rbrace$\footnote{$_{x}\Pi^{\mathfrak{m}}_{y}$ is a bitorsor under $(_{x}\Pi^{\mathfrak{m}}_{x}, _{y}\Pi^{\mathfrak{m}}_{y})$.}:\nomenclature{$_{x}\Pi^{\mathfrak{m}}_{y}$}{motivic bitorsor of path, and $_{x}\Pi_{y}$, $_{x}\Pi_{y}^{dR}$, $_{x}\Pi_{y}^{B}$, its  $\omega$, resp. de Rham resp. Betti realizations}
\begin{framed}
\textit{The motivic bitorsors of path} $_{x}\Pi^{\mathfrak{m}}_{y}\mathrel{\mathop:}=\pi_{1}^{\mathfrak{m}} (X_{N}, \overrightarrow{xy})$ on $X_{N}\mathrel{\mathop:}=\mathbb{P}^{1} -\left\{0,\mu_{N},\infty\right\}$ with tangential basepoints given by $\overrightarrow{xy}\mathrel{\mathop:}= (\lambda'(0), -\lambda'(1))$ where $\lambda$ is the straight path from $x$ to $y$, $x\neq -y$.\\
\end{framed}
Let us denote $_{x}\Pi_{y}\mathrel{\mathop:}=_{x}\Pi^{\omega}_{y}$, resp. $_{x}\Pi_{y}^{dR}$, $_{x}\Pi_{y}^{B}$ its $\omega$, resp. de Rham resp. Betti realizations. In particular, Chen's theorem implies that we have an isomorphism:
$$_{0}\Pi_{1}^{B}\otimes\mathbb{C}\xrightarrow{\sim} {} _{0}\Pi_{1}\otimes \mathbb{C}.$$\\
Therefore, the motivic fundamental group above boils down to:
\begin{itemize}
\item[$(i)$] The affine group schemes $_{x}\Pi_{y}^{B}$, $x,y\in\mu_{N}\cup \lbrace0, \infty\rbrace $, with a groupoid structure. The Betti fundamental groupoid is the pro-unipotent
completion of the ordinary topological fundamental groupoid, i.e. corresponds to $\pi_{1}^{un}(X,x,y)$ above.
\item[$(ii)$]  $\Pi(X)=\pi^{\omega}_{1}(X)$, the affine group scheme over $\mathbb{Q}$. It does not depend on $x,y$ since the existence of a canonical de Rham path between x and y implies a canonical isomorphism $\Pi(X)\cong _{x}\Pi(X)_{y}$; however, the action of the motivic Galois group $\mathcal{G}$ is sensitive to the tangential base points $x,y$.
\item[$(iii)$] a canonical comparison isomorphism of schemes over $\mathbb{C}$, $\text{comp}_{B,\omega}$.
\end{itemize}
 
\begin{figure}[H]
\centering
\includegraphics[]{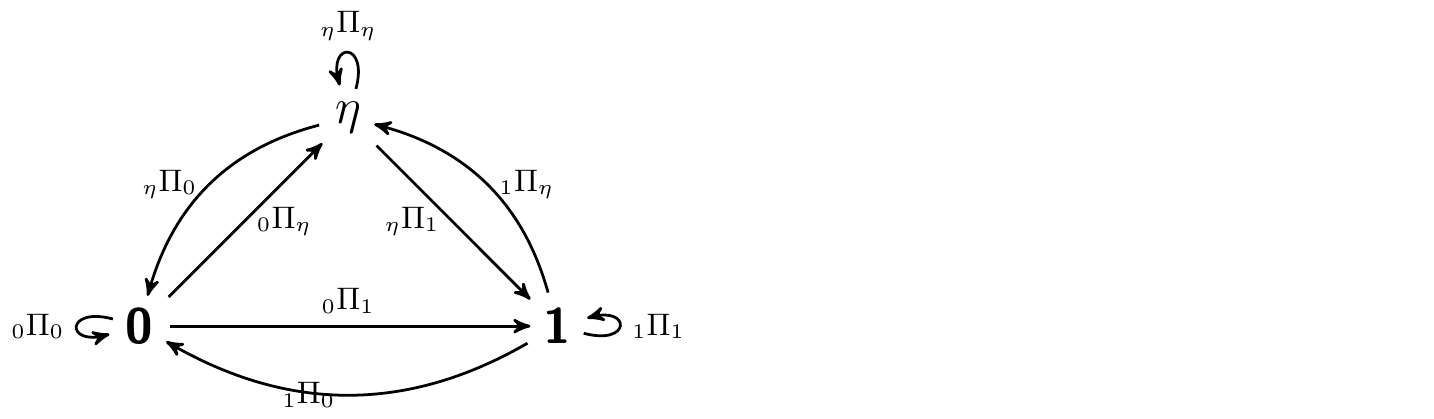}

\caption{Part of the Fundamental groupoid $\Pi$.\\
This picture however does not represent accurately the tangential base points.} \label{fig:Pi}
\end{figure}

Moreover, the dihedral group\footnote{Symmetry group of a regular polygon with $N$ sides.} $Di_{N}= \mathbb{Z}\diagup 2 \mathbb{Z} \ltimes \mu_{N}$\nomenclature{$Di_{N}$}{dihedral group of order $2n$} acts on $X_{N}=\mathbb{P}^{1}\diagdown \lbrace 0, \mu_{N},\infty\rbrace$\nomenclature{$X_{N}$}{defined as $\mathbb{P}^{1}\diagdown \lbrace 0, \mu_{N},\infty\rbrace$}: the group with two elements corresponding to the action $x \mapsto x^{-1}$ and the cyclic group $\mu_{N}$ acting by $x\mapsto \eta x$. Notice that for $N=1,2,4$, the group of projective transformations $X_{N}\rightarrow X_{N}$ is larger than $Di_{N}$, because of special symmetries, and detailed in $A.3$. \footnote{Each homography $\phi$ defines isomorphisms: 
$$\begin{array}{lll}
 _{a}\Pi_{b} & \xrightarrow[\sim]{\phi}& _{\phi(a)}\Pi_{\phi(b)} \\
f(e_{0}, e_{1}, \ldots, e_{n}) &\mapsto &f(e_{\phi(0)}, e_{\phi(1)}, \ldots, e_{\phi(n)})
\end{array} \text{ and, passing to the dual }  \mathcal{O}(_{\phi(a)}\Pi_{\phi(b)}) \xrightarrow[\sim]{\phi^{\vee}} \mathcal{O}( _{a}\Pi_{b}) .$$} \\ The dihedral group $Di_{N}$ acts then on the motivic fundamental groupoid $\pi^{\mathfrak{m}}_{1}(X,x,y)$, $x,y \in \lbrace 0 \rbrace \cup \mu_{N}$ by permuting the tangential base points (and its action is respected by the motivic Galois group): 
$$\text{For } \quad \sigma\in Di_{N}, \quad  _{x}\Pi_{y} \rightarrow _{\sigma.x}\Pi_{\sigma.y} $$

The group scheme $\mathcal{V}$ of automorphisms on these groupoids $_{x}\Pi_{y}$, respecting their structure, i.e.:
\begin{itemize}
\item[$\cdot$] groupoid structure, i.e. the compositions $_{x}\Pi_{y}\times _{y}\Pi_{z} \rightarrow _{x}\Pi_{z}$,
\item[$\cdot$] $\mu_{N}$-equivariance as above,
\item[$\cdot$] inertia: the action fixes $\exp(e_{x})\in _{x}\Pi_{x}(\mathbb{Q})$,
\end{itemize}
is isomorphic to (cf. $\cite{DG}$, $\S 5$ for the detailed version):
\begin{equation}\label{eq:gpaut}
\begin{array}{ll}
\mathcal{V}\cong _{0}\Pi_{x} \\
a\mapsto a\cdot _{0}1_{x}
\end{array}.
\end{equation}
In particular, the \textit{Ihara action} defined in  $(\ref{eq:iharaaction})$ corresponds via this identification to the composition law for these automorphisms, and then can be computed explicitly. Its dual would be the combinatorial coaction $\Delta$ used through all this work.\\
\\
In consequence of these equivariances, we can restrict our attention to:
\begin{framed}
$$_{0}\Pi^{\mathfrak{m}}_{ \xi_{N}}\mathrel{\mathop:}=\pi_{1}^{\mathfrak{m}}(X_{N}, \overrightarrow{0\xi_{N}} ) \text{ or equivalently  at }  _{0}\Pi^{\mathfrak{m}}_{1}.$$
\end{framed}
\noindent
Keep in mind, for the following, that $_{0}\Pi_{1}$ is the functor:\nomenclature{$R\langle X \rangle$ resp. $R\langle\langle X \rangle\rangle$}{the ring of non commutative polynomials, resp. of non commutative formal series in elements of X}
\begin{framed}
\begin{equation}\label{eq:pi}_{0}\Pi_{1}: R \text{ a } \mathbb{Q}-\text{algebra } \mapsto \left\{S\in R\langle\langle e_{0}, (e_{\eta})_{\eta\in\mu_{N}}\rangle\rangle^{\times} | \Delta S= S\otimes S  \text{ and } \epsilon(S)= 1 \right\} ,\end{equation}
whose affine ring of regular functions is the graded (Hopf) algebra for the shuffle product:
\begin{equation}\label{eq:opi}
\mathcal{O}(_{0}\Pi_{1})\cong \mathbb{Q} \left\langle e^{0}, (e^{\eta})_{\eta\in\mu_{N}} \right\rangle.
\end{equation}
\end{framed}
\noindent
The Lie algebra of $_{0}\Pi_{1}(R)$ would naturally be the primitive series ($\Delta S= 1 \otimes S+ S\otimes 1$). \\
\\
Let us denote $dch_{0,1}^{B}=_{0}1^{B}_{1}$,\nomenclature{$dch_{0,1}^{B}$}{the image of the straight path} the image of the straight path (\textit{droit chemin}) in $_{0}\Pi_{1}^{B}(\mathbb{Q})$, and $dch_{0,1}^{dR}$  or $\Phi_{KZ_{N}}$ the corresponding element in $_{0}\Pi_{1}(\mathbb{C})$ via the Betti-De Rham comparison isomorphism:
\begin{equation}\label{eq:kz}
 \boldsymbol{\Phi_{KZ_{N}}}\mathrel{\mathop:}= dch_{0,1}^{dR}\mathrel{\mathop:}= \text{comp}_{dR,B}(_{0}1^{B}_{1})= \sum_{W\in \lbrace e_{0}, (e_{\eta})_{\eta\in\mu_{N}}  \rbrace^{\times}} \zeta_{\shuffle}(w) w \quad \in \mathbb{C} \langle\langle e_{0}, (e_{\eta})_{\eta\in\mu_{N}} \rangle\rangle ,
\end{equation}
where the correspondence between MZV and words in $e_{0},e_{\eta}$ is similar to the iterated integral representation $(\ref{eq:reprinteg})$, with $\eta_{i}$. It is known as the \textit{Drinfeld associator} and arises also from the monodromy of the famous Knizhnik$-$Zamolodchikov differential equation.\footnote{Indeed, for $N=1$, Drinfeld associator is equal to $G_{1}^{-1}G_{0}$, where $G_{0},G_{1}$ are solutions, with certain asymptotic behavior at $0$ and $1$ of the Knizhnik$-$Zamolodchikov differential equation:$$ \frac{d}{dz}G(z)= \left(\frac{e_{0}}{z}+ \frac{e_{1}}{1-z} \right) G(z) .$$}

\paragraph{Category generated by $\boldsymbol{\pi_{1}^{\mathfrak{m}}}$. }
Denote by:\nomenclature{$\mathcal{MT}'_{N}$}{the full Tannakian subcategory of $\mathcal{MT}_{N}$ generated by the fundamental groupoid}
\begin{framed}
  $\boldsymbol{\mathcal{MT}'_{N}}$ the full Tannakian subcategory of $\mathcal{MT}_{N}$ generated by the fundamental groupoid,
 \end{framed} 
(i.e. generated by $\mathcal{O}(\pi_{1}^{\mathfrak{m}} (X_{N},\overrightarrow{01}))$ by sub-objects, quotients, $\otimes$, $\oplus$, duals) and let:\nomenclature{$\mathcal{G}^{N}$}{the motivic Galois group of $\boldsymbol{\mathcal{MT}'_{N}}$ }\nomenclature{$\mathcal{A}^{N}$}{the fundamental Hopf algebra of $\boldsymbol{\mathcal{MT}'_{N}}$ }\nomenclature{$\mathcal{L}^{N}$}{the motivic coalgebra associated to $\boldsymbol{\mathcal{MT}'_{N}}$ }
\begin{itemize}
\item[$\cdot$] $\mathcal{G}^{N}=\mathbb{G}_{m} \ltimes \mathcal{U}^{N} $ its motivic \textit{Galois group} defined over $\mathbb{Q}$,
\item[$\cdot$] $\mathcal{A}^{N}=\mathcal{O}(\mathcal{U}^{N})$ its \textit{fundamental Hopf algebra},
\item[$\cdot$] $\mathcal{L}^{N}\mathrel{\mathop:}= \mathcal{A}^{N}_{>0} / \mathcal{A}^{N}_{>0} \cdot\mathcal{A}^{N}_{>0}$ the Lie \textit{coalgebra of indecomposable elements}.
\end{itemize}
\texttt{Nota Bene}: $\mathcal{U}^{N}$ is the quotient of $\mathcal{U}^{\mathcal{MT}} $ by the kernel of the action on $_{0}\Pi_{1}$: i.e. $\mathcal{U}^{N}$ acts faithfully on $_{0}\Pi_{1}$.\nomenclature{$\mathcal{U}^{N}$}{the motivic prounipotent group associated to $\boldsymbol{\mathcal{MT}'_{N}}$ }\\
\\
\textsc{Remark:} In the case of $N=1$ (by F. Brown in \cite{Br2}), or $N=2,3,4,\mlq 6 \mrq,8 $ (by P. Deligne, in \cite{De}, proven in a dual point of view in Chapter $5$), these categories $\mathcal{MT}'_{N}$ and $\mathcal{MT}(\mathcal{O}_{N}\left[ \frac{1}{N} \right] )$ are equal. More precisely, for $\xi_{N}\in\mu_{N}$ a fixed primitive root, the following motivic torsors of path are sufficient to generate the category:
\begin{description}
\item[$\boldsymbol{N=2,3,4}$:] $\Pi^{\mathfrak{m}} (\mathbb{P}^{1} \diagdown \lbrace 0, 1, \infty \rbrace, \overrightarrow{0 \xi_{N}})$ generates $\mathcal{MT}(\mathcal{O}_{N}\left[ \frac{1}{N} \right] )$.
\item[$\boldsymbol{N=\mlq 6\mrq}$:]\footnote{The quotation marks around $6$ underlines that we consider the unramified category in this case.} $\Pi^{\mathfrak{m}} (\mathbb{P}^{1} \diagdown \lbrace 0, 1, \infty \rbrace, \overrightarrow{0 \xi_{6}})$  generates $\mathcal{MT}(\mathcal{O}_{6})$.
\item[$\boldsymbol{N=8}$:] $\Pi^{\mathfrak{m}} (\mathbb{P}^{1} \diagdown \lbrace 0, \pm 1, \infty \rbrace, \overrightarrow{0 \xi_{8}})$  generates $\mathcal{MT}(\mathcal{O}_{8}\left[ \frac{1}{2}\right] )$.\\
\end{description} 
However, if $N$ has a prime factor which is non inert, the motivic fundamental group is in the proper subcategory $\mathcal{MT}_{\Gamma_{N}}$ and hence can not generate $\mathcal{MT}(\mathcal{O}_{N}\left[ \frac{1}{N}\right] )$.

\section{Motivic Iterated Integrals}

Taking from now $\mathcal{M}=\mathcal{MT}'_{N}$, $M=\mathcal{O}(\pi^{\mathfrak{m}}_{1}(\mathbb{P}^{1}-\lbrace 0,\mu_{N},\infty\rbrace ,\overrightarrow{xy} ))$, the definition of motivic periods $(\ref{eq:mper})$ leads to motivic iterated integrals relative to $\mu_{N}$. Indeed:\nomenclature{$I^{\mathfrak{m}}(x;w;y)$}{motivic iterated integral}
\begin{framed}\label{mii}
A \textit{\textbf{motivic iterated integral}} is the triplet $I^{\mathfrak{m}}(x;w;y)\mathrel{\mathop:}= \left[\mathcal{O} \left( \Pi^{\mathfrak{m}} \left( X_{N}, \overrightarrow{xy}\right) \right) ,w,_{x}dch_{y}^{B}\right]^{\mathfrak{m}}$ where $w\in \omega(M)$, $_{x}dch_{y}^{B}$ is the image of the straight path from $x$ to $y$ in $\omega_{B}(M)^{\vee}$ and whose period is:
\begin{equation}\label{eq:peri} \text{per}(I^{\mathfrak{m}}(x;w;y))= I(x;w;y) =\int_{x}^{y}w= \langle \text{comp}_{B,dR}(w\otimes 1),_{x}dch_{y}^{B} \rangle \in\mathbb{C}.
\end{equation}
\end{framed}
\noindent
\\
\\
\textsc{Remarks: } 
\begin{itemize}
\item[$\cdot$] There, $w\in \omega(\mathcal{O}(_{x}\Pi^{\mathfrak{m}}_{y}))\cong \mathbb{Q}  \left\langle  \omega_{0}, (\omega_{\eta})_{\eta\in\mu_{N}}  \right\rangle $ where $\omega_{\eta}\mathrel{\mathop:}= \frac{dt}{t-\eta}$. Similarly to $\ref{eq:iterinteg}$, let:
\begin{equation}\label{eq:iterintegw}
I^{\mathfrak{m}} (a_{0}; a_{1}, \ldots, a_{n}; a_{n+1})\mathrel{\mathop:}= I^{\mathfrak{m}} (a_{0}; \omega_{\boldsymbol{a}}; a_{n+1}), \quad \text{ where }  \omega_{\boldsymbol{a}}\mathrel{\mathop:}=\omega_{a_{1}} \cdots \omega_{a_{n}}, \text{ for } a_{i}\in \lbrace 0\rbrace \cup \mu_{N} 
\end{equation}
\item[$\cdot$] The Betti realization functor $\omega_{B}$ depends on the embedding $\sigma: k \hookrightarrow \mathbb{C}$. Here, by choosing a root of unity, we fixed the embedding $\sigma$.
\end{itemize}
For $\mathcal{M}$ a category of Mixed Tate Motives among $\mathcal{MT}_{N},  \mathcal{MT}_{\Gamma_{N}}$ resp. $\mathcal{MT}'_{N}$, let introduce the graded $\mathcal{A}^{\mathcal{M}}$-comodule, with trivial coaction on $\mathbb{L}^{\mathfrak{m}}$ (degree $1$): 
 \begin{equation}\label{eq:hn}
 \boldsymbol{\mathcal{H}^{\mathcal{M}}} \mathrel{\mathop:}= \mathcal{A}^{\mathcal{M}} \otimes \left\{
\begin{array}{ll} 
\mathbb{Q}\left[  (\mathbb{L}^{\mathfrak{m}})^{2} \right] & \text{ for } N=1,2  \\
\mathbb{Q}\left[ \mathbb{L}^{\mathfrak{m}} \right] & \text{ for } N>2 .
\end{array}
\right. \subset \mathcal{O}(\mathcal{G}^{\mathcal{M}})=  \mathcal{A}^{\mathcal{M}}\otimes  \mathbb{Q}[\mathbb{L}^{\mathfrak{m}}, (\mathbb{L}^{\mathfrak{m}})^{-1}].
\end{equation}
\texttt{Nota Bene}: For $N>2$, it corresponds to the geometric motivic periods, $\mathcal{P}_{\mathcal{M}}^{\mathfrak{m},+}$ whereas for $N=1,2$, it is the subset $\mathcal{P}_{\mathcal{M},\mathbb{R}}^{\mathfrak{m},+}$ invariant by the real Frobenius; cf. $(\ref{eq:periodgeom}), (\ref{eq:periodgeomr})$.\\
For $\mathcal{M}=\mathcal{MT}'_{N}$, we will simply denote it $\mathcal{H}^{N}\mathrel{\mathop:}=\mathcal{H}^{\mathcal{MT}'_{N}}$. Moreover:
$$\mathcal{H}^{N}\subset \mathcal{H}^{\mathcal{MT}_{\Gamma_{N}}} \subset \mathcal{H}^{\mathcal{MT}_{N}} .$$
\\
Cyclotomic iterated integrals of weight $n$ are periods of $\pi^{un}_{1}$ (of $X^{n}$ relative to $Y^{(n)}$): \footnote{Notations of $(\ref{eq:y(n)})$. Cf. also $(\ref{eq:pi1unTate})$. The case of tangential base points requires blowing up to get rid of singularities. Most interesting periods are often those whose integration domain meets the singularities of the differential form.}
\begin{framed}
 Any motivic iterated integral $I^{\mathfrak{m}}$ relative to $\mu_{N}$ is an element of $\mathcal{H}^{N}$, which is the graded $\mathcal{A}^{N}-$ comodule generated by these motivic iterated integrals relative to $\mu_{N}$.
\end{framed}

In a similar vein, define:\nomenclature{$ I^{\mathfrak{a}}$ resp. $I^{\mathfrak{l}}$}{versions of motivic iterated integrals in $\mathcal{A}$ resp. in $\mathcal{L}$}
\begin{itemize}
\item[$\cdot \boldsymbol{I^{\omega}}$: ] A motivic period of type $(\omega,\omega)$, in $\mathcal{O}(\mathcal{G})$:
\begin{equation} \label{eq:intitdr}
I^{\omega}(x;w;y)=\left[\mathcal{O} \left( _{x}\Pi^{\mathfrak{m}}_{y}\right)  ,w,_{x}1^{\omega}_{y} \right]^{\omega}, \quad \text{ where } \left\lbrace \begin{array}{l}
w\in\omega(\mathcal{O} \left( _{x}\Pi^{\mathfrak{m}}_{y}\right) )\\
_{x}1^{\omega}_{y}\in \omega(M)^{\vee}=\mathcal{O}\left( _{x}\Pi_{y}\right)^{\vee} 
\end{array}\right. .
\end{equation}
where $_{x}1^{\omega}_{y}\in \mathcal{O}\left( _{x}\Pi_{y}\right)^{\vee}$ is defined by the augmentation map $\epsilon:\mathbb{Q}\langle e^{0}, (e^{\eta})_{\eta\in\mu_{N}}\rangle \rightarrow \mathbb{Q}$, corresponding to the unit element in $_{x}\Pi_{y}$. This defines a function on $\mathcal{G}=\text{Aut}^{\otimes}(\omega)$, given on the rational points by $g\in\mathcal{G}(\mathbb{Q})  \mapsto \langle g\omega, \epsilon\rangle \in \mathbb{Q}$.
\item[$\cdot \boldsymbol{I^{\mathfrak{a}}}$: ] the image of $I^{\omega}$ in $\mathcal{A}= \mathcal{O}(\mathcal{U})$, by the projection $\mathcal{O}(\mathcal{G})\twoheadrightarrow \mathcal{O}(\mathcal{U})$. These \textit{unipotent} motivic periods are the objects studied by Goncharov, which he called motivic iterated integrals; for instance, $\zeta^{\mathfrak{a}}(2)=0$.
\item[$\cdot \boldsymbol{I^{\mathfrak{l}}}$: ] the image of $I^{\mathfrak{a}}$ in the coalgebra of indecomposables $\mathcal{L}\mathrel{\mathop:}=\mathcal{A}_{>0} \diagup \mathcal{A}_{>0}. \mathcal{A}_{>0}$.\footnote{Well defined since $\mathcal{A}= \mathcal{O} (\mathcal{U})$ is graded with positive degrees.}
\end{itemize}

\textsc{Remark:} It is similar (cf. $\cite{Br2}$) to define $\mathcal{H}^{N}$, as  $\mathcal{O}(_{0}\Pi_{1}) \diagup J \quad$, with:
 \begin{itemize}
 \item[$\cdot$]  $J\subset \mathcal{O}(_{0}\Pi_{1})$ is the biggest graded ideal $\subset \ker  per$ closed by the coaction $\Delta^{c}$, corresponding to the ideal of motivic relations, i.e.:
$$\Delta^{c}(J)\subset \mathcal{A}\otimes J + J\mathcal{A} \otimes \mathcal{O}(_{0}\Pi_{1}).$$
\item[$\cdot$] the $\shuffle$-homomorphism: $\text{per}: \mathcal{O}(_{0}\Pi_{1}) \rightarrow \mathbb{C} \text{  ,  }  e^{a_{1}} \cdots e^{a_{n}} \mapsto  I(0; a_{1}, \ldots, a_{n} ; 1)\mathrel{\mathop:}= \int_{dch} \omega.$
\\
 \end{itemize}
Once the motivic iterated integrals are defined, motivic cyclotomic multiple zeta values follow, as usual (cf. $\ref{eq:iterinteg}$):
\begin{center}
\textit{Motivic multiple zeta values} relative to $\mu_{N}$ are defined by, for $\epsilon_{i}\in\mu_{N}, k\geq 0, n_{i}>0$
\begin{equation}\label{mmzv}
\boldsymbol{\zeta_{k}^{\mathfrak{m}} \left({ n_{1}, \ldots , n_{p} \atop \epsilon_{1} , \ldots ,\epsilon_{p} }\right) }\mathrel{\mathop:}= (-1)^{p} I^{\mathfrak{m}} \left(0;\boldsymbol{0}^{k}, (\epsilon_{1}\cdots \epsilon_{p})^{-1}, \boldsymbol{0}^{n_{1}-1} ,\cdots, (\epsilon_{i}\cdots \epsilon_{p})^{-1}, \boldsymbol{0}^{n_{i}-1} ,\cdots, \epsilon_{p}^{-1}, \boldsymbol{0}^{n_{p}-1} ;1 \right)
\end{equation}
\end{center}
 An \textit{admissible} (motivic) MZV is such that $\left( n_{p}, \epsilon_{p}\right) \neq \left(  1, 1 \right) $; otherwise, they are defined by shuffle regularization, cf. ($\ref{eq:shufflereg}$) below; the versions $\boldsymbol{\zeta_{k}^{\mathfrak{a}}} (\cdots)$, or $\boldsymbol{\zeta_{k}^{\mathfrak{l}}} (\cdots)$ are defined similarly, from $I^{\mathfrak{a}}$ resp. $I^{\mathfrak{l}}$ above. The roots of unity in the iterated integral will often be denoted by $\eta_{i}\mathrel{\mathop:}= (\epsilon_{i}\cdots \epsilon_{p})^{-1}$\\ 
\\
From $(\ref{eq:projpiam})$, there is a surjective homomorphism called the \textbf{\textit{period map}}, conjectured to be isomorphism:
 \begin{equation}\label{eq:period}\text{per}:\mathcal{H} \rightarrow \mathcal{Z} \text{ ,  } \zeta^{\mathfrak{m}} \left(n_{1}, \ldots , n_{p} \atop \epsilon_{1} , \ldots ,\epsilon_{p} \right)\mapsto  \zeta\left(n_{1}, \ldots , n_{p} \atop \epsilon_{1} , \ldots ,\epsilon_{p} \right).
 \end{equation}
\texttt{Nota Bene:} Each identity between motivic cyclotomic multiple zeta values is then true for cyclotomic multiple zeta values and in particular each result about a basis with motivic MZV implies the corresponding result about a generating family of MZV by application of the period map.\\
Conversely, we can sometimes \textit{lift} an identity between MZV to an identity between motivic MZV, via the coaction (as in $\cite{Br2}$, Theorem $3.3$); this is discussed below, and illustrated throughout this work in different examples or counterexamples, as in Lemma $\ref{lemmcoeff}$. It is similar in the case of motivic Euler sums ($N=2$). We will see (Theorem $2.4.4$) that for other roots of unity there are several rational coefficients which appear at each step (of the coaction calculus) and prevent us from concluding by identification.\\

\paragraph{Properties.}\label{propii}
Motivic iterated integrals satisfy the following properties:\nomenclature{$\mathfrak{S}_{p}$}{set of permutations of $\lbrace 1, \ldots, p\rbrace$.}
\begin{itemize}
	\item[(i)] $I^{\mathfrak{m}}(a_{0}; a_{1})=1$.
	\item[(ii)] $I^{\mathfrak{m}}(a_{0}; a_{1}, \cdots a_{n}; a_{n+1})=0$ if $a_{0}=a_{n+1}$.
	\item[(iii)] Shuffle product:\footnote{Product rule for iterated integral in general is:
$$ \int_{\gamma} \phi_{1} \cdots \phi_{r}  \cdot \int_{\gamma} \phi_{r+1} \cdots \phi_{r+s} = \sum_{\sigma\in Sh_{r,s}} \int_{\gamma} \phi_{\sigma^{-1}(1)} \cdots \phi_{\sigma^{-1}(r+s)} , $$	
	where $Sh_{r,s}\subset \mathfrak{S}_{r+s}$ is the subset of permutations which respect the order of $\lbrace 1 , \ldots, r\rbrace $ and $\lbrace r+1 , \ldots, r+s\rbrace$. Here, to define the non convergent case, $(iii)$ is sufficient, paired with the other rules.} 
	\begin{multline}\label{eq:shufflereg}
\zeta_{k}^{\mathfrak{m}} \left( {n_{1}, \ldots , n_{p} \atop \epsilon_{1}, \ldots ,\epsilon_{p} }\right)= \\
(-1)^{k}\sum_{i_{1}+ \cdots + i_{p}=k} \binom {n_{1}+i_{1}-1} {i_{1}} \cdots \binom {n_{p}+i_{p}-1} {i_{p}} \zeta^{\mathfrak{m}} \left( {n_{1}+i_{1}, \ldots , n_{p}+i_{p} \atop \epsilon_{1}, \ldots ,\epsilon_{p} }\right).
 \end{multline}
	\item[(iv)] Path composition: 
	$$ \forall x\in \mu_{N} \cup \left\{0\right\} ,  I^{\mathfrak{m}}(a_{0}; a_{1}, \ldots, a_{n}; a_{n+1})=\sum_{i=1}^{n} I^{\mathfrak{m}}(a_{0}; a_{1}, \ldots, a_{i}; x) I^{\mathfrak{m}}(x; a_{i+1}, \ldots, a_{n}; a_{n+1}) .$$
	\item[(v)] Path reversal: $I^{\mathfrak{m}}(a_{0}; a_{1}, \ldots, a_{n}; a_{n+1})= (-1)^n I^{\mathfrak{m}}(a_{n+1}; a_{n}, \ldots, a_{1}; a_{0}).$
	\item[(vi)] Homothety: $\forall \alpha \in \mu_{N}, I^{\mathfrak{m}}(0; \alpha a_{1}, \ldots, \alpha a_{n}; \alpha a_{n+1})  = I^{\mathfrak{m}}(0; a_{1}, \ldots, a_{n}; a_{n+1})$.
\end{itemize}
\vspace{0,5cm}
\textsc{Remark}: These relations, for the multiple zeta values relative to $\mu_{N}$, and for the iterated integrals  $I(a_{0}; a_{1}, \cdots ,a_{n}; a_{n+1})$ ($\ref{eq:reprinteg}$), are obviously all easily checked.\\
\\
It has been proven that motivic iterated integrals verify stuffle $\ast$ relations, but also pentagon, and hexagon (resp. octagon for $N>1$) ones, as iterated integral at $\mu_{N}$. In depth $1$, by Deligne and Goncharov, the only relations satisfied by the motivic iterated integrals are distributions and conjugation relations, stated in $\S 2.4.3$.

\paragraph{Motivic Euler $\star$, $\boldsymbol{\sharp}$ sums.}
Here, assume that $N=1$ or $2$.\footnote{Detailed definitions of these $\star$ and $\sharp$ versions are given in $\S 4.1$.} In the motivic iterated integrals above, $I^{\mathfrak{m}}(\cdots, a_{i}, \cdots)$, $a_{i}$ were in $\lbrace 0, \pm 1 \rbrace$. We can extend by linearity to $a_{i}\in \lbrace \pm \star, \pm \sharp\rbrace$, which corresponds to a $\omega_{\pm \star}$, resp. $\omega_{ \pm\sharp}$ in the iterated integral, with the differential forms:\nomenclature{$\omega_{\pm\star}$, $\omega_{\pm\sharp}$}{specific differential forms}
$$\boldsymbol{\omega_{\pm\star}}\mathrel{\mathop:}= \omega_{\pm 1}- \omega_{0}=\frac{dt}{t(\pm t -1)} \quad \text{ and }  \quad \boldsymbol{\omega_{\pm\sharp}}\mathrel{\mathop:}=2 \omega_{\pm 1}-\omega_{0}=\frac{(t \pm 1)dt}{t(t\mp 1)}.$$
It means that, by linearity, for $A,B$ sequences in $\lbrace 0, \pm 1, \pm \star, \pm \sharp \rbrace$:
\begin{equation} \label{eq:miistarsharp}
 I^{\mathfrak{m}}(A, \pm \star, B)= I^{\mathfrak{m}}(A, \pm 1, B) - I^{\mathfrak{m}}(A, 0, B),  \text{ and }  I^{\mathfrak{m}}(A, \pm \sharp, B)=  2 I^{\mathfrak{m}}(A, \pm 1, B) - I^{\mathfrak{m}}(A, 0, B).
\end{equation} 
\nomenclature{$\zeta^{\star, \mathfrak{m}}$, resp. $\zeta^{\sharp, \mathfrak{m}}$}{Motivic Euler $\star$ Sums, resp. Motivic Euler $\sharp$ Sums}
\begin{itemize}
\item[$\boldsymbol{\zeta^{\star, \mathfrak{m}}}$: ]\textit{Motivic Euler $\star$ Sums} are defined by a similar integral representation as MES ($\ref{eq:reprinteg}$), with $\omega_{\pm \star}$ replacing the $\omega_{\pm 1}$, except the first one, which stays a $\omega_{\pm 1}$. \\
Their periods, Euler $\star$ sums, which are already common in the literature, can be written as a summation similar than for Euler sums replacing strict inequalities by large ones:
$$ \zeta^{\star}\left(n_{1}, \ldots , n_{p} \right) =  \sum_{0 < k_{1}\leq k_{2} \leq \cdots \leq k_{p}} \frac{\epsilon_{1}^{k_{1}} \cdots \epsilon_{p}^{k_{p}}}{k_{1}^{\mid n_{1}\mid} \cdots k_{p}^{\mid n_{p}\mid}}, \quad \epsilon_{i}\mathrel{\mathop:}=sign(n_{i}), \quad n_{i}\in\mathbb{Z}^{\ast}, n_{p}\neq 1. $$
\item[$\boldsymbol{\zeta^{\sharp, \mathfrak{m}}}$: ] \textit{Motivic Euler $\sharp$ Sums} are defined by a similar integral representation as MES ($\ref{eq:reprinteg}$), with $\omega_{\pm \sharp}$ replacing the $\omega_{\pm 1}$, except the first one, which stays a $\omega_{\pm 1}$.
\end{itemize}
They are both $\mathbb{Q}$-linear combinations of multiple Euler sums, and appear in Chapter $4$, via new bases for motivic MZV (Hoffman $\star$, or with Euler $\sharp$ sums) and in the Conjecture $\ref{lzg}$.\\

\paragraph{Dimensions.}
Algebraic $K$-theory provides an \textit{upper bound} for the dimensions of motivic cyclotomic iterated integrals, since: 
\begin{equation}
\begin{array}{ll}
  \text{Ext}_{\mathcal{MT}_{N,M}}^{1} (\mathbb{Q}(0), \mathbb{Q}(1)) =  (\mathcal{O}_{k_{N}}[\frac{1}{M}])^{\ast} \otimes \mathbb{Q}& \\
    \text{Ext}_{\mathcal{MT}_{\Gamma_{N}}}^{1} (\mathbb{Q}(0), \mathbb{Q}(1)) =  \Gamma_{N} & \\
  \text{Ext}_{\mathcal{MT}_{N,M}}^{1} (\mathbb{Q}(0), \mathbb{Q}(n)) =  \text{Ext}_{\mathcal{MT}_{\Gamma}}^{1} (\mathbb{Q}(0), \mathbb{Q}(n)) = K_{2n-1}(k_{N}) \otimes \mathbb{Q}  & \text{ for } n >1 .\\
  \text{Ext}_{\mathcal{MT}_{N,M}}^{i} (\mathbb{Q}(0), \mathbb{Q}(n))= \text{Ext}_{\mathcal{MT}_{\Gamma}}^{i} (\mathbb{Q}(0), \mathbb{Q}(n)) =0 & \text{ for } i>1 \text{ or } n\leq 0 . 
  \end{array}
\end{equation}  
Let $ n_{\mathfrak{p}_{M}}$\nomenclature{$ n_{\mathfrak{p}_{M}}$}{ the number of different prime ideals above the primes dividing $M$} denote the number of different prime ideals above the primes dividing $M$, $\nu_{N}$\nomenclature{$\nu_{N}$}{the number of primes dividing $N$} the number of primes dividing $N$ and $\varphi$ Euler's indicator function\nomenclature{$\varphi$}{Euler's indicator function}. For $M|N$ (cf. $\cite{Bo}$), using Dirichlet $S$-unit theorem when $n=1$:
\begin{equation}\label{dimensionk}\dim K_{2n-1}(\mathcal{O}_{k_{N}} [1/M]) \otimes \mathbb{Q} = \left\{
\begin{array}{ll}
  1 & \text{ if } N =1 \text{ or } 2 , \text{ and } n \text{ odd }, (n,N) \neq (1,1) .\\
  0 & \text{ if } N =1 \text{ or } 2 , \text{ and } n \text{ even } .\\
 \frac{\varphi(N)}{2}+ n_{\mathfrak{p}_{M}}-1& \text{ if } N >2, n=1 . \\
  \frac{\varphi(N)}{2} & \text{ if } N >2 , n>1  .
\end{array}
\right.
\end{equation}
The numbers of generators in each degree, corresponding to the categories $\mathcal{MT}_{N,M}$ resp. $\mathcal{MT}_{\Gamma_{N}}$, differ only in degree $1$:
\begin{equation}\label{eq:agamma}
\begin{array}{ll}
 \text{In degree } >1 : & b_{N}\mathrel{\mathop:}=b_{N,M}= b_{\Gamma_{N}}= \frac{\varphi(N)}{2} \\
  \text{In degree } 1 : & a_{N,M}\mathrel{\mathop:}=\frac{\varphi(N)}{2}+ n_{\mathfrak{p}_{M}}-1 \quad \text{ whereas } \quad a_{\Gamma_{N}}\mathrel{\mathop:}= \frac{\varphi(N)}{2}+\nu(N)-1.
\end{array}
\end{equation}
\texttt{Nota Bene}: The following formulas in this paragraph can be applied for the categories $\mathcal{MT}_{N,M}$ resp. $\mathcal{MT}_{\Gamma_{N}}$, replacing $a_{N}$ by $a_{N,M}$ resp. $a_{\Gamma_{N}}$.\\
\\
In degree $1$, for $\mathcal{MT}_{M,N}$, only the units modulo torsion matter whereas for the category $\mathcal{MT}_{\Gamma_{N}}$, only the cyclotomic units modulo torsion matter in degree $1$, cf. $\S 2.4.3$. Recall that cyclotomic units form a subgroup of finite index in the group of units, and generating families for cyclotomic units modulo torsion are (cf. $\cite{Ba}$)\footnote{If we consider cyclotomic units in $\mathbb{Z}[\xi_{N}]\left[ \frac{1}{M}\right] $, with $M=\prod r_{i}$, $r_{i}$ prime power, we have to add $\lbrace 1- \xi_{r_{i}}\rbrace$.}:\nomenclature{ $a\wedge b$}{$gcd(a,b)$}
$$\begin{array}{ll}
\text{ For } N=p^{s} : &\left\lbrace \frac{1-\xi_{N}^{a}}{1-\xi_{N}} , a\wedge p=1 \right\rbrace, \quad \text{ where } a\wedge b\mathrel{\mathop:}= gcd(a,b).\\
\text{ For } N=\prod_{i} p_{i}^{s_{i}}= \prod q_{i} : &\left\lbrace \frac{1-\xi_{q_{i}}^{a}}{1-\xi_{q_{i}}} , a\wedge p_{i}=1 \right\rbrace \cup  \left\lbrace 1-\xi_{d}^{a}, \quad  a\wedge d=1, d\mid N, d\neq q_{i} \right\rbrace \\
\end{array}. $$
Results on cyclotomic units determine depth $1$ weight $1$ results for MMZV$_{\mu_{N}}$ (cf. $\S. 2.4.3$).\\
\\
\\
Knowing dimensions, we lift $(\ref{eq:uab})$ to a non-canonical isomorphism with the free Lie algebra:
\begin{equation}
\label{eq:LieAlg}
\mathfrak{u}^{\mathcal{MT} } \underrel{n.c}{\cong} L\mathrel{\mathop:}= \mathbb{L}_{\mathbb{Q}} \left\langle  \left(  \sigma^{j}_{1}\right)_{1 \leq j \leq a_{N}},  \left(  \sigma^{j}_{i}\right)_{1 \leq j \leq b_{N}}, i>1 \right\rangle \quad \sigma_{i} \text{ in degree } -i.
\end{equation}
The generators $\sigma^{j}_{i}$\nomenclature{$\sigma^{j}_{i}$}{generators of the graded Lie algebra $\mathfrak{u}$} of the graded Lie algebra $\mathfrak{u}$ are indeed non-canonical, only their classes in the abelianization are.\footnote{In other terms, this means:
\begin{equation}
H_{1}(\mathfrak{u}^{\mathcal{MT}}; \mathbb{Q}) \cong \bigoplus_{i,j \text{ as above }}  [\sigma^{j}_{i}]\mathbb{Q} , \quad H^{B}_{i}(\mathfrak{u}^{\mathcal{MT} }; \mathbb{Q}) =0 \text{ for } i>1.
\end{equation}}
For the fundamental Hopf algebra, with $f^{j}_{i}=(\sigma^{j}_{i})^{\vee}$\nomenclature{$f^{j}_{i}$}{are defined as $(\sigma^{j}_{i})^{\vee}$} in degree $j$:
\begin{equation}
\label{HopfAlg}
\mathcal{A}^{\mathcal{MT}} \underrel{n.c}{\cong} A\mathrel{\mathop:}= \mathbb{Q} \left\langle  \left(  f^{j}_{1}\right)_{1 \leq j \leq a_{N}},  \left(  f^{j}_{i}\right)_{1 \leq j \leq b_{N}}, i>1 \right\rangle .
\end{equation}
\begin{framed}
$\mathcal{A}^{\mathcal{MT}}$ is a cofree commutative graded Hopf algebra cogenerated by $a_{N}$ elements $f^{\bullet}_{1}$ in degree 1, and $b_{N}$ elements $f^{\bullet}_{r}$ in degree $r>1$.
\end{framed}

The comodule $\mathcal{H}^{N}\subseteq \mathcal{O}(_{0}\Pi_{1})$ embeds, non-canonically\footnote{We can fix the image of algebraically independent elements with trivial coaction.\\
For instance, for $N=3$, we can choose to send: $ \zeta^{\mathfrak{m}}\left( r \atop j \right)  \xmapsto{\phi} f_{r} , \quad \text{ and } \quad \left( 2i \pi \right)^{\mathfrak{m}}  \xmapsto{\phi} g_{1}$.}, into $\mathcal{H}^{\mathcal{MT}_{N}}$ and hence:\nomenclature{$\phi^{N}$}{the Hopf algebra morphism $\mathcal{H}^{N}  \hookrightarrow H^{N}$}\nomenclature{$H^{N}$}{the Hopf algebra $\mathbb{Q} \left\langle \left(f^{j}_{1}\right) _{1\leq j \leq a_{N}}, \left( f^{j}_{r}\right)_{r>1\atop 1\leq j \leq b_{N}} \right\rangle  \otimes \mathbb{Q}\left[ g_{1} \right]$}
  \begin{framed}
\begin{equation}\label{eq:phih}
\mathcal{H}^{N}    \xhookrightarrow[n.c.]{\quad\phi^{N}\quad}  H^{N}\mathrel{\mathop:}=  \mathbb{Q} \left\langle \left(f^{j}_{1}\right) _{1\leq j \leq a_{N}}, \left( f^{j}_{r}\right)_{r>1\atop 1\leq j \leq b_{N}} \right\rangle  \otimes \mathbb{Q}\left[ g_{1} \right].
\end{equation}
  \end{framed}
  \noindent
  \texttt{Nota Bene:} This comodule embedding is an isomorphism for $N=1,2,3,4,\mlq 6\mrq,8$ (by F. Brown $\cite{Br2}$ for $N=1$, by Deligne $\cite{De}$ for the other cases; new proof in Chapter $5$), since the categories $\mathcal{MT}'_{N}$, $\mathcal{MT}(\mathcal{O}\left[ \frac{1}{N}\right] )$ and $\mathcal{MT}_{\Gamma_{N}}$ are equivalent. However, for some other $N$, such as $N$ prime greater than $5$, it is not an isomorphism.\\
\noindent
Looking at the dimensions $d^{N}_{n}\mathrel{\mathop:}= \dim \mathcal{H}^{\mathcal{MT}_{N}}_{n}$:\nomenclature{$d^{N}_{n}$}{the dimension of the $\mathbb{Q}$vector space $\mathcal{H}^{\mathcal{MT}_{N}}_{n}$}
\begin{lemm}
For $N>2$, $d^{N}_{n}$ satisfies two (equivalent) recursive formulas\footnote{Those two recursive formulas, although equivalent, leads to two different perspective for counting dimensions.}:
$$\begin{array}{lll}
d^{N}_{n} = & 1 + a_{N} d_{n-1}+ b_{N}\sum_{i=2}^{n} d_{n -i} & \\
d^{N}_{n} = & (a_{N}+1)d_{n-1}+ (b_{N}-a_{N})d_{n -2} & \text{ with } \left\lbrace  \begin{array}{l}
 d_{0}=1\\
d_{1}=a_{N}+1
\end{array}\right. 
\end{array} .$$
Hence the Hilbert series for the dimensions of $\mathcal{H}^{\mathcal{MT}}$ is:
$$h_{N}(t)\mathrel{\mathop:}=\sum_{k} d_{k}^{N} t^{k}=\frac{1}{1-(a_{N}+1)t+ (a_{N}-b_{N})t^{2}}. $$
\end{lemm}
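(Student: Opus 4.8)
The plan is to compute the Hilbert series $h_N(t)=\sum_k d_k^N t^k$ directly from the structure of the fundamental Hopf algebra $\mathcal{A}^{\mathcal{MT}_N}$, and then to extract both recursions by clearing denominators. First I would invoke $(\ref{eq:hn})$: for $N>2$ one has $\mathcal{H}^{\mathcal{MT}_N}=\mathcal{A}^{\mathcal{MT}_N}\otimes_{\mathbb{Q}}\mathbb{Q}[\mathbb{L}^{\mathfrak{m}}]$, and since $\mathbb{L}^{\mathfrak{m}}$ sits in weight $1$ this is an isomorphism of graded vector spaces, with $\mathbb{Q}[\mathbb{L}^{\mathfrak{m}}]$ of Hilbert series $(1-t)^{-1}$. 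Hence $h_N(t)=A_N(t)/(1-t)$ where $A_N(t)=\sum_n\bigl(\dim\mathcal{A}^{\mathcal{MT}_N}_n\bigr)t^n$, and everything reduces to computing $A_N(t)$.

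Next, by $(\ref{eq:Amt})$ together with $(\ref{eq:LieAlg})$ and $(\ref{HopfAlg})$, $\mathcal{A}^{\mathcal{MT}_N}$ is (non-canonically, but this is all that is needed for dimensions) the graded dual of the universal enveloping algebra of the free graded Lie algebra on $a_N$ generators in degree $1$ and $b_N$ generators in each degree $r>1$; equivalently, by PBW it is a free associative (tensor) algebra, hence its dual is a cofree graded coalgebra, on a graded vector space $V$ of Hilbert series $P_V(t)=a_Nt+b_N\sum_{r\ge 2}t^r=a_Nt+\dfrac{b_Nt^2}{1-t}$. Therefore $A_N(t)=\dfrac{1}{1-P_V(t)}$. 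A one-line simplification gives $1-P_V(t)=\dfrac{(1-t)(1-a_Nt)-b_Nt^2}{1-t}=\dfrac{1-(a_N+1)t+(a_N-b_N)t^2}{1-t}$, so that $A_N(t)=\dfrac{1-t}{1-(a_N+1)t+(a_N-b_N)t^2}$ and consequently $h_N(t)=\dfrac{1}{1-(a_N+1)t+(a_N-b_N)t^2}$, the claimed closed form; here $a_N,b_N$ are the $K$-theoretic counts from $(\ref{dimensionk})$--$(\ref{eq:agamma})$.

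Finally, the two recursions drop out by multiplying through by denominators. From $\bigl(1-(a_N+1)t+(a_N-b_N)t^2\bigr)h_N(t)=1$, comparing the coefficient of $t^n$ yields $d_0=1$, $d_1=a_N+1$, and $d_n=(a_N+1)d_{n-1}+(b_N-a_N)d_{n-2}$ for $n\ge 2$, which is the second formula. For the first formula I would instead start from $A_N(t)\bigl(1-P_V(t)\bigr)=1$ and $h_N=A_N/(1-t)$, which together give the functional equation $h_N(t)=\dfrac{1}{1-t}+a_N t\,h_N(t)+\dfrac{b_Nt^2}{1-t}\,h_N(t)$; extracting the coefficient of $t^n$, using $[t^n](1-t)^{-1}=1$ and $[t^n]\tfrac{t^2}{1-t}h_N=\sum_{i=2}^n d_{n-i}$, gives $d_n=1+a_Nd_{n-1}+b_N\sum_{i=2}^n d_{n-i}$. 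Equivalence of the two is then immediate (subtracting the $n$ and $n-1$ instances of the first recursion recovers the second). There is no genuine obstacle here: it is a generating-function bookkeeping. The only points that need care are that $\mathbb{L}^{\mathfrak{m}}$ contributes degree $1$, that the relevant category is $\mathcal{MT}_N$ so that the cofreeness of $\mathcal{A}^{\mathcal{MT}_N}$ with exactly $a_N$, resp.\ $b_N$, (co)generators is literally the boxed statement after $(\ref{HopfAlg})$ rather than the possibly smaller $\mathcal{A}^N$, and that all graded pieces are finite-dimensional so that passing to graded duals preserves Hilbert series.
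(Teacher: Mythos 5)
Your proof is correct and follows the paper's intended argument: the lemma is stated right after the identification (for $N>2$) $\mathcal{H}^{\mathcal{MT}_N}\cong\mathcal{A}^{\mathcal{MT}_N}\otimes\mathbb{Q}[\mathbb{L}^{\mathfrak{m}}]$ with $\mathcal{A}^{\mathcal{MT}_N}$ cofree on $a_N$ cogenerators in degree $1$ and $b_N$ in each degree $r>1$, and the dimensions are simply counts of monomials $f^{j_1}_{i_1}\cdots f^{j_k}_{i_k}(\mathbb{L}^{\mathfrak{m}})^{s}$, which is exactly what your Hilbert-series bookkeeping encodes. Your derivation of the closed form $h_N(t)=\bigl(1-(a_N+1)t+(a_N-b_N)t^2\bigr)^{-1}$ and of both recursions (the first by splitting off the leading letter, the second by clearing the quadratic denominator) is the generating-function packaging of that same count, so there is nothing to add.
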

In particular, these dimensions (for $\mathcal{H}^{\mathcal{MT}_{\Gamma_{N}}}$) are an upper bound for the dimensions of motivic MZV$_{\mu_{N}}$ (i.e. of $\mathcal{H}^{N}$), and hence of MZV$_{\mu_{N}}$ by the period map. In the case $N=p^{r}$, $p\geq 5$ prime, this upper bound is conjectured to be not reached; for other $N$ however, this bound is still conjectured to be sharp (cf. $\S 3.4$). \\
\\
\texttt{Examples:}
\begin{itemize}
\item[$\cdot$] For the unramified category $\mathcal{MT}(\mathcal{O}_{N})$:
$$d_{n}= \frac{\varphi(N)}{2}d_{n-1}+ d_{n-2}.$$
\item[$\cdot$]  For $M \mid N$ such that all primes dividing $M$ are inert, $ n_{\mathfrak{p}_{M}}=\nu(N)$. In particular, it is the case if $N=p^{r}$:
$$\text{ For } \mathcal{MT}\left( \mathcal{O}_{p^{r}}\left[  \frac{1}{p} \right] \right)  \text{  ,  } \quad d_{n}= \left( \frac{\varphi(N)}{2}+1\right) ^{n}.$$
Let us detail the cases $N=2,3,4,\mlq 6\mrq,8$ considered in Chapter $5$:\\
\end{itemize}

\begin{tabular}{|c|c|c|c|}
    \hline
    & & & \\
   $N \backslash$ $d_{n}^{N}$& $A$ & Dimension relation $d_{n}^{N}$ & Hilbert series \\
  \hline
  $N=1$\footnotemark[2] & \twolines{$1$ generator in each odd degree $>1$\\
  $\mathbb{Q} \langle f_{3}, f_{5}, f_{7}, \cdots \rangle$ }  &\twolines{$d_{n}=d_{n-3} +d_{n-2}$,\\ $d_{2}=1$, $d_{1}=0$} & $ \frac{1}{1-t^{2}-t^{3}}$  \\
      \hline
   $N=2$\footnotemark[3] & \twolines{$1$ generator in each odd degree $\geq 1$\\
   $\mathbb{Q} \langle f_{1}, f_{3}, f_{5}, \cdots \rangle$ } &  \twolines{$d_{n}=d_{n-1} +d_{n-2}$\\ $d_{0}=d_{1}=1$}  & $ \frac{1}{1-t-t^{2}}$  \\
       \hline
  $N=3,4$ & \twolines{$1$ generator in each degree $\geq 1$\\
  $\mathbb{Q} \langle f_{1}, f_{2}, f_{3}, \cdots \rangle$ } & $d_{k}=2d_{k-1} = 2^{k}$ & $\frac{1}{1-2t}$  \\
      \hline
  $N=8$ & \twolines{$2$ generators in each degree $\geq 1$ \\ $\mathbb{Q} \langle f^{1}_{1}, f^{2}_{1}, f^{1}_{2}, f^{2}_{2}, \cdots \rangle$ } & $d_{k}= 3 d_{k -1}=3^{k}$ & $\frac{1}{1-3t}$  \\
      \hline
   \twolines{$N=6$ \\ $\mathcal{MT}(\mathcal{O}_{6}\left[\frac{1}{6}\right])$} & \twolines{$1$ in each degree $> 1$, $2$ in degree $1$\\
   $\mathbb{Q} \langle f^{1}_{1}, f^{2}_{1}, f_{2}, f_{3}, \cdots \rangle$ } & \twolines{$d_{k}= 3 d_{k -1} -d_{k-2}$, \\$d_{1}=3$} & $\frac{1}{1-3t+t^{2}}$ \\
       \hline
   \twolines{$N=6$ \\ $\mathcal{MT}(\mathcal{O}_{6})$} & \twolines{$1$ generator in each degree $>1$\\
   $\mathbb{Q} \langle f_{2}, f_{3}, f_{4}, \cdots \rangle$} & \twolines{$d_{k}= 1+ \sum_{i\geq 2} d_{k-i}$\\$=d_{k -1} +d_{k-2}$} & $ \frac{1}{1-t-t^{2}}$ \\
   \hline
\end{tabular}
\footnotetext[2]{For $N=1$, Broadhurst and Kreimer made a more precise conjecture for dimensions of multiple zeta values graded by the depth, which transposes to motivic ones:
\begin{equation}\label{eq:bkdepth}
 \sum \dim (gr^{\mathfrak{D}}_{d} \mathcal{H}^{1}_{n})s^{n}t^{d} = \frac{1+\mathbb{E}(s)t}{1- \mathbb{O}(s)t+\mathbb{S}(s)t^{2}-\mathbb{S}(s) t^{4}} , \quad \text{ where } \begin{array}{l}
 \mathbb{E}(s)\mathrel{\mathop:}= \frac{s^{2}}{1-s^{2}}\\ 
 \mathbb{O}(s)\mathrel{\mathop:}= \frac{s^{3}}{1-s^{2}}\\
 \mathbb{S}(s)\mathrel{\mathop:}= \frac{s^{12}}{(1-s^{4})(1-s^{6})}
 \end{array}
\end{equation}
where $ \mathbb{E}(s)$, resp. $ \mathbb{O}(s)$, resp. $ \mathbb{S}(s)$ are the generating series of even resp. odd simple zeta values resp. of the space of cusp forms for the full modular group $PSL_{2}(\mathbb{Z})$. The coefficient $\mathbb{S}(s)$ of $t^{2}$ can be understood via the relation between double zetas and cusp forms in $\cite{GKZ}$; The coefficient $\mathbb{S}(s)$ of $t^{4}$, underlying exceptional generators in depth $4$, is now also understood by the recent work of F. Brown $\cite{Br3}$, who gave an interpretation of this conjecture via the homology of an explicit Lie algebra.}
\footnotetext[3]{For $N=2$, the dimensions are Fibonacci numbers.}

\section{Motivic Hopf algebra}

\subsection{Motivic Lie algebra.}

Let $\mathfrak{g}$\nomenclature{$\mathfrak{g}$}{ the free graded Lie algebra generated by $e_{0},(e_{\eta})_{\eta\in\mu_{N}}$ in degree $-1$} the free graded Lie algebra generated by $e_{0},(e_{\eta})_{\eta\in\mu_{N}}$ in degree $-1$. Then, the completed Lie algebra $\mathfrak{g}^{\wedge}$ is the Lie algebra of $_{0}\Pi_{1}(\mathbb{Q})$ and the universal enveloping algebra $ U\mathfrak{g}$ is the cocommutative Hopf algebra which is the graded dual of $O(_{0}\Pi_{1})$:
\begin{equation} \label{eq:ug}  (U\mathfrak{g})_{n}=\left( \mathbb{Q}e_{0} \oplus \left( \oplus_{\eta\in\mu_{N}} \mathbb{Q}e_{\eta}\right) \right) ^{\otimes n}= (O(_{0}\Pi_{1})^{\vee})_{n}.
\end{equation}
The product is the concatenation, and the coproduct is such that $e_{0},e_{\eta}$ are primitive.\\
\\
Considering the motivic version of the Drinfeld associator:\nomenclature{$\Phi^{\mathfrak{m}}$}{the motivic Drinfeld associator}
\begin{equation} \label{eq:associator}
\Phi^{\mathfrak{m}}\mathrel{\mathop:}= \sum_{w} \zeta^{\mathfrak{m}} (w) w \in \mathcal{H}\left\langle \left\langle e_{0},e_{\eta} \right\rangle \right\rangle \text{, where :}
\end{equation} 
$$ \zeta^{\mathfrak{m}} (e_{0}^{n}e_{\eta_{1}}e_{0}^{n_{1}-1}\cdots e_{\eta_{p}}e_{0}^{n_{p}-1})  =\zeta^{\mathfrak{m}}_{n}\left( n_{1}, \ldots, n_{p} \atop \epsilon_{1} , \ldots, \epsilon_{p}\right)  \text{ with } \begin{array}{l}
\epsilon_{p}\mathrel{\mathop:}=\eta_{p}^{-1}\\
 \epsilon_{i}\mathrel{\mathop:}=\eta_{i}^{-1}\eta_{i+1}
\end{array}.$$
\texttt{Nota Bene:} This motivic Drinfeld associator satisfies the double shuffle relations, and, for $N=1$, the associator equations defined by Drinfeld (pentagon and hexagon), replacing $2\pi i$ by the Lefschetz motivic period $\mathbb{L}^{\mathfrak{m}}$; for $N>1$, an octagon relation generalizes this hexagon relation, as we will see in $\S 4.2.2$.\\
Moreover, it defines a map:
$$\oplus \mathcal{H}_{n}^{\vee} \rightarrow U \mathfrak{g} \quad \text{ which induces a map: } \oplus \mathcal{L}_{n}^{\vee} \rightarrow U \mathfrak{g}.$$ 
Define $\boldsymbol{\mathfrak{g}^{\mathfrak{m}}}$, the \textit{Lie algebra of motivic elements} as the image of $\oplus \mathcal{L}_{n}^{\vee}$ in $U \mathfrak{g}$:\footnote{The action of the Galois group $\mathcal{U}^{\mathcal{MT}}$ turns $\mathcal{L}$ into a coalgebra, and hence $\mathfrak{g}^{\mathfrak{m}}$ into a Lie algebra.}
\begin{equation} \label{eq:motivicliealgebra} \oplus \mathcal{L}_{n}^{\vee} \xrightarrow{\sim} \mathfrak{g}^{\mathfrak{m}} \hookrightarrow U \mathfrak{g}.
\end{equation}
The Lie algebra $\mathfrak{g}^{\mathfrak{m}}$ is equipped with the Ihara bracket given precisely below.
Notice that for the cases $N=1,2,3,4,\mlq 6\mrq,8$, $\mathfrak{g}^{\mathfrak{m}}$ is non-canonically isomorphic to the free Lie algebra $L$ defined in $(\ref{eq:LieAlg})$, generated by $(\sigma_{i})'s$. 

\paragraph{Ihara action.}
As said above, the group scheme $\mathcal{V}$ of automorphisms of $_{x}\Pi_{y}, x,y\in\lbrace 0, \mu_{N} \rbrace$ is isomorphic to $_{0}\Pi_{1}$ ($\ref{eq:gpaut}$), and the group law of automorphisms leads to the Ihara action. More precisely, for $a\in _{0}\Pi_{1}$ (cf. $\cite{DG}$): 
\begin{equation} \label{eq:actionpi01}
\begin{array}{lllll}
\text{ The action on } _{0}\Pi_{0} : \quad \quad &\langle a\rangle_{0} : &  _{0}\Pi_{0} & \rightarrow &_{0} \Pi_{0} \\
&& \exp(e_{0}) &\mapsto & \exp(e_{0}) \\
&&\exp(e_{\eta}) &\mapsto &([\eta]\cdot a) \exp(e_{\eta}) ([\eta]\cdot a)^{-1} \\
\text{ Then, the action on } _{0}\Pi_{1} :\quad \quad & \langle a\rangle :   & _{0}\Pi_{1}  &\rightarrow &_{0}\Pi_{1} \\
& &  b &\mapsto & \langle a\rangle _{0} (b)\cdot a  
\end{array}
\end{equation}
This action is called the \textbf{\textit{Ihara action}}:\nomenclature{$\circ$}{Ihara action}
\begin{equation} \label{eq:iharaaction}
\begin{array}{llll}
\circ : &  _{0}\Pi_{1}  \times _{0}\Pi_{1}  &  \rightarrow & _{0}\Pi_{1} \\
& (a,b) & \mapsto &   a\circ b \mathrel{\mathop:}= \langle a\rangle _{0} (b)\cdot a.
\end{array}
\end{equation}
At the Lie algebra level, it defines the \textit{Ihara bracket} on $Lie(_{0}\Pi_{1})$:
\begin{equation}
\lbrace a, b\rbrace \mathrel{\mathop:}= a \circ b - b\circ a.
\end{equation}

\texttt{Nota Bene:} The dual point of view leads to a combinatorial coaction $\Delta^{c}$, which is the keystone of this work.

\subsection{Coaction}

The motivic Galois group $\mathcal{G}^{\mathcal{MT}_{N}}$ and hence $\mathcal{U}^{\mathcal{MT}}$ acts on the de Rham realization $_{0}\Pi_{1}$ of the motivic fundamental groupoid (cf. $\cite{DG}, \S 4.12$). It is fundamental, since the action of $\mathcal{U}^{\mathcal{MT}}$ is compatible with the structure of $_{x}\Pi_{y}$ (groupoid, $\mu_{N}$ equivariance and inertia), that this action factorizes through the Ihara action, using the isomorphism $\mathcal{V}\cong _{0}\Pi_{1}$ ($\ref{eq:gpaut}$):
$$ \xymatrix{
\mathcal{U}^{\mathcal{MT}}\times _{0}\Pi_{1}  \ar[r] \ar[d] &_{0}\Pi_{1}  \ar[d]^{\sim}\\
_{0}\Pi_{1}  \times _{0}\Pi_{1}  \ar[r]^{\circ}  & _{0}\Pi_{1} \\
}$$
Since  $\mathcal{A}^{\mathcal{MT}}= \mathcal{O}(\mathcal{U}^{\mathcal{MT}})$, this action gives rise by duality to a coaction: $\Delta^{\mathcal{MT}}$, compatible with the grading, represented below. By the previous diagram, the combinatorial coaction $\Delta^{c}$ (on words on $0, \eta\in\mu_{N}$), which is explicit (the formula being given below), factorizes through $\Delta^{\mathcal{MT}}$. Remark that $\Delta^{\mathcal{MT}}$ factorizes through $\mathcal{A}$, since $\mathcal{U}$ is the quotient of $\mathcal{U}^{\mathcal{MT}}$ by the kernel of its action on $_{0}\Pi_{1}$. By passing to the quotient, it induces a coaction $\Delta$ on $\mathcal{H}$:

$$ \label{Coaction} \xymatrix{
\mathcal{O}(_{0}\Pi_{1}) \ar[r]^{\Delta^{c}} \ar[d]^{\sim} & \mathcal{A} \otimes_{\mathbb{Q}} \mathcal{O} (_{0}\Pi_{1})  \ar[d] \\
\mathcal{O}(_{0}\Pi_{1}) \ar[d]\ar[r]^{\Delta^{\mathcal{MT}}} & \mathcal{A}^{\mathcal{MT}} \otimes_{\mathbb{Q}} \mathcal{O} (_{0}\Pi_{1})  \ar[d]\\
\mathcal{H} \ar[r]^{\Delta}  & \mathcal{A} \otimes \mathcal{H}. \\
}$$
\\
The coaction for motivic iterated integrals is given by the following formula, due to A. B. Goncharov (cf. $\cite{Go1}$) for $\mathcal{A}$ and extended by F. Brown to $\mathcal{H}$ (cf. $\cite{Br2}$):\nomenclature{$\Delta$}{Goncharov coaction}
\begin{theom} \label{eq:coaction}
The coaction $\Delta: \mathcal{H} \rightarrow \mathcal{A} \otimes_{\mathbb{Q}} \mathcal{H}$ is given by the combinatorial coaction $\Delta^{c}$:
$$\Delta^{c} I^{\mathfrak{m}}(a_{0}; a_{1}, \cdots a_{n}; a_{n+1}) =$$
$$\sum_{k ;i_{0}= 0<i_{1}< \cdots < i_{k}<i_{k+1}=n+1} \left( \prod_{p=0}^{k} I^{\mathfrak{a}}(a_{i_{p}}; a_{i_{p}+1}, \cdots a_{i_{p+1}-1}; a_{i_{p+1}}) \right) \otimes I^{\mathfrak{m}}(a_{0}; a_{i_{1}}, \cdots a_{i_{k}}; a_{n+1}) .$$
\end{theom}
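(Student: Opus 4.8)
The plan is to establish this formula in two stages, following the route indicated by the commutative diagram that precedes the statement. First I would prove the analogous statement at the level of $\mathcal{A}$, i.e. for the unipotent motivic iterated integrals $I^{\mathfrak{a}}$, where the coaction $\Delta^{\mathfrak{a}}\colon\mathcal{A}\to\mathcal{A}\otimes\mathcal{A}$ is dual to the composition (Ihara action) on $_0\Pi_1$ via $\mathcal{U}^{\mathcal{MT}}\cong\mathcal V\cong {}_0\Pi_1$ from $(\ref{eq:gpaut})$. The key is that the Ihara action $\circ$ on $_0\Pi_1$ is \emph{explicit}: unwinding $(\ref{eq:actionpi01})$–$(\ref{eq:iharaaction})$, the automorphism $\langle a\rangle_0$ fixes $\exp(e_0)$ and conjugates each $\exp(e_\eta)$, and then one composes with right-multiplication by $a$. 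Dualizing this to the shuffle Hopf algebra $\mathcal{O}(_0\Pi_1)\cong\mathbb{Q}\langle e^0,(e^\eta)\rangle$ and tracking what happens to the linear functional picking out a word $e^{a_1}\cdots e^{a_n}$ produces exactly the sum over increasing subsequences $0=i_0<i_1<\cdots<i_k<i_{k+1}=n+1$, with the ``quotient'' factor $I^{\mathfrak{a}}(a_0;a_{i_1},\dots,a_{i_k};a_{n+1})$ coming from the right-multiplication by $a$ and the product of ``subinterval'' factors $\prod_p I^{\mathfrak{a}}(a_{i_p};a_{i_p+1},\dots,a_{i_{p+1}-1};a_{i_{p+1}})$ coming from the conjugation terms. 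This is the combinatorial heart of the argument; it is essentially Goncharov's computation and amounts to a careful but mechanical bookkeeping of how conjugation by a group-like series distributes over the concatenation dual to shuffle.

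The second stage is to lift this from $\mathcal{A}\otimes\mathcal{A}$ to $\mathcal{A}\otimes\mathcal{H}$, i.e. to promote the right-hand tensor factor from a unipotent period $I^{\mathfrak a}$ to a genuine motivic period $I^{\mathfrak m}$. Here I would invoke the general coalgebroid formalism from $\S 2.2$: the groupoid structure on isomorphisms of fibre functors dualizes to the coaction $\Delta^{\mathfrak m,\omega}\colon\mathcal{P}^{\mathfrak m}_{\mathcal M}\to\mathcal{P}^{\omega}_{\mathcal M}\otimes\mathcal{P}^{\mathfrak m}_{\mathcal M}$, which restricts to $\Delta\colon\mathcal H\to\mathcal A\otimes\mathcal H$ after passing to the quotient by $\mathbb{L}^{\mathfrak m}$-twists and using that $\mathcal U^N$ acts faithfully on $_0\Pi_1$. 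The point is that the \emph{left} factor of the coaction only sees the de Rham/$\omega$-realization (hence lands in $\mathcal A$, where the formula is already proved), while the \emph{right} factor retains the full Betti–de Rham comparison data, hence lives in $\mathcal H$; but the underlying combinatorics of the comultiplication — which is the Ihara composition law, independent of which realization one reads off — is the same. So the formula for $\Delta$ on $\mathcal H$ has exactly the same shape as for $\Delta^{\mathfrak a}$ on $\mathcal A$, with the rightmost factor reinterpreted as $I^{\mathfrak m}$, which is Brown's extension.

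The main obstacle is the first stage: making the passage from the geometric/Tannakian definition of the coaction (dual to the Ihara action) to the explicit combinatorial formula genuinely rigorous, rather than merely plausible. One must check that conjugation by the group-like element $a$ in the formula $\langle a\rangle_0(\exp(e_\eta)) = ([\eta]\cdot a)\exp(e_\eta)([\eta]\cdot a)^{-1}$, followed by right multiplication, dualizes term-by-term to the stated sum over flags of indices — including that the homothety/$\mu_N$-equivariance $[\eta]$ is correctly absorbed into the $a_i$'s and does not introduce extra roots of unity (this is where property (vi) and the conventions $(\ref{eq:iterintegw})$ get used), and that the edge cases $a_0=a_{n+1}$, empty subintervals, and the regularization in the non-admissible case (property (iii), $(\ref{eq:shufflereg})$) are all consistent. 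I would handle this by induction on $n$, peeling off either the first or the last letter and using path composition (property (iv)) together with the compatibility of $\Delta^c$ with the shuffle product, reducing the general case to the already-understood behaviour in low weight; the inductive step is where one verifies that no terms are double-counted or dropped. Everything downstream — that $\Delta^c$ descends through $\Delta^{\mathcal{MT}}$ to $\mathcal H$ — is then formal from the commutative diagram.
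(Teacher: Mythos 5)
You should first note a structural point: the paper does not prove this theorem. It is imported as background, with the $\mathcal{A}$-version credited to Goncharov $\cite{Go1}$ and the extension to $\mathcal{H}$ to Brown $\cite{Br2}$, so there is no internal proof to compare yours against line by line. Your two-stage plan — compute the coaction on $\mathcal{A}$ as the dual of the Ihara action via $(\ref{eq:gpaut})$, then lift the right-hand tensor factor to $\mathcal{H}$ using the splitting of geometric periods into $\mathcal{A}\otimes\mathbb{Q}[\mathbb{L}^{\mathfrak{m}}]$ and the projection $\mathcal{O}(\mathcal{G})\to\mathcal{O}(\mathcal{U})$ — is the standard route in the cited sources and is consistent with the commutative diagram the paper draws just before the statement; the second stage as you describe it is essentially correct and formal once the first stage is in place.

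The genuine gap is exactly where you place the ``combinatorial heart'', and it is larger than ``mechanical bookkeeping''. Concretely: (a) the Ihara action $(\ref{eq:actionpi01})$--$(\ref{eq:iharaaction})$ only governs the coaction on $\mathcal{O}(_{0}\Pi_{1})$, i.e.\ integrals with endpoints $0$ and $1$, whereas the theorem allows arbitrary $a_{0},a_{n+1}\in\lbrace 0\rbrace\cup\mu_{N}$; to reach the general case you must either dualize the action of $\mathcal{V}\cong{}_{0}\Pi_{1}$ on the whole groupoid $\lbrace _{x}\Pi_{y}\rbrace$ (using $\mu_{N}$-equivariance and inertia) or reduce to $(0,1)$ via properties (i)--(vi), and neither reduction is spelled out. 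Moreover the recursion you propose — ``induction on $n$, peeling off the first or last letter'' — is not available: there is no identity expressing $I^{\mathfrak{m}}(a_{0};a_{1},\ldots,a_{n};a_{n+1})$ through integrals of shorter words, and path composition (iv) does not shorten words; the natural recursive structure is the differential equation in the $a_{i}$ noted after the theorem, which is what Goncharov's actual argument exploits. (b) Even with endpoints $(0,1)$, the raw dual of $a\circ b=\langle a\rangle_{0}(b)\cdot a$ is a sum over interleavings of the letters of $b$ with blocks of the form $([\eta']a)^{-1}e_{0}^{k}([\eta]a)$ and a terminal copy of $a$; its left factors are (reversed, signed) coefficient functionals of $a$ itself, not the integrals $I^{\mathfrak{a}}(a_{i_{p}};a_{i_{p}+1},\ldots;a_{i_{p+1}})$ whose endpoints are letters of the original word. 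Reassembling the former into the latter uses the antipode/reversal, $I^{\mathfrak{a}}(a;b)=1$, $I^{\mathfrak{a}}(a;\ldots;a)=0$, path composition and the shuffle relation in a systematic way, and checking that no terms are dropped or double-counted is precisely the content of the theorem. Until (a) and (b) are carried out — or Goncharov's computation is invoked as a black box, which is what the paper itself does — your text is a correct outline of the known strategy rather than a proof.
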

\noindent
\textsc{Remark:} It has a nice geometric formulation, considering the $a_{i}$ as vertices on a half-circle: 
$$\Delta^{c} I^{\mathfrak{m}}(a_{0}; a_{1}, \cdots a_{n}; a_{n+1})=\sum_{\text{ polygons on circle  } \atop \text{ with vertices } (a_{i_{p}})}     \prod_{p}  I^{\mathfrak{a}}\left(  \text{ arc between consecutive vertices  } \atop \text{ from } a_{i_{p}} \text{ to } a_{i_{p+1}} \right)  \otimes I^{\mathfrak{m}}(\text{  vertices } ).$$
\texttt{Example}: In the reduced coaction\footnote{$\Delta'(x):=\Delta(x)-1\otimes x-x\otimes 1$} of $\zeta^{\mathfrak{m}}(-1,3)=I^{\mathfrak{m}}(0; -1,1,0,0;1)$, there are $3$ non zero cuts: \includegraphics[]{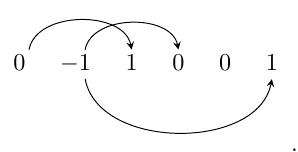}. Hence:
\begin{multline}\nonumber
\Delta'(I^{\mathfrak{m}}(0; -1,1,0,0;1))\\
= I^{\mathfrak{a}}(0; -1;1) \otimes I^{\mathfrak{m}}(0; 1,0,0;1)+ I^{\mathfrak{a}}(-1; 1;0) \otimes I^{\mathfrak{m}}(0; -1,0,0;1)+ I^{\mathfrak{a}}(-1; 1,0,0;1) \otimes I^{\mathfrak{m}}(0; -1;1)
\end{multline}   
I.e, in terms of motivic Euler sums, using the properties of motivic iterated integrals ($\S \ref{propii}$):
$$\Delta'(\zeta^{\mathfrak{m}}(-1,3))= \zeta^{\mathfrak{a}}(-1)\otimes \zeta^{\mathfrak{m}}(3)-\zeta^{\mathfrak{a}}(-1)\otimes \zeta^{\mathfrak{m}}(-3)+ (\zeta^{\mathfrak{a}}(3)-\zeta^{\mathfrak{a}}(-3) )\otimes \zeta^{\mathfrak{m}}(-1).$$
\\
Define for $r\geq 1$, the \textit{derivation operators}:
\begin{equation}\label{eq:dr}
\boldsymbol{D_{r}}: \mathcal{H} \rightarrow \mathcal{L}_{r} \otimes_{\mathbb{Q}} \mathcal{H},
\end{equation}
 composite of $\Delta'= \Delta^{c}- 1\otimes id$ with $\pi_{r} \otimes id$, where $\pi_{r}$ is the projection $\mathcal{A} \rightarrow \mathcal{L} \rightarrow \mathcal{L}_{r}$.\\
 \\
\texttt{Nota Bene:} It is sufficient to consider these weight-graded derivation operators to keep track of all the information of the coaction.\\
\\
According to the previous theorem, the action of $D_{r}$ on $I^{\mathfrak{m}}(a_{0}; a_{1}, \cdots a_{n}; a_{n+1})$ is:\nomenclature{$D_{r}$}{the $r$-weight-graded part of the coaction $\Delta$ }
\begin{framed}
\begin{equation}
\label{eq:Der}
D_{r}I^{\mathfrak{m}}(a_{0}; a_{1}, \cdots, a_{n}; a_{n+1})= 
\end{equation}
$$\sum_{p=0}^{n-1} I^{\mathfrak{l}}(a_{p}; a_{p+1}, \cdots, a_{p+r}; a_{p+r+1}) \otimes I^{\mathfrak{m}}(a_{0}; a_{1}, \cdots, a_{p}, a_{p+r+1}, \cdots, a_{n}; a_{n+1}) .$$
\end{framed}
\textsc{Remarks}
\begin{itemize}
\item[$\cdot$] Geometrically, it is equivalent to keep in the previous coaction only the polygons corresponding to a unique cut of (interior) length $r$ between two elements of the iterated integral.
\item[$\cdot$] These maps $D_{r}$ are derivations:
$$D_{r} (XY)= (1\otimes X) D_{r}(Y) + (1\otimes Y) D_{r}(X).$$
\item[$\cdot$] This formula is linked with the differential equation satisfied by the iterated integral $I(a_{0}; \cdots; a_{n+1})$ when the $a_{i} 's$ vary (cf. \cite{Go1})\footnote{Since $I(a_{i-1};a_{i};a_{i+1})= \log(a_{i+1}-a_{i})-\log(a_{i-1}-a_{i})$.}:
$$dI(a_{0}; \cdots; a_{n+1})= \sum dI(a_{i-1};a_{i};a_{i+1})  I(a_{0}; \cdots, \widehat{a_{i}}, \cdots; a_{n+1}).$$
\end{itemize}
\texttt{Example}: By the previous example:\\
$$D_{3}(\zeta^{\mathfrak{m}}(-1,3))=(\zeta^{\mathfrak{a}}(3)-\zeta^{\mathfrak{a}}(-3) )\otimes \zeta^{\mathfrak{m}}(-1)$$
$$D_{1}(\zeta^{\mathfrak{m}}(-1,3))= \zeta^{\mathfrak{a}}(-1)\otimes ( \zeta^{\mathfrak{m}}(3)- \zeta^{\mathfrak{m}}(-3)) $$

\subsection{Depth filtration}

The inclusion of $\mathbb{P}^{1}\diagdown \lbrace 0, \mu_{N},\infty\rbrace \subset \mathbb{P}^{1}\diagdown \lbrace 0,\infty\rbrace$ implies the surjection for the de Rham realizations of fundamental groupoid:
\begin{equation} \label{eq:drsurj}
  _{0}\Pi_{1} \rightarrow  \pi_{1}^{dR}(\mathbb{G}_{m}, \overrightarrow{01}).
  \end{equation}
Looking at the dual, it corresponds to the inclusion of:
\begin{equation} \label{eq:drsurjdual}
 \mathcal{O}  \left( \pi_{1}^{dR}(\mathbb{G}_{m}, \overrightarrow{01} ) \right)  \cong \mathbb{Q} \left\langle e^{0} \right\rangle  \xhookrightarrow[\quad \quad]{} \mathcal{O} \left(  _{0}\Pi_{1} \right) \cong \mathbb{Q} \left\langle e^{0}, (e^{\eta})_{\eta} \right\rangle  .
   \end{equation}
This leads to the definition of  an increasing \textit{depth filtration} $\mathcal{F}^{\mathfrak{D}}$ on $\mathcal{O}(_{0}\Pi_{1})$\footnote{It is the filtration dual to the filtration given by the descending central series of the kernel of the map  $\ref{eq:drsurj}$; it can be defined also from the cokernel of $\ref{eq:drsurjdual}$, via the decontatenation coproduct.} such that:\nomenclature{$\mathcal{F}_{\bullet}^{\mathfrak{D}}$}{the depth filtration}
 \begin{equation}\label{eq:filtprofw} \boldsymbol{\mathcal{F}_{p}^{\mathfrak{D}}\mathcal{O}(_{0}\Pi_{1})} \mathrel{\mathop:}= \left\langle  \text{ words } w \text{ in }e^{0},e^{\eta}, \eta\in\mu_{N} \text{ such that } \sum_{\eta\in\mu_{N}} deg _{e^{\eta}}w \leq p \right\rangle _{\mathbb{Q}}.
 \end{equation}
This filtration is preserved by the coaction and thus descends to $\mathcal{H}$ (cf. $\cite{Br3}$), on which:
 \begin{equation}\label{eq:filtprofh}  \mathcal{F}_{p}^{\mathfrak{D}}\mathcal{H}\mathrel{\mathop:}= \left\langle  \zeta^{\mathfrak{m}}\left( n_{1}, \ldots, n_{r} \atop \epsilon_{1}, \ldots, \epsilon_{r} \right) , r\leq p \right\rangle _{\mathbb{Q}}.
 \end{equation}
In the same way, we define $ \mathcal{F}_{p}^{\mathfrak{D}}\mathcal{A}$ and  $\mathcal{F}_{p}^{\mathfrak{D}}\mathcal{L}$. Beware, the corresponding grading on $\mathcal{O}(_{0}\Pi_{1})$ is not motivic and the depth is not a grading on $\mathcal{H}$\footnote{ For instance: $\zeta^{\mathfrak{m}}(3)=\zeta^{\mathfrak{m}}(1,2)$. }. The graded spaces $gr^{\mathfrak{D}}_{p}$ are defined as the quotient $\mathcal{F}_{p}^{\mathfrak{D}}/\mathcal{F}_{p-1}^{\mathfrak{D}}$.\\
Similarly, there is an increasing depth filtration on $U\mathfrak{g}$, considering the degree in $\lbrace e_{\eta}\rbrace_{\eta\in\mu_{N}}$, which passes to the motivic Lie algebra $\mathfrak{g}^{\mathfrak{m}}$($\ref{eq:motivicliealgebra}$) such that the graded pieces $gr^{r}_{\mathfrak{D}} \mathfrak{g}^{\mathfrak{m}}$ are dual to $gr^{\mathfrak{D}}_{r} \mathcal{L}$.\\ 
In depth $1$, there are canonical elements:\footnote{For $N=1$, there are only the $\overline{\sigma}_{2i+1}\mathrel{\mathop:}=(\text{ad} e_{0})^{2i} (e_{1}) \in gr^{1}_{\mathfrak{D}} \mathfrak{g}^{\mathfrak{m}}$, $i>0$ and the subLie algebra generated by them is not free, which means also there are other \say{exceptional} generators in higher depth, cf. \cite{Br2}.\\
For $N=2,3,4,\mlq 6\mrq,8$, when keeping $\eta_{i}$ as in Lemma $5.2.1$,  $(\overline{\sigma}^{(\eta_{i})}_{i})$ then generate a free Lie algebra in $gr_{\mathfrak{D}} \mathfrak{g}$.}
\begin{equation}\label{eq:oversigma}
\overline{\sigma}^{(\eta)}_{i}\mathrel{\mathop:}=(\text{ad } e_{0})^{i-1} (e_{\eta}) \in gr^{1}_{\mathfrak{D}} \mathfrak{g}^{\mathfrak{m}}.
\end{equation}
They satisfy the distribution and conjugation relations stated below.\\

\paragraph{Depth $\boldsymbol{1}$.}
In depth $1$, it is known for $\mathcal{A}$ (cf. $\cite{DG}$ Theorem $6.8$):
\begin{lemm}[Deligne, Goncharov]
The elements $\zeta^{\mathfrak{a}} \left( r; \eta \right)$ are subject only to the following relations in $\mathcal{A}$:
\begin{description}
\item[Distribution]
$$\forall d|N \text{  ,  }  \forall \eta\in\mu_{\frac{N}{d}} \text{  ,  } (\eta,r)\neq(1,1)\text{    ,    } \zeta^{\mathfrak{a}} \left({r \atop \eta}\right)= d^{r-1} \sum_{\epsilon^{d}=\eta} \zeta^{\mathfrak{a}} \left({r \atop \epsilon}\right).$$ 
\item[Conjugation]
$$\zeta^{\mathfrak{a}} \left({r \atop \eta}\right)= (-1)^{r-1} \zeta^{\mathfrak{a}} \left({r \atop \eta^{-1}}\right).$$
\end{description}
\end{lemm}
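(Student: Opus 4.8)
The statement to prove is the Deligne–Goncharov description of the depth-one part of $\mathcal{A}$: that the motivic iterated integrals $\zeta^{\mathfrak{a}}\!\left({r\atop\eta}\right)$ (equivalently the classes $\overline{\sigma}^{(\eta)}_i = (\mathrm{ad}\,e_0)^{i-1}(e_\eta)$ in $gr^1_{\mathfrak{D}}\mathfrak{g}^{\mathfrak m}$) satisfy \emph{only} the distribution and conjugation relations. I would split this into three pieces: (1) verify that both families of relations actually hold in $\mathcal{A}$; (2) show that, in each weight $r$, the relations cut the span of the $\zeta^{\mathfrak{a}}\!\left({r\atop\eta}\right)$, $\eta\in\mu_N$, down to a space of dimension exactly $b_N=\varphi(N)/2$ for $r>1$ (and to dimension $a_{\Gamma_N}$, resp. the appropriate count, for $r=1$); (3) show this upper bound is attained, i.e. the relations are \emph{all} of them. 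The essential input making (3) work is the identification \eqref{eq:uab}, namely $\mathfrak{u}^{ab}\cong\bigoplus\mathrm{Ext}^1_{\mathcal{MT}}(\mathbb{Q}(0),\mathbb{Q}(n))^\vee$ in degree $n$, together with the $K$-theory computation \eqref{dimensionk}: depth-one motivic iterated integrals are precisely the coordinates dual to $gr^1_{\mathfrak D}\mathfrak{g}^{\mathfrak m}$, which injects into $\mathfrak{u}^{ab}$, whose degree-$n$ piece has dimension exactly $\dim K_{2n-1}(\mathcal{O}_N[\tfrac1N])\otimes\mathbb{Q}$.

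\textbf{Step 1: the relations hold.} The conjugation relation $\zeta^{\mathfrak{a}}\!\left({r\atop\eta}\right)=(-1)^{r-1}\zeta^{\mathfrak{a}}\!\left({r\atop\eta^{-1}}\right)$ is the path-reversal property (v) of motivic iterated integrals together with the homothety property (vi): writing $\zeta^{\mathfrak m}\!\left({r\atop\eta}\right)=-I^{\mathfrak m}(0;\eta^{-1},\mathbf{0}^{r-1};1)$, apply (v) to get $(-1)^r I^{\mathfrak m}(1;\mathbf{0}^{r-1},\eta^{-1};0)$, then rescale by $\eta$ via (vi) and use (ii) to kill the endpoint contributions, landing on $\pm I^{\mathfrak m}(0;\eta,\mathbf0^{r-1};1)$ up to the sign $(-1)^{r-1}$; passing to $\mathcal{A}$ this is exact. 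The distribution relation comes from the pushforward under the $d$-th power map $\mathbb{P}^1\setminus\{0,\mu_N,\infty\}\to\mathbb{P}^1\setminus\{0,\mu_{N/d},\infty\}$ applied to the motivic torsor of paths: the form $\omega_\eta$ pulls back to $\sum_{\epsilon^d=\eta}\omega_\epsilon$ up to the scaling producing the factor $d^{r-1}$ on the nested $\mathbf0$'s; since this is a morphism in $\mathcal{MT}(k_N)$ it descends to $\mathcal{A}$. Both can also be checked directly on the combinatorial side using the explicit coaction of Theorem \ref{eq:coaction} restricted to depth one.

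\textbf{Steps 2 and 3: the relations are exhaustive.} For the dimension count, fix weight $r$. The distribution relations, as $d$ ranges over divisors of $N$, express each $\zeta^{\mathfrak a}\!\left({r\atop\eta}\right)$ with $\eta$ of non-maximal order in terms of those of maximal order $N$, so the span is generated by $\{\zeta^{\mathfrak a}\!\left({r\atop\xi}\right):\xi \text{ primitive}\}$; there are $\varphi(N)$ of these, and conjugation identifies $\xi\leftrightarrow\xi^{-1}$ (with sign), halving this to $\varphi(N)/2$ generators for $r>1$ — and a short check using the relation between $\zeta^{\mathfrak a}$ in degree $1$ and cyclotomic units (as recalled in the \eqref{eq:agamma} discussion) handles $r=1$ and the $(\eta,r)=(1,1)$ exclusion. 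This gives the upper bound $\dim\,(gr^1_{\mathfrak D}\mathfrak{g}^{\mathfrak m})_r\le b_N$. For the matching lower bound, use that $gr^1_{\mathfrak D}\mathfrak g^{\mathfrak m}\hookrightarrow\mathfrak u^{ab}$ and that $\dim(\mathfrak u^{ab})_r=\dim\mathrm{Ext}^1_{\mathcal{MT}_N}(\mathbb Q(0),\mathbb Q(r))^\vee=\dim K_{2r-1}(\mathcal O_{k_N})\otimes\mathbb Q=\varphi(N)/2$ for $r>1$ by \eqref{dimensionk}; one then checks the depth-one motivic iterated integrals actually surject onto this piece — equivalently, that the period map on depth-one (polylogarithms $\mathrm{Li}_r(\xi)$, Borel's regulator) is non-degenerate. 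Since upper and lower bounds coincide, there can be no further relations.

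\textbf{Main obstacle.} The routine part is Step 1 and the "generators" half of Step 2. The real content is the lower bound: proving that the depth-one motivic iterated integrals genuinely span a space of dimension $\varphi(N)/2$ in each weight $>1$, i.e. that no \emph{accidental} motivic relation beyond distribution/conjugation exists. This is where one cannot stay combinatorial: it requires the comparison with algebraic $K$-theory via \eqref{eq:uab} and the non-vanishing of Borel regulators (Borel's theorem, as packaged in Soulé's and Beilinson's work), together with Goncharov's result that Hodge and $\ell$-adic realizations are faithful on $\mathcal{MT}$ of cyclotomic fields. I would invoke these as black boxes, following $\cite{DG}$, rather than reprove them; the delicate bookkeeping is matching the primitive-root generators to a basis of $K_{2r-1}(\mathcal O_{k_N})\otimes\mathbb Q$ compatibly with the $\mathrm{Gal}(k_N/\mathbb Q)$-action, which is exactly what the $\varphi(N)/2$ (= number of complex places) counts.
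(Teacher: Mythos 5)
The paper does not actually prove this lemma: it is quoted as an external input, with the reference given just above the statement (cf.\ $\cite{DG}$, Theorem 6.8), so there is no internal proof to compare yours against line by line. Your outline is essentially a reconstruction of the cited Deligne--Goncharov argument — check that distribution and conjugation hold, then match the dimension of the formal quotient against $\dim\mathrm{Ext}^{1}_{\mathcal{MT}_{N}}(\mathbb{Q}(0),\mathbb{Q}(r))=\dim K_{2r-1}\otimes\mathbb{Q}$ via $(\ref{eq:uab})$ and $(\ref{dimensionk})$, with Borel's theorem and the Beilinson--Soul\'e spanning of the cyclotomic classes supplying the lower bound — and that structure is sound; you also correctly identify which inputs must be taken as black boxes, which is exactly what the paper itself does by citing the result.

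Two steps are looser than your write-up acknowledges. First, the reduction of the non-primitive $\zeta^{\mathfrak{a}}\left(r \atop \eta\right)$ to the primitive ones is not a single application of a distribution relation: for composite $N$ one must combine the relations for several divisors and invert coefficients of the shape $1-d^{r-1}$ (your own $r=1$ caveat is precisely where this inversion fails, which is why weight one is governed by cyclotomic units and has a different dimension $a_{\Gamma_N}$); the resulting count $\dim\bigl(V_r/(\mathrm{dist}+\mathrm{conj})\bigr)=\varphi(N)/2$ for all $r>1$ and all composite $N$ is a genuine computation (e.g.\ via characters of $\mathbb{Z}/N\mathbb{Z}$) that you assert rather than perform. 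Second, the conjugation relation does not follow from path reversal (v) and homothety (vi) alone as sketched — homothety requires the base point $0$ and does not interchange $\eta$ and $\eta^{-1}$ in the required way; motivically one uses the inversion $z\mapsto z^{-1}$ acting on the fundamental groupoid, or the explicit depth-one computation, the point being that the Bernoulli correction term, a rational multiple of $(2\pi i)^{r}$, vanishes in $\mathcal{A}$ because the Lefschetz period dies there. Neither issue derails the plan, but both would have to be written out for your proof to stand independently of $\cite{DG}$.
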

\textsc{Remark}: More generally, distribution relations for MZV relative to $\mu_{N}$ are:
$$\forall d| N, \quad \forall \epsilon_{i}\in\mu_{\frac{N}{d}} \text{  ,  } \quad \zeta\left( { n_{1}, \ldots , n_{p} \atop  \epsilon_{1} , \ldots ,\epsilon_{p} } \right) = d^{\sum n_{i} - p} \sum_{\eta_{1}^{d}=\epsilon_{1}} \cdots  \sum_{\eta_{p}^{d}=\epsilon_{p}} \zeta \left( {n_{1}, \ldots , n_{p} \atop  \eta_{1} , \ldots ,\eta_{p} } \right) .$$
They are deduced from the following identity:  
$$\text{ For } d|N , \epsilon\in\mu_{\frac{N}{d}} \text {   ,  } \sum_{\eta^{d}=\epsilon} \eta^{n}=  \left\{
\begin{array}{ll}
  d \epsilon ^{\frac{n}{d}}& \text{ if } d|n \\
  0 & \text{ else }.\\
\end{array}
\right. $$
These relations are obviously analogous of those satisfied by the cyclotomic units modulo torsion. \\
\\
In weight $r>1$, a basis for $gr_{1}^{\mathfrak{D}} \mathcal{A}$ is formed by depth $1$ MMZV at primitive roots up to conjugation. However, MMZV$_{\mu_{N}}$ of weight $1$, $\zeta^{\mathfrak{m}} \left( 1 \atop \xi^{a}_{N}\right)  = -\log(1-\xi^{a}_{N})$, are more subtle. For instance (already in $\cite{CZ})$:
\begin{lemme}
A $\mathbb{Z}$-basis for $\mathcal{A}_{1}$ is hence:
\begin{description}
\item[$\boldsymbol{N=p^{r}}$:]  $ \quad \quad \left\lbrace \zeta^{\mathfrak{a}}\left( 1 \atop \xi^{k}\right) \quad a\wedge p=1 \quad 1 \leq a \leq \frac{p-1}{2} \right\rbrace$.
\item[$\boldsymbol{N=pq}$:] With $p<q$ primes:    $$ \quad\left\lbrace  \left\lbrace \zeta^{\mathfrak{a}}\left( 1 \atop \xi^{k}\right) \quad a\wedge p=1 \quad 1 \leq a \leq \frac{p-1}{2} \right\rbrace \bigcup_{a\in (\mathbb{Z}/q\mathbb{Z})^{\ast}\diagup \langle -1, p\rangle } \left\lbrace \zeta^{\mathfrak{a}}\left( 1 \atop \xi^{ap}\right)\right\rbrace \diagdown \left\lbrace\zeta^{\mathfrak{a}}\left( 1 \atop \xi^{a}\right) \right\rbrace \right. $$
$$\left.  \bigcup_{a\in (\mathbb{Z}/p\mathbb{Z})^{\ast}\diagup \langle -1, q\rangle}\left\lbrace \zeta^{\mathfrak{a}}\left( 1 \atop \xi^{aq}\right)\right\rbrace \diagdown \left\lbrace\zeta^{\mathfrak{a}}\left( 1 \atop \xi^{a}\right) \right\rbrace \right\rbrace $$
\end{description}
\end{lemme}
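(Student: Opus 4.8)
The plan is to identify $\mathcal{A}_1$ (equivalently $\mathcal{L}_1$, since the two coincide in weight $1$) with the group of cyclotomic units modulo torsion, tensored with $\mathbb{Q}$, and then to read off an explicit basis from the known structure of the latter. Applying the Deligne--Goncharov lemma above with $r=1$, the $\mathbb{Q}$-vector space $\mathcal{A}_1$ is spanned by the $\zeta^{\mathfrak{a}}\left( 1 \atop \eta \right)$, $\eta\in\mu_{N}$, subject \emph{only} to conjugation $\zeta^{\mathfrak{a}}\left( 1 \atop \eta \right)=\zeta^{\mathfrak{a}}\left( 1 \atop \eta^{-1} \right)$ and the distribution relations $\zeta^{\mathfrak{a}}\left( 1 \atop \eta \right)=\sum_{\epsilon^{d}=\eta}\zeta^{\mathfrak{a}}\left( 1 \atop \epsilon \right)$ for $d\mid N$, $\eta\in\mu_{N/d}\setminus\{1\}$ (with $\zeta^{\mathfrak{a}}\left( 1 \atop 1 \right)=0$ by shuffle regularisation). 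Via the dictionary $\zeta^{\mathfrak{m}}\left( 1 \atop \xi_N^{a} \right)=-\log(1-\xi_N^{a})$ — motivically, $\zeta^{\mathfrak{a}}\left( 1 \atop \xi_N^{a} \right)$ is the class of the Kummer motive of $1-\xi_N^{a}$ in $\text{Ext}^{1}_{\mathcal{MT}'_N}(\mathbb{Q}(0),\mathbb{Q}(1))=\Gamma_N$ — these are exactly the multiplicative relations among the $1-\xi_N^{a}$ inside $\Gamma_N\otimes\mathbb{Q}$: conjugation because $\tfrac{1-\xi^{-a}}{1-\xi^{a}}=-\xi^{-a}\in\mu_N$, and distribution because $\prod_{\epsilon^{d}=\eta}(1-\epsilon)=1-\eta$. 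Thus it suffices, in each case, to exhibit a family of $1-\xi_N^{a}$ whose classes are a $\mathbb{Z}$-basis of the cyclotomic unit lattice and to match its size with $a_{\Gamma_N}=\tfrac{\varphi(N)}{2}+\nu_N-1$ from $(\ref{eq:agamma})$.

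For $N=p^{r}$, every $\zeta^{\mathfrak{a}}\left( 1 \atop \eta \right)$ is reduced, by iterating the distribution relations and then conjugation, to a combination of the $\tfrac{\varphi(p^{r})}{2}$ elements $\zeta^{\mathfrak{a}}\left( 1 \atop \xi^{a} \right)$ with $a\wedge p=1$ and $1\le a<\tfrac{p^{r}}{2}$ (for $r=1$ this is the displayed set, $\tfrac{p^{r}}{2}=\tfrac{p}{2}$ and coprimality being automatic), so this family spans $\mathcal{A}_1$. It has the right cardinality, since $\dim_{\mathbb{Q}}\mathcal{A}_1=\dim_{\mathbb{Q}}\Gamma_{p^{r}}\otimes\mathbb{Q}=\tfrac{\varphi(p^{r})}{2}=a_{\Gamma_{p^{r}}}$ by Dirichlet's $S$-unit theorem, $p$ being totally ramified in $k_{p^{r}}$ with a single prime above it, generated by $1-\xi_{p^{r}}$. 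Hence it is a basis; and it is a $\mathbb{Z}$-basis because the $\tfrac{\varphi(p^{r})}{2}-1$ normalised cyclotomic units $\tfrac{1-\xi^{a}}{1-\xi}$ ($1<a<p^{r}/2$, $p\nmid a$) form a $\mathbb{Z}$-basis of the cyclotomic unit group modulo torsion (of index $h^{+}(k_{p^{r}})$ in the units), and adjoining $1-\xi$ — a $\mathbb{Z}$-invertible change of basis away from the displayed family — accounts for the prime above $p$.

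For $N=pq$ with distinct primes $p<q$, the distribution relations for $d=p$ and $d=q$ now interlock the generators at primitive $p$-th, $q$-th and $pq$-th roots. The mechanism is: writing $\xi_q:=\xi_{pq}^{p}$ (so $\zeta^{\mathfrak{a}}\left( 1 \atop \xi^{ap} \right)=\zeta^{\mathfrak{a}}\left( 1 \atop \xi_q^{a} \right)$ for $\gcd(a,q)=1$), the $d=p$ distribution relation reads $\zeta^{\mathfrak{a}}\left( 1 \atop \xi_q^{b} \right)=\zeta^{\mathfrak{a}}\left( 1 \atop \xi_q^{bp^{-1}} \right)+\bigl(\text{sum of }\zeta^{\mathfrak{a}}\text{ at primitive }pq\text{-th roots}\bigr)$, so modulo the span of the primitive $pq$-th root generators it identifies $\zeta^{\mathfrak{a}}\left( 1 \atop \xi_q^{b} \right)$ with $\zeta^{\mathfrak{a}}\left( 1 \atop \xi_q^{bp^{-1}} \right)$; together with conjugation $b\sim -b$, the $q$-th root generators not already accounted for by $pq$-th root ones are indexed by $(\mathbb{Z}/q\mathbb{Z})^{\ast}/\langle -1,p\rangle$, and symmetrically the $p$-th root generators contribute $(\mathbb{Z}/p\mathbb{Z})^{\ast}/\langle -1,q\rangle$. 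Assembling a suitable choice of primitive $pq$-th root generators with representatives of these two quotients and removing the resulting overlap (the subtracted sets $\{\zeta^{\mathfrak{a}}\left( 1 \atop \xi^{a} \right)\}$ in the statement) yields the displayed three-fold family; one then checks its cardinality is $a_{\Gamma_{pq}}=\tfrac{(p-1)(q-1)}{2}+1$ and that it is $\mathbb{Z}$-independent, using the classical description of the cyclotomic unit group of $k_{pq}$ and its index (Bass $\cite{Ba}$; cf. $\cite{CZ}$) together with the $\langle p\rangle,\langle q\rangle$ summands of $\Gamma_{pq}$ from $(\ref{eq:gamma})$.

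The main obstacle is precisely this composite-modulus bookkeeping: deciding which primitive $pq$-th root generators to retain, making precise which representatives of $(\mathbb{Z}/q\mathbb{Z})^{\ast}/\langle -1,p\rangle$ and $(\mathbb{Z}/p\mathbb{Z})^{\ast}/\langle -1,q\rangle$ are genuinely new, and pinning down the single redundancy among all the distribution relations so that the final count is $\tfrac{(p-1)(q-1)}{2}+1$ and not one more or one less. Once the linear system is resolved this way, independence requires no transcendence input — it is forced by $\dim\mathcal{A}_1=a_{\Gamma_{pq}}$ together with the Deligne--Goncharov \say{only relations} statement — and the promotion from a $\mathbb{Q}$-basis to a $\mathbb{Z}$-basis is the routine change of basis between the $1-\xi^{a}$ and the normalised cyclotomic units. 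By contrast, the identification of $\mathcal{A}_1$ with cyclotomic units and the prime-power case follow directly from the results already quoted (Definition $\ref{defimtcat}$, $(\ref{eq:extension})$, $(\ref{eq:gamma})$, $(\ref{eq:agamma})$, and the Deligne--Goncharov depth-$1$ lemma).
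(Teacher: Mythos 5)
Your strategy is the same as the paper's: the paper gives no formal proof of this lemma, but justifies it by exactly the ingredients you assemble --- the Deligne--Goncharov depth-one lemma (conjugation and distribution are the \emph{only} relations), the identification of the weight-one classes $\zeta^{\mathfrak{a}}\left(1 \atop \xi^{a}\right)$ with the classes of $1-\xi^{a}$ in $\Gamma_{N}\otimes\mathbb{Q}$ via Kummer motives, the dimension count $a_{\Gamma_{N}}=\tfrac{\varphi(N)}{2}+\nu_{N}-1$, and Bass's description of the cyclotomic units (the statement itself is credited to Conrad--Zhao). On this basis your treatment of $N=p^{r}$ is complete and correct, including the unimodular change of basis between $\{1-\xi^{a}\}$ and the normalised cyclotomic units which upgrades the $\mathbb{Q}$-basis to a $\mathbb{Z}$-basis.

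For $N=pq$, however, you stop exactly where the content of the lemma lies. What the paper supplies --- in the remark following the lemma and in the $N=34$ example --- and what you explicitly defer as the main obstacle, is the loop argument: for a fixed orbit of $\langle -1,p\rangle$ on $(\mathbb{Z}/q\mathbb{Z})^{\ast}$, each $d=p$ distribution relation writes the $q$-th-root generator of index $b$ as the $q$-th-root generator of index $p^{-1}b$ plus $(p-1)$ primitive $pq$-th-root generators; traversing the orbit, the chain closes, so that summing the relations around the loop yields exactly one linear relation among the primitive generators attached to that orbit, while all $q$-th-root generators of the loop are determined by any single one of them together with primitives. This is precisely why trading one primitive generator for one non-primitive generator per loop produces a spanning and independent family once the dimension is matched, and none of it is carried out in your proposal: you only assert that one then checks cardinality and independence. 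Moreover, as you describe them the per-orbit swaps preserve the cardinality $\varphi(pq)/2$ of the primitive family, so your claim that the resulting family has $a_{\Gamma_{pq}}=\varphi(pq)/2+1$ elements does not follow as written; identifying where the extra generator comes from (the classes carrying the valuations at $p$ and $q$, i.e. the elements $1-\xi_{p}^{a}$, $1-\xi_{q}^{a}$, and the possible coincidence of removed representatives from the two quotients) is exactly the bookkeeping the paper's warning about intersecting orbits and its $N=34$ example are meant to address, and it remains open in your write-up. Until that linear algebra is done, neither the spanning nor the independence of the displayed family for $N=pq$ is established, even granting Deligne--Goncharov and the dimension count.
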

\textsc{Remarks}: 
\begin{itemize}
\item[$\cdot$] Indeed, for $N=pq$, a phenomenon of loops occurs: orbits via the action of $p$ and $-1$ on $(\mathbb{Z}/q \mathbb{Z})^{\ast}$, resp. of $q$ and $-1$ on $(\mathbb{Z}/p \mathbb{Z})^{\ast}$. Consequently, for each loop we have to remove a primitive root $\zeta\left( 1 \atop \xi^{a}\right) $ and add the non primitive $\zeta\left( 1 \atop \xi^{ap}\right) $ to the basis.\footnote{Cardinal of an orbit $ \lbrace \pm ap^{i} \mod N\rbrace$ is either the order of $p$ modulo $q$, if odd, or half of the order of $p$ modulo $q$, if even.} The situation for $N$ a product of primes would be analogous, considering different orbits associated to each prime; we just have to pay more attention when orbits intersect, for the choice of the representatives $a$: avoid to withdraw or add an element already chosen for previous orbits.
\item[$\cdot$] Depth $1$ results also highlight a nice behavior in the cases $N=2,3,4,\mlq 6\mrq,8$: primitive roots of unity modulo conjugation form a basis (as in the case of prime powers) and if we restrict (for dimension reasons) for non primitive roots to $1$ (or $\pm 1$ for $N=8$), it is annihilated in weight $1$ and in weight $>1$ modulo $p$.
\item[$\cdot$]  In weight $1$, there always exists a $\mathbb{Z}$- basis.\footnote{Conrad and Zhao conjectured (\cite{CZ}) there exists a basis of MZV$_{\mu_{N}}$ for the $\mathbb{Z}$-module spanned by MZV$_{\mu_{N}}$ for each $N$ and fixed weight $w$, except $N=1$, $w=6,7$.}
\end{itemize} 
\texttt{Example}:
For $N=34$, relations in depth $1$, weight $1$ lead to two orbits, with $(a)\mathrel{\mathop:}=\zeta^{\mathfrak{a}} \left( 1 \atop \xi^{a}_{N}\right) $:
$$\begin{array}{ll}
(2)= (16) +(1) & \quad \quad (6)=(3)+(14) \\
(16)=(8) +(9) & \quad \quad (14)=(7)+(10) \\ 
(8)= (4) +(13) & \quad \quad (10)=(5)+(12) \\
(4)= (2) +(15) & \quad \quad (12)=(11)+(6) \\
\end{array},$$
Hence a basis could be chosen as: 
$$\left\lbrace \zeta^{\mathfrak{a}}\left( 1 \atop \xi_{34}^{k}\right), k\in \lbrace 5,7,9,11,13,15,\boldsymbol{2,6} \rbrace \right\rbrace .$$

\paragraph{Motivic depth.} The \textit{motivic depth} of an element in $\mathcal{H}^{\mathcal{MT}_{N}}$ is defined, via the correspondence ($\ref{eq:phih}$), as the degree of the polynomial in the $(f_{i})$. \footnote{Beware, $\phi$ is non-canonical, but the degree is well defined.}  It can also be defined recursively as, for $\mathfrak{Z}\in \mathcal{H}^{N}$:
$$\begin{array}{lll}
\mathfrak{Z} \text{ of motivic depth } 1 & \text{ if and only if } & \mathfrak{Z}\in  \mathcal{F}_{1}^{\mathfrak{D}}\mathcal{H}^{N}.\\
\mathfrak{Z} \text{ of motivic depth } \leq p & \text{ if and only if } &  \left( \forall r< n, D_{r}(\mathfrak{Z})  \text{ of motivic depth } \leq p-1 \right).
\end{array}.$$
For $\mathfrak{Z}= \zeta^{\mathfrak{m}} \left( n_{1}, \ldots, n_{p} \atop \epsilon_{1}, \ldots, \epsilon_{p} \right) \in \mathcal{H}^{N}$ of motivic depth $p_{\mathfrak{m}}$, we clearly have the inequalities:
$$ \text {depth } p \geq p_{c} \geq p_{\mathfrak{m}} \text{ motivic depth}, \quad \text{ where $ p_{c}$ is the smallest $i$ such that $\mathfrak{Z}\in \mathcal{F}_{i}^{\mathfrak{D}}\mathcal{H}^{N}$}. $$
\texttt{Nota Bene:} For $N=2,3,4, \mlq 6\mrq, 8$, $p_{\mathfrak{m}}$ always coincides with $p_{c}$, whereas for $N=1$, they may differ.

\subsection{Derivation space}

Translating ($\ref{eq:dr}$) for cyclotomic MZV:
\begin{lemm}
\label{drz}
$$D_{r}: \mathcal{H}_{n} \rightarrow  \mathcal{L}_{r} \otimes  \mathcal{H}_{n-r} $$
\begin{multline}
 D_{r} \left(\zeta^{\mathfrak{m}} \left({n_{1}, \ldots , n_{p} \atop \epsilon_{1}, \ldots ,\epsilon_{p}} \right)\right) =   \delta_{r=n_{1}+ \cdots +n_{i}} \zeta^{\mathfrak{l}} \left({n_{1}, \cdots, n_{i} \atop  \epsilon_{1}, \ldots, \epsilon_{i}}\right) \otimes \zeta^{\mathfrak{m}} \left(  { n_{i+1},\cdots, n_{p} \atop \epsilon_{i+1}, \ldots, \epsilon_{p} }\right) \\
+ \sum_{1 \leq i<j\leq p \atop \lbrace r \leq \sum_{k=i}^{j} n_{k} -1\rbrace}  \left[ \delta_{ \sum_{k=i+1}^{j} n_{k} \leq r }  \zeta^{\mathfrak{l}}_{r-  \sum_{k=i+1}^{j}n_{k}} \left({ n_{i+1}, \ldots , n_{j} \atop  \epsilon_{i+1}, \ldots, \epsilon_{j}}\right) +(-1)^{r} \delta_{ \sum_{k=i}^{j-1} n_{k} \leq r}  \zeta^{\mathfrak{l}}_{r-  \sum_{k=i}^{j-1}n_{k}} \left({ n_{j-1}, \cdots, n_{i} \atop  \epsilon_{j-1}^{-1}, \ldots, \epsilon_{i}^{-1}}\right) \right]  \\
 \otimes \zeta^{\mathfrak{m}} \left( {\cdots, \sum_{k=i}^{j} n_{k}-r,\cdots \atop  \cdots , \prod_{k=i}^{j}\epsilon_{k}, \cdots}\right)  
\end{multline}
\end{lemm}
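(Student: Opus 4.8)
The plan is to read the formula off the combinatorial expression \eqref{eq:Der} for $D_{r}$ applied to the iterated integral representation \eqref{mmzv}, and then to reorganise the resulting terms using only the elementary properties (i)--(vi) of \S\ref{propii}. Write $\zeta^{\mathfrak{m}}\left( n_{1}, \ldots , n_{p} \atop \epsilon_{1} , \ldots , \epsilon_{p}\right)=(-1)^{p}I^{\mathfrak{m}}(0;\boldsymbol{a};1)$ with $\boldsymbol{a}=(\eta_{1},\boldsymbol{0}^{n_{1}-1},\ldots,\eta_{p},\boldsymbol{0}^{n_{p}-1})$, $\eta_{i}=(\epsilon_{i}\cdots\epsilon_{p})^{-1}$, of length $n=\sum_{k}n_{k}$; write $q_{i}=1+\sum_{k<i}n_{k}$ for the position of the letter $\eta_{i}$, set $a_{0}=0$, $a_{n+1}=1$, $N_{j}=\sum_{k\le j}n_{k}$, and adopt the formal conventions $q_{p+1}=n+1$, $\eta_{p+1}=1$, so that $a_{n+1}$ stands in for the absent letter ``$\eta_{p+1}$''. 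By \eqref{eq:Der}, $D_{r}I^{\mathfrak{m}}(0;\boldsymbol{a};1)$ is the sum over $\ell\in\{0,\ldots,n-r\}$ of $I^{\mathfrak{l}}(a_{\ell};a_{\ell+1},\ldots,a_{\ell+r};a_{\ell+r+1})\otimes I^{\mathfrak{m}}(0;a_{1},\ldots,a_{\ell},a_{\ell+r+1},\ldots,a_{n};1)$, so the entire content of the lemma is a classification, for each $\ell$, of the cut-out $\mathfrak{l}$-factor and the surviving $\mathfrak{m}$-factor.

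I would first discard the vanishing $\ell$'s. A term with $a_{\ell}=a_{\ell+r+1}$ is $0$ by (ii). A term whose segment $(a_{\ell+1},\ldots,a_{\ell+r})$ stays inside one block (contains no letter $\eta_{i+1},\ldots$) has cut-out factor of the shape $I^{\mathfrak{l}}(\ast;\boldsymbol{0}^{r};\ast)$; using path composition (iv) through $0$ to bring both endpoints into $\{0\}$ (the cross-terms are products, hence $0$ in $\mathcal{L}$), then reversal (v) and homothety (vi), this becomes $I^{\mathfrak{l}}(0;\boldsymbol{0}^{r};1)$, which vanishes since $I^{\mathfrak{m}}(0;0;1)=0$ and $I^{\mathfrak{m}}(0;\boldsymbol{0}^{r};1)=\frac{1}{r!}I^{\mathfrak{m}}(0;0;1)^{r}$ by the shuffle relation. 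For $\ell=0$ one has $a_{0}=0$, so a non-zero term forces $a_{r+1}\ne 0$, i.e. $r=N_{i}$ for some $1\le i\le p$; homothety by $\eta_{i+1}^{-1}$ (using $\eta_{i+1}^{-1}\eta_{k}=(\epsilon_{k}\cdots\epsilon_{i})^{-1}$) turns the cut-out factor into the defining word of $\zeta^{\mathfrak{l}}\left( n_{1},\ldots,n_{i}\atop\epsilon_{1},\ldots,\epsilon_{i}\right)$, the surviving factor being $\zeta^{\mathfrak{m}}\left( n_{i+1},\ldots,n_{p}\atop\epsilon_{i+1},\ldots,\epsilon_{p}\right)$ up to sign; collecting the signs with the leading $(-1)^{p}$ yields the first (deconcatenation) term.

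The remaining $\ell$'s are those whose segment genuinely spans several blocks, say it contains exactly the letters $\eta_{i+1},\ldots,\eta_{j}$ with $1\le i<j\le p$; set $w=\sum_{k=i}^{j}n_{k}$, so that $r\le w-1$. In every such case the surviving factor is the same: deleting the $r$ middle letters fuses blocks $i,\ldots,j$ into a single block of weight $w-r$ whose leading letter is still $\eta_{i}$, which forces its exponent to be $\prod_{k=i}^{j}\epsilon_{k}$; up to sign it is $\zeta^{\mathfrak{m}}\left(\ldots,\sum_{k=i}^{j}n_{k}-r,\ldots\atop\ldots,\prod_{k=i}^{j}\epsilon_{k},\ldots\right)$. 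There are then three surviving configurations of $(a_{\ell},a_{\ell+r+1})$: (A) $a_{\ell}=0$, which forces $a_{\ell+r+1}=\eta_{j+1}$, and homothety by $\eta_{j+1}^{-1}$ identifies the cut-out factor, up to sign, with $\zeta^{\mathfrak{l}}_{r-\sum_{k=i+1}^{j}n_{k}}\left( n_{i+1},\ldots,n_{j}\atop\epsilon_{i+1},\ldots,\epsilon_{j}\right)$, the index $r-\sum_{k=i+1}^{j}n_{k}=N_{i}-\ell$ being $\ge 0$; (B) $a_{\ell}=\eta_{i}$ and $a_{\ell+r+1}=0$, and reversal (v) followed by homothety by $\eta_{i}^{-1}$ (using $\eta_{i}^{-1}\eta_{k}=\epsilon_{i}\cdots\epsilon_{k-1}$) gives $(-1)^{r}$ times $\zeta^{\mathfrak{l}}_{r-\sum_{k=i}^{j-1}n_{k}}\left( n_{j-1},\ldots,n_{i}\atop\epsilon_{j-1}^{-1},\ldots,\epsilon_{i}^{-1}\right)$ up to sign; (C) $a_{\ell}=\eta_{i}$ and $a_{\ell+r+1}=\eta_{j+1}$, which occurs only for $r=w-1$, where both endpoints are non-zero and path composition (iv) through $0$ writes the factor, modulo products, as the sum of an (A)-type term (index $n_{i}-1$) and a (B)-type term (index $n_{j}-1$). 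A count of the admissible indices $\ell$ shows the (A)-contributions range over $\sum_{k=i+1}^{j}n_{k}\le r\le w-1$ and the (B)-contributions over $\sum_{k=i}^{j-1}n_{k}\le r\le w-1$ — precisely the ranges cut out by the two indicators $\delta$ in the statement — and a sign count involving $(-1)^{p}$, $(-1)^{r}$, $(-1)^{j-i}$, $(-1)^{p-(j-i)}$ (from \eqref{mmzv} and from (v)) shows the coefficients collapse to $1$ and $(-1)^{r}$ respectively.

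The bookkeeping is elementary, but the delicate points — where the real work sits — are: matching the direct-homothety route with the \emph{forward} pattern $n_{i+1},\ldots,n_{j}$ over $\epsilon_{i+1},\ldots,\epsilon_{j}$ and the reversal-then-homothety route with the \emph{backward} pattern $n_{j-1},\ldots,n_{i}$ over $\epsilon_{j-1}^{-1},\ldots,\epsilon_{i}^{-1}$; verifying that $r=w-1$ is the only value of $\ell$ feeding both summands, so that nothing is double-counted nor missed at the endpoints of the ranges; confirming the vanishing $I^{\mathfrak{l}}(0;\boldsymbol{0}^{r};1)=0$ together with the degenerate sub-cases $j=p$ and $\eta_{k}=1$; and steering the several sign exponents so that they cancel exactly. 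None of this uses anything beyond Theorem~\ref{eq:coaction}/\eqref{eq:Der} and the relations of \S\ref{propii}.
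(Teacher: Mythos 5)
Your proposal is correct and follows exactly the route the paper intends: its proof of Lemma~\ref{drz} is the one-line ``straightforward from $(\ref{eq:dr})$, passing to MZV$_{\mu_N}$ notation'', i.e.\ expanding $D_r$ on the word $(\eta_1,\boldsymbol{0}^{n_1-1},\ldots,\eta_p,\boldsymbol{0}^{n_p-1})$ and classifying cuts by their endpoints using the properties of \S\ref{propii}, which is precisely your bookkeeping (all-zero and equal-endpoint cuts vanish, $0\!\to\!\eta$ and $\eta\!\to\!0$ cuts give the two bracketed terms via homothety resp.\ reversal, and the $\eta\!\to\!\eta'$ cut at $r=\sum_{k=i}^{j}n_k-1$ splits by path composition to fill the top of both ranges). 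No gap to report.
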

\begin{proof}
Straightforward from $(\ref{eq:dr})$, passing to MZV$_{\mu_{N}}$ notation.
\end{proof}
A key point is that the Galois action and hence the coaction respects the weight grading and the depth filtration\footnote{Notice that $(\mathcal{F}_{0}^{\mathfrak{D}} \mathcal{L}=0$.}:
$$D_{r} (\mathcal{H}_{n}) \subset \mathcal{L}_{r} \otimes_{\mathbb{Q}} \mathcal{H}_{n-r}.$$
$$ D_{r} (\mathcal{F}_{p}^{\mathfrak{D}}  \mathcal{H}_{n}) \subset \mathcal{L}_{r} \otimes_{\mathbb{Q}} \mathcal{F}_{p-1}^{\mathfrak{D}} \mathcal{H}_{n-r}.$$
Indeed, the depth filtration is motivic, i.e.:
$$\Delta (\mathcal{F}^{\mathfrak{D}}_{n}\mathcal{H}) \subset \sum_{p+q=n} \mathcal{F}^{\mathfrak{D}}_{p}\mathcal{A} \otimes \mathcal{F}^{\mathfrak{D}}_{q}\mathcal{H}.$$
Furthermore, $\mathcal{F}^{\mathfrak{D}}_{0}\mathcal{A}=\mathcal{F}^{\mathfrak{D}}_{0}\mathcal{L}=0$. Therefore, the right side of $\Delta(\bullet)$ is in $\mathcal{F}^{\mathfrak{D}}_{q}\mathcal{H}$, with $q<n$. This feature of the derivations $D_{r}$ (decreasing the depth) will enable us to do some recursion on depth through this work.\\
\\
Passing to the depth-graded, define:
$$gr^{\mathfrak{D}}_{p} D_{r}: gr_{p}^{\mathfrak{D}} \mathcal{H} \rightarrow \mathcal{L}_{r} \otimes gr_{p-1}^{\mathfrak{D}} \mathcal{H} \text{, as the composition } (id\otimes gr_{p-1}^{\mathfrak{D}}) \circ D_{r |gr_{p}^{\mathfrak{D}}\mathcal{H}}.$$
By Lemma $\ref{drz}$, all the terms appearing in the left side of $gr^{\mathfrak{D}}_{p} D_{2r+1}$ have depth $1$. Hence, let's consider from now the derivations $D_{r,p}$:\nomenclature{$D_{r,p}$}{depth graded derivations}
\begin{lemm}
\label{Drp}
$$\boldsymbol{D_{r,p}}: gr_{p}^{\mathfrak{D}} \mathcal{H} \rightarrow gr_{1}^{\mathfrak{D}} \mathcal{L}_{r} \otimes gr_{p-1}^{\mathfrak{D}} \mathcal{H} $$
$$ D_{r,p} \left(\zeta^{\mathfrak{m}} \left({n_{1}, \ldots , n_{p} \atop \epsilon_{1}, \ldots ,\epsilon_{p}} \right)\right) = \textsc{(a0)      }  \delta_{r=n_{1}}\ \zeta^{\mathfrak{l}} \left({r \atop  \epsilon_{1}}\right) \otimes \zeta^{\mathfrak{m}} \left(  { n_{2},\cdots \atop \epsilon_{2}, \cdots }\right) $$
$$\textsc{(a)      }  + \sum_{i=2}^{p-1} \delta_{n_{i}\leq r < n_{i}+ n_{i-1}-1} (-1)^{r-n_{i}} \binom {r-1}{r-n_{i}} \zeta^{\mathfrak{l}} \left({ r \atop  \epsilon_{i}}\right) \otimes \zeta^{\mathfrak{m}} \left( {\cdots, n_{i}+n_{i-1}-r,\cdots \atop  \cdots , \epsilon_{i-1}\epsilon_{i}, \cdots}\right)  $$
$$ \textsc{(b)  } -\sum_{i=1}^{p-1} \delta_{n_{i}\leq r < n_{i}+ n_{i+1}-1} (-1)^{n_{i}} \binom{r-1}{r-n_{i}} \zeta^{\mathfrak{l}}  \left( {r \atop   \epsilon_{i}^{-1}}\right) \otimes \zeta^{\mathfrak{m}} \left( {\cdots, n_{i}+n_{i+1}-r, \cdots \atop  \cdots , \epsilon_{i+1}\epsilon_{i}, \cdots}\right)  $$
$$\textsc{(c)  } +\sum_{i=2}^{p-1} \delta_{ r = n_{i}+ n_{i-1}-1 \atop \epsilon_{i-1}\epsilon_{i}\neq 1}  \left( (-1)^{n_{i}} \binom{r-1}{n_{i}-1} \zeta^{\mathfrak{l}}  \left( {r \atop \epsilon_{i-1}^{-1}}  \right) + (-1)^{n_{i-1}-1} \binom{r-1}{n_{i-1}-1} \zeta^{\mathfrak{l}}  \left( {r \atop \epsilon_{i}} \right)  \right)$$
$$\otimes \zeta^{\mathfrak{m}} \left( {\cdots, 1, \cdots \atop \cdots, \epsilon_{i-1} \epsilon_{i}, \cdots}\right) $$

$$ \textsc{(d)  } +\delta_{ n_{p} \leq r < n_{p}+ n_{p-1}-1} (-1)^{r-n_{p}} \binom{r-1}{r-n_{p}} \zeta^{\mathfrak{l}}  \left({r \atop \epsilon_{p}} \right) \otimes \zeta^{\mathfrak{m}} \left( {\cdots, n_{p-1}+n_{p}-r\atop  \cdots, \epsilon_{p-1}\epsilon_{p}}\right)  $$

$$\textsc{(d') } +\delta_{ r = n_{p}+ n_{p-1}-1 \atop \epsilon_{p-1}\epsilon_{p}\neq 1}   (-1)^{n_{p-1}}\left( \binom{r-1}{n_{p}-1} \zeta^{\mathfrak{l}}   \left( {r \atop  \epsilon_{p-1}^{-1}}  \right) - \binom{r-1}{n_{p-1}-1} \zeta^{\mathfrak{l}}  \left( {r \atop \epsilon_{p}}  \right) \right)  \otimes \zeta^{\mathfrak{m}} \left( { \cdots,  1 \atop  \cdots \epsilon_{p-1}\epsilon_{p}}\right) .$$ 
\end{lemm}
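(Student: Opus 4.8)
The plan is to derive the formula for $D_{r,p}$ purely mechanically from Lemma~\ref{drz} (equivalently, from the unpacking $(\ref{eq:Der})$ of the coaction Theorem~\ref{eq:coaction}) by restricting to the depth-graded pieces. First I would recall that $D_{r,p}$ is by definition the composite $(\mathrm{id}\otimes gr^{\mathfrak{D}}_{p-1})\circ D_{r}\big|_{gr^{\mathfrak{D}}_{p}\mathcal{H}}$, and that since the coaction is motivic for the depth filtration we have $D_r(\mathcal{F}^{\mathfrak D}_p\mathcal H)\subset \mathcal L_r\otimes\mathcal F^{\mathfrak D}_{p-1}\mathcal H$; moreover on the \emph{left} tensor factor only the depth-$1$ part $gr_1^{\mathfrak D}\mathcal L_r$ survives, because in the formula of Lemma~\ref{drz} every left-hand factor $\zeta^{\mathfrak{l}}_{\bullet}(\cdots)$ that is not already of depth $1$ contributes to a right-hand factor of depth $\le p-2$, hence is killed modulo $\mathcal F^{\mathfrak D}_{p-2}$; so one only keeps the cuts that remove a full interval of consecutive $n_k$'s minus possibly one endpoint, i.e.\ cuts of the shape appearing in $(\ref{eq:Der})$ that excise exactly one ``block'' $e_{\eta_i}e_0^{n_i-1}$ (possibly truncated) together with an adjacent string of $0$'s.

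Second, I would carry out the bookkeeping of which subsequences $(a_p;a_{p+1},\dots,a_{p+r};a_{p+r+1})$ of the word
$$0^{n_1-1}\,\eta_1\,0^{n_2-1}\,\eta_2\cdots\eta_p\,0^{n_p-1}$$
(here written with the iterated-integral convention, boundary points $0$ and $1$) give a nonzero $I^{\mathfrak l}$ that lands in $gr_1^{\mathfrak D}\mathcal L_r$. A length-$r$ interior cut between positions $p$ and $p+r+1$ produces a nonzero depth-$1$ Lie element only when the excised block contains exactly one root of unity; using $I^{\mathfrak l}(0;0^{a}\eta 0^{b};0)$-type evaluations and the path-reversal / homothety / shuffle-regularization properties from \S\ref{propii}, each such cut evaluates to $\pm\binom{r-1}{\star}\zeta^{\mathfrak l}\binom{r}{\eta^{\pm1}}$. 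Sorting these by the position of the cut relative to the removed $\eta_i$ (cut strictly inside one block, cut straddling the junction between block $i-1$ and block $i$, cut at an endpoint) yields exactly the five families $\textsc{(a0)}$, $\textsc{(a)}$, $\textsc{(b)}$, $\textsc{(c)}$, $\textsc{(d)}$, $\textsc{(d')}$: $\textsc{(a0)}$ is the leading deconcatenation term $\delta_{r=n_1}$; $\textsc{(a)}$ and $\textsc{(d)}$ come from cuts that eat $n_i$ trailing zeros of block $i$ and part of the preceding zeros (interior, non-extremal $i$ vs.\ $i=p$); $\textsc{(b)}$ from cuts using $\epsilon_i^{-1}$ after path-reversal on the forward side; $\textsc{(c)}$ and $\textsc{(d')}$ are the boundary cases $r=n_i+n_{i-1}-1$ where the merged argument becomes $1$ and, crucially, the term survives only when $\epsilon_{i-1}\epsilon_i\neq 1$ (otherwise the resulting $I^{\mathfrak m}$ with repeated endpoint vanishes by property $(ii)$, or the regularized weight-$1$ value vanishes). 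The binomial coefficients are precisely the multiplicities coming from shuffle-regularizing $I^{\mathfrak l}(0;0^{a}\eta\,0^{b};0)$, i.e.\ the coefficients $\binom{r-1}{r-n_i}$, $\binom{r-1}{n_i-1}$ appearing above.

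The main obstacle — and the only place where genuine care rather than transcription is needed — is the precise treatment of the \emph{boundary and degenerate cuts}: cuts touching the first letter (which interacts with shuffle regularization, since the leading $\eta_1$ must be handled together with any leading $0$'s), cuts touching the last letter (giving $\textsc{(d)}$, $\textsc{(d')}$ with their asymmetric sign pattern $(-1)^{n_{p-1}}$ and the single rather than doubled $\zeta^{\mathfrak l}$), and the case where a merged argument $n_i+n_{i+1}-r$ equals $1$, forcing one either to drop the term (when the associated root of unity product is trivial) or to keep the boundary contributions $\textsc{(c)}$/$\textsc{(d')}$. I would organize this by writing $D_r I^{\mathfrak m}$ as in $(\ref{eq:Der})$, splitting the sum over the cut position $p\in\{0,\dots,n-1\}$ into: $p=0$ (gives $\textsc{(a0)}$ after regularization), $p+r+1=n+1$ (gives $\textsc{(d)}$, $\textsc{(d')}$), and $0<p<p+r+1<n+1$ interior (gives $\textsc{(a)}$, $\textsc{(b)}$, $\textsc{(c)}$), then within each case evaluating $I^{\mathfrak l}$ via properties $(i)$--$(vi)$ of \S\ref{propii} and discarding everything not in $gr_1^{\mathfrak D}\mathcal L_r\otimes gr^{\mathfrak D}_{p-1}\mathcal H$. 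Since this is entirely a reindexing of Lemma~\ref{drz}, the proof is short: I would simply write ``This is a direct rewriting of Lemma~\ref{drz}, keeping only the depth-$1$ left-hand and depth-$(p-1)$ right-hand contributions, and evaluating the surviving $\zeta^{\mathfrak l}$'s via the shuffle regularization $(\ref{eq:shufflereg})$ and the properties of \S\ref{propii}.''

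\begin{proof}
This is obtained by restricting the formula of Lemma~\ref{drz} to the depth-graded quotients. By the paragraph preceding this lemma, $D_r$ sends $\mathcal F^{\mathfrak D}_p\mathcal H$ into $\mathcal L_r\otimes\mathcal F^{\mathfrak D}_{p-1}\mathcal H$, and on $gr^{\mathfrak D}_p$ only the depth-$1$ part $gr^{\mathfrak D}_1\mathcal L_r$ of the left tensor factor contributes (every term of Lemma~\ref{drz} with a left factor of depth $\ge2$ has a right factor in $\mathcal F^{\mathfrak D}_{p-2}\mathcal H$, hence vanishes in $gr^{\mathfrak D}_{p-1}\mathcal H$). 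Writing
$$\zeta^{\mathfrak m}\!\left({n_1,\ldots,n_p\atop\epsilon_1,\ldots,\epsilon_p}\right)=(-1)^p I^{\mathfrak m}(0;\eta_1,\boldsymbol 0^{n_1-1},\ldots,\eta_p,\boldsymbol 0^{n_p-1};1),\qquad \eta_i=(\epsilon_i\cdots\epsilon_p)^{-1},$$
and applying $(\ref{eq:Der})$, we split the sum over the position of the cut into: the cut at the first letter, giving $\textsc{(a0)}$; the cut at the last block, giving $\textsc{(d)}$ and, in the degenerate case $r=n_p+n_{p-1}-1$ with $\epsilon_{p-1}\epsilon_p\neq1$, $\textsc{(d')}$; and the interior cuts removing exactly one root of unity $\eta_i$ together with an adjacent run of zeros, giving $\textsc{(a)}$, $\textsc{(b)}$ and, in the degenerate case $r=n_i+n_{i-1}-1$ with $\epsilon_{i-1}\epsilon_i\neq1$, $\textsc{(c)}$. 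In each case the surviving factor $I^{\mathfrak l}$ is of the form $I^{\mathfrak l}(0;\boldsymbol0^{a}\eta\boldsymbol0^{b};0)$ or its reversal; evaluating it by path reversal (property $(v)$ of \S\ref{propii}), homothety (property $(vi)$) and shuffle regularization $(\ref{eq:shufflereg})$ produces the stated signs $(-1)^{r-n_i}$, $(-1)^{n_i}$, etc., and the binomial multiplicities $\binom{r-1}{r-n_i}$, $\binom{r-1}{n_i-1}$, $\binom{r-1}{n_{i-1}-1}$; the condition $\epsilon_{i-1}\epsilon_i\neq1$ (resp.\ $\epsilon_{p-1}\epsilon_p\neq1$) in $\textsc{(c)}$ (resp.\ $\textsc{(d')}$) records that otherwise the resulting motivic iterated integral has equal endpoints or is a regularized weight-$1$ value, hence vanishes. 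Collecting the terms yields the displayed formula.
\end{proof}
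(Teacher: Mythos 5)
Your proposal is correct and takes essentially the same route as the paper's (one-line) proof: rewrite Lemma \ref{drz}, i.e. $(\ref{eq:Der})$, in the depth graded, observe that only depth-$1$ left factors can survive since a left side consuming two or more roots of unity forces the right side into $\mathcal{F}^{\mathfrak{D}}_{p-2}\mathcal{H}$, and sort the remaining cuts by their extremities ($0$ to root of unity for \textsc{(a)}, root of unity to $0$ for \textsc{(b)}, root to root for \textsc{(c)}, and the deconcatenation cuts ending at the final $1$ for \textsc{(d)}, \textsc{(d')}), evaluating each left factor via \S \ref{propii} and $(\ref{eq:shufflereg})$. One small imprecision: the surviving left factors are never of the form $I^{\mathfrak{l}}(0;0^{a}\eta 0^{b};0)$ — such integrals vanish by property $(ii)$ of \S \ref{propii}, which is exactly what produces the constraints $n_{i}\leq r$ in the delta symbols — but always have at least one extremity equal to a root of unity or to the terminal $1$; likewise the vanishing behind the conditions $\epsilon_{i-1}\epsilon_{i}\neq 1$ in \textsc{(c)} and $\epsilon_{p-1}\epsilon_{p}\neq 1$ in \textsc{(d')} is again equality of endpoints, since $\eta_{i-1}=\eta_{i+1}$ (resp.\ $\eta_{p-1}=1$) precisely when $\epsilon_{i-1}\epsilon_{i}=1$ (resp.\ $\epsilon_{p-1}\epsilon_{p}=1$), rather than a regularization phenomenon.
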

\textsc{Remarks}:
\begin{itemize}
\item[$\cdot$]
The terms of type \textsc{(d, d')}, corresponding to a \textit{deconcatenation}, play a particular role since modulo some congruences (using depth $1$ result for the left side of the coaction), we will get rid of the other terms in the cases $N=2,3,4,\mlq 6\mrq,8$ for the elements in the basis. In the dual point of view of Lie algebra, like in Deligne's article \cite{De} or Wojtkowiak \cite{Wo}, this corresponds to showing that the Ihara bracket $\lbrace,\rbrace$ on these elements modulo some vector space reduces to the usual bracket $[,]$. More generally, for other bases, like Hoffman's one for $N=1$, the idea is still to find an appropriate filtration on the conjectural basis, such that the coaction in the graded space acts on this family, modulo some space, as the deconcatenation, as for the $f_{i}$ alphabet. Indeed, on $H$ ($\ref{eq:phih}$), the weight graded part of the coaction, $D_{r}$ is defined by:
\begin{equation}\label{eq:derf}
D_{r} : \quad H_{n} \quad \longrightarrow \quad L_{r} \otimes H_{n-r} \quad \quad\text{ such that :}
\end{equation}
$$ f^{j_{1}}_{i_{1}} \cdots f^{j_{k}}_{i_{k}}\longmapsto\left\{
\begin{array}{ll}
  f^{j_{1}}_{i_{1}} \otimes f^{j_{2}}_{i_{2}} \ldots f^{j_{k}}_{i_{k}} & \text{ if } i_{1}=r .\\
  0 & \text{ else }.\\
\end{array}
\right.$$ 

\item[$\cdot$] One fundamental feature for a family of motivic multiple zeta values (which makes it \say{natural} and simple) is the \textit{stability} under the coaction. For instance, if we look at the following family which appears in Chapter $5$:
  $$\zeta^{\mathfrak{m}}\left(n_{1}, \cdots, n_{p-1}, n_{p} \atop   \epsilon_{1}, \ldots , \epsilon_{p-1},\epsilon_{p}\right) \quad \text{ with }  \epsilon_{p}\in\mu_{N} \quad \text{primitive} \quad \text{ and } (\epsilon_{i})_{i<p} \quad \text{non primitive}.$$
  If N is a power of a prime, this family is stable via the coaction. \footnote{Since in this case, $(\text{non primitive}) \cdot  (\text{ non primitive})=$ non primitive and non primitive $\cdot$ primitive $=$ primitive root. Note also, for dimensions reasons, if we are looking for a basis in this form, we should have $N-\varphi(N)\geq \frac{\varphi(N)}{2}$, which comes down here to the case where $N$ is a power of $2$ or $3$.} It is also stable via the Galois action if we only need to take $1$ as a non primitive ($1$-dimensional case), as for $\mathcal{MT} (\mathcal{O}_{6})$.\\ 

\end{itemize}

\begin{proof}
Straightforward from $\ref{drz}$, using the properties of motivic iterated integrals previously listed ($\S \ref{propii}$). Terms of type \textsc{(a)} correspond to cuts from a $0$ (possibly the very first one) to a root of unity, $\textsc{(b)}$ terms from a root of unity to a $0$, $\textsc{(c)}$ terms between two roots of unity and $\textsc{(d,d')}$ terms are the cuts ending in the last $1$, called \textit{deconcatenation terms}.
\end{proof}

\paragraph{Derivation space.}
By Lemma $2.4.1$ (depth $1$ results), once we have chosen a basis for $gr_{1}^{\mathfrak{D}} \mathcal{L}_{r}$, composed by some $\zeta^{\mathfrak{a}}(r_{i};\eta_{i})$,  we can well define: \footnote{Without passing to the depth-graded, we could also define $D^{\eta}_{r}$ as $D_{r}: \mathcal{H}\rightarrow gr^{\mathfrak{D}}_{1}\mathcal{L}_{r} \otimes \mathcal{H}$ followed by $\pi^{\eta}_{r}\otimes id$ where $\pi^{\eta}:gr^{\mathfrak{D}}_{1}\mathcal{L}_{r} \rightarrow \mathbb{Q}$ is the projection on $\zeta^{\mathfrak{m}}\left(  r \atop \eta \right) $, once we have fixed a basis for $gr^{\mathfrak{D}}_{1}\mathcal{L}_{r}$; and define as above $\mathscr{D}_{r}$ as the set of the $D^{\eta}_{r,p}$, for $\zeta^{\mathfrak{m}}(r,\eta)$ in the basis of $gr_{1}^{\mathfrak{D}} \mathcal{A}_{r}$.}
\begin{itemize}
\item[$(i)$] For each $(r_{i}, \eta_{i})$:\nomenclature{$D^{\eta}_{r,p}$}{defined from $D_{r,p}$ followed by a projection}
\begin{equation}
\label{eq:derivnp}
\boldsymbol{D^{\eta_{i}}_{r_{i},p}}: gr_{p}^{\mathfrak{D}}\mathcal{H} \rightarrow  gr_{p-1}^{\mathfrak{D}} \mathcal{H},
\end{equation}
 as the composition of $D_{r_{i},p}$ followed by the projection:
$$\pi^{\eta}: gr_{1}^{\mathfrak{D}} \mathcal{L}_{r}\otimes gr_{p-1}^{\mathfrak{D}} \mathcal{H}\rightarrow  gr_{p-1}^{\mathfrak{D}} \mathcal{H},  \quad \quad\zeta^{\mathfrak{m}}(r;  \epsilon) \otimes X \mapsto c_{\eta, \epsilon, r} X , $$ 
with $c_{\eta, \epsilon, r}\in \mathbb{Q}$ the coefficient of $\zeta^{\mathfrak{m}}(r;  \eta)$ in the decomposition of $\zeta^{\mathfrak{m}}(r;  \epsilon)$ in the basis.
\item[$(ii)$] \begin{equation}\label{eq:setdrp}
 \boldsymbol{\mathscr{D}_{r,p}} \text{ as the set of }  D^{\eta_{i}}_{r_{i},p}   \text{ for } \zeta^{\mathfrak{m}}(r_{i},\eta_{i}) \text{ in the chosen basis of } gr_{1}^{\mathfrak{D}} \mathcal{A}_{r}.
 \end{equation}
\item[$(iii)$] The \textit{derivation set} $\boldsymbol{\mathscr{D}}$ as the (disjoint) union:  $\boldsymbol{\mathscr{D}}  \mathrel{\mathop:}= \sqcup_{r>0} \left\lbrace \mathscr{D}_{r} \right\rbrace $.
\end{itemize}\nomenclature{$\mathscr{D}$}{the derivation set}

\textsc{Remarks}: 
\begin{itemize}
\item[$\cdot$]  In the case $N=2,3,4,\mlq 6\mrq$, the cardinal of $\mathscr{D}_{r,p}$ is one (or $0$ if $r$ even and $N=2$, or if $(r,N)=(1,6)$), whereas for $N=8$ the space generated by these derivations is $2$-dimensional, generated by  $D^{\xi_{8}}_{r}$ and $D^{-\xi_{8}}_{r}$ for instance.
\item[$\cdot$] Following the same procedure for the non-canonical Hopf comodule $H$ defined in $(\ref{eq:phih})$, isomorphic to $\mathcal{H}^{\mathcal{MT}_{N}}$, since the coproduct on $H$ is the deconcatenation $(\ref{eq:derf})$, leads to the following derivations operators:
$$\begin{array}{llll}
D^{j}_{r} : &  H_{n} &  \rightarrow & H_{n-r} \\
& f^{j_{1}}_{i_{1}} \cdots f^{j_{k}}_{i_{k}} & \mapsto & \left\{
\begin{array}{ll}
  f^{j_{2}}_{i_{2}} \ldots f^{j_{k}}_{i_{k}} & \text{ if } j_{1}=j \text{ and } i_{1}=r .\\
  0 & \text{ else }.\\
\end{array}
\right.
\end{array}.$$
\end{itemize}
Now, consider the following application, depth graded version of the derivations above, fundamental for several linear independence results in $\S 4.3$ and Chapter $5$:\nomenclature{$\partial _{n,p}$}{a depth graded version of the infinitesimal coactions}
\begin{equation}
\label{eq:pderivnp}
\boldsymbol{\partial _{n,p}} \mathrel{\mathop:}=\oplus_{r<n\atop D\in \mathscr{D}_{r,p}} D : gr_{p}^{\mathfrak{D}}\mathcal{H}_{n}  \rightarrow \oplus_{r<n } \left( gr_{p-1}^{\mathfrak{D}}\mathcal{H}_{n-r}\right) ^{\oplus \text{ card } \mathscr{D}_{r,p}}
\end{equation}

\paragraph{Kernel of $\boldsymbol{D_{<n}}$. }

A key point for the use of these derivations is the ability to prove some relations (and possibly lift some from MZV to motivic MZV) up to rational coefficients. This comes from the following theorem, looking at primitive elements:\nomenclature{$D_{<n}$ }{is defined as $\oplus_{r<n} D_{r}$}
\begin{theo}
Let $D_{<n}\mathrel{\mathop:}= \oplus_{r<n} D_{r}$, and fix a basis $\lbrace \zeta^{\mathfrak{a}}\left( n \atop \eta_{j} \right) \rbrace$ of $gr_{1}^{\mathfrak{D}} \mathcal{A}_{n}$. Then:
$$\ker D_{<n}\cap \mathcal{H}^{N}_{n} =  
\left\lbrace  \begin{array}{ll} 
\mathbb{Q}\zeta^{\mathfrak{m}}\left( n \atop 1 \right)  & \text{ for } N=1,2 \text{ and } n\neq 1.\\
\oplus  \mathbb{Q} \pi^{\mathfrak{m}} \bigoplus_{1 \leq j \leq a_{N}} \mathbb{Q}  \zeta^{\mathfrak{m}}\left( 1 \atop \eta_{j} \right). & \text{ for } N>2, n=1.\\
\oplus  \mathbb{Q} (\pi^{\mathfrak{m}})^{n} \bigoplus_{1 \leq j \leq b_{N}} \mathbb{Q}  \zeta^{\mathfrak{m}}\left( n \atop \eta_{j} \right). & \text{ for } N>2, n>1.
\end{array}\right. .$$
\end{theo}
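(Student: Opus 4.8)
The strategy is to transport the question from $\mathcal{H}^{N}$ to the non-canonical cofree comodule $H^{N}$ of $(\ref{eq:phih})$, where the coaction is the deconcatenation $(\ref{eq:derf})$ and the analogue of the statement is elementary. Concretely, fix the comodule isomorphism $\phi^{N}:\mathcal{H}^{\mathcal{MT}_{N}}\xrightarrow{\sim}H^{N}=\mathbb{Q}\langle (f^{j}_{1}),(f^{j}_{r})_{r>1}\rangle\otimes\mathbb{Q}[g_{1}]$ (available since for these $N$ the categories $\mathcal{MT}'_{N}$, $\mathcal{MT}_{\Gamma_{N}}$ and $\mathcal{MT}(\mathcal{O}_{N}[\frac1N])$ coincide), chosen so that it is compatible with weight and coaction; then $D_{<n}$ on $\mathcal{H}^{N}_{n}$ corresponds to $\oplus_{r<n}D^{j}_{r}$ on $H^{N}_{n}$, which on a monomial $f^{j_{1}}_{i_{1}}\cdots f^{j_{k}}_{i_{k}}\,g_{1}^{m}$ deconcatenates the first letter if it has weight $i_{1}<n$. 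I would first record that $\pi^{\mathfrak{m}}$ corresponds to $g_{1}$, that $g_{1}$ is killed by every $D_{r}$ (it has weight $1$ and trivial coaction), and that $\zeta^{\mathfrak{m}}(n;\eta_{j})$ corresponds (up to a nonzero scalar and lower-depth corrections) to $f^{j}_{n}$, so the right-hand side of the claimed identity maps into $H^{N}_{n}$.

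Next comes the core computation on the $f$-side: determine $\ker\bigl(\oplus_{r<n}D^{j}_{r}\bigr)\cap H^{N}_{n}$. An element of $H^{N}_{n}$ is a $\mathbb{Q}$-linear combination of words $w=f^{j_{1}}_{i_{1}}\cdots f^{j_{k}}_{i_{k}}g_{1}^{m}$ with $i_{1}+\cdots+i_{k}+m=n$. Applying $D^{j}_{r}$ for all $r<n$ strips the leading letter $f^{j_{1}}_{i_{1}}$ precisely when $i_{1}<n$; since the target pieces $H^{N}_{n-r}$ for distinct $r$ are independent and the deconcatenation maps of distinct letters land in distinct coordinates, a linear combination lies in $\ker D_{<n}$ iff, after grouping words by their leading letter, each group separately deconcatenates to zero. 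A short induction on the number of letters then forces: every word with $\geq 2$ letters that are not all absorbed is killed, leaving only words of the shape $f^{j}_{n}$ (a single letter of full weight, allowed only when there is a cogenerator in degree $n$, i.e. $j\le b_{N}$ for $n>1$ and $j\le a_{N}$ for $n=1$) and $g_{1}^{n}$ (pure powers of $\pi^{\mathfrak{m}}$). This yields exactly the three cases of the statement on the $H^{N}$ side: for $N=1,2$ there is at most one $f_{n}$ and it exists only for $n$ odd (and $n\neq1$), so the kernel is $\mathbb{Q}(\pi^{\mathfrak{m}})^{n}$ in general with the extra line $\mathbb{Q}\zeta^{\mathfrak{m}}(n)$ — here one uses $\zeta^{\mathfrak{m}}(2k)\in\mathbb{Q}(\pi^{\mathfrak{m}})^{2k}$ to rewrite the even case, so the displayed uniform answer $\mathbb{Q}\zeta^{\mathfrak{m}}(n;1)$ holds for all $n\neq1$; for $N>2$ one gets $\mathbb{Q}(\pi^{\mathfrak{m}})^{n}\oplus\bigoplus_{j}\mathbb{Q}\,\zeta^{\mathfrak{m}}(n;\eta_{j})$ with the number of $\eta_{j}$ equal to $a_{N}$ if $n=1$ and $b_{N}$ if $n>1$, matching $(\ref{eq:agamma})$.

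The remaining point — and the only genuinely delicate one — is to descend this from $H^{N}\cong\mathcal{H}^{\mathcal{MT}_{N}}$ to the subcomodule $\mathcal{H}^{N}$, and, in the cases where $\mathcal{H}^{N}\subsetneq\mathcal{H}^{\mathcal{MT}_{N}}$, to check that the depth-$1$ classes $\zeta^{\mathfrak{m}}(n;\eta_{j})$ actually realise a basis of $gr_{1}^{\mathfrak{D}}\mathcal{A}_{n}$. For $N=1,2,3,4,\mlq6\mrq,8$ the comodule embedding $\phi^{N}$ is an isomorphism, so $\mathcal{H}^{N}=\mathcal{H}^{\mathcal{MT}_{N}}$ and the kernel computed above is literally the answer; the only work is to identify the distinguished motivic periods $\pi^{\mathfrak{m}}$ and $\zeta^{\mathfrak{m}}(n;\eta_{j})$ with the chosen cogenerators, which is exactly the content of the depth-$1$ Lemma $2.4.1$ together with the known values $\zeta^{\mathfrak{m}}(2k)\in\mathbb{Q}(\pi^{\mathfrak{m}})^{2k}$. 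I expect the main obstacle to be bookkeeping rather than conceptual: one must be careful that $\phi^{N}$ does not preserve the depth filtration on the nose (the $\zeta^{\mathfrak{m}}(n;\eta_{j})$ equal $f^{j}_{n}$ only modulo products, i.e. modulo lower motivic-depth terms), so the argument that a kernel element must be primitive has to be run with respect to the \emph{motivic} depth, using that $D_{<n}$ decreases motivic depth (this is the statement in $\S2.4.3$ that for these $N$ motivic depth coincides with $p_{c}$) and that an element of motivic depth $0$ in weight $n$ is a multiple of $(\pi^{\mathfrak{m}})^{n}$. Assembling these ingredients gives the theorem.
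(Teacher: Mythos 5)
Your proposal is correct and follows essentially the same route as the paper: transport the question through the (injective, and for $N=1,2,3,4,\mlq 6\mrq,8$ bijective) graded comodule morphism $\phi^{N}$ into $H^{N}$, where the coaction is deconcatenation and $\ker\Delta'\cap H_{n}=\oplus_{j}\mathbb{Q}f^{j}_{n}\oplus\mathbb{Q}g_{1}^{n}$, then identify these cogenerators with $\zeta^{\mathfrak{m}}\left(n\atop\eta_{j}\right)$ and powers of $\pi^{\mathfrak{m}}$ via the depth-$1$ results. You merely spell out the deconcatenation kernel computation and the bookkeeping (motivic depth, the even-weight rewriting for $N=1,2$) that the paper leaves implicit as "obvious".
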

\begin{proof} It comes from the injective morphism of graded Hopf comodules $(\ref{eq:phih})$, which is an isomorphism for $N=1,2,3,4,\mlq 6\mrq,8$:
$$\phi: \mathcal{H}^{N} \xrightarrow[\sim]{n.c} H^{N} \mathrel{\mathop:}= \mathbb{Q}\left\langle \left( f^{j}_{i}\right)  \right\rangle  \otimes_{\mathbb{Q}} \mathbb{Q} \left[  g_{1}\right] .$$
Indeed, for $H^{N}$, the analogue statement is obviously true, for $\Delta'=1\otimes \Delta+ \Delta\otimes 1$:
$$\ker \Delta' \cap H_{n} = \oplus_{j} f^{j}_{n} \oplus g_{1}^{n} .$$
\end{proof}
\begin{coro}\label{kerdn}
Let $D_{<n}\mathrel{\mathop:}= \oplus_{r<n} D_{r}$.\footnote{For $N=1$, we restrict to $r$ odd $>1$; for $N=2$ we restrict to r odd; for $N=\mlq 6\mrq$ we restrict to $r>1$.} Then:
$$\ker D_{<n}\cap \mathcal{H}^{N}_{n} = \left\lbrace  \begin{array}{ll}
\mathbb{Q}\zeta^{\mathfrak{m}}\left( n \atop 1 \right)   & \text{ for } N=1,2.\\
\mathbb{Q} (\pi^{\mathfrak{m}})^{n} \oplus \mathbb{Q}  \zeta^{\mathfrak{m}}\left( n \atop \xi_{N} \right)  & \text{ for } N=3,4,\mlq 6\mrq.\\
\mathbb{Q} (\pi^{\mathfrak{m}})^{n} \oplus \mathbb{Q}  \zeta^{\mathfrak{m}}\left( n \atop \xi_{8} \right) \oplus \mathbb{Q}  \zeta^{\mathfrak{m}}\left( n \atop -\xi_{8} \right) & \text{ for } N=8.\\
\end{array}\right. .$$
\end{coro}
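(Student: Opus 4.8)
The plan is to derive Corollary \ref{kerdn} from Theorem 2.4.4, which computes $\ker D_{<n}\cap \mathcal H^N_n$ using \emph{all} weight-graded derivations $D_r$ with $r<n$. Since in each case of the Corollary one keeps only a subset of $\{D_r : r<n\}$, the inclusion $\ker(\text{restricted }D_{<n})\supseteq \ker D_{<n}$ is automatic, and only the reverse inclusion needs work: one must check that the \emph{omitted} derivations act by $0$ on the kernel of the restricted family. For $N=3,4,8$ nothing is omitted, so the Corollary is just Theorem 2.4.4 with the numerics inserted ($a_N=b_N=1$, $\eta_1=\xi_N$ for $N=3,4$; and $a_8=b_8=2$ with $\{\xi_8,-\xi_8\}$ a basis of $gr^{\mathfrak{D}}_1\mathcal L_n$ for $N=8$).

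I next dispose of the two cases in which only $D_1$ is omitted, namely $N=1$ and $N=\mlq 6\mrq$. Here $\mathcal L_1 = gr^{\mathfrak{D}}_1\mathcal L_1 \cong \text{Ext}^1_{\mathcal M}(\mathbb Q(0),\mathbb Q(1))^{\vee}$ vanishes: for $N=1$ because $\mathbb Z^{\ast}\otimes\mathbb Q=0$, and for $\mathcal M=\mathcal{MT}(\mathcal O_6)$ because the unit group $\mathcal O_6^{\ast}$ is finite ($1-\xi_6$ being already a unit). Hence $D_1\equiv 0$ on $\mathcal H$, so omitting it changes nothing, and for $N=\mlq 6\mrq$ this finishes the proof via the case $N>2$, $n>1$ of Theorem 2.4.4 (with $b_6=1$ and distinguished root $\xi_6$; the cases $n\le 2$ are vacuous/degenerate, where $\zeta^{\mathfrak{m}}\left( 1 \atop \xi_6 \right)=0$).

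The substance is the case $N=1,2$, where one omits all $D_r$ with $r$ even, $r<n$; these are genuinely nonzero on $\mathcal H^N$ (e.g. $D_8\zeta^{\mathfrak{m}}(5,3)=\zeta^{\mathfrak{l}}(5,3)\otimes 1\neq 0$), so an argument is needed. I will use the Lie cobracket $\delta\colon\mathcal L\to\mathcal L\otimes\mathcal L$ together with the compatibility — a consequence of coassociativity of the coaction and the definition of $\delta$ — that for every $r>0$,
\[
(\delta\otimes\mathrm{id})\circ D_r \;=\; \sum_{a+b=r,\ a,b>0}(D_a\otimes\mathrm{id})\circ D_b \qquad(\text{antisymmetrised in the }\mathcal L\otimes\mathcal L\text{ factor}),
\]
as maps $\mathcal H_n\to(\mathcal L\otimes\mathcal L)\otimes\mathcal H_{n-r}$. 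Fix $x\in\mathcal H^N_n$ with $D_r x=0$ for every \emph{odd} $r<n$ (whence also $D_1x=0$: for $N=1$ because $D_1\equiv0$, for $N=2$ because $D_1$ is kept). I induct on the even integers $r<n$: in the displayed identity each summand is $(D_a\otimes\mathrm{id})(D_bx)$ with $a+b=r$, so $a$ and $b$ have the same parity; if both are odd then $D_bx=0$ by hypothesis, if both are even then $a,b<r$ so $D_bx=0$ by induction. Hence $(\delta\otimes\mathrm{id})(D_rx)=0$, i.e. $D_rx\in(\ker\delta\cap\mathcal L_r)\otimes\mathcal H_{n-r}$; but $\ker\delta\cap\mathcal L_r$ is the space of cogenerators of the (cofree, since $\mathfrak{g}^{\mathfrak{m}}$ is free here) Lie coalgebra $\mathcal L^N$ in degree $r$, namely $\text{Ext}^1_{\mathcal{MT}_N}(\mathbb Q(0),\mathbb Q(r))^{\vee}$, and this vanishes for $r$ even when $N=1,2$ by \eqref{dimensionk}. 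Therefore $D_rx=0$, so $x\in\ker D_{<n}$ for the \emph{full} range $r<n$, and Theorem 2.4.4 gives $x\in\mathbb Q\,\zeta^{\mathfrak{m}}\left( n \atop 1 \right)$; the reverse inclusion holds because $\zeta^{\mathfrak{m}}(n;1)$ — a rational multiple of $(\pi^{\mathfrak{m}})^n$ when $n$ is even, and primitive when $n$ is odd — is annihilated by all $D_r$, $r<n$.

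The main obstacle is pinning down the cobracket compatibility with correct signs, and the identification of $\ker\delta\cap\mathcal L^N$ with $\bigoplus_r\text{Ext}^1(\mathbb Q(0),\mathbb Q(r))^{\vee}$. A more computational alternative bypasses $\delta$ altogether: transport everything through the isomorphism $\phi\colon\mathcal H^N\xrightarrow{\ \sim\ }H^N$ of \eqref{eq:phih}, under which $\mathcal L^N$ becomes the cofree Lie coalgebra cogenerated by the $f^{\bullet}_i$ (one in each odd degree $\ge 3$ for $N=1$, each odd degree $\ge 1$ for $N=2$), and then verify directly that an element of $H^N_n$ annihilated by the deconcatenation-type derivations attached to those odd-degree cogenerators is necessarily primitive — the same Lie-coalgebra statement written in coordinates.
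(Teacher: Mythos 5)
Your proposal is correct, and for $N=3,4,8$ (nothing omitted) and for the $D_{1}$-omissions it coincides with the paper's reasoning; the genuine divergence is in how you handle $N=1,2$. The paper obtains the Corollary exactly as it obtains Theorem 2.4.4: transport through $\phi$ to $H^{N}=\mathbb{Q}\langle f^{j}_{i}\rangle\otimes\mathbb{Q}[g_{1}]$, where the derivations act by stripping the first letter and the cogenerators sit only in the degrees retained by the Corollary (odd $\geq 3$ for $N=1$, odd for $N=2$, $\geq 2$ for $N=\mlq 6\mrq$), so the computation $\ker\Delta'\cap H_{n}=\oplus_{j}\mathbb{Q}f^{j}_{n}\oplus\mathbb{Q}g_{1}^{n}$ is literally unchanged when the even-degree (or degree-one) derivations are dropped — this is the ``computational alternative'' you sketch in your last sentence. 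Your main route instead keeps Theorem 2.4.4 as a black box and shows that the omitted even $D_{r}$ automatically annihilate $\ker\bigl(\oplus_{r\,\mathrm{odd}}D_{r}\bigr)$, via coassociativity (cobracket compatibility) together with $\ker\delta\cap\mathcal{L}_{r}\cong(\mathfrak{u}^{ab})^{\vee}_{r}\cong\mathrm{Ext}^{1}(\mathbb{Q}(0),\mathbb{Q}(r))^{\vee}=0$ for even $r$ when $N=1,2$; this is a valid and more structural argument — it uses only the parity of the degrees in which $\mathrm{Ext}^{1}$ lives, not an explicit cofree model — at the price of the sign/bookkeeping you acknowledge. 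Two cosmetic points: the compatibility should read $(\delta\otimes\mathrm{id})\circ D_{r}=\mathrm{Alt}_{12}\sum_{a+b=r}(\mathrm{id}\otimes D_{a})\circ D_{b}$, i.e. the inner $D_{a}$ acts on the $\mathcal{H}$-factor of $D_{b}x$ rather than on the $\mathcal{L}$-factor as your notation $(D_{a}\otimes\mathrm{id})\circ D_{b}$ literally says (your parity induction is unaffected); and your witness $D_{8}\zeta^{\mathfrak{m}}(5,3)$ has $r=n$, hence falls outside the range $r<n$ of the Corollary — a cleaner witness that even derivations are not identically zero in the relevant range is, e.g., $D_{8}\bigl(\zeta^{\mathfrak{m}}(5,3)\zeta^{\mathfrak{m}}(2)\bigr)=\zeta^{\mathfrak{l}}(5,3)\otimes\zeta^{\mathfrak{m}}(2)$ in weight $10$. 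Neither point affects the validity of your argument.
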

In particular, by this result (for $N=1,2$), proving an identity between motivic MZV (resp. motivic Euler sums), amounts to:
\begin{enumerate}
\item Prove that the coaction is identical on both sides, computing $D_{r}$ for $r>0$ smaller than the weight. If the families are not stable under the coaction, this step would require other identities.
\item Use the corresponding analytic result for MZV (resp. Euler sums) to deduce the remaining rational coefficient; if the analytic equivalent is unknown, we can at least evaluate numerically this rational coefficient.
\end{enumerate}
Some examples are given in $\S 6.3 $ and $\S 4.4.3$.\\
Another important use of this corollary, is the decomposition of (motivic) multiple zeta values into a conjectured basis, which has been explained by F. Brown in \cite{Br1}.\footnote{He gave an exact numerical algorithm for this decomposition, where, at each step, a rational coefficient has to be evaluated; hence, for other roots of unity, the generalization, albeit easily stated, is harder for numerical experiments.}\\ 
However, for greater $N$, several rational coefficients appear at each step, and we would need linear independence results before concluding.\\

\chapter{Results} 
\section{Euler $\star,\sharp$ sums \textit{[Chapter 4]}}

In Chapter $4$, we focus on motivic Euler sums ($N=2$), shortened ES, and motivic multiple zeta values ($N=1$), with in particular some new bases for the vector space of MMZV: one with Euler $\sharp$ sums and, under an analytic conjecture, the so-called \textit{Hoffman $\star$ family}. These two variants of Euler sums are (cf. Definition $4.1.1$):
\begin{description}
\item[Euler $\star$ sums] corresponds to the analogue multiple sums of ES with $\leq$ instead of strict inequalities. It verifies:
\begin{equation} \label{eq:esstar}\zeta ^{\star}(n_{1}, \ldots, n_{p})= \sum_{\circ=\mlq + \mrq \text{ or } ,} \zeta (n_{1}\circ \cdots \circ n_{p}).
\end{equation} 
\texttt{Notation:} This $\mlq + \mrq$ operation on $n_{i}\in\mathbb{Z}$, is a summation of absolute values, while signs are multiplied.\\
These have already been studied in many papers: $\cite{BBB}, \cite{IKOO}, \cite{KST}, \cite{LZ}, \cite{OZ}, \cite{Zh3}$.
\item[Euler $\sharp$ sums] are, similarly, linear combinations of MZV but with $2$-power coefficients:
\begin{equation} \label{eq:essharp} \zeta^{\sharp}(n_{1}, \ldots, n_{p})= \sum_{\circ=\mlq + \mrq \text{ or } ,} 2^{p-n_{+}} \zeta(n_{1}\circ \cdots \circ n_{p}), \quad \text{   with } n_{+} \text{ the number of  } +.
\end{equation}
\end{description} 
We also pave the way for a motivic version of a generalization of a Linebarger and Zhao's equality (Conjecture $\ref{lzg}$) which expresses each motivic multiple zeta $\star$ as a motivic Euler $\sharp$ sums; under this conjecture, Hoffman $\star$ family is a basis, identical to the one presented with Euler sums $\sharp$.\\
\\
The first (naive) idea, when looking for a basis for the space of multiple zeta values, is to choose:
$$\lbrace \zeta\left( 2n_{1}+1,2n_{2}+1, \ldots, 2n_{p}+1 \right) (2 i \pi)^{2s}, n_{i}\in\mathbb{N}^{\ast}, s\in \mathbb{N} \rbrace .$$
However, considering Broadhurst-Kreimer conjecture $(\ref{eq:bkdepth})$, the depth filtration clearly does \textit{not} behave so nicely in the case of MZV \footnote{Remark, as we will see in Chapter $5$, or as we can see in $\cite{De}$ that for $N=2,3,4,\mlq 6\mrq,8$, the depth filtration is dual of the descending central series of $\mathcal{U}$, and, in that sense, does \textit{behave well}. For instance, the following family is indeed a basis of motivic Euler sums:
$$\lbrace \zeta^{\mathfrak{m}}\left( 2n_{1}+1,2n_{2}+1, \ldots, 2n_{p-1}+1,-(2n_{p}+1) \right) (\mathbb{L}^{\mathfrak{m}})^{2s}, n_{i}\in\mathbb{N}, s\in \mathbb{N} \rbrace .$$ } and already in weight $12$, they are not linearly independent: 
$$28\zeta(9,3)+150\zeta(7,5)+168\zeta(5,7) = \frac{5197}{691}\zeta(12).$$
Consequently, in order to find a basis of motivic MZV, we have to:
\begin{itemize}
\item[\texttt{Either}: ]  Allow \textit{higher} depths, as the Hoffman basis (proved by F Brown in $\cite{Br2}$), or the $\star$ analogue version:
$$\texttt{ Hoffman } \star \quad : \left\lbrace  \zeta^{\star, \mathfrak{m}} \left( \boldsymbol{2}^{a_{0}},3, \boldsymbol{2}^{a_{1}}, \ldots, 3, \boldsymbol{2}^{a_{p}} \right), a_{i}\geq 0 \right\rbrace .$$
The analogous real family Hoffman $\star$ was also conjectured (in $\cite{IKOO}$, Conjecture $1$) to be a basis of the space of MZV.  Up to an analytic conjecture ($\ref{conjcoeff}$), we prove (in $\S 4.4$) that the motivic Hoffman $\star$ family is a basis of $\mathcal{H}^{1}$, the space of motivic MZV\footnote{Up to this analytic statement, $\ref{conjcoeff}$, the Hoffman $\star$ family is then a generating family for MZV.}. In this case, the notion of \textit{motivic depth} (explained in $\S 2.4.3$) is the number of $3$, and is here in general much smaller than the depth.
\item[\texttt{Or}: ] Pass by motivic Euler sums, as the Euler $\sharp$ basis given below; it is also another illustration of the descent idea of Chapter $5$: roughly, it enables to reach motivic periods in $\mathcal{H}^{N'}$  coming from above, i.e. via motivic periods in $\mathcal{H}^{N}$, for $N' \mid N$.
\end{itemize}
More precisely, let look at the following motivic Euler $\sharp$ sums:
\begin{theom}
The motivic Euler sums $\zeta^{\sharp,  \mathfrak{m}} \left( \lbrace \overline{\text{even }},  \text{odd } \rbrace^{\times} \right) $ are motivic geometric periods of $\mathcal{MT}(\mathbb{Z})$. Hence, they are $\mathbb{Q}$ linear combinations of motivic multiple zeta values.\footnote{Since, by $\cite{Br2}$, we know that Frobenius invariant geometric motivic periods of $\mathcal{MT}(\mathbb{Z})$ are $\mathbb{Q}$ linear combinations of motivic multiple zeta values.}
\end{theom}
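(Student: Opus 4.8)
The plan is to prove the statement by the general machinery of Chapter~2: to show that a motivic period $\zeta^{\sharp,\mathfrak{m}}$ of this shape lies in $\mathcal{H}^{1}$ (the space of motivic MZV), it suffices, by Corollary~\ref{kerdn} applied inductively in the weight, to compute all the derivations $D_{2r+1}$ for $r\geq 1$ and check that they land in $\mathcal{H}^{1}\otimes\mathcal{H}^{1}$ (equivalently, in the subspace generated by objects already known by induction to be motivic MZV), and then to fix the remaining rational multiple of $(\pi^{\mathfrak{m}})^{n}$ or $\zeta^{\mathfrak{m}}(n)$ by a period computation. Concretely: (i)~expand $\zeta^{\sharp,\mathfrak{m}}$ by definition~(\ref{eq:miistarsharp}) as a $\mathbb{Q}$-linear combination of motivic Euler sums $\zeta^{\mathfrak{m}}$ with $\pm 1$ arguments and $2$-power coefficients; (ii)~apply the derivation formula of Lemma~\ref{drz} (or the depth-graded refinement Lemma~\ref{Drp}) to each term; (iii)~collect the outputs and show that, after using path reversal, homothety ($\alpha=-1$) and the other properties of~\S\ref{propii}, together with the defining combinatorics of the $\sharp$-forms, all the ``bad'' pieces (Euler sums genuinely ramified at $2$, i.e.\ not unramified) cancel and what remains involves only $\zeta^{\sharp,\mathfrak{m}}$ of the same restricted shape $\{\overline{\mathrm{even}},\mathrm{odd}\}$ in lower weight. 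The induction then closes: by hypothesis these lower-weight symbols are already motivic MZV, hence each $D_{2r+1}$ of our element lies in $\mathcal{L}_r\otimes\mathcal{H}^{1}$, so by Theorem~2.4.4 / Corollary~\ref{kerdn} the element itself differs from an element of $\mathcal{H}^{1}$ by a rational multiple of $\zeta^{\mathfrak{m}}(n)$ (or $(\pi^{\mathfrak{m}})^n$, which for odd $n$ vanishes in the relevant quotient), which is itself in $\mathcal{H}^{1}$.

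**Key combinatorial point.** The heart of the argument is the stability, under the infinitesimal coaction, of the family of $\sharp$-sums with only odd positive and even negative entries. The reason one expects this to work is the same mechanism Brown exploited in \cite{Br2} for the Hoffman elements and which is recalled in the remark after Lemma~\ref{Drp}: the forms $\omega_{\pm\sharp}=2\omega_{\pm1}-\omega_0$ are designed so that the ``interior'' cuts of the coaction — terms of type \textsc{(a)},\textsc{(b)},\textsc{(c)} in Lemma~\ref{Drp} — reorganise into $\sharp$-combinations, while the $2$-power normalisation makes the pieces that would introduce a new ramified prime telescope. One must check parity: a cut of odd length $2r+1$ applied between entries whose weights are constrained to be odd (positive part) or even (negative part) produces, on the left, a depth-one $\zeta^{\mathfrak{l}}(2r+1;\pm1)$, and on the right a $\sharp$-sum in which the merged entry $n_i+n_{i+1}-(2r+1)$ again has the correct parity and sign behaviour dictated by the product $\epsilon_i\epsilon_{i+1}$ of roots of unity; the key is that $(\text{odd})+(\text{even})-(\text{odd})$ and $(\text{even})+(\text{even})-(\text{odd})$ land back in the allowed set, while genuinely ramified configurations pick up coefficient $0$.

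**The main obstacle.** The hard part will be Step~(iii): controlling the precise rational coefficients so that the ramified-at-$2$ Euler sums cancel exactly. Unlike the $N=1$ Hoffman case where only $\omega_0,\omega_1$ occur, here one is working inside $\mathcal{H}^{2}$ and must track the full bookkeeping of signs, binomial coefficients $\binom{r-1}{r-n_i}$, the path-reversal signs $(-1)^{n_i}$, and the boundary/deconcatenation terms \textsc{(d)},\textsc{(d')}; the cancellation is not formal and requires a genuine identity among these binomial sums, most cleanly proved by introducing the $\sharp$ alphabet as a change of basis on words and showing the coaction is triangular with respect to it. An auxiliary lemma establishing a closed ``$\sharp$-coaction formula'' — an analogue of Lemma~\ref{Drp} written directly for the $\omega_{\pm\sharp}$ forms — is the natural technical device; once that is in hand the induction is routine. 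Finally, the period-matching step (fixing the leftover rational) uses the corresponding known numerical/analytic identity for Euler $\sharp$ sums, or, failing an explicit one, the fact (from \cite{Br2}) that Frobenius-invariant geometric motivic periods of $\mathcal{MT}(\mathbb{Z})$ are exactly $\mathbb{Q}$-linear combinations of motivic MZV, so membership in $\mathcal{H}^1$ is all that is needed and no further identification of the constant is required.
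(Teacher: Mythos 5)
Your overall architecture does coincide with the paper's: induction on the weight, a derivation criterion for being unramified, and stability of the $\sharp$-family under the coaction, with a coaction formula written directly for the forms $\omega_{\pm\sharp}$ (this is Lemma~\ref{lemmt} and the stability Lemma~A.1.3 in the paper). The problem is that the step you defer as the ``main obstacle'' is precisely the substance of the theorem, and the mechanism you propose for it is neither carried out nor the one that actually works. The cuts which create a positive even or a negative odd entry (the ``unstable'' cuts) do not carry coefficient zero, and they are not removed by shuffle, stuffle, path reversal, homothety, or a triangularity/change-of-basis argument in the $\sharp$ alphabet: in the paper they cancel \emph{in pairs} by means of the hybrid relation in the coalgebra $\mathcal{L}$ (Theorem~\ref{hybrid}), extracted from the linearized, Frobenius-anti-invariant part of the octagon relation for $N=2$, combined with the antipode relations of $\S 4.2.1$; from these one derives the rules \textsc{Reverse}, \textsc{Shift}, \textsc{Cut}, \textsc{Minus}, \textsc{Sign} for $\zeta^{\sharp\sharp,\mathfrak{l}}$ (Corollary~4.2.7), and it is these rules that pair off the unstable cuts in the proof of Lemma~A.1.3. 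Two features of that mechanism are absent from your sketch: the identities hold only modulo products, which is admissible exactly because they are applied to the left factor of $D_{2r+1}$, living in $\mathcal{L}$; and the bookkeeping is governed by the parity constraint $w\equiv p-s \pmod 2$ (weight, depth, number of sign changes) valid for subsequences of this family, which forces a cut of odd weight to have either odd depth with equal end-signs or even depth with opposite ones, and thereby decides which rule applies. Your parity discussion only covers the depth-one-drop cuts of Lemma~\ref{Drp}; the problematic cuts of the full $D_{2r+1}$ of Lemma~\ref{drz} remove an entire subsequence, and that is exactly where the unstable configurations arise. Without the hybrid/octagon input, the claimed cancellation is an unproved assertion, not a routine verification.

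A smaller but real omission: the descent criterion from $\mathcal{H}^{2}$ to $\mathcal{H}^{1}$ (Corollary~\ref{criterehonoraire}) requires $D_{1}(\mathfrak{Z})=0$ \emph{in addition to} $D_{2r+1}(\mathfrak{Z})\in\mathcal{L}_{2r+1}\otimes\mathcal{H}^{1}$ for $r\geq 1$; the latter alone does not suffice (in the $f$-alphabet, an element of $\mathcal{H}^{2}_{4}$ mapping to $f_{1}f_{3}$ has $D_{3}=0$, so all $D_{2r+1}$ with $r\geq1$ trivially land in $\mathcal{H}^{1}$, yet it is not a motivic MZV; only $D_{1}$ detects it). For the family at hand $D_{1}=0$ is immediate, since no argument $\overline{1}$ occurs and hence no consecutive pair $\epsilon,-\epsilon$ appears in the word (Lemma~\ref{condd1}), but this check must be stated. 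You are right, however, that no final period-matching is needed: the criterion characterises membership in $\mathcal{H}^{1}$ exactly, and the depth-one elements killed by all derivations are already unramified.
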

\texttt{Notations}: Recall that an overline $\overline{x}$ corresponds to a negative sign, i.e. $-x$ in the argument. Here, the family considered is a family of Euler $\sharp$ sums with only positive odd and negative even integers for arguments.\\
This motivic family is even a generating family of motivic MZV from which we are able to extract a basis:
\begin{theom}
A basis of $\boldsymbol{\mathcal{P}_{\mathcal{MT}(\mathbb{Z}), \mathbb{R}}^{\mathfrak{m},+}}=\mathcal{H}^{1}$, the space of motivic multiple zeta values is:
$$\lbrace\zeta^{\sharp,\mathfrak{m}} \left( 2a_{0}+1,2a_{1}+3,\cdots, 2 a_{p-1}+3, \overline{2a_{p}+2}\right) \text{ , } a_{i}\geq 0 \rbrace .$$
\end{theom}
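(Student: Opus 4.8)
The plan is to prove linear independence by a recursion on the \emph{weight} and, within a fixed weight, on the number $p$ of the ``block variables'' $a_i$ (equivalently on the depth), using the derivation operators $D_r$ and Corollary~\ref{kerdn}. Since the cardinality of the proposed family in weight $n$ equals $d_n^1$ (the number of compositions $(a_0,\ldots,a_p)$ with $a_i\ge 0$ and $2a_0+1+\sum_{i\ge 1}(2a_i+3)=n$ is easily checked to satisfy the Fibonacci-type recursion $d_n=d_{n-2}+d_{n-3}$ governing $\dim\mathcal H^1_n$, with the convention that $p=0$ gives the single element $\zeta^{\sharp,\mathfrak m}(\overline{2a_0+2})$ when $n$ is even and no element when... — one checks the bookkeeping matches), it suffices to prove the family is linearly independent; it is then automatically a basis. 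By the previous theorem each $\zeta^{\sharp,\mathfrak m}(2a_0+1,2a_1+3,\ldots,2a_{p-1}+3,\overline{2a_p+2})$ lies in $\mathcal H^1=\mathcal P^{\mathfrak m,+}_{\mathcal{MT}(\mathbb Z),\mathbb R}$, so the statement makes sense.

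First I would set up the right filtration: the ``level'' here is $p$, the number of indices $a_1,\ldots,a_p$, i.e. the depth minus one (as indicated in the introduction's footnote). I claim the $D_r$ are compatible with this level filtration and that on the graded pieces they act, up to lower level, by a deconcatenation. Concretely, one computes $D_{2r+1}$ applied to a motivic Euler $\sharp$ sum in the family using the formula for $D_r$ on iterated integrals $(\ref{eq:Der})$, after expanding each $\omega_{\pm\sharp}=2\omega_{\pm1}-\omega_0$ by linearity $(\ref{eq:miistarsharp})$. The key computational lemma will be: modulo terms of strictly smaller level, $D_{2r+1}\zeta^{\sharp,\mathfrak m}(\ldots,2a_{p-1}+3,\overline{2a_p+2})$ has a single surviving component, coming from the cut at the tail of the word (the ``deconcatenation'' terms \textsc{(d,d')} of Lemma~\ref{Drp}), and it equals $c\cdot \zeta^{\mathfrak m}(2r+1)\otimes \zeta^{\sharp,\mathfrak m}(\ldots)$ with $\ldots$ again in the family (with one fewer block, or with the last block shortened) and $c$ an \emph{explicit nonzero} rational. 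The cancellations that kill the interior cuts are exactly the point of the $\sharp$-normalization: the combination $2\omega_{\pm1}-\omega_0$ is engineered so that $D_r$ of an interior segment, when the two roots of unity involved are $\pm1$, telescopes — this mirrors what makes the Euler $\sharp$ sums with odd/$\overline{\mathrm{even}}$ arguments unramified in the first place, and one should be able to reuse the combinatorial identities from the proof of that theorem.

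Granting that lemma, the recursion runs as in Brown's proof for the Hoffman basis: suppose a $\mathbb Q$-linear relation among weight-$n$ elements of the family; project to the top level $p$; apply $\partial_{n,p}$ from $(\ref{eq:pderivnp})$; by the lemma this maps the top-level part injectively (modulo level $<p$) to a sum over $r$ of families in weight $n-(2r+1)$ and level $p-1$, which are linearly independent by the inductive hypothesis; the explicit nonzero coefficients $c$ and a triangularity argument on the blocks $(a_1,\ldots,a_p)$ (ordering, say, by the last block $a_p$, or lexicographically) then force all top-level coefficients to vanish. Then descend to level $p-1$, etc. The base cases are $\ker D_{<n}$, handled by Corollary~\ref{kerdn}: in weight $n$ the only level-$0$ element is a rational multiple of $\zeta^{\mathfrak m}(n)$ when this is the relevant generator, and $(\pi^{\mathfrak m})$-powers are excluded since our elements are geometric periods of $\mathcal{MT}(\mathbb Z)$, i.e. the analogue statement on $H^1=\mathbb Q\langle f_3,f_5,\ldots\rangle\otimes\mathbb Q[g_1]$ pins down $\ker\Delta'$.

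The main obstacle I anticipate is the explicit computation of $D_{2r+1}$ on these $\sharp$-sums and, in particular, verifying that the interior terms \textsc{(a),(b),(c)} cancel modulo lower level and that the surviving coefficient $c$ is provably nonzero — this requires care with the binomial coefficients $\binom{r-1}{r-n_i}$ appearing in Lemma~\ref{Drp} and with how the $2$-power weights in $(\ref{eq:essharp})$ interact with the expansion $\omega_{\pm\sharp}=2\omega_{\pm1}-\omega_0$. A secondary (but routine) obstacle is the dimension count: one must check that the number of tuples $(a_0,a_1,\ldots,a_p)$ producing weight $n$ really is $d_n^1$, including the degenerate $p=0$ case and the parity constraints, so that linear independence yields a basis rather than merely a free family. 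Finally, since the family is not obviously stable under the full coaction (the $\sharp$-expansion produces iterated integrals with interior $1$'s and $0$'s), the recursion must, as the introduction warns, carry along a hypothesis controlling the rational coefficients that appear when re-expressing $D_r$ of a family element in terms of the family; I would fold this into the inductive statement from the start.
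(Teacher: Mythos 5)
Your overall architecture (recursion on weight and on the level $=$ depth $-1$, injectivity of $\partial_{<n,p}$, cardinality count, Corollary~\ref{kerdn} for the base case, stability of the family under the coaction) matches the paper's proof, but your ``key computational lemma'' is not what actually happens, and the argument as you state it would fail at that point. You claim that, modulo lower level, all interior cuts cancel by a telescoping built into the $\sharp$-normalization, leaving a \emph{single} deconcatenation term with an explicit nonzero coefficient, and you then want triangularity from that single term. In fact the paper's depth-graded formula $(\ref{eq:dgrderiv})$ shows that the interior depth-one cuts do \emph{not} cancel: besides the deconcatenation terms $2\binom{2r}{2a_p+1}\,\zeta^{\sharp,\mathfrak m}(\ldots,\overline{2\alpha+2})$ there remain whole families of terms with coefficients $\tfrac{2^{2r+1}}{1-2^{2r}}\binom{2r}{\cdot}$ in which an interior block $2a_i+3$ is shortened; the antipodal and hybrid relations of \S 4.2 only reduce the coaction to this stable form, they do not kill these terms. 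The missing idea is the $2$-adic valuation argument: after multiplying the row of $D^{-1}_{2r+1,p}$ by $2^{-2r}$, every interior coefficient has strictly positive $2$-adic valuation while the deconcatenation coefficients $2^{1-2r}\binom{2r}{2a_p+1}$ have valuation $\leq 0$; hence modulo $2$ only deconcatenation survives, the matrix $\widetilde{M}^{\mathfrak D}_{n,p}$ (ordered lexicographically by $(a_p,\ldots,a_0)$ resp.\ $(r,b_{p-1},\ldots,b_0)$) is unitriangular, its determinant is odd, and $\partial_{<n,p}$ is bijective on the family. Without this (or some substitute estimate) your ``explicit nonzero coefficient plus triangularity on blocks'' step has nothing to rest on, because at top level several terms survive and there is no ordering in which the transition matrix is literally triangular over $\mathbb{Q}$.

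Two smaller points. Your final worry about instability of the family under the coaction is unfounded: the simplifications of Appendix~A.1 (used already for Theorem~4.3.1) show $D_{2r+1}(\mathcal H^{odd\sharp}_n)\subset \mathcal L_{2r+1}\otimes\mathcal H^{odd\sharp}_{n-2r-1}$, so no auxiliary inductive hypothesis on re-expression coefficients is needed — but one must be careful that the matrix of $\partial_{<n,p}$ is only well defined once the lower-depth elements are known to be independent, which is why the paper first works with the formal matrix defined by $(\ref{eq:dgrderiv})$ and proves \emph{its} invertibility before identifying it with $M^{\mathfrak D}_{n,p}$; your recursion should incorporate this bootstrapping explicitly. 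The dimension bookkeeping you leave as ``routine'' is indeed routine and is handled in the paper by the same Fibonacci-type count.
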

The proof is based on the good behaviour of this family with respect to the coaction and the depth filtration; the suitable filtration corresponding to the \textit{motivic depth} for this family is the usual depth minus $1$.\\
By application of the period map, combining these results:
\begin{corol}
Each Euler sum $\zeta^{\sharp} \left( \lbrace \overline{\text{even }},  \text{odd } \rbrace^{\times} \right) $ (i.e. with positive odd and negative even integers for arguments) is a $\mathbb{Q}$ linear combination of multiple zeta values of the same weight.\\
Conversely, each multiple zeta value of depth $<d$ is a 
$\mathbb{Q}$ linear combination of elements $\zeta^{\sharp} \left( 2a_{0}+1,2a_{1}+3,\cdots, 2 a_{p-1}+3, \overline{2a_{p}+2}\right) $, of the same weight with $a_{i}\geq 0$, $p\leq d$.
\end{corol}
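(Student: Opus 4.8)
The plan is to deduce this Corollary formally from the two preceding motivic Theorems by pushing them through the period homomorphism $\mathrm{per}\colon\mathcal{H}^{1}\to\mathcal{Z}^{1}$ of $(\ref{eq:period})$. The first thing to record is that $\mathrm{per}$ is a surjective morphism of $\mathbb{Q}$-algebras, graded for the weight, which sends $\zeta^{\mathfrak{m}}(\cdot)\mapsto\zeta(\cdot)$; since the $\sharp$-variant is obtained from ordinary motivic iterated integrals by the \emph{linear} substitutions $(\ref{eq:miistarsharp})$ and $\mathrm{per}$ is linear, it follows at once that $\mathrm{per}(\zeta^{\sharp,\mathfrak{m}}(\cdot))=\zeta^{\sharp}(\cdot)$ as well. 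Everything else is bookkeeping; the real content sits in the two motivic Theorems, whose proofs run on the coaction machinery of $\S 2.4$ (Lemma $\ref{Drp}$ and Theorem $2.4.4$) and which we are free to assume here.

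For the forward direction, take $\zeta^{\sharp,\mathfrak{m}}$ with only positive odd and negative even arguments. By the first motivic Theorem it is a geometric motivic period of $\mathcal{MT}(\mathbb{Z})$ fixed by the real Frobenius, hence an element of $\mathcal{P}_{\mathcal{MT}(\mathbb{Z}),\mathbb{R}}^{\mathfrak{m},+}=\mathcal{H}^{1}$; by Brown's description of $\mathcal{H}^{1}$ (Frobenius-invariant geometric periods of $\mathcal{MT}(\mathbb{Z})$ are $\mathbb{Q}$-combinations of motivic multiple zeta values) it equals a finite sum $\sum_{j}c_{j}\,\zeta^{\mathfrak{m}}(w_{j})$ with $c_{j}\in\mathbb{Q}$, and, all maps in sight being weight-graded, the $w_{j}$ can be taken of the same weight as the left-hand side. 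Applying $\mathrm{per}$ and the two compatibilities above turns this into $\zeta^{\sharp}(\cdot)=\sum_{j}c_{j}\,\zeta(w_{j})$ with all $\zeta(w_{j})$ of that weight, which is the first assertion.

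For the converse, the second motivic Theorem says the family $\mathcal{B}=\{\zeta^{\sharp,\mathfrak{m}}(2a_{0}+1,2a_{1}+3,\ldots,2a_{p-1}+3,\overline{2a_{p}+2})\}$ is a basis of $\mathcal{H}^{1}$; in particular each $\zeta^{\mathfrak{m}}(n_{1},\ldots,n_{p})$ expands on $\mathcal{B}$ within a single weight by gradedness. For the depth bound one reuses the mechanism of that theorem's proof: the recursion there is on the \emph{motivic depth}, the adapted filtration on $\mathcal{B}$ being the usual depth minus $1$, and the derivations $D_{r}$ strictly lower the depth filtration, $D_{r}(\mathcal{F}^{\mathfrak{D}}_{p}\mathcal{H}^{1})\subset\mathcal{L}_{r}\otimes\mathcal{F}^{\mathfrak{D}}_{p-1}\mathcal{H}^{1}$; since for an element of $\mathcal{H}^{1}$ the motivic depth is bounded by its depth-filtration level, tracking this through the change of basis shows that an element of $\mathcal{F}^{\mathfrak{D}}_{d-1}\mathcal{H}^{1}$ — in particular a motivic multiple zeta value of depth $<d$ — is a $\mathbb{Q}$-combination of the $\zeta^{\sharp,\mathfrak{m}}$ with $p\le d$. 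Applying $\mathrm{per}$ termwise gives the converse statement for numerical multiple zeta values.

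There is honestly no serious obstacle at the level of this Corollary: granted the two motivic Theorems, it is a one-line application of a surjective graded ring homomorphism. The only point that deserves care — and it is already handled inside the proof of the second motivic Theorem — is to carry the depth filtration through the change of basis, so that ``depth $<d$'' on the multiple zeta value side becomes exactly the bound $p\le d$ on the Euler $\sharp$ side; the rest is the formal transfer along $\mathcal{H}^{1}\twoheadrightarrow\mathcal{Z}^{1}$.
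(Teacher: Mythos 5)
Your proposal is correct and is essentially the paper's own argument: the Corollary is obtained by applying the weight-graded period homomorphism $\mathrm{per}:\mathcal{H}^{1}\to\mathcal{Z}^{1}$ (with $\mathrm{per}(\zeta^{\sharp,\mathfrak{m}})=\zeta^{\sharp}$ by linearity of $(\ref{eq:miistarsharp})$) to the two motivic theorems, the depth bound coming from the level filtration (motivic depth $=$ depth minus one) built into the proof of the basis theorem. The paper states exactly this transfer without further elaboration, so your additional bookkeeping on the depth filtration only makes explicit what is implicit there.
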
 

\textsc{Remarks}:
\begin{itemize}
\item[$\cdot$] Finding a \textit{good} basis for the space of motivic multiple zeta values is a fundamental question. Hoffman basis may be unsatisfactory for various reasons, while this basis with Euler sums (linear combinations with $2$ power coefficients) may appear slightly more natural, in particular since the motivic depth is here the depth minus $1$. However, both of those two baess are not bases of the $\mathbb{Z}$ module and the primes appearing in the determinant of the \textit{passage matrix}\footnote{The inverse of the matrix expressing the considered basis in term of a $\mathbb{Z}$ basis.} are growing rather fast.\footnote{Don Zagier has checked this for small weights with high precision; he suggested that the primes involved in the case of this basis could have some predictable features, such as being divisor of $2^{n}-1$.}
\item[$\cdot$] Looking at how periods of $\mathcal{MT}(\mathbb{Z})$ embed into periods of $\mathcal{MT}(\mathbb{Z}[\frac{1}{2}])$, is a fragment of the Galois descent ideas of Chapter $5$.\\ Euler sums which belong to the $\mathbb{Q}$-vector space of multiple zeta values, sometimes called \textit{honorary}, have been studied notably by D. Broadhurst (cf. $\cite{BBB1}$) among others. We define then \textit{unramified} motivic Euler sums as motivic ES which are $\mathbb{Q}$-linear combinations of motivic MZVs, i.e. in $\mathcal{H}^{1}$. Being unramified for a motivic period implies that its period is unramified, i.e. honorary; some examples of unramified motivic ES are given in $\S 6.2$, or with the family above. In Chapter 5, we give a criterion for motivic Euler sums to be unramified \ref{criterehonoraire}, which generalizes for some other roots of unity; by the period map, this criterion also applies to Euler sums. 
\item[$\cdot$] For these two theorems, in order to simplify the coaction, we crucially need a motivic identity in the coalgebra $\mathcal{L}$, proved in $\S 4.2$, coming from the octagon relation pictured in Figure $\ref{fig:octagon2}$. More precisely, we need to consider the linearized version of the anti-invariant part by the Frobenius at infinity of this relation, in order to prove this hybrid relation (Theorem $\ref{hybrid}$), for $n_{i}\in\mathbb{N}^{\ast}$,  $\epsilon_{i}\in\pm 1$:
$$\zeta^{\mathfrak{l}}_{k}\left(n_{0},\cdots, n_{p} \atop \epsilon_{0} , \ldots, \epsilon_{p} \right) + \zeta^{\mathfrak{l}}_{n_{0}+k}\left( n_{1}, \ldots, n_{p}  \atop \epsilon_{1} , \ldots, \epsilon_{p} \right) \equiv (-1)^{w+1}\left(  \zeta^{\mathfrak{l}}_{k}\left( n_{p}, \ldots, n_{0} \atop \epsilon_{p} , \ldots, \epsilon_{0}\right) + \zeta^{\mathfrak{l}}_{k+n_{p}}\left( n_{p-1}, \ldots,n_{0} \atop \epsilon_{p-1}, \ldots, \epsilon_{0}\right)  \right).$$
Thanks to this hybrid relation, and the antipodal relations presented in $\S 4.2.1$, the coaction expression is considerably simplified in Appendix $A.1$.
\end{itemize}

\begin{theom}
If the analytic conjecture  ($\ref{conjcoeff}$) holds, then the motivic \textit{Hoffman} $\star$ family $\lbrace \zeta^{\star,\mathfrak{m}} (\lbrace 2,3 \rbrace^{\times})\rbrace$ is a basis of $\mathcal{H}^{1}$, the space of MMZV.
\end{theom}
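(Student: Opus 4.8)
\emph{Proof strategy.} The plan is to run F.~Brown's argument for the Hoffman basis (\cite{Br2}) in the $\star$-setting. First I would reduce to linear independence: for $N=1,2,3,4,\mlq 6\mrq,8$ the comodule embedding $(\ref{eq:phih})$ is an isomorphism, so $\dim_{\mathbb{Q}}\mathcal{H}^{1}_{n}=d^{1}_{n}$ is given by the Hilbert series $1/(1-t^{2}-t^{3})$, while the number of words in $\lbrace 2,3\rbrace^{\times}$ of weight $n$ obeys the very same recursion $c_{n}=c_{n-2}+c_{n-3}$ with $c_{0}=1$, $c_{1}=0$. Hence it suffices to prove that $\lbrace\zeta^{\star,\mathfrak{m}}(\lbrace 2,3\rbrace^{\times})\rbrace$ is $\mathbb{Q}$-linearly independent in $\mathcal{H}^{1}$; spanning is then automatic.

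Next I would set up a double induction on the weight $n$ and on the \emph{level} $p$ of a word, $p$ being its number of $3$'s, which for these elements should coincide with the motivic depth of $\S 2.4.3$. The level-$0$ case is $\zeta^{\star,\mathfrak{m}}(\lbrace 2\rbrace^{a})$, a nonzero rational multiple of $(\pi^{\mathfrak{m}})^{2a}$, so it occupies a one-dimensional slice of each even weight; this is the base case. For the inductive step I would feed the weight-graded derivations $D_{2r+1}$ ($r\geq 1$) into Theorem~$2.4.4$: since $\ker D_{<n}\cap\mathcal{H}^{1}_{n}=\mathbb{Q}\,\zeta^{\mathfrak{m}}(n)$ for $N=1$ (Corollary~$\ref{kerdn}$), any nontrivial weight-$n$ relation among Hoffman $\star$ elements surviving all the $D_{2r+1}$ must already be proportional to $\zeta^{\mathfrak{m}}(n)$, which is then disposed of separately (for $n$ odd via its known Hoffman $\star$ expansion; for $n$ even it does not occur).

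The crux is to compute $D_{2r+1}\,\zeta^{\star,\mathfrak{m}}(\lbrace 2,3\rbrace^{\times})$ modulo level-$<p$ terms. Expanding the $\star$-forms via $(\ref{eq:miistarsharp})$ and applying the combinatorial formula of Lemmas~$\ref{drz}$ and $\ref{Drp}$, together with the antipodal and hybrid relations of $\S 4.2$ and the reductions of Appendix~A.1, the right-hand tensor factor should again be a Hoffman $\star$ element of level $p-1$ with an explicit rational coefficient, the left-hand factor being $\zeta^{\mathfrak{l}}(2r+1)$ up to a scalar. This is exactly where the analytic conjecture $(\ref{conjcoeff})$ is indispensable: it pins down the ``leading'' such rational coefficient (the $\star$-analogue of Zagier's evaluation of $\zeta(\lbrace 2\rbrace^{a},3,\lbrace 2\rbrace^{b})$ used by Brown) and guarantees it is nonzero, so that after passing to the depth-graded and fixing the basis $\zeta^{\mathfrak{a}}(2r+1)$ of $gr_{1}^{\mathfrak{D}}\mathcal{L}_{2r+1}$, the operator $\partial_{n,p}$ of $(\ref{eq:pderivnp})$ carries the level-$p$ part of the family injectively, modulo level $<p$, onto sums of level-$(p-1)$ parts. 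The induction on $(n,p)$ then closes, and combined with the dimension count of the first step this yields the basis statement.

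The main obstacle is precisely this last computation: the family $\lbrace 2,3\rbrace^{\times}$ is \emph{not} stable under the coaction, so $D_{2r+1}$ produces, besides the expected level-$(p-1)$ $\star$-term, a bouquet of other Euler-sum-type terms that must be rewritten inside the Hoffman $\star$ family, with all rational coefficients controlled and the resulting triangular system shown to be nondegenerate --- this is the role of the hybrid relation of $\S 4.2$ and of Appendix~A.1. It is also the one point where a genuinely new analytic input, not reducible to the motivic formalism (cf. Lemma~$\ref{lemmcoeff}$ and Corollary~$\ref{kerdn}$), is unavoidable, since the coaction alone can never fix the single rational number that conjecture~$(\ref{conjcoeff})$ supplies.
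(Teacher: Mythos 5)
Your strategy is essentially the paper's proof: reduce to linear independence by the dimension count, filter the Hoffman $\star$ elements by the level (the number of $3$'s), and run a double induction on weight and level using the level- and weight-graded coaction, with Conjecture (\ref{conjcoeff}) supplying the $\star$-analogue of Zagier's evaluation needed to control the rational coefficients and establish injectivity of the map $\partial^{L}_{<n,l}$ built from the $D_{2r+1}$. Two small corrections to your description: the Hoffman $\star$ family \emph{is} stable under the coaction (the right-hand factors of $D_{2r+1}$ stay in the family, with level dropped by one) --- the genuine complication sits in the \emph{left} factors, which are $\zeta^{\star\star}$-terms that must be evaluated as polynomials in simple zetas via Lemma \ref{lemmcoeff}; moreover the injectivity is not a matter of one nonzero leading coefficient, nor is it furnished by the hybrid relation, but rests on the conjectured $2$-adic valuations $(\ref{eq:valuations})$, which make the deconcatenation coefficients $B^{a,b}C_{r}$ the $2$-adically smallest ones, so that the matrix $M_{n,l}$ becomes triangular with $1$'s on the diagonal modulo $2$ and hence invertible.
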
 
\texttt{Nota Bene:} A MMZV $\star$, in the depth graded, is obviously equal to the corresponding MMZV. However, the motivic Hoffman (i.e. with only $2$ and $3$) multiple zeta $(\star)$ values are almost all zero in the depth graded (the \textit{motivic depth} there being the number of $3$). Hence, the analogous result for the non $\star$ case\footnote{I.e. that the motivic Hoffman family is a basis of the space of MMZV, cf $\cite{Br1}$.}, proved by F. Brown, does not make the result in the $\star$ case anyhow simpler.\\
\\
Denote by $\mathcal{H}^{2,3}$\nomenclature{$\mathcal{H}^{2,3}$}{the $\mathbb{Q}$-vector space spanned by the motivic Hoffman $\star$ family} the $\mathbb{Q}$-vector space spanned by the motivic Hoffman $\star$ family. The idea of the proof is similar as in the non-star case done by Francis Brown. We define an increasing filtration $\mathcal{F}^{L}_{\bullet}$ on $\mathcal{H}^{2,3}$, called the \textit{level}, such that:\footnote{Beware, this notion of level is different than the level associated to a descent in Chapter $5$. It is similar as the level notion for the Hoffman basis, in F. Brown paper's $\cite{Br2}$. It corresponds to the motivic depth, as we will see through the proof.}\nomenclature{$\mathcal{F}^{L}_{l}$}{level filtration on $\mathcal{H}^{2,3}$}
\begin{center}
$\mathcal{F}^{L}_{l}\mathcal{H}^{2,3}$ is spanned by $\zeta^{\star,\mathfrak{m}} (\boldsymbol{2}^{a_{0}},3,\cdots,3, \boldsymbol{2}^{a_{l}}) $, with less than \say{l} $3$.
\end{center}
One key feature is that the vector space $\mathcal{F}^{L}_{l}\mathcal{H}^{2,3}$ is stable under the action of $\mathcal{G}$.\\
The linear independence is then proved thanks to a recursion on the level and on the weight, using the injectivity of a map $\partial$ where $\partial$ came out of the level and weight-graded part of the coaction $\Delta$ (cf. $\S 4.4.1$). The injectivity is proved via $2$-adic properties of some coefficients with Conjecture $\ref{conjcoeff}$.\\
One noteworthy difference is that, when computing the coaction on the motivic MZV$^{\star}$, some motivic MZV$^{\star\star}$ arise, which are a non convergent analogue of MZV$^{\star}$ and have to be renormalized. Therefore, where F. Brown in the non-star case needed an analytic formula proven by Don Zagier ($\cite{Za}$), we need some slightly more complicated identities (in Lemma $\ref{lemmcoeff}$) because the elements involved, such as $\zeta^{\star \star,\mathfrak{m}} (\boldsymbol{2}^{a},3, \boldsymbol{2}^{b}) $ for instance, are not of depth $1$ but are linear combinations of products of depth $1$ motivic MZV times a power of $\pi$.\\
\\
\\
These two bases for motivic multiple zeta values turn to be identical, when considering this conjectural motivic identity, more generally:
\begin{conje}
For $a_{i},c_{i} \in \mathbb{N}^{\ast}$, $c_{i}\neq 2$,
\begin{equation}  \zeta^{\star, \mathfrak{m}} \left( \boldsymbol{2}^{a_{0}},c_{1},\cdots,c_{p}, \boldsymbol{2}^{a_{p}}\right)  =
\end{equation}
$$(-1)^{1+\delta_{c_{1}}} \zeta^{\sharp,  \mathfrak{m}} \left(  \pm (2a_{0}+1-\delta_{c_{1}}),\boldsymbol{1}^{ c_{1}-3},\cdots,\boldsymbol{1}^{ c_{i}-3  },\pm(2a_{i}+3-\delta_{c_{i}}-\delta_{c_{i+1}}), \ldots, \pm ( 2 a_{p}+2-\delta_{c_{p}}) \right) . $$
where the sign $\pm$ is always $-$ for an even argument, $+$ for an odd one, $\delta_{c}=\delta_{c=1}$, Kronecker symbol, and $\boldsymbol{1}^{n}:=\boldsymbol{1}^{min(0,n)}$ is a sequence of $n$ 1 if $n\in\mathbb{N}$, an empty sequence else.
\end{conje}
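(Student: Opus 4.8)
The strategy is to establish this as a \emph{motivic} identity using the strategy outlined in $\S 2.4$ (see Corollary \ref{kerdn}): first show that both sides have the same image under all the derivations $D_{r}$ for $r$ odd $1 < r < w$ (where $w$ is the weight), and then pin down the remaining single rational multiple of $\zeta^{\mathfrak{m}}(w)$ using a known analytic identity for the corresponding real numbers. The first reduction is to pass to the coalgebra level: by induction on the weight it suffices to prove the identity modulo products, i.e.\ to prove that $D_{r}$ of the left-hand side equals $D_{r}$ of the right-hand side for each relevant $r$, together with one numerical check in each weight to fix the kernel contribution. Because both the $\star$ sums and the $\sharp$ sums are, after expansion, explicit $\mathbb{Q}$-linear combinations of ordinary (motivic) iterated integrals with entries in $\{0,\pm 1\}$, the operators $D_{r}$ act on them via the combinatorial formula of Lemma \ref{drz} (equivalently Lemma \ref{Drp}), so the whole computation is, in principle, a finite combinatorial identity in the Hopf comodule $\mathcal{H}^{1}$.

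\textbf{Key steps, in order.} First I would unwind both sides into iterated-integral notation: expand $\zeta^{\star,\mathfrak{m}}$ on the left using the $\omega_{\pm\star}=\omega_{\pm 1}-\omega_{0}$ substitution and $\zeta^{\sharp,\mathfrak{m}}$ on the right using $\omega_{\pm\sharp}=2\omega_{\pm 1}-\omega_{0}$, taking care of the non-convergent ("$\star\star$") renormalizations exactly as in the proof of Theorem \ref{hybrid} and Appendix $A.1$, and of the $\boldsymbol{1}^{c_i-3}$ strings of $1$'s. Second, I would compute $D_{r}$ on both sides. Here the crucial simplification is the hybrid relation (the linearized Frobenius-anti-invariant octagon relation, Theorem \ref{hybrid}) together with the antipodal/stuffle relations of $\S 4.2.1$: these are precisely what make the mass of cross terms in Lemma \ref{Drp} collapse, so that $D_{r}$ of each side reduces to a manageable sum of terms of the form $\zeta^{\mathfrak{l}}(\text{odd}) \otimes (\text{shorter } \star/\sharp \text{ object})$. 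Third, one reconciles the two sides term by term: each cut on the left (a depth-one $\zeta^{\mathfrak{l}}$ factored off a shorter $\star$-value) must match a cut on the right (a depth-one $\zeta^{\mathfrak{l}}$ factored off a shorter $\sharp$-value), and by the induction hypothesis the shorter objects already agree. The combinatorics of how a block $c_{i}$ (with its $\boldsymbol{1}^{c_i-3}$ and the adjacent $\boldsymbol{2}^{a_i}$) contributes both "interior" cuts and "boundary" cuts has to be checked against the explicit shift $2a_i+3-\delta_{c_i}-\delta_{c_{i+1}}$ appearing in the statement; this is the bookkeeping heart of the argument. Fourth, having matched all $D_{r}$, Corollary \ref{kerdn} says the two sides differ by a rational multiple of $(\pi^{\mathfrak{m}})^{w}=(\mathbb{L}^{\mathfrak{m}})^{w}$ times a rational (for $N=1$ only even powers of $\mathbb{L}^{\mathfrak{m}}$ survive, so this term is present only when $w$ is even); this last rational is fixed by comparing one numerical value — i.e.\ one would invoke (or conjecture, in the spirit of the other results of $\S 4.4$) the corresponding analytic identity for real $\star$ and $\sharp$ sums, generalizing the Two-One, Three-One and Linebarger--Zhao formulas cited in the introduction.

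\textbf{Main obstacle.} I expect the genuine difficulty to be the \emph{combinatorial matching} at the level of $D_{r}$, not the analytic input. The left side is a $\star$-value whose depth-graded image is concentrated in low motivic depth (the number of non-$2$ entries $c_i$, further shifted by the $1$'s), while the right side is a $\sharp$-value of controlled depth; so a priori the two sides live in very different strata of the depth filtration, and the identity only becomes visible after the hybrid and antipodal relations are used to kill the high-depth debris. Organizing this — in particular handling the $\delta_{c_i}$ corrections when some $c_i=1$, the empty-versus-nonempty $\boldsymbol{1}^{c_i-3}$ strings, and the edge cases at $i=0$ and $i=p$ (the special first/last arguments $2a_0+1-\delta_{c_1}$ and $2a_p+2-\delta_{c_p}$, with the final $-$ sign of an even argument matching the fixed renormalization) — uniformly across all configurations is where the real work lies. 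A secondary obstacle is that, as emphasized in the \texttt{Nota Bene} after Theorem \ref{eq:coaction} and in Lemma \ref{lemmcoeff}, a motivic identity is strictly stronger than its numerical shadow, so even after the analytic identity is in hand the coaction-matching step cannot be skipped; one genuinely needs the explicit $2$-adic/rational control of the coefficients that appears in the $\sharp$-sum computations of $\S 4.3$. For these reasons the statement is left as a conjecture here, with the expectation that a full proof would follow the template of Theorems \ref{eq:coaction}--type arguments combined with a suitable extension of the analytic identities of \cite{Za}, \cite{Li}, \cite{IKOO}, \cite{LZ}.
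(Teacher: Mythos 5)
You should first note that the paper itself does not prove this statement: it is precisely Conjecture \ref{lzg}, and what the paper supplies is a conditional reduction (the theorem following the conjecture in $\S 4.5$), whose architecture is the same as yours — compute $D_{r}$ on both sides using the explicit formulas of Appendix $A.1$, simplify the cuts with the hybrid and antipodal relations of $\S 4.2$, close a recursion on the weight, and use the analytic identity to control the kernel. So your template is the right one, and your worry about the analytic input is in fact moot: by the ADDENDUM, the analytic version of the identity is deduced by Zhao from \cite{Zh3}.

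The genuine gap is in your third step. The term-by-term reconciliation of the cuts does \emph{not} close under the induction hypothesis together with the hybrid/antipodal relations alone. When the paper carries out this matching (cases $(i)$--$(iii)$ of its proof), the cuts whose right sides are of the forms $\zeta^{\star,\mathfrak{m}}(\cdots,\boldsymbol{2}^{a_i},\alpha,\boldsymbol{2}^{\beta},c_{j+1},\cdots)$ and $\zeta^{\sharp,\mathfrak{m}}(\cdots,1^{\gamma_{i+1}},B,1^{\gamma_{j+1}},\cdots)$ force, on the left sides, an identity in the coalgebra $\mathcal{L}$ which is \emph{not} a lower-weight instance of the conjecture, namely
$$\zeta^{\sharp,\mathfrak{l}}_{B_{0}-1}\left( \boldsymbol{1}^{\gamma_{1}},\cdots,\boldsymbol{1}^{\gamma_{p}},B_{p}\right) \equiv \zeta^{\star\star,\mathfrak{l}}_{2}\left( \boldsymbol{2}^{a_{0}-1},c_{1},\cdots,\boldsymbol{2}^{a_{p}}\right) - \zeta^{\star\star,\mathfrak{l}}_{1}\left( \boldsymbol{2}^{a_{0}},c_{1}-1,\ldots,\boldsymbol{2}^{a_{p}}\right),$$
i.e.\ the identity $(\ref{eq:conjid})$, together with the companion identity $(\ref{eq:toolid})$, which the paper proves jointly inside the same recursion \emph{assuming} $(\ref{eq:conjid})$. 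In other words, the ``bookkeeping heart'' you point to conceals a genuinely new coalgebra identity between a $\sharp$-value and a difference of two $\star\star$-values of the same weight, which must either be supplied as a separate hypothesis (as the paper does) or proved independently, e.g.\ directly from the relations of $\S 4.2$ or via the coaction with its own analytic input; your sketch implicitly assumes it follows from the recursion. Once this is recognized, your step four (Corollary \ref{kerdn} plus the analytic statement) is exactly how the paper fixes the kernel contribution, so apart from this missing ingredient your division of labour matches the paper's.
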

This conjecture expresses each motivic MZV$^{\star}$ as a linear combination of motivic Euler sums, which gives another illustration of the  Galois descent between the Hopf algebra of motivic MZV and the Hopf algebra of motivic Euler sums.\\
\\
\texttt{Nota Bene}: Such a \textit{motivic relation} between MMZV$_{\mu_{N}}$ is stronger than its analogue between MZV$_{\mu_{N}}$ since it contains more information; it implies many other relations because of its Galois conjugates. This explain why its is not always simple to lift an identity from MZV to MMZV from the Theorem $\ref{kerdn}$. If the family concerned is not stable via the coaction, such as $(iv)$ in Lemma $\ref{lemmcoeff}$, we may need other analytic equalities before concluding.\\
\\
This conjecture implies in particular the following motivic identities, whose analogue for real Euler sums are proved as indicated in the brackets\footnote{Beware, only the identity for real Euler sums is proved; the motivic analogue stays a conjecture.}:
\begin{description}
\item[Two-One] [For $c_{i}=1$, Ohno Zudilin: $\cite{OZ}$]:
\begin{equation}
\zeta^{\star, \mathfrak{m}} (\boldsymbol{2}^{a_{0}},1,\cdots,1, \boldsymbol{2}^{a_{p}})= - \zeta^{\sharp,  \mathfrak{m}} \left( \overline{2a_{0}}, 2a_{1}+1, \ldots, 2a_{p-1}+1, 2 a_{p}+1\right) . 
\end{equation}
\item[Three-One] [For $c_{i}$ alternatively $1$ and $3$, Zagier conjecture, proved in $\cite{BBB}$]
\begin{equation} \zeta^{\star, \mathfrak{m}} (\boldsymbol{2}^{a_{0}},1,\boldsymbol{2}^{a_{1}},3 \cdots,1, \boldsymbol{2}^{a_{p-1}}, 3, \boldsymbol{2}^{a_{p}}) = -\zeta^{\sharp,  \mathfrak{m}} \left( \overline{2a_{0}}, \overline{2a_{1}+2}, \ldots, \overline{2a_{p-1}+2}, \overline{2 a_{p}+2} \right) .
\end{equation}
\item[Linebarger-Zhao $\star$] [With $c_{i}\geq 3$, Linebarger Zhao in $\cite{LZ}$]:
\begin{equation}
\zeta^{\star, \mathfrak{m}} \left( \boldsymbol{2}^{a_{0}},c_{1},\cdots,c_{p}, \boldsymbol{2}^{a_{p}}\right)  =
-\zeta^{\sharp,  \mathfrak{m}} \left( 2a_{0}+1,\boldsymbol{1}^{ c_{1}-3  },\cdots,\boldsymbol{1}^{ c_{i}-3 },2a_{i}+3, \ldots, \overline{ 2 a_{p}+2} \right) 
\end{equation}
In particular, restricting to all $c_{i}=3$:
\begin{equation}\label{eq:LZhoffman}
\zeta^{\star, \mathfrak{m}} \left( \boldsymbol{2}^{a_{0}},3,\cdots,3,\boldsymbol{ 2}^{a_{p}}\right)  = - \zeta^{\sharp,  \mathfrak{m}} \left( 2a_{0}+1, 2a_{1}+3, \ldots, 2a_{p-1}+3, \overline{2 a_{p}+2}\right) . 
\end{equation}
\end{description}
\texttt{Nota Bene}: Hence the previous conjecture $(\ref{eq:LZhoffman})$ implies that the motivic Hoffman $\star$ is a basis, since we proved the right side of $(\ref{eq:LZhoffman})$ is a basis:
$$ \text{ Conjecture } \ref{lzg}  \quad \Longrightarrow  \quad  \text{ Hoffman } \star \text{ is a basis of MMZV} . $$
\\
\texttt{Examples:} The previous conjecture would give such relations:\\
$$\begin{array}{ll}
\zeta^{\star, \mathfrak{m}} (2,2,3,3,2)  =-\zeta^{\sharp, \mathfrak{m}} (5,3,-4) &\zeta^{\star, \mathfrak{m}} (5,6,2)  =-\zeta^{\sharp, \mathfrak{m}} (1,1,1,3,1,1,1,-4) \\
\zeta^{\star, \mathfrak{m}} (1,6)  =\zeta^{\sharp, \mathfrak{m}} (-2,1,1,1,-2) &\zeta^{\star, \mathfrak{m}} (2,4, 1, 2,2,3)  =-\zeta^{\sharp, \mathfrak{m}} (3,1, -2, -6,-2) .
\end{array}$$

\section{Galois Descents \textit{[Chapter 5]}}

There, we study Galois descents for categories of mixed Tate motives $\mathcal{MT}_{\Gamma_{N}}$, and how periods of $\pi_{1}^{un}(X_{N'})$ are embedded into periods of $\pi_{1}^{un}(X_{N})$ for $N'\mid N$. Indeed, for each $N, N'$ with $N'|N$ there are the motivic Galois group $\mathcal{G}^{\mathcal{MT}_{N}}$ acting on $\mathcal{H}^{\mathcal{MT}_{N}} $ and a Galois descent between $\mathcal{H}^{\mathcal{MT}_{N'}}$ and $\mathcal{H}^{\mathcal{MT}_{N}}$, such that:
$$(\mathcal{H}^{\mathcal{MT}_{N}})^{\mathcal{G}^{N/N'}}=\mathcal{H}^{\mathcal{MT}_{N'}}.$$
Since for $N=2,3,4,\mlq 6\mrq,8$, the categories $\mathcal{MT}_{N}$ and $\mathcal{MT}'_{N}$ are equal, this Galois descent has a parallel for the motivic fundamental group side; we will mostly neglect the difference in this chapter:
\begin{figure}[H]
$$\xymatrixcolsep{5pc}\xymatrix{
\mathcal{H}^{N} \ar@{^{(}->}[r]
^{\sim}_{n.c} & \mathcal{H}^{\mathcal{MT}_{N}}  \\
\mathcal{H}^{N'}\ar[u]_{\mathcal{G}^{N/N'}} \ar@{^{(}->}[r]
_{n.c}^{\sim} &\mathcal{H}^{\mathcal{MT}_{N'}} \ar[u]^{\mathcal{G}^{\mathcal{MT}}_{N/N'}}    \\
\mathbb{Q}[i\pi^{\mathfrak{m}}] \ar[u]_{\mathcal{U}^{N'}} \ar@{^{(}->}[r]^{\sim} & \mathbb{Q}[i\pi^{\mathfrak{m}}] \ar[u]^{\mathcal{U}^{\mathcal{MT}_{N'}}} \\
\mathbb{Q}  \ar[u]_{\mathbb{G}_{m}} \ar@/^2pc/[uuu]^{\mathcal{G}^{N}} & \mathbb{Q}  \ar[u]^{\mathbb{G}_{m}} \ar@/_2pc/[uuu]_{\mathcal{G}^{\mathcal{MT}_{N}}}
}$$
\caption{Galois descents, $N=2,3,4,\mlq 6\mrq,8$ (level $0$).\protect\footnotemark }\label{fig:paralleldescent}
\end{figure}
\footnotetext{The (non-canonical) horizontal isomorphisms have to be chosen in a compatible way.} 
\texttt{Nota Bene:}  For $N'=1$  or $2$, $i\pi^{\mathfrak{m}}$ has to be replaced by $\zeta^{\mathfrak{m}}(2)$ or $(\pi^{\mathfrak{m}})^{2}$, since we consider, in $\mathcal{H}^{N'}$ only periods invariant by the Frobenius $\mathcal{F}_{\infty}$. In the descent between $\mathcal{H}^{N}$ and $\mathcal{H}^{N'}$, we require hence invariance by the Frobenius in order to keep only those periods; this condition get rid of odd powers of $i\pi^{\mathfrak{m}}$.\\

The first section of Chapter $5$ gives an overview for the Galois descents valid for any $N$: a criterion for the descent between MMZV$_{\mu_{N'}}$ and MMZV$_{\mu_{N}}$ (Theorem $5.1.1$), a criterion for being unramified (Theorem $5.1.2$), and their corollaries. The conditions are expressed in terms of the derivations $D_{r}$, since they reflect the Galois action. Indeed, looking at the descent between  $\mathcal{MT}_{N,M}$ and $\mathcal{MT}_{N',M'}$, sometimes denoted $(\mathcal{d})=(k_{N}/k_{N'}, M/M')$, it has possibly two components:\nomenclature{$\mathcal{d}$}{a specific Galois descent between $\mathcal{MT}_{N,M}$ and $\mathcal{MT}_{N',M'}$}
\begin{itemize}
\item[$\cdot$] The change of cyclotomic fields  $k_{N}/k_{N'}$; there, the criterion has to be formulated in the depth graded.
\item[$\cdot$] The change of ramification $M/M'$, which is measured by the $1$ graded part of the coaction i.e. $D_{1}$ with the notations of $\S 2.4$.\\
\end{itemize}

The second section specifies the descents for $N\in \left\{2, 3, 4, \mlq 6\mrq, 8\right\}$ \footnote{As above, the quotation marks underline that we consider the unramified category for $N=\mlq 6\mrq$.} represented in Figure $\ref{fig:d248}$, and $\ref{fig:d36}$. In particular, this gives a basis of motivic multiple zeta values relative to $\mu_{N'}$ via motivic multiple zeta values relative to $\mu_{N}$, for these descents considered, $N'\mid N$. It also gives a new proof of Deligne's results ($\cite{De}$): the category of mixed Tate motives over $\mathcal{O}_{k_{N}}[1/N]$, for $N\in \left\{2, 3, 4,\mlq 6\mrq, 8\right\}$ is spanned by the motivic fundamental groupoid of $\mathbb{P}^{1}\setminus\left\{0,\mu_{N},\infty \right\}$ with an explicit basis; as claimed in $\S 2.2$, we can even restrict to a smaller fundamental groupoid.\\
Let us present our results further and fix a descent $(\mathcal{d})=(k_{N}/k_{N'}, M/M')$ among these considered (in Figures $\ref{fig:d248}$, $\ref{fig:d36}$), between the category of mixed Tate motives of $\mathcal{O}_{N}[1/M]$ and $\mathcal{O}_{N'}[1/M']$.\footnote{Usually, the indication of the descent (in the exponent) is omitted when we look at a specific descent.} Each descent $(\mathcal{d})$ is associated to a subset $\boldsymbol{\mathscr{D}^{\mathcal{d}}} \subset \mathscr{D}$ of derivations, which represents the action of the Galois group $\mathcal{G}^{N/N'}$. It defines, recursively on $i$, an increasing motivic filtration $\mathcal{F}^{\mathcal{d}}_{i}$ on $\mathcal{H}^{N}$ called \textit{motivic level}, stable under the action of $\mathcal{G}^{\mathcal{MT}_{N}}$:\nomenclature{$\mathcal{F}^{\mathcal{d}}_{\bullet}$}{the increasing filtration by the motivic level associated to $\mathcal{d}$}
$$\texttt{Motivic level:}  \left\lbrace  \begin{array}{l}
 \mathcal{F}^{\mathcal{d}} _{-1} \mathcal{H}^{N} \mathrel{\mathop:}=0\\
\boldsymbol{\mathcal{F}^{\mathcal{d}}_{i}} \text{ the largest submodule of } \mathcal{H}^{N} \text{ such that } \mathcal{F}^{\mathcal{d}}_{i}\mathcal{H}^{N}/\mathcal{F}^{\mathcal{d}} _{i-1}\mathcal{H}^{N} \text{ is killed by } \mathscr{D}^{\mathcal{d}}.
\end{array} \right.  . $$
The $0^{\text{th}}$ level $\mathcal{F}^{\mathcal{d}}_{0}\mathcal{H}^{N}$, corresponds to invariants under the group $\mathcal{G}^{N/N'}$ while the $i^{\text{th}}$ level $\mathcal{F}^{\mathcal{d}}_{i}$, can be seen as the $i^{\text{th}}$ \textit{ramification space} in generalized Galois descents. Indeed, they correspond to a decreasing filtration of $i^{\text{th}}$ ramification Galois groups $\mathcal{G}_{i}$, which are the subgroups of $\mathcal{G}^{N/N'}$ which acts trivially on $\mathcal{F}^{i}\mathcal{H}^{N}$.\footnote{\textit{On \textbf{ramification groups} in usual Galois theory}: let $L/K$ a Galois extension of local fields. By Hensel's lemma, $\mathcal{O}_{L}=\mathcal{O}_{K}[\alpha]$ and the i$^{\text{th}}$ ramification group is defined as:
\begin{equation}\label{eq:ramifgroupi}
G_{i}\mathrel{\mathop:}=\left\lbrace g\in \text{Gal}(L/K) \mid v(g(\alpha)-\alpha) >i  \right\rbrace , \quad \text{ where } \left\lbrace \begin{array}{l}
 v \text{ is the valuation on } L \\
 \mathfrak{p}= \lbrace x\in L \mid v(x) >0 \rbrace \text{ maximal ideal for } L
\end{array}\right.  .
\end{equation} 
Equivalently, this condition means $g$ acts trivially on $\mathcal{O}_{L}\diagup \mathfrak{p}^{i+1}$, i.e. $g(x)\equiv x \pmod{\mathfrak{p}^{i+1}}$. This decreasing filtration of normal subgroups corresponds, by the Galois fundamental theorem, to an increasing filtration of Galois extensions:
$$G_{0}=\text{ Gal}(L/K) \supset G_{1} \supset G_{2} \supset \cdots \supset G_{i} \cdots$$
$$K=K_{0}  \subset K_{1} \subset K_{2} \subset \cdots \subset K_{i} \cdots$$
$G_{1}$, the inertia subgroup, corresponds to the subextension of minimal ramification.}
   \begin{figure}[H]
\centering
\begin{equation}\label{eq:descent} \xymatrix{
\mathcal{H}^{N}  \\
\mathcal{F}_{i}\mathcal{H}^{N} \ar[u]^{\mathcal{G}_{i}}\\
\mathcal{F}_{0}\mathcal{H}^{N} =\mathcal{H}^{N'} \ar@/^2pc/[uu]^{\mathcal{G}_{0}=\mathcal{G}^{N/N'}}  \ar[u] \\
\mathbb{Q} \ar[u]^{\mathcal{G}^{N'}} \ar@/_2pc/[uuu]_{\mathcal{G}^{N}} }  \quad \quad 
\begin{array}{l}
 (\mathcal{H}^{N})^{\mathcal{G}_{i}}=\mathcal{F}_{i}\mathcal{H}^{N} \\
\\
 \begin{array}{llll}
 \mathcal{G}^{N/N'}=\mathcal{G}_{0} & \supset \mathcal{G}_{1} & \supset \cdots  &\supset \mathcal{G}_{i} \cdots\\
 & & & \\
 \mathcal{H}^{N'}= \mathcal{F}_{0}\mathcal{H}^{N} &  \subset  \mathcal{F}_{1 }\mathcal{H}^{N} & \subset \cdots  & \subset \mathcal{F}_{i}\mathcal{H}^{N}  \cdots.
 \end{array}
\end{array}
\end{equation}
\caption{Representation of a Galois descent.}\label{fig:descent}
\end{figure}
\noindent
Those ramification spaces constitute a tower of intermediate spaces between the elements in MMZV$_{\mu_{N}}$ and the whole space of MMZV$_{\mu_{N'}}$.\\
\\
Let define the quotients associated to the motivic level: 
$$\boldsymbol{\mathcal{H}^{\geq i}} \mathrel{\mathop:}=  \mathcal{H}/ \mathcal{F}_{i-1}\mathcal{H}\text{  ,  } \quad\mathcal{H}^{\geq 0}=\mathcal{H}.$$
\newpage
\noindent
The descents considered are illustrated by the following diagrams:\\
   \begin{figure}[H]
\centering
$$\xymatrixcolsep{5pc}\xymatrix{
\mathcal{H}^{\mathcal{MT}(\mathcal{O}_{8}\left[ \frac{1}{2}\right] )} &   \\
\mathcal{H}^{\mathcal{MT}(\mathcal{O}_{4}\left[ \frac{1}{2}\right] )} \ar[u]^{\mathcal{F}^{k_{8}/k_{4},2/2}_{0}}   &  \text{\framebox[1.1\width]{$\mathcal{H}^{\mathcal{MT}(\mathcal{O}_{4})}$}} \ar[l]_{\mathcal{F}^{k_{4}/k_{4},2/1}_{0}} \\
\mathcal{H}^{\mathcal{MT}\left( \mathbb{Z}\left[ \frac{1}{2}\right] \right) } \ar[u]^{\mathcal{F}^{k_{4}/\mathbb{Q},2/2}_{0}} &  \mathcal{H}^{\mathcal{MT}(\mathbb{Z}),} \ar[l]^{\mathcal{F}^{\mathbb{Q}/\mathbb{Q},2/1}_{0}} \ar[lu]_{\mathcal{F}^{k_{4}/\mathbb{Q},2/1}_{0}} \ar@{.>}@/_1pc/[u] \ar@/_7pc/[uul] ^{\mathcal{F}^{k_{8}/\mathbb{Q},2/1}_{0}}
}
$$
\caption{\textsc{The cases $N=1,2,4,8$}. }\label{fig:d248}
\end{figure}
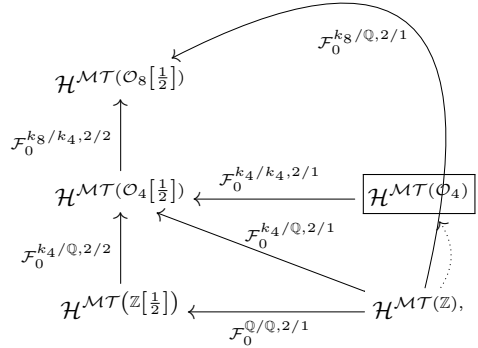

\begin{figure}[H]
$$\xymatrixcolsep{5pc}\xymatrix{
& \mathcal{H}^{\mathcal{MT}(\mathcal{O}_{6})}   \\
\mathcal{H}^{\mathcal{MT}\left( \mathcal{O}_{3}\left[ \frac{1}{3}\right] \right) } & \text{\framebox[1.1\width]{$\mathcal{H}^{\mathcal{MT}(\mathcal{O}_{3})}$}} \ar[l]_{\mathcal{F}^{k_{3}/k_{3},3/1}_{0}} \\
\text{\framebox[1.1\width]{$\mathcal{H}^{\mathcal{MT}\left( \mathbb{Z}\left[ \frac{1}{3}\right] \right) }$}}  \ar[u]^{\mathcal{F}^{k_{3}/\mathbb{Q},3/3}_{0}} &  \mathcal{H}^{\mathcal{MT}(\mathbb{Z})}  \ar[lu]_{\mathcal{F}^{k_{3}/\mathbb{Q},3/1}_{0}} \ar@{.>}@/_1pc/[u] \ar@/_3pc/[uu]_{\mathcal{F}^{k_{6}/\mathbb{Q},1/1}_{0}}
}
$$
\caption{\textsc{The cases $N=1,3,\mlq 6\mrq$}. }\label{fig:d36}
\end{figure}

\textsc{Remarks:}
\begin{enumerate}
\item[$\cdot$] The vertical arrows represent the change of field and the horizontal arrows the change of ramification. The full arrows are the descents made explicit in Chapter $5$.\\
More precisely, for each arrow $A \stackrel{\mathcal{F}_{0}}{\leftarrow}B$ in the above diagrams, we give a basis $\mathcal{B}^{A}_{n}$ of $\mathcal{H}_{n}^{A}$, and a basis of $\mathcal{H}_{n}^{B}= \mathcal{F}_{0} \mathcal{H}_{n}^{A}$ in terms of the elements of $\mathcal{B}_{n}^{A}$; similarly for the higher level of these filtrations.
 \item[$\cdot$] The framed spaces $\mathcal{H}^{\cdots}$ appearing in these diagrams are not known to be associated to a fundamental group and there is presently no other known way to reach these (motivic) periods. For instance, we obtain by descent, a basis for $\mathcal{H}_{n}^{\mathcal{MT}(\mathbb{Z}\left[ \frac{1}{3}\right] )}$ in terms of the basis of $\mathcal{H}_{n}^{\mathcal{MT}\left( \mathcal{O}_{3}\left[ \frac{1}{3}\right] \right) }$.\\
\end{enumerate}
\texttt{Example: Descent between Euler sums and MZV.} The comodule $\mathcal{H}^{1}$ embeds, non-canonically, into $\mathcal{H}^{2}$. Let first point out that:\footnote{Since all the motivic iterated integrals with only $0,1$ of length $1$ are zero by properties stated in $\S \ref{propii}$, hence the left side of $D_{1}$, defined in $(\ref{eq:Der})$, would always cancel.} $D_{1}(\mathcal{H}^{1})=0$; the Galois descent between $\mathcal{H}^{2}$ and $\mathcal{H}^{1}$ is precisely measured by $D_{1}$:
\begin{theom}
Let $\mathfrak{Z}\in\mathcal{H}^{2}$, a motivic Euler sum. Then:
$$\mathfrak{Z}\in\mathcal{H}^{1}, \text{ i.e. is a motivic MZV } \Longleftrightarrow  D_{1}(\mathfrak{Z})=0 \textrm{  and  } D_{2r+1}(\mathfrak{Z})\in\mathcal{H}^{1}.$$ 
\end{theom}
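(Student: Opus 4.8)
\textbf{Proof strategy for the descent criterion between $\mathcal{H}^{2}$ and $\mathcal{H}^{1}$.}

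The plan is to transport the statement to the combinatorial $f$-alphabet model. By $(\ref{eq:hn})$ and $(\ref{eq:phih})$, for $N=1,2$ one has $\mathcal{H}^{N}=\mathcal{A}^{N}\otimes_{\mathbb{Q}}\mathbb{Q}[(\mathbb{L}^{\mathfrak{m}})^{2}]$ with $\mathcal{A}^{N}$ the cofree commutative graded Hopf algebra cogenerated by one element $f_{r}$ in each degree with $b_{N}\neq0$: thus $\mathcal{A}^{2}\cong\mathbb{Q}\langle f_{1},f_{3},f_{5},\dots\rangle$ and $\mathcal{A}^{1}\cong\mathbb{Q}\langle f_{3},f_{5},\dots\rangle$, and moreover the embedding $\mathcal{H}^{1}\hookrightarrow\mathcal{H}^{2}$ is the one coming from the surjection $\mathcal{U}^{2}\twoheadrightarrow\mathcal{U}^{1}$ of pro-unipotent groups, whose kernel on Lie algebras is the ideal generated by the single degree-$1$ generator. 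One may therefore choose the non-canonical isomorphism $\phi^{2}\colon\mathcal{H}^{2}\xrightarrow{\sim}H^{2}$ of $(\ref{eq:phih})$ so that it carries the degree-$1$ generator $\zeta^{\mathfrak{m}}(\overline{1})$ to $f_{1}$ and restricts to an isomorphism $\mathcal{H}^{1}\xrightarrow{\sim}H^{1}$, where $H^{1}=\mathbb{Q}\langle f_{3},f_{5},\dots\rangle\otimes\mathbb{Q}[(\mathbb{L}^{\mathfrak{m}})^{2}]\subset H^{2}=\mathbb{Q}\langle f_{1},f_{3},f_{5},\dots\rangle\otimes\mathbb{Q}[(\mathbb{L}^{\mathfrak{m}})^{2}]$ is the span of the words that do not involve $f_{1}$. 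After this reduction, and since for $N=2$ one has $\mathcal{L}_{r}=0$ for $r$ even (so $D_{r}=0$ for even $r$) and the coaction on $H^{2}$ is the deconcatenation one of $(\ref{eq:derf})$ with $(\mathbb{L}^{\mathfrak{m}})^{2}$ coacting trivially, it suffices to prove: for $x\in H^{2}$ one has $x\in H^{1}$ if and only if $D_{1}x=0$ and $D_{2r+1}x\in\mathcal{L}_{2r+1}\otimes H^{1}$ for all $r\geq1$.

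For the implication ``$\Rightarrow$'': if $x\in H^{1}$ then every word occurring in $x$ avoids $f_{1}$; by $(\ref{eq:derf})$ the component $D_{1}x$ retains only the words whose leading letter is $f_{1}$, hence $D_{1}x=0$, while $D_{2r+1}x$ retains the words with leading letter $f_{2r+1}$ and strips that letter, which again yields words avoiding $f_{1}$, so $D_{2r+1}x\in\mathcal{L}_{2r+1}\otimes H^{1}$. On the motivic side this is simply the statement that $\mathcal{H}^{1}$ is a sub-comodule, $\Delta(\mathcal{H}^{1})\subset\mathcal{A}^{1}\otimes\mathcal{H}^{1}$, together with $\mathcal{L}^{1}_{1}=0$.

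For ``$\Leftarrow$'': expand $x\in H^{2}_{n}$ in the word basis, and for each odd $s$ let $x_{s}\in H^{2}_{n-s}$ denote the element obtained by keeping only the words of $x$ with leading letter $f_{s}$ and deleting that letter; by $(\ref{eq:derf})$, $D_{s}x=f_{s}\otimes x_{s}$. Since $\mathcal{L}_{s}=\mathbb{Q}f_{s}\neq0$, the hypotheses become $x_{1}=0$ and $x_{2r+1}\in H^{1}$ for every $r\geq1$. As the basis elements $w\,(\mathbb{L}^{\mathfrak{m}})^{2k}$ are linearly independent there is no cancellation, so $x_{1}=0$ means no word of $x$ begins with $f_{1}$, and $x_{2r+1}\in H^{1}$ means the tails of the words beginning with $f_{2r+1}$ avoid $f_{1}$; the terms of $x$ with empty word part already lie in $\mathbb{Q}[(\mathbb{L}^{\mathfrak{m}})^{2}]\subset H^{1}$. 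Since a word avoids $f_{1}$ exactly when its leading letter is not $f_{1}$ and its tail avoids $f_{1}$, these conditions say precisely that every word in $x$ avoids $f_{1}$, i.e. $x\in H^{1}$ (one may also package this as an induction on the weight $n$, the cases $n\leq1$ being immediate). Applying $(\phi^{2})^{-1}$ gives $\mathfrak{Z}\in\mathcal{H}^{1}$, completing the proof.

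The real content sits in the first paragraph: one needs to know that $\mathcal{H}^{1}\hookrightarrow\mathcal{H}^{2}$ ``is'' the inclusion of the words avoiding $f_{1}$, which rests on the deep theorems of Brown ($N=1$) and Deligne ($N=2$) that the comodule embeddings $\phi^{N}$ of $(\ref{eq:phih})$ are isomorphisms, and on identifying the kernel of $\mathcal{U}^{2}\twoheadrightarrow\mathcal{U}^{1}$; once this structural input is granted the rest is a mechanical unwinding of the deconcatenation coaction. An alternative, more self-contained route for ``$\Leftarrow$'' is a weight induction constructing $\mathfrak{Z}'\in\mathcal{H}^{1}_{n}$ with $D_{r}\mathfrak{Z}'=D_{r}\mathfrak{Z}$ for all odd $r<n$, then applying Corollary~\ref{kerdn} for $N=2$ to get $\mathfrak{Z}-\mathfrak{Z}'\in\ker D_{<n}\cap\mathcal{H}^{2}_{n}=\mathbb{Q}\,\zeta^{\mathfrak{m}}(n)\subset\mathcal{H}^{1}$, hence $\mathfrak{Z}\in\mathcal{H}^{1}$.
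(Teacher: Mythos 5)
Your proof is correct and follows essentially the same route as the paper: Theorem 5.1.1/5.1.2 (of which this is the $N=2$ case, Corollary 5.1.3) is proved there precisely by transporting the statement via the compatible non-canonical isomorphisms $\phi^{N}$ of $(\ref{eq:phih})$ to the $f$-alphabet, where membership in $\mathcal{H}^{1}$ means avoiding $f_{1}$ and the deconcatenation coaction makes the equivalence immediate. Your write-up just makes explicit the structural input (Brown for $N=1$, Deligne for $N=2$, and the compatible choice of $\phi$) that the paper invokes in one line.
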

This is a useful recursive criterion to determine if a (motivic) Euler sum is in fact a (motivic) multiple zeta value. It can be generalized for other roots of unity, as we state more precisely in $\S 5.1$. These unramified motivic Euler sums are the $0^{\text{th}}$-level of the filtration by the motivic level here defined as:
\begin{center}
$\mathcal{F}_{i}\mathcal{H}^{2}$ is the largest sub-module such that $\mathcal{F}_{i}/ \mathcal{F}_{i-1}$ is killed by $D_{1}$.
\end{center}

\paragraph{Results.}
More precisely, for $N\in \left\{2, 3, 4, \mlq 6\mrq, 8\right\}$, we define a particular family $\mathcal{B}^{N}$ of motivic multiple zeta values relative to $\mu_{N}$ with different notions of \textbf{\textit{level}} on the basis elements, one for each Galois descent considered above:\nomenclature{$\mathcal{B}^{N}$}{basis of $\mathcal{H}^{N}$}
\begin{equation}\label{eq:base} \mathcal{B}^{N}\mathrel{\mathop:}=\left\{ \zeta^{\mathfrak{m}}\left(x_{1}, \cdots x_{p-1}, x_{p} \atop \epsilon_{1}, \ldots, \epsilon_{p-1}, \epsilon_{p}\xi_{N}\right) (2\pi i)^{s ,\mathfrak{m}} \text{ , }  x_{i}\in\mathbb{N}^{\ast} , s\in\mathbb{N}^{\ast} ,  \left\{
\begin{array}{ll}
 x_{i} \text{ odd, } s \text{ even}, \epsilon_{i}=1 &\text{if } N=2 \\
 \epsilon_{i}=1 &\text{if } N=3,4\\
   x_{i} >1 \text{,  } \epsilon_{i}=1 &\text{if } N=6\\
   \epsilon_{i}\in\lbrace\pm 1\rbrace &\text{if } N=8
   \end{array}
\right. \right\}
\end{equation}
  Denote by $\mathcal{B}_{n,p,i}$ the subset of elements with weight $n$, depth $p$ and level $i$. \\
  \\
  \texttt{Examples}: 
\begin{description}
  \item[$\cdot N=2$: ]   The basis for motivic Euler sums: $\mathcal{B}^{2}\mathrel{\mathop:}=\left\{ \zeta^{\mathfrak{m}}\left(2y_{1}+1, \ldots , 2 y_{p}+1 \atop 1, 1, \ldots, 1, -1\right) \zeta^{\mathfrak{m}} (2)^{s}, y_{i} \geq 0, s\geq 0 \right\}$ .
  The level for the descent from $\mathcal{H}^{2}$ to $\mathcal{H}^{1}$ is defined as the number of $y_{i}'s$ equal to $0$.
  \item[$\cdot N=4$: ]  The basis is: $\mathcal{B}^{4}\mathrel{\mathop:}= \left\{   \zeta^{\mathfrak{m}}\left(x_{1}, \ldots , x_{p} \atop 1,1, \ldots, 1, \sqrt{-1}\right)  (2\pi i)^{s ,\mathfrak{m}}, s\geq 0, x_{i} >0 \right\} $. 
$$\text{The level is:} \begin{array}{l}
\cdot \text{  the number of even $x_{i}'s$ for  the descent from $\mathcal{H}^{4}$ to $\mathcal{H}^{2}$ }\\
\cdot  \text{ the number of even $x_{i}'s + $ number of $x_{i}'s$ equal to 1  for  the descent from $\mathcal{H}^{4}$ to $\mathcal{H}^{1}$ }
\end{array}$$  
  \item[$\cdot N=8$: ] the level includes the number of $\epsilon_{i}'s$ equal to $-1$, etc.\\
  \end{description}
The quotients $\mathcal{H}^{\geq i}$, respectively filtrations $\mathcal{F}_{i}$ associated to the descent $\mathcal{d}$, will match with the sub-families (level restricted) $\mathcal{B}_{n,p, \geq i}$, respectively $\mathcal{B}_{n,p, \leq i}$. Indeed, we prove: \footnote{Cf. Theorem $5.2.4$ slightly more precise.}\nomenclature{$\mathbb{Z}_{1[P]}$}{subring of $\mathbb{Z}$}
\begin{theom}
With $ \mathbb{Z}_{1[P]} \mathrel{\mathop:}=\left\{ \frac{a}{1+b P}, a,b\in\mathbb{Z} \right\}$ where $ P \mathrel{\mathop:}= \left\lbrace \begin{array}{ll}
2 & \text{ for } N=2,4,8 \\
3 & \text{ for } N=3,\mlq 6\mrq
\end{array} \right. $.
\begin{enumerate}
\item[$\cdot$] $\mathcal{B}_{n,\leq p, \geq i}$ is a basis of $\mathcal{F}_{p}^{\mathfrak{D}} \mathcal{H}_{n}^{\geq i}$  and  $\mathcal{B}_{n,\cdot, \geq i} $ a basis of $\mathcal{H}^{\geq i}_{n}$.
\item[$\cdot$] $\mathcal{B}_{n, p, \geq i}$ is a basis of $gr_{p}^{\mathfrak{D}} \mathcal{H}_{n}^{\geq i}$ on which it defines a $\mathbb{Z}_{1[P]}$-structure:
\begin{center}
Each $\zeta^{\mathfrak{m}}\left( z_{1}, \ldots , z_{p} \atop \epsilon_{1}, \ldots, \epsilon_{p}\right)$ decomposes in $gr_{p}^{\mathfrak{D}} \mathcal{H}_{n}^{\geq i}$ as a $\mathbb{Z}_{1[P]}$-linear combination of $\mathcal{B}_{n, p, \geq i}$ elements.
\end{center}
	\item[$\cdot$] We have the two split exact sequences in bijection:
$$ 0\longrightarrow \mathcal{F}_{i}\mathcal{H}_{n} \longrightarrow \mathcal{H}_{n} \stackrel{\pi_{0,i+1}} {\rightarrow}\mathcal{H}_{n}^{\geq i+1} \longrightarrow 0$$
$$ 0 \rightarrow \langle \mathcal{B}_{n, \cdot, \leq i} \rangle_{\mathbb{Q}} \rightarrow \langle\mathcal{B}_{n} \rangle_{\mathbb{Q}} \rightarrow \langle \mathcal{B}_{n, \cdot, \geq i+1} \rangle_{\mathbb{Q}} \rightarrow 0 .$$
		\item[$\cdot$] A basis for the filtration spaces $\mathcal{F}_{i} \mathcal{H}_{n}$ is:
$$\cup_{p} \left\{ \mathfrak{Z}+ cl_{n, \leq p, \geq i+1}(\mathfrak{Z}), \mathfrak{Z}\in \mathcal{B}_{n, p, \leq i} \right\},$$
	$$\text{ where } cl_{n,\leq p,\geq i}: \langle\mathcal{B}_{n, p, \leq i-1}\rangle_{\mathbb{Q}} \rightarrow \langle\mathcal{B}_{n, \leq p, \geq i}\rangle_{\mathbb{Q}} \text{ such that } \mathfrak{Z}+cl_{n,\leq p,\geq i}(\mathfrak{Z})\in \mathcal{F}_{i-1}\mathcal{H}_{n}.$$
\item[$\cdot$] A basis for the graded space $gr_{i} \mathcal{H}_{n}$:
$$\cup_{p} \left\{ \mathfrak{Z}+ cl_{n, \leq p, \geq i+1}(\mathfrak{Z}), \mathfrak{Z}\in \mathcal{B}_{n, p, i} \right\}.$$
\end{enumerate}
\end{theom}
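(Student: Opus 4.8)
The plan is to reduce the statement to a single linear-independence input and then harvest the structural consequences by elementary linear algebra. It suffices to prove that the family $\mathcal{B}^{N}$ of $(\ref{eq:base})$ is linearly independent in $\mathcal{H}^{N}$, together with the compatible triangularity of the infinitesimal coaction described below; once $\mathcal{B}_{n}$ is known to be independent it is automatically a basis of $\mathcal{H}_{n}^{N}$, because by F.~Brown's theorem ($N=1$) and Deligne's ($N=2,3,4,\mlq 6\mrq,8$) the comodule embedding $(\ref{eq:phih})$ is an isomorphism, so $\dim_{\mathbb{Q}}\mathcal{H}_{n}^{N}=d_{n}^{N}$ is governed by the recursion and Hilbert series of the dimension lemma, and an elementary generating-function count gives $\#\mathcal{B}_{n}=d_{n}^{N}$ (counting compositions into odd parts for $N=2$, into arbitrary parts for $N=3,4$, into parts $>1$ for $N=\mlq 6\mrq$, together with the prescribed sign data and the auxiliary generator $(2\pi i)^{\mathfrak{m}}$, resp. $(\pi^{\mathfrak{m}})^{2}$ or $\zeta^{\mathfrak{m}}(2)$ for $N=1,2$). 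Refining this count by depth and by level then matches the cardinalities of $\mathcal{B}_{n,\leq p,\geq i}$, $\mathcal{B}_{n,p,\geq i}$, $\mathcal{B}_{n,\cdot,\leq i}$, and so on, with the dimensions of the corresponding (quotients of) filtration pieces.

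The heart of the proof is a double induction, on the weight $n$ and, for fixed $n$, on the depth $p$, carried out on the quotients $\mathcal{H}^{\geq i}=\mathcal{H}/\mathcal{F}_{i-1}$ and their depth-graded pieces, using the operator $\partial_{n,p}$ of $(\ref{eq:pderivnp})$. The basic structural fact is that $\mathcal{B}^{N}$ is essentially stable under the coaction: for $N$ a prime power this is the remark following Lemma~$\ref{Drp}$ (a product of non-primitive roots is non-primitive, non-primitive times primitive is primitive), and for $N=\mlq 6\mrq$ it holds since the only retained non-primitive root is $1$. Hence, for a basis element $\mathfrak{Z}\in\mathcal{B}_{n,p,\geq i}$, every term produced by Lemma~$\ref{drz}$, rewritten in depth-graded form via Lemma~$\ref{Drp}$ and with the depth-$1$ left factors projected onto the chosen basis of $gr^{\mathfrak{D}}_{1}\mathcal{A}_{r}$ using the distribution and conjugation relations of the depth-$1$ lemma, again lies in the $\mathbb{Q}$-span of $\mathcal{B}_{n-r,p-1}$. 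One then chooses a total order on $\mathcal{B}_{n,p,\geq i}$ refining the level filtration and shows that the matrix of $\partial_{n,p}$ against $\mathcal{B}_{\cdot,p-1,\geq i}$ is triangular, with diagonal entries products of binomial coefficients $\binom{r-1}{\,\cdot\,}$ and distribution factors whose denominators are $\equiv 1 \pmod P$; the arithmetic point, checked case by case, is that these binomials are units in $\mathbb{Z}_{1[P]}$ for the admissible parities --- odd for $N=2,4,8$, prime to $3$ for $N=3,\mlq 6\mrq$ --- so that the transition matrix is unitriangular over $\mathbb{Z}_{1[P]}$. This yields at once the linear independence in $gr^{\mathfrak{D}}_{p}\mathcal{H}^{\geq i}_{n}$, hence in $\mathcal{F}^{\mathfrak{D}}_{p}\mathcal{H}^{\geq i}_{n}$ and in $\mathcal{H}^{\geq i}_{n}$, and the asserted $\mathbb{Z}_{1[P]}$-structure on the depth-graded quotients; the base cases $p=0$ (powers of $2\pi i$) and $p=1$ are Theorem~$2.4.4$ and Corollary~$\ref{kerdn}$ together with the depth-$1$ results.

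I expect the main obstacle to be exactly this combinatorial--arithmetic verification of triangularity. Lemma~$\ref{Drp}$ produces, besides the head cut $\textsc{(a0)}$, the ``merging'' terms $\textsc{(a)},\textsc{(b)},\textsc{(c)}$ between two roots of unity and the tail terms $\textsc{(d)},\textsc{(d')}$; one must pin down, for the descent $\mathcal{d}$ at hand, which of these strictly lower the motivic level (hence vanish in $\mathcal{H}^{\geq i}$) and which sit strictly below the diagonal in the chosen order, while simultaneously controlling the $P$-adic valuations of all the binomial coefficients and of the distribution factors introduced by the projections $\pi^{\eta}$. Making the level bookkeeping (number of even arguments, of arguments equal to $1$, of roots equal to $-1$, according to $\mathcal{d}$) mesh exactly with the congruences $\pmod P$ is where the restriction $N\in\{2,3,4,\mlq 6\mrq,8\}$ is genuinely used, via the fact that $gr^{\mathfrak{D}}_{1}\mathcal{L}_{r}$ is one-dimensional (two-dimensional for $N=8$) and that primitive roots modulo conjugation form a basis there. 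Granting this, the remaining assertions are formal: the split surjection $\pi_{0,i+1}\colon\mathcal{H}_{n}\twoheadrightarrow\mathcal{H}^{\geq i+1}_{n}$ with kernel $\mathcal{F}_{i}\mathcal{H}_{n}$ corresponds, under the bases, to the exact sequence $0\to\langle\mathcal{B}_{n,\cdot,\leq i}\rangle_{\mathbb{Q}}\to\langle\mathcal{B}_{n}\rangle_{\mathbb{Q}}\to\langle\mathcal{B}_{n,\cdot,\geq i+1}\rangle_{\mathbb{Q}}\to 0$, giving the two split exact sequences in bijection; the triangularity provides, for each $\mathfrak{Z}\in\mathcal{B}_{n,p,\leq i-1}$, a unique correction $cl_{n,\leq p,\geq i}(\mathfrak{Z})\in\langle\mathcal{B}_{n,\leq p,\geq i}\rangle_{\mathbb{Q}}$ with $\mathfrak{Z}+cl_{n,\leq p,\geq i}(\mathfrak{Z})\in\mathcal{F}_{i-1}\mathcal{H}_{n}$; and the families $\{\mathfrak{Z}+cl(\mathfrak{Z})\}$ are then bases of $\mathcal{F}_{i}\mathcal{H}_{n}$, respectively $gr_{i}\mathcal{H}_{n}$, by comparing cardinalities with $\mathcal{B}_{n,p,\leq i}$ and $\mathcal{B}_{n,p,i}$.
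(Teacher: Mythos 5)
Your core strategy is the one the paper actually follows: a recursion on weight, depth and motivic level, in which one writes the matrix of $\partial^{i,\mathcal{d}}_{n,p}$ of $(\ref{eq:pderivinp})$ evaluated on $\mathcal{B}_{n,p,\geq i}$ against the (inductively independent) families $\mathcal{B}_{\cdot,p-1,\geq i-1}$, $\mathcal{B}_{\cdot,p-1,\geq i}$, orders both sides lexicographically on the reversed tuples so that the deconcatenation cuts $\textsc{(d,d')}$ of Lemma~\ref{Drp} give the diagonal, and concludes invertibility by a $P$-adic estimate; the structural statements then follow by the same linear algebra and cardinality comparisons. Two corrections, however, to the mechanism you describe. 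First, the matrix is \emph{not} triangular, and a fortiori not unitriangular over $\mathbb{Z}_{1[P]}$: the terms of type $\textsc{(a0)},\textsc{(a)},\textsc{(b)},\textsc{(c)}$ and the off-diagonal deconcatenation binomials (e.g.\ $\binom{2r}{2x_p}$ with $r>x_p$ for $N=2$) occupy arbitrary positions. The correct statement is that, after normalizing each row, every non-deconcatenation entry has strictly positive $P$-adic valuation --- for $N=2$ because those cuts carry a factor $2^{2r}$ relative to the deconcatenation ones, and for the other $N$ because the depth-one identity expressing $\zeta^{\mathfrak{l}}\left(r \atop 1\right)$ through $\zeta^{\mathfrak{l}}\left(r \atop \xi\right)$ has a coefficient of positive $P$-adic valuation --- so the matrix is only \emph{congruent modulo} $P$ to a unitriangular matrix, whence $\det\equiv 1\pmod P$ and invertibility over $\mathbb{Z}_{1[P]}$. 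Second, right-hand factors of level $<i$ produced by the coaction do \emph{not} ``vanish in $\mathcal{H}^{\geq i}$''; they are nonzero there and must be re-expanded, by the induction on (strictly smaller) weight, as $\mathbb{Z}_{1[P]}$-linear combinations of level-$\geq i$ basis elements without lowering the $P$-adic valuation: this re-expansion is precisely where the asserted $\mathbb{Z}_{1[P]}$-structure is both used and established, and skipping it would break the mod-$P$ reduction.

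The other deviation is your appeal to Brown's and Deligne's theorems to make $(\ref{eq:phih})$ an isomorphism and read off dimensions. The paper deliberately avoids this: it uses only the inclusion $\mathcal{H}^{N}\subset\mathcal{H}^{\mathcal{MT}_{N}}$ together with the $K$-theoretic dimensions of the right-hand side (and the identification $(\ref{eq:isomfiltration})$ of the level filtration there), so that the equality $\mathcal{H}^{N}=\mathcal{H}^{\mathcal{MT}_{N}}$ --- i.e.\ Deligne's theorem for $N=2,3,4,\mlq 6\mrq,8$ --- is obtained as a corollary of this very theorem rather than assumed. Your shortcut is not logically false, since Deligne's result has an independent proof, but it is unnecessary (injectivity of $\partial^{i,\mathcal{d}}_{n,p}$ plus the cardinality count against $\dim \mathcal{F}^{\mathfrak{D}}_{p}\mathcal{H}^{\geq i,\mathcal{MT}}_{n}$ already gives spanning) and it forfeits the paper's main corollary; moreover, even granting Deligne, the depth- and level-refined dimension identifications you call an elementary generating-function count are not elementary for the depth filtration and are exactly why the dimension argument must be run on the $\mathcal{MT}$ side at each step of the recursion.
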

\noindent
\texttt{Nota Bene}: The morphism $cl_{n, \leq p, \geq i+1}$ satisfying those conditions is unique.\\
\\
The linear independence is obtained first in the depth graded, and the proof relies on the bijectivity of the following map $\partial^{i, \mathcal{d}}_{n,p}$ by an argument involving $2$ or $3$ adic properties:\footnote{The first $ c ^{\mathcal{d}}_{r}$ components of $\partial^{i, \mathcal{d}}_{n,p}$ correspond to the derivations in $\mathscr{D}^{\mathcal{d}}$ associated to the descent, which hence decrease the motivic level.}
\begin{equation}\label{eq:derivintro}
\partial^{i, \mathcal{d}}_{n,p}: gr_{p}^{\mathfrak{D}} \mathcal{H}_{n}^{\geq i} \rightarrow  \oplus_{r<n} \left( gr_{p-1}^{\mathfrak{D}} \mathcal{H}_{n-r}^{\geq i-1}\right) ^{\oplus c ^{\mathcal{d}}_{r}} \oplus_{r<n} \left( gr_{p-1}^{\mathfrak{D}} \mathcal{H}_{n-r}^{\geq i}\right) ^{\oplus c^{\backslash\mathcal{d}}_{r}} \text{, } c ^{\mathcal{d}}_{r}, c^{\backslash\mathcal{d}}_{r}\in\mathbb{N},
\end{equation}
which is obtained from the depth and weight graded part of the coaction, followed by a projection for the left side (by depth $1$ results), and by passing to the level quotients ($(\ref{eq:pderivinp})$). Once the freeness obtained, the generating property is obtained from counting dimensions, since K-theory gives an upper bound for the dimensions.\\
\\
This main theorem generalizes in particular a result of P. Deligne ($\cite{De}$), which we could formulate by different ways: \footnote{The basis $\mathcal{B}$, in the cases where $N\in \left\{3, 4,8\right\}$ is identical to P. Deligne's in $\cite{De}$. For $N=2$ (resp. $N=\mlq 6\mrq$ unramified) it is a linear basis analogous to his algebraic basis which is formed by Lyndon words in the odd (resp. $\geq 2$) positive integers (with $\ldots 5 \leq 3 \leq 1$); a Lyndon word being strictly smaller in lexicographic order than all of the words formed by permutation of its letters. Deligne's method is roughly dual to this point of view, working in Lie algebras, showing the action is faithful and that the descending central series of $\mathcal{U}$ is dual to the depth filtration.}
\begin{corol} 
\begin{itemize}
\item[$\cdot$] The map $\mathcal{G}^{\mathcal{MT}} \rightarrow \mathcal{G}^{\mathcal{MT}'}$ is an isomorphism.
\item[$\cdot$] The motivic fundamental group $\pi_{1}^{\mathfrak{m}} \left( \mathbb{P}^{1}\diagdown \lbrace 0, \mu_{N}, \infty \rbrace, \overline{0 \xi_{N}} \right)$ generates the category of mixed Tate motives $\mathcal{MT}_{N}$. 
\item[$\cdot$] $\mathcal{B}_{n}$ is a basis of $ \mathcal{H}^{N}_{n}$, the space of motivic MZV relative to $\mu_{N}$.
\item[$\cdot$] The geometric (and Frobenius invariant if $N=2$) motivic periods of $\mathcal{MT}_{N}$ are $\mathbb{Q}$-linear combinations of motivic MZV relative to $\mu_{N}$ (unramified for $N=\mlq 6\mrq$).
\end{itemize}
\end{corol}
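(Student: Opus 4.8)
\textit{Proof strategy.} The plan is to obtain all four assertions from the preceding theorem (Theorem~5.2.4) by a dimension count together with the Tannakian dictionary; the genuinely hard input is that theorem, so here I only indicate how to repackage it. The first step is numerical: the theorem already exhibits $\mathcal{B}_{n}$ as a $\mathbb{Q}$-basis of $\mathcal{H}^{N}_{n}$ (which is the third bullet), and I would pair it with the combinatorial identity $\#\mathcal{B}_{n}=d^{N}_{n}$, where $d^{N}_{n}$ is the dimension predicted by $K$-theory via the recursion $d^{N}_{n}=(a_{N}+1)d_{n-1}+(b_{N}-a_{N})d_{n-2}$ (with the $a,b$ of the relevant category, in particular the unramified one when $N=\mlq 6\mrq$). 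This is a generating-function check: one matches $\sum_{n}\#\mathcal{B}_{n}\,t^{n}$ against the Hilbert series $h_{N}(t)$, counting the admissible tuples $(x_{i},\epsilon_{i},s)$ of $(\ref{eq:base})$. It follows that $\dim_{\mathbb{Q}}\mathcal{H}^{N}_{n}=d^{N}_{n}$ for all $n$.

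Next I would upgrade this equality of dimensions to an equality of graded Hopf comodules. The non-canonical embedding $(\ref{eq:phih})$ realizes $\mathcal{H}^{N}=\mathcal{H}^{\mathcal{MT}'_{N}}$ inside $\mathcal{H}^{\mathcal{MT}_{\Gamma_{N}}}$, while the $K$-theory computation of the $\text{Ext}$ groups bounds $\dim\mathcal{H}^{\mathcal{MT}_{\Gamma_{N}}}_{n}\le d^{N}_{n}$; since the subcomodule already has this dimension in every weight, the inclusion is an isomorphism, so $\mathcal{H}^{N}=\mathcal{H}^{\mathcal{MT}_{\Gamma_{N}}}$ as graded comodules. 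For $N=2,3,4,8$ every prime dividing $N$ is inert, so by $(\ref{eq:gamma})$ one has $\mathcal{MT}_{\Gamma_{N}}=\mathcal{MT}_{N}$ and hence $\mathcal{H}^{N}=\mathcal{H}^{\mathcal{MT}_{N}}$; for $N=\mlq 6\mrq$ the same argument, run with the unramified conventions of \S2.2 (the smaller groupoid $\pi_{1}^{\mathfrak{m}}(\mathbb{P}^{1}\diagdown\{0,1,\infty\},\overrightarrow{0\xi_{6}})$ and the category $\mathcal{MT}(\mathcal{O}_{6})$), gives the corresponding comodule equality.

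Granted this, the remaining three bullets are formal. An equality $\mathcal{H}^{N}=\mathcal{H}^{\mathcal{MT}_{N}}$ forces $\mathcal{A}^{N}=\mathcal{A}^{\mathcal{MT}_{N}}$ (strip off the $\mathbb{L}^{\mathfrak{m}}$-polynomial factor, which carries trivial coaction), hence $\mathcal{U}^{N}=\mathcal{U}^{\mathcal{MT}_{N}}$; but $\mathcal{U}^{N}$ is by construction the quotient of $\mathcal{U}^{\mathcal{MT}_{N}}$ by the kernel of its action on $_{0}\Pi_{1}$, so that quotient map is an isomorphism, which is exactly the first bullet $\mathcal{G}^{\mathcal{MT}}\xrightarrow{\sim}\mathcal{G}^{\mathcal{MT}'}$. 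The second bullet is then immediate from the Tannakian equivalence $(\ref{eq:catrep})$: $\mathcal{MT}_{N}\cong\text{Rep}\,\mathcal{G}^{\mathcal{MT}_{N}}$ and $\mathcal{MT}'_{N}\cong\text{Rep}\,\mathcal{G}^{N}$ compatibly with the inclusion $\mathcal{MT}'_{N}\subseteq\mathcal{MT}_{N}$, so an isomorphism of Galois groups forces $\mathcal{MT}'_{N}=\mathcal{MT}_{N}$, i.e. the motivic fundamental groupoid generates $\mathcal{MT}_{N}$. For the fourth, $(\ref{eq:periodgeom})$ and $(\ref{eq:periodgeomr})$ identify the ring of geometric motivic periods $\mathcal{P}^{\mathfrak{m},+}_{\mathcal{MT}_{N}}$ (its $\mathcal{F}_{\infty}$-invariant subring when $N=2$) with $\mathcal{A}^{\mathcal{MT}_{N}}\otimes\mathbb{Q}[\mathbb{L}^{\mathfrak{m}}]$, resp. $\mathcal{A}^{\mathcal{MT}_{N}}\otimes\mathbb{Q}[(\mathbb{L}^{\mathfrak{m}})^{2}]$, i.e. with $\mathcal{H}^{\mathcal{MT}_{N}}=\mathcal{H}^{N}$, which is by definition spanned by motivic iterated integrals; so every such period is a $\mathbb{Q}$-linear combination of motivic $\text{MZV}$ relative to $\mu_{N}$.

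I expect no real difficulty inside this corollary: everything is the Tannakian dictionary applied to Theorem~5.2.4, and the only step that is not automatic is the cardinality count $\#\mathcal{B}_{n}=d^{N}_{n}$ together with the slightly fussy $N=\mlq 6\mrq$ book-keeping (distinguishing $\mathcal{MT}(\mathcal{O}_{6})$ from $\mathcal{MT}_{\Gamma_{6}}$ and choosing the correct fundamental groupoid). The actual obstacle lies one level below, in the proof of Theorem~5.2.4 itself, namely the bijectivity of the maps $\partial^{i,\mathcal{d}}_{n,p}$ of $(\ref{eq:derivintro})$, which is obtained from $2$- and $3$-adic valuation estimates on the structure constants of the depth-graded coaction and is carried out earlier.
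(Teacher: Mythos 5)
Your proposal is correct and follows essentially the paper's own route: the corollary is deduced from Theorem~5.2.4 by exactly the dimension comparison you describe (the cardinality check $\#\mathcal{B}_{n}=d_{n}^{N}$ against the $K$-theoretic Hilbert series is already embedded in the paper's proof of the theorem, where the equality $\mathcal{H}_{n}^{\geq i}=\mathcal{H}_{n}^{\geq i,\mathcal{MT}}$ is established) followed by the Tannakian dictionary to pass from $\mathcal{H}^{N}=\mathcal{H}^{\mathcal{MT}_{N}}$ to $\mathcal{A}^{N}=\mathcal{A}^{\mathcal{MT}}$, the faithfulness of the action of $\mathcal{U}^{\mathcal{MT}}$ on $_{0}\Pi_{1}$, and hence to $\mathcal{MT}'_{N}=\mathcal{MT}_{N}$ and the period statement via $(\ref{eq:periodgeom})$--$(\ref{eq:periodgeomr})$. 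You also correctly isolate where the real work lies, namely the bijectivity of $\partial^{i,\mathcal{d}}_{n,p}$ proved by the $2$- and $3$-adic estimates, and the $N=\mlq 6\mrq$ unramified bookkeeping, so nothing is missing.
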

\textsc{Remarks: }
\begin{itemize}
\item[$\cdot$] For $N=\mlq 6\mrq$ the result remains true if we restrict to iterated integrals relative not to all $6^{\text{th}}$ roots of unity but only to these relative to primitive roots.
\item[$\cdot$] We could even restrict to: $\begin{array}{ll}
\pi_{1}^{\mathfrak{m}} \left( \mathbb{P}^{1}\diagdown \lbrace 0, 1, \infty \rbrace, \overline{0 \xi_{N}} \right)  & \text{ for $N=2,3,4,\mlq 6\mrq$} \\
\pi_{1}^{\mathfrak{m}} \left( \mathbb{P}^{1}\diagdown \lbrace 0, \pm 1, \infty \rbrace, \overline{0 \xi_{N}} \right)  & \text{ for } N=8
\end{array}$ .
\end{itemize}

The previous theorem also provides the Galois descent from $\mathcal{H}^{\mathcal{MT}_{N}}$ to $\mathcal{H}^{\mathcal{MT}_{N'}}$:
\begin{corol}
A basis for MMZV$_{\boldsymbol{\mu_{N'}}}$ is formed by MMZV$_{\boldsymbol{\mu_{N}}}$ $\in \mathcal{B}^{N}$ of level $0$ each corrected by a $\mathbb{Q}$-linear combination of MMZV $_{\boldsymbol{\mu_{N}}}$ of level greater than or equal to $1$:  
$$\text{ Basis of } \mathcal{H}^{N'}_{n} : \quad \left\{ \mathfrak{Z}+ cl_{n,\cdot, \geq 1}(\mathfrak{Z}), \mathfrak{Z}\in \mathcal{B}^{N}_{n, \cdot, 0} \right\}.$$
\end{corol}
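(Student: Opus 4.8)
The plan is to deduce this corollary from the main theorem (the one stating that $\mathcal{B}_{n,\leq p,\geq i}$ is a basis of $\mathcal{F}^{\mathfrak{D}}_p\mathcal{H}^{\geq i}_n$, together with the two split exact sequences and the explicit description of a basis for each filtration space $\mathcal{F}_i\mathcal{H}_n$). First I would record the identification $\mathcal{F}^{\mathcal{d}}_0\mathcal{H}^{N}=\mathcal{H}^{N'}$: this is precisely the statement that the $0^{\text{th}}$ level of the motivic level filtration associated to the descent $\mathcal{d}=(k_N/k_{N'},M/M')$ consists exactly of those motivic periods killed by every derivation in $\mathscr{D}^{\mathcal{d}}$, hence fixed by the ramification group $\mathcal{G}_0=\mathcal{G}^{N/N'}$, hence (by Tannakian duality and the fact that $\mathcal{MT}'_N=\mathcal{MT}_N$ for $N\in\{2,3,4,\mlq6\mrq,8\}$) lying in $\mathcal{H}^{\mathcal{MT}_{N'}}\cong\mathcal{H}^{N'}$. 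So the content reduces to exhibiting an explicit $\mathbb{Q}$-basis of $\mathcal{F}_0\mathcal{H}^{N}_n$.

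Next I would invoke the last two items of the main theorem with $i=0$: a basis of the filtration space $\mathcal{F}_0\mathcal{H}_n$ is given by $\bigcup_p\{\mathfrak{Z}+cl_{n,\leq p,\geq 1}(\mathfrak{Z}),\ \mathfrak{Z}\in\mathcal{B}_{n,p,\leq 0}\}$, where $cl_{n,\leq p,\geq 1}$ is the unique $\mathbb{Q}$-linear map $\langle\mathcal{B}_{n,p,\leq -1}\rangle_{\mathbb{Q}}\to\langle\mathcal{B}_{n,\leq p,\geq 1}\rangle_{\mathbb{Q}}$ — wait, more precisely with $i=1$ in the notation of the theorem, so that $\mathfrak{Z}+cl(\mathfrak{Z})\in\mathcal{F}_0\mathcal{H}_n$. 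Since $\mathcal{B}_{n,p,\leq 0}=\mathcal{B}_{n,p,0}$ (there is no negative level), this says exactly that a basis of $\mathcal{H}^{N'}_n$ is obtained by taking each level-$0$ basis element $\mathfrak{Z}\in\mathcal{B}^{N}_{n,\cdot,0}$ and correcting it by $cl_{n,\cdot,\geq 1}(\mathfrak{Z})$, a $\mathbb{Q}$-linear combination of elements of $\mathcal{B}^{N}$ of level $\geq 1$. The uniqueness of $cl$ (noted in the theorem) guarantees the corrected family is well-defined; linear independence and spanning both come for free from the main theorem via the first split exact sequence $0\to\mathcal{F}_0\mathcal{H}_n\to\mathcal{H}_n\xrightarrow{\pi_{0,1}}\mathcal{H}^{\geq 1}_n\to 0$ and its bijective counterpart on the $\mathcal{B}$'s. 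Finally I would unwind this for the concrete descents $\mathcal{d}$ of Figures $\ref{fig:d248}$ and $\ref{fig:d36}$: for $N'\mid N$ among $1,2,3,4,\mlq6\mrq,8$, $\mathcal{B}^{N}_{n,\cdot,0}$ is the explicitly described sub-family (e.g. for $k_2/\mathbb{Q}$: the $\zeta^{\mathfrak{m}}$ with all $y_i>0$; for $k_4/k_2$: those with all $x_i$ odd; etc.), and one reads off the stated basis of $\mathcal{H}^{N'}_n$.

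The main obstacle — which is entirely absorbed into the main theorem and therefore only cited here — is the bijectivity of the map $\partial^{i,\mathcal{d}}_{n,p}$ of $(\ref{eq:derivintro})$, proved by the $2$-adic (resp. $3$-adic) argument on the coefficients, and the subsequent dimension count against the $K$-theory upper bound $(\ref{dimensionk})$ to upgrade linear independence in the depth-graded to a genuine basis. Granting that, the corollary is a formal consequence: the only genuinely new point to check is that the correction term $cl_{n,\cdot,\geq 1}(\mathfrak{Z})$ is indeed supported on level $\geq 1$ elements of the \emph{same} weight $n$ (immediate, since the whole construction is weight-graded and the motivic level filtration is stable under $\mathcal{G}^{\mathcal{MT}_N}$), and that the resulting set has the right cardinality, namely $\dim_{\mathbb{Q}}\mathcal{H}^{N'}_n$, which follows from the bijection of split exact sequences. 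I would also remark that for $N'=1,2$ one must, as in the Nota Bene preceding the results, replace $(2\pi i)^{\mathfrak{m}}$ by $\zeta^{\mathfrak{m}}(2)$ or $(\pi^{\mathfrak{m}})^2$ and impose Frobenius-invariance, so that the $0^{\text{th}}$ level picks out only even powers of $i\pi^{\mathfrak{m}}$; this is the one place where the descent criterion must be combined with the real-Frobenius condition, but it changes nothing structural in the argument.
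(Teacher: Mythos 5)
Your proposal is correct and follows essentially the same route as the paper: the corollary is a formal consequence of the main theorem (items (iii)--(iv) at level $i=0$) combined with the identification $\mathcal{F}_{0}^{\mathcal{d}}\mathcal{H}^{N}=\mathcal{H}^{N'}$, which the paper also obtains from the obvious inclusion plus the cardinality/dimension count coming from the constructed basis. The remarks you add (uniqueness of $cl$, weight-gradedness of the correction, and the Frobenius-invariance adjustment for $N'=1,2$) match the paper's treatment.
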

\textsc{Remark}:
Descent can be calculated explicitly in small depth, less than or equal to $3$, as we explain in the Appendix $A.2$. In the general case, we could make the part of maximal depth of $cl(\mathfrak{Z})$ explicit (by inverting a matrix with binomial coefficients) but motivic methods do not enable us to describe the other coefficients for terms of lower depth.\\
\\
\texttt{Example, $N=2$:} A basis for motivic multiple zeta values is formed by:
$$ \left\lbrace  \zeta^{\mathfrak{m}}\left( 2x_{1}+1, \ldots, \overline{2x_{p}+1}\right) \zeta^{\mathfrak{m}}(2)^{s} + \sum_{\exists i, y_{i}=0 \atop q\leq p} \alpha_{\textbf{y}}^{\textbf{x}} \zeta^{\mathfrak{m}}(2y_{1}+1, \ldots, \overline{2y_{q}+1})\zeta^{\mathfrak{m}}(2)^{s} \text{  ,  } x_{i}>0, \alpha^{\textbf{x}}_{\textbf{y}}\in\mathbb{Q} \right\rbrace .$$
Starting from a motivic Euler sum with odd numbers greater than $1$, we add some \textit{correction terms}, in order to get an element in $\mathcal{H}^{1}$, the space of MMZV. At this level, correction terms are motivic Euler sums with odds, and at least one $1$ in the arguments; i.e. they are of level $\geq 1$ with the previous terminology. For instance, the following linear combination is a motivic MZV:
$$\zeta^{\mathfrak{m}}(3,3,\overline{3})+ \frac{774}{191} \zeta^{\mathfrak{m}}(1,5, \overline{3})  - \frac{804}{191} \zeta^{\mathfrak{m}}(1,3, \overline{5})   -6 \zeta^{\mathfrak{m}}(3,1,\overline{5}) +\frac{450}{191}\zeta^{\mathfrak{m}}(1,1, \overline{7}).$$

\section{Miscellaneous Results \textit{[Chapter 6]}}

Chapter $6$ is devoted on the Hopf algebra structure of motivic multiple zeta values relative to $\mu_{N}$, particularly for $N=1,2$, presenting various uses of the coaction, and divided into sections as follows:\\
\begin{enumerate}
\item An important use of the coaction, is the decomposition of (motivic) multiple zeta values into a conjectured basis, as explained in $\cite{Br1}$. It is noteworthy to point out that the coaction always enables us to determine the coefficients of the maximal depth terms. We consider in $\S 6.1$ two simple cases, in which the space $gr^{\mathfrak{D}}_{max}\mathcal{H}_{n}$ is $1$ dimensional:
\begin{itemize}
\item[$(i)$] For $N=1$, when the weight is a multiple of $3$ ($w=3d$), such that the depth $p>d$:\footnote{This was a question asked for by D. Broadhurst: an algorithm, or a formula for the coefficient of $\zeta(3)^{d}$ of such a MZV, when decomposed in Deligne basis.}
$$gr^{\mathfrak{D}}_{p}\mathcal{H}_{3d} =\mathbb{Q} \zeta^{\mathfrak{m}}(3)^{d}.$$
\item[$(ii)$] For $N=2,3,4$, when weight equals depth:
$$gr^{\mathfrak{D}}_{p}\mathcal{H}_{p} =\mathbb{Q} \zeta^{\mathfrak{m}}\left( 1 \atop \xi_{N}\right) ^{p}.$$
The corresponding Lie algebra, called the \textit{diagonal Lie algebra}, has been studied by Goncharov in $ \cite{Go2}, \cite{Go3} $.
\end{itemize}
In these cases, we are able to determine the projection:
$$\vartheta : gr_{max}^{\mathfrak{D}} \mathcal{H}_{n}^{N} \rightarrow \mathbb{Q},$$
either via the linearized Ihara action $\underline{\circ}$, or via the dual point of view of infinitesimal derivations $D_{r}$. For instance, for $(i)$ ($N=1$, $w=3d$), it boils down to look at:
$$\frac{D_{3}^{\circ d }}{d! } \quad \text{ or } \quad   \exp_{\circ} ( \overline{\sigma}_{3}), \text{ where } \overline{\sigma}_{2i+1}= (-1)^{i}(\text{ad} e_{0} )^{2i}(e_{1}) \footnote{ These $\overline{\sigma}_{2i+1}$ are the generators of $gr_{1}^{\mathfrak{D}} \mathfrak{g}^{\mathfrak{m}}$, the depth $1$ graded part of the motivic Lie algebra; cf. $(\ref{eq:oversigma})$.}.$$
In general, the space $gr^{\mathfrak{D}}_{max}\mathcal{H}^{N}_{n}$ is more than $1$-dimensional; nevertheless, these methods could be generalized. 
\item Using criterion $\ref{criterehonoraire}$, we provide in the second section infinite families of honorary motivic multiple zeta values up to depth 5, with specified alternating odd or even integers. It was inspired by some isolated examples of honorary multiple zeta values found by D. Broadhurst\footnote{ Those emerged when looking at the \textit{depth drop phenomena},  cf. $\cite{Bro2}$.}, such as $\zeta( \overline{8}, 5, \overline{8}), \zeta( \overline{8}, 3, \overline{10}), \zeta(3, \overline{6}, 3, \overline{6}, 3)$, where we already could observe some patterns of even and odd. Investigating this trail in a systematic way, looking for any general families of unramified (motivic) Euler sums (without linear combinations first), we arrive at the families presented in $\S 6.2$, which unfortunately, stop in depth $5$. However, this investigation does not cover the unramified $\mathbb{Q}$-linear combinations of motivic Euler sums, such as those presented in Chapter $4$, Theorem $\ref{ESsharphonorary}$ (motivic Euler $\sharp$ sums with positive odds and negative even integers).
\item By Corollary $\ref{kerdn}$, we can lift some identities between MZV to motivic MZV (as in $\cite{Br2}$, Theorem $4.4$), and similarly in the case of Euler sums. Remark that, as we will see for depth $1$ Hoffman $\star$ elements (Lemma $\ref{lemmcoeff}$), the lifting may not be straightforward, if the family is not stable under the coaction.  In this section $\S 6.3$, we list some identities that we are able to lift to motivic versions, in particular some \textit{Galois trivial} elements\footnote{Galois trivial here means that the unipotent part of the Galois group acts trivially, not $\mathbb{G}_{m}$; hence not strictly speaking Galois trivial.} or product of simple zetas, and sum identities.
\end{enumerate}

\textsc{Remark}: The stability of a family on the coaction is a precious feature that allows to prove easily (by recursion) properties such as linear independence\footnote{If we find an appropriate filtration respected by the coaction, and such as the $0$ level elements are Galois-trivial, it corresponds then to the motivic depth filtration; for the Hoffman ($\star$) basis it is the number of $3$; for the Euler $\sharp$ sums basis, it is the number of odds, also equal to the depth minus $1$; for Deligne basis relative to $\mu_{N}$, $N=2,3,4,\mlq 6\mrq,8$, it is the usual depth.}, Galois descent features (unramified for instance), identities ($\S 6.3$), etc. 
$$\quad $$

\section{And Beyond?}

For most values of $N$, the situation concerning the periods of $\mathcal{MT}_{\Gamma_{N}} \subset \mathcal{MT} ( \mathcal{O}_{N}[\frac{1}{N}])$ is still hazy, although it has been studied in several articles, notably by Goncharov (\cite{Go2},\cite{Go3}, \cite{Go4}\footnote{Goncharov studied the structure of the fundamental group of $\mathbb{G}_{m} \diagdown \mu_{N}$ and made some parallels with the topology of some modular variety for $GL_{m, \diagup\mathbb{Q}}$, $m>1$ notably. He also proved, for $N=p\geq 5$, that the following morphism, given by the Ihara bracket, is not injective:
$$\beta:  \bigwedge^{2} gr^{\mathfrak{D}}_{1} \mathfrak{g}^{\mathfrak{m}}_{1}  \rightarrow  gr^{\mathfrak{D}}_{2} \mathfrak{g}^{\mathfrak{m}}_{2} \quad  \text{ and } \quad \begin{array}{ll}
\dim  \bigwedge^{2} gr^{\mathfrak{D}}_{1} \mathfrak{g}^{\mathfrak{m}}_{1} & = \frac{(p-1)(p-3)}{8}\\
\dim \ker \beta & = \frac{p^{2}-1}{24}\\
\dim Im \beta & = \dim gr^{\mathfrak{D}} \mathfrak{g}_{2}^{\mathfrak{m}}= \frac{(p-1)(p-5)}{12} \\
\dim gr^{\mathfrak{D}} \mathfrak{g}_{3}^{\mathfrak{m}}& \geq \frac{(p-5)(p^{2}-2p-11)}{48}.
\end{array}$$
Note that $gr^{\mathfrak{D}}_{2} \mathfrak{g}^{\mathfrak{m}}$ corresponds to the space generated by $\zeta^{\mathfrak{m}}\left(1,1 \atop \epsilon_{1}, \epsilon_{2}\right)$ quotiented by dilogarithms $\zeta^{\mathfrak{m}}\left(2 \atop \epsilon \right)$, modulo torsion.}) and Zhao: some bounds on dimensions, tables in small weight, and other results and thoughts on cyclotomic MZV can be seen in $\cite{Zh2}$, $\cite{Zh1}$,  $\cite{CZ}$. \\
\\
\texttt{Nota Bene}: As already pointed out, as soon as $N$ has a non inert prime factor $p$\footnote{In particular, as soon as $N\neq p^{s}, 2p^{s}, 4p^{s}, p^{s}q^{k}$ for $p,q$ odd prime since $(\mathbb{Z} \diagup m\mathbb{Z})^{\ast}$ is cyclic $\Leftrightarrow m=2,4,p^{k}, 2p^{k}$.}, $ \mathcal{MT}_{\Gamma_{N}} \subsetneq \mathcal{MT}(\mathcal{O}_{N}\left[ \frac{1}{N} \right] )$. Hence, some motivic periods of $\mathcal{MT}(\mathcal{O}_{N}\left[ \frac{1}{N}\right] )$ are not motivic iterated integrals on $\mathcal{P}^{1}\diagdown \lbrace 0, \mu_{N}, \infty\rbrace$ as considered above; already in weight $1$, there are more generators than the logarithms of cyclotomic units $\log^{\mathfrak{m}} (1-\xi_{N}^{a})$.\\
\\
Nevertheless, we can \textit{a priori} split the situation (of $\mathcal{MT}_{\Gamma_{N}}$) into two main schemes:
\begin{itemize}
\item[$(i)$] As soon as $N$ has two distinct prime factors, or $N$ power of $2$ or $3$, it is commonly believed that the motivic fundamental group $\pi_{1}^{\mathfrak{m}}(\mathbb{P}^{1}\diagdown \lbrace 0, \infty, \mu_{N}\rbrace, \overrightarrow{01})$ generates $\mathcal{MT}_{\Gamma_{N}}$, even though no suitable basis has been found. Also, in these cases, Zhao conjectured there were non standard relations\footnote{Non standard relations are these which do not come from distribution, conjugation, and regularised double shuffle relation, cf. $\cite{Zh1}$}. Nevertheless, in the case of $N$ power of $2$ or power of $3$, there seems to be a candidate for a basis ($\ref{eq:firstidea}$) and some linearly independent families were exhibited:
\begin{equation}\label{eq:firstidea}
\zeta^{\mathfrak{m}}\left(n_{1}, \cdots n_{p-1}, n_{p} \atop   \epsilon_{1}, \ldots , \epsilon_{p-1},\epsilon_{p}\right) \quad \text{ with }  \epsilon_{p}\in\mu_{N} \quad \text{primitive} \quad \text{ and } (\epsilon_{i})_{i<p} \quad \text{non primitive}.
\end{equation}
Indeed, when $N$ is a power of $2$ or $3$, linearly independent subfamilies of $\ref{eq:firstidea}$, keeping $\frac{3}{4}$ resp. $\frac{2}{3}$ generators in degree $1$, and all generators in degree $r>1$ are presented in $\cite{Wo}$ (in a dual point of view of the one developed here).\\
\\
\texttt{Nota Bene}: Some subfamilies of $\ref{eq:firstidea}$, restricting to $\lbrace \epsilon_{i}=1, x_{i} \geq 2\rbrace$ (here $\epsilon_{p}$ still as above) can be easily proven (via the coaction, by recursion on depth) to be linearly independent for any $N$; if N is a prime power, we can widen to $x_{i}\geq 1$, and for $N$ even to $\epsilon_{i}\in \lbrace \pm 1 \rbrace$; nevertheless, these families are considerably \textit{small}.\\

\item[$(ii)$] For $N=p^{s}$, $p$ prime greater than $5$, there are \textit{missing periods}: i.e. it is conjectured that the motivic fundamental group $\pi_{1}^{\mathfrak{m}}(\mathbb{P}^{1}\diagdown \lbrace 0, \infty, \mu_{N}\rbrace, \overrightarrow{01})$ does not generate $\mathcal{MT}_{\Gamma_{N}}$. For $N=p \geq 5$, it can already be seen in weight $2$, depth $2$. More precisely, (taking the dual point of view of Goncharov in $\cite{Go3}$), the following map is not surjective:
\begin{equation}\label{eq:d1prof2}
\begin{array}{llll}
D_{1}: & gr^{\mathfrak{D}}_{2} \mathcal{A}_{2}  &  \rightarrow &  \mathcal{A}_{1} \otimes \mathcal{A}_{1}\\
 &\zeta^{\mathfrak{a}} \left( 1,1 \atop \xi^{a}, \xi^{b}\right)& \mapsto &(a) \otimes (b) + \delta_{a+b \neq 0} ((b)-(a))\otimes (a+b)
\end{array}, \text{ where } \boldsymbol{(a)}\mathrel{\mathop:}= \zeta^{\mathfrak{a}} \left( 1 \atop \xi^{a}\right).
\end{equation}
These missing periods were a motivation for instance to introduce Aomoto polylogarithms  (in \cite{Du})\footnote{Aomoto polylogarithms generalize the previous iterated integrals, with notably differential forms such as $\frac{dt_{i}}{t_{i}-t_{i+1}-a_{i}}$; there is also a coaction acting on them.}.\\
Another idea, in order to reach these missing periods would be to use Galois descents: coming from a category above, in order to arrive at the category underneath, in the manner of Chapter $5$. For instance, missing periods for $N=p$ prime $>5$, could be reached via a Galois descent from the category $\mathcal{MT}_{\Gamma_{2p}}$ \footnote{This category is equal to $\mathcal{MT}(\mathcal{O}_{2p} ([\frac{1}{2p}]))$ iff $2$ is a primitive root modulo $p$. Some conditions on $p$ necessary or sufficient are known: this implies that $p\equiv 3,5 ± \mod 8$; besides, if $p\equiv 3,11 \mod 16$, it is true, etc.}. First, let point out that this category has the same dimensions than $\mathcal{MT}_{p}$ in degree $>1$, and has one more generator in degree $1$, corresponding to $\zeta^{\mathfrak{a}} \left( 1 \atop \xi^{p} \right) $. Furthermore, for $p$ prime, the descent between $\mathcal{H}^{2p}$ and $\mathcal{H}^{p}$ is measured by $D_{1}^{p}$, the component of $D_{1}$ associated to $\zeta^{\mathfrak{a}} \left( 1 \atop \xi^{p}\right) $:\\
$$\text{Let } \mathfrak{Z} \in \mathcal{H}^{2p}, \text{ then } \mathfrak{Z} \in  \mathcal{H}^{p} \Leftrightarrow \left\lbrace \begin{array}{l}
D^{p}_{1}(\mathfrak{Z})=0\\
D_{r} (\mathfrak{Z}) \in \mathcal{H}^{p}
\end{array}\right.$$
The situation is pictured by:
\begin{equation}\label{eq:descent2p} \xymatrix{
\mathcal{H}^{2p}:=\mathcal{H}^{\Gamma_{2p}}  \ar@{^{(}->}[r] &  \mathcal{H}^{\mathcal{MT}\left( \mathcal{O}_{2p}\left[ \frac{1}{2p}\right]\right)  }\\
\mathcal{H}^{p}:= \mathcal{H}^{\Gamma_{p}}=\mathcal{H}^{\mathcal{MT}\left( \mathcal{O}_{p}\left[ \frac{1}{p}\right]\right)}  \ar[u]^{D^{p}_{1}} \ar@{=}[r]  &  \mathcal{H}^{\mathcal{MT}\left( \mathcal{O}_{2p}\left[ \frac{1}{p}\right]\right) } \\
\mathcal{H}^{\mathcal{MT}\left( \mathcal{O}_{p}\right) }\ar[u]^{\lbrace D^{2a}_{1}-D^{a}_{1}\rbrace_{2 \leq a \leq \frac{p-1}{2}}}  \ar@{=}[r] &  \mathcal{H}^{\mathcal{MT}\left( \mathcal{O}_{2p}\right) } }.
\end{equation}
\texttt{Example, for N=5}:\nomenclature{$\text{Vec}_{k} \left\langle X \right\rangle$}{the $k$ vector space generated by elements in $X$ }
A basis of $gr^{\mathfrak{D}}_{p}\mathcal{A}_{1}$ corresponds to the logarithms of the roots of unity $\xi^{1}, \xi^{2}$; here, $\xi=\xi_{5}$ is a primitive fifth root of unity. Moreover, the image of $D_{1}: \mathcal{A}_{2} \rightarrow \mathcal{A}_{1} \otimes  \mathcal{A}_{1}$ on $\zeta^{\mathfrak{a}} \left( 1,1 \atop \xi_{5}^{a}, \xi_{5}^{b}\right) $ is (cf. $\ref{eq:d1prof2}$):
$$\text{Vec}_{\mathbb{Q}} \left\langle (1)\otimes (1),  (2)\otimes (2),(1)\otimes (2)+ (2)\otimes (1)  \right\rangle .$$
We notice that one dimension is missing ($3$ instead of $4$). Allowing the use of tenth roots of unity, adding for instance here in depth $2$, $\zeta^{\mathfrak{a}} \left( 1,1 \atop \xi_{10}^{1}, \xi_{10}^{2}\right)$ recovers the surjection for $D_{1}$. Since we have at our disposal criterion to determine if a MMZV$_{\mu_{10}}$ is in $\mathcal{H}^{5}$, we could imagine constructing a base of $\mathcal{H}^{5}$ from tenth roots of unity. \\
\texttt{Nota Bene}: More precisely, we have the following spaces, descents and dimensions:
\begin{equation}\label{eq:descent10} \xymatrix{
\mathcal{H}^{\mathcal{MT}\left( \mathcal{O}_{10}\left[ \frac{1}{10}\right]\right)  }= \mathcal{H}^{\Gamma_{10}}  &  \\
\mathcal{H}^{\mathcal{MT}\left( \mathcal{O}_{10}\left[ \frac{1}{5}\right]\right)  }=\mathcal{H}^{\mathcal{MT}\left( \mathcal{O}_{5}\left[ \frac{1}{5}\right] \right) }= \mathcal{H}^{\Gamma_{5}} \ar[u]^{D^{5}_{1}} &   d_{n}= 2d_{n-1}+3d_{n-2}= 3d_{n-1}\\
\mathcal{H}^{\mathcal{MT}\left( \mathcal{O}_{5}\right) }=\mathcal{H}^{\mathcal{MT}\left( \mathcal{O}_{10}\right)  } \ar[u]^{D^{4}_{1}+ D^{2}_{1}}  & d'_{n}=  2d'_{n-1}+d'_{n-2} }\\
\end{equation}

\end{itemize}
\noindent
\textsc{Remarks}:
\begin{itemize}
\item[$\cdot$]
Recently (in $\cite{Bro3}$), Broadhurst made some conjectures about \textit{multiple Landen values}, i.e. periods associated to the ring of integers of the real subfield of $\mathbb{Q}(\xi_{5})$, i.e. $\mathbb{Z} \left[\rho \right]$, with $\rho\mathrel{\mathop:}= \frac{1+\sqrt{5}}{2} $, the golden ratio\footnote{He also looked at the case of the real subfield of $\mathbb{Q}(\xi_{7})$ in his latest article: $\cite{Bro4}$}. Methods presented through this thesis could be transposed in such context.\\
\item[$\cdot$] It also worth noticing that, for $N=p>5$, modular forms obstruct the freeness of the Lie algebra $gr_{\mathfrak{D}} \mathfrak{g}^{\mathfrak{m}}$\footnote{Goncharov proved that the subspace of cuspidal forms of weight 2 on the modular curve $X_{1}(p)$ (associated to $\Gamma_{1}(p)$), of dimension $\frac{(p-5)(p-7)}{24}$ embeds into $\ker \beta$, for $N=p \geq 11$ which leaves another part of dimension $\frac{p-3}{2}$.}, as in the case of $N=1$ (cf. $\cite{Br3}$). Indeed, for $N=1$ one can associate, to each cuspidal form of weight $n$, a relation between weight $n$ double and simple multiple zeta values, cf. $\cite{GKZ}$. Notice that, on the contrary, for $N=2,3,4,8$, $gr_{\mathfrak{D}} \mathfrak{g}^{\mathfrak{m}}$ is free. This fascinating connection with modular forms still waits to be explored for cyclotomic MZV. \footnote{We could hope also for an interpretation, in these cyclotomic cases, of exceptional generators and relations in the Lie algebra, in the way of $\cite{Br3}$ for $N=1$.}\\
\item[$\cdot$] In these cases where $gr_{\mathfrak{D}} \mathfrak{g}^{\mathfrak{m}}$ is not free, since we have to turn towards other basis (than $\ref{eq:firstidea}$), we may remember the Hoffman basis (of $\mathcal{H}^{1}_{n}$, cf $\cite{Br2}$): $\left\lbrace \zeta^{\mathfrak{m}} \left( \lbrace 2, 3\rbrace^{\times}\right)\right\rbrace_{\text{ weight } n}$, whose dimensions verify $d_{n}=d_{n-2}+d_{n-3}$. Looking at dimensions in Lemma $2.3.1$, two cases bring to mind a basis in the \textit{\textbf{Hoffman's way}}:
\begin{itemize}
\item[$(i)$] For $\mathcal{MT}(\mathcal{O}_{N})$, since $d_{n}= \frac{\varphi(N)}{2}d_{n-1}+ d_{n-2}$, this suggests to look for a basis with $\boldsymbol{1}$ (with $\frac{\varphi(N)}{2}$ choices of $N^{\text{th}}$ roots of unity) and $\boldsymbol{2}$ (1 choice of $N^{\text{th}}$ roots of unity).
\item[$(ii)$]  For  $\mathcal{MT}\left( \mathcal{O}_{p^{r}}\left[  \frac{1}{p} \right] \right)$, where $p \mid N$ and $p$ inert, since $ d_{n}= \left( \frac{\varphi(N)}{2}+1\right)^{n},$ this suggests a basis with only $1$ above, and $( \frac{\varphi(N)}{2}+1)$ choices of $N^{\text{th}}$ roots of unity; in particular if $N=p^{k}$.\\
\end{itemize}
\texttt{Example}: For $N=2$, the recursion relation for dimensions $d_{n}=d_{n-1}+d_{n-2}$ of $\mathcal{H}_{n}^{2}$ suggests, \textit{in the Hoffman's way}, a basis composed of motivic Euler sums with only $1$ and $2$. For instance, the following are candidates conjectured to be a basis, supported by numerical computations:
$$\left\lbrace  \zeta^{\mathfrak{m}} \left( n_{1}, \ldots, n_{p-1}, n_{p} \atop 1, \ldots, 1, -1 \right) , n_{i}\in \lbrace 2, 1\rbrace \right\rbrace, \textsc{ or } \left\lbrace  \zeta^{\mathfrak{m}} \left( 1, \cdots 1, \atop \boldsymbol{s}, -1 \right)\zeta^{\mathfrak{m}} (2)^{\bullet} ,\boldsymbol{s}\in \left\lbrace \lbrace 1\rbrace, \lbrace -1,-1\rbrace\right\rbrace   ^{\ast }\right\rbrace.$$
However, there is not a nice \textit{suitable} filtration\footnote{In the second case, it appears that we could proceed as follows to show the linear independence of these elements, where $p$ equals $1+$ the number of $1$ in the $E_{n}$ element:
Prove that, for $x\in E_{n,p}$ there exists a linear combination $cl(x)\in E_{n,>p}$ such that $x+cl(x)\in\mathcal{F}^{\mathfrak{D}}_{p} \mathcal{H}_{n}$, and then that $\lbrace x+cl(x), x\in E_{n,p} \rbrace$ is precisely a basis for $gr^{\mathfrak{D}}_{p} \mathcal{H}_{n}$, considering, for $2r \leq n-p$:
$$D_{2r+1}: gr^{\mathfrak{D}}_{p} \mathcal{H}_{n}  \rightarrow gr^{\mathfrak{D}}_{p-1} \mathcal{H}_{n-2r-1}.$$} corresponding to the motivic depth which would allow a recursive proof \footnote{A suitable filtration, whose level $0$ would be the power of $\pi$, level $1$ would be linear combinations of $\zeta(odd)\cdot\zeta(2)^{\bullet}$, etc.; as in proofs in $\S 4.5.1$.}. 
\end{itemize}

\chapter{MZV $\star$ and Euler  $\sharp$ sums}

\paragraph{\texttt{Contents}:}
After introducing motivic Euler $\star$, and $\sharp$ sums, with some useful motivic relations (antipodal and hybrid), the third section focuses on some specific Euler $\sharp$ sums, starting by a broad subfamily of \textit{unramified} elements (i.e. which are motivic MZV) and extracting from it a new basis for $\mathcal{H}^{1}$. The fourth section deals with the Hoffman star family, proving it is a basis of $\mathcal{H}^{1}$, up to an analytic conjecture ($\ref{conjcoeff}$). In Appendix $\S 4.7$, some missing coefficients in Lemma $\ref{lemmcoeff}$, although not needed for the proof of the Hoffman $\star$ Theorem $\ref{Hoffstar}$, are discussed. The last section presents a conjectured motivic equality ($\ref{lzg}$) which turns each motivic MZV $\star$ into a motivic Euler $\sharp$ sums of the previous honorary family; in particular, under this conjecture, the two previous bases are identical. The proofs here are partly based on results of Annexe $\S A.1$, which themselves use relations presented in $\S 4.2$.

\section{Star, Sharp versions}

Here are the different variants of motivic Euler sums (MES) used in this chapter, where a $ \pm \star$ resp. $\pm \sharp$ in the notation below $I(\cdots)$ stands for a $\omega_{\pm \star} $ resp. $\omega_{\pm\sharp}$ in the iterated integral:\footnote{Possibly regularized with $(\ref{eq:shufflereg})$.}
\begin{defi}
Using the expression in terms of motivic iterated integrals ($\ref{eq:reprinteg}$), motivic Euler sums are, with $n_{i}\in\mathbb{Z}^{\ast}$, $\epsilon_{i}\mathrel{\mathop:}=sign(n_{i})$:
\begin{equation}\label{eq:mes}
\zeta^{\mathfrak{m}}_{k} \left(n_{1}, \ldots , n_{p} \right) \mathrel{\mathop:}= (-1)^{p}I^{\mathfrak{m}} \left(0; 0^{k}, \epsilon_{1}\cdots \epsilon_{p}, 0^{\mid n_{1}\mid -1} ,\ldots, \epsilon_{i}\cdots \epsilon_{p}, 0^{\mid n_{i}\mid -1} ,\ldots, \epsilon_{p}, 0^{\mid n_{p}\mid-1} ;1 \right).
\end{equation}
$$\text{ With the differentials: } \omega_{\pm\star}\mathrel{\mathop:}= \omega_{\pm 1}- \omega_{0}=\frac{dt}{t(\pm t-1)}, \quad \quad \omega_{\pm\sharp}\mathrel{\mathop:}=2 \omega_{\pm 1}-\omega_{0}=\frac{(t \pm 1)dt}{t(t\mp 1)},$$
\begin{description}
\item[MES ${\star}$] are defined similarly than $(\ref{eq:mes})$ with $\omega_{\pm \star}$ (instead of $\omega_{\pm 1}$), $\omega_{0}$ and a $\omega_{\pm 1}$ at the beginning:
$$\zeta_{k}^{\star,\mathfrak{m}} \left(n_{1}, \ldots , n_{p} \right) \mathrel{\mathop:}= (-1)^{p} I^{\mathfrak{m}} \left(0; 0^{k}, \epsilon_{1}\cdots \epsilon_{p}, 0^{\mid n_{1}\mid-1}, \epsilon_{2}\cdots \epsilon_{p}\star, 0^{\mid n_{2}\mid -1}, \ldots, \epsilon_{p}\star, 0^{\mid n_{p}\mid-1} ;1 \right).$$
\item[MES ${\star\star}$] similarly with only $\omega_{\pm \star}, \omega_{0}$ (including the first):\nomenclature{MES ${\star\star}$, $\zeta^{\star\star,\mathfrak{m}}$}{Motivic Euler sums $\star\star$}
$$\zeta_{k}^{\star\star,\mathfrak{m}} \left(n_{1}, \ldots , n_{p} \right) \mathrel{\mathop:}= (-1)^{p} I^{\mathfrak{m}}  \left(0; 0^{k}, \epsilon_{1}\cdots \epsilon_{p}\star, 0^{\mid n_{1}\mid-1}, \epsilon_{2}\cdots \epsilon_{p}\star, 0^{\mid n_{2}\mid-1}, \ldots, \epsilon_{p}\star, 0^{\mid n_{p}\mid-1} ;1 \right).$$
\item[MES ${\sharp}$] with $\omega_{\pm \sharp},\omega_{0} $ and a $\omega_{\pm 1}$ at the beginning:
$$\zeta_{k}^{\sharp,\mathfrak{m}} \left(n_{1}, \ldots , n_{p} \right) \mathrel{\mathop:}= 2 (-1)^{p} I^{\mathfrak{m}}  \left(0; 0^{k}, \epsilon_{1}\cdots \epsilon_{p}, 0^{\mid n_{1}\mid-1}, \epsilon_{2}\cdots \epsilon_{p}\sharp, 0^{\mid n_{2}\mid -1}, \ldots, \epsilon_{p}\sharp, 0^{\mid n_{p}\mid-1} ;1 \right).$$
\item[MES $\sharp\sharp$] similarly with only $\omega_{\pm \sharp}, \omega_{0}$ (including the first):\nomenclature{MES ${\sharp\sharp}$, $\zeta^{\sharp\sharp,\mathfrak{m}}$}{Motivic Euler sums $\sharp\sharp$}
$$\zeta_{k}^{\sharp\sharp,\mathfrak{m}} \left(n_{1}, \ldots , n_{p} \right) \mathrel{\mathop:}= (-1)^{p} I^{\mathfrak{m}}  \left(0; 0^{k}, \epsilon_{1}\cdots \epsilon_{p}\sharp, 0^{\mid n_{1}\mid-1},  \epsilon_{2}\cdots \epsilon_{p}\sharp, 0^{\mid n_{2}\mid-1}, \ldots, \epsilon_{p}\sharp, 0^{\mid n_{p}\mid-1} ;1 \right).$$
\end{description}
\end{defi}
\textsc{Remarks}:
\begin{itemize}
\item[$\cdot$] The Lie algebra of the fundamental group $\pi_{1}^{dR}(\mathbb{P}^{1}\diagdown \lbrace 0, 1, \infty\rbrace)=\pi_{1}^{dR}(\mathcal{M}_{0,4})$ is generated by $e_{0}, e_{1},e_{\infty}$ with the only condition than $e_{0}+e_{1}+e_{\infty}=0$\footnote{ For the case of motivic Euler sums, it is the Lie algebra generated by $e_{0}, e_{1}, e_{-1}, e_{\infty}$ with the only condition than $e_{0}+e_{1}+e_{-1}+e_{\infty}=0$; similarly for other roots of unity with $e_{\eta}$. Note that $e_{i}$ corresponds to the class of the residue around $i$ in $H_{dR}^{1}(\mathbb{P}^{1} \diagdown \lbrace 0, \mu_{N}, \infty \rbrace)^{\vee}$. }. If we keep $e_{0}$ and $e_{\infty}$ as generators, instead of the usual $e_{0},e_{1}$, it leads towards MMZV  $^{\star\star}$ up to a sign, instead of MMZV since $-\omega_{0}+\omega_{1}- \omega_{\star}=0$. We could also choose $e_{1}$ and $e_{\infty}$ as generators, which leads to another version of MMZV that has not been much studied yet. These versions are equivalent since each one can be expressed as $\mathbb{Q}$  linear combination of another one.
\item[$\cdot$] By linearity and $\shuffle$-regularisation $(\ref{eq:shufflereg})$, all these versions ($\star$, $\star\star$, $\sharp$ or $\sharp\sharp$) are $\mathbb{Q}$-linear combination of motivic Euler sums. Indeed, with $n_{+}$ the number of $+$ among $\circ$:
$$\begin{array}{llll}
\zeta ^{\star,\mathfrak{m}}(n_{1}, \ldots, n_{p})  &=& \sum_{\circ=\mlq + \mrq \text{ or } ,} & \zeta ^{\mathfrak{m}}(n_{1}\circ \cdots \circ n_{p}) \\
\\
\zeta ^{\mathfrak{m}}(n_{1}, \ldots, n_{p})  &=& \sum_{\circ=\mlq + \mrq \text{ or } ,} (-1)^{n_{+}} & \zeta ^{\star,\mathfrak{m}}(n_{1}\circ \cdots \circ n_{p})  \\
\\
 \zeta ^{ \sharp,\mathfrak{m}}(n_{1}, \ldots, n_{p}) &=& \sum_{\circ=\mlq + \mrq \text{ or } ,} 2^{p-n_{+}} & \zeta ^{\mathfrak{m}}(n_{1}\circ \cdots \circ n_{p}) \\
 \\
 \zeta ^{\mathfrak{m}}(n_{1}, \ldots, n_{p}) &=&  \sum_{\circ=\mlq + \mrq \text{ or } ,} (-1)^{n_{+}} 2^{-p} & \zeta ^{ \sharp,\mathfrak{m}}(n_{1}\circ \cdots \circ n_{p})   \\
 \\
  \zeta ^{\star\star,\mathfrak{m}}(n_{1}, \ldots, n_{p}) &=& \sum_{i=0}^{p-1} & \zeta ^{\star,\mathfrak{m}}_{\mid n_{1}\mid+\cdots+\mid n_{i}\mid}(n_{i+1}, \cdots , n_{p}) \\
  & = & \sum_{\circ=\mlq + \mrq \text{ or } ,\atop i=0}^{p-1} & \zeta^{\mathfrak{m}}_{\mid n_{1}\mid+\cdots+\mid n_{i}\mid}(n_{i+1}\circ \cdots \circ n_{p})\\
  \\
 \zeta ^{ \sharp\sharp,\mathfrak{m}}(n_{1}, \ldots, n_{p}) &=& \sum_{ i=0}^{p-1} & \zeta ^{\sharp,\mathfrak{m}}_{\mid n_{1}\mid+\cdots+\mid n_{i}\mid}(n_{i+1}, \cdots , n_{p})  \\
 & = & \sum_{\circ=\mlq + \mrq \text{ or } ,\atop i=0}^{p-1} 2^{p-i-n_{+}} & \zeta^{\mathfrak{m}}_{\mid n_{1}\mid+\cdots+\mid n_{i}\mid}(n_{i+1}\circ \cdots \circ n_{p}) \\
 \\
  \zeta ^{\star,\mathfrak{m}}(n_{1}, \ldots, n_{p})  &=& \zeta ^{\star\star,\mathfrak{m}}(n_{1}, \ldots, n_{p})& -\zeta ^{\star\star,\mathfrak{m}}_{\mid n_{1}\mid}(n_{2}, \ldots, n_{p})
 \\
\\
\zeta ^{\sharp,\mathfrak{m}}(n_{1}, \ldots, n_{p}) &=& \zeta ^{\sharp\sharp}(n_{1}, \ldots, n_{p})
&-\zeta ^{\sharp\sharp,\mathfrak{m}}_{\mid n_{1}\mid}(n_{2}, \ldots, n_{p}) \\ 

\end{array}$$
\texttt{Notation:} Beware, the $\mlq + \mrq$ here is on $n_{i}\in\mathbb{Z}^{\ast}$ is a summation of absolute values while signs are multiplied:
$$n_{1} \mlq + \mrq \cdots   \mlq + \mrq n_{i} \rightarrow sign(n_{1}\cdots n_{i})( \vert n_{1}\vert +\cdots  + \vert n_{i} \vert).$$
\end{itemize}
\texttt{Examples:} Expressing them as $\mathbb{Q}$ linear combinations of motivic Euler sums\footnote{To get rid of the $0$ in front of the MZV, as in the last example, we use the shuffle regularisation $\ref{eq:shufflereg}$.}:
$$\begin{array}{lll}
\zeta^{\star,\mathfrak{m}}(2,\overline{1},3) & = & -I^{\mathfrak{m}}(0;-1,0,-\star, \star,0,0; 1) \\
& = &  \zeta^{\mathfrak{m}}(2,\overline{1},3)+ \zeta^{\mathfrak{m}}(\overline{3},3)+ \zeta^{\mathfrak{m}}(2,\overline{4})+\zeta^{\mathfrak{m}}(\overline{6}) \\
\zeta^{\sharp,\mathfrak{m}}(2,\overline{1},3) &=& - 2 I^{\mathfrak{m}}(0;-1,0,-\sharp, \sharp,0,0; 1) \\
 &=& 8\zeta^{\mathfrak{m}}(2,\overline{1},3)+ 4\zeta^{\mathfrak{m}}(\overline{3},3)+ 4\zeta^{\mathfrak{m}}(2,\overline{4})+2\zeta^{\mathfrak{m}}(\overline{6})\\
 \zeta^{\star\star,\mathfrak{m}}(2,\overline{1},3) &=& -I^{\mathfrak{m}}(0;-\star,0,-\star, \star,0,0; 1) \\
 &=& \zeta^{\star,\mathfrak{m}}(2,\overline{1},3)+ \zeta^{\star,\mathfrak{m}}_{2}(\overline{1},3)+\zeta^{\star,\mathfrak{m}}_{3}(3) \\
  &=&  \zeta^{\star,\mathfrak{m}}(2, \overline{1}, 3)+\zeta^{\star,\mathfrak{m}}(\overline{3},3)+3 \zeta^{\star,\mathfrak{m}}(\overline{2},4)+6\zeta^{\star,\mathfrak{m}}(1, 5)-10\zeta^{\star,\mathfrak{m}}(6)\\
 &=& 11\zeta^{\mathfrak{m}}(\overline{6})+2\zeta^{\mathfrak{m}}(\overline{3}, 3)+\zeta^{\mathfrak{m}}(2, \overline{4})+\zeta^{\mathfrak{m}}(2,\overline{1}, 3)+3\zeta^{\mathfrak{m}}(\overline{2}, 4)+6\zeta^{\mathfrak{m}}(\overline{1}, 5)-10\zeta^{\mathfrak{m}}(6)\\
\end{array}$$

\paragraph{Stuffle.}

One of the most famous relations between cyclotomic MZV, the\textit{ stuffle} relation, coming from the multiplication of series, has been proven to be \textit{motivic} i.e. true for cyclotomic MMZV, which was a priori non obvious. \footnote{The stuffle for these motivic iterated integrals can be deduced from works by Goncharov on mixed Hodge structures, but was also proved in a direct way by G. Racinet, in his thesis, or I. Souderes in $\cite{So}$ via blow-ups. Remark that shuffle relation, coming from the iterated integral representation is clearly \textit{motivic}.} In particular:
\begin{lemm}
$$\zeta^{\mathfrak{m}}\left( a_{1}, \ldots, a_{r} \atop \alpha_{1}, \ldots, \alpha_{r}\right) \zeta^{\mathfrak{m}}\left(  b_{1}, \ldots, b_{s} \atop  \beta_{1}, \ldots, \beta_{s}\right)=\sum_{ \left(  c_{j} \atop \gamma_{j}\right)  = \left(  a_{i} \atop \alpha_{i} \right) ,\left(  b_{i'} \atop \beta_{i'}\right)  \text{ or }\left(  a_{i}+b_{i'} \atop \alpha_{i}\beta_{i'}\right) \atop \text{order } (a_{i}), (b_{i}) \text{ preserved}  }\zeta^{\mathfrak{m}}\left( c_{1}, \ldots, c_{m} \atop \gamma_{1}, \ldots, \gamma_{m} \right) .$$
$$\zeta^{\star,\mathfrak{m}}\left( a_{1}, \ldots, a_{r} \atop \alpha_{1}, \ldots, \alpha_{r}\right) \zeta^{\star, \mathfrak{m}}\left(  b_{1}, \ldots, b_{s} \atop  \beta_{1}, \ldots, \beta_{s}\right)=\sum_{ \left(  c_{j} \atop \gamma_{j}\right)  = \left(  a_{i} \atop \alpha_{i} \right) ,\left(  b_{i} \atop \beta_{i}\right)  \text{ or }\left(  a_{i}+b_{i'} \atop \alpha_{i}\beta_{i'}\right) \atop \text{order } (a_{i}), (b_{i}) \text{ preserved}  }(-1)^{r+s+m}\zeta^{\star, \mathfrak{m}}\left( c_{1}, \ldots, c_{m} \atop \gamma_{1}, \ldots, \gamma_{m} \right) .$$
$$\zeta^{\sharp , \mathfrak{m}}\left( \textbf{a} \atop \boldsymbol{\alpha}  \right) \zeta^{\sharp, \mathfrak{m}}\left( \textbf{ b } \atop  \boldsymbol{\beta} \right)=\sum_{ \left(  c_{j} \atop \gamma_{j}\right)  = \left( a_{i}+\sum_{l=1}^{k} a_{i+l} +b_{i'+l} \atop \alpha_{i}\prod_{l=1}^{k}\alpha_{i+l}\beta_{i'+l}\right) \text{ or }  \left( b_{i'}+\sum_{l=1}^{k} a_{i+l} +b_{i'+l} \atop \beta_{i'}\prod_{l=1}^{k}\alpha_{i+l}\beta_{i'+l}\right) \atop k\geq 0, \text{ order } (a_{i}), (b_{i}) \text{ preserved}}(-1)^{\frac{r+s-m}{2}}\zeta^{\sharp, \mathfrak{m}}\left( c_{1}, \ldots, c_{m} \atop \gamma_{1}, \ldots, \gamma_{m} \right) .$$
\end{lemm}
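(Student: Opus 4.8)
All three identities follow from two inputs already recorded above: the fact, recalled just before the Lemma, that the stuffle (quasi-shuffle) product relation holds \emph{at the motivic level} for the $\zeta^{\mathfrak{m}}\binom{\cdots}{\cdots}$ themselves (Racinet, Souder\`es), together with the $\mathbb{Q}$-linear change-of-basis formulas of the preceding Remark expressing $\zeta^{\star,\mathfrak{m}}$, $\zeta^{\star\star,\mathfrak{m}}$, $\zeta^{\sharp,\mathfrak{m}}$, $\zeta^{\sharp\sharp,\mathfrak{m}}$ in terms of the $\zeta^{\mathfrak{m}}$ and conversely (with inverse signs $(-1)^{n_{+}}$, resp. coefficients $(-1)^{n_{+}}2^{-p}$). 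No further motivic input is needed: once the three formulas are established as identities in the free quasi-shuffle $\mathbb{Q}$-algebra on the letters $\binom{n}{\epsilon}$, they transport verbatim to $\mathcal{H}^{2}$, since the only structural facts used about $\zeta^{\mathfrak{m}}$ are $\mathbb{Q}$-linearity and the motivic stuffle. Thus the first line of the Lemma is just the motivic stuffle specialized to $N=2$, i.e. to $\alpha_{i},\beta_{j}\in\{\pm1\}$; for admissible indices it is the formula quoted above, and the non-admissible (divergent) cases reduce to the admissible ones via the $\shuffle$-regularization $(\ref{eq:shufflereg})$, exactly as classically. It remains to derive the $\star$-line and the $\sharp$-line from it.

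\textbf{The $\star$-case.} Write $\zeta^{\star,\mathfrak{m}}(\mathbf{a})=\sum_{\pi}\zeta^{\mathfrak{m}}(\mathbf{a}/\pi)$, where $\mathbf{a}=\binom{a_{1},\dots,a_{r}}{\alpha_{1},\dots,\alpha_{r}}$ and $\pi$ runs over the ``$+$-or-comma'' patterns, i.e. over decompositions of $(1,\dots,r)$ into consecutive blocks, a block $\{i,\dots,j\}$ being contracted to $\binom{a_{i}+\cdots+a_{j}}{\alpha_{i}\cdots\alpha_{j}}$ (this is $(\ref{eq:esstar})$). Then
\[
\zeta^{\star,\mathfrak{m}}(\mathbf{a})\,\zeta^{\star,\mathfrak{m}}(\mathbf{b})=\sum_{\pi,\rho}\zeta^{\mathfrak{m}}(\mathbf{a}/\pi)\,\zeta^{\mathfrak{m}}(\mathbf{b}/\rho),
\]
and applying the motivic stuffle to each product on the right expresses everything as a $\mathbb{Z}$-combination of $\zeta^{\mathfrak{m}}(\mathbf{c})$, where $\mathbf{c}$ ranges over the interleavings-with-diagonal-contractions of $\mathbf{a}/\pi$ and $\mathbf{b}/\rho$. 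Forgetting $\pi,\rho$, each such $\mathbf{c}$ is an interleaving-with-contractions of $\mathbf{a}$ and $\mathbf{b}$ in the sense of the $\star$-formula (entries $a_{i}$, $b_{j}$, or $a_{i}+b_{j}$, more precisely sums of a consecutive run). Grouping the triple sum by the underlying $\mathbf{c}$ and performing M\"obius inversion over the refinement order on compositions — equivalently, inverting the $\star$-map by the explicit inverse $(-1)^{n_{+}}$ of the Remark — collapses the inner $\zeta^{\mathfrak{m}}$-sum to the single term $(-1)^{r+s+m}\zeta^{\star,\mathfrak{m}}(\mathbf{c})$, with $m$ the length of $\mathbf{c}$. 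Conceptually: the $\star$-map is the algebra automorphism of the quasi-shuffle Hopf algebra conjugating $\ast$ into the product whose structure constants are exactly those displayed (the $\ast\leftrightarrow\bar\ast$ duality of Hoffman and of Ihara--Kaneko--Zagier / Yamamoto), so $\zeta^{\star,\mathfrak{m}}=\zeta^{\mathfrak{m}}\circ(\star\text{-map})$ is a homomorphism for $\bar\ast$; one may instead just carry out the bookkeeping above directly.

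\textbf{The $\sharp$-case.} Identical in structure: write $\zeta^{\sharp,\mathfrak{m}}(\mathbf{a})=\sum_{\pi}2^{\#\pi-n_{+}(\pi)}\zeta^{\mathfrak{m}}(\mathbf{a}/\pi)$ (this is $(\ref{eq:essharp})$), multiply the two such sums, apply the motivic stuffle, and re-collect using the inverse $\zeta^{\mathfrak{m}}=\sum(-1)^{n_{+}}2^{-p}\zeta^{\sharp,\mathfrak{m}}$. The powers of $2$ telescope so that the surviving $\mathbf{c}$ are precisely the ``run-merging'' interleavings $\binom{a_{i}+\sum_{l=1}^{k}(a_{i+l}+b_{i'+l})}{\alpha_{i}\prod_{l}\alpha_{i+l}\beta_{i'+l}}$ and their mirror image starting with a $b$; a block of parameter $k$ uses $k{+}1$ entries of one side and $k$ of the other, hence shortens the total length by $2k$, so $r+s-m=2\sum_{t}k_{t}$ is even and the sign $(-1)^{(r+s-m)/2}=(-1)^{\sum_{t}k_{t}}$ is well defined, as claimed.

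\textbf{Main obstacle.} All the content is the combinatorial re-collection in the last two steps: proving that the triple sum over $\bigl(\pi,\rho,\text{stuffle terms}\bigr)$ reorganizes \emph{exactly} into the stated closed forms — right indexing, right signs, and (for $\sharp$) right powers of $2$, with no residual terms. This is a purely combinatorial identity in the free quasi-shuffle algebra, and I expect the cleanest route to be through the automorphism / $\bar\ast$-duality description, which simultaneously explains the analogous product rules for the $\star\star$ and $\sharp\sharp$ variants and for the regularized truncations $\zeta^{\star,\mathfrak{m}}_{k}$, $\zeta^{\sharp,\mathfrak{m}}_{k}$. The remaining points — handling non-admissible words via $\shuffle$-regularization, and the parity check for $r+s-m$ — are routine.
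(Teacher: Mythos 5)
Your proposal is correct and follows the route the paper intends: the paper gives no proof of this Lemma at all, simply invoking the motivicity of the stuffle product (Racinet, Souder\`es, Goncharov, as in the footnote) and leaving the $\star$- and $\sharp$-versions to follow by $\mathbb{Q}$-linearity from the change-of-basis formulas $(\ref{eq:esstar})$, $(\ref{eq:essharp})$, exactly as you do. The only part you leave sketched — the re-collection of the triple sum with its signs and powers of $2$ — is a purely formal identity in the quasi-shuffle algebra, independent of any motivic input (for the $\star$-case it is the classical M\"obius-inversion argument, and for the $\sharp$-case your parity analysis of the odd-length alternating merges is the right bookkeeping), so no genuine gap remains.
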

\noindent\textsc{Remarks:}
\begin{itemize}
\item[$\cdot$] In the depth graded, stuffle corresponds to shuffle the sequences $\left( \boldsymbol{a} \atop \boldsymbol{\alpha} \right) $ and $\left( \boldsymbol{b} \atop \boldsymbol{\beta} \right) $.
\item[$\cdot$] Other identities mixing the two versions could also be stated, such as
$$\zeta^{\star, \mathfrak{m}}\left( a_{1}, \ldots, a_{r} \atop \alpha_{1}, \ldots, \alpha_{r}\right) \zeta^{\mathfrak{m}}\left(  b_{1}, \ldots, b_{s} \atop  \beta_{1}, \ldots, \beta_{s}\right)=\sum_{ \left(  c_{j} \atop \gamma_{j}\right)  = \left(  a_{i} \atop \alpha_{i} \right) ,\left(  b_{i'} \atop \beta_{i'}\right)  \text{ or }\left(  (\sum_{l=1}^{k} a_{i+l})+b_{i'} \atop (\prod_{l=1}^{k}\alpha_{i+l})\beta_{i'}\right) \atop k \geq 1, \text{order } (a_{i}), (b_{i}) \text{ preserved}  }\zeta^{\mathfrak{m}}\left( c_{1}, \ldots, c_{m} \atop \gamma_{1}, \ldots, \gamma_{m} \right) .$$
\end{itemize}

\section{Relations in $\mathcal{L}$}

\subsection{Antipode relation}

In this part, we are interested in some Antipodal relations for motivic Euler sums in the coalgebra $\mathcal{L}$, i.e. modulo products. To explain quickly where they come from, let's go back to two combinatorial Hopf algebra structures.\\
\\
First recall that if $A$ is a graded connected bialgebra, there exists an unique antipode S (leading to a Hopf algebra structure)\footnote{It comes from the usual required relation for the antipode in a Hopf algebra, but because it is graded and connected, we can apply the formula recursively to construct it, in an unique way. }, which is the graded map defined by:
\begin{equation} \label{eq:Antipode} S(x)= -x-\sum S(x_{(1)}) \cdot x_{(2)},
\end{equation}
where $\cdot$ is the product and using Sweedler notations for the coaction:
$$\Delta (x)= 1\otimes x+ x\otimes 1+ \sum x_{(1)}\otimes x_{(2)}= \Delta'(x)+ 1\otimes x+ x\otimes 1 .$$
Hence, in the quotient $A/ A_{>0}\cdot A_{>0} $:
$$S(x) \equiv -x .  $$

\subsubsection{The $\shuffle$ Hopf algebra}

Let $X=\lbrace a_{1},\cdots, a_{n} \rbrace$ an alphabet and $A_{\shuffle}\mathrel{\mathop:}=\mathbb{Q} \langle X^{\times} \rangle$ the $\mathbb{Q}$-vector space generated by words on X, i.e. non commutative polynomials in $a_{i}$. It is easy to see that $A_{\shuffle}$ is a Hopf algebra with the $\shuffle$ shuffle product, the deconcatenation coproduct $\Delta_{D}$ and antipode $S_{\shuffle}$:\nomenclature{$\Delta_{D}$}{the deconcatenation coproduct}
\begin{equation} \label{eq:shufflecoproduct}  \Delta_{D}(a_{i_{1}}\cdots a_{i_{n}})= \sum_{k=0}^{n} a_{i_{1}}\cdots a_{i_{k}} \otimes a_{i_{k+1}} \cdots a_{i_{n}}.
\end{equation}
\begin{equation} \label{eq:shuffleantipode} 
S_{\shuffle} (a_{i_{1}} \cdots a_{i_{n}})= (-1)^{n} a_{i_{n}} \cdots a_{i_{1}}.
\end{equation}
$A_{\shuffle}$ is even a connected graded Hopf algebra, called the \textit{ shuffle Hopf algebra}; the grading coming from the degree of polynomial. By the equivalence of category between $\mathbb{Q}$-Hopf algebra and $\mathbb{Q}$-Affine Group Scheme, it corresponds to:
\begin{equation} \label{eq:gpschshuffle} 
G=\text{Spec} A_{\shuffle} : R \rightarrow \text{Hom}(\mathbb{Q} \langle X \rangle, R)=\lbrace S\in R\langle\langle a_{i} \rangle\rangle \mid \Delta_{\shuffle} S= S\widehat{\otimes} S, \epsilon(S)=1 \rbrace,
\end{equation} 
where $\Delta_{\shuffle}$ is the coproduct dual to the product $\shuffle$:\nomenclature{$\Delta_{\shuffle}$}{the $\shuffle$ coproduct}
$$\Delta_{\shuffle}(a_{i_{1}}\cdots a_{i_{n}})= \left( 1\otimes a_{i_{1}}+ a_{i_{1}}\otimes 1\right) \cdots \left( 1\otimes a_{i_{n}}+ a_{i_{n}}\otimes 1\right) .$$
Let restrict now to $X=\lbrace 0,\mu_{N}\rbrace$; our main interest in this Chapter is $N=2$, but it can be extended to other roots of unity. The shuffle relation for motivic iterated integral relative to $\mu_{N}$:
\begin{equation}\label{eq:shuffleim}
I^{\mathfrak{m}}(0; \cdot ; 1) \text{ is a morphism of Hopf algebra from } A_{\shuffle} \text{ to }  (\mathbb{R},\times): 
\end{equation}
$$I^{\mathfrak{m}}(0; w ; 1) I^{\mathfrak{m}}(0; w' ; 1)= I^{\mathfrak{m}}(0; w\shuffle w' ;1) \text{ with }  w,w' \text{ words in } X.$$

\begin{lemm}[\textbf{Antipode $\shuffle$}]
In the coalgebra $\mathcal{L}$, with $w$ the weight, $\bullet$ standing for MMZV$_{\mu_{N}}$, or $\star\star$ ($N=2$) resp. $\sharp\sharp$-version ($N=2$):
$$\zeta^{\bullet,\mathfrak{l}}_{n-1}\left( n_{1}, \ldots, n_{p} \atop \epsilon_{1}, \ldots, \epsilon_{p} \right) \equiv (-1)^{w+1}\zeta^{\bullet,\mathfrak{l}}_{n_{p}-1}\left( n_{p-1}, \ldots, n_{1},n \atop \epsilon_{p-1}^{-1}, \ldots, \epsilon_{1}^{-1}, \epsilon \right) \text{ where }  \epsilon\mathrel{\mathop:}=\epsilon_{1}\cdot\ldots\cdot\epsilon_{p}.$$
\end{lemm}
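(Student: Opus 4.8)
The statement is an ``Antipode $\shuffle$'' relation in the coalgebra $\mathcal{L}$, so the strategy is to exploit the graded connected Hopf algebra structure $A_{\shuffle}$ from $(\ref{eq:shufflecoproduct})$--$(\ref{eq:shuffleantipode})$ together with the fact that $I^{\mathfrak{m}}(0;\cdot;1)$ is a Hopf algebra morphism $(\ref{eq:shuffleim})$. First I would recall the general principle $(\ref{eq:Antipode})$: in any graded connected bialgebra $A$, the antipode $S$ satisfies $S(x)\equiv -x$ modulo products, i.e. in the quotient $A/A_{>0}\cdot A_{>0}$. Applying the morphism $I^{\mathfrak{m}}$ and then projecting to $\mathcal{L}=\mathcal{A}_{>0}/\mathcal{A}_{>0}\cdot\mathcal{A}_{>0}$, the image of the shuffle antipode formula $S_{\shuffle}(a_{i_1}\cdots a_{i_n})=(-1)^n a_{i_n}\cdots a_{i_1}$ yields the congruence
$$ I^{\mathfrak{l}}(0; a_1,\dots,a_n;1) \equiv (-1)^{n+1} I^{\mathfrak{l}}(0; a_n,\dots,a_1;1) \pmod{\text{products}}, $$
where $n$ is the number of ones in the word (the length of $w$, hence the weight $w$ of the corresponding MZV in the excerpt's notation). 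Actually the identity $I(0;a_1,\dots,a_n;1)=I(0;a_1,\dots,a_n;1)\shuffle 1$ together with $\sum S_{\shuffle}(x_{(1)})\cdot x_{(2)}=0$ in the quotient is what gives this directly.

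Second, I would translate the word-level statement into the MMZV$_{\mu_N}$ notation. A word $w=e_0^{\,\square}\eta_1 e_0^{n_1-1}\cdots \eta_p e_0^{n_p-1}$ with the appropriate leading zeros corresponds via $(\ref{mmzv})$, $(\ref{eq:iterintegw})$ to $\zeta^{\mathfrak{m}}_{k}\!\left({n_1,\dots,n_p\atop\epsilon_1,\dots,\epsilon_p}\right)$ up to sign, with $\eta_i=(\epsilon_i\cdots\epsilon_p)^{-1}$. Reversing the word $a_n,\dots,a_1$ and then using the path-reversal property (v) of motivic iterated integrals (or equivalently the homothety (vi) to normalize), one checks that the reversed word reassembles, after reindexing the roots of unity, into $\zeta^{\mathfrak{m}}_{n_p-1}\!\left({n_{p-1},\dots,n_1,n\atop \epsilon_{p-1}^{-1},\dots,\epsilon_1^{-1},\epsilon}\right)$ with $\epsilon=\epsilon_1\cdots\epsilon_p$ and $n$ the total weight. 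The careful bookkeeping here is: the block $e_0^{n_p-1}$ that originally trailed $\eta_p$ now becomes the leading block of zeros (accounting for the subscript $n_p-1$ in the statement), and the last entry of the reversed integral acquires weight $n$ because the original leading zeros $e_0^{k}$ merge with $\eta_1 e_0^{n_1-1}$ to form a block of size $n-(n_1+\cdots+n_{p-1}+\text{stuff})$; tracking the arguments $(\epsilon_i)$ under inversion and the global product $\epsilon$ comes from the homothety relation (vi), since reversal sends $e_\eta$ to $e_{\eta^{-1}}$ up to an overall homothety by $\epsilon$.

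Third, I would verify that the same argument applies verbatim to the $\star\star$ and $\sharp\sharp$ variants when $N=2$: these are defined ($\S$ 4.1, and $(\ref{eq:miistarsharp})$) by linear substitution $\omega_{\pm 1}\mapsto \omega_{\pm 1}-\omega_0$ resp. $2\omega_{\pm 1}-\omega_0$ \emph{including the first form}, so $I^{\mathfrak{m}}$ applied to the corresponding linear combinations of words is still a $\shuffle$-morphism (linearity preserves the Hopf structure), and the word-reversal $S_{\shuffle}$ is $\mathbb{Q}$-linear, hence commutes with these substitutions. One then re-reads the reversed linear combination in terms of $\zeta^{\star\star,\mathfrak{l}}$ resp. $\zeta^{\sharp\sharp,\mathfrak{l}}$ — this works cleanly precisely because these variants are ``symmetric'' (no asymmetry between the first and remaining letters, unlike the plain $\star$ and $\sharp$ versions which keep a genuine $\omega_{\pm 1}$ first), so the reversed word is again of the same shape.

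\textbf{Main obstacle.} The conceptual content (antipode $\equiv -\mathrm{id}$ mod products, composed with a Hopf morphism) is immediate; the real work — and the place where sign and index errors lurk — is the combinatorial dictionary of the second paragraph: correctly matching the reversed word to an MMZV, in particular pinning down the regularization subscript $n_p-1$, the enlarged last argument $n$ (the weight), the inversion $\epsilon_i\mapsto\epsilon_i^{-1}$ of the inner roots, and the appearance of the global root $\epsilon=\epsilon_1\cdots\epsilon_p$ via homothety (vi). I would do this carefully on the level of words first, then specialize, double-checking with a small example such as the $\zeta^{\mathfrak{m}}(-1,3)$ computation already worked out in the excerpt.
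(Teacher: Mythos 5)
Your proposal is correct and follows essentially the same route as the paper's own proof: $S_{\shuffle}\equiv -\mathrm{id}$ modulo products in the graded connected shuffle Hopf algebra, combined with $S_{\shuffle}\left(I^{\mathfrak{m}}(0;a_{1},\dots,a_{n};1)\right)=(-1)^{n}I^{\mathfrak{m}}(0;a_{n},\dots,a_{1};1)$ and the translation of the reversed word back into MMZV$_{\mu_{N}}$ notation (with the same remark that linearity handles the $\star\star$ and $\sharp\sharp$ variants). One minor point: the inversion $\epsilon_{i}\mapsto\epsilon_{i}^{-1}$ and the final entry $\epsilon=\epsilon_{1}\cdots\epsilon_{p}$ require no appeal to the homothety property (vi) — they fall out automatically when the reversed word is re-read through the dictionary $\eta_{i}=(\epsilon_{i}\cdots\epsilon_{p})^{-1}$, the letters themselves being unchanged by reversal.
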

\noindent
This formula stated for any $N$ is slightly simpler in the case $N=1,2$ since $n_{i}\in\mathbb{Z}^{\ast}$:
\begin{framed}
\begin{equation}\label{eq:antipodeshuffle2}
\textsc{Antipode } \shuffle \quad :
\begin{array}{l}
 \zeta^{\bullet,\mathfrak{l}}_{n-1}\left( n_{1}, \ldots, n_{p} \right) \equiv(-1)^{w+1}\zeta^{\bullet,\mathfrak{l}}_{\mid n_{p}\mid -1}\left( n_{p-1}, \ldots, n_{1},sign(n_{1}\cdots n_{p}) n \right)\\
 \text{ } \\
 I^{\mathfrak{l}}(0;X;\epsilon)\equiv (-1)^{w} I^{\mathfrak{l}}(\epsilon;\widetilde{X};0) \equiv (-1)^{w+1} I^{\mathfrak{l}}(0; \widetilde{X}; \epsilon)
\end{array}.
\end{equation}
\end{framed}
Here $X$ is any word in $0,\pm 1$ or $0, \pm \star$ or $0, \pm\sharp$, and $\widetilde{X}$ denotes the \textit{reversed} word.

\begin{proof}
For motivic iterated integrals, as said above:
$$ S_{\shuffle} (I^{\mathfrak{m}}(0; a_{1}, \ldots, a_{n}; 1))= (-1)^{n}I^{\mathfrak{m}}(0; a_{n}, \ldots, a_{1}; 1),$$
which, in terms of the MMZV$_{\mu_{N}}$ notation is:
$$S_{\shuffle}\left( \zeta^{\bullet,\mathfrak{l}}_{n-1}\left( n_{1}, \ldots, n_{p} \atop \epsilon_{1}, \ldots, \epsilon_{p} \right) \right)  \equiv (-1)^{w}\zeta^{\bullet,\mathfrak{l}}_{n_{p}-1}\left( n_{p-1}, \ldots, n_{1},n \atop \epsilon_{p-1}^{-1}, \ldots, \epsilon_{1}^{-1}, \epsilon \right) \text{ where }  \epsilon\mathrel{\mathop:}=\epsilon_{1}\cdot\ldots\cdot\epsilon_{p}.$$
Then, if we look at the antipode recursive formula $\eqref{eq:Antipode}$ in the coalgebra $\mathcal{L}$, for $a_{i}\in \lbrace 0, \mu_{N} \rbrace$:
$$ S_{\shuffle} (I^{\mathfrak{l}}(0; a_{1}, \ldots, a_{n}; 1))\equiv - I^{\mathfrak{l}}(0; a_{1}, \ldots, a_{n}; 1).$$
This leads to the lemma above. The $\shuffle$-antipode relation can also be seen at the level of iterated integrals as the path composition modulo products followed by a reverse of path.
\end{proof}

\subsubsection{The $\ast$ Hopf algebra}
Let $Y=\lbrace \cdots, y_{-n}, \ldots, y_{-1}, y_{1},\cdots, y_{n}, \cdots \rbrace$ an infinite alphabet and $A_{\ast}\mathrel{\mathop:}=\mathbb{Q} \langle Y^{\times} \rangle$ the non commutative polynomials in $y_{i}$ with rational coefficients, with $y_{0}=1$ the empty word.
Similarly, it is a graded connected Hopf algebra called the\textit{ stuffle Hopf algebra}, with the stuffle $\ast$ product and the following coproduct:\footnote{For the $\shuffle$ algebra, we had to use the notation in terms of iterated integrals, with $0,\pm 1$, but for the $\ast$ stuffle relation, it is more natural with the Euler sums notation, which corresponds to $y_{n_{i}}, n_{i}\in\mathbb{Z}$.}
\begin{equation} \label{eq:stufflecoproduct} 
\Delta_{D\ast}(y_{n_{1}} \cdots y_{n_{p}})= \sum_{} y_{n_{1}} \cdots y_{n_{i}}\otimes y_{n_{i+1}}, \ldots, y_{n_{p}}, \quad n_{i}\in \mathbb{Z}^{\ast}. 
\end{equation}
\texttt{Nota Bene}: Remark that here we restricted to Euler sums, $N=2$, but it could be extended for other roots of unity, for which stuffle relation has been stated in $\S 4.1$.\\
The completed dual is the Hopf algebra of series $\mathbb{Q}\left\langle \left\langle Y \right\rangle \right\rangle $ with the coproduct:
$$\Delta_{\ast}(y_{n})= \sum_{ k =0 \atop sgn(n)=\epsilon_{1}\epsilon_{2}}^{\mid n \mid} y_{\epsilon_{1} k} \otimes y_{\epsilon_{2}( n-k)}.$$
Now, let introduce the notations:\footnote{Here $\star$ resp. $\sharp$ refers naturally to the Euler $\star$ resp. $\sharp$, sums, as we see in the next lemma. Beware, it is not a $\ast$ homomorphism.}
$$(y_{n_{1}} \cdots y_{n_{p}})^{\star} \mathrel{\mathop:}=  \sum_{1=i_{0}< i_{1} < \cdots < i_{k-1}\leq  i_{k+1}=p \atop k\geq 0} y_{n_{i_{0}}\mlq + \mrq\cdots \mlq + \mrq n_{i_{1}-1}} \cdots y_{n_{i_{j}}\mlq + \mrq\cdots \mlq + \mrq n_{i_{j+1}-1}} \cdots  y_{n_{i_{k}}\mlq + \mrq \cdots \mlq + \mrq n_{i_{k+1}}}.$$
$$(y_{n_{1}} \cdots y_{n_{p}})^{\sharp} \mathrel{\mathop:}=  \sum_{1=i_{0}< i_{1} < \cdots < i_{k-1}\leq  i_{k+1}=p \atop k\geq 0} 2^{k+1} y_{n_{i_{0}}\mlq + \mrq\cdots \mlq + \mrq n_{i_{1}-1}} \cdots y_{n_{i_{j}} \mlq + \mrq \cdots \mlq + \mrq n_{i_{j+1}-1}} \cdots  y_{n_{i_{k}}\mlq + \mrq \cdots \mlq + \mrq n_{i_{k+1}}},$$
where $n_{i}\in\mathbb{Z}^{\ast}$ and the operation $\mlq + \mrq$ indicates that signs are multiplied whereas absolute values are summed. It is straightforward to check that:
\begin{equation} 
\Delta_{D\ast}(w^{\star})=(\Delta_{D\ast}(w))^{\star} , \quad \text{ and } \quad  \Delta_{D\ast}(w^{\sharp})=(\Delta_{D\ast}(w))^{\sharp}.
\end{equation}
As said above, the relation stuffle is motivic:
\begin{center}
$\zeta^{\mathfrak{m}}(\cdot)$ is a morphism of Hopf algebra from $A_{\ast}$ to $(\mathbb{R},\times)$.
\end{center}

\begin{lemm}[\textbf{Antipode $\ast$}]
In the coalgebra $\mathcal{L}$, with $n_{i}\in\mathbb{Z}^{\ast}$
$$\zeta^{\mathfrak{l}}_{n-1}(n_{1}, \ldots, n_{p}) \equiv (-1)^{p+1}\zeta^{\star,\mathfrak{l}}_{n-1}(n_{p}, \ldots, n_{1}).$$
$$\zeta^{\sharp,\mathfrak{l}}_{n-1}(n_{1}, \ldots, n_{p})\equiv (-1)^{p+1}\zeta^{\sharp,\mathfrak{l}}_{n-1}(n_{p}, \ldots, n_{1}).$$
\end{lemm}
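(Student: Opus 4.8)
The plan is to mimic exactly the argument used for the Antipode $\shuffle$ lemma, now working inside the stuffle Hopf algebra $A_{\ast}$ rather than $A_{\shuffle}$. First I would recall that, since $\zeta^{\mathfrak{m}}(\cdot)$ is a morphism of Hopf algebras from $A_{\ast}$ to $(\mathbb{R},\times)$, the motivic object $\zeta^{\mathfrak{m}}(n_{1},\ldots,n_{p})$ transports the antipode structure of $A_{\ast}$. So the key step is to compute the antipode $S_{\ast}$ of the stuffle Hopf algebra explicitly on a word $y_{n_{1}}\cdots y_{n_{p}}$, using the recursive formula $(\ref{eq:Antipode})$. The standard computation (e.g. the one behind Hoffman's duality for harmonic/quasi-shuffle algebras) gives
$$S_{\ast}(y_{n_{1}}\cdots y_{n_{p}}) = (-1)^{p}\,(y_{n_{p}}\cdots y_{n_{1}})^{\star},$$
i.e. the antipode reverses the word and symmetrizes via the $\mlq+\mrq$ operation, producing precisely the $\star$-combination. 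This is the stuffle analogue of $(\ref{eq:shuffleantipode})$; it is proved by induction on $p$ using $\Delta_{D\ast}$ and the compatibility $\Delta_{D\ast}(w^{\star})=(\Delta_{D\ast}(w))^{\star}$ noted just above the lemma.

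Next I would pass to the coalgebra $\mathcal{L}=\mathcal{A}_{>0}/\mathcal{A}_{>0}\cdot\mathcal{A}_{>0}$. As recalled in the excerpt, in any graded connected bialgebra, in the quotient $A/A_{>0}\cdot A_{>0}$ the antipode satisfies $S(x)\equiv -x$. Applying this with $x=\zeta^{\mathfrak{l}}_{n-1}(n_{1},\ldots,n_{p})$ and combining with the explicit formula above (where the $\star$-symmetrization, read off in $\mathcal{L}$, is exactly what turns $\zeta^{\mathfrak{m}}$ into $\zeta^{\star,\mathfrak{m}}$, cf. $(\ref{eq:esstar})$), I obtain
$$-\zeta^{\mathfrak{l}}_{n-1}(n_{1},\ldots,n_{p}) \equiv (-1)^{p}\,\zeta^{\star,\mathfrak{l}}_{n-1}(n_{p},\ldots,n_{1}),$$
which rearranges to the first claimed identity $\zeta^{\mathfrak{l}}_{n-1}(n_{1},\ldots,n_{p})\equiv(-1)^{p+1}\zeta^{\star,\mathfrak{l}}_{n-1}(n_{p},\ldots,n_{1})$. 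One has to keep careful track of the extra index $k=n-w$ (the number of leading zeros): since the weight $n$ is preserved by stuffle and by reversal, and the regularization procedure $(\ref{eq:shufflereg})$ is built into the definition of these objects, the subscript $n-1$ is the same on both sides.

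For the $\sharp$ identity I would repeat the same steps with the $\sharp$-symmetrization operator $w\mapsto w^{\sharp}$: the compatibility $\Delta_{D\ast}(w^{\sharp})=(\Delta_{D\ast}(w))^{\sharp}$ shows that the sub-coalgebra (resp. image in $\mathcal{L}$) spanned by $\sharp$-sums is stable, so the antipode of $A_{\ast}$ restricted to these elements again reverses and, because the $\sharp$-operator is idempotent-like under the antipode combinatorics (the $2^{k+1}$ weights are exactly those that make $S_{\ast}$ stabilize the $\sharp$-family up to sign $(-1)^{p}$), one gets $\zeta^{\sharp,\mathfrak{l}}_{n-1}(n_{1},\ldots,n_{p})\equiv(-1)^{p+1}\zeta^{\sharp,\mathfrak{l}}_{n-1}(n_{p},\ldots,n_{1})$. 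I expect the main obstacle to be purely bookkeeping: verifying that the explicit antipode formula for $A_{\ast}$ really produces the $\star$- (resp. $\sharp$-) symmetrized word with the right signs, i.e. carrying out the induction on $p$ cleanly while tracking the $\mlq+\mrq$ merges and the sign $(-1)^{p}$ versus $(-1)^{\text{number of blocks}}$; and secondarily, checking that nothing in the regularization of non-admissible words (those ending in $(1,1)$ in the Euler-sum normalization) spoils the identity in $\mathcal{L}$. Once the combinatorial antipode identity is pinned down, the descent to $\mathcal{L}$ via $S\equiv -\mathrm{id}$ is immediate, exactly as in the $\shuffle$ case.
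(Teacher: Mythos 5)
Your proposal is correct and follows essentially the same route as the paper: one computes the stuffle antipode recursively from $(\ref{eq:Antipode})$, obtaining $S_{\ast}(y_{n_{1}}\cdots y_{n_{p}})=(-1)^{p}(y_{n_{p}}\cdots y_{n_{1}})^{\star}$ and likewise $S_{\ast}((y_{n_{1}}\cdots y_{n_{p}})^{\sharp})=(-1)^{p}(y_{n_{p}}\cdots y_{n_{1}})^{\sharp}$, then concludes via the Hopf-morphism property of $\zeta^{\mathfrak{m}}$ and the fact that $S\equiv-\mathrm{id}$ in $\mathcal{L}$. The combinatorial induction you flag as the remaining bookkeeping is exactly the identity the paper states and leaves to the reader, so the two arguments coincide.
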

\begin{proof}
By recursion, using the formula $\eqref{eq:Antipode}$, and the following 
identity (left to the reader):
$$\sum_{i=0}^{p-1} (-1)^{i}(y_{n_{i}} \cdots y_{n_{1}})^{\star} \ast (y_{n_{i+1}} \cdots y_{n_{p}})= -(-1)^{p} (y_{n_{p}} \cdots y_{n_{1}})^{\star}, $$
we deduce the antipode $S_{\ast}$:
$$S_{\ast} (y_{n_{1}} \cdots y_{n_{p}})= (-1)^{p} (y_{n_{p}} \cdots y_{n_{1}})^{\star} .$$
Similarly:
$$S_{\ast}((y_{n_{1}} \cdots y_{n_{p}})^{\sharp})=-\sum_{i=0}^{n-1} S_{\ast}((y_{n_{1}} \cdots y_{n_{i}})^{\sharp}) \ast (y_{n_{i+1}} \cdots y_{n_{p}})^{\sharp}$$
$$=-\sum_{i=0}^{n-1} (-1)^{i}(y_{n_{i}} \cdots y_{n_{1}})^{\sharp} \ast (y_{n_{i+1}} \cdots y_{n_{p}})^{\sharp}=(-1)^{p}(y_{n_{p}} \cdots y_{n_{1}})^{\sharp}.$$
Then, we deduce the lemma, since $\zeta^{\mathfrak{m}}(\cdot)$ is a morphism of Hopf algebra. Moreover, the formula $\eqref{eq:Antipode}$ in the coalgebra $\mathcal{L}$ gives that:
$$S(\zeta^{\mathfrak{l}}(\textbf{s}))\equiv -\zeta^{\mathfrak{l}}(\textbf{s}).$$
\end{proof}

\subsection{Hybrid relation in $\mathcal{L}$}

In this part, we look at a new relation called \textit{hybrid relation} between motivic Euler sums in the coalgebra $\mathcal{L}$, i.e. modulo products, which comes from the motivic version of the octagon relation (for $N>1$, cf. $\cite{EF}$) \footnote{\begin{figure}[H]
\centering
\includegraphics[]{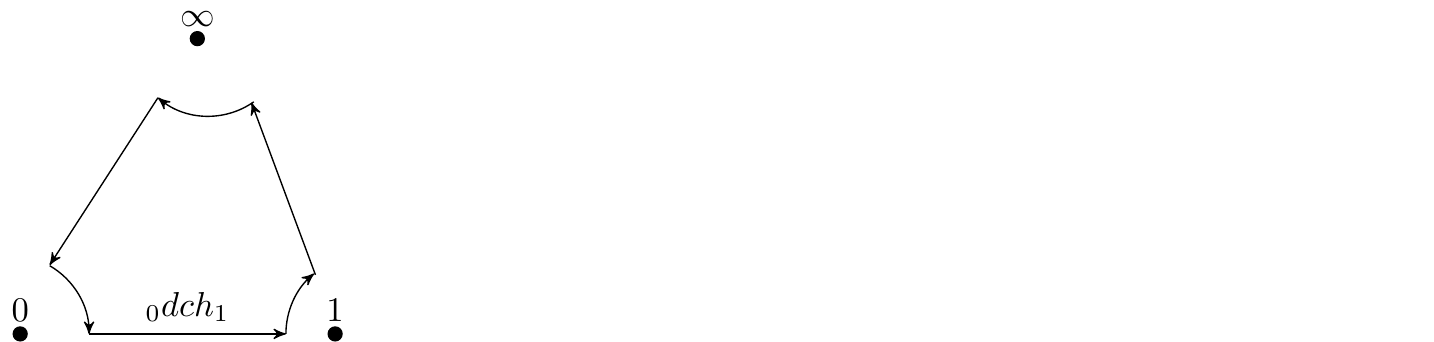}
\caption{For $N=1$, Hexagon relation: $e^{i\pi e_{0}} \Phi(e_{\infty}, e_{0}) e^{i\pi e_{\infty}} \Phi(e_{1},e_{\infty}) e^{i\pi e_{1}}\Phi( e_{0},e_{1})=1.$} \label{fig:hexagon}
\end{figure}}

\begin{figure}[H]
\centering
\includegraphics[]{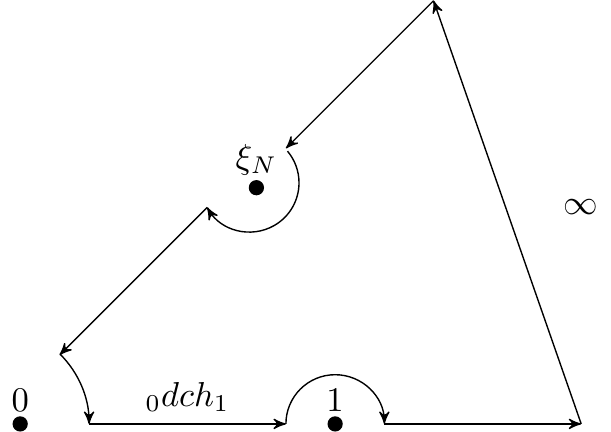}
\caption{Octagon relation, $N>1$:\\
$ \Phi(e_{0}, e_{1}, \ldots, e_{n}) e^{\frac{2 i\pi e_{1}}{N}} \Phi(e_{\infty}, e_{1}, e_{n}, \ldots, e_{2})^{-1} e^{\frac{2 i\pi e_{\infty}}{N}}\Phi(e_{\infty}, e_{n},  \ldots, e_{1}) e^{\frac{2i\pi e_{n}}{N}}\Phi( e_{0},e_{n}, e_{1}, \ldots, e_{n-1})^{-1}e^{\frac{2 i\pi e_{0}}{N}}$\\
$=1$} \label{fig:octagon}
\end{figure}
\noindent
This relation is motivic, and hence valid for the motivic Drinfeld associator $\Phi^{\mathfrak{m}}$ ($\ref{eq:associator}$), replacing $2 i \pi$ by the Lefschetz motivic period $\mathbb{L}^{\mathfrak{m}}$. \\

Let focus on the case $N=2$ and recall that the space of motivic periods of $\mathcal{MT}\left( \mathbb{Z}[\frac{1}{2}]\right)$ decomposes as (cf. $\ref{eq:periodgeomr}$):
\begin{equation}\label{eq:perioddecomp2}
\mathcal{P}_{\mathcal{MT}\left( \mathbb{Z}[\frac{1}{2}]\right)}^{\mathfrak{m}}= \mathcal{H}^{2} \oplus  \mathcal{H}^{2}. \mathbb{L}^{\mathfrak{m}}, \quad \text{ where } \begin{array}{l}
\mathcal{H}^{2} \text{ is } \mathcal{F}_{\infty} \text{ invariant} \\
\mathcal{H}^{2}. \mathbb{L}^{\mathfrak{m}} \text{ is } \mathcal{F}_{\infty} \text{ anti-invariant} 
\end{array}.
\end{equation}
For the motivic Drinfeld associator, seeing the path in the Riemann sphere, it becomes:
\begin{figure}[H]
\centering
\includegraphics[]{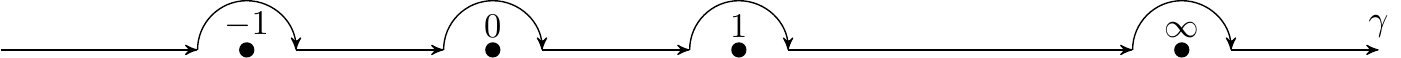}
\caption{Octagon relation, $N=2$ with $e_{0}+e_{1}+e_{-1}+e_{\infty}=0$:\\
$e^{\frac{\mathbb{L}^{\mathfrak{m}} e_{-1}}{2}} \Phi^{\mathfrak{m}}(e_{0}, e_{-1},e_{1})^{-1} e^{ \frac{\mathbb{L}^{\mathfrak{m}}e_{0}}{2}} \Phi^{\mathfrak{m}}(e_{0},e_{1},e_{-1}) e^{\frac{\mathbb{L}^{\mathfrak{m}} e_{1}}{2}}\Phi^{\mathfrak{m}}( e_{\infty},e_{1},e_{-1})^{-1} e^{\frac{\mathbb{L}^{\mathfrak{m}} e_{\infty}}{2}}\Phi^{\mathfrak{m}}( e_{\infty},e_{-1},e_{1})  =1.$} \label{fig:octagon2}
\end{figure}

Let $X= \mathbb{P}^{1}\diagdown \left\lbrace 0, \pm 1, \infty \right\rbrace $. The action of the \textit{real Frobenius} $\boldsymbol{\mathcal{F}_{\infty}}$ on $X(\mathbb{C})$ is induced by complex conjugation. The real Frobenius acts on the Betti realization $\pi^{B}(X (\mathbb{C}))$\footnote{ It is compatible with the groupoid structure of $\pi^{B}$, and the local monodromy. }, and induces an involution on motivic periods, compatible with the Galois action:
$$\mathcal{F}_{\infty}: \mathcal{P}_{\mathcal{MT}(\mathbb{Z}[\frac{1}{2}])}^{\mathfrak{m}} \rightarrow\mathcal{P}_{\mathcal{MT}(\mathbb{Z}[\frac{1}{2}])}^{\mathfrak{m}}.$$
The Lefschetz motivic period $\mathbb{L}^{\mathfrak{m}}$ is anti-invariant by $\mathcal{F}_{\infty}$:
$$\mathcal{F}_{\infty} \mathbb{L}^{\mathfrak{m}}= -\mathbb{L}^{\mathfrak{m}},$$
whereas terms corresponding to real paths in Figure $\ref{fig:octagon2}$, such as Drinfeld associator terms, are obviously invariant by $\mathcal{F}_{\infty}$.\\
\\
The linearized $\mathcal{F}_{\infty}$-anti-invariant part of this octagon relation leads to the following hybrid relation. 

\begin{theo}\label{hybrid}
In the coalgebra $\mathcal{L}^{2}$, with $n_{i}\in \mathbb{Z}^{\ast}$, $w$ the weight:
$$\zeta^{\mathfrak{l}}_{k}\left( n_{0}, n_{1},\ldots, n_{p} \right) + \zeta^{\mathfrak{l}}_{\mid n_{0} \mid +k}\left( n_{1}, \ldots, n_{p} \right) \equiv (-1)^{w+1}\left(  \zeta^{\mathfrak{l}}_{k}\left( n_{p}, \ldots, n_{1}, n_{0} \right) + \zeta^{\mathfrak{l}}_{k+\mid n_{p}\mid}\left( n_{p-1}, \ldots, n_{1},n_{0} \right)\right)$$
Equivalently, in terms of motivic iterated integrals, for $X$ any word in $\lbrace 0, \pm 1 \rbrace$, with $\widetilde{X}$ the reversed word, we obtain both:
$$I^{\mathfrak{l}} (0; 0^{k}, \star, X; 1)\equiv I^{\mathfrak{l}} (0; X, \star, 0^{k}; 1)\equiv (-1)^{w+1} I^{\mathfrak{l}} (0; 0^{k}, \star, \widetilde{X}; 1), $$
$$I^{\mathfrak{l}} (0; 0^{k}, -\star, X; 1)\equiv I^{\mathfrak{l}} (0;- X, -\star, 0^{k}; 1)\equiv (-1)^{w+1} I^{\mathfrak{l}} (0; 0^{k}, -\star, -\widetilde{X}; 1) $$
\end{theo}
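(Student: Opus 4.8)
The plan is to start from the motivic octagon relation of Figure \ref{fig:octagon2}, which holds in $\mathcal{P}^{\mathfrak{m}}_{\mathcal{MT}(\mathbb{Z}[\frac{1}{2}])}$ for the motivic Drinfeld associator $\Phi^{\mathfrak{m}}$ with $2i\pi$ replaced by $\mathbb{L}^{\mathfrak{m}}$, and to extract from it a relation in the coalgebra $\mathcal{L}^{2}$ by two successive reductions: first take the $\mathcal{F}_{\infty}$-anti-invariant part, then linearize (pass modulo products, i.e. project to $\mathcal{L}$), and finally read off the weight-graded component. Concretely, write the octagon identity as a product of four associator factors $\Phi^{\mathfrak{m}}(\cdot)^{\pm 1}$ interleaved with the four exponential factors $e^{\mathbb{L}^{\mathfrak{m}}e_{i}/2}$. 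Since the $\Phi^{\mathfrak{m}}$-factors are $\mathcal{F}_{\infty}$-invariant while each $e^{\mathbb{L}^{\mathfrak{m}}e_{i}/2}$ has $\mathcal{F}_{\infty}$ sending $\mathbb{L}^{\mathfrak{m}}\mapsto -\mathbb{L}^{\mathfrak{m}}$, applying $\mathcal{F}_{\infty}$ gives a second identity; subtracting, and keeping only the part linear in the exponential factors (equivalently, working modulo $(\mathbb{L}^{\mathfrak{m}})^{2}$ and modulo products in $\mathcal{L}$), leaves an expression in which exactly one exponential factor is expanded to first order and the remaining three are set to $1$. This is the standard mechanism by which hexagon/octagon relations produce linear relations in $\mathcal{L}$; it is used for the hexagon case in \cite{Fu} and should be transcribed here with the extra root $e_{-1}$ and the relation $e_{0}+e_{1}+e_{-1}+e_{\infty}=0$.

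\textbf{Key steps in order.} First, I would write $\Phi^{\mathfrak{m}}=\sum_{w}\zeta^{\mathfrak{m}}(w)\,w$ as in \eqref{eq:associator} and expand each of the four associator factors of Figure \ref{fig:octagon2} as a non-commutative series in $e_{0},e_{1},e_{-1},e_{\infty}$, using the homography action (the footnote after Figure \ref{fig:Pi}) to identify $\Phi^{\mathfrak{m}}(e_{a},e_{b},e_{c})$ with the generating series of motivic iterated integrals on $\mathbb{P}^{1}\setminus\{a,b,c,\ldots\}$ after the appropriate change of variables. Second, I would multiply out the octagon word equation keeping terms of total degree $1$ in the four $e_{i}/2$ exponentials and of arbitrary degree in the $\Phi^{\mathfrak{m}}$'s, then apply $\mathcal{F}_{\infty}$ and subtract; because $\mathcal{F}_{\infty}$ is multiplicative and fixes the associators, only the terms with an odd number of $\mathbb{L}^{\mathfrak{m}}$'s survive, so the degree-$1$-in-exponentials part is precisely the $\mathcal{F}_{\infty}$-anti-invariant linearization. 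Third, project to $\mathcal{L}^{2}$ (kill products), which annihilates $\mathbb{L}^{\mathfrak{m}}\cdot(\text{anything of positive weight})$ coming from cross terms and leaves a bare linear combination of $\zeta^{\mathfrak{l}}$'s; here the coefficient of each $e_{i}$ picks out an integrand of the form $I^{\mathfrak{l}}(0;0^{k},\pm\star,X;1)$, using that $-\omega_{0}+\omega_{1}-\omega_{\star}=0$ (the defining relation of $\omega_{\star}$) to convert the insertion of $e_{0}$ versus $e_{1}$ into the $\star$-differential. Fourth, identify the four resulting families of iterated integrals with the four terms of the claimed identity by reversing paths (Lemma on \textsc{Antipode} $\shuffle$, i.e. \eqref{eq:antipodeshuffle2}: $I^{\mathfrak{l}}(0;X;\epsilon)\equiv(-1)^{w+1}I^{\mathfrak{l}}(0;\widetilde{X};\epsilon)$) and by path composition modulo products, collecting the two integrals attached to the $e_{0}$-exponential into $I^{\mathfrak{l}}(0;0^{k},\star,X;1)$ and $I^{\mathfrak{l}}(0;X,\star,0^{k};1)$, and the two attached to the $e_{\infty}$-exponential (after the homography sending $\infty$ to a finite point) into the reversed-word versions, with the sign $(-1)^{w+1}$ produced exactly by the antipode reversal. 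Finally, translate from iterated-integral notation to the $\zeta^{\mathfrak{l}}_{k}(n_{0},\ldots,n_{p})$ notation via \eqref{mmzv}, and check the homothety/sign bookkeeping for the $-\star$ version by applying the Betti involution $[\,-1\,]$ (homothety by $-1$, property (vi) of \S\ref{propii}), which exchanges $\star\leftrightarrow-\star$ and $X\leftrightarrow -X$ and thereby deduces the second displayed identity from the first.

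\textbf{Main obstacle.} The hardest part will be the careful combinatorial bookkeeping in the third and fourth steps: tracking which of the eight factors in the octagon contributes which $e_{i}$, keeping the signs straight through the homography that moves $e_{\infty}$ to a finite puncture (this is where the $(-1)^{w+1}$ and the reversal of the word originate), and verifying that all the genuinely bilinear cross-terms — products $\Phi^{\mathfrak{m}}\cdot\Phi^{\mathfrak{m}}$ times $\mathbb{L}^{\mathfrak{m}}$, and products of two exponential insertions — really do vanish in $\mathcal{L}^{2}$ after taking the anti-invariant part. A secondary subtlety is that the naive linearization produces $\star\star$-type integrands (all differentials $\omega_{\pm\star}$) rather than the mixed $\omega_{\pm 1},\omega_{\pm\star}$ appearing in the statement; reconciling these requires the relation $\zeta^{\star,\mathfrak{m}}=\zeta^{\star\star,\mathfrak{m}}-\zeta^{\star\star,\mathfrak{m}}_{|n_1|}(\cdots)$ (or rather its $\mathcal{L}$-linear shadow) together with the antipode lemmas, and one must check that the regularization of the non-convergent $I^{\mathfrak{l}}(0;0^{k},\star,\ldots)$ integrals (via \eqref{eq:shufflereg}) is compatible with all of this. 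Once the octagon is correctly linearized and anti-symmetrized, the rest is a mechanical, if lengthy, application of the path-composition and path-reversal rules already recorded in \S\ref{propii} and in the Antipode lemmas of \S 4.2.1.
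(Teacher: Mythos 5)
Your overall strategy is exactly the paper's: take the motivic octagon relation of Figure \ref{fig:octagon2}, apply $(\mathrm{id}-\mathcal{F}_{\infty})$, divide by $\mathbb{L}^{\mathfrak{m}}$ and project to $\mathcal{L}$ so that exactly one exponential is expanded to first order (this is the linearized octagon relation $\eqref{eq:octagonlin}$ of the paper), then read off coefficients of suitable words, translate the resulting $\zeta^{\star\star,\mathfrak{l}}$-identities into iterated integrals via Antipode $\shuffle$, and reduce $k>0$ to $k=0$ by shuffle regularization. Your worry about ending up with $\star\star$-type integrands is resolved more simply than you suggest: since $\omega_{\pm\star}=\omega_{\pm1}-\omega_{0}$ is an invertible change of letters, an identity valid for all words $X$ in $\lbrace 0,\pm\star\rbrace$ is, by linearity alone, valid for all words in $\lbrace 0,\pm1\rbrace$ --- no appeal to the relation between $\zeta^{\star}$ and $\zeta^{\star\star}$ is needed.

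There is, however, one genuine flaw: your last step, deducing the $-\star$ identity from the $\star$ identity by the homothety $[-1]$ (property (vi) of \S\ref{propii}), does not work. That homothety sends $I^{\mathfrak{l}}(0;0^{k},\star,X;1)$ to $I^{\mathfrak{l}}(0;0^{k},-\star,-X;-1)$: it necessarily moves the upper endpoint from $1$ to $-1$, whereas the second displayed identity of the theorem concerns integrals ending at $1$. Equivalently, applying $[-1]$ to the second identity shows it is the hybrid relation \emph{with endpoint} $-1$, which is a different statement from the first identity and cannot be obtained from it by this symmetry (nor by any homography fixing $\lbrace 0,1\rbrace$, as the table in \S A.3 shows). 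The paper obtains the $-\star$ family by a second, independent extraction from the same linearized octagon relation: the $\star$ case comes from the coefficient of words beginning and ending with $e_{-1}$ (total sign $\prod\epsilon_{i}=+1$), where only $e_{-1}\Phi^{\mathfrak{l}}(e_{\infty},e_{-1},e_{1})-\Phi^{\mathfrak{l}}(e_{\infty},e_{-1},e_{1})e_{-1}$ contributes, while the $-\star$ case comes from words beginning with $e_{1}$ and ending with $e_{-1}$ (total sign $-1$), where the contributing terms are $-e_{1}\Phi^{\mathfrak{l}}(e_{1},e_{\infty},e_{0})-\Phi^{\mathfrak{l}}(e_{\infty},e_{-1},e_{1})e_{-1}$. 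So your proof covers only half of the theorem as written; the missing half is recoverable within your framework, but by extracting this second class of coefficients from the octagon rather than by the homothety argument you propose.
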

The proof is given below, firstly for $k=0$, using octagon relation (Figure $\ref{fig:octagon2}$). The generalization for any $k >0$ is deduced directly from the shuffle regularization $(\ref{eq:shufflereg})$.\\
\\
\textsc{Remarks}:
\begin{itemize}
\item[$\cdot$] This theorem implies notably the famous \textit{depth-drop phenomena} when weight and depth have not the same parity (cf. Corollary $\ref{hybridc}$). 
\item[$\cdot$] Equivalently, this statement is true for $X$ any word in $\lbrace 0, \pm \star \rbrace$. Recall that ($\ref{eq:miistarsharp}$), by linearity:
$$ I^{\mathfrak{m}}(\ldots, \pm \star, \ldots)\mathrel{\mathop:}= I^{\mathfrak{m}}(\ldots, \pm 1, \ldots) - I^{\mathfrak{m}}(\ldots, 0, \ldots).$$
\item[$\cdot$] The point of view adopted by Francis Brown in $\cite{Br3}$, and its use of commutative polynomials (also seen in Ecalle work) can be applied in the coalgebra $\mathcal{L}$ and leads to a new proof of Theorem $\ref{hybrid}$ in the case of MMZV, i.e. $N=1$, sketched in Appendix $A.4$; it uses the stuffle relation and the antipode shuffle. Unfortunately, generalization for motivic Euler sums of this proof is not clear, because of this commutative polynomial setting. 
\end{itemize}
Since Antipode $\ast$ relation expresses $\zeta^{\mathfrak{l}}_{n-1}(n_{1}, \ldots, n_{p})+(-1)^{p} \zeta^{\mathfrak{l}}_{n-1}(n_{p}, \ldots, n_{1})$ in terms of smaller depth (cf. Lemma $4.2.2$), when weight and depth have not the same parity, it turns out that a (motivic) Euler sum can be expressed by smaller depth:\footnote{Erik Panzer recently found a new proof of this depth drop result for MZV at roots of unity, which appear as a special case of some functional equations of polylogarithms in several variables. }
\begin{coro}\label{hybridc}
If $w+p$ odd, a motivic Euler sum in $\mathcal{L}$ is reducible in smaller depth:
$$2\zeta^{\mathfrak{l}}_{n-1}(n_{1}, \ldots, n_{p}) \equiv$$
$$-\zeta^{\mathfrak{l}}_{n+\mid n_{1}\mid -1}(n_{2}, \ldots, n_{p})+(-1)^{p} \zeta^{\mathfrak{l}}_{n+\mid n_{p}\mid -1}(n_{p-1}, \ldots, n_{1})+\sum_{\circ=+ \text{ or } ,\atop \text{at least one } +} (-1)^{p+1} \zeta^{\mathfrak{l}}_{n-1}(n_{p}\circ \cdots \circ n_{1}).$$
\end{coro}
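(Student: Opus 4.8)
The corollary is an immediate consequence of combining the hybrid relation of Theorem~\ref{hybrid} with the antipode~$\ast$ relation of Lemma~4.2.2, both read in the coalgebra $\mathcal{L}^{2}$ (i.e. modulo products). The idea is to apply Theorem~\ref{hybrid} with $k=0$ and with $(n_{0},n_{1},\dots,n_{p})$ taken to be the full tuple of arguments — so that $\zeta^{\mathfrak{l}}_{0}(n_{1},\dots,n_{p})=\zeta^{\mathfrak{l}}(n_{1},\dots,n_{p})$ — and then to rewrite the two terms of the form $\zeta^{\mathfrak{l}}(n_{p},\dots,n_{1})$ that carry a reversed argument string, using the antipode~$\ast$ relation, which precisely trades $\zeta^{\mathfrak{l}}_{n-1}(n_{1},\dots,n_{p})$ against $(-1)^{p+1}\zeta^{\star,\mathfrak{l}}_{n-1}(n_{p},\dots,n_{1})$, i.e. against a sum over all ways of contracting consecutive blocks via the $\mlq + \mrq$ operation.

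\textbf{Step 1.} First I would instantiate Theorem~\ref{hybrid}: relabel so that the hybrid relation reads
$$\zeta^{\mathfrak{l}}(n_{1},\ldots,n_{p}) + \zeta^{\mathfrak{l}}_{\mid n_{1}\mid}(n_{2},\ldots,n_{p}) \equiv (-1)^{w+1}\Bigl(\zeta^{\mathfrak{l}}(n_{p},\ldots,n_{1}) + \zeta^{\mathfrak{l}}_{\mid n_{p}\mid}(n_{p-1},\ldots,n_{1})\Bigr),$$
where $w=\sum\mid n_{i}\mid$ is the weight; this is the $k=0$ case with the role of "$n_{0}$" played by $n_{1}$ on the left and by $n_{p}$ on the right. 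Here I use that $\zeta^{\mathfrak{l}}_{k}$ with subscript $k$ denotes the shuffle-regularized version with $k$ extra zeros in front, matching the notation of~(\ref{eq:shufflereg}) and Lemma~\ref{drz}.

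\textbf{Step 2.} Next I would eliminate the reversed term $\zeta^{\mathfrak{l}}(n_{p},\ldots,n_{1})$ on the right-hand side by invoking the antipode~$\ast$ relation (Lemma~4.2.2), which gives
$$\zeta^{\mathfrak{l}}(n_{p},\ldots,n_{1}) \equiv (-1)^{p+1}\zeta^{\star,\mathfrak{l}}(n_{1},\ldots,n_{p}) = (-1)^{p+1}\sum_{\circ=+\text{ or }, }\zeta^{\mathfrak{l}}(n_{1}\circ\cdots\circ n_{p}),$$
by the very definition of the $\star$-version as a sum over contractions. Separating the trivial term (all commas, no $+$), which reproduces $\zeta^{\mathfrak{l}}(n_{1},\ldots,n_{p})$ itself, I can move it to the left-hand side. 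Using the hypothesis $w+p$ odd, so that $(-1)^{w+1}(-1)^{p+1}=(-1)^{w+p}=-1$ and $(-1)^{w+1}=(-1)^{p}$, the coefficients rearrange: the $\zeta^{\mathfrak{l}}(n_{1},\ldots,n_{p})$ contribution coming from the reversed-then-contracted term combines with the original $\zeta^{\mathfrak{l}}(n_{1},\ldots,n_{p})$ on the left to produce the factor $2$, and after reindexing the remaining contractions (which by a harmless change of summation variable may equally be written as contractions of $n_{p}\circ\cdots\circ n_{1}$, since $\mlq + \mrq$ is associative and the reversed contraction set is in bijection with the forward one) and relabeling the two regularized lower-depth terms, one obtains exactly the stated identity. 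The signs on $\zeta^{\mathfrak{l}}_{n+\mid n_{1}\mid-1}(n_{2},\ldots,n_{p})$ and $\zeta^{\mathfrak{l}}_{n+\mid n_{p}\mid-1}(n_{p-1},\ldots,n_{1})$ match because the first came from the left-hand side of the hybrid relation (sign $-1$ after moving across) and the second from the right-hand side (sign $(-1)^{w+1}=(-1)^{p}$).

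\textbf{Main obstacle.} The proof is essentially bookkeeping, so the only real care needed is in tracking the signs and in confirming that the contraction sum appearing after applying the $\star$-antipode on the \emph{reversed} tuple genuinely equals the sum $\sum_{\circ}(-1)^{p+1}\zeta^{\mathfrak{l}}(n_{p}\circ\cdots\circ n_{1})$ displayed in the corollary rather than its forward analogue. This is where I would be most careful: one should check that contracting consecutive blocks in $(n_{1},\dots,n_{p})$ and then reversing gives the same multiset of $\mathcal{L}$-classes as reversing first and then contracting, and that the number of resulting arguments has the right parity so that the overall sign $(-1)^{p+1}$ (with $p$ the \emph{original} depth) is the correct one for each term regardless of how many contractions occurred — which holds because $\zeta^{\mathfrak{l}}(n_{p}\circ\cdots\circ n_{1})$ of depth $q$ carries, through the chain of relations, the sign $(-1)^{w+1}(-1)^{q+1}$ relative to a fixed reference, and $w+q\equiv w+p\pmod 2$ fails in general — so in fact the cleaner route is to keep the sign uniformly as $(-1)^{p+1}$ by noting that each single application of $\mlq+\mrq$ changes depth by one and the $\star$-sum already encodes precisely those signs via $(-1)^{r+s+m}$ in Lemma~4.2.2; I would verify this parity matching explicitly before asserting the final coefficient.
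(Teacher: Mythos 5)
Your two ingredients, the hybrid relation of Theorem \ref{hybrid} and the antipode $\ast$ relation of Lemma 4.2.2, are exactly the ones the paper combines, so the plan is the right one; but the execution has a genuine gap in the last step. Substituting $\zeta^{\mathfrak{l}}_{n-1}(n_{p},\ldots,n_{1})\equiv(-1)^{p+1}\sum_{\circ}\zeta^{\mathfrak{l}}_{n-1}(n_{1}\circ\cdots\circ n_{p})$ into the right-hand side of the hybrid relation, and using $(-1)^{w+1}(-1)^{p+1}=-1$ and $(-1)^{w+1}=(-1)^{p}$ for $w+p$ odd, what you actually obtain is
$$2\zeta^{\mathfrak{l}}_{n-1}(n_{1},\ldots,n_{p})\equiv-\zeta^{\mathfrak{l}}_{n+\mid n_{1}\mid-1}(n_{2},\ldots,n_{p})+(-1)^{p}\zeta^{\mathfrak{l}}_{n+\mid n_{p}\mid-1}(n_{p-1},\ldots,n_{1})-\sum_{\circ,\ \text{at least one } +}\zeta^{\mathfrak{l}}_{n-1}(n_{1}\circ\cdots\circ n_{p}),$$
i.e.\ a contraction sum over the \emph{forward} tuple with coefficient $-1$, not the stated $(-1)^{p+1}\zeta^{\mathfrak{l}}_{n-1}(n_{p}\circ\cdots\circ n_{1})$. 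The proposed \say{harmless change of summation variable} does not repair this: $\zeta^{\mathfrak{l}}$ of a tuple is not equal to $\zeta^{\mathfrak{l}}$ of the reversed tuple (all the reversal relations of $\S 4.2$ carry correction terms), and the coefficients $-1$ and $(-1)^{p+1}$ already disagree whenever $p$ is odd; your closing paragraph in effect concedes that the termwise parity bookkeeping ($w+q$ versus $w+p$) fails, and the appeal to the sign $(-1)^{r+s+m}$ of the stuffle-$\star$ formula is beside the point. The clean route — and the one the paper intends — is to apply the antipode $\ast$ to the \emph{un-reversed} term: from $\zeta^{\mathfrak{l}}_{n-1}(n_{1},\ldots,n_{p})\equiv(-1)^{p+1}\zeta^{\star,\mathfrak{l}}_{n-1}(n_{p},\ldots,n_{1})$ one gets $(-1)^{p}\zeta^{\mathfrak{l}}_{n-1}(n_{p},\ldots,n_{1})\equiv-\zeta^{\mathfrak{l}}_{n-1}(n_{1},\ldots,n_{p})+(-1)^{p+1}\sum_{\circ,\ \text{at least one } +}\zeta^{\mathfrak{l}}_{n-1}(n_{p}\circ\cdots\circ n_{1})$, and substituting this for the reversed term in the hybrid relation yields the corollary exactly as displayed.

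A second, smaller point: you instantiate the hybrid relation at $k=0$, which only produces the case $n=1$; the corollary carries the general regularization subscript $n-1$, so you should take $k=n-1$ (Theorem \ref{hybrid} is stated for every $k$, with $w$ the full weight $n-1+\sum\mid n_{i}\mid$), after which the same manipulation goes through unchanged.
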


\paragraph{Proof of Theorem $\ref{hybrid}$}
First, the octagon relation (Figure $\ref{fig:octagon2}$) is equivalent to:
\begin{lemm} In  $\mathcal{P}_{\mathcal{MT}\left( \mathbb{Z}[\frac{1}{2}]\right)}^{\mathfrak{m}}\left\langle \left\langle e_{0}, e_{1}, e_{-1}\right\rangle \right\rangle $, with $e_{0} + e_{1}  + e_{-1} +e_{\infty} =0$:
\begin{equation}\label{eq:octagon21}
\Phi^{\mathfrak{m}}(e_{0}, e_{1},e_{-1}) e^{\frac{\mathbb{L}^{\mathfrak{m}} e_{0}}{2}} \Phi^{\mathfrak{m}}(e_{-1}, e_{0},e_{\infty}) e^{\frac{\mathbb{L}^{\mathfrak{m}} e_{-1}}{2}} \Phi^{\mathfrak{m}}(e_{\infty}, e_{-1},e_{1}) e^{\frac{\mathbb{L}^{\mathfrak{m}} e_{\infty}}{2}} \Phi^{\mathfrak{m}}(e_{1}, e_{\infty},e_{0}) e^{\frac{\mathbb{L}^{\mathfrak{m}} e_{1}}{2}}  =1,
\end{equation}
Hence, the linearized octagon relation is:
\begin{multline}\label{eq:octagonlin}
 - e_{0} \Phi^{\mathfrak{l}}(e_{-1}, e_{0},e_{\infty})+  \Phi^{\mathfrak{l}}(e_{-1}, e_{0},e_{\infty})e_{0} +(e_{0}+e_{-1}) \Phi^{\mathfrak{l}}(e_{\infty}, e_{-1},e_{1}) - \Phi^{\mathfrak{l}}(e_{\infty}, e_{-1},e_{1}) (e_{0}+e_{-1})\\
   - e_{1} \Phi^{\mathfrak{l}}(e_{1}, e_{\infty},e_{0}) + \Phi^{\mathfrak{l}}(e_{1}, e_{\infty},e_{0}) e_{1}  \equiv 0.
\end{multline}
\end{lemm}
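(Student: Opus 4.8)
The statement to establish is the \emph{Lemma} just after Theorem~\ref{hybrid}: that the octagon relation of Figure~\ref{fig:octagon2} is equivalent to the rewritten form $(\ref{eq:octagon21})$, and that its linearization (the $\mathbb{L}^{\mathfrak{m}}$-linear, $\mathcal{F}_\infty$-anti-invariant part) is $(\ref{eq:octagonlin})$. The plan is to proceed in three stages: first massage the octagon into the cyclically symmetric form $(\ref{eq:octagon21})$; then isolate the term linear in $\mathbb{L}^{\mathfrak{m}}$; then project onto the coalgebra $\mathcal{L}^2$ (i.e.\ work modulo products of positive-degree elements), which is where the anti-invariance under $\mathcal{F}_\infty$ intervenes.

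\textbf{Step 1: rewriting the octagon.} Starting from the relation in Figure~\ref{fig:octagon2},
$$e^{\frac{\mathbb{L}^{\mathfrak{m}} e_{-1}}{2}} \Phi^{\mathfrak{m}}(e_{0}, e_{-1},e_{1})^{-1} e^{ \frac{\mathbb{L}^{\mathfrak{m}}e_{0}}{2}} \Phi^{\mathfrak{m}}(e_{0},e_{1},e_{-1}) e^{\frac{\mathbb{L}^{\mathfrak{m}} e_{1}}{2}}\Phi^{\mathfrak{m}}( e_{\infty},e_{1},e_{-1})^{-1} e^{\frac{\mathbb{L}^{\mathfrak{m}} e_{\infty}}{2}}\Phi^{\mathfrak{m}}( e_{\infty},e_{-1},e_{1})  =1,$$
I would conjugate/cyclically rotate and use the standard associator identities (inversion $\Phi(x,y)^{-1}=\Phi(y,x)$, and the duality/shuffle relations of the motivic associator, which hold by the Nota Bene after $(\ref{eq:associator})$) to convert each $\Phi^{\mathfrak{m}}(\cdot)^{-1}$ into a $\Phi^{\mathfrak{m}}(\cdot)$ with permuted arguments; the exponentials $e^{\frac{\mathbb{L}^{\mathfrak{m}}e_\ast}{2}}$ are group-like and commute past nothing, so one tracks them carefully. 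Matching arguments against the known $2$-cycle / $3$-cycle structure of the octagon (this is exactly the manipulation in \cite{EF}, adapted to the motivic setting) yields $(\ref{eq:octagon21})$. This step is essentially bookkeeping with the associator's functional equations.

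\textbf{Step 2 and 3: linearize and pass to $\mathcal{L}$.} Write $\Phi^{\mathfrak{m}} = 1 + \Phi^{\mathfrak{m}}_{>0}$ and expand $(\ref{eq:octagon21})$ keeping only terms of total degree one in the \emph{new} variable pair $(\mathbb{L}^{\mathfrak{m}}, \Phi^{\mathfrak{m}}_{>0})$ — i.e.\ one factor $e^{\frac{\mathbb{L}^{\mathfrak{m}}e_\ast}{2}}$ contributes its linear term $\frac{\mathbb{L}^{\mathfrak{m}}}{2}e_\ast$ while the others contribute $1$, and exactly one associator contributes $\Phi^{\mathfrak{l}}$ while the others contribute $1$. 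Using $e_0+e_1+e_{-1}+e_\infty=0$ to eliminate $e_\infty$ and collecting, one gets an identity of the shape "$\sum (\text{left-multiplication} - \text{right-multiplication by } e_\ast)\,\Phi^{\mathfrak{l}}(\cdots) \equiv 0$". The coefficient $\tfrac12\mathbb{L}^{\mathfrak{m}}$ is common and the terms where the linear $\mathbb{L}^{\mathfrak{m}}$-factor and the linear $\Phi^{\mathfrak{l}}$-factor come from adjacent positions combine as commutators; one must also discard the purely-$\mathbb{L}^{\mathfrak{m}}$ quadratic pieces and the purely-$\Phi^{\mathfrak{m}}_{>0}$ pieces, which is legitimate precisely because we take the $\mathcal{F}_\infty$-anti-invariant part: $\mathbb{L}^{\mathfrak{m}}$ is anti-invariant (so $(\mathbb{L}^{\mathfrak{m}})^2$ and the associator-only terms, being invariant, drop out) by $(\ref{eq:perioddecomp2})$, and working in $\mathcal{L}$ kills products. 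The surviving terms are exactly those displayed in $(\ref{eq:octagonlin})$, with the three associators $\Phi^{\mathfrak{l}}(e_{-1},e_0,e_\infty)$, $\Phi^{\mathfrak{l}}(e_\infty,e_{-1},e_1)$, $\Phi^{\mathfrak{l}}(e_1,e_\infty,e_0)$ appearing (the fourth, $\Phi^{\mathfrak{l}}(e_0,e_1,e_{-1})$, being absorbed or cancelled by the cyclic structure). I expect the main obstacle to be Step 2--3 combined: correctly bookkeeping which cross-terms survive the simultaneous linearization in $\mathbb{L}^{\mathfrak{m}}$ and projection to $\mathcal{L}$, and verifying that the coefficients $+1$ on $(e_0+e_{-1})$ and $-1$ on $e_1$ etc.\ come out exactly as in $(\ref{eq:octagonlin})$ — a sign/indexing check that is routine in principle but easy to get wrong. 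Once $(\ref{eq:octagonlin})$ is in hand, translating it into the iterated-integral and MMZV statements of Theorem~\ref{hybrid} (for $k=0$) is a matter of reading off coefficients of words in $e_0,e_{\pm1}$ and using the dictionary $(\ref{eq:associator})$ together with the path-reversal and homothety properties of $\S\ref{propii}$; the general $k$ case follows from shuffle regularization $(\ref{eq:shufflereg})$ as indicated.
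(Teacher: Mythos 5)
Your proposal is correct and follows essentially the same route as the paper: first recast the octagon of Figure $\ref{fig:octagon2}$ into the cyclic form $(\ref{eq:octagon21})$ via inversion identities for the cyclotomic associators, then apply $(id-\mathcal{F}_{\infty})$, divide by $\mathbb{L}^{\mathfrak{m}}$, project to $\mathcal{L}$, and retain only the terms carrying exactly one $\Phi^{\mathfrak{l}}$ together with one linear exponential factor, the remaining pieces vanishing by anti-invariance, by killing of products and of $(\mathbb{L}^{\mathfrak{m}})^{2}$ in $\mathcal{L}$, and by $e_{0}+e_{1}+e_{-1}+e_{\infty}=0$. The only content the paper makes explicit that you leave implicit is the exact form of the inversion identities — $\Phi^{\mathfrak{m}}(e_{0},e_{1},e_{-1})^{-1}=\Phi^{\mathfrak{m}}(e_{1},e_{0},e_{\infty})$ and its two analogues, proved there via the homography $t\mapsto\frac{1-t}{1+t}$ combined with path composition, rather than a literal two-variable duality $\Phi(x,y)^{-1}=\Phi(y,x)$.
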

\begin{proof}
\begin{itemize}
\item[$\cdot$] Let's first remark that:
$$ \Phi^{\mathfrak{m}}(e_{0}, e_{1},e_{-1})= \Phi^{\mathfrak{m}}(e_{1}, e_{0},e_{\infty})^{-1} .$$
Indeed, the coefficient in the series $\Phi^{\mathfrak{m}}(e_{1}, e_{0},e_{\infty})$ of a word $e_{0}^{a_{0}} e_{\eta_{1}} e_{0}^{a_{1}} \cdots e_{\eta_{r}} e_{0}^{a_{r}}$, where $\eta_{i}\in \lbrace\pm 1 \rbrace$ is (cf. $\S 4.6$):
 $$ I^{\mathfrak{m}} \left(0;  (\omega_{1}-\omega_{-1})^{a_{0}} (-\omega_{\mu_{1}})  (\omega_{1}-\omega_{-1})^{a_{1}} \cdots  (-\omega_{\mu_{r}})(\omega_{1}-\omega_{-1})^{a_{r}} ;1 \right) \texttt{ with } \mu_{i}\mathrel{\mathop:}= \left\lbrace \begin{array}{ll}
-\star& \texttt{if } \eta_{i}=1\\
-1 & \texttt{if } \eta_{i}=-1
\end{array}  \right. .$$
Let introduce the following homography $\phi_{\tau\sigma}$ (cf. Annexe $(\ref{homography2})$):
$$\phi_{\tau\sigma}= \phi_{\tau\sigma}^{-1}: t \mapsto \frac{1-t}{1+t} :\left\lbrace  \begin{array}{l}
-\omega_{\star}\mapsto \omega_{\star} \\
-\omega_{1}\mapsto \omega_{-\star}\\
\omega_{-1}-\omega_{1} \mapsto -\omega_{0}\\
\omega_{-1} \mapsto -\omega_{-1}\\
\omega_{-\star}  \mapsto -\omega_{1}
\end{array} \right..$$
If we apply $\phi_{\tau\sigma}$ to the motivic iterated integral above, it gives: $ I^{\mathfrak{m}} \left(1;  \omega_{0}^{a_{0}} \omega_{\eta_{1}} \omega_{0}^{a_{1}} \cdots  \omega_{\eta_{r}}  \omega_{0}^{a_{r}} ;0 \right)$. Hence, summing over words $w$ in $e_{0},e_{1},e_{-1}$:
$$ \Phi^{\mathfrak{m}}(e_{1}, e_{0},e_{\infty})= \sum I^{\mathfrak{m}}(1; w; 0) w$$
Therefore:
$$\Phi^{\mathfrak{m}}(e_{0}, e_{1},e_{-1})\Phi^{\mathfrak{m}}(e_{1}, e_{0},e_{\infty})= \sum_{w, w=uv} I^{\mathfrak{m}}(0; u; 1) I^{\mathfrak{m}}(1; v; 0) w= 1.$$
We used the composition formula for iterated integral to conclude, since for $w$ non empty, $\sum_{w=uv} I^{\mathfrak{m}}(0; u; 1) I^{\mathfrak{m}}(1; v; 0)= I^{\mathfrak{m}}(0; w; 0) =0$.\\
Similarly:
$$\Phi^{\mathfrak{m}}(e_{0}, e_{-1},e_{1})= \Phi^{\mathfrak{m}}(e_{-1}, e_{0},e_{\infty})^{-1} , \quad \text{ and } \quad  \Phi^{\mathfrak{m}}(e_{\infty}, e_{1},e_{-1})= \Phi^{\mathfrak{m}}(e_{1}, e_{\infty},e_{0})^{-1}.$$ 
The identity $\ref{eq:octagon21}$ follows from $\ref{fig:octagon2}$.\\

\item[$\cdot$] Let consider both paths on the Riemann sphere $\gamma$ and $\overline{\gamma}$, its conjugate: \footnote{Path $\gamma$ corresponds to the cycle $\sigma$, $1 \mapsto \infty \mapsto -1 \mapsto 0 \mapsto 1$ (cf. in Annexe $\ref{homography2}$). Beware, in the figure, the position of both path is not completely accurate in order to distinguish them.} \\
\\
\includegraphics[]{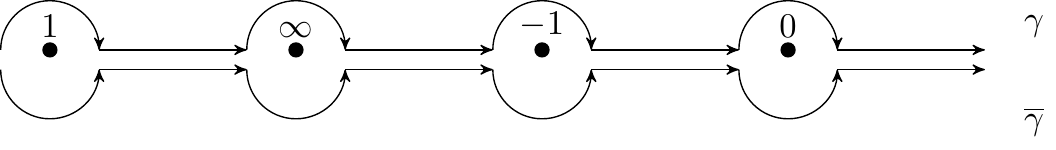}\\
Applying $(id-\mathcal{F}_{\infty})$ to the octagon identity $\ref{eq:octagon21}$ \footnote{The identity $\ref{eq:octagon21}$ corresponds to the path $\gamma$ whereas applying $\mathcal{F}_{\infty}$ to the path $\gamma$ corresponds to the path $\overline{\gamma}$ represented.} leads to:
\begin{small}
\begin{multline}\label{eq:octagon22}
\Phi^{\mathfrak{m}}(e_{0}, e_{1},e_{-1}) e^{\frac{\mathbb{L}^{\mathfrak{m}} e_{0}}{2}} \Phi^{\mathfrak{m}}(e_{-1}, e_{0},e_{\infty}) e^{\frac{\mathbb{L}^{\mathfrak{m}} e_{-1}}{2}} \Phi^{\mathfrak{m}}(e_{\infty}, e_{-1},e_{1}) e^{\frac{\mathbb{L}^{\mathfrak{m}} e_{\infty}}{2}} \Phi^{\mathfrak{m}}(e_{1}, e_{\infty},e_{0}) e^{\frac{\mathbb{L}^{\mathfrak{m}} e_{1}}{2}}\\
-\Phi^{\mathfrak{m}}(e_{0}, e_{1},e_{-1}) e^{-\frac{\mathbb{L}^{\mathfrak{m}} e_{0}}{2}} \Phi^{\mathfrak{m}}(e_{-1}, e_{0},e_{\infty}) e^{-\frac{\mathbb{L}^{\mathfrak{m}} e_{-1}}{2}} \Phi^{\mathfrak{m}}(e_{\infty}, e_{-1},e_{1}) e^{-\frac{\mathbb{L}^{\mathfrak{m}} e_{\infty}}{2}} \Phi^{\mathfrak{m}}(e_{1}, e_{\infty},e_{0}) e^{-\frac{\mathbb{L}^{\mathfrak{m}} e_{1}}{2}}=0.
\end{multline}
\end{small}
By $(\ref{eq:perioddecomp2})$, the left side of $(\ref{eq:octagon22})$, being anti-invariant by $\mathcal{F}_{\infty}$, lies in $ \mathcal{H}^{2}\cdot \mathbb{L}^{\mathfrak{m}} \left\langle \left\langle e_{0}, e_{1}, e_{-1} \right\rangle \right\rangle $. Consequently, we can divide it by $\mathbb{L}^{\mathfrak{m}}$ and consider its projection $\pi^{\mathcal{L}}$ in the coalgebra $\mathcal{L} \left\langle \left\langle e_{0}, e_{1}, e_{-1} \right\rangle \right\rangle $, which gives firstly:
\begin{small}
\begin{multline}\label{eq:octagon23}\hspace*{-0.5cm} 
0=\Phi^{\mathfrak{l}}(e_{0}, e_{1},e_{-1}) \pi^{\mathcal{L}} \left( (\mathbb{L}^{\mathfrak{m}})^{-1} \left[ e^{\frac{\mathbb{L}^{\mathfrak{m}} e_{0}}{2}} e^{\frac{\mathbb{L}^{\mathfrak{m}} e_{-1}}{2}}e^{\frac{\mathbb{L}^{\mathfrak{m}} e_{\infty}}{2}} e^{\frac{\mathbb{L}^{\mathfrak{m}} e_{1}}{2}} - e^{-\frac{\mathbb{L}^{\mathfrak{m}} e_{0}}{2}} e^{-\frac{\mathbb{L}^{\mathfrak{m}} e_{-1}}{2}}e^{-\frac{\mathbb{L}^{\mathfrak{m}} e_{\infty}}{2}} e^{-\frac{\mathbb{L}^{\mathfrak{m}} e_{1}}{2}} \right] \right)  \\
\hspace*{-0.5cm}  +\pi^{\mathcal{L}} \left( (\mathbb{L}^{\mathfrak{m}})^{-1} \left[  e^{\frac{\mathbb{L}^{\mathfrak{m}} e_{0}}{2}}  \Phi^{\mathfrak{l}}(e_{-1}, e_{0},e_{\infty})  e^{\frac{\mathbb{L}^{\mathfrak{m}} e_{-1}}{2}} e^{\frac{\mathbb{L}^{\mathfrak{m}} e_{\infty}}{2}}  e^{\frac{\mathbb{L}^{\mathfrak{m}} e_{1}}{2}}- e^{-\frac{\mathbb{L}^{\mathfrak{m}} e_{0}}{2}}  \Phi^{\mathfrak{l}}(e_{-1}, e_{0},e_{\infty})  e^{-\frac{\mathbb{L}^{\mathfrak{m}} e_{-1}}{2}} e^{-\frac{\mathbb{L}^{\mathfrak{m}} e_{\infty}}{2}}  e^{-\frac{\mathbb{L}^{\mathfrak{m}} e_{1}}{2}} \right]  \right) \\
 \hspace*{-0.5cm} + \pi^{\mathcal{L}} \left( (\mathbb{L}^{\mathfrak{m}})^{-1} \left[  e^{\frac{\mathbb{L}^{\mathfrak{m}} e_{0}}{2}}  e^{\frac{\mathbb{L}^{\mathfrak{m}} e_{-1}}{2}}  \Phi^{\mathfrak{l}}(e_{\infty}, e_{-1},e_{1}) e^{\frac{\mathbb{L}^{\mathfrak{m}} e_{\infty}}{2}}  e^{\frac{\mathbb{L}^{\mathfrak{m}} e_{1}}{2}} -  e^{-\frac{\mathbb{L}^{\mathfrak{m}} e_{0}}{2}}  e^{-\frac{\mathbb{L}^{\mathfrak{m}} e_{-1}}{2}}  \Phi^{\mathfrak{l}}(e_{\infty}, e_{-1},e_{1}) e^{-\frac{\mathbb{L}^{\mathfrak{m}} e_{\infty}}{2}}  e^{-\frac{\mathbb{L}^{\mathfrak{m}} e_{1}}{2}} \right]  \right) \\  
\hspace*{-0.5cm}  +\pi^{\mathcal{L}} \left( (\mathbb{L}^{\mathfrak{m}})^{-1} \left[    e^{\frac{\mathbb{L}^{\mathfrak{m}} e_{0}}{2}}  e^{\frac{\mathbb{L}^{\mathfrak{m}} e_{-1}}{2}} e^{\frac{\mathbb{L}^{\mathfrak{m}} e_{\infty}}{2}} \Phi^{\mathfrak{l}}(e_{1}, e_{\infty},e_{0})  e^{\frac{\mathbb{L}^{\mathfrak{m}} e_{1}}{2}} -  e^{-\frac{\mathbb{L}^{\mathfrak{m}} e_{0}}{2}}  e^{-\frac{\mathbb{L}^{\mathfrak{m}} e_{-1}}{2}} e^{-\frac{\mathbb{L}^{\mathfrak{m}} e_{\infty}}{2}} \Phi^{\mathfrak{l}}(e_{1}, e_{\infty},e_{0})  e^{-\frac{\mathbb{L}^{\mathfrak{m}} e_{1}}{2}}  \right] \right)
\end{multline}
\end{small}
The first line is zero (since $e_{0}+e_{1}+ e_{-1}+e_{\infty}=0$) whereas each other line will contribute by two terms, in order to give $(\ref{eq:octagonlin})$. Indeed, the projection $\pi^{\mathcal{L}}(x)$, when seeing $x$ as a polynomial (with only even powers) in $\mathbb{L}^{\mathfrak{m}}$, only keep the constant term; hence, for each term, only one of the exponentials above $e^{x}$ contributes by its linear term i.e. $x$, while the others contribute simply by $1$. For instance, if we examine carefully the second line of $(\ref{eq:octagon23})$, we get:
$$\begin{array}{ll}
= & e_{0}  \Phi^{\mathfrak{l}}(e_{-1}, e_{0},e_{\infty}) + \Phi^{\mathfrak{l}}(e_{-1}, e_{0},e_{\infty}) (e_{-1}+e_{\infty}+e_{1})\\
&  - (-e_{0})  \Phi^{\mathfrak{l}}(e_{-1}, e_{0},e_{\infty}) -    \Phi^{\mathfrak{l}}(e_{-1}, e_{0},e_{\infty}) ( - e_{-1}- e_{\infty}- e_{1}) \\
= & 2 \left[ e_{0}  \Phi^{\mathfrak{l}}(e_{-1}, e_{0},e_{\infty}) -  \Phi^{\mathfrak{l}}(e_{-1}, e_{0},e_{\infty}) e_{0}\right] 
\end{array}.$$
Similarly, the third line of  $(\ref{eq:octagon23})$ is equal to $(e_{0}+e_{-1}) \Phi^{\mathfrak{l}}(e_{\infty}, e_{-1},e_{1}) - \Phi^{\mathfrak{l}}(e_{\infty}, e_{-1},e_{1}) (e_{0}+e_{-1})$ and the last line is equal to $ -e_{1} \Phi^{\mathfrak{l}}(e_{1}, e_{\infty},e_{0}) + \Phi^{\mathfrak{l}}(e_{1}, e_{\infty},e_{0}) e_{1}$. Therefore, $(\ref{eq:octagon23})$ is equivalent to $(\ref{eq:octagonlin})$, as claimed.
\end{itemize}
\end{proof}

This linearized octagon relation $\ref{eq:octagonlin}$, while looking at the coefficient of a specific word in $\lbrace e_{0},e_{1}, e_{-1}\rbrace$, provides an identity between some $ \zeta^{\star\star,\mathfrak{l}} (\bullet)$ and $\zeta^{\mathfrak{l}} (\bullet)$ in the coalgebra $\mathcal{L}$. The different identities obtained in this way are detailed in the $\S 4.6$. In the following proof of Theorem $\ref{hybrid}$, two of those identities are used.

\begin{proof}[Proof of Theorem $\ref{hybrid}$]
The identity with MMZV$_{\mu_{2}}$ is equivalent to, in terms of motivic iterated integrals:\footnote{Indeed, if $\prod_{i=0}^{p} \epsilon_{i}=1$, it corresponds to the first case, whereas  if $\prod_{i=0}^{p} \epsilon_{i}$, we need the second case.}
$$I^{\mathfrak{l}} (0; 0^{k}, \star, X; 1)\equiv I^{\mathfrak{l}} (0; X, \star, 0^{k}; 1) \text{ and } I^{\mathfrak{l}} (0; 0^{k}, -\star, X; 1)\equiv I^{\mathfrak{l}} (0;- X, -\star, 0^{k}; 1).$$
Furthermore, by shuffle regularization formula ($\ref{eq:shufflereg}$), spreading the first $0$ further inside the iterated integrals, the identity  $I^{\mathfrak{l}} (0;\boldsymbol{0}^{k}, \star, X; 1)\equiv  (-1)^{w+1} I^{\mathfrak{l}} (0;\boldsymbol{0}^{k}, \star, \widetilde{X}; 1)$ boils down to the case $k=0$. \\
The notations are as usual: $\epsilon_{i}=\text{sign} (n_{i})$, $\epsilon_{i}=\eta_{i}\eta_{i+1}$,$\epsilon_{p}= \eta_{p}$, $n_{i}=\epsilon_{i}(a_{i}+1)$.
\begin{itemize}
\item[$(i)$] In $(\ref{eq:octagonlin})$, if we look at the coefficient of a specific word in $\lbrace e_{0},e_{1}, e_{-1}\rbrace$ ending and beginning with $e_{-1}$ (as in $\S 4.6$), only two terms contribute, i.e.:
\begin{equation}\label{eq:octagonlinpart1}
 e_{-1}\Phi^{\mathfrak{l}}(e_{\infty}, e_{-1},e_{1})- \Phi^{\mathfrak{l}}(e_{\infty}, e_{-1},e_{1})e_{-1} 
 \end{equation}
The coefficient of $e_{0}^{a_{0}}e_{\eta_{1}} e_{0}^{a_{1}} \cdots e_{\eta_{p}} e_{0}^{a_{p}}$ in $\Phi^{\mathfrak{m}}(e_{\infty}, e_{-1},e_{1})$ is $(-1)^{n+p}\zeta^{\star\star,\mathfrak{m}}_{n_{0}-1} \left( n_{1}, \cdots, n_{p-1}, -n_{p}\right)$.\footnote{The expressions of those associators are more detailed in the proof of Lemma $\ref{lemmlor}$.} Hence, the coefficient in $(\ref{eq:octagonlinpart1})$ (as in $(\ref{eq:octagonlin})$) of the word $e_{-1} e_{0}^{a_{0}} e_{\eta_{1}} \cdots  e_{\eta_{p}} e_{0}^{a_{p}} e_{-1}$ is:
$$ \zeta^{\star\star, \mathfrak{l}}_{\mid n_{0}\mid -1}(n_{1}, \cdots, - n_{p}, 1) -  \zeta^{\star\star, \mathfrak{l}}(n_{0}, n_{1}, \cdots, n_{p-1}, -n_{p})=0, \quad \text{ with} \prod_{i=0}^{p} \epsilon_{i}=1.$$
In terms of iterated integrals, reversing the first one with Antipode $\shuffle$, it is:
$$ I^{\mathfrak{l}} \left(0;-X , \star ;1 \right)\equiv I^{\mathfrak{l}} \left(0; \star, -X ;1 \right), \text{ with } X\mathrel{\mathop:}=0^{n_{0}-1} \eta_{1} 0^{n_{1}-1} \cdots \eta_{p} 0^{n_{p}-1}.$$
Therefore, since $X$ can be any word in $\lbrace 0, \pm \star \rbrace$, by linearity this is also true for any word X in $\lbrace 0, \pm 1 \rbrace$: $ I^{\mathfrak{l}} \left(0;X, \star ;1 \right)\equiv I^{\mathfrak{l}} \left(0; \star, X ;1 \right)$.
\item[$(ii)$] Now, let look at the coefficient of a specific word in $\lbrace e_{0},e_{1}, e_{-1}\rbrace$ beginning by $e_{1}$, and ending by $e_{-1}$. Only two terms in the left side of $(\ref{eq:octagonlin})$ contribute, i.e.:
\begin{equation}\label{eq:octagonlinpart2}
 -e_{1}\Phi^{\mathfrak{l}}(e_{1}, e_{\infty},e_{0})- \Phi^{\mathfrak{l}}(e_{\infty}, e_{-1},e_{1})e_{-1} 
 \end{equation}
The coefficient in this expression of the word $e_{1} e_{0}^{a_{0}} e_{\eta_{1}} \cdots  e_{\eta_{p}} e_{0}^{a_{p}} e_{-1}$ is:
$$ \zeta^{\star\star, \mathfrak{l}}_{\mid n_{0}\mid -1}(n_{1}, \cdots, n_{p}, -1) -  \zeta^{\star\star, \mathfrak{l}}(n_{0}, n_{1}, \cdots, n_{p})=0, \quad \text{ with} \prod_{i=0}^{p} \epsilon_{i}=-1.$$
In terms of iterated integrals, reversing the first one with Antipode $\shuffle$, it is:
$$ I^{\mathfrak{l}} \left(0; - X , -\star ;1 \right)\equiv I^{\mathfrak{l}} \left(0; -\star, X ;1 \right).$$
Therefore, since $X$ can be any word in $\lbrace 0, \pm \star \rbrace$, by linearity this is also true for any word X in $\lbrace 0, \pm 1 \rbrace$.
\end{itemize}
\end{proof}

\paragraph{For Euler $\boldsymbol{\star\star}$ sums. }

\begin{coro}
In the coalgebra $\mathcal{L}^{2}$, with $n_{i}\in\mathbb{Z}^{\ast}$, $n\geq 1$:
\begin{equation}\label{eq:antipodestaresss}
\zeta^{\star\star,\mathfrak{l}}_{n-1}(n_{1}, \ldots, n_{p})\equiv (-1)^{w+1}\zeta^{\star\star,\mathfrak{l}}_{n-1}(n_{p}, \ldots, n_{1}).
\end{equation}
Motivic Euler $\star\star$ sums of depth $p$ in $\mathcal{L}$ form a dihedral group of order $p+1$:
$$\textsc{(Shift)   } \quad \zeta^{\star\star,\mathfrak{l}}_{\mid n\mid -1}(n_{1}, \ldots, n_{p})\equiv \zeta^{\star\star,\mathfrak{l}}_{\mid n_{1}\mid -1}(n_{2}, \ldots, n_{p},n) \quad \text{ where } sgn(n)\mathrel{\mathop:}= \prod_{i} sgn(n_{i}).$$
\end{coro}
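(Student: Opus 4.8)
The plan is to deduce both displayed statements from the hybrid relation (Theorem \ref{hybrid}) combined with the $\shuffle$-antipode relation (Lemma, equation \eqref{eq:antipodeshuffle2}) and the $\ast$-antipode relation (Lemma 4.2.2), all working in the coalgebra $\mathcal{L}^{2}$, i.e. modulo products. First I would translate everything into the iterated-integral language, since that is where the hybrid relation is cleanest: a motivic Euler $\star\star$ sum $\zeta^{\star\star,\mathfrak{l}}_{k}(n_{1},\dots,n_{p})$ is, up to sign, $I^{\mathfrak{l}}(0;\mathbf{0}^{k},\pm\star,X;1)$ for an appropriate word $X$ in $\{0,\pm\star\}$, by the very definition of MES $\star\star$ (replacing every $\omega_{\pm 1}$, including the first, by $\omega_{\pm\star}$). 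Theorem \ref{hybrid} gives directly $I^{\mathfrak{l}}(0;\mathbf{0}^{k},\star,X;1)\equiv (-1)^{w+1}I^{\mathfrak{l}}(0;\mathbf{0}^{k},\star,\widetilde{X};1)$ and the analogous identity with $-\star$ and $-X$; reversing the word $X$ is exactly what produces the reversal $(n_{1},\dots,n_{p})\mapsto(n_{p},\dots,n_{1})$ of the arguments on the right-hand side, and the sign bookkeeping ($\epsilon_{i}=\mathrm{sign}(n_{i})$, the first letter carrying the product $\prod\epsilon_{i}$) must be checked to reproduce $(-1)^{w+1}$ with $w$ the weight. This yields \eqref{eq:antipodestaresss}. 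One has to be a little careful that the two clauses of Theorem \ref{hybrid} (the $\star$ case versus the $-\star$ case) correspond respectively to $\prod_{i}\epsilon_{i}=1$ and $\prod_{i}\epsilon_{i}=-1$, exactly as in the footnote to the proof of the theorem; handling both sign configurations uniformly is the first place where care is needed.

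For the \textbf{(Shift)} identity I would argue as follows. Combining the $\shuffle$-antipode relation \eqref{eq:antipodeshuffle2}, which in iterated-integral form reads $I^{\mathfrak{l}}(0;X;\epsilon)\equiv(-1)^{w+1}I^{\mathfrak{l}}(0;\widetilde{X};\epsilon)$, with the first part of Theorem \ref{hybrid} (the equivalence $I^{\mathfrak{l}}(0;\mathbf{0}^{k},\star,X;1)\equiv I^{\mathfrak{l}}(0;X,\star,\mathbf{0}^{k};1)$, which is precisely the ``cyclic'' half of the hybrid relation before the reversal step) gives a way to move the block $\mathbf{0}^{k}\star$ from the front of the word to the back. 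Interpreting the word-level move in terms of the $(n_{i})$ encoding, shifting $\mathbf{0}^{\,|n|-1}$ (together with the accompanying $\star$, which records the product of signs) from the left end of $X$ to the right end turns $\zeta^{\star\star,\mathfrak{l}}_{|n|-1}(n_{1},\dots,n_{p})$ into $\zeta^{\star\star,\mathfrak{l}}_{|n_{1}|-1}(n_{2},\dots,n_{p},n)$ with $\mathrm{sign}(n)=\prod_{i}\mathrm{sign}(n_{i})$, as claimed. Together, (Shift) generates the cyclic group of order $p+1$ on depth-$p$ Euler $\star\star$ sums in $\mathcal{L}$, while \eqref{eq:antipodestaresss} supplies the order-two reflection; the two relations are compatible (the reflection conjugates the shift to its inverse, up to the sign $(-1)^{w+1}$ which is central), so the group they generate is exactly the dihedral group of order $2(p+1)$ acting on the $p+1$ ``cyclic positions'' — this is the dihedral symmetry asserted in the statement.

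I expect the main obstacle to be \emph{purely combinatorial/bookkeeping}: correctly packaging the passage between the $(n_{1},\dots,n_{p};k)$ notation and the words in $\{0,\pm\star\}$, in particular tracking how the leading letter absorbs the total sign $\prod\epsilon_{i}$, how the shift-regularization \eqref{eq:shufflereg} lets one reduce the general $k$ to $k=0$ (exactly as in the proof of Theorem \ref{hybrid}), and verifying that all the signs collapse to the single factor $(-1)^{w+1}$. There is no new geometric or motivic input needed here — the octagon relation and its linearized $\mathcal{F}_{\infty}$-anti-invariant shadow have already done that work inside Theorem \ref{hybrid} — so the proof is really a matter of chasing the two antipode relations and the hybrid relation through the $\star\star$-definitions and checking that the resulting symmetries close up into a dihedral group; I would write it as a short two-step deduction (reversal from the hybrid relation, shift from hybrid $+$ $\shuffle$-antipode) followed by the remark that these generate the stated dihedral group.
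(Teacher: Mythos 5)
Your proposal is correct and takes essentially the same route as the paper: both the reversal identity and \textsc{(Shift)} are immediate consequences of Theorem \ref{hybrid} (combined with the antipode $\shuffle$ relation), exactly as you argue. The only cosmetic difference is that you apply the iterated-integral form of the hybrid relation directly to words in $\lbrace 0,\pm\star\rbrace$ (legitimate by the remark following the theorem), whereas the paper expands $\zeta^{\star\star,\mathfrak{l}}$ into pairs of ordinary Euler sums, applies the hybrid relation termwise and resums; the sign bookkeeping you flag (the cases $\prod\epsilon_{i}=\pm 1$ and the flip $X\mapsto -X$) does close up as you expect.
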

\noindent
Indeed, these two identities lead to a dihedral group structure of order $p+1$:  $(\ref{eq:antipodestaresss})$, respectively $\textsc{Shift}$, correspond to the action of a reflection resp. of a cycle of order $p$ on motivic Euler $\star\star$ sums of depth $p$ in $\mathcal{L}$.
\begin{proof}
Writing $\zeta^{\star\star,\mathfrak{m}}$ as a sum of Euler sums:
\begin{small}
$$\zeta^{\star\star,\mathfrak{m}}_{n-1}(n_{1}, \ldots, n_{p})=\sum_{i=1}^{p} \zeta^{\mathfrak{m}}_{n-1+\mid n_{1}\mid+\cdots+ \mid n_{i-1}\mid}(n_{i} \circ \cdots \circ n_{p})=\sum_{r \atop A_{i}} \left( \zeta^{\mathfrak{m}}_{n-1}(A_{1}, \ldots, A_{r})+ \zeta^{\mathfrak{m}}_{n-1+\mid A_{1}\mid}(A_{2}, \ldots, A_{r})\right),$$
\end{small}
where the last sum is over $(A_{i})_{i}$ such that each $A_{i}$ is a non empty \say{sum} of consecutive $(n_{j})'s$, preserving the order; the absolute value being summed whereas the sign of the $n_{i}$ involved are multiplied; moreover, $\mid A_{1}\mid \geq \mid n_{1}\mid $ resp. $\mid A_{r} \mid \geq \mid n_{p}\mid $.\\
Using Theorem $(\ref{hybrid})$ in the coalgebra $\mathcal{L}$, the previous equality turns into:
$$(-1)^{w+1}\sum_{r \atop A_{i}} \left( \zeta^{\mathfrak{l}}_{n-1}(A_{r}, \ldots, A_{1})+ \zeta^{\mathfrak{l}}_{n-1+\mid A_{r}\mid}(A_{r-1}, \ldots, A_{1})\right) \equiv (-1)^{w+1}\zeta^{\star\star,\mathfrak{m}}_{n-1}(n_{p}, \ldots, n_{1}). $$
The identity $\textsc{Shift}$ is obtained as the composition of Antipode $\shuffle$ $(\ref{eq:antipodeshuffle2})$ and the first identity of the corollary.
\end{proof}

\paragraph{For Euler $\boldsymbol{\sharp\sharp}$ sums.}

\begin{coro}
In the coalgebra $\mathcal{L}$, for $n\in\mathbb{N}$, $n_{i}\in\mathbb{Z}^{\ast}$, $\epsilon_{i}\mathrel{\mathop:}=sign(n_{i})$:\footnote{Here, $\mlq - \mrq$ denotes the operation where absolute values are subtracted whereas sign multiplied.}\\
\begin{tabular}{lll}
\textsc{Reverse} & $\zeta^{\sharp\sharp,\mathfrak{l}}_{n}(n_{1}, \ldots, n_{p})+ (-1)^{w}\zeta^{\sharp\sharp,\mathfrak{l}}_{n}(n_{p}, \ldots, n_{1}) \equiv
\left\{
\begin{array}{l}
0   . \\
\zeta^{\sharp,\mathfrak{l}}_{n}(n_{1}, \ldots, n_{p})
  \end{array}\right.$ & $ \begin{array}{l}
\textrm{       if } w+p \textrm{ even } . \\
 \textrm{    if  } w+p \textrm{ odd } \end{array} .$\\
 &&\\
\textsc{Shift} &  $\zeta^{\sharp\sharp,\mathfrak{l}}_{ n -1}(n_{1}, \ldots, n_{p})\equiv \zeta^{\sharp\sharp,\mathfrak{l}}_{\mid n_{1}\mid-1}(n_{2}, \ldots, n_{p},\epsilon_{1}\cdots \epsilon_{p} \cdot n)$ & for  $w+p$ even.\\
&&\\
\textsc{Cut} & $\zeta^{\sharp\sharp,\mathfrak{l}}_{n}(n_{1},\cdots, n_{p}) \equiv \zeta^{\sharp\sharp,\mathfrak{l}}_{n+\mid n_{p}\mid}(n_{1},\cdots, n_{p-1}),$ & for $w+p$ odd.\\
&&\\
\textsc{Minus} & $\zeta^{\sharp\sharp,\mathfrak{l}}_{n-i}(n_{1},\cdots, n_{p}) \equiv  \zeta^{\sharp\sharp,\mathfrak{l}}_{n}\left( n_{1},\cdots, n_{p-1},  n_{p} \mlq - \mrq i)\right)$, & for $\begin{array}{l}
w+p \text{ odd }\\
i \leq \min(n,\mid n_{p}\mid)
\end{array}$.\\
&&\\
\textsc{Sign} & $\zeta^{\sharp\sharp,\mathfrak{l}}_{n}(n_{1},\cdots,  n_{p-1}, n_{p}) \equiv  \zeta^{\sharp\sharp,\mathfrak{l}}_{n}(n_{1},\cdots,  n_{p-1},-n_{p})$,& for $w+p$ odd.
\end{tabular}
$$\quad\Rightarrow \forall  W \in \lbrace 0, \pm \sharp\rbrace^{\times} \text{with an odd number of } 0, \quad I^{\mathfrak{l}}(-1; W ; 1) \equiv 0 .$$
\end{coro}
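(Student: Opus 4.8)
The plan is to deduce the displayed statement $I^{\mathfrak{l}}(-1;W;1)\equiv 0$ for every word $W$ in $\{0,\pm\sharp\}$ with an odd number of $0$'s as a direct corollary of the preceding identities, especially \textsc{Sign} together with the path-reversal / $\shuffle$-antipode relations. First I would translate the claim into the $\zeta^{\sharp\sharp,\mathfrak{l}}$ language: by the homothety property (vi) applied with $\alpha=-1$ and the path-reversal property (v), the motivic iterated integral $I^{\mathfrak{l}}(-1;W;1)$ is, up to sign, an integral $I^{\mathfrak{l}}(1;\widetilde{W'};{-1})$, i.e. of the shape encoding a $\zeta^{\sharp\sharp,\mathfrak{l}}$ whose \emph{last} argument carries the sign $-1$. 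Then \textsc{Sign} says precisely that in the coalgebra $\mathcal{L}$, when $w+p$ is odd, flipping the sign of the last argument $n_p\mapsto -n_p$ changes nothing; hence $I^{\mathfrak{l}}(-1;W;1)\equiv I^{\mathfrak{l}}(1;W;1)$ (after undoing the homothety), but the latter is $0$ by property (ii) since both endpoints are equal. The parity hypothesis $w+p$ odd is exactly the statement that $W$ contains an odd number of $0$'s: the number of $0$'s in the iterated-integral word is $w-p$ (weight minus depth), so an odd number of $0$'s is the same as $w+p$ odd.

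More carefully, here is the sequence of steps I would carry out. \textbf{Step 1.} Fix $W\in\{0,\pm\sharp\}^{\times}$ with an odd number of $0$'s; write it as a $\zeta^{\sharp\sharp}$-type word with some depth $p$ and weight $w$, so $w-p$ is odd, equivalently $w+p$ odd. \textbf{Step 2.} Use the homothety relation $I^{\mathfrak{l}}(0;\alpha a_1,\dots,\alpha a_n;\alpha a_{n+1})=I^{\mathfrak{l}}(0;a_1,\dots,a_n;a_{n+1})$ for $\alpha=-1\in\mu_2$, combined with the path-composition/path-reversal properties of $\S\ref{propii}$, to rewrite $I^{\mathfrak{l}}(-1;W;1)$ in the standard form $I^{\mathfrak{l}}(0;\ldots;\pm 1)$, landing on a motivic Euler $\sharp\sharp$ sum whose last entry has sign $-1$ (the $-1$ endpoint becomes the controlling sign $\eta_p$). \textbf{Step 3.} Apply \textsc{Sign} from the Corollary: $\zeta^{\sharp\sharp,\mathfrak{l}}_{n}(n_1,\dots,n_{p-1},n_p)\equiv\zeta^{\sharp\sharp,\mathfrak{l}}_{n}(n_1,\dots,n_{p-1},-n_p)$ when $w+p$ is odd. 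This trades the sign $-1$ on the last argument for $+1$. \textbf{Step 4.} Reversing Step 2 (back through homothety/path reversal), the resulting object is $I^{\mathfrak{l}}(1;W;1)$ up to sign, which vanishes by property (ii) ($a_0=a_{n+1}$). Hence $I^{\mathfrak{l}}(-1;W;1)\equiv 0$ in $\mathcal{L}$.

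An alternative, more self-contained route avoids invoking \textsc{Sign} as a black box and instead re-derives the vanishing from \textsc{Reverse} and \textsc{Minus}/\textsc{Cut} in the odd-parity case: when $w+p$ is odd these say $\zeta^{\sharp\sharp,\mathfrak{l}}$ reduces in depth and also equals (up to the explicit sign $(-1)^w$) its own reversal plus a $\zeta^{\sharp,\mathfrak{l}}$ correction; running the induction on depth $p$ one checks that the combination representing $I^{\mathfrak{l}}(-1;W;1)$ collapses. I would present the \textsc{Sign}-based argument as the main proof since it is shortest, and remark that it is really just the observation that the anti-invariant-under-$\mathcal{F}_\infty$ part of the octagon, linearized, forces the sign of the outermost root of unity to be irrelevant modulo products in odd parity.

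The main obstacle I anticipate is purely bookkeeping: getting the dictionary between the iterated-integral word $W$ with endpoints $(-1,1)$ and the $\zeta^{\sharp\sharp,\mathfrak{l}}$-notation exactly right, including the placement and propagation of the signs $\epsilon_i=\eta_i\eta_{i+1}$ under the homothety by $-1$ and the reversal, and making sure the parity hypothesis ``odd number of $0$'s'' lines up with ``$w+p$ odd'' as used in \textsc{Sign}. Once that translation is pinned down, the mathematical content is immediate. There is no genuine analytic or structural difficulty here; everything needed — properties (i)--(vi) of motivic iterated integrals, the $\shuffle$-antipode, and the \textsc{Sign} consequence of Theorem \ref{hybrid} — is already available in the excerpt.
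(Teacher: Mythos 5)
Your proposal addresses only the final displayed implication and takes the five relations (\textsc{Reverse}, \textsc{Shift}, \textsc{Cut}, \textsc{Minus}, \textsc{Sign}) as already available; but those relations are the actual content of the corollary, and establishing them is the bulk of the paper's proof. The paper derives \textsc{Reverse} by expanding $\zeta^{\sharp\sharp,\mathfrak{l}}$ as a $2$-power linear combination of motivic Euler sums, applying the hybrid relation of Theorem \ref{hybrid} to each summand, and then invoking the antipode $\ast$ relation to recognize the result as $\zeta^{\sharp,\mathfrak{l}}$ (or $0$) according to the parity of $w+p$; \textsc{Shift} is \textsc{Reverse} composed with the antipode $\shuffle$; \textsc{Cut} is the odd-parity case of \textsc{Reverse} read backwards; \textsc{Minus} follows by applying \textsc{Cut} twice; and \textsc{Sign} follows by applying \textsc{Cut} in both directions with the two possible signs of the last entry. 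None of this appears in your write-up, so as a proof of the stated corollary it is incomplete: you must first carry out this derivation (or an equivalent one from Theorem \ref{hybrid} and the antipode relations of $\S 4.2.1$) before \textsc{Sign} is at your disposal.

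For the part you do treat, the idea coincides with the paper's, but Steps 2--4 are not correct as formulated: homothety and path reversal alone cannot turn $I^{\mathfrak{l}}(-1;W;1)$ into a single integral based at $0$, and you never literally reach an integral with equal endpoints to which property (ii) of $\S\ref{propii}$ would apply. The correct bookkeeping is the paper's: path composition modulo products gives $I^{\mathfrak{l}}(-1;W;1)\equiv I^{\mathfrak{l}}(-1;W;0)+I^{\mathfrak{l}}(0;W;1)$; reversal and homothety give $I^{\mathfrak{l}}(-1;W;0)\equiv -I^{\mathfrak{l}}(0;W;-1)=-I^{\mathfrak{l}}(0;-W;1)$; and \textsc{Sign} (applicable because an odd number of $0$'s in $W$ is exactly the condition $w+p$ odd, as you correctly observe) identifies $I^{\mathfrak{l}}(0;-W;1)$ with $I^{\mathfrak{l}}(0;W;1)$, so the two terms cancel. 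Your intuition that the sign of the outer root of unity is irrelevant, hence the integral behaves like one with coinciding endpoints, is the right picture, but the formal argument runs through this two-term cancellation rather than through property (ii).
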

\noindent
\textsc{Remark}:
In the coaction of Euler sums, terms with $\overline{1}$ can appear\footnote{More precisely, using the notations of Lemma $\ref{lemmt}$, a $\overline{1}$ can appear in terms of the type $T_{\epsilon, -\epsilon}$ for a cut between $\epsilon$ and $-\epsilon$.}, which are clearly not motivic multiple zeta values. The left side corresponding to such a term in the coaction part $D_{2r+1}(\cdot)$ is $I^{ \mathfrak{l}}(1;X;-1)$, X odd weight with $0, \pm \sharp$. It is worth underlying that, for the $\sharp$ family with $\lbrace \overline{even}, odd\rbrace$, these terms disappear by $\textsc{Sign}$, since by constraint on parity, X will always be of even depth for such a cut. This $\sharp$ family is then more suitable for an unramified criterion, cf. $\S 4.3$.
\begin{proof}
These are consequences of the hybrid relation in Theorem $\ref{hybrid}$.
\begin{itemize}
\item[$\cdot$] \textsc{Reverse:}
Writing $\zeta^{\sharp\sharp,\mathfrak{l}}$ as a sum of Euler sums:
\begin{flushleft}
$\hspace*{-0.5cm}\zeta^{\sharp\sharp,\mathfrak{m}}_{k}(n_{1}, \ldots, n_{p}) +(-1)^{w} \zeta^{\sharp\sharp,\mathfrak{m}}_{k}(n_{p}, \ldots, n_{1})$
\end{flushleft}
\begin{small}
$$\hspace*{-0.5cm}\begin{array}{l}
=\sum_{i=1}^{p} 2^{p-i+1-n_{+}}  \zeta^{\mathfrak{m}}_{k+n_{1}+\cdots+ n_{i-1}}(n_{i} \circ \cdots \circ n_{p})   + (-1)^{w}2^{i-n_{+}} \zeta^{\mathfrak{m}}_{k+n_{p}+\cdots+ n_{i+1}}(n_{i} \circ \cdots \circ n_{1})\\
 \\
=\sum_{r \atop A_{i}}2^{r-1} \left(\right. 2 \zeta^{\mathfrak{m}}_{k}(A_{1}, \ldots, A_{r}) +2 (-1)^{w} \zeta^{\mathfrak{m}}_{k}(A_{r}, \ldots, A_{1}) + \zeta^{\mathfrak{m}}_{k+A_{1}}(A_{2}, \ldots, A_{r}) +(-1)^{w} \zeta^{\mathfrak{m}}_{k+A_{r}}(A_{r-1}, \ldots, A_{1}) \left.  \right) 
\end{array}$$
\end{small}
where the sum is over $(A_{i})$ such that each $A_{i}$ is a non empty \say{sum} of consecutive $(n_{j})'s$, preserving the order; i.e. absolute values of $n_{i}$ are summed whereas signs are multiplied; moreover, $A_{1}$ resp. $A_{r}$ are no less than $n_{1}$ resp. $n_{p}$.\\
By Theorem $\ref{hybrid}$, the previous equality turns into, in $\mathcal{L}$:
$$\sum_{r \atop A_{i}}2^{r-1} \left( \zeta^{\mathfrak{l}}_{k}(A_{1}, \ldots, A_{r})+  (-1)^{w} \zeta^{\mathfrak{l}}_{k}(A_{r}, \ldots, A_{1})\right)$$
$$ \equiv 2^{-1} \left( \zeta^{\sharp,\mathfrak{l}}_{k}(n_{1}, \ldots, n_{p})+  (-1)^{w} \zeta^{\sharp,\mathfrak{l}}_{k}(n_{p}, \ldots, n_{1})\right)\equiv 2^{-1}  \zeta^{\sharp,\mathfrak{l}}_{k}(n_{1}, \ldots, n_{p}) \left( 1+ (-1)^{w+p+1} \right).$$
By the Antipode $\star$ relation applied to $\zeta^{\sharp,\mathfrak{l}}$, it implies the result stated, splitting the cases $w+p$ even and $w+p$ odd.
\item[$\cdot$] \textsc{Shift:} Obtained when combining \textsc{Reverse} and \textsc{Antipode} $\shuffle$, when $w+p$ even.
\item[$\cdot$] \textsc{Cut:} Reverse in the case $w+p$ odd implies:
$$\zeta^{\sharp\sharp,\mathfrak{l}}_{n+\mid n_{1}\mid }(n_{2}, \ldots, n_{p})+ (-1)^{w}\zeta^{\sharp\sharp,\mathfrak{l}}_{n}(n_{p}, \ldots, n_{1})  \equiv 0,$$
Which, reversing the variables, gives the Cut rule.
\item[$\cdot$] \textsc{Minus} follows from \textsc{Cut} since, by \textsc{Cut} both sides are equal to $\zeta^{\sharp\sharp,\mathfrak{l}}_{n-i+ \mid  n_{p}\mid}(n_{1},\cdots, n_{p-1})$.
\item[$\cdot$] In \textsc{Cut}, the sign of $n_{p}$ does not matter, hence, using \textsc{Cut} in both directions, with different signs leads to \textsc{Sign}:
$$\zeta^{\sharp\sharp,\mathfrak{l}}_{n} (n_{1},\ldots,n_{p})\equiv \zeta^{\sharp\sharp,\mathfrak{l}}_{n+ \mid n_p\mid } (n_{1}, \ldots,n_{p-1})\equiv \zeta^{\sharp\sharp,\mathfrak{l}}_{n} ( n_{1},\ldots,-n_{p}).$$
Note that, translating in terms of iterated integrals, it leads to, for $X$ any sequence of $0, \pm \sharp$, with $w+p$ odd:
$$ I^{\mathfrak{l}}(0; X ; 1) \equiv  I^{\mathfrak{l}}(0; -X; 1), $$ 
where $-X$ is obtained from $X$ after exchanging $\sharp$ and $-\sharp$. Moreover, $I^{\mathfrak{l}}(0; -X; 1)\equiv I^{\mathfrak{l}}(0; X; -1) \equiv - I^{\mathfrak{l}}(-1; X; 0)$. Hence, we obtain, using the composition rule of iterated integrals modulo product:
$$I^{\mathfrak{l}}(0; X ; 1) + I^{\mathfrak{l}}(-1; X; 0)\equiv I^{\mathfrak{l}}(-1; X ; 1)  \equiv 0.$$
\end{itemize} 
\end{proof}

\section{Euler $\sharp$ sums}

Let's consider more precisely the following family, appearing in Conjecture $\ref{lzg}$, ith only positive odd and negative even integers for arguments:
$$\zeta^{\sharp,  \mathfrak{m}} \left( \lbrace \overline{\text{even }} ,  \text{odd } \rbrace^{\times} \right) .$$
In the iterated integral, this condition means that we see only the following sequences:
\begin{center}
$\epsilon 0^{2a} \epsilon$, $\quad$ or $\quad\epsilon 0^{2a+1} -\epsilon$, $\quad$  with $\quad\epsilon\in \lbrace \pm\sharp \rbrace$.
\end{center}
\begin{theo}\label{ESsharphonorary}
The motivic Euler sums $\zeta^{\sharp,  \mathfrak{m}} (\lbrace \overline{\text{even }},  \text{odd } \rbrace^{\times} )$ are motivic geometric$^{+}$ periods of $\mathcal{MT}(\mathbb{Z})$.\\
Hence, they are $\mathbb{Q}$ linear combinations of motivic multiple zeta values.
\end{theo}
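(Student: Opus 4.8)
The plan is to show that every element of the family $\zeta^{\sharp,\mathfrak{m}}(\{\overline{\text{even}},\text{odd}\}^{\times})$ lies in $\mathcal{H}^1 = \mathcal{P}_{\mathcal{MT}(\mathbb{Z}),\mathbb{R}}^{\mathfrak{m},+}$ by using the unramified criterion (Theorem~$5.1.1$): a motivic Euler sum $\mathfrak{Z}\in\mathcal{H}^2$ is a motivic MZV if and only if $D_1(\mathfrak{Z})=0$ and $D_{2r+1}(\mathfrak{Z})\in\mathcal{H}^1$ for all $r\geq 1$. This is a recursion on the weight: assuming the statement holds in all lower weights, it suffices to check that the derivations $D_{2r+1}$ send our family into (linear combinations of) lower-weight elements that are again in $\mathcal{H}^1$, and that $D_1$ annihilates our elements. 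The natural thing is to prove a stronger stability statement: the $\mathbb{Q}$-span of $\{\zeta^{\sharp,\mathfrak{m}}(\{\overline{\text{even}},\text{odd}\}^{\times})\}$ together with the lower-weight MMZV is closed (modulo products, which is where $\mathcal{L}$ and the $D_r$ live) under all the $D_r$.

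\textbf{Key steps.} First I would translate the action of $D_r$ on $\zeta^{\sharp,\mathfrak{m}}$ into the iterated-integral picture via $(\ref{eq:miistarsharp})$ and the formula $(\ref{eq:Der})$, writing everything in terms of the $\omega_{\pm\sharp}$-words. The crucial structural input is the parity constraint: in the iterated integral, the admissible subwords are exactly $\epsilon\,0^{2a}\,\epsilon$ and $\epsilon\,0^{2a+1}\,(-\epsilon)$ with $\epsilon\in\{\pm\sharp\}$, so weight and depth have \emph{opposite} parity on every sub-block. Second, I would feed this into the $\sharp\sharp$-relations of the previous section — specifically the rules \textsc{Cut}, \textsc{Sign}, \textsc{Minus} of Corollary (for $\sharp\sharp$ sums), together with the hybrid relation (Theorem~$\ref{hybrid}$) and the antipode relations $(\ref{eq:antipodeshuffle2})$. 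The point is that \textsc{Sign} kills precisely the dangerous terms: a cut between $\epsilon$ and $-\epsilon$ would produce a sub-integral $I^{\mathfrak{l}}(1;X;-1)$ (a "$\overline 1$" term, manifestly not an MZV), but by the parity constraint such a cut always occurs at a sub-block of \emph{even} depth, so $w+p$ is odd there and \textsc{Sign} (equivalently $I^{\mathfrak{l}}(-1;W;1)\equiv 0$ for $W$ with an odd number of zeros) makes it vanish in $\mathcal{L}$. Third, for the right-hand factor $I^{\mathfrak{m}}(a_0;\dots;a_{n+1})$ of each term of $D_r$ one checks the contracted word still satisfies the same parity pattern, so it is again of $\sharp$-type and lower weight, hence in $\mathcal{H}^1$ by induction; and for $D_1$ one notes that length-$1$ iterated integrals in $0,\pm\sharp$ that violate $a_0\neq a_{n+1}$ vanish, and the \textsc{Sign}/parity argument disposes of the rest, giving $D_1(\mathfrak{Z})\equiv 0$. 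Finally, one has to track the Frobenius: since the $\omega_{\pm\sharp}$ are real differential forms with the extra $-\omega_0$, the relevant motivic period is geometric (weights $\geq 0$) and $\mathcal{F}_\infty$-invariant after the usual adjustments, so it genuinely lands in $\mathcal{P}_{\mathcal{MT}(\mathbb{Z}),\mathbb{R}}^{\mathfrak{m},+}=\mathcal{H}^1$; then $\cite{Br2}$ (Frobenius-invariant geometric periods of $\mathcal{MT}(\mathbb{Z})$ are $\mathbb{Q}$-combinations of motivic MZV) gives the last sentence for free.

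\textbf{Main obstacle.} The delicate part is the bookkeeping of exactly which subwords the derivations $D_r$ can cut out while preserving the $\{\overline{\text{even}},\text{odd}\}$ pattern, and in particular verifying that \emph{every} potentially-bad cut (those producing $\overline 1$'s or otherwise ramified sub-integrals) falls into the odd-$(w+p)$ regime where the $\sharp\sharp$-relations kill it in $\mathcal{L}$. One must be careful that a cut can straddle the boundary between two blocks, and that the first letter of the word (which stays $\omega_{\pm 1}$ rather than $\omega_{\pm\sharp}$ in the $\sharp$-normalization) is handled separately; I expect the combinatorial case analysis here, rather than any single deep ingredient, to be where the real work lies. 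The relations from $\S 4.2$ — antipode $\shuffle$, antipode $\ast$, the hybrid relation and its $\sharp\sharp$-corollaries — are exactly tailored to make this case analysis close, so the proof amounts to organizing the recursion carefully and invoking them at the right moments.
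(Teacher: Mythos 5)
Your proposal is correct and follows essentially the same route as the paper: the unramified criterion of Corollary $5.1.2$ plus a recursion on the weight, reduced to the stability of the $\sharp$-family under the $D_{2r+1}$ (the paper's Lemma $A.1.3$), with the $\overline{1}$-producing cuts between $\epsilon$ and $-\epsilon$ killed by the \textsc{Sign} rule thanks to the parity constraint $w\equiv p-s \pmod 2$, and $D_{1}=0$ since no adjacent $\epsilon,-\epsilon$ occurs. The only (harmless) difference is that the paper disposes of $D_{1}$ directly via Lemma $\ref{condd1}$ rather than by a Sign/parity argument.
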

The proof, in $\S 4.3.2$, relies mainly upon the stability under the coaction of this family.\\
This motivic family is even a generating family of motivic MZV:\nomenclature{$\mathcal{B}^{\sharp}$}{is a family of (unramified) motivic Euler $\sharp$ sums, basis of MMZV}
\begin{theo}\label{ESsharpbasis}
The following family is a basis of $\mathcal{H}^{1}$:
$$\mathcal{B}^{\sharp}\mathrel{\mathop:}= \left\lbrace \zeta^{ \sharp,\mathfrak{m}} (2a_{0}+1,2a_{1}+3,\cdots, 2 a_{p-1}+3, \overline{2a_{p}+2})\text{ , } a_{i}\geq 0\right\rbrace .$$
\end{theo}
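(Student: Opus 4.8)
The strategy mirrors the general scheme outlined in the introduction: find a filtration on the family which corresponds to the motivic depth, show that the derivations $D_r$ act triangularly with respect to it, and deduce linear independence by a recursion on weight and depth; the generating property then follows from a dimension count against the known upper bound for $\dim\mathcal{H}^1_n$. Concretely, I would grade $\mathcal{B}^\sharp$ by the depth $p$ (equivalently, by the number of odd arguments, which is $p$ here, so the ``level'' is $p-1$), and filter $\mathcal{H}^1$ by the depth filtration $\mathcal{F}^{\mathfrak{D}}_\bullet$. The key is that, modulo $\mathcal{F}^{\mathfrak{D}}_{p-2}$, the operators $D_{2r+1}$ act on $\mathcal{B}^\sharp_{n,p}$ essentially by \emph{deconcatenation}: cutting off the first block $2a_0+1$ (a cut of odd length ending at a root of unity near the start) should produce $\zeta^{\mathfrak{l}}(2a_0+1)\otimes\zeta^{\sharp,\mathfrak{m}}(2a_1+3,\dots,\overline{2a_p+2})$ up to terms of strictly smaller depth, while the ``interior'' and ``deconcatenation'' cuts, by the hybrid relation (Theorem~\ref{hybrid}), the antipode relations of~\S4.2, and the $\textsc{Sign}$/$\textsc{Cut}/\textsc{Minus}$ identities for $\zeta^{\sharp\sharp,\mathfrak{l}}$, either vanish in $\mathcal{L}$ or land in lower depth. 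This is precisely the content I would extract from Appendix~$A.1$, which the statement says I may assume.

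First I would carry out the bookkeeping establishing that $\mathcal{B}^\sharp$ is \emph{unramified}, i.e. that each element lies in $\mathcal{H}^1$ — this is Theorem~\ref{ESsharphonorary}, whose proof (\S4.3.2) relies on the stability of the larger family $\zeta^{\sharp,\mathfrak{m}}(\{\overline{\text{even}},\text{odd}\}^\times)$ under the coaction, together with Deligne--Goncharov's depth-$1$ result and the criterion that a motivic Euler sum $\mathfrak{Z}$ is a motivic MZV iff $D_1(\mathfrak{Z})=0$ and $D_{2r+1}(\mathfrak{Z})\in\mathcal{H}^1$. The parity constraint $\epsilon\,0^{2a}\,\epsilon$ or $\epsilon\,0^{2a+1}\,(-\epsilon)$ is exactly what forces $D_1=0$ (length-one cuts between $\pm\sharp$ would need a $\overline 1$, which is excluded by the $\textsc{Sign}$ rule since such a cut always sits at even depth) and keeps the family inside itself under $D_{2r+1}$. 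Second, with unramifiedness in hand, I would set up the map
\[
\partial_{n,p}\;:\;gr^{\mathfrak{D}}_p\mathcal{H}^1_n\;\longrightarrow\;\bigoplus_{2r+1<n}\bigl(gr^{\mathfrak{D}}_{p-1}\mathcal{H}^1_{n-2r-1}\bigr),
\]
the depth-graded, weight-graded part of $\oplus_r D_{2r+1}$ followed by the depth-$1$ projection, and compute $\partial_{n,p}$ on the generators $\zeta^{\sharp,\mathfrak{m}}(2a_0+1,2a_1+3,\dots,2a_{p-1}+3,\overline{2a_p+2})$. The claim to prove is that in the basis $\mathcal{B}^\sharp_{n-2r-1,p-1}$ of the target (available by the inductive hypothesis on weight), the matrix of $\partial_{n,p}$ is, after the deconcatenation reduction above, a matrix of binomial coefficients which is invertible — and here $2$-adic estimates on those binomial coefficients give the injectivity, exactly as in the analogous proofs in Chapter~5 and \S4.4.

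The recursion then runs: $\mathcal{B}^\sharp_{n,1}=\{\zeta^{\sharp,\mathfrak{m}}(\overline{2a+2})\}$ is handled by the depth-$1$ (Deligne--Goncharov) analysis, in weight $n$ we assume $\mathcal{B}^\sharp_{m,q}$ is a basis of $gr^{\mathfrak{D}}_q\mathcal{H}^1_m$ for all $m<n$, injectivity of $\partial_{n,p}$ gives linear independence of $\mathcal{B}^\sharp_{n,p}$ in $gr^{\mathfrak{D}}_p\mathcal{H}^1_n$, and summing over $p$ gives linear independence of $\mathcal{B}^\sharp_n$ in $\mathcal{H}^1_n$ (the depth filtration being respected); finally, $\#\mathcal{B}^\sharp_n$ equals the Fibonacci-type count $d_n=d_{n-2}+d_{n-3}$ (the number of compositions of $n$ into parts counted with the $a_i\ge 0$ constraints matches the Hoffman-basis count), which equals $\dim\mathcal{H}^1_n$ by F.~Brown's theorem, so the family is a basis. \textbf{The main obstacle} I anticipate is the deconcatenation reduction itself: showing rigorously that all the non-deconcatenation cuts of $D_{2r+1}$ applied to a $\sharp$-element fall into $\mathcal{F}^{\mathfrak{D}}_{p-2}$ (so that $\partial_{n,p}$ really is triangular), which requires the full force of the hybrid relation of Theorem~\ref{hybrid} and the $\zeta^{\sharp\sharp,\mathfrak{l}}$ identities to collapse interior cuts in $\mathcal{L}$ — this is the delicate combinatorial heart, packaged in Appendix~$A.1$, and the one place where the specific $2$-power coefficients in the definition of $\zeta^\sharp$ (rather than $\zeta^\star$) are what make everything align.
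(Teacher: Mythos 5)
Your overall architecture (unramifiedness via the criterion $D_1=0$ plus stability of the $\{\overline{\mathrm{even}},\mathrm{odd}\}$ family, then a depth-graded map $\partial_{n,p}$, invertibility of its matrix, then a cardinality count) is essentially the paper's, but the structural claim you single out as the heart of the argument is not just delicate — as stated it is false, and the paper never proves it. For this family the operators $D_{2r+1}$ do \emph{not} act as deconcatenation modulo $\mathcal{F}^{\mathfrak{D}}_{p-2}$. The depth-graded derivation $D^{-1}_{2r+1,p}$ applied to $\zeta^{\sharp,\mathfrak{m}}(2a_0+1,2a_1+3,\dots,2a_{p-1}+3,\overline{2a_p+2})$ produces, besides the initial cut $\delta_{r=a_0}$, a whole collection of interior cuts which merge two adjacent arguments and therefore remain exactly in depth $p-1$, inside the same family — that is precisely why $\mathcal{H}^{odd\sharp}$ is stable. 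The hybrid relation and the antipode/\textsc{Sign}/\textsc{Cut} identities are used in Appendix A.1 to keep those cuts \emph{inside} the $\{\overline{\mathrm{even}},\mathrm{odd}\}$ family, not to push them into $\mathcal{F}^{\mathfrak{D}}_{p-2}$. So there is no algebraic triangularity \say{modulo lower depth} to be had, and your anticipated \say{main obstacle} cannot be overcome in the form you propose.

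What actually makes the matrix invertible is a purely $2$-adic dominance, and the dominant terms are the opposite ones from yours: not the cut removing the first block $2a_0+1$ (whose left factor is an \emph{unbarred} odd zeta and hence, after projecting onto $\zeta^{\mathfrak{l}}(\overline{2r+1})$ via $\zeta^{\mathfrak{l}}(2r+1)=\tfrac{-2^{2r}}{2^{2r}-1}\zeta^{\mathfrak{l}}(\overline{2r+1})$, carries the factor $\tfrac{2^{2r+1}}{1-2^{2r}}$), but the cuts ending at the last argument, with left factor $\zeta^{\mathfrak{l}}_{2a_{p-1}-2\alpha+1}(\overline{2a_p+2})$ and coefficient $2\binom{2r}{2a_p+1}$. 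After rescaling the row of $D_{2r+1}$ by $2^{-2r}$, every non-deconcatenation entry (initial and interior cuts alike) has positive $2$-adic valuation, while these end-deconcatenation entries have valuation $\le 0$; hence modulo $2$ only the latter survive, and with the lexicographic order on the reversed tuples $(a_p,\dots,a_0)$ versus $(r,b_{p-1},\dots,b_0)$ the reduced matrix is triangular with $1$'s on the diagonal, so invertible — no further estimate on binomials is needed beyond this. Two smaller repairs: the deconcatenation must sit at the right end of the word, where the $\overline{\mathrm{even}}$ entry is (this is what makes its left factor $2$-adically small), and before speaking of \say{the matrix of $\partial_{n,p}$ in the basis $\mathcal{B}^{\sharp}_{n-2r-1,p-1}$} you must first know those target elements are linearly independent, which the paper handles by running the recursion on weight and depth with a formally defined matrix before identifying it with the actual one.
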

First, it is worth noticing that this subfamily is also stable under the coaction.\\
\\
\textsc{Remark}:  It is conjecturally the same family as the Hoffman star family $\zeta^{\star} (\boldsymbol{2}^{a_{0}},3,\cdots, 3, \boldsymbol{2}^{a_{p}})$, by Conjecture $(\ref{lzg})$.\\
\\
For that purpose, we use the increasing \textit{depth filtration} $\mathcal{F}^{\mathfrak{D}}$ on $\mathcal{H}^{2}$ such that (cf. $\S 2.4.3$):
\begin{center}
  $\mathcal{F}_{p}^{\mathfrak{D}} \mathcal{H}^{2}$ is generated by Euler sums of depth smaller than $p$.
 \end{center} 
 Note that it is not a grading, but we define the associated graded as the quotient $gr_{p}^{\mathfrak{D}}\mathrel{\mathop:}=\mathcal{F}_{p}^{\mathfrak{D}} \diagup \mathcal{F}_{p-1}^{\mathfrak{D}}$. The vector space $\mathcal{F}_{p}^{\mathfrak{D}}\mathcal{H}$ is stable under the action of $\mathcal{G}$. The linear independence of this $\sharp$ family is proved below thanks to a recursion on the depth and on the weight, using the injectivity of a map $\partial$ where $\partial$ came out of the depth and weight-graded part of the coaction $\Delta$. 

\subsection{Depth graded Coaction}

In Chapter $2$, we defined the depth graded derivations $D_{r,p}$ (cf. $\ref{Drp}$), and $D^{-1}_{r,p}$ ($\ref{eq:derivnp}$) after the projection on the right side, using depth $1$ results: 
$$gr^{\mathcal{D}}_{1} \mathcal{L}_{2r+1}=\mathbb{Q}\zeta^{\mathfrak{l}}(2r+1).$$
Let look at the following maps, whose injectivity is fundamental to the Theorem $\ref{ESsharpbasis}$:
$$ D^{-1}_{2r+1,p} : gr^{\mathfrak{D}}_{p}\mathcal{H}_{n}\rightarrow gr^{\mathfrak{D}}_{p-1}\mathcal{H}_{n-2r-1} .$$
$$\partial_{<n,p} \mathrel{\mathop:}=\oplus_{2r+1<n} D^{-1}_{2r+1,p} .$$
Their explicit expression is:

\begin{lemm}\footnotemark[2]\footnotetext[2]{To be accurate, the term $i=0$ in the first sum has to be understood as:
$$ \frac{2^{2r+1}}{1-2^{2r}}\binom{2r}{2a_{1}+2} \zeta^{\sharp,\mathfrak{m}} (2\alpha+3,  2 a_{2}+3,\cdots, \overline{2a_{p}+2}) . $$
Meanwhile the terms $i=1$, resp. $i=p$ in the second sum have to be understood as:
$$ \frac{2^{2r+1}}{1-2^{2r}}\binom{2r}{2a_{0}+2} \zeta^{\sharp,\mathfrak{m}} (2\alpha+3,  2 a_{2}+3,\cdots, \overline{2a_{p}+2}) \quad \text{ resp. } \quad \frac{2^{2r+1}}{1-2^{2r}}\binom{2r}{2a_{p-1}+2} \zeta^{\sharp,\mathfrak{m}} (\cdots, 2 a_{p-2}+3, \overline{2\alpha+2}).$$}
\begin{multline} \label{eq:dgrderiv} 
D^{-1}_{2r+1,p} \left(  \zeta^{\sharp,\mathfrak{m}} (2a_{0}+1,2a_{1}+3,\cdots, 2 a_{p-1}+3, \overline{2a_{p}+2}) \right) = \\
\delta_{r=a_{0}} \frac{2^{2r+1}}{1-2^{2r}}\binom{2r}{2r+2} \zeta^{\sharp,\mathfrak{m}} (2 a_{1}+3,\cdots, \overline{2a_{p}+2})\\
+ \sum_{0 \leq i \leq p-2, \quad  \alpha \leq a_{i}\atop r=a_{i+1}+a_{i}+1-\alpha} \frac{2^{2r+1}}{1-2^{2r}}\binom{2r}{2a_{i+1}+2} \zeta^{\sharp,\mathfrak{m}} (\cdots, 2 a_{i-1}+3,2\alpha+3,  2 a_{i+2}+3,\cdots, \overline{2a_{p}+2})\\
+ \sum_{1 \leq i \leq p-1, \quad \alpha \leq a_{i} \atop r=a_{i-1}+a_{i}+1-\alpha} \frac{2^{2r+1}}{1-2^{2r}}\binom{2r}{2a_{i-1}+2} \zeta^{\sharp,\mathfrak{m}} (\cdots, 2 a_{i-2}+3,2\alpha+3,  2 a_{i+1}+3,\cdots, \overline{2a_{p}+2})\\
+ \textsc{(Deconcatenation)} \sum_{\alpha \leq a_{p} \atop r=a_{p-1}+a_{p}+1-\alpha} 2 \binom{2r}{2a_{p}+1}\zeta^{\sharp,\mathfrak{m}} (\cdots, 2 a_{p-1}+3,\overline{2\alpha+2}).
\end{multline}
\end{lemm}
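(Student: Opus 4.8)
The statement is the explicit formula for the depth-graded, weight-graded infinitesimal coaction $D^{-1}_{2r+1,p}$ applied to the basis elements $\zeta^{\sharp,\mathfrak{m}}(2a_0+1, 2a_1+3,\dots, 2a_{p-1}+3, \overline{2a_p+2})$. The plan is to start from the general formula for $D_r$ on motivic iterated integrals (equation~\eqref{eq:Der}), rewrite the $\sharp$-version via the linearity relations~\eqref{eq:miistarsharp}, and then carry out the bookkeeping of which cuts survive after projecting onto $gr^{\mathfrak{D}}_1\mathcal{L}_{2r+1} = \mathbb{Q}\,\zeta^{\mathfrak{l}}(2r+1)$ on the left-hand factor.

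\textbf{Step 1: reduce to iterated-integral combinatorics.} First I would expand $\zeta^{\sharp,\mathfrak{m}}(2a_0+1,2a_1+3,\dots,\overline{2a_p+2})$ as $2(-1)^p I^{\mathfrak{m}}(0; 0^{2a_0}, \sharp, 0^{2a_1+2}, \sharp, \dots, \sharp, 0^{2a_{p-1}+2}, -\sharp, 0^{2a_p+1}; 1)$ — using that the arguments have the sign pattern: each odd argument $2a_i+3$ contributes a block $\epsilon\, 0^{2a_i+2}\,\epsilon$ (signs equal) and the final even argument $\overline{2a_p+2}$ forces a sign flip, consistent with the "only $\{\overline{\text{even}},\text{odd}\}$" pattern described in $\S 4.3$. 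Here each $\sharp$ stands for $2\omega_{\pm 1} - \omega_0$ by~\eqref{eq:miistarsharp}. By the derivation property of $D_r$ and multilinearity in the $\sharp$-slots, it suffices to compute $D_{2r+1}$ on the underlying $I^{\mathfrak{m}}$ with $a_i \in \{0, \pm 1\}$ and then re-assemble the $\sharp$-coefficients; the powers of $2$ and the factors $\frac{2^{2r+1}}{1-2^{2r}}$ will come from this reassembly combined with the depth-$1$ identity.

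\textbf{Step 2: classify the surviving cuts.} Applying~\eqref{eq:Der}, a cut of interior length $2r+1$ removes a subword $a_{p+1},\dots,a_{p+2r+1}$. For the left factor $I^{\mathfrak{l}}$ to be nonzero modulo products and after the depth-graded projection, it must be a depth-$1$ motivic iterated integral of odd weight $2r+1$ whose class in $gr^{\mathfrak{D}}_1\mathcal{L}_{2r+1}$ is a rational multiple of $\zeta^{\mathfrak{l}}(2r+1)$. Using the depth-$1$ distribution/conjugation relations (Lemma~2.4.1) and the antipode relations of $\S 4.2.1$ (the Antipode $\shuffle$, equation~\eqref{eq:antipodeshuffle2}), together with $I^{\mathfrak{l}}(0; w; -1) \equiv 0$-type vanishings from Corollary in $\S 4.2$, one identifies exactly which cut positions contribute: cuts entirely inside one odd block (giving the $\delta_{r=a_0}$ term and analogous interior terms when the cut straddles two consecutive $\sharp$'s sharing the $0^{2a_i+2}$ block between them — these are the two middle sums with the splitting $r = a_{i+1}+a_i+1-\alpha$), and the deconcatenation cut ending at the final $1$ (the \textsc{Deconcatenation} term). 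The binomial coefficients $\binom{2r}{2a_{i+1}+2}$ etc. arise from the shuffle-regularization~\eqref{eq:shufflereg} needed to express a depth-$1$ motivic iterated integral $I^{\mathfrak{l}}(0; 0^a, \pm 1, 0^b; 1)$ in terms of $\zeta^{\mathfrak{l}}(2r+1)$, picking up $\binom{a+b}{a}$, and from the $\sharp$-linearization that converts $\pm 1$ into $\pm\sharp$. The rational prefactor $\frac{2^{2r+1}}{1-2^{2r}}$ is precisely the coefficient of $\zeta^{\mathfrak{l}}(2r+1)$ in the class of $\zeta^{\sharp,\mathfrak{l}}(\overline{2r+2})$ (or $\zeta^{\sharp,\mathfrak{l}}(2r+1)$) modulo products, computed from the depth-$1$ value; this should be extracted once and for all and then plugged in.

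\textbf{Step 3: assemble and match.} Finally I would collect the surviving terms according to which $0$-block the cut touches, relabel indices, and verify the answer coincides with~\eqref{eq:dgrderiv}, paying attention to the three degenerate cases flagged in the footnote (the $i=0$ term in the first sum, and the $i=1$, $i=p$ terms in the second sum, where a block collapses or merges with the boundary). \textbf{The main obstacle} I anticipate is not the individual cut analysis but the sign and coefficient bookkeeping: tracking the $(-1)^p$, the overall factor $2$, the $\binom{2r}{\cdot}$ from regularization versus the $2$-power from $\sharp$-linearization, and ensuring the two "interior" sums (cut straddling the block to the left versus to the right of a $\sharp$) are correctly separated and not double-counted. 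The depth-graded projection kills most cross terms, but verifying that the only depth-$1$ left-factors that survive are those of the stated form — in particular that cuts producing $\overline{1}$-type arguments vanish, via \textsc{Sign} from the $\S 4.2$ corollary, because parity forces even depth — is the delicate point that makes the formula as clean as stated.
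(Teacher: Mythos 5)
Your proposal is correct and follows essentially the same route as the paper: the paper applies the $\sharp$-linearized coaction of Appendix A.1 (already simplified by the antipode/hybrid/\textsc{Sign} rules of $\S 4.2$, which is your Step 2), keeps only the depth-one cuts in the depth-graded, and converts each left factor into a multiple of the basis element $\zeta^{\mathfrak{l}}(\overline{2r+1})$ via the depth-one identities $\zeta^{\mathfrak{l}}_{2r+1-a}(a)=(-1)^{a+1}\binom{2r}{a-1}\zeta^{\mathfrak{l}}(2r+1)$ and $\zeta^{\mathfrak{l}}(2r+1)=\frac{2^{2r}}{1-2^{2r}}\zeta^{\mathfrak{l}}(\overline{2r+1})$, which is exactly where your binomial coefficients and the prefactor $\frac{2^{2r+1}}{1-2^{2r}}$ come from. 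The small slips in your Step 1 (here every letter carries the sign $-1$ and each letter precedes its block of zeros, and the prefactor is $2$ times the depth-one conversion factor rather than a coefficient of a $\sharp$-value) are precisely the sign/coefficient bookkeeping you flag and do not affect the method.
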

\begin{proof}
Looking at the Annexe $A.1$ expression for $D_{2r+1}$, we obtain for $D_{2r+1,p}$ keeping only the cuts of depth one (removing exactly one non zero element):
\begin{multline} \nonumber
D_{2r+1,p} \zeta^{\sharp,\mathfrak{m}} (2a_{0}+1,2a_{1}+3,\cdots, 2 a_{p-1}+3, \overline{2a_{p}+2})=\\
\sum_{i, \alpha \leq a_{i}\atop r=a_{i+1}+a_{i}+1-\alpha} 2 \zeta^{\mathfrak{l}} _{2a_{i}-2\alpha}(2a_{i+1}+3)  \otimes \zeta^{\sharp,\mathfrak{m}} (\cdots, 2 a_{i-1}+3,2\alpha+3,  2 a_{i+2}+3,\cdots, \overline{2a_{p}+2})\\
+\sum_{i, \alpha \leq a_{i} \atop r=a_{i-1}+a_{i}+1-\alpha} 2 \zeta^{\mathfrak{l}} _{2a_{i}-2\alpha}(2a_{i-1}+3)  \otimes \zeta^{\sharp,\mathfrak{m}} (\cdots, 2 a_{i-2}+3,2\alpha+3,  2 a_{i+1}+3,\cdots, \overline{2a_{p}+2})\\
+\sum_{\alpha \leq a_{p} \atop r=a_{p-1}+a_{p}+1-\alpha} 2 \zeta^{\mathfrak{l}} _{2a_{p-1}-2\alpha+1}(\overline{2a_{p}+2})  \otimes \zeta^{\sharp,\mathfrak{m}} (\cdots, 2 a_{p-1}+3,\overline{2\alpha+2}).
\end{multline}
To lighten the result, some cases at the borders ($i=0$, or $i=p$) have been included in the sum, being fundamentally similar (despite some index problems). These are clarified in the previous footnote\footnotemark[2].\\
In particular, with notations of the Lemma $\ref{lemmt}$, $T_{0,0}$ terms can be neglected as they decrease the depth by at least $2$; same for the $T_{0,\epsilon}$ and  $T_{\epsilon,0}$ for cuts between $\epsilon$ and $\pm \epsilon$. To obtain the lemma, it remains to check the coefficient of $\zeta^{\mathfrak{l}}(\overline{2r+1})$ for each term in the left side thanks to the known identities:
$$  \zeta^{\mathfrak{l}}(2r+1)= \frac{-2^{2r}}{2^{2r}-1} \zeta^{\mathfrak{l}}(\overline{2r+1})\quad  \text{  and  } \quad \zeta^{\mathfrak{l}}_{2r+1-a}(a)=(-1)^{a+1}\binom{2r}{a-1} \zeta^{\mathfrak{l}}(2r+1).$$
\end{proof}

\subsection{Proofs of Theorem $4.3.1$ and $4.3.2$}

\begin{proof}[\textbf{Proof of Theorem $4.3.1$}]
By Corollary $5.1.2$, we can prove it in two steps:
\begin{itemize}
\item[$\cdot$] First, checking that $D_{1}(\cdot)=0$ for this family, which is rather obvious by Lemma $\ref{condd1}$ since there is no sequence of the type $\lbrace 0, \epsilon, -\epsilon \rbrace$ or $\lbrace \epsilon, -\epsilon, 0 \rbrace$ in the iterated integral.
\item[$\cdot$] Secondly, we can use a recursion on weight to prove that $D_{2r+1}(\cdot)$, for $r> 0$, are unramified. Consequently, using recursion, this follows from the following statement:
\begin{center}
The family $\zeta^{\sharp,  \mathfrak{m}} \left( \lbrace \overline{\text{even }},  + \text{odd } \rbrace^{\times} \right) $ is stable under $D_{2r+1}$.
\end{center}
This is proved in Lemma $A.1.3$, using the relations of $\S 4.2$ in order to simplify the \textit{unstable cuts}, i.e. the cuts where a sequence of type $\epsilon, 0^{2a+1}, \epsilon$ or $\epsilon, 0^{2a}, -\epsilon$ appears; indeed, these cuts give rise to a $\text{even}$ or to a $\overline{ \text{ odd}}$ in the $\sharp$ Euler sum.
\end{itemize}
One fundamental observation about this family, used in Lemma $A.1.3$ is: for a subsequence of odd length from the iterated integral, because of these patterns of  $\epsilon, \boldsymbol{0}^{2a}, \epsilon$, or $\epsilon, \boldsymbol{0}^{2a+1}, -\epsilon$, we can put in relation the depth $p$, the weight $w$ and $s$ the number of sign changes among the $\pm\sharp$:
$$w\equiv p-s  \pmod{2}.$$
It means that if we have a cut $\epsilon_{0},\cdots \epsilon_{p+1}$ of odd weight, then:
\begin{center}
\textsc{Either:} Depth $p$ is odd, $s$ even, $\epsilon_{0}=\epsilon_{p+1}$,    \textsc{Or:} Depth $p$ is even, $s$ odd, $\epsilon_{0}=-\epsilon_{p+1}$.
\end{center}
\end{proof}

\begin{proof}[\textbf{Proof of Theorem $4.3.2$}]
By a cardinality argument, it is sufficient to prove the linear independence of the family, which is based on the injectivity of $\partial_{<n,p}$. Let us define: \footnote{Sub-$\mathbb{Q}$ vector space of $\mathcal{H}^{1}$ by previous Theorem.}\nomenclature{$\mathcal{H}^{odd\sharp}$}{$\mathbb{Q}$-vector space generated by $\zeta^{ \sharp,\mathfrak{m}} (2a_{0}+1,2a_{1}+3,\cdots, 2 a_{p-1}+3, \overline{2a_{p}+2})$}
\begin{center}
$\mathcal{H}^{odd\sharp}$:  $\mathbb{Q}$-vector space generated by $\zeta^{ \sharp,\mathfrak{m}} (2a_{0}+1,2a_{1}+3,\cdots, 2 a_{p-1}+3, \overline{2a_{p}+2})$.
\end{center}
The first thing to remark is that $\mathcal{H}^{odd\sharp}$ is stable under these derivations, by the expression obtained in Lemma $A.1.4$.:
$$D_{2r+1} (\mathcal{H}_{n}^{odd\sharp}) \subset  \mathcal{L}_{2r+1} \otimes \mathcal{H}_{n-2r-1}^{odd\sharp},$$
Now, let consider the restriction on $\mathcal{H}^{odd\sharp}$ of $\partial_{<n,p}$ and prove:
$$\partial_{<n,p}: gr^{\mathfrak{D}}_{p} \mathcal{H}_{n}^{odd\sharp} \rightarrow \oplus_{2r+1<n}  gr^{\mathfrak{D}}_{p-1}\mathcal{H}_{n-2r-1}^{odd\sharp} \text{  is bijective. }$$
The formula $\eqref{eq:dgrderiv}$ gives the explicit expression of this map. Let us prove more precisely:
\begin{center}
$M^{\mathfrak{D}}_{n,p}$ the matrix of $\partial_{<n,p}$ on $\left\lbrace \zeta^{ \sharp ,\mathfrak{m}} (2a_{0}+1,2a_{1}+3,\cdots, 2 a_{p-1}+3, \overline{2a_{p}+2})\right\rbrace $ in terms of $\left\lbrace \zeta^{ \sharp ,\mathfrak{m}} (2b_{0}+1,2b_{1}+3,\cdots, 2 b_{p-2}+3, \overline{2b_{p-1}+2})\right\rbrace $ is invertible.
\end{center}
\texttt{Nota Bene}: The matrix $M^{\mathfrak{D}}_{n,p}$ is well (uniquely) defined provided that the $\zeta^{ \sharp ,\mathfrak{m}}$ of the second line are linearly independent. So first, we have to consider the formal matrix associated $\mathbb{M}^{\mathfrak{D}}_{n,p}$ defined explicitly (combinatorially) by the formula for the derivations given, and prove $\mathbb{M}^{\mathfrak{D}}_{n,p}$ is invertible. Afterwards, we could state that $M^{\mathfrak{D}}_{n,p}$ is well defined and invertible too since equal to $\mathbb{M}^{\mathfrak{D}}_{n,p}$.
\begin{proof}
The invertibility comes from the fact that the (strictly) smallest terms $2$-adically in $\eqref{eq:dgrderiv}$ are the deconcatenation ones, which is an injective operation. More precisely, let $\widetilde{M}^{\mathfrak{D}}_{n,p}$ be the matrix $\mathbb{M}_{n,p}$ where we have multiplied each line corresponding to $D_{2r+1}$ by ($2^{-2r}$). Then, order elements on both sides by lexicographical order on  ($a_{p}, \ldots, a_{0}$), resp. ($r,b_{p-1}, \ldots, b_{0}$), such that the diagonal corresponds to $r=a_{p}+1$ and $b_{i}=a_{i}$ for $i<p$. The $2$ -adic valuation of all the terms in $(\ref{eq:dgrderiv})$ (once divided by $2^{2r}$) is at least $1$, except for the deconcatenation terms since:
$$v_{2}\left( 2^{-2r+1} \binom{2r}{2a_{p}+1} \right)  \leq 0 \Longleftrightarrow  v_{2}\left( \binom{2r}{2a_{p}+1} \right)  \leq 2r-1.$$
Then, modulo $2$, only the deconcatenation terms remain, so the matrix $\widetilde{M}^{\mathfrak{D}}_{n,p}$ is triangular with $1$ on the diagonal. This implies that $\det (\widetilde{M}^{\mathfrak{D}}_{n,p})\equiv 1   \pmod{2}$, and in particular is non zero: the matrix $\widetilde{M}^{\mathfrak{D}}_{n,p}$ is invertible, and so does $\mathbb{M}^{\mathfrak{D}}_{n,p}$.
\end{proof}
This allows us to complete the proof since it implies:
\begin{center}
The elements of $\mathcal{B}^{\sharp}$ are linearly independent.
\end{center}
\begin{proof}
First, let prove the linear independence of this family of the same depth and weight, by recursion on $p$. For depth $0$, this is obvious since $\zeta^{\mathfrak{m}}(\overline{2n})$ is a rational multiple of $\pi^{2n}$.\\
Assuming by recursion on the depth that the elements of weight $n$ and depth $p-1$ are linearly independent, since $M^{\mathfrak{D}}_{n,p}$ is invertible, this means both that the  $\zeta^{ \sharp,\mathfrak{m}} (2a_{0}+1,2a_{1}+3,\cdots, 2 a_{p-1}+3, \overline{2a_{p}+2})$ of weight $n$ are linearly independent and that $\partial_{<n,p}$ is bijective, as announced before.\\
The last step is just to realize that the bijectivity of $\partial_{<n,l}$ also implies that elements of different depths are also linearly independent. The proof could be done by contradiction: by applying $\partial_{<n,p}$ on a linear combination where $p$ is the maximal depth appearing, we arrive at an equality between same level elements.
\end{proof}
\end{proof}

\section{Hoffman $\star$}

\begin{theo}\label{Hoffstar}
If the analytic conjecture  ($\ref{conjcoeff}$) holds, then the motivic \textit{Hoffman} $\star$ family $\lbrace \zeta^{\star,\mathfrak{m}} (\lbrace 2,3 \rbrace^{\times})\rbrace$ is a basis of $\mathcal{H}^{1}$, the space of MMZV.
\end{theo}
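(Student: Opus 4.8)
The strategy mirrors F.\ Brown's proof that the Hoffman elements $\zeta^{\mathfrak{m}}(\{2,3\}^\times)$ form a basis of $\mathcal{H}^1$, but adapted to the $\star$ (hence $\star\star$) setting. The plan is to prove linear independence of $\{\zeta^{\star,\mathfrak{m}}(\{2,3\}^\times)\}$; since this family has the right cardinality in each weight ($d_n = d_{n-2}+d_{n-3}$, the Hoffman counting), linear independence forces it to be a basis. First I would introduce the level filtration $\mathcal{F}^L_\bullet$ on $\mathcal{H}^{2,3}$ by the number of $3$'s, and establish that each $\mathcal{F}^L_\ell \mathcal{H}^{2,3}$ is stable under the coaction $\Delta$ (equivalently under all $D_r$). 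This stability is the analogue of Brown's key observation; it should follow from an explicit computation of $D_{2r+1}$ applied to $\zeta^{\star,\mathfrak{m}}(\boldsymbol{2}^{a_0},3,\ldots,3,\boldsymbol{2}^{a_p})$ using Theorem~\ref{eq:coaction}, where the subtlety is that cutting inside such a string produces non-convergent motivic $\star$-values that must be renormalized as MMZV$^{\star\star}$. Here one invokes the antipodal and hybrid relations of \S4.2 (Corollary for Euler $\star\star$ sums, and Theorem~\ref{hybrid}) to re-express the $\star\star$ terms, plus the identities of Lemma~\ref{lemmcoeff} giving the depth-$1$-plus-power-of-$\pi$ decompositions, so that the right-hand factor again lands in $\mathcal{H}^{2,3}$ with controlled level (level dropping by exactly one under each $D_{2r+1}$, and the level-graded piece behaving like deconcatenation in the $f$-alphabet).

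Next I would set up the recursion: arguing by induction on the weight $n$ and within fixed weight by induction on the level $\ell$. The engine is a map $\partial$ built from the level-and-weight-graded part of the coaction, $\partial\colon \mathrm{gr}^L_\ell \mathcal{H}^{2,3}_n \to \bigoplus_{r} \mathrm{gr}^L_{\ell-1}\mathcal{H}^{2,3}_{n-2r-1}$ (projecting the left-hand factor via depth-$1$ results as in \S2.4.4), and the claim is that $\partial$ is injective. Granting injectivity, Corollary~\ref{kerdn} (the kernel of $D_{<n}$ is spanned by $\zeta^{\mathfrak{m}}(n)$ and $(\pi^{\mathfrak{m}})^n$) handles the level-$0$ / kernel part, and the induction closes exactly as in Brown's argument. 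To prove $\partial$ injective one writes its matrix $M_{n,\ell}$ in the Hoffman $\star$ basis on source and target, identifies the ``deconcatenation'' entries (coming from cutting off the final block) as the distinguished diagonal, and shows these dominate $2$-adically: after normalizing each $D_{2r+1}$-row by a suitable power of $2$, all non-deconcatenation entries have positive $2$-adic valuation while the deconcatenation entries are $2$-adic units, so $M_{n,\ell}\equiv (\text{triangular with units on the diagonal})\pmod 2$ and hence $\det M_{n,\ell}\not\equiv 0\pmod 2$. This $2$-adic valuation estimate on the binomial coefficients that appear is precisely what the analytic Conjecture~\ref{conjcoeff} supplies (it controls the coefficients produced by the $\star\star$-renormalization), and it is the only place the conjecture enters.

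The main obstacle will be the bookkeeping and renormalization in the coaction computation: unlike the non-$\star$ Hoffman case where the string $\{2,3\}$ cuts cleanly, here $D_{2r+1}\zeta^{\star,\mathfrak{m}}(\boldsymbol{2}^{a_0},3,\cdots)$ spawns $\zeta^{\star\star,\mathfrak{m}}(\boldsymbol{2}^{a},3,\boldsymbol{2}^{b})$-type quantities which are \emph{not} of depth $1$ but are $\mathbb{Q}$-linear combinations of products of depth-$1$ MMZV with powers of $\pi$; taming these and checking that after all reductions the answer still lies in $\mathcal{H}^{2,3}$ at the correct level requires the full strength of Lemma~\ref{lemmcoeff} together with the hybrid/antipodal relations, and getting the combinatorics of Lemma~\ref{lemmcoeff}'s coefficients right (including the $2$-adic valuations) is where Conjecture~\ref{conjcoeff} is indispensable. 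Once the level filtration is shown stable and the matrix $M_{n,\ell}$ is shown invertible via the $2$-adic argument, the conclusion that $\mathcal{H}^{2,3}=\mathcal{H}^1$ is immediate by comparing dimensions with Lemma~\ref{drz}-type counting. I would organize the write-up as: (1) definition of level and its stability under $\mathcal{G}$; (2) the renormalization lemmas for $\star\star$ values (\S4.2 inputs plus Lemma~\ref{lemmcoeff}); (3) the explicit formula for $\partial$ and the statement of the matrix $M_{n,\ell}$; (4) the $2$-adic invertibility of $M_{n,\ell}$ under Conjecture~\ref{conjcoeff}; (5) the double induction on weight and level, using Corollary~\ref{kerdn}, concluding linear independence and hence the basis property.
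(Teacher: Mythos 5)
Your proposal follows essentially the same route as the paper's proof in \S 4.4: the level filtration by the number of $3$'s and its stability under $\mathcal{G}$, the renormalization of the $\zeta^{\star\star,\mathfrak{m}}(\boldsymbol{2}^{a},3,\boldsymbol{2}^{b})$ terms via Lemma \ref{lemmcoeff} and the relations of \S 4.2, the level-and-weight-graded map $\partial^{L}_{<n,l}$ whose matrix is shown invertible by the $2$-adic dominance of the deconcatenation entries (which is exactly where Conjecture \ref{conjcoeff} enters), and the double recursion on weight and level followed by the cardinality count $d_n=d_{n-2}+d_{n-3}$. The argument is correct and matches the paper's.
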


For that purpose, we define an increasing filtration $\mathcal{F}^{L}_{\bullet}$ on $\mathcal{H}^{2,3}$, called \textbf{level}, such that:
\begin{equation}\label{eq:levelf}
 \mathcal{F}^{L}_{l}\mathcal{H}^{2,3} \text{ is spanned by } \zeta^{\star,\mathfrak{m}} (\boldsymbol{2}^{a_{0}},3,\cdots,3, \boldsymbol{2}^{a_{p}}) \text{, with less than 'l' } 3. 
\end{equation}
It corresponds to the motivic depth for this family, as we see through the proof below and the coaction calculus.\\
\paragraph{Sketch. } The vector space $\mathcal{F}^{L}_{l}\mathcal{H}^{2,3}$ is stable under the action of $\mathcal{G}$ ($\ref{eq:levelfiltstrable}$). The linear independence of the Hoffman $\star$ family is proved below ($ § 4.4.2$) thanks to a recursion on the level and on the weight, using the injectivity of a map $\partial^{L}$ where $\partial^{L}$ came out of the level and weight-graded part of the coaction $\Delta$ (cf. $4.4.2$). The injectivity is proved via $2$-adic properties of some coefficients conjectured in $\ref{conjcoeff}$.\\
Indeed, when computing the level graded coaction (cf. Lemma $4.4.2$) on the Hoffman $\star$ elements, looking at the left side, some elements appear, such as $\zeta^{\star\star,\mathfrak{m}}(\boldsymbol{2}^{a},3,\boldsymbol{2}^{b})$ but also $\zeta^{\star\star,\mathfrak{m}}(\boldsymbol{2}^{a},3,\boldsymbol{2}^{b})$. These are not always of depth $1$ as we could expect,\footnote{As for the Hoffman non $\star$ case done by Francis Brown, using a result of Don Zagier for level $1$.} but at least are abelians: product of motivic simple zeta values, as proved in Lemma $\ref{lemmcoeff}$.\\
To prove the linear independence of Hoffman $\star$ elements, we will then need to know some coefficients appearing in Lemma $\ref{lemmcoeff}$ (or at least the 2-adic valuation) of $\zeta(weight)$ for each of these terms, conjectured in $\ref{conjcoeff}$, which is the only missing part of the proof, and can be solved at the analytic level.\\

\subsection{Level graded coaction}

Let use the following form for a MMZV$^{\star}$, gathering the $2$:
$$\zeta^{\star, \mathfrak{m}} (\boldsymbol{2}^{a_{0}},c_{1},\cdots,c_{p}, \boldsymbol{2}^{a_{p}}), \quad c_{i}\in\mathbb{N}^{\ast}, c_{i}\neq 2.$$
This writing is suitable for the Galois action (and coaction) calculus, since by the antipode relations ($\S 4.2$), many of the cuts from a $2$ to a $2$ get simplified (cf.  Annexe $\S A.1$).\\
For the Hoffman family, with only $2$ and $3$, the expression obtained is:\footnote{Cf. Lemma $A.1.2$; where $\delta_{2r+1}$ means here that the left side has to be of weigh $2r+1$.}\\
\begin{flushleft}
\hspace*{-0.7cm}$D_{2r+1}   \zeta^{\star, \mathfrak{m}} (\boldsymbol{2}^{a_{0}},3,\cdots,3, \boldsymbol{2}^{a_{p}})$
\end{flushleft}
\begin{multline} \label{eq:dr3}
\hspace*{-1.3cm}= \delta_{2r+1}\sum_{i<j} \left[  
\begin{array}{lll}
 + \quad \zeta^{\star\star, \mathfrak{l}}_{1} (\boldsymbol{2}^{a_{i+1}},3,\cdots,3, \boldsymbol{2}^{\leq a_{j}}) & \otimes & \zeta^{\star, \mathfrak{m}} (\cdots,3, \boldsymbol{2}^{1+a_{i}+ \leq a_{j}},3, \cdots)\\
 - \quad \zeta^{\star\star, \mathfrak{l}}_{1} (\boldsymbol{2}^{\leq a_{i}},3,\cdots,3, \boldsymbol{2}^{ a_{j-1}}) & \otimes & \zeta^{\star, \mathfrak{m}} (\cdots,3, \boldsymbol{2}^{1+a_{j}+ \leq a_{i}},3, \cdots)\\
 + \left(  \zeta^{\star\star, \mathfrak{l}}_{2} (\boldsymbol{2}^{a_{i+1}},3,\cdots, \boldsymbol{2}^{a_{j}},3) +  \zeta^{\star\star, \mathfrak{l}}_{1} (\boldsymbol{2}^{<a_{i}},3,\cdots, \boldsymbol{2}^{a_{j}},3) \right) & \otimes&  \zeta^{\star, \mathfrak{m}} (\cdots,3, \boldsymbol{2}^{<a_{i}},3,\boldsymbol{2}^{a_{j+1}},3, \cdots)\\
  -  \left(\zeta^{\star\star, \mathfrak{l}}_{2} (\boldsymbol{2}^{a_{j+1}},3,\cdots,3) + \zeta^{\star\star, \mathfrak{l}}_{1}(\boldsymbol{2}^{<a_{j}},3,\cdots,3) \right)& \otimes &  \zeta^{\star, \mathfrak{m}} (\cdots,3, \boldsymbol{2}^{a_{i-1}},3,\boldsymbol{2}^{< a_{j}},3, \cdots) \\
\end{array}  \right] \\
\quad \quad \begin{array}{lll}
\quad \quad+ \quad\delta_{2r+1} \quad \left( \zeta^{\star, \mathfrak{l}} (\boldsymbol{2}^{a_{0}},3,\cdots,3, \boldsymbol{2}^{\leq a_{i}})- \zeta^{\star\star, \mathfrak{l}} (\boldsymbol{2}^{\leq a_{i}},3,\cdots,3, \boldsymbol{2}^{a_{0}}) \right) & \otimes & \zeta^{\star, \mathfrak{m}} (\boldsymbol{2}^{\leq a_{i}},3, \cdots)\\
 \quad\quad +\quad \delta_{2r+1} \quad\zeta^{\star\star, \mathfrak{l}} (\boldsymbol{2}^{\leq a_{j}},3,\cdots,3, \boldsymbol{2}^{ a_{p}}) & \otimes & \zeta^{\star, \mathfrak{m}} (\cdots,3, \boldsymbol{2}^{\leq a_{j}}).
\end{array}
\end{multline}
In particular, the coaction on the Hoffman $\star$ elements is stable. \\
By the previous expression $(\ref{eq:dr3})$, we see that each cut (of odd length) removes at least one $3$. It means that the level filtration is stable under the action of $\mathcal{G}$ and:
\begin{equation} \label{eq:levelfiltstrable}
D_{2r+1}(\mathcal{F}^{L}_{l}\mathcal{H}^{2,3}) \subset  \mathcal{L}_{2r+1} \otimes \mathcal{F}^{L}_{l-1}\mathcal{H}_{n-2r-1}^{2,3} .
\end{equation}
Then, let consider the level graded derivation:
\begin{equation}
gr^{L}_{l} D_{2r+1}: gr^{L}_{l}\mathcal{H}_{n}^{2,3} \rightarrow  \mathcal{L}_{2r+1} \otimes gr^{L}_{l-1}\mathcal{H}_{n-2r-1}^{2,3}.
\end{equation}
If we restrict ourselves to the cuts in the coaction that remove exactly one $3$ in the right side, the formula $(\ref{eq:dr3})$ leads to:
\begin{flushleft}
\hspace*{-0.5cm}$gr^{L}_{l} D_{2r+1}   \zeta^{\star, \mathfrak{m}} (\boldsymbol{2}^{a_{0}},3,\cdots,3, \boldsymbol{2}^{a_{p}}) =$
\end{flushleft}
\begin{multline}\label{eq:gdr3}
\hspace*{-1.5cm}\begin{array}{lll}
\quad - \delta_{a_{0} < r \leq a_{0}+a_{1}+2} \quad \zeta^{\star\star, \mathfrak{l}}_{2} (\boldsymbol{2}^{a_{0}}, 3, \boldsymbol{2}^{r-a_{0}-2})  &\otimes & \zeta^{\star, \mathfrak{m}} (\boldsymbol{2}^{ a_{0}+a_{1}+1-r},3, \cdots)
\end{array}\\
\hspace*{-1.3cm}\sum_{i<j} \left[ \begin{array}{l}
\delta_{r\leq a_{i}} \quad \zeta^{\star\star, \mathfrak{l}}_{1} (\boldsymbol{2}^{r}) \quad \quad \quad \quad \quad  \otimes  \left(  \zeta^{\star, \mathfrak{m}} (\cdots,3, \boldsymbol{2}^{a_{i-1}+ a_{i}-r+1},3, \cdots)  - \zeta^{\star, \mathfrak{m}} (\cdots,3, \boldsymbol{2}^{a_{i+1}+  a_{i}-r+1},3, \cdots) \right) \\
 + \left( \delta_{r=a_{i}+2} \zeta^{\star\star, \mathfrak{l}}_{2} (\boldsymbol{2}^{a_{i}},3) +  \delta_{r< a_{i}+a_{i-1}+3} \zeta^{\star\star, \mathfrak{l}}_{1} (\boldsymbol{2}^{r-a_{i}-3}, 3, \boldsymbol{2}^{a_{i}},3) \right)  \otimes  \zeta^{\star, \mathfrak{m}} (\cdots,3, \boldsymbol{2}^{a_{i}+a_{i-1}-r+1},3,\boldsymbol{2}^{a_{i+1}},3, \cdots)\\
  - \left( \delta_{r=a_{i}+2} \zeta^{\star\star, \mathfrak{l}}_{2} (\boldsymbol{2}^{a_{i}},3) + \delta_{r< a_{i}+a_{i+1}+3} \zeta^{\star\star, \mathfrak{l}}_{1}(\boldsymbol{2}^{r-a_{i}-3},3, \boldsymbol{2}^{a_{i}}, 3) \right) \otimes  \zeta^{\star, \mathfrak{m}} (\cdots,3, \boldsymbol{2}^{a_{i-1}},3,\boldsymbol{2}^{a_{i}+a_{i+1}-r+1},3, \cdots) 
\end{array} \right] \\
\hspace*{-2cm} \textsc{(D)}  \begin{array}{lll}
 +\delta_{a_{p}+1 \leq r \leq a_{p}+a_{p-1}+1}  \quad  \zeta^{\star\star, \mathfrak{l}} (\boldsymbol{2}^{r- a_{p}-1},3, \boldsymbol{2}^{ a_{p}}) &\otimes & \zeta^{\star, \mathfrak{m}} (\cdots,3, \boldsymbol{2}^{a_{p}+ a_{p-1}-r+1}). 
 \end{array}
 \end{multline}
By the antipode $\shuffle$ relation (cf. $\ref{eq:antipodeshuffle2}$):
$$\zeta^{\star\star, \mathfrak{l}}_{1} (\boldsymbol{2}^{a},3, \boldsymbol{2}^{b},3)= \zeta^{\star\star, \mathfrak{l}}_{2} (\boldsymbol{2}^{b},3, \boldsymbol{2}^{a+1})=\zeta^{\star\star, \mathfrak{l}}(\boldsymbol{2}^{b+1},3, \boldsymbol{2}^{a+1})- \zeta^{\star, \mathfrak{l}}(\boldsymbol{2}^{b+1},3, \boldsymbol{2}^{a+1}).$$
Then, by Lemma $\ref{lemmcoeff}$, all the terms appearing in the left side of $gr^{L}_{l} D_{2r+1}$ are product of simple MZV, which turns into, in the coalgebra $\mathcal{L}$ a rational multiple of $\zeta^{\mathfrak{l}}(2r+1)$:
$$gr^{L}_{l} D_{2r+1} (gr^{L}_{l}\mathcal{H}_{n}^{2,3}) \subset \mathbb{Q}\zeta^{\mathfrak{l}}(2r+1)\otimes gr^{L}_{l-1}\mathcal{H}_{n-2r-1}^{2,3}.$$
\\
Sending $\zeta^{\mathfrak{l}}(2r+1)$ to $1$ with the projection $\pi:\mathbb{Q} \zeta^{\mathfrak{l}}(2r+1)\rightarrow\mathbb{Q}$, we can then consider:\nomenclature{$\partial^{L}_{r,l}$ and $\partial^{L}_{<n,l}$}{defined as composition from derivations} 
\begin{description}
\item[$\boldsymbol{\cdot\quad \partial^{L}_{r,l}}$] $ : gr^{L}_{l}\mathcal{H}_{n}^{2,3}\rightarrow gr^{L}_{l-1}\mathcal{H}_{n-2r-1}^{2,3},  \quad \text{ defined as the composition }$
$$\partial^{L}_{r,l}\mathrel{\mathop:}=gr_{l}^{L}\partial_{2r+1}\mathrel{\mathop:}=m\circ(\pi\otimes id)(gr^{L}_{l} D_{r}): \quad gr^{L}_{l}\mathcal{H}_{n}^{2,3} \rightarrow \mathbb{Q}\otimes_{\mathbb{Q}} gr^{L}_{l-1}\mathcal{H}_{n-2r-1}^{2,3}  \rightarrow gr^{L}_{l-1}\mathcal{H}_{n-2r-1}^{2,3} .$$
\item[$\boldsymbol{\cdot\quad \partial^{L}_{<n,l}}$] $\mathrel{\mathop:}=\oplus_{2r+1<n}\partial^{L}_{r,l} .$
\\
\end{description}
The injectivity of this map is the keystone of the Hoffman$^{\star}$ proof. Its explicit expression is:
\begin{lemm} 
\begin{flushleft}
$\partial^{L}_{r,l} (\zeta^{\star, \mathfrak{m}} (\boldsymbol{2}^{a_{0}},3,\cdots,3, \boldsymbol{2}^{a_{p}}))=$
\end{flushleft}
$$\begin{array}{l}
 \quad  - \delta_{a_{0} < r \leq a_{0}+a_{1}+2} \widetilde{B}^{a_{0}+1,r-a_{0}-2}  \zeta^{\star, \mathfrak{m}} (\boldsymbol{2}^{ a_{0}+a_{1}+1-r},3, \cdots)  \\
   \\
+ \sum_{i<j} \left[ \begin{array}{l}
 \delta_{r\leq a_{i}}C_{r}  \left(  \zeta^{\star, \mathfrak{m}} (\cdots,3, \boldsymbol{2}^{a_{i-1}+ a_{i}-r+1},3, \cdots) - \zeta^{\star, \mathfrak{m}} (\cdots,3, \boldsymbol{2}^{a_{i+1}+  a_{i}-r+1},3, \cdots) \right) \\
   \\
+\delta_{a_{i}+2\leq r \leq a_{i}+a_{i-1}+2} \widetilde{B}^{a_{i}+1,r-a_{i}-2}  \zeta^{\star, \mathfrak{m}} (\cdots,3, \boldsymbol{2}^{a_{i}+a_{i-1}-r+1},3,\boldsymbol{2}^{a_{i+1}},3, \cdots) \\
   \\
-  \delta_{a_{i}+2 \leq r\leq a_{i}+a_{i+1}+2} \widetilde{B}^{a_{i}+1,r-a_{i}-2} \zeta^{\star, \mathfrak{m}} (\cdots,3, \boldsymbol{2}^{a_{i-1}},3,\boldsymbol{2}^{a_{i}+a_{i+1}-r+1},3, \cdots) \\
\end{array} \right]  \\
 \\
  \textsc{(D)} + \delta_{a_{p}+1 \leq r \leq a_{p}+a_{p-1}+1} B^{r-a_{p}-1,a_{p}} \zeta^{\star, \mathfrak{m}} (\cdots,3, \boldsymbol{2}^{a_{p}+ a_{p-1}-r+1}) , \\
      \\
      \quad \quad \quad   \text{ with }  \widetilde{B}^{a,b}\mathrel{\mathop:}=B^{a,b}C_{a+b+1}-A^{a,b}.
\end{array}$$ 

\end{lemm}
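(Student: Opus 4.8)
The plan is to derive the stated formula directly from the level-graded coaction $(\ref{eq:gdr3})$ by evaluating the projection $\pi\otimes\mathrm{id}$ on each tensor term. Recall that by definition $\partial^{L}_{r,l} = m\circ(\pi\otimes\mathrm{id})\,(gr^{L}_{l}D_{2r+1})$, and that $gr^{\mathfrak{D}}_{1}\mathcal{L}_{2r+1}=\mathbb{Q}\,\zeta^{\mathfrak{l}}(2r+1)$; so the whole computation reduces to writing each left tensor factor occurring in $(\ref{eq:gdr3})$ as an explicit rational multiple of $\zeta^{\mathfrak{l}}(2r+1)$ and then multiplying the attached right factor by that rational. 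This step is legitimate precisely because, by Lemma $\ref{lemmcoeff}$, every left factor $\zeta^{\star\star,\mathfrak{l}}(\cdots)$ (and the lone $\zeta^{\star,\mathfrak{l}}(\cdots)$) appearing in $(\ref{eq:gdr3})$ is a polynomial in simple motivic zeta values, hence of motivic depth $\le 1$, hence lands in $\mathbb{Q}\,\zeta^{\mathfrak{l}}(2r+1)$ after $\pi$.

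First I would classify the left factors appearing in $(\ref{eq:gdr3})$. Up to the re-indexing already visible there, they are of three shapes: $\zeta^{\star\star,\mathfrak{l}}_{1}(\boldsymbol{2}^{r})$ (from the block-move cuts), the pair $\zeta^{\star\star,\mathfrak{l}}_{2}(\boldsymbol{2}^{a},3)$ and $\zeta^{\star\star,\mathfrak{l}}_{1}(\boldsymbol{2}^{b},3,\boldsymbol{2}^{a},3)$ (from the cuts that also swallow a neighbouring $3$), and $\zeta^{\star\star,\mathfrak{l}}(\boldsymbol{2}^{a},3,\boldsymbol{2}^{b})$ together with the boundary $\zeta^{\star,\mathfrak{l}}(\cdots)$ term. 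Using the antipode $\shuffle$ relation $(\ref{eq:antipodeshuffle2})$ in the form displayed just before the statement, namely $\zeta^{\star\star,\mathfrak{l}}_{1}(\boldsymbol{2}^{a},3,\boldsymbol{2}^{b},3)=\zeta^{\star\star,\mathfrak{l}}(\boldsymbol{2}^{b+1},3,\boldsymbol{2}^{a+1})-\zeta^{\star,\mathfrak{l}}(\boldsymbol{2}^{b+1},3,\boldsymbol{2}^{a+1})$, together with the $\textsc{Shift}$ identity for $\star\star$ sums, each of these collapses to a combination of the two normal forms $\zeta^{\star\star,\mathfrak{l}}(\boldsymbol{2}^{a},3,\boldsymbol{2}^{b})$ and $\zeta^{\star,\mathfrak{l}}(\boldsymbol{2}^{a},3,\boldsymbol{2}^{b})$ (with $2a+2b+3=2r+1$) and of $\zeta^{\star\star,\mathfrak{l}}_{1}(\boldsymbol{2}^{r})$. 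I would then let $C_{r}$, $A^{a,b}$, $B^{a,b}$ denote the projection coefficients of $\zeta^{\star\star,\mathfrak{l}}_{1}(\boldsymbol{2}^{r})$, $\zeta^{\star\star,\mathfrak{l}}(\boldsymbol{2}^{a},3,\boldsymbol{2}^{b})$, $\zeta^{\star,\mathfrak{l}}(\boldsymbol{2}^{a},3,\boldsymbol{2}^{b})$ respectively — these are exactly the constants named in Lemma $\ref{lemmcoeff}$. Carrying the antipode substitution through, the coefficient that ends up multiplying a block of shape $\zeta^{\star\star,\mathfrak{l}}_{1}(\boldsymbol{2}^{a},3,\boldsymbol{2}^{b},3)$ — after also absorbing the normalization $C$-factor contributed by the outer projection — works out to $B^{a,b}C_{a+b+1}-A^{a,b}$, which is precisely the abbreviation $\widetilde{B}^{a,b}$ in the statement; the block-move cuts contribute the bare $C_{r}$, and the deconcatenation cut contributes a bare $B^{r-a_p-1,a_p}$ since there no antipode rewriting is needed.

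Finally I would substitute these values back into $(\ref{eq:gdr3})$ and collect terms, organizing them by the position $i$ of the removed block and by the type of cut: the first line of the statement is the $i=0$ boundary cut of the "neighbouring $3$" type; the $C_{r}$-term is the block-move cut; the two $\widetilde{B}$-terms are the internal left-neighbour and right-neighbour cuts; and the $(\textsc{D})$-term is the deconcatenation cut. The re-indexings $r-a_i-2$, $r-a_p-1$ and the ranges $a_i+2\le r\le a_i+a_{i\pm1}+2$ come out by merging the $\delta_{r=a_i+2}$ contribution (from $\zeta^{\star\star,\mathfrak{l}}_{2}(\boldsymbol{2}^{a_i},3)$, the degenerate $b=0$ case) with the $\delta_{r<a_i+a_{i\pm1}+3}$ contribution and discarding any term in which a block exponent would become negative. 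The main obstacle is entirely this bookkeeping: matching every tensor term of $(\ref{eq:gdr3})$ to its correct normalized shape, propagating the $+1$ index shifts introduced by the antipode identity, keeping all signs straight, and verifying that the degenerate boundary blocks (empty blocks, $\boldsymbol{2}^{-1}$) collate so that the inequalities in the final statement emerge in the clean form written there. No new conceptual ingredient is needed beyond $(\ref{eq:gdr3})$, the antipode relations of $\S 4.2$, and Lemma $\ref{lemmcoeff}$.
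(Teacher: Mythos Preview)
Your approach is correct and is exactly the paper's: start from the level-graded formula $(\ref{eq:gdr3})$, rewrite each left factor via the antipode $\shuffle$ identity and Lemma~$\ref{lemmcoeff}$ as a rational multiple of $\zeta^{\mathfrak{l}}(2r+1)$, then collect. One small slip: you have the roles of $A^{a,b}$ and $B^{a,b}$ reversed relative to Lemma~$\ref{lemmcoeff}$ --- in the paper $A^{a,b}$ is the $\zeta^{\star}$ coefficient (item $(iv)$) and $B^{a,b}$ the $\zeta^{\star\star}$ one (item $(v)$), so that $\zeta^{\star\star,\mathfrak{l}}_{2}(\boldsymbol{2}^{a},3,\boldsymbol{2}^{b})=\zeta^{\star\star,\mathfrak{l}}(\boldsymbol{2}^{a+1},3,\boldsymbol{2}^{b})-\zeta^{\star,\mathfrak{l}}(\boldsymbol{2}^{a+1},3,\boldsymbol{2}^{b})=\widetilde{B}^{a+1,b}\,\zeta^{\mathfrak{l}}(\overline{2r+1})$ with $\widetilde{B}^{a,b}=B^{a,b}C_{a+b+1}-A^{a,b}$.
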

\begin{proof}
Using Lemma $\ref{lemmcoeff}$ for the left side of $gr^{L}_{p} D_{2r+1}$, and keeping just the coefficients of $\zeta^{2r+1}$, we obtain easily this formula. In particular:
\begin{flushleft}
$\zeta^{\star\star, \mathfrak{l}}_{2} (\boldsymbol{2}^{a},3, \boldsymbol{2}^{b})=\zeta^{\star\star, \mathfrak{l}}(\boldsymbol{2}^{a+1},3, \boldsymbol{2}^{b})- \zeta^{\star, \mathfrak{l}}(\boldsymbol{2}^{a+1},3, \boldsymbol{2}^{b}) = \widetilde{B}^{a+1,b} \zeta^{\mathfrak{l}}(\overline{2a+2b+5}).$\\
$\zeta^{\star\star, \mathfrak{l}}_{1} (\boldsymbol{2}^{a},3, \boldsymbol{2}^{b},3)= \zeta^{\star\star, \mathfrak{l}}_{2} (\boldsymbol{2}^{b},3, \boldsymbol{2}^{a+1})= \widetilde{B}^{b+1,a+1}\zeta^{\mathfrak{l}}(\overline{2a+2b+7}).$
\end{flushleft}
\end{proof}

\subsection{Proof of Theorem $4.4.1$}

Since the cardinal of the Hoffman $\star$ family in weight $n$ is equal to the dimension of $\mathcal{H}_{n}^{1}$, \footnote{Obviously same recursive relation: $d_{n}=d_{n-2}+d_{n-3}$} it remains to prove that they are linearly independent:
\begin{center}
\texttt{Claim 1}: The Hoffman $\star$ elements are linearly independent.
\end{center}
It fundamentally use the injectivity of the map defined above, $\partial^{L}_{<n,l}$, via a recursion on the level. Indeed, let first prove the following statement:
\begin{equation} \label{eq:bijective} \texttt{Claim 2}: \quad \partial^{L}_{<n,l}: gr^{L}_{l}\mathcal{H}_{n}^{2,3}\rightarrow \oplus_{2r+1<n} gr^{L}_{l-1}\mathcal{H}_{n-2r-1}^{2,3} \text{  is bijective}.
\end{equation}
Using the Conjecture $\ref{conjcoeff}$ (assumed for this theorem), regarding the $2$-adic valuation of these coefficients, with $r=a+b+1$:\footnote{The last inequality comes from the fact that $v_{2} (\binom{2r}{2b+1} )<2r $.}
\begin{equation}\label{eq:valuations}
\hspace*{-0.7cm}\left\lbrace  \begin{array}{ll}
 C_{r}=\frac{2^{2r+1}}{2r+1}  &\Rightarrow  v_{2}(C_{r})=2r+1 .\\
 \widetilde{B}^{a,b}\mathrel{\mathop:}= B^{a,b}C_{r}-A^{a,b}=2^{2r+1}\left(  \frac{1}{2r+1}-\frac{\binom{2r}{2a}}{2^{2r}-1} \right) &\Rightarrow  v_{2}(\widetilde{B}^{a,b}) \geq 2r+1.\\
B^{a,b}C_{r}=C_{r}-2\binom{2r}{2b+1} &\Rightarrow   v_{2}(B^{0,r-1}C_{r})= 2+ v_{2}(r) \leq v_{2}(B^{a,b}C_{r}) < 2r+1 .
\end{array} \right. 
\end{equation}
The deconcatenation terms in $\partial^{L}_{<n,l}$, which correspond to the terms with $B^{a,b}C_{r}$ are then the smallest 2-adically, which is crucial for the injectivity.\\
\\
Now, define a matrix $M_{n,l}$ as the matrix of $\partial^{L}_{<n,l}$ on $\zeta^{\star, \mathfrak{m}} (\boldsymbol{2}^{a_{0}},3,\cdots,3, \boldsymbol{2}^{a_{l}})$ in terms of $\zeta^{\star, \mathfrak{m}} (\boldsymbol{2}^{b_{0}},3,\cdots,3, \boldsymbol{2}^{b_{l-1}})$; even if up to now, we do not know that these families are linearly independent. We order the elements on both sides by lexicographical order on ($a_{l}, \ldots, a_{0}$), resp. ($r,b_{l-1}, \ldots, b_{0}$), such that the diagonal corresponds to $r=a_{l}$ and $b_{i}=a_{i}$ for $i<l$ and claim:
\begin{center}
\texttt{Claim 3}: The matrix $M_{n,l}$ of $\partial^{L}_{<n,l}$ on the Hoffman $\star$ elements is invertible 
\end{center}
\begin{proof}[\texttt{Proof of Claim 3}]
Indeed, let $\widetilde{M}_{n,l}$ be the matrix $M_{n,l}$ where we have multiplied each line corresponding to $D_{2r+1}$ by ($2^{-v_{2}(r)-2}$). Then modulo $2$, because of the previous computations on the $2$-adic valuations of the coefficients, only the deconcatenations terms remain. Hence, with the previous order, the matrix is, modulo $2$, triangular with $1$ on the diagonal; the diagonal being the case where $B^{0,r-1}C_{r}$ appears. This implies that $\det (\widetilde{M}_{n,l})\equiv 1   \pmod{2}$, and in particular is non zero. Consequently, the matrix $\widetilde{M}_{n,l}$ is invertible and so does $M_{n,l}$.
\end{proof}
Obviously, $\texttt{Claim 3} \Rightarrow \texttt{Claim 2} $, but it will also enables us to complete the proof:

\begin{proof}[\texttt{Proof of Claim 1}] Let first prove it for the Hoffman $\star$ elements of a same level and weight, by recursion on level. Level $0$ is obvious: $\zeta^{\star,\mathfrak{m}}(2)^{n}$ is a rational multiple of $(\pi^{\mathfrak{m}})^{2n}$. Assuming by recursion on the level that the Hoffman $\star$ elements of weight $\leq n$ and level $l-1$ are linearly independent, since $M_{n,l}$ is invertible, this means both that the Hoffman $\star$ elements of weight $n$ and level $l$ are linearly independent.\\
The last step is to realize that the bijectivity of $\partial^{L}_{<n,l}$ also implies that Hoffman $\star$ elements of different levels are linearly independent. Indeed, proof can be done by contradiction: applying  $\partial^{L}_{<n,l}$ to a linear combination of Hoffman $\star$ elements, $l$ being the maximal number of $3$, we arrive at an equality between same level elements, and at a contradiction.
\end{proof}

\subsection{Analytic conjecture}

Here are the equalities needed for Theorem $4.4.1$, known up to some rational coefficients:
\begin{lemm} \label{lemmcoeff}
With  $w$, $d$ resp.  $ht$ denoting the weight, the depth, resp. the height:
\begin{itemize}
\item[$(o)$] $\begin{array}{llll}
\zeta^{\mathfrak{m}}(\overline{r}) & = & (2^{1-r}-1) &\zeta^{\mathfrak{m}}(r).\\
\zeta^{\mathfrak{m}}(2n) & = & \frac{\mid B_{n}\mid 2^{3n-1}3^{n}}{(2n)!} &\zeta^{\mathfrak{m}}(2)^{n}.
\end{array}$
\item[$(i)$] $\zeta^{\star,\mathfrak{m}}(\boldsymbol{2}^{n})= -2 \zeta^{\mathfrak{m}}(\overline{2n}) =\frac{(2^{2n}-2)6^{n}}{(2n)!}\vert B_{2n}\vert\zeta^{\mathfrak{m}}(2)^{n}.$
\item[$(ii)$] $\zeta^{\star,\mathfrak{m}}_{1}(\boldsymbol{2}^{n})= -2 \sum_{r=1}^{n} \zeta^{\mathfrak{m}}(2r+1)\zeta^{\star,\mathfrak{m}}(\boldsymbol{2}^{n-r}).$
\item[$(iii)$] 
\begin{align}
\zeta^{\star\star,\mathfrak{m}}(\boldsymbol{2}^{n}) & = \sum_{d \leq n} \sum_{w(\textbf{m})=2n \atop ht(\textbf{m})=d(\textbf{m})=d} 2^{2n-2d}\zeta^{\mathfrak{m}}(\textbf{m})  \\
& =\sum_{2n=\sum s_{k}(2i_{k}+1)+2S \atop i_{k}\neq i_{j}}  \left( \prod_{k=1}^{p} \frac{C_{i_{k}}^{s_{k}}} {s_{k}!} \zeta^{\mathfrak{m}}(\overline{2i_{k}+1})^{s_{k}} \right) D_{S} \zeta^{\mathfrak{m}}(2)^{S}. \nonumber\\
\zeta^{\star\star,\mathfrak{m}}_{1}(\boldsymbol{2}^{n}) & =-\sum_{d \leq n} \sum_{w(\textbf{m})=2n+1 \atop ht(\textbf{m})=d(\textbf{m})=d} 2^{2n+1-2d}\zeta^{\mathfrak{m}}(\textbf{m}) \\
&=\sum_{2n+1=\sum s_{k}(2i_{k}+1)+2S \atop i_{k}\neq i_{j}}  \left( \prod_{k=1}^{p} \frac{C_{i_{k}}^{s_{k}}} {s_{k}!} \zeta^{\mathfrak{m}}(\overline{2i_{k}+1})^{s_{k}}\right) D_{S} \zeta^{\mathfrak{m}}(2)^{S}\nonumber
\end{align}

\item[$(iv)$] $\zeta^{\star,\mathfrak{m}}(\boldsymbol{2}^{a},3,\boldsymbol{2}^{b})= \sum A^{a,b}_{r} \zeta^{\mathfrak{m}}(\overline{2r+1})\zeta^{\star,\mathfrak{m}}(\boldsymbol{2}^{n-r}).$
\item[$(v)$] \begin{align}
\zeta^{\star\star,\mathfrak{m}}(\boldsymbol{2}^{a},3,\boldsymbol{2}^{b}) &= \sum_{w=\sum s_{k}(2i_{k}+1)+2S \atop i_{k}\neq i_{j}} B^{a,b}_{i_{1},\cdots, i_{p}\atop s_{1}\cdots s_{p}} \left( \prod_{k=1}^{p} \frac{C_{i_{k}}^{s_{k}}} {s_{k}!} \zeta^{\mathfrak{m}}(\overline{2i_{k}+1})^{s_{k}}\right)  D_{S} \zeta^{\mathfrak{m}}(2)^{S}.\\
\zeta^{\star\star,\mathfrak{m}}_{1}(\boldsymbol{2}^{a},3,\boldsymbol{2}^{b}) &=D^{a,b} \zeta^{\mathfrak{m}}(2)^{\frac{w}{2}}+ \sum_{w=\sum s_{k}(2i_{k}+1)+2S \atop i_{k}\neq i_{j}} B^{a,b}_{i_{1},\cdots, i_{p}\atop s_{1}\cdots s_{p}}  \left( \prod_{k=1}^{p} \frac{C_{i_{k}}^{s_{k}}} {s_{k}!} \zeta^{\mathfrak{m}}(\overline{2i_{k}+1})^{s_{k}}\right)  D_{S}\zeta^{\mathfrak{m}}(2)^{S}.
\end{align}
\end{itemize}
Where:
\begin{itemize}

\item[$\cdot$] $C_{r}=\frac{2^{2r+1}}{2r+1}$, $D_{S}$ explicit\footnote{Cf. Proof.} and with the following constraint:
\begin{equation} \label{eq:constrainta}
A^{a,b}_{r}=A_{r}^{a,r-a-1}+C_{r} \left( B^{r-b-1,b}- B^{r-a-1,a} +\delta_{r\leq b}-\delta_{r\leq a} \right). 
\end{equation}
\item[$\cdot$] The recursive formula for $B$-coefficients, where $B^{x,y}\mathrel{\mathop:}=B^{x,y}_{x+y+1 \atop 1}$ and $r<a+b+1$:
  \begin{equation} \label{eq:constraintb} 
\begin{array}{lll }
 B^{a,b}_{r \atop 1} & = & \delta_{r\leq b} -  \delta_{r< a}+ B^{r-b-1,b}+\frac{D^{a-r-1,b}}{a+b-r+1}+\delta_{r=a} \frac{2(2^{2b+1}-1)6^{b+1} \mid B_{2b+2} \mid}{(2b+2)! D_{b+1}}.\\
 B^{a,b}_{i_{1},\cdots, i_{p}\atop s_{1}\cdots s_{p}} &=& \left\{
\begin{array}{l}
 \delta_{i_{1}\leq b } - \delta_{i_{1}< a } + B^{i_{1}-b-1,b} + B^{a-i_{1}-1,b}_{i_{1}, \ldots, i_{p}\atop s_{1}-1, \ldots, s_{p}}  \quad  \text{ for } \sum s_{k} \text{ odd }  \\
 \delta_{i_{1}\leq b } - \delta_{i_{1}\leq a } + B^{i_{1}-b-1,b} +B^{a-i_{1},b}_{i_{1}, \ldots, i_{p}\atop s_{1}-1, \ldots, s_{p}}  \quad \text{ else }.
  \end{array}
  \right. 
\end{array}  
    \end{equation}
\end{itemize}

\end{lemm}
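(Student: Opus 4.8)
The statement to prove is Lemma \ref{lemmcoeff}: a collection of explicit or semi-explicit evaluations of motivic MZV$^{\star}$ and $^{\star\star}$ of the shape $\zeta^{\star,\mathfrak{m}}(\boldsymbol{2}^{a},3,\boldsymbol{2}^{b})$ (and their one-fold regularizations $\zeta^{\star\star,\mathfrak{m}}_{1}$), each being a $\mathbb{Q}$-linear combination of products $\zeta^{\mathfrak{m}}(\overline{2i_{k}+1})^{s_{k}}\cdot\zeta^{\mathfrak{m}}(2)^{S}$, with recursive constraints \eqref{eq:constrainta}, \eqref{eq:constraintb} on the coefficients. The plan is to prove each item by the standard two-step \emph{motivic lifting} scheme made available by Corollary \ref{kerdn} (the $N=1$ case): (1) establish that the claimed identity holds in the coalgebra $\mathcal{L}$, i.e. modulo products, by computing the reduced coaction / the derivations $D_{r}$ on both sides and matching them using the antipode relations of \S 4.2 and the depth-graded coaction formula (Lemma \ref{drz}, Lemma \ref{Drp}); (2) fix the single remaining rational multiple of $\zeta^{\mathfrak{m}}(2)^{n}$ (resp. $\pi^{\mathfrak{m}}$ to the appropriate even power) by comparing periods, i.e. by invoking the corresponding \emph{analytic} identity for real (Euler) sums — which is exactly the content left conjectural for the unknown coefficients and supplied in the literature (\cite{BBB}, \cite{IKOO}, \cite{Za}, \cite{OZ}, \cite{LZ}) for the known ones.

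\textbf{Order of the steps.} I would proceed from the simplest items upward, since each feeds the recursion of the next. First, $(o)$: the evaluation $\zeta^{\mathfrak{m}}(\overline{r})=(2^{1-r}-1)\zeta^{\mathfrak{m}}(r)$ is the depth-one distribution relation of Lemma 2.4.1 (it is one of the only relations in depth $1$), and $\zeta^{\mathfrak{m}}(2n)=\tfrac{|B_{n}|2^{3n-1}3^{n}}{(2n)!}\zeta^{\mathfrak{m}}(2)^{n}$ follows from Euler's Lemma 1.1.1 together with $\zeta^{\mathfrak{m}}(2)^{\mathfrak m}$ being a generator and $\operatorname{Ext}^{1}_{\mathcal{MT}(\mathbb Z)}(\mathbb Q(0),\mathbb Q(2n))=0$, so $\zeta^{\mathfrak{m}}(2n)\in\mathbb Q\,(\pi^{\mathfrak m})^{2n}$, the constant being read off from the classical formula. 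Next $(i)$: $\zeta^{\star,\mathfrak{m}}(\boldsymbol2^{n})=-2\zeta^{\mathfrak{m}}(\overline{2n})$ is the $\star\star/\star$ version of the well-known $\zeta^{\star}(\boldsymbol2^n)$ formula; I would prove it by expanding $\zeta^{\star,\mathfrak m}(\boldsymbol2^n)$ as a sum of $\zeta^{\mathfrak m}$ via the $\mlq+\mrq$ rule of \S 4.1, recognizing the resulting alternating combination, and lifting via Corollary \ref{kerdn} (both sides lie in $\ker D_{<2n}$ since $\zeta^{\mathfrak m}(\overline{2n})\in\mathbb Q\pi^{2n}$, so only a rational needs checking). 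Item $(ii)$, $\zeta^{\star,\mathfrak m}_1(\boldsymbol2^n)=-2\sum_r\zeta^{\mathfrak m}(2r+1)\zeta^{\star,\mathfrak m}(\boldsymbol2^{n-r})$, is a shuffle-regularization identity: I would derive it from the $\shuffle$-regularization formula \eqref{eq:shufflereg} applied to $I^{\mathfrak m}(0;\star,0,\boldsymbol\star^{n-1}\!,0^{n-1};1)$ combined with $(i)$, or equivalently from the path-composition and stuffle relations of \S\ref{propii}–\S4.1. Then $(iii)$ (the two closed forms for $\zeta^{\star\star,\mathfrak m}(\boldsymbol2^n)$ and $\zeta^{\star\star,\mathfrak m}_1(\boldsymbol2^n)$) follows by induction on $n$ using the relation $\zeta^{\star\star,\mathfrak m}=\zeta^{\star,\mathfrak m}+\zeta^{\star\star,\mathfrak m}_{2,\dots}$ displayed in the Remarks of \S4.1, reducing to $(i)$–$(ii)$; the explicit $D_{S}$ is the coefficient forced by iterating $(o)$'s even-zeta evaluation. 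Finally $(iv)$ and $(v)$: here I would compute $D_{2r+1}\big(\zeta^{\star,\mathfrak m}(\boldsymbol2^a,3,\boldsymbol2^b)\big)$ and $D_{2r+1}\big(\zeta^{\star\star,\mathfrak m}(\boldsymbol2^a,3,\boldsymbol2^b)\big)$ using the Appendix A.1 simplification (antipode $\shuffle$, antipode $\ast$, and the hybrid relation Theorem \ref{hybrid}), observe that every term on the left of $D_{2r+1}$ is again of type $(i)$–$(iii)$ or $(iv)$–$(v)$ of strictly smaller weight, feed in the inductive hypothesis, and match the right-hand side with the claimed $B$-combination; the recursion \eqref{eq:constraintb} is literally the recursion on $\zeta^{\star\star,\mathfrak m}(\boldsymbol2^{a'}\!,3,\boldsymbol2^{b'})$ read off from these cuts, and the compatibility constraint \eqref{eq:constrainta} between $A$ and $B$ comes from the two ways of expanding $\zeta^{\star,\mathfrak m}(\boldsymbol2^a,3,\boldsymbol2^b)$ (directly, versus via $\zeta^{\star\star,\mathfrak m}$ minus the regularized tail). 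Corollary \ref{kerdn} then pins down the one remaining power-of-$\pi$ coefficient ($D^{a,b}$) per identity.

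\textbf{Main obstacle.} The genuine difficulty is \emph{not} the coalgebra computation — that is bookkeeping with the A.1 coaction formula and the antipode/hybrid relations of \S4.2, all already proved — but the determination of the purely ``abelian'' (product-of-$\pi$-powers, and the rational multiples $A^{a,b}_r$, $D^{a,b}$, $D_S$) coefficients, which live in $\ker D_{<n}$ and therefore cannot be obtained by motivic methods alone. For the cases where a closed form for real Euler/MZV sums is available in the literature (the $\boldsymbol2^n$ families, the two--one and three--one evaluations, Linebarger--Zhao) the coefficient is determined by the period map applied to that analytic identity. For $\zeta^{\star\star,\mathfrak m}_1(\boldsymbol2^a,3,\boldsymbol2^b)$ the coefficient $D^{a,b}$ of $\zeta^{\mathfrak m}(2)^{w/2}$ is precisely the quantity the paper cannot currently evaluate in closed form, which is why Lemma \ref{lemmcoeff} carries the $B$- and $D$-coefficients only through their recursive constraints rather than a closed formula, and why the sequel (Conjecture \ref{conjcoeff}, needed for Theorem \ref{Hoffstar}) concerns only their $2$-adic valuations. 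So the honest statement of the proof is: steps (1)–(2) above \emph{reduce} Lemma \ref{lemmcoeff} to a finite list of analytic identities for real Euler sums; those in the literature give the explicit constants, the rest are pinned down up to the stated recursions, and the numerical/$2$-adic control of the residual freedom is deferred to the analytic conjecture. I expect the most delicate bookkeeping point to be keeping the regularization tails (the $\zeta^{\star\star,\mathfrak m}_1$ versus $\zeta^{\star\star,\mathfrak m}$ discrepancy and the spreading of the leading $0$ via \eqref{eq:shufflereg}) consistent across the induction, since a sign or binomial slip there propagates into \eqref{eq:constraintb}.
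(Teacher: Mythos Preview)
Your overall strategy is essentially the paper's: compute $D_{2r+1}$ on both sides using the Appendix A.1 simplification rules (\textsc{Shift}, antipode $\shuffle$, hybrid), match modulo products, and fix the remaining rational via Corollary~\ref{kerdn} together with a known analytic identity. Two points deserve sharpening. First, your ordering for $(iv)$--$(v)$ glosses over a dependency the paper makes explicit: when you compute $D_{2r+1}\big(\zeta^{\star,\mathfrak m}(\boldsymbol2^{a},3,\boldsymbol2^{b})\big)$, the left side is \emph{not} stable inside the $\star$-family; terms of the shape $\zeta^{\star\star,\mathfrak m}(\boldsymbol2^{r-b-1},3,\boldsymbol2^{b})$ appear, so $(v)$ must be established (or run simultaneously) \emph{before} $(iv)$ can close, and the constraint \eqref{eq:constrainta} is precisely what records this feedback. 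Second, your route for $(iii)$ via the relation $\zeta^{\star\star,\mathfrak m}=\zeta^{\star,\mathfrak m}+\zeta^{\star\star,\mathfrak m}_{|n_{1}|}(\dots)$ and induction from $(i)$--$(ii)$ is not the paper's: the paper first rewrites $\zeta^{\star\star,\mathfrak m}(\boldsymbol2^{n})$ combinatorially (via a multi-binomial identity) as $\sum_{d}2^{2n-2d}\!\sum_{ht=d=d(\mathbf m)}\zeta^{\mathfrak m}(\mathbf m)$, then invokes the Ohno--Zagier generating-function formula for sums at fixed weight, depth and height to supply the analytic side; this is what produces the explicit $C_{r}=\tfrac{2^{2r+1}}{2r+1}$ and $D_{S}$ at each recursion step, not just ``iterating $(o)$'s even-zeta evaluation''. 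Your inductive reduction to $(i)$--$(ii)$ would still leave you needing an analytic evaluation of $\zeta^{\star\star}_{2}(\boldsymbol2^{n-1})$ etc., which ultimately unwinds to the same Ohno--Zagier input, so the approaches converge --- but the paper's direct route is what actually identifies $D_{S}$ in closed form.
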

\noindent
Before giving the proof, here is the (analytic) conjecture remaining on some of these coefficients, sufficient to complete the Hoffman $\star$ basis proof (cf. Theorem $\ref{Hoffstar}$):
\begin{conj}\label{conjcoeff}
The equalities $(v)$ are satisfied for real MZV, with:
$$B^{a,b}=1-\frac{2}{C_{a+b+1}}\binom{2a+2b+2}{2b+1}.$$
\end{conj}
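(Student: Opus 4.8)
The statement to establish is Conjecture \ref{conjcoeff}: that the identities $(v)$ of Lemma \ref{lemmcoeff} hold for real multiple zeta values with the explicit constant $B^{a,b}=1-\tfrac{2}{C_{a+b+1}}\binom{2a+2b+2}{2b+1}$. My approach is to reduce the problem to an identity for a generating function, in the spirit of the proofs by Zagier (\cite{Za}) and Li (\cite{Li}) used by Brown for the non-star Hoffman basis. The point is that $\zeta^{\star\star}(\boldsymbol{2}^{a},3,\boldsymbol{2}^{b})$ and $\zeta^{\star\star}_{1}(\boldsymbol{2}^{a},3,\boldsymbol{2}^{b})$ are, by the antipode and shuffle relations already proved in $\S 4.2$, $\mathbb{Q}$-linear combinations of products of single zeta values (and powers of $\pi$), so the content of $(v)$ is entirely about identifying the coefficients; the structural shape of the identity is already known. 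Concretely, I would introduce the two-variable generating series
\[
F(x,y)\;=\;\sum_{a,b\geq 0}\zeta^{\star\star}(\boldsymbol{2}^{a},3,\boldsymbol{2}^{b})\,x^{2a+1}y^{2b+1},
\]
and similarly for the $\zeta^{\star\star}_{1}$ family, and express each as an explicit combination of hypergeometric-type functions (products of $\Gamma$-quotients and $\sin$, as in the classical evaluations of $\zeta^{\star}(\boldsymbol{2}^{n})$ and of $\zeta(\boldsymbol{2}^{a},3,\boldsymbol{2}^{b})$, cf. \cite{BBB}, \cite{Za}). The right-hand side of $(v)$, with the conjectured $B^{a,b}$, should assemble into the \emph{same} closed form once one recognizes $1-\tfrac{2}{C_{r}}\binom{2r}{2b+1}$ as (up to normalization) a ratio of binomials that matches a coefficient extracted from $\frac{1}{\sin}$-type expansions.

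The key steps, in order, are as follows. First, use the relations of $\S 4.2$ (Antipode $\shuffle$, Antipode $\ast$, and the hybrid relation, Theorem \ref{hybrid}) together with the known evaluations $(o)$--$(iii)$ of Lemma \ref{lemmcoeff} to write the $\star\star$ objects of height $1$ appearing in $(iv)$--$(v)$ in terms of height $1$ MZV and powers of $\pi$; this fixes the \emph{form} of the identity and reduces the conjecture to the combinatorial identities $(\ref{eq:constrainta})$ and $(\ref{eq:constraintb})$ for the coefficients, which are then a finite self-consistent recursion. Second, solve that recursion explicitly: check that $B^{a,b}=1-\tfrac{2}{C_{a+b+1}}\binom{2a+2b+2}{2b+1}$ satisfies $(\ref{eq:constraintb})$ (both the $\sum s_k$ odd and even cases) and that the induced $A^{a,b}_{r}$ satisfies $(\ref{eq:constrainta})$ — this is the routine but bookkeeping-heavy part, best done by verifying the generating-function identity $\sum_{b} B^{a,b}z^{b}=\cdots$ rather than term by term. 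Third, and this is where genuine analytic input is needed, prove the \emph{base case} identity at the generating-function level: that $F(x,y)$ equals the closed form predicted by the conjectured coefficients. Here I would follow Li's method (\cite{Li}): set up a contour-integral / partial-fraction representation of $\zeta^{\star\star}(\boldsymbol{2}^{a},3,\boldsymbol{2}^{b})$ as an iterated integral over $\Delta$, sum the geometric series in $a$ and $b$, and evaluate the resulting integral in terms of $\frac{\Gamma\Gamma}{\Gamma}\cdot(\text{trig})$ factors; then expand back and read off the coefficients, matching them against $B^{a,b}$.

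The main obstacle is precisely this third step — the analytic evaluation of the generating function — which is why the authors left it as a conjecture and flagged it as ``of an entirely different nature''. The difficulty is that, unlike $\zeta^{\star}(\boldsymbol{2}^{n})$ or the Three-One family, inserting a single $3$ into a string of $2$'s breaks the clean hypergeometric structure: the relevant integral is no longer a pure ${}_{3}F_{2}$ at $1$ but a derivative/convolution of such, so the closed form involves not just $\Gamma$-quotients but their logarithmic derivatives (hence the appearance of single zeta values $\zeta(\overline{2i_k+1})$ with multiplicities $s_k$, rather than a single $\zeta$-value). Controlling the combinatorics of those multiplicities and matching them with the recursive $B$-coefficients is the real work; a secondary obstacle is that one must also pin down the purely-$\pi$ term, i.e.\ the coefficient $D^{a,b}$ in the $\zeta^{\star\star}_{1}$ case of $(v)$, which requires the Bernoulli-number evaluation in $(\ref{eq:constraintb})$ to be threaded through consistently. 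If the generating-function identity resists a direct proof, a fallback is to verify it modulo the motivic relations one weight at a time using the coaction — but that only gives the motivic statement up to the rational multiple, which is exactly what Conjecture \ref{conjcoeff} is meant to supply, so it is not a genuine shortcut; the analytic evaluation seems unavoidable.
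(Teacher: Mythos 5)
There is a genuine gap, and it is exactly the one you name yourself: the statement you were asked to prove is left as a conjecture in the paper (the author explicitly says it is ``of an entirely different nature'' from the motivic techniques and is expected to follow from analytic/hypergeometric methods along the lines of \cite{Za}, \cite{Li}), and your proposal does not close it either. Your steps 1 and 2 essentially reproduce what the paper already establishes: the proof of Lemma \ref{lemmcoeff} uses the coaction, the antipode/hybrid relations and Corollary \ref{kerdn} to show that the $\star\star$ families in $(v)$ are polynomials in single zeta values and $\zeta^{\mathfrak{m}}(2)$, with all coefficients determined recursively by $(\ref{eq:constraintb})$ \emph{except} the top one. The whole content of Conjecture \ref{conjcoeff} is the value of that top coefficient $B^{a,b}=B^{a,b}_{a+b+1\atop 1}$, and here your step 2 is circular: the recursion $(\ref{eq:constraintb})$ only expresses $B^{a,b}_{r\atop 1}$ for $r<a+b+1$ (and the multi-index coefficients) in terms of lower-weight data — it places no constraint on $B^{a,b}$ itself, which is precisely why the paper extracts it from the (conjectural) analytic identity via Corollary \ref{kerdn} rather than from the recursion. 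So one cannot ``check'' the conjectured closed form against $(\ref{eq:constraintb})$; similarly $(\ref{eq:constrainta})$ determines $A^{a,b}_{r}$ for $r<a+b+1$ from the $B$'s but says nothing about the missing coefficient.

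That leaves your step 3 — the closed-form evaluation of the generating series of $\zeta^{\star\star}(\boldsymbol{2}^{a},3,\boldsymbol{2}^{b})$ and $\zeta^{\star\star}_{1}(\boldsymbol{2}^{a},3,\boldsymbol{2}^{b})$ — carrying the entire burden, and it is only sketched: no integral representation is actually set up, no hypergeometric evaluation is performed, and no coefficient extraction is carried out to produce $1-\frac{2}{C_{a+b+1}}\binom{2a+2b+2}{2b+1}$. You correctly identify why this is hard (inserting a $3$ destroys the pure ${}_{3}F_{2}$ structure, forcing logarithmic derivatives of $\Gamma$-factors, which is where the monomials $\prod\zeta(\overline{2i_k+1})^{s_k}$ come from), but identifying the obstacle is not overcoming it, and your stated fallback (verifying motivically weight by weight) cannot work for the reason you give: the coaction argument is what \emph{consumes} this analytic input, it cannot supply it. In short, the proposal is a reasonable research plan whose preparatory reductions duplicate the paper, but the decisive analytic identity — the actual assertion of Conjecture \ref{conjcoeff} — remains unproved.
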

\textsc{Remarks:} 
\begin{itemize}
\item[$\cdot$] This conjecture is of an entirely different nature from the techniques developed in this thesis. We can expect that it can proved using analytic methods as the usual techniques of identifying hypergeometric series, as in $\cite{Za}$, or $\cite{Li}$.
\item[$\cdot$] The equality $(iv)$ is already proven in the analytic case by Ohno-Zagier (cf.$\cite{IKOO}$, $\cite{Za}$), with the values of the coefficient $A_{r}^{a,b}$ given below. Nevertheless, as we will see through the proofs below, to make the coefficients for the (stronger) motivic identity $(iv)$ explicit, we need to prove the other identities in $(v)$. 
\item[$\cdot$]  We will use below a result of Ohno and Zagier on sums of MZV of fixed weight, depth and height to conclude for the coefficients for $(iii)$.
\end{itemize}

\begin{theo}
If the analytic conjecture ($\ref{conjcoeff}$) holds, the equalities $(iv)$, $(v)$ are true in the motivic case, with the same values of the coefficients. In particular:
$$A_{r}^{a,b}= 2\left( -\delta_{r=a}+ \binom{2r}{2a} \right)  \frac{2^{2r}}{2^{2r}-1}-2\binom{2r}{2b+1}.$$
\end{theo}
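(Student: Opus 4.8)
The plan is to show that under the analytic Conjecture \ref{conjcoeff} (which provides the identities $(v)$ for \emph{real} MZV together with the explicit value $B^{a,b}=1-\frac{2}{C_{a+b+1}}\binom{2a+2b+2}{2b+1}$), one can promote the identities $(iv)$ and $(v)$ to \emph{motivic} identities, and extract the closed form for $A_r^{a,b}$. The engine for this is Corollary \ref{kerdn} (the $N=1$ case): a motivic element of weight $n$ lies in $\ker D_{<n}\cap\mathcal{H}^1_n$ if and only if it is a rational multiple of $\zeta^{\mathfrak m}(n)$, equivalently of $(\pi^{\mathfrak m})^n$. So the strategy for each identity of type $(iv)$ or $(v)$ is the two-step procedure spelled out after Corollary \ref{kerdn}: (1) verify that $D_r$ agrees on both sides for every $0<r<w$; (2) fix the single remaining rational ambiguity by comparing periods, i.e.\ by invoking the analytic statement in Conjecture \ref{conjcoeff}.

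\textbf{Step 1: the coaction matches.} I would argue by a simultaneous induction on the weight $w=2a+2b+3$ (for $(iv)$, $(v)$) and, within a fixed weight, on auxiliary data such as $a+b$ and the height. Apply $D_r$ to both sides of each proposed identity using Lemma \ref{Drp} / the explicit depth-graded formulas and the specialized formula \eqref{eq:dr3}. On the right-hand sides the combinations $\zeta^{\mathfrak m}(\overline{2r+1})\zeta^{\star,\mathfrak m}(\boldsymbol2^{n-r})$, products of simpler $\zeta^{\star\star,\mathfrak m}$'s, and powers of $\zeta^{\mathfrak m}(2)$ all have strictly smaller weight, so their coactions are known by the induction hypothesis together with parts $(o),(i),(ii),(iii)$ of Lemma \ref{lemmcoeff} (which are unconditional: $(o)$ is classical, $(i)$–$(iii)$ follow from stuffle/shuffle in $\mathcal H$, Euler's evaluation, and the Ohno–Zagier sum theorem for fixed weight/depth/height applied motivically via the kernel criterion). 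On the left, computing $D_r\zeta^{\star,\mathfrak m}(\boldsymbol2^a,3,\boldsymbol2^b)$ and $D_r\zeta^{\star\star,\mathfrak m}(\boldsymbol2^a,3,\boldsymbol2^b)$ produces again $\star$- and $\star\star$-values of the same shape but lower weight, which the induction hypothesis rewrites in the claimed form; the antipode $\shuffle$ and antipode $\ast$ relations of \S\ref{propii} (Lemmas in \S4.2) and the hybrid relation Theorem \ref{hybrid} are exactly what is needed to bring the ``unstable'' cuts (a $2$-to-$2$ cut, or a cut producing a non-admissible tail) into the stable family. The consistency of the bookkeeping is precisely encoded in the recursive constraints \eqref{eq:constrainta} and \eqref{eq:constraintb} on the $A$- and $B$-coefficients: matching the coaction forces these recursions, and conversely the recursions guarantee the match. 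This is the step I expect to be the main obstacle — not because any single computation is deep, but because it is a large, entangled bookkeeping argument: several families ($\star$, $\star\star$, $\star\star$ with a shift) must be carried through the induction together, and one must keep careful track of which terms are genuinely lower weight versus which are reabsorbed, and verify that the combinatorial identities among binomials implied by \eqref{eq:constrainta}–\eqref{eq:constraintb} actually hold.

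\textbf{Step 2: fix the rational constant and read off $A_r^{a,b}$.} Once Step 1 shows that (claimed RHS) $-$ (LHS) lies in $\ker D_{<w}\cap\mathcal H^1_w$, Corollary \ref{kerdn} says this difference is a rational multiple of $\zeta^{\mathfrak m}(w)$ when $w$ is odd, and a rational multiple of $(\pi^{\mathfrak m})^{w}$ when $w$ is even; since all the identities $(iv),(v)$ have odd weight $w=2a+2b+3$ except the pure-$2$ ones already handled in $(i),(iii)$, the ambiguity is a single rational number times $\zeta^{\mathfrak m}(2a+2b+3)$. Applying the period map $\mathrm{per}$ and using Conjecture \ref{conjcoeff} (the corresponding equality for real Euler/MZV) shows this rational number is $0$, so the motivic identity holds with the conjectured coefficients. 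Finally, to obtain the explicit formula
$$A_r^{a,b}=2\Bigl(-\delta_{r=a}+\binom{2r}{2a}\Bigr)\frac{2^{2r}}{2^{2r}-1}-2\binom{2r}{2b+1},$$
substitute the closed form $B^{a,b}=1-\frac{2}{C_{a+b+1}}\binom{2a+2b+2}{2b+1}$ into the constraint \eqref{eq:constrainta} $A^{a,b}_r=A_r^{a,r-a-1}+C_r\bigl(B^{r-b-1,b}-B^{r-a-1,a}+\delta_{r\le b}-\delta_{r\le a}\bigr)$, together with the base case $A^{a,r-a-1}_r$ coming from the Ohno–Zagier evaluation of $\zeta^{\star}(\boldsymbol2^a,3,\boldsymbol2^{r-a-1})$ (the height-one case, $(iv)$ restricted so that the total weight equals $2r+1$); a direct binomial manipulation using $C_r=\frac{2^{2r+1}}{2r+1}$ and the Vandermonde/absorption identities then collapses the expression to the stated closed form. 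The $2$-adic valuation statements in \eqref{eq:valuations} used elsewhere in \S4.4 are immediate corollaries of this formula, which is consistent.
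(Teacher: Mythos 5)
Your proposal follows essentially the same route as the paper: a recursion on the weight in which the coaction computations (the same ones that underlie the constraints \eqref{eq:constrainta}--\eqref{eq:constraintb} established in the proof of Lemma \ref{lemmcoeff}) reduce each identity of type $(iv)$, $(v)$ to a single undetermined rational multiple of $\zeta^{\mathfrak m}(w)$ resp.\ $(\pi^{\mathfrak m})^{w}$ via Corollary \ref{kerdn}, which is then killed by the analytic input (Conjecture \ref{conjcoeff} for $(v)$, Ohno--Zagier for $(iv)$), after which the closed form of $A_{r}^{a,b}$ is read off from \eqref{eq:constrainta} with the explicit $B$-coefficients, exactly as in the paper. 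The only slip is your remark that all the identities in $(iv)$--$(v)$ have odd weight: the one for $\zeta^{\star\star,\mathfrak m}_{1}(\boldsymbol{2}^{a},3,\boldsymbol{2}^{b})$ has even weight, where the residual ambiguity is the coefficient $D^{a,b}$ of $\zeta^{\mathfrak m}(2)^{w/2}$, but since you already describe how the even-weight case is handled, this is harmless.
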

\begin{proof}
Remind that if we know a motivic equality up to one unknown coefficient (of $\zeta(weight)$), the analytic result analogue enables us to conclude on its value by Corollary $\ref{kerdn}$.\\
Let assume now, in a recursion on $n$, that we know $\lbrace B^{a,b}, D^{a,b}, B_{i_{1} \cdots i_{p} \atop s_{1} \cdots s_{p} }^{a,b} \rbrace_{a+b+1<n}$ and consider $(a,b)$ such that $a+b+1=n$. Then, by $(\ref{eq:constraintb})$, we are able to compute the $B_{\textbf{i}\atop \textbf{s}}^{a,b}$ with $(s,i)\neq (1,n)$. Using the analytic $(v)$ equality, and Corollary $\ref{kerdn}$, we deduce the only remaining unknown coefficient $B^{a,b}$ resp. $D^{a,b}$ in $(v)$.\\
Lastly, by recursion on $n$ we deduce the $A_{r}^{a,b}$ coefficients: let assume they are known for $a+b+1<n$, and take $(a,b)$ with $a+b+1=n$. By 
the constraint $(\ref{eq:constrainta})$, since we already know $B$ and $C$ coefficients, we deduce $A_{r}^{a,b}$ for $r<n$. The remaining coefficient, $A_{n}^{a,b}$, is obtained using the analytic $(iv)$ equality and Corollary $\ref{kerdn}$.
\end{proof}

\paragraph{\texttt{Proof of} Lemma $\ref{lemmcoeff}$.}:
\begin{proof}
Computing the coaction on these elements, by a recursive procedure, we are able to prove these identities up to some rational coefficients, with the Corollary $\ref{kerdn}$. When the analytic analogue of the equality is known for MZV, we \textit{may} conclude on the value of the remaining rational coefficient of $\zeta^{\mathfrak{m}}(w)$ by identification (as for $(i),(ii),(iii)$). However, if the family is not stable under the coaction , (as for $(iv)$) knowing the analytic case is not enough.\\
\texttt{Nota Bene:} This proof refers to the expression of $D_{2r+1}$ in Lemma $\ref{lemmt}$: we look at cuts of length $2r+1$ among the sequence of $0, 1, $ or $\star$ (in the iterated integral writing); there are different kind of cuts (according their extremities), and each cut may bring out two terms ($T_{0,0}$ and $T_{0,\star}$ for instance). The simplifications are illustrated by the diagrams, where some arrows (term of a cut) get simplified by rules specified in Annexe $A$.\\
\begin{itemize}
\item[$(i)$] The corresponding iterated integral: 
$$I^{\mathfrak{m}}(0; 1, 0, \star, 0 \cdots, \star, 0; 1).$$
The only possible cuts of odd length are between two $\star$ ($T_{0,\star}$ and $T_{\star,0}$) or $T_{1,0}$ from the first $1$ to a $\star$, or $T_{0,1}$ from a $\star$ to the last $1$. By \textsc{ Shift }(\ref{eq:shift}), these cuts get simplified two by two. Since $D_{2r+1}(\cdot)$, for $2r+1<2n$ are all zero, it belongs to $\mathbb{Q}\zeta^{\mathfrak{m}}(2n)$, by Corollary $\ref{kerdn}$). Using the (known) analytic equality, we can conclude.
\item[$(ii)$] It is quite similar to $(i)$: using $\textsc{ Shift }$ $(\ref{eq:shift})$, it remains only the cut:\\
\includegraphics[]{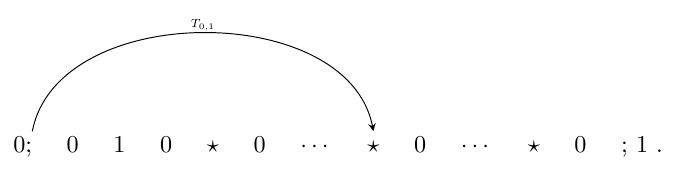}\\
$$\text{i.e.}: \quad D_{2r+1} (\zeta^{\star,\mathfrak{m}}_{1}(\boldsymbol{2}^{n}))= \zeta^{\mathfrak{l},\star}_{1}(\boldsymbol{2}^{r})\otimes \zeta^{\star,\mathfrak{m}}(\boldsymbol{2}^{n-r})=-2 \zeta^{ \mathfrak{l}}(\overline{2r+1})\otimes \zeta^{ \star,\mathfrak{m}}(\boldsymbol{2}^{n-r}).$$
The last equality is deduced from the recursive hypothesis (smaller weight). The analytic equality (coming from the Zagier-Ohno formula, and the $\shuffle$ regulation) enables us to conclude on the value of the remaining coefficient of $\zeta^{\mathfrak{m}}(2n+1)$.
\item[$(iii)$] Expressing these ES$\star\star$ as a linear combination of ES by $\shuffle$ regularisation:
$$\hspace*{-0.5cm}\zeta^{\star\star,\mathfrak{m}}(\boldsymbol{2}^{n})= \sum_{k_{i} \text{ even}} \zeta^{\mathfrak{m}}_{2n-\sum k_{i}}(k_{1},\cdots, k_{p})=\sum_{n_{i}\geq 2} \left(  \sum_{k_{i} \text{ even} \atop k_{i} \leq n_{i}}  \binom{n_{1}-1}{k_{1}-1} \cdots \binom{n_{d}-1}{k_{d}-1}  \right) \zeta^{\mathfrak{m}}(n_{1},\cdots, n_{d}) .$$
Using the multi-binomial formula:
$$2^{\sum m_{i}}=\sum_{l_{i} \leq m_{i}} \binom{m_{1}}{l_{1}}(1-(-1))^{l_{1}} \cdots \binom{m_{d}}{l_{d}}(1-(-1))^{l_{d}}= 2^{d} \sum_{l_{i} \leq m_{i}\atop l_{i} \text{ odd  }} \binom{m_{1}}{l_{1}} \cdots \binom{m_{d}}{l_{d}} .$$
Thus:
$$\zeta^{\star\star,\mathfrak{m}}(\boldsymbol{2}^{n})=\sum_{d \leq n} \sum_{w(\textbf{m})=2n \atop ht(\textbf{m})=d(\textbf{m})=d} 2^{2n-2d}\zeta^{\mathfrak{m}}(\textbf{m}).$$
Similarly for $(4.27)$, since:
$$\zeta^{\star\star,\mathfrak{m}}_{1}(\boldsymbol{2}^{n})=\sum_{k_{i} \text{ even}} \zeta^{\mathfrak{m}}_{2n+1-\sum k_{i}}(k_{1},\cdots, k_{p})=\sum_{d \leq n} \sum_{w(\textbf{m})=2n \atop ht(\textbf{m})=d(\textbf{m})=d} 2^{2n-2d}\zeta^{\mathfrak{m}}(\textbf{m}).$$
Now, using still only $\textsc{ Shift }$ $(\ref{eq:shift})$, it remains the following cuts:\\
\includegraphics[]{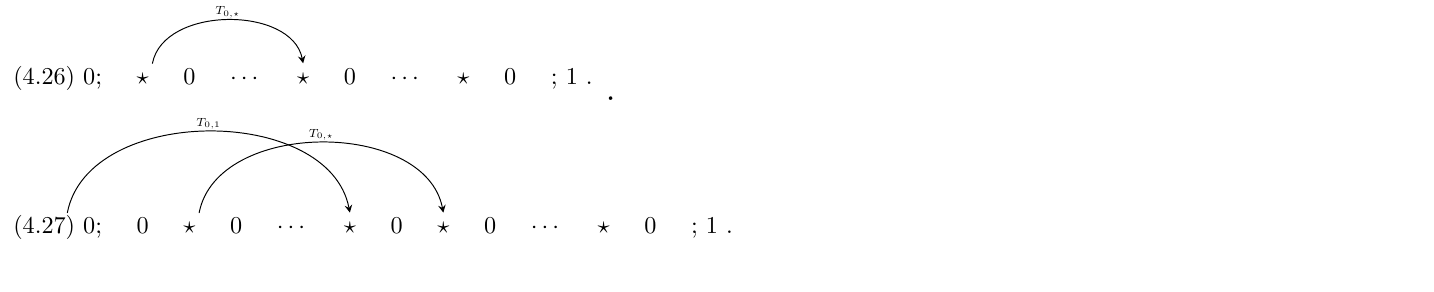}\\
With a recursion on $n$ for both $(4.26)$, $(4.27)$, we deduce:
$$D_{2r+1}(\zeta^{\star\star,\mathfrak{m}}(\boldsymbol{2}^{n}))=\zeta^{\star\star,\mathfrak{m}}_{1}(\boldsymbol{2}^{r})\otimes \zeta^{\star\star,\mathfrak{l}}_{1}(\boldsymbol{2}^{n-r})=C_{r} \zeta^{ \mathfrak{l}}(\overline{2r+1})\otimes  \zeta^{\star\star,\mathfrak{m}}_{1}(\boldsymbol{2}^{n-r-1}).$$
$$D_{2r+1}(\zeta^{\star\star,\mathfrak{m}}_{1}(\boldsymbol{2}^{n}))=\zeta^{\star\star,\mathfrak{l}}_{1}(\boldsymbol{2}^{r})\otimes \zeta^{\star\star,\mathfrak{m}}(\boldsymbol{2}^{n-r})=C_{r} \zeta^{ \mathfrak{l}}(\overline{2r+1})\otimes \zeta^{\star\star,\mathfrak{m}}(\boldsymbol{2}^{n-r}).$$
To find the remaining coefficients, we need the analytic result corresponding, which is a consequence of the sum relation for MZV of fixed weight, depth and height, by Ohno and Zagier ($\cite{OZa}$, Theorem $1$), via the hypergeometric functions.\\
Using $\cite{OZa}$, the generating series of these sums is, with $\alpha,\beta=\frac{x+y \pm \sqrt{(x+y)^{2}-4z}}{2}$:
$$\begin{array}{lll}
\phi_{0}(x,y,z)\mathrel{\mathop:} & = &  \sum_{s\leq d \atop w\geq d+s}  \left( \sum \zeta(\textbf{k}) \right) x^{w-d-s}y^{d-s}z^{s-1} \\
& = & \frac{1}{xy-z} \left( 1- \exp \left( \sum_{m=2}^{\infty} \frac{\zeta(m)}{m}(x^{m}+y^{m}-\alpha^{m}-\beta^{m}) \right)   \right) .
\end{array}$$
From this, let express the generating series of both $\zeta^{\star\star}(\boldsymbol{2}^{n})$ and $\zeta^{\star\star}_{1}(\boldsymbol{2}^{n})$:
$$\phi(x)\mathrel{\mathop:}= \sum_{w} \left( \sum_{ht(\textbf{k})=d(\textbf{k})=d\atop w\geq 2d} 2^{w-2d} \zeta(\textbf{k}) \right) x^{w-2}= \phi_{0}(2x, 0, x^{2}).$$
Using the result of Ohno and Don Zagier:
$$\phi(x)= \frac{1}{x^{2}} \left(\exp \left( \sum_{m=2}^{\infty} \frac{2^{m}-2}{m} \zeta(m) x^{m} \right)   -1\right).$$
Consequently, both $\zeta^{\star\star}(\boldsymbol{2}^{n})$ and $\zeta^{\star\star}_{1}(\boldsymbol{2}^{n})$ can be written explicitly as polynomials in simple zetas. For $\zeta^{\star\star}(\boldsymbol{2}^{n})$, by taking the coefficient of $x^{2n-2}$ in $\phi(x)$:
$$\zeta^{\star\star}(\boldsymbol{2}^{n})= \sum_{\sum m_{i} s_{i}=2n \atop m_{i}\neq m_{j}} \prod_{i=1}^{k} \left(  \frac{1}{s_{i} !}\left( \zeta(m_{i}) \frac{2^{m_{i}}-2}{m_{i}}\right)^{s_{i}} \right) .$$
Gathering the zetas at even arguments, it turns into:
$$\zeta^{\star\star}(\boldsymbol{2}^{n})= \sum_{\sum (2i_{k}+1) s_{k}+2S=2n \atop i_{k}\neq i_{j}} \prod_{i=1}^{p} \left(  \frac{1}{s_{k} !}\left( \zeta(2 i_{k}+1) \frac{2^{2 i_{k}+1}-2}{2i_{k}+1}\right)^{s_{k}} \right) d_{S} \zeta(2)^{S}, $$
\begin{equation}\label{eq:coeffds}
 \text{ where }  d_{S}\mathrel{\mathop:}=3^{S}\cdot 2^{3S}\sum_{\sum m_{i} s_{i}=S \atop m_{i}\neq m_{j}} \prod_{i=1}^{k} \left( \frac{1}{s_{i}!} \left( \frac{\mid B_{2m_{i}}\mid (2^{2m_{i}-1}-1) } {2m_{i} (2m_{i})!}\right)^{s_{i}}  \right).
\end{equation}
It remains to turn $\zeta(odd)$ into $\zeta(\overline{odd})$ by $(o)$ to fit the expression of the Lemma:
$$\zeta^{\star\star}(\boldsymbol{2}^{n})= \sum_{\sum (2i_{k}+1) s_{k}+2S=2n \atop i_{k}\neq i_{j}} \prod_{i=1}^{p} \left(  \frac{1}{s_{k} !}\left( c_{i_{k}}\zeta(\overline{2 i_{k}+1}) \right)^{s_{k}} \right) d_{S} \zeta(2)^{S}, \text{ where } c_{r}=\frac{2^{2r+1}}{2r+1}.$$
It is completely similar for $\zeta^{\star\star}_{1}(\boldsymbol{2}^{n})$: by taking the coefficient of $x^{2n-3}$ in $\phi(x)$, we obtained the analytic analogue of $(4.25)$, with the same coefficients $d_{S}$ and $c_{r}$.\\
Now, using these analytic results for $(4.26)$, $(4.27)$, by recursion on the weight, we can identify the coefficient $D_{S}$ and $C_{r}$ with resp. $d_{S}$ and $c_{r}$, since there is one unknown coefficient at each step of the recursion.
\item[$(iv)$] After some simplifications by Antipodes rules ($\S A.1$), only the following cuts remain:\\
\includegraphics[]{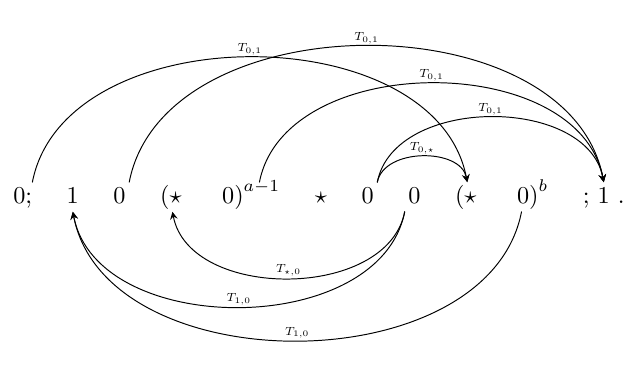}\\
This leads to the formula:\\
$$D_{2r+1} (\zeta^{\star,\mathfrak{m}}(\boldsymbol{2}^{a},3,\boldsymbol{2}^{b}))=  \left(\zeta^{\star,\mathfrak{m}}(\boldsymbol{2}^{a},3,\boldsymbol{2}^{r-a-1})+\right.$$
$$ \left.\left(  \delta_{r \leq b}-\delta_{r \leq a}\right)  \zeta^{\star\star,\mathfrak{m}}_{1}(\boldsymbol{2}^{r})  + \zeta^{\star\star , \mathfrak{m}}(\boldsymbol{2}^{r-b-1},3,\boldsymbol{2}^{b}) -\zeta^{\star\star ,\mathfrak{m}}(\boldsymbol{2}^{r-a-1},3,\boldsymbol{2}^{a})\right) \otimes \zeta^{\star,\mathfrak{m}}(\boldsymbol{2}^{n-r}).$$
In particular, the Hoffman $\star$ family is not stable under the coaction, so we need first to prove $(v)$, and then:
$$\hspace*{-0.7cm}D_{2r+1} (\zeta^{\star ,\mathfrak{m}}(\boldsymbol{2}^{a},3,\boldsymbol{2}^{b}))= \left( A_{r}^{a,r-a-1}+C_{r} \left( B^{r-b-1,b}- B^{r-a-1,a} +\delta_{r\leq b}-\delta_{r\leq a} \right)\right)  \zeta^{ \mathfrak{l}}(\overline{2r+1})\otimes \zeta^{\star ,\mathfrak{m}}(\boldsymbol{2}^{n-r}). $$
It leads to the constraint $(\ref{eq:constrainta})$ above for coefficients $A$. To make these coefficients explicit, apart from the known analytic Ohno Zagier formula, we need the analytic analogue of $(v)$ identities, as stated in Conjecture $\ref{conjcoeff}$.
\item[$(v)$] By Annexe rules, the following cuts get simplified (by colors, above with below):\footnote{The vertical arrows indicates a cut from the $\star$ to a $\star$ of the same group.}\\
\includegraphics[]{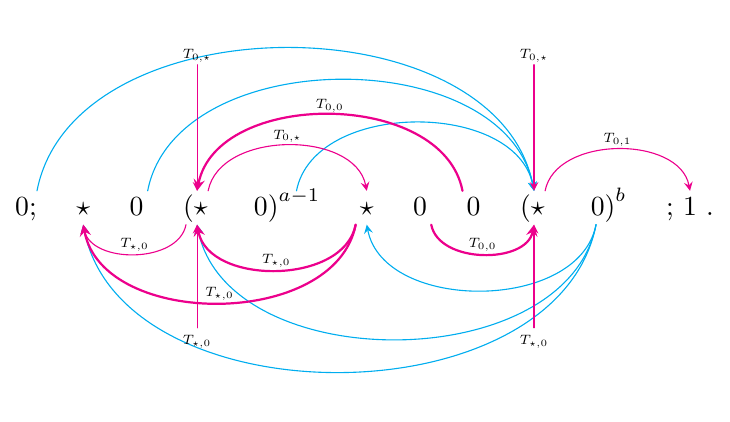}\\
Indeed, cyan arrows get simplified by \textsc{Antipode} $\shuffle$, $T_{0,0}$ resp. $T_{0, \star}$ above with $T_{0,0}$ resp. $T_{\star,0}$ below; magenta ones by $\textsc{ Shift }$ $(\ref{eq:shift})$, term above with the term below shifted by two on the left. It remains the following cuts for $(4.28)$:\\
\includegraphics[]{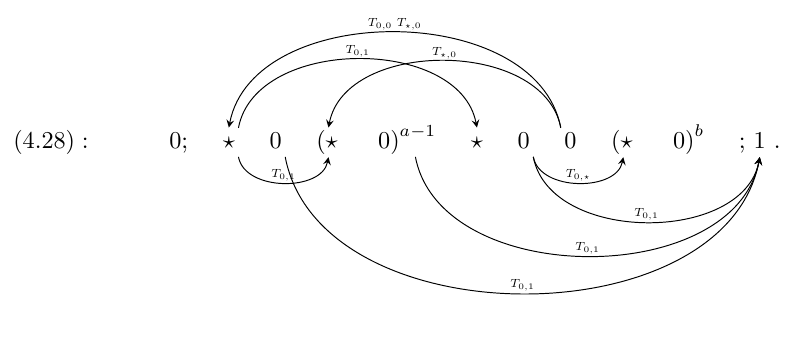}\\
In a very similar way, the simplifications lead to the following remaining terms:\\
\includegraphics[]{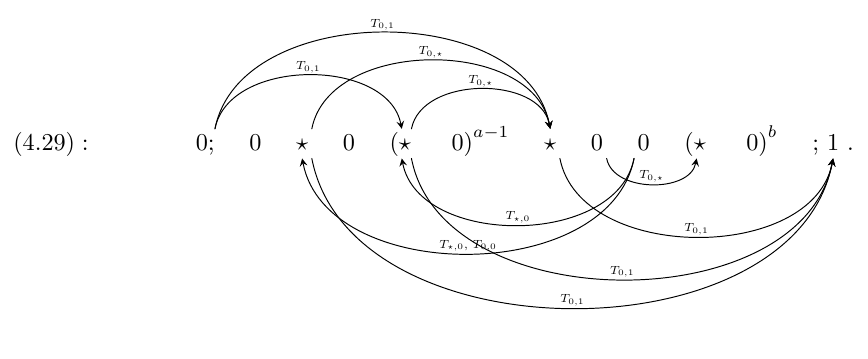}\\
Then, the derivations reduce to: 
$$\hspace*{-0.7cm}D_{2r+1} (\zeta^{\star\star ,\mathfrak{m}}(\boldsymbol{2}^{a},3,\boldsymbol{2}^{b}))= \left( \left(  \delta_{r\leq b}-\delta_{r \leq a}\right)  \zeta^{\star\star ,\mathfrak{l}}_{1}(\boldsymbol{2}^{r}) +\delta_{r> b}\zeta^{\star\star, \mathfrak{m-l}}(\boldsymbol{2}^{r-b-1},3,\boldsymbol{2}^{b})\right) \otimes \zeta^{\star\star ,\mathfrak{m}}(\boldsymbol{2}^{n-r}) +$$
$$\hspace*{+1cm} +\delta_{r\leq a-1} \zeta^{\star\star ,\mathfrak{l}}_{1}(\boldsymbol{2}^{r}) \otimes \zeta^{\star\star ,\mathfrak{m}}_{1}(\boldsymbol{2}^{a-r-1},3,\boldsymbol{2}^{b})+ \delta_{r=a} \zeta^{\star\star ,\mathfrak{l}}_{1}(\boldsymbol{2}^{a})\otimes \zeta^{\star\star ,\mathfrak{m}}_{2}(\boldsymbol{2}^{b}) .$$
$$\hspace*{-1.4cm}D_{2r+1} (\zeta^{\star\star ,\mathfrak{m}}_{1}(\boldsymbol{2}^{a},3,\boldsymbol{2}^{b}))= \left( \left(  \delta_{r\leq b}-\delta_{r \leq a}\right)  \zeta^{\star\star ,\mathfrak{l}}_{1}(\boldsymbol{2}^{r})  + \zeta^{\star\star ,\mathfrak{l}}(\boldsymbol{2}^{r-b-1},3,\boldsymbol{2}^{b})\right)\otimes \zeta^{\star\star,\mathfrak{m}}_{1}(\boldsymbol{2}^{n-r}) +$$
$$ + \zeta^{\star\star ,\mathfrak{l}}_{1}(\boldsymbol{2}^{r})\otimes \zeta^{\star\star,\mathfrak{m}}(\boldsymbol{2}^{a-r},3,\boldsymbol{2}^{b}) .$$
\hspace*{-0.5cm}With a recursion on $w$ for both:
$$\hspace*{-1.4cm}\begin{array}{ll}
D_{2r+1} (\zeta^{\star\star ,\mathfrak{m}}(\boldsymbol{2}^{a},3,\boldsymbol{2}^{b})) & = C_{r} \zeta^{ \mathfrak{l}}(\overline{2r+1})\otimes\\
& \left( \left( \delta_{r\leq b}-\delta_{r < a}  + B_{r}^{r-b-1,b}\right) \zeta^{\star\star ,\mathfrak{m}}(\boldsymbol{2}^{n-r}) + \zeta^{\star\star ,\mathfrak{m}}_{1}(\boldsymbol{2^{a-r-1},}3,\boldsymbol{2}^{b})+ \delta_{r=a}\zeta^{\star ,\mathfrak{m}}(\boldsymbol{2}^{b+1}) \right) .\\
& \\
D_{2r+1} (\zeta^{\star\star ,\mathfrak{m}}_{1}(\boldsymbol{2}^{a},3,\boldsymbol{2}^{b}))& = C_{r} \zeta^{ \mathfrak{l}}(\overline{2r+1})\otimes\left( \left(  \delta_{r\leq b}-\delta_{r \leq a}   + B_{r}^{r-b-1,b}\right) \zeta^{ \star\star ,\mathfrak{m}}_{1}(\boldsymbol{2}^{n-r}) + \zeta^{\star\star ,\mathfrak{m}}(\boldsymbol{2}^{a-r},3,\boldsymbol{2}^{b}) \right).
\end{array}$$
This leads to the recursive formula $(\ref{eq:constraintb})$ for $B$.
\end{itemize}
\end{proof}


\section{Motivic generalized Linebarger Zhao Conjecture}

We conjecture the following motivic identities, which express each motivic MZV $\star$ as a motivic Euler $\sharp$ sum:
\begin{conj}\label{lzg}  
For $a_{i},c_{i} \in \mathbb{N}^{\ast}$, $c_{i}\neq 2$,
$$\zeta^{\star, \mathfrak{m}} \left( \boldsymbol{2}^{a_{0}},c_{1},\cdots,c_{p}, \boldsymbol{2}^{a_{p}}\right)  =(-1)^{1+\delta_{c_{1}}}\zeta^{\sharp,  \mathfrak{m}} \left(B_{0},\boldsymbol{1}^{c_{1}-3 },\cdots,\boldsymbol{1}^{ c_{i}-3 },B_{i}, \ldots, B_{p}\right), $$
where $\left\lbrace \begin{array}{l}
B_{0}\mathrel{\mathop:}=  \pm (2a_{0}+1-\delta_{c_{1}})\\
B_{i}\mathrel{\mathop:}= \pm(2a_{i}+3-\delta_{c_{i}}-\delta_{c_{i+1}})\\
B_{p}\mathrel{\mathop:}=\pm ( 2 a_{p}+2-\delta_{c_{p}})
\end{array}\right.$, with $\pm\mathrel{\mathop:}=\left\lbrace \begin{array}{l}
- \text{ if } \mid B_{i}\mid \text{ even} \\
+ \text{ if } \mid B_{i}\mid \text{ odd}
\end{array} \right.$, $\begin{array}{l}
\delta_{c}\mathrel{\mathop:}=\delta_{c=1},\\
\text{the Kronecker symbol}.
\end{array}$
and $\boldsymbol{1}^{n}:=\boldsymbol{1}^{min(0,n)}$ is a sequence of $n$ 1 if $n\in\mathbb{N}$, an empty sequence else.
\end{conj}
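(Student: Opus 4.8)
The plan is to prove Conjecture \ref{lzg} by the same two-step strategy that was used for genuine motivic identities throughout this work (see Corollary \ref{kerdn} and the discussion following it): first establish that the coaction agrees on both sides, and then fix the single remaining rational coefficient in each weight by comparison with the (analytically proved) Linebarger--Zhao identity for real Euler sums. The essential difficulty is that the $\star$-side family $\{\zeta^{\star,\mathfrak{m}}(\boldsymbol{2}^{a_0},c_1,\cdots,c_p,\boldsymbol{2}^{a_p})\}$ is \emph{not} stable under the coaction: as the computation of $D_{2r+1}$ in $\S 4.4.1$ shows, cutting a $\star$-MZV produces $\zeta^{\star\star,\mathfrak{m}}$ terms, which are themselves linear combinations of products of simple motivic zetas times powers of $\pi^{\mathfrak{m}}$ (Lemma \ref{lemmcoeff}). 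So the recursion has to be set up on the whole algebra generated by the $\zeta^{\star,\mathfrak{m}}$, $\zeta^{\star\star,\mathfrak{m}}$ on one side and by the $\zeta^{\sharp,\mathfrak{m}}$, $\zeta^{\sharp\sharp,\mathfrak{m}}$ on the other, and one must track the auxiliary $\star\star$/$\sharp\sharp$ identities in parallel.

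\textbf{Key steps.} First I would fix the weight $n$ and argue by induction on $n$, assuming the claimed motivic equality (together with the companion $\star\star$-versus-$\sharp\sharp$ identities, obtained by the shuffle-regularisation relations between $\star$ and $\star\star$, resp. $\sharp$ and $\sharp\sharp$, recalled in $\S 4.1$) in all weights $<n$. Second, I would compute $D_{2r+1}$ of the left-hand side using the expression \eqref{eq:dr3} for the Hoffman-type case extended to general $c_i$ (via the more general coaction formula of Lemma \ref{Drp} and the antipodal/hybrid simplifications of $\S 4.2$), and $D_{2r+1}$ of the right-hand side using Annexe $A.1$ together with Lemma 4.3.1-style bookkeeping (the computation in the proof of Theorem \ref{ESsharphonorary} already handles exactly the sequences $\epsilon\,0^{2a}\epsilon$ and $\epsilon\,0^{2a+1}(-\epsilon)$ that occur here once one inserts the blocks $\boldsymbol{1}^{c_i-3}$). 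The $D_1$ components must be checked to agree separately — on the $\star$-side $D_1$ detects the $c_i=3$ positions producing $\boldsymbol{2}$-strings of length $0$, and on the $\sharp$-side it detects the $\pm1,\mp1$ patterns; by Lemma \ref{condd1} both sides have matching $D_1$. Third, once $D_{<n}$ of (LHS $-$ RHS) is shown to vanish — reducing by the inductive hypothesis each $D_{2r+1}$-image to a difference of identical elements — Corollary \ref{kerdn} forces LHS $-$ RHS $\in \mathbb{Q}\,(\pi^{\mathfrak{m}})^n$ (only relevant in even weight), and applying the period map plus the real Linebarger--Zhao identity (\cite{LZ}, specialising $c_i\ge 3$, and \cite{OZ}, \cite{BBB} for the degenerate $c_i=1$ cases) pins down that rational to $0$.

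\textbf{Main obstacle.} The hard part will be making the matching of the $D_{2r+1}$-terms genuinely combinatorial: the left-hand coaction spits out $\zeta^{\star\star,\mathfrak{m}}$ factors whose reduction to products of simple zetas (the coefficients $A^{a,b}$, $B^{a,b}$, $D^{a,b}$ of Lemma \ref{lemmcoeff}) is itself only conjectural at the analytic level, whereas the right-hand coaction stays within the $\sharp\sharp$-world. Thus a clean proof seems to require, as an input, the analytic Conjecture \ref{conjcoeff} (or at least the $2$-adic information it encodes), exactly as in the Hoffman $\star$ theorem; equivalently, one would need an independent, purely motivic proof that the $\star\star$-to-$\sharp\sharp$ dictionary is compatible with deconcatenation. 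I expect that, modulo Conjecture \ref{conjcoeff} and the already-known real-valued Linebarger--Zhao/Two-One/Three-One identities, the induction closes; the residual work is the (lengthy but mechanical) verification that the simplified cut-by-cut expressions on the two sides are termwise equal after applying \textsc{Shift}, \textsc{Cut}, \textsc{Sign} and the antipode $\shuffle$ relations of $\S 4.2$. A secondary subtlety is the sign bookkeeping $\pm = (-1)^{[\,|B_i|\ \mathrm{even}\,]}$ and the prefactor $(-1)^{1+\delta_{c_1}}$, which must be checked to be exactly what the parity constraint $w\equiv p-s\pmod 2$ (from the proof of Theorem \ref{ESsharphonorary}) produces at each deconcatenation step.
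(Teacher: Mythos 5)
Your overall strategy---induction on the weight, termwise comparison of the $D_{2r+1}$ on both sides after the antipodal/hybrid simplifications of $\S 4.2$, then Corollary \ref{kerdn} together with the real Linebarger--Zhao-type identities to kill the residual rational---is exactly the route the paper takes in Theorem $4.5.1$, which, like your sketch, only yields a \emph{conditional} result (the statement is genuinely left as a conjecture). The one real divergence is in the auxiliary input you propose to feed the induction. You reduce the $\zeta^{\star\star,\mathfrak{m}}$ factors produced by the left-hand coaction to polynomials in simple zetas via Lemma \ref{lemmcoeff}, and therefore import the analytic Conjecture \ref{conjcoeff}; the paper never expands these factors. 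Instead it groups the cuts of the two sides by their (inductively equal) right-hand factors, and matches the left-hand $\zeta^{\star\star,\mathfrak{l}}$ terms directly against $\zeta^{\sharp\sharp,\mathfrak{l}}$ terms using \textsc{Shift}, \textsc{Cut}, \textsc{Minus}, \textsc{Sign}, the antipodes, one isolated coalgebra identity $(\ref{eq:conjid})$---precisely the ``$\star\star$-to-$\sharp\sharp$ dictionary compatible with deconcatenation'' you name as the alternative---and an auxiliary identity $(\ref{eq:toolid})$ proved simultaneously inside the same induction (your ``companion identities in parallel''). This buys two things: Conjecture \ref{conjcoeff} is not needed at all, and the analytic hypothesis is discharged, since the real-variable version of the full identity follows from Zhao \cite{Zh3}, so the only remaining assumption is the single $\mathcal{L}$-identity $(\ref{eq:conjid})$, which is strictly lighter than what your plan requires. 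If you pursue your version, note also that the degenerate cases $c_i=1$ (the Two-One/Three-One regime, where $\gamma_i=c_i-3+2\delta_{c_i}$ and the sign and size of $B_i$ change) and the cuts producing $\zeta^{\star\star,\mathfrak{l}}_{c_j-2}(\cdots)$ force a case analysis exactly parallel to the paper's three groupings $(a)$, $(b)$, $(c)$; your sketch leaves this bookkeeping, which is where all the content lies, unexecuted.
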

\textsc{Remarks}:
\begin{itemize}
\item[$\cdot$] Motivic Euler $\sharp$ sums appearing on the right side have already been proven to be unramified in $\S 4.3$, i.e. MMZV. 
\item[$\cdot$] This conjecture implies that the motivic Hoffman $\star$ family is a basis, since it corresponds here to the motivic Euler $\sharp$ sum family proved to be a basis in Theorem $\ref{ESsharpbasis}$: cf. ($\ref{eq:LZhoffman}$).
\item[$\cdot$] The number of sequences of consecutive $1$ in $\zeta^{\star}$, $n_{1}$ is linked with the number of even in $\zeta^{\sharp}$, $n_{e}$, here by the following formula:\\
$$n_{e}=1+2n_{1}-2\delta_{c_{p}} -\delta_{c_{1}}.$$
In particular, when there is no $1$ in the MMZV $\star$, there is only one even (at the end) in the Euler sum $\sharp$. There are always at least one even in the Euler sums.
\end{itemize}

Special cases of this conjecture, which are already proven for real Euler sums (references indicated in the braket), but remain conjectures in the motivic case:
\begin{description}
\item[Two-One] [Ohno Zudilin, $\cite{OZ}$.]
\begin{equation}\label{eq:OZ21}
\zeta^{\star, \mathfrak{m}} (\boldsymbol{2}^{a_{0}},1,\cdots,1, \boldsymbol{2}^{a_{p}})= - \zeta^{\sharp,  \mathfrak{m}} \left( \overline{2a_{0}}, 2a_{1}+1, \ldots, 2a_{p-1}+1, 2 a_{p}+1\right) . 
\end{equation}
\item [Three-One] [Broadhurst et alii, $\cite{BBB}$.] \footnote{The Three-One formula was conjectured for real Euler sums by Zagier, proved by Broadhurst et alii in $\cite{BBB}$.}
\begin{equation}\label{eq:Z31} \zeta^{\star, \mathfrak{m}} (\boldsymbol{2}^{a_{0}},1,\boldsymbol{2}^{a_{1}},3 \cdots,1, \boldsymbol{2}^{a_{p-1}}, 3, \boldsymbol{2}^{a_{p}}) = -\zeta^{\sharp,  \mathfrak{m}} \left( \overline{2a_{0}}, \overline{2a_{1}+2}, \ldots, \overline{2a_{p-1}+2}, \overline{2 a_{p}+2} \right) .
\end{equation}
\item[Linebarger-Zhao$\star$] [Linebarger Zhao, $\cite{LZ}$]  With $c_{i}\geq 3$:
\begin{equation}\label{eq:LZ}
\zeta^{\star, \mathfrak{m}} \left( \boldsymbol{2}^{a_{0}},c_{1},\cdots,c_{p}, \boldsymbol{2}^{a_{p}}\right)  =
-\zeta^{\sharp,  \mathfrak{m}} \left( 2a_{0}+1,\boldsymbol{1}^{ c_{1}-3  },\cdots,\boldsymbol{1}^{ c_{i}-3  },2a_{i}+3, \ldots, \overline{ 2 a_{p}+2} \right) 
\end{equation}
In particular, when all $c_{i}=3$:
\begin{equation}\label{eq:LZhoffman}
\zeta^{\star, \mathfrak{m}} \left( \boldsymbol{2}^{a_{0}},3,\cdots,3, \boldsymbol{2}^{a_{p}}\right)  = - \zeta^{\sharp,  \mathfrak{m}} \left( 2a_{0}+1, 2a_{1}+3, \ldots, 2a_{p-1}+3, \overline{2 a_{p}+2}\right) . 
\end{equation}
\end{description}
\texttt{Examples}: Particular identities implied by the previous conjecture, sometimes known for MZV and which could then be proven for motivic Euler sums directly with the coaction:
\begin{itemize}
\item[$\cdot$] $ \zeta^{\star, \mathfrak{m}}(1, \left\lbrace 2 \right\rbrace^{n} )=2 \zeta^{ \mathfrak{m}}(2n+1).$
\item[$\cdot$] $ \zeta^{\star, \mathfrak{m}}(1, \left\lbrace 2 \right\rbrace^{a}, 1, \left\lbrace 2 \right\rbrace^{b} )= \zeta^{ \sharp\mathfrak{m} }(2a+1,2b+1)= 4 \zeta^{ \mathfrak{m} }(2a+1,2b+1)+ 2 \zeta^{ \mathfrak{m} }(2a+2b+2). $
\item[$\cdot$] $ \zeta^{ \mathfrak{m}} (n)= - \zeta^{\sharp, \mathfrak{m}} (\lbrace 1\rbrace^{n-2}, -2)= -\sum_{ w(\boldsymbol{k})=n \atop \boldsymbol{k} \text{admissible}} \boldsymbol{2}^{p} \zeta^{\mathfrak{m}}(k_{1}, \ldots, k_{p-1}, -k_{p}).$
\item[$\cdot$] $ \zeta^{ \star, \mathfrak{m}} (\lbrace 2 \rbrace ^{n})= \sum_{\boldsymbol{k} \in \lbrace \text{ even }\rbrace^{\times} \atop w(\boldsymbol{k})= 2n} \zeta^{\mathfrak{m}} (\boldsymbol{k})=- 2 \zeta^{\mathfrak{m}} (-2n) .$
\end{itemize}
We paved the way for the proof of Conjecture $\ref{lzg}$, bringing it back to an identity in $\mathcal{L}$:
\begin{theo}
Let assume:
\begin{itemize}
\item[$(i)$] The analytic version of $\ref{lzg}$ is true.
\item[$(ii)$] In the coalgebra $\mathcal{L}$, i.e. modulo products, for odd weights:
\begin{equation}\label{eq:conjid}
 \zeta^{\sharp,  \mathfrak{l}} _{B_{0}-1}(\boldsymbol{1}^{ \gamma_{1}},\cdots, \boldsymbol{1}^{\gamma_{p}  },B_{p})\equiv
  \zeta^{\star\star, \mathfrak{l}}_{2} (\boldsymbol{2}^{a_{0}-1},c_{1},\cdots,\boldsymbol{2}^{a_{p}})-\zeta^{\star\star, \mathfrak{l}}_{1} (\boldsymbol{2}^{a_{0}}, c_{1}-1, \ldots, \boldsymbol{2}^{a_{p}}) ,
\end{equation}
\begin{flushright}
with $c_{1}\geq 3$, $a_{0}>0$,  $\gamma_{i}=c_{i}-3 + 2\delta_{c_{i}}$   and   $\left\lbrace \begin{array}{l}
B_{0}= 2a_{0}+1-\delta_{c_{1}}\\
B_{i}=2a_{i}+3-\delta_{c_{i}}-\delta_{c_{i+1}}\\
B_{p}=2a_{p}+3-\delta_{c_{p}}
\end{array} \right. $.
\end{flushright}
\end{itemize}
Then:
\begin{enumerate}[I.]
\item Conjecture $\ref{lzg}$ is true, for motivic Euler sums.
\item In the coalgebra $\mathcal{L}$, for odd weights, with $c_{1}\geq 3$ and the previous notations:
\begin{equation}\label{eq:toolid}
\zeta^{\sharp,  \mathfrak{l}} (\boldsymbol{1}^{ \gamma_{1}},\cdots, \boldsymbol{1}^{ \gamma_{p} },B_{p})\equiv  - \zeta^{\star, \mathfrak{l}}_{1} (c_{1}-1, \boldsymbol{2}^{a_{1}},c_{2},\cdots,c_{p}, \boldsymbol{2}^{a_{p}}).
\end{equation}
\end{enumerate}
\end{theo}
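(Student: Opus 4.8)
The statement is a conditional reduction: assuming the analytic version of Conjecture~\ref{lzg} together with the purely motivic coalgebra identity~(\ref{eq:conjid}), derive both the full motivic Conjecture~\ref{lzg} (part~I) and the auxiliary $\mathcal{L}$-identity~(\ref{eq:toolid}) (part~II). The whole argument runs by a simultaneous induction on the weight $w$, using Corollary~\ref{kerdn}: to upgrade a relation among real Euler sums to the motivic level it suffices to check that the infinitesimal coactions $D_{2r+1}$ (for all $0<2r+1<w$) agree on both sides, after which the single remaining ambiguity is a rational multiple of $\zeta^{\mathfrak m}(w)$ (or $(\pi^{\mathfrak m})^{w}$ when $w$ is even), which is pinned down by hypothesis~$(i)$. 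So the engine is: compute $D_{2r+1}$ of the left-hand side $\zeta^{\star,\mathfrak m}(\boldsymbol 2^{a_0},c_1,\dots,c_p,\boldsymbol 2^{a_p})$ using the simplified expression~(\ref{eq:dr3}) of Appendix~$A.1$, compute $D_{2r+1}$ of the right-hand side $\zeta^{\sharp,\mathfrak m}(\dots)$ using Lemma~\ref{lemmt} / the formulas of $\S A.1$, and match term by term, invoking the inductive hypothesis (lower weight instances of Conjecture~\ref{lzg} and of~(\ref{eq:toolid})) to rewrite the $\otimes$-right factors, and invoking~(\ref{eq:conjid}) to rewrite the $\otimes$-left factors, which are products of simple zetas hence rational multiples of $\zeta^{\mathfrak l}(2r+1)$ in $\mathcal L$.

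\textbf{Key steps, in order.} First I would set up the induction and record the base cases (small weight, e.g. $\zeta^{\star,\mathfrak m}(\boldsymbol 2^{a})=-2\zeta^{\mathfrak m}(\overline{2a})$ from Lemma~\ref{lemmcoeff}$(i)$, and depth-one identities), checking they are consistent with both~I and~II. Second, I would prove~(\ref{eq:toolid}) from~(\ref{eq:conjid}): the passage from $\zeta^{\star\star}$ to $\zeta^{\star}$ and from the regularized $\zeta^{\sharp}_{B_0-1}(\dots)$ to $\zeta^{\sharp}(\dots)$ is exactly the shuffle-regularization relation~(\ref{eq:shufflereg}) combined with the two structural identities $\zeta^{\star,\mathfrak m}=\zeta^{\star\star,\mathfrak m}-\zeta^{\star\star,\mathfrak m}_{|n_1|}(\cdots)$ and $\zeta^{\sharp,\mathfrak m}=\zeta^{\sharp\sharp}-\zeta^{\sharp\sharp,\mathfrak m}_{|n_1|}(\cdots)$ from $\S4.1$; so~(\ref{eq:toolid}) should drop out of~(\ref{eq:conjid}) by a finite manipulation in $\mathcal L$, with the Antipode~$\shuffle$ and Antipode~$\ast$ relations of $\S4.2$ used to normalize the index. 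Third, the heart: compute $D_{2r+1}$ on $\zeta^{\star,\mathfrak m}(\boldsymbol 2^{a_0},c_1,\dots,c_p,\boldsymbol 2^{a_p})$ and on $\zeta^{\sharp,\mathfrak m}$ of the conjectured word, organizing the cuts into the same families as in~(\ref{eq:dr3}) (deconcatenation cuts, cuts from a $2$-block to a $c_i$, cuts between two $c_i$'s, boundary cuts). For the $\otimes$-right factors, each is again a $\zeta^{\star,\mathfrak m}$ resp.\ $\zeta^{\sharp,\mathfrak m}$ of strictly smaller weight in the same shape, so the inductive hypothesis~I identifies them; for the $\otimes$-left factors on the $\star$-side one gets $\zeta^{\star\star,\mathfrak l}$ and $\zeta^{\star,\mathfrak l}$ of the relevant blocks, and~(\ref{eq:conjid}) (plus the reversal/shift corollaries of Theorem~\ref{hybrid} for Euler $\star\star$ and $\sharp\sharp$ sums) rewrites these as the $\zeta^{\sharp,\mathfrak l}$ appearing on the $\sharp$-side. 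Fourth, conclude: once $D_{2r+1}$ agree for all $r$, Corollary~\ref{kerdn} gives equality up to $\mathbb Q\,\zeta^{\mathfrak m}(w)$, fixed by $(i)$; and the $\mathcal L$-projection of the verified identity, together with the Antipode relations, yields~(\ref{eq:toolid}) in the form stated (closing the induction, since~II in weight $w$ feeds the $\otimes$-left matching in weight $w+2r+1$).

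\textbf{Main obstacle.} The combinatorial bookkeeping in the third step is where the real difficulty lies: the $\sharp$-word on the right has interleaved blocks $\boldsymbol 1^{c_i-3}$ and signed entries $B_i$, and the $D_{2r+1}$-cuts on it — governed by the $T_{\epsilon,\epsilon'}$-classification of Lemma~\ref{lemmt} and the $\S A.1$ simplifications — must be shown to match, cut-for-cut and with correct signs and the $\delta_{c_i}$ Kronecker corrections, the cuts on the $\star$-side. In particular one must verify that the numerous "unstable'' cuts (those producing an even argument $\overline{2a+2}$ or a $\overline{1}$) cancel in $\mathcal L$ exactly as in the proof of Theorem~\ref{ESsharphonorary}, using \textsc{Sign}, \textsc{Cut}, \textsc{Minus} from the $\sharp\sharp$ corollary; tracking the sign prefactor $(-1)^{1+\delta_{c_1}}$ through all of this, and handling the boundary blocks $B_0,B_p$ (which carry the $-\delta_{c_1}$ and $-\delta_{c_p}$ shifts) as genuinely special cases rather than degenerate instances of the generic formula, is the place where an apparently routine induction can hide real work. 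A secondary, softer obstacle is organizing the simultaneous induction so that the weight-parity hypothesis in~(\ref{eq:conjid}) and~(\ref{eq:toolid}) (odd weight) meshes correctly with the weights $2r+1$ of the cuts — since a cut of odd length off an arbitrary-weight word can land on either parity, one must be careful that each application of~II is at odd weight, which is automatic here because $\mathcal L_{r}$ contributions with $r$ even vanish for $N=1,2$ by depth-one results, but this needs to be said explicitly.
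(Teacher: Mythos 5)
Your overall strategy for part I is essentially the paper's: a recursion on the weight in which one matches the derivations $D_{2r+1}$ on both sides of Conjecture \ref{lzg} (right factors identified by the inductive instance of I; left factors, which live in $\mathcal{L}_{2r+1}$, rewritten via (\ref{eq:toolid}) at lower weight and via hypothesis $(ii)$ together with the Shift/Cut/Minus/Antipode relations of \S 4.2), and then pins the residual rational multiple of $\zeta^{\mathfrak{m}}(w)$ with the analytic hypothesis $(i)$ through Corollary \ref{kerdn}. That is exactly how the paper organizes the computation (cuts grouped into three families according to the right side, the middle family split into the subcases $\beta<a_i<a_j$, $\beta>a_j>a_i$, $a_i<\beta<a_j$), so on this part you are on the right track, even though the cut-by-cut bookkeeping you defer is where the bulk of the paper's proof actually lives.

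The genuine gap is your step 2: the claim that (\ref{eq:toolid}) "drops out of (\ref{eq:conjid}) by a finite manipulation" using shuffle regularization and the $\star/\star\star$, $\sharp/\sharp\sharp$ structural identities. It does not: (\ref{eq:conjid}) is stated only for $a_0>0$, and (\ref{eq:toolid}) is precisely the excluded boundary shape --- its star side is $\zeta^{\star,\mathfrak{l}}_1(c_1-1,\boldsymbol{2}^{a_1},\ldots)$ with leading entry $c_1-1$, not a difference of $\zeta^{\star\star}$'s carrying $\boldsymbol{2}^{a_0-1}$ and $\boldsymbol{2}^{a_0}$ blocks, and the shuffle regularization (\ref{eq:shufflereg}) redistributes the leading zeros into the arguments rather than simply stripping the subscript $B_0-1$, so no normalization of indices converts one statement into the other. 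In the paper, II at weight $w$ is deduced from I at the same weight: one writes $-\zeta^{\star,\mathfrak{l}}_1(c_1-1,\textbf{X})\equiv \zeta^{\star,\mathfrak{l}}(1,c_1-1,\textbf{X})-\zeta^{\star,\mathfrak{l}}(c_1,\textbf{X})-\zeta^{\star,\mathfrak{l}}(c_1-1,\textbf{X},1)$ (definition of $\zeta^{\star\star}$ plus Shift), converts each term by Conjecture \ref{lzg} into $\zeta^{\sharp}$'s, and simplifies in the $\sharp\sharp$ setting with Shift, Cut and Minus, the case $c_1=3$ requiring separate treatment. So either supply a genuinely new derivation of II from $(ii)$ alone, or reorder your induction: prove I at weight $w$ first (it only needs II at the strictly smaller weights $2r+1<w$), and then obtain II at weight $w$ as a consequence of I at weight $w$; this is how the circle closes in the paper.
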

\texttt{ADDENDUM:} The hypothesis $(i)$ is proved: J. Zhao deduced it from its Theorem 1.4 in $\cite{Zh3}$.\\
\\
\textsc{Remark:} The $(ii)$ hypothesis should be proven either directly via the various relations in $\mathcal{L}$ proven in $\S 4.2$ (as for $\ref{eq:toolid}$), or using the coaction, which would require the analytic identity corresponding. Beware, $(ii)$ would only be true in $\mathcal{L}^{2}$, not in $\mathcal{H}^{2}$.

\begin{proof}
To prove this equality $1.$ at a motivic level by recursion, we would need to proof that the coaction is equal on both side, and use the conjecture analytic version of the same equality. We prove $I$ and $II$ successively, in a same recursion on the weight:
\begin{enumerate}[I.]
\item Using the formulas of the coactions $D_{r}$ for these families (Lemma $A.1.2$ and $A.1.4$), we can gather terms in both sides according to the right side, which leads to three types:
$$ \begin{array}{llll}
(a) &  \zeta^{\star, \mathfrak{m}} (\cdots,\boldsymbol{2}^{a_{i}}, \alpha, \boldsymbol{2}^{\beta}, c_{j+1}, \cdots)  & \longleftrightarrow &   \zeta^{\sharp ,\mathfrak{m}}(B_{0} \cdots, B_{i}, \textcolor{magenta}{1^{\gamma}, B}, 1^{\gamma_{j+1}}, \ldots, B_{p}) \\
(b) &   \zeta^{\star, \mathfrak{m}} (\cdots,\boldsymbol{2}^{a_{i-1}}, c_{i}, \boldsymbol{2}^{\beta}, c_{j+1}, \cdots)    & \longleftrightarrow &   \zeta^{\sharp ,\mathfrak{m}}(B_{0} \cdots, B_{i-1}, 1^{\gamma_{i}}, \textcolor{green}{B}, 1^{\gamma_{j+1}}, \ldots, B_{p})   \\
(c) &   \zeta^{\star, \mathfrak{m}} (\cdots, c_{i}, \boldsymbol{2}^{\beta}, \alpha, \boldsymbol{2}^{a_{j}}, \cdots)    & \longleftrightarrow & \zeta^{\sharp, \mathfrak{m}}(B_{0} \cdots, 1^{\gamma_{i+1}},\textcolor{cyan}{ B, 1^{\gamma}}, B_{j+1}, \ldots, B_{p}) 
\end{array},$$
with $ \gamma=\alpha-3$ and $B=2\beta +3-\delta_{c_{j+1}}$, or $B=2\beta+3 - \delta_{c_{i}}- \delta_{c_{j+1}}$ for $(b)$.\\
The third case, antisymmetric of the first case, may be omitted below. By recursive hypothesis, these right sides are equal and it remains to compare the left sides associated:
\begin{enumerate}
\item On the one hand, by lemma $A.1.2$, the left side corresponding:
$$ \delta_{3\leq \alpha \leq c_{i+1}-1 \atop 0\leq \beta  a_{j}}  \zeta^{\star, \mathfrak{l}}_{c_{i+1}-\alpha}- (\boldsymbol{2}^{ a_{j}-\beta}, \ldots, \boldsymbol{2}^{a_{i+1}}).$$
On the other hand (Lemma $A.1.4$), the left side is:
$$-\delta_{2 \leq B \leq B_{j} \atop 0\leq\gamma\leq\gamma_{i+1}-1}\zeta^{\sharp,\mathfrak{l}}(B_{j}-B+1, 1^{\gamma_{j}}, \ldots, 1^{\gamma_{i+1}-\gamma-1}).$$
They are both equal, by $\ref{eq:toolid}$, where $c_{i+1}-\alpha+2$ corresponds to $c_{1} $ and is greater than $3$.\\
\item By lemma $A.1.2$, the left side corresponding for $\zeta^{\star}$:
$$\hspace*{-1.2cm}\begin{array}{llll}
-& \delta_{c_{i}>3} \zeta^{\star\star, \mathfrak{l}}_{2} (\boldsymbol{2}^{a_{i}}, \ldots, \boldsymbol{2}^{ a_{j}-\beta-1}) & + & \delta_{c_{j}>3} \zeta^{\star\star, \mathfrak{l}}_{2} (\boldsymbol{2}^{a_{j}}, \ldots, \boldsymbol{2}^{ a_{i}-\beta-1})  \\
 - & \delta_{c_{i}=1} \zeta^{\star\star, \mathfrak{l}} (\boldsymbol{2}^{a_{j}-\beta}, \ldots, \boldsymbol{2}^{ a_{i}}) & +&  \delta_{c_{j+1}=1} \zeta^{\star\star, \mathfrak{l}} (\boldsymbol{2}^{a_{i}-\beta}, \ldots, \boldsymbol{2}^{ a_{j}})\\
   + & \delta_{c_{i+1}=1 \atop \beta> a_{i}} \zeta^{\star\star, \mathfrak{l}}_{1} (\boldsymbol{2}^{a_{i}+a_{j}-\beta}, \ldots, \boldsymbol{2}^{ a_{i+1}})  & - & \delta_{c_{j}=1 \atop \beta> a_{j}} \zeta^{\star\star, \mathfrak{l}}_{1} (\boldsymbol{2}^{a_{i}+a_{j}-\beta}, \ldots, \boldsymbol{2}^{ a_{j-1}})\\
   - & \delta_{a_{j}< \beta \leq a_{i}+ a_{j}+1}  \zeta^{\star\star, \mathfrak{l}}_{c_{j}-2} (\boldsymbol{2}^{a_{j-1}}, \ldots, \boldsymbol{2}^{a_{i}+ a_{j}-\beta+1})  &  + & \delta_{a_{i}< \beta \leq a_{j}+a_{i}+1} \zeta^{\star\star, \mathfrak{l}}_{c_{i+1}-2} (\boldsymbol{2}^{a_{i+1}}, \ldots, \boldsymbol{2}^{ a_{i}+ a_{j} -\beta+1}) .
\end{array}$$
It should correspond to (using still lemma $A.1.4$), with $B_{k}=2a_{k}+3-\delta_{c_{k}}-\delta_{c_{k+1}}$, $\gamma_{k}=c_{k}-3+2\delta_{c_{k}}$ and $B=2\beta+3 - \delta_{c_{i}}- \delta_{c_{j+1}}$:
$$\left( \delta_{B_{i}< B}\zeta^{\sharp\sharp,\mathfrak{l}}_{B_{i}+B_{j}-B}(1^{\gamma_{j}}, \ldots, 1^{\gamma_{i+1}}) - \delta_{B_{j}< B}\zeta^{\sharp\sharp,\mathfrak{l}}_{B_{i}+B_{j}-B}(1^{\gamma_{i+1}}, \ldots, 1^{\gamma_{j}}) \right.  $$
$$\left. + \zeta^{\sharp\sharp,\mathfrak{l}}_{B_{i}-B}(1^{\gamma_{i+1}}, \ldots, B_{j}) - \zeta^{\sharp\sharp,\mathfrak{l}}_{B_{j}-B}(1^{\gamma_{j}}, \ldots, B_{i})\right) .$$
The first line has even depth, while the second line has odd depth, as noticed in Lemma $A.1.4$.
Let distinguish three cases, and assume $a_{i}<a_{j}$:\footnote{The case $a_{j}<a_{i}$ is anti-symmetric, hence analogue.}
\begin{itemize}

\item[$(i)$]  When $\beta< a_{i}<a_{j}$, we should have:
\begin{equation}\label{eq:ci}
 \zeta^{\sharp\sharp,\mathfrak{l}}_{B_{i}-B}(1^{\gamma_{i+1}}, \ldots, B_{j}) - \zeta^{\sharp\sharp,\mathfrak{l}}_{B_{j}-B}(1^{\gamma_{j}}, \ldots, B_{i})   \text{ equal to:} 
\end{equation} 
$$\begin{array}{llll}
-  \delta_{c_{i} >3} & \zeta^{\star\star, \mathfrak{l}}_{2} (\boldsymbol{2}^{a_{j}-\beta-1}, \ldots, \boldsymbol{2}^{ a_{i}})  & - \delta_{c_{i}=1}  &  \zeta^{\star\star, \mathfrak{l}} (\boldsymbol{2}^{a_{j}-\beta}, \ldots, \boldsymbol{2}^{ a_{i}}) \\
+\delta_{c_{j+1}>3} & \zeta^{\star\star, \mathfrak{l}}_{2} (\boldsymbol{2}^{a_{i}-\beta-1}, \ldots, \boldsymbol{2}^{ a_{j}})  & + \delta_{c_{j+1}=1}  & \zeta^{\star\star, \mathfrak{l}} (\boldsymbol{2}^{a_{i}-\beta}, \ldots, \boldsymbol{2}^{ a_{j}}) 
\end{array}$$
\begin{itemize}
\item[$\cdot$] Let first look at the case where $c_{i}>3, c_{j+1}>3$. Renumbering the indices, using $\textsc{Shift}$ for odd depth for the second line, it is equivalent to, with $\alpha=\beta +1$, $B_{p}=2a_{p}+3, B_{0}=2a_{0}+3$:
$$\begin{array}{llll}
&\zeta^{\star\star, \mathfrak{l}}_{2} (\boldsymbol{2}^{a_{0}-\alpha},c_{1},\cdots,c_{p},\boldsymbol{2}^{a_{p}}) & -& \zeta^{\star\star, \mathfrak{l}}_{2} (\boldsymbol{2}^{a_{0}},c_{1},\cdots,c_{p},\boldsymbol{2}^{a_{p}-\alpha}) \\
\equiv & \zeta^{\sharp\sharp,  \mathfrak{l}} _{B_{0}-B}(1^{\gamma_{1}},\cdots, 1^{\gamma_{p}},B_{p}) & - & \zeta^{\sharp\sharp,  \mathfrak{l}} _{B_{p}-B}(B_{0}, 1^{\gamma_{1}},\cdots, 1^{\gamma_{p}})\\
\equiv & \zeta^{\sharp\sharp,  \mathfrak{l}} _{B_{p}-1}(B_{0}-B+1,1^{\gamma_{1}},\cdots, 1^{\gamma_{p}}) & -& \zeta^{\sharp\sharp,\mathfrak{l}} _{B_{p}-B}(B_{0}, 1^{ \gamma_{1}},\cdots, 1^{\gamma_{p}})\\
\equiv & \zeta^{\sharp,  \mathfrak{l}} _{B_{p}-1}(B_{0}-B+1,1^{\gamma_{1}},\cdots, 1^{\gamma_{p}}) & -& \zeta^{\sharp,\mathfrak{l}} _{B_{p}-B}(B_{0}, 1^{ \gamma_{1}},\cdots, 1^{\gamma_{p}}).
\end{array}$$
This boils down to $(\ref{eq:conjid})$ applied to each $\zeta^{\star\star}_{2}$, since by \textsc{Shift} $(\ref{eq:shift})$ the two terms of the type $\zeta^{\star\star}_{1}$ get simplified.\\
\item[$\cdot$] Let now look at the case where $c_{i}=1, c_{j+1}>3$ \footnote{The case $c_{j+1}=1, c_{i}>3$ being analogue, by symmetry.}; hence $B_{i}=2a_{i}+2-\delta_{c_{i+1}}$, $B=2\beta+2$. In a first hand, we have to consider:
$$ \zeta^{\star\star, \mathfrak{l}}_{2} (\boldsymbol{2}^{a_{i}-\beta-1},c_{i+1},\cdots,c_{j},\boldsymbol{2}^{a_{j}}) - \zeta^{\star\star, \mathfrak{l}} (\boldsymbol{2}^{a_{j}-\beta},c_{j},\cdots,c_{i+1},\boldsymbol{2}^{a_{i}}).$$
By renumbering indices in $\ref{eq:ci}$, the correspondence boils down here to the following $\boldsymbol{\diamond} = \boldsymbol{\Join}$, where $B_{0}=2a_{0}+3-\delta_{c_{1}}$, $B_{i}=2a_{i}+3-\delta_{c_{i}}-\delta_{c_{i+1}}$, $B=2\beta +2$:
$$ (\boldsymbol{\diamond}) \quad  \zeta^{\star\star, \mathfrak{l}}_{2} (\boldsymbol{2}^{a_{0}-\beta},c_{1},\cdots,c_{p},\boldsymbol{2}^{a_{p}}) - \zeta^{\star\star, \mathfrak{l}} (\boldsymbol{2}^{a_{0}+1},c_{1},\cdots,c_{p},\boldsymbol{2}^{a_{p}-\beta})$$
$$(\boldsymbol{\Join}) \quad \zeta^{\sharp\sharp,\mathfrak{l}}_{B_{0}-B+1}(1^{\gamma_{1}}, \ldots,1^{\gamma_{p}}, B_{p}) - \zeta^{\sharp\sharp,\mathfrak{l}}_{B_{p}-B}(1^{\gamma_{p}}, \ldots,1^{\gamma_{1}}, B_{0}+1).$$
Turning in $(\boldsymbol{\diamond})$ the second term into a $\zeta^{\star, \mathfrak{l}}(2, \cdots)+ \zeta^{\star\star, \mathfrak{l}}_{2} (\cdots)$, and applying the identity $(\ref{eq:conjid})$ for both terms $\zeta^{\star\star, \mathfrak{l}}_{2}(\cdots)$ leads to:
$$\hspace*{-0.7cm}(\boldsymbol{\diamond}) \left\lbrace \begin{array}{lll}
 + \zeta^{\star\star, \mathfrak{l}}_{1} (\boldsymbol{2}^{a_{0}-\beta+1},c_{1}-1,\cdots,c_{p},\boldsymbol{2}^{a_{p}}) & - \zeta^{\star\star, \mathfrak{l}}_{1} (\boldsymbol{2}^{a_{0}+1},c_{1}-1,\cdots,c_{p},\boldsymbol{2}^{a_{p}-\beta})& \quad (\boldsymbol{\diamond_{1}})  \\
+ \zeta^{\sharp, \mathfrak{l}}_{B_{0}-B+1} (\boldsymbol{1}^{\gamma_{1}},\cdots,\boldsymbol{1}^{\gamma_{p}},B_{p}) &- \zeta^{\sharp, \mathfrak{l}}_{B_{0}-1} (\boldsymbol{1}^{\gamma_{1}},\cdots,\boldsymbol{1}^{\gamma_{p}},B_{p}-B+2)& \quad(\boldsymbol{\diamond_{2}})  \\
- \zeta^{\star, \mathfrak{l}} (\boldsymbol{2}^{a_{0}+1},c_{1},\cdots,c_{p},\boldsymbol{2}^{a_{p}-\beta}) &  & \quad (\boldsymbol{\diamond_{3}}) \\
\end{array} \right. $$
The first line, $(\boldsymbol{\diamond}_{1}) $ by $\textsc{Shift}$ is zero. We apply $\textsc{Antipode}$ $\ast$ on the terms of the second line, then turn each into a difference $\zeta^{\sharp\sharp}_{n}(m, \cdots)- \zeta^{\sharp\sharp}_{n+m}(\cdots)$; the terms of the type $\zeta^{\sharp\sharp}_{n+m}(\cdots)$, are identical and get simplified:

$$(\boldsymbol{\diamond_{2}}) \quad \begin{array}{lll}
\equiv & \zeta^{\sharp\sharp, \mathfrak{l}}_{B_{0}-B+1} (B_{p},\boldsymbol{1}^{\gamma_{p}},\cdots,\boldsymbol{1}^{\gamma_{1}}) & - \zeta^{\sharp\sharp, \mathfrak{l}}_{B_{0}-B+1+ B_{p}} (\boldsymbol{1}^{\gamma_{p}},\cdots,\boldsymbol{1}^{\gamma_{1}}) \\
& -\zeta^{\sharp\sharp, \mathfrak{l}}_{B_{0}-1} (B_{p}-B+2,\boldsymbol{1}^{\gamma_{p}},\cdots,\boldsymbol{1}^{\gamma_{1}}) & + \zeta^{\sharp\sharp, \mathfrak{l}}_{B_{0}-B+1+ B_{p}} (\boldsymbol{1}^{\gamma_{p}},\cdots,\boldsymbol{1}^{\gamma_{1}}) \\
\equiv & \zeta^{\sharp\sharp, \mathfrak{l}}_{B_{0}-B+1} (B_{p},\boldsymbol{1}^{\gamma_{p}},\cdots,\boldsymbol{1}^{\gamma_{1}}) & - \zeta^{\sharp\sharp, \mathfrak{l}}_{B_{0}-1} (B_{p}-B+2,\boldsymbol{1}^{\gamma_{p}},\cdots,\boldsymbol{1}^{\gamma_{1}}).
\end{array}$$
Furthermore, applying the recursion hypothesis (I.), i.e. conjecture $\ref{lzg}$ on $(\boldsymbol{\diamond}_{3})$, and turn it into a difference of $\zeta^{\sharp\sharp}$:
$$(\boldsymbol{\diamond_{3}})\quad \begin{array}{ll}
& - \zeta^{\star, \mathfrak{l}} (\boldsymbol{2}^{a_{0}+1},c_{1},\cdots,c_{p},\boldsymbol{2}^{a_{p}-\beta})\\
\equiv & - \zeta^{\sharp, \mathfrak{l}} (B_{p}-B+1,\boldsymbol{1}^{\gamma_{p}},\cdots,\boldsymbol{1}^{\gamma_{1}},B_{0})\\
\equiv & - \zeta^{\sharp\sharp, \mathfrak{l}} (B_{p}-B+1,\boldsymbol{1}^{\gamma_{p}},\cdots,\boldsymbol{1}^{\gamma_{1}},B_{0}) + \zeta^{\sharp\sharp, \mathfrak{l}}_{B_{p}-B+1} (\boldsymbol{1}^{\gamma_{p}},\cdots,\boldsymbol{1}^{\gamma_{1}},B_{0})
\end{array}$$
When adding $(\boldsymbol{\diamond_{2}})$ and $(\boldsymbol{\diamond_{3}})$ to get $(\boldsymbol{\diamond})$, the two last terms (odd depth) being simplified by $\textsc{Shift}$, it remains:
$$(\boldsymbol{\diamond}) \quad  \zeta^{\sharp\sharp, \mathfrak{l}}_{B_{0}-B+1} (B_{p},\boldsymbol{1}^{\gamma_{p}},\cdots,\boldsymbol{1}^{\gamma_{1}}) - \zeta^{\sharp\sharp, \mathfrak{l}} (B_{p}-B+1,\boldsymbol{1}^{\gamma_{p}},\cdots,\boldsymbol{1}^{\gamma_{1}},B_{0}). $$
This, applying \textsc{Antipode} $\ast$ to the first term, $\textsc{Cut}$ and $\textsc{Shift}$ to the second, corresponds to $(\boldsymbol{\Join})$.\\
\end{itemize}
\item[$(ii)$] When $\beta >a_{j}>a_{i}$, we should have:
$$\begin{array}{lll}
& - \zeta^{\star\star, \mathfrak{l}}_{c_{j}-2} (\boldsymbol{2}^{a_{j-1}}, \ldots, \boldsymbol{2}^{a_{i}+ a_{j}-\beta+1}) & + \zeta^{\star\star, \mathfrak{l}}_{c_{i+1}-2} (\boldsymbol{2}^{a_{i+1}}, \ldots, \boldsymbol{2}^{ a_{i}+ a_{j} -\beta+1})\\
\equiv & + \zeta^{\sharp\sharp,\mathfrak{l}}_{B_{i}+B_{j}-B}(1^{\gamma_{j}}, \ldots, 1^{\gamma_{i+1}}) & -\zeta^{\sharp\sharp,\mathfrak{l}}_{B_{i}+B_{j}-B}(1^{\gamma_{i+1}}, \ldots, 1^{\gamma_{j}}).
\end{array}$$
Using \textsc{Shift} $(\ref{eq:shift})$ for the first line, and renumbering the indices, it is equivalent to, with $c_{1},c_{p} \geq 3$ and $a_{0}>0$:
\begin{equation} \label{eq:corresp3}
\zeta^{\star\star, \mathfrak{l}}_{1} (\boldsymbol{2}^{a_{0}},c_{1}-1,\cdots,c_{p})- \zeta^{\star\star, \mathfrak{l}}_{1} (\boldsymbol{2}^{a_{0}},c_{1},\cdots,c_{p}-1)
\end{equation}
$$ \equiv \zeta^{\sharp\sharp,  \mathfrak{l}} _{B_{0}+2}(1^{\gamma_{1}},\cdots, 1^{\gamma_{p}})-\zeta^{\sharp\sharp,  \mathfrak{l}} _{B_{0}+2}(1^{\gamma_{p}},\cdots, 1^{\gamma_{1}}) \equiv \zeta^{\sharp,  \mathfrak{l}} _{B_{0}+2}(1^{\gamma_{1}}, \ldots, 1^{\gamma_{p}}).$$
The last equality comes from Corollary $4.2.7$, since depth is even. By $(\ref{eq:corresp})$ applied on each term of the first line
$$\zeta^{\star\star, \mathfrak{l}}_{1} (\boldsymbol{2}^{a_{0}},c_{1}-1,\cdots,c_{p})- \zeta^{\star\star, \mathfrak{l}}_{1} (\boldsymbol{2}^{a_{0}},c_{1},\cdots,c_{p}-1)$$
$$\hspace*{-1.5cm} \equiv \zeta^{\star\star, \mathfrak{l}}_{2} (\boldsymbol{2}^{a_{0}-1},c_{1},\cdots,c_{p})+ \zeta^{\sharp  \mathfrak{l}} _{2a_{0}}(3,1^{\gamma_{p}},\cdots, 1^{\gamma_{1}}) - \zeta^{\star\star, \mathfrak{l}}_{2} (c_{p},\cdots,c_{1},\boldsymbol{2}^{a_{0}-1}) - \zeta^{\sharp\sharp,  \mathfrak{l}}_{2}(2a_{0}+1,1^{\gamma_{p}},\cdots, 1^{\gamma_{1}}).$$
By Antipode $\shuffle$, the $\zeta^{\star\star}$ get simplified, and by the definition of $\zeta^{\sharp\sharp}$, the previous equality is equal to: 
$$\equiv- \zeta^{\sharp  \mathfrak{l}} _{2a_{0}+3}(1^{\gamma_{p}},\cdots, 1^{\gamma_{1}}) + \zeta^{\sharp\sharp  \mathfrak{l}} _{2a_{0}}(3,1^{\gamma_{p}},\cdots, 1^{\gamma_{1}}) + \zeta^{\sharp\sharp  \mathfrak{l}} _{2a_{0}+4}(1^{\gamma_{p}-1},\cdots, 1^{\gamma_{1}})  $$
$$ - \zeta^{\sharp\sharp  \mathfrak{l}} _{2}(2a_{0}+1, 1^{\gamma_{p}},\cdots, 1^{\gamma_{1}}) + \zeta^{\sharp\sharp  \mathfrak{l}} _{2a_{0}+3}(1^{\gamma_{r}},\cdots, 1^{\gamma_{1}}).$$
Then, by \textsc{Shift} $(\ref{eq:shift})$, the second and fourth term get simplified while the third and fifth term get simplified by \textsc{Cut} $(\ref{eq:cut})$. It remains:
$$- \zeta^{\sharp , \mathfrak{l}} _{2a_{0}+3}(1^{\gamma_{p}},\cdots, 1^{\gamma_{1}}), \quad \text{ which leads straight to } \ref{eq:corresp3}.$$
\item[$(iii)$] When $a_{i}< \beta <a_{j}$, we should have:
\begin{multline}\nonumber
 - \zeta^{\star\star, \mathfrak{l}}_{2} (\boldsymbol{2}^{a_{i}}, \ldots, \boldsymbol{2}^{a_{j}-\beta-1})  +  \zeta^{\star\star, \mathfrak{l}}_{c_{i+1}-2} (\boldsymbol{2}^{a_{i+1}}, \ldots, \boldsymbol{2}^{a_{i}+ a_{j} -\beta+1})  \\
\equiv \zeta^{\sharp\sharp,\mathfrak{l}}_{B_{i}+B_{j}-B}(1^{\gamma_{j}}, \ldots, 1^{\gamma_{i+1}})-\zeta^{\sharp\sharp,\mathfrak{l}}_{B_{j}-B}(1^{\gamma_{j}}, \ldots, B_{i}). 
\end{multline}
Using resp. \textsc{Antipode} \textsc{Shift} $(\ref{eq:shift})$ for the first line, and re-ordering the indices, it is equivalent to, with $c_{1}\geq 3$, $B_{0}=2a_{0}+1-\delta c_{1}$ here:
\begin{equation} \label{eq:corresp} \zeta^{\star\star, \mathfrak{l}}_{2} (\boldsymbol{2}^{a_{0}-1},c_{1},\cdots,c_{p},\boldsymbol{2}^{a_{p}})- \zeta^{\star\star, \mathfrak{l}}_{1} (\boldsymbol{2}^{a_{0}},c_{1}-1,\cdots,c_{p},\boldsymbol{2}^{a_{p}})  
\end{equation}
$$\equiv \zeta^{\sharp\sharp,  \mathfrak{l}} _{B_{p}-1}(B_{0}, 1^{\gamma_{1}},\cdots, 1^{\gamma_{p}}) - \zeta^{\sharp\sharp,  \mathfrak{l}} _{B_{0}+B_{p}-1}(1^{ \gamma_{p}},\cdots, 1^{\gamma_{1}}) \equiv  \zeta^{\sharp,  \mathfrak{l}} _{B_{0}-1}(1^{\gamma_{1}},\cdots, 1^{\gamma_{p}},B_{p}).$$
This matches with the identity $\ref{eq:conjid}$; the last equality coming from $\textsc{Shift}$ since depth is odd.
\end{itemize}
\item Antisymmetric of the first case.\\
\end{enumerate}

\item Let us denote the sequences $\textbf{X}=\boldsymbol{2}^{a_{1}}, \ldots , \boldsymbol{2}^{a_{p}}$ and $\textbf{Y}= \boldsymbol{1}^{\gamma_{1}-1}, B_{1},\cdots,  \boldsymbol{1}^{\gamma_{p}} $.\\
We want to prove that:
\begin{equation} \label{eq:1234567}
\zeta^{\sharp,\mathfrak{l}} (1,\textbf{Y},B_{p})\equiv -\zeta^{\star,\mathfrak{l}}_{1} (c_{1}-1,\textbf{X})
\end{equation}
Relations used are mostly these stated in $\S 4.2$. Using the definition of $\zeta^{\star\star}$:
\begin{equation}\label{eq:12345}
\begin{array}{ll}
-\zeta^{\star,\mathfrak{l}}_{1} (c_{1}-1,\textbf{X}) & \equiv -\zeta^{\star\star,\mathfrak{l}}_{1} (c_{1}-1,\textbf{X})+ \zeta^{\star\star,\mathfrak{l}}_{c_{1}} (\textbf{X})\\
& \equiv - \zeta^{\star\star,\mathfrak{l}} (1,c_{1}-1,\textbf{X})+ \zeta^{\star,\mathfrak{l}} (1,c_{1}-1,\textbf{X})+ \zeta^{\star\star,\mathfrak{l}}(c_{1},\textbf{X})- \zeta^{\star,\mathfrak{l}} (c_{1},\textbf{X}) \\
& \equiv \zeta^{\star,\mathfrak{l}} (1,c_{1}-1,\textbf{X})- \zeta^{\star,\mathfrak{l}} (c_{1},\textbf{X})- \zeta^{\star,\mathfrak{l}}(c_{1}-1,\textbf{X},1).
\end{array}
\end{equation}
There, the first and third term in the second line, after applying \textsc{Shift}, have given the last $\zeta^{\star}$ in the last line.\\
Using then Conjecture $\ref{lzg}$, in terms of MMZV$^{\sharp}$, then MMZV$^{\sharp\sharp}$, it gives:
\begin{multline}
\zeta^{\sharp,\mathfrak{l}} (2,\textbf{Y},B_{p}-1)+ \zeta^{\sharp,\mathfrak{l}}  (1,1,\textbf{Y},B_{p}-1)+ \zeta^{\sharp,\mathfrak{l}} (1,\textbf{Y},B_{p}-1,1)\\
 \equiv \zeta^{\sharp\sharp,\mathfrak{l}} (2,\textbf{Y},B_{p}-1)- \zeta^{\sharp\sharp,\mathfrak{l}} _{2}(\textbf{Y},B_{p}-1)+ \zeta^{\sharp\sharp,\mathfrak{l}}  (1,1,\textbf{Y},B_{p}-1)\\
-\zeta^{\sharp\sharp,\mathfrak{l}}_{1} (1,\textbf{Y},B_{p}-1)+ \zeta^{\sharp\sharp,\mathfrak{l}} (1,\textbf{Y},B_{p}-1,1)-\zeta^{\sharp\sharp,\mathfrak{l}}_{1} (\textbf{Y},B_{p}-1,1)   
\end{multline}
First term (odd depth)\footnote{Since weight is odd, we know also depth parity of these terms.} is simplified with the last, by $\textsc{Schift}$. Fifth term (even depth) get simplified by \textsc{Cut} with the fourth term. Hence it remains two terms of even depth:
$$\equiv - \zeta^{\sharp\sharp,\mathfrak{l}} _{2}(\textbf{Y},B_{p}-1)+ \zeta^{\sharp\sharp,\mathfrak{l}}  (1,1,\textbf{Y},B_{p}-1) \equiv - \zeta^{\sharp\sharp,\mathfrak{l}} _{1}(\textbf{Y},B_{p})+ \zeta^{\sharp\sharp,\mathfrak{l}}_{B_{p}-1}  (1,1,\textbf{Y}) , $$
where \textsc{Minus} resp. \textsc{Cut} have been applied. This matches with $(\ref{eq:1234567})$ since, by $\textsc{Shift}:$
$$\equiv - \zeta^{\sharp\sharp,\mathfrak{l}} _{1}(\textbf{Y},B_{p})+ \zeta^{\sharp\sharp,\mathfrak{l}} (1,\textbf{Y},B_{p})\equiv \zeta^{\sharp,\mathfrak{l}}(1,\textbf{Y},B_{p}). $$
The case $c_{1}=3$ slightly differs since $(\ref{eq:12345})$ gives, by recursion hypothesis I.($\ref{lzg}$):
$$ -\zeta^{\star,\mathfrak{l}}_{1} (2,\textbf{X})\equiv \zeta^{\sharp,\mathfrak{l}} (B_{1}+1,\textbf{Y}',B_{p}-1)+ \zeta^{\sharp,\mathfrak{l}}  (1,B_{1},\textbf{Y}',B_{p}-1)+ \zeta^{\sharp,\mathfrak{l}} (B_{1},\textbf{Y}',B_{p}-1,1),$$
where $\textbf{Y}'= \boldsymbol{1}^{\gamma_{2}},\cdots,  \boldsymbol{1}^{\gamma_{p}} $, odd depth.
Turning into MES$^{\sharp\sharp}$, and using identities of $\S 4.2$ in the same way than above, leads to the result. Indeed, from:
$$\equiv \zeta^{\sharp\sharp,\mathfrak{l}} (B_{1}+1,\textbf{Y}',B_{p}-1)+ \zeta^{\sharp\sharp,\mathfrak{l}}  (1,B_{1},\textbf{Y}',B_{p}-1)+ \zeta^{\sharp\sharp,\mathfrak{l}} (B_{1},\textbf{Y}',B_{p}-1,1)$$
$$-\zeta^{\sharp\sharp,\mathfrak{l}}_{B_{1}+1} (\textbf{Y}',B_{p}-1)- \zeta^{\sharp\sharp,\mathfrak{l}}_{1}  (B_{1},\textbf{Y}',B_{p}-1)-\zeta^{\sharp\sharp,\mathfrak{l}}_{B_{1}} (\textbf{Y}',B_{p}-1,1)$$
First and last terms get simplified via $\textsc{Shift}$, while third and fifth term get simplified by $\textsc{Cut}$; besides, we apply \textsc{minus}  for second term, and \textsc{minus} for the fourth term, which are both of even depth. This leads to $\ref{eq:toolid}$, using again $\textsc{Shift}$ for the first term:
$$\begin{array}{l}
\equiv\zeta^{\sharp\sharp,\mathfrak{l}}_{B_{p}-1}  (1,B_{1},\textbf{Y}')-\zeta^{\sharp\sharp,\mathfrak{l}}_{B_{1}} (\textbf{Y}',B_{p}) \\
\equiv \zeta^{\sharp\sharp,\mathfrak{l}} (B_{1},\textbf{Y}',B_{p})-\zeta^{\sharp\sharp,\mathfrak{l}}_{B_{1}} (\textbf{Y}',B_{p}) \\
\equiv \zeta^{\sharp,\mathfrak{l}} (B_{1},\textbf{Y}',B_{p}).
\end{array}$$
\end{enumerate}
\end{proof}

\section{Appendix $1$: From the linearized octagon relation}

The identities in the coalgebra $\mathcal{L}$ obtained from the linearized octagon relation $\ref{eq:octagonlin}$:

\begin{lemm}\label{lemmlor}
In the coalgebra $\mathcal{L}$, $n_{i}\in\mathbb{Z}^{\ast}$:\footnote{Here, $\mlq + \mrq$ still denotes the operation where absolute values are summed and signs multiplied.}
\begin{itemize}
\item[$(i)$] $\zeta^{\star\star, \mathfrak{l}}(n_{0},\cdots, n_{p})= (-1)^{w+1} \zeta^{\star\star,\mathfrak{l}}(n_{p},\cdots, n_{0})$.
\item[$(ii)$] $\zeta^{\mathfrak{l}}(n_{0},\cdots, n_{p})+(-1)^{w+p} \zeta^{\star\star\mathfrak{l}}(n_{0},\cdots, n_{p})+(-1)^{p} \zeta^{\star\star\mathfrak{l}}_{n_{p}}(n_{p-1},\cdots,n_{1},n_{0})=0$.
\item[$(iii)$] 
$$\hspace*{-1cm}\zeta^{\mathfrak{l}}_{n_{0}-1}(n_{1},\cdots, n_{p})-  \zeta^{\mathfrak{l}}_{n_{0}}(n_{1},\cdots,n_{p-1}, n_{p}\mlq + \mrq 1 )=(-1)^{w} \left[ \zeta^{\star\star,\mathfrak{l}}_{n_{0}-1}(n_{1},\cdots, n_{p})-  \zeta^{\star\star,\mathfrak{l}}_{n_{0}}(n_{1},\cdots,n_{p-1}, n_{p}\mlq + \mrq 1)\right].$$
\end{itemize}
\end{lemm}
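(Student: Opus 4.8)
\textbf{Plan for the proof of Lemma \ref{lemmlor}.} The statement is a list of three identities in the coalgebra $\mathcal{L}$ extracted from the linearized octagon relation $(\ref{eq:octagonlin})$. My plan is to read off, one by one, the coefficient of a carefully chosen family of words $e_{0}^{a_{0}}e_{\eta_{1}}e_{0}^{a_{1}}\cdots e_{\eta_{p}}e_{0}^{a_{p}}$ (with $\eta_i\in\{\pm1\}$) on both sides of $(\ref{eq:octagonlin})$, exactly as in the proof of Theorem \ref{hybrid}, but now tracking \emph{all} the associator terms rather than suppressing some. The first preliminary step is to record, for each of the four associators $\Phi^{\mathfrak{l}}(e_\infty,e_{-1},e_1)$, $\Phi^{\mathfrak{l}}(e_1,e_\infty,e_0)$, $\Phi^{\mathfrak{l}}(e_{-1},e_0,e_\infty)$ and $\Phi^{\mathfrak{l}}(e_0,e_1,e_{-1})$, which kind of $\zeta^{\star\star,\mathfrak{l}}$ or $\zeta^{\mathfrak{l}}$ its word-coefficients compute: the computation at the start of the proof of Theorem \ref{hybrid} (applying the homography $\phi_{\tau\sigma}:t\mapsto \frac{1-t}{1+t}$) already shows $\Phi^{\mathfrak{m}}(e_1,e_0,e_\infty)=\sum I^{\mathfrak{m}}(1;w;0)w$ and the analogous identifications $\Phi^{\mathfrak{m}}(e_0,e_{-1},e_1)=\Phi^{\mathfrak{m}}(e_{-1},e_0,e_\infty)^{-1}$ etc., so in $\mathcal{L}$ each $\Phi^{\mathfrak{l}}$ becomes a sign times an iterated integral $I^{\mathfrak{l}}$ between $0,\pm1,\infty$, which one then rewrites via the $\omega_{\pm\star}$ dictionary as the appropriate $\zeta^{\star\star,\mathfrak{l}}$ or $\zeta^{\mathfrak{l}}$.

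\textbf{Step-by-step.} For $(i)$: I would look at the coefficient in $(\ref{eq:octagonlin})$ of a word \emph{both beginning and ending} with $e_{-1}$. As in part $(i)$ of the proof of Theorem \ref{hybrid}, only the two terms $e_{-1}\Phi^{\mathfrak{l}}(e_\infty,e_{-1},e_1)-\Phi^{\mathfrak{l}}(e_\infty,e_{-1},e_1)e_{-1}$ contribute, and the coefficient of $e_{-1}e_0^{a_0}e_{\eta_1}\cdots e_{\eta_p}e_0^{a_p}e_{-1}$ gives an equality between $\zeta^{\star\star,\mathfrak{l}}$ with its arguments read in one order and in the reverse order; passing from the $e$-word to the MZV-notation produces exactly $\zeta^{\star\star,\mathfrak{l}}(n_0,\dots,n_p)=(-1)^{w+1}\zeta^{\star\star,\mathfrak{l}}(n_p,\dots,n_0)$ after using the path-reversal/antipode-$\shuffle$ normalization $(\ref{eq:antipodeshuffle2})$. (This is really the Antipode-$\shuffle$ relation of Lemma in $\S 4.2.1$ specialized to $\star\star$, so I may alternatively just invoke that.) For $(ii)$: here I would pick a word beginning with $e_1$ and ending with $e_{-1}$, or (its ``mirror'') beginning with $e_{-1}$ ending with $e_1$ — this time \emph{three} of the six terms in $(\ref{eq:octagonlin})$ survive, namely the $\Phi^{\mathfrak{l}}(e_1,e_\infty,e_0)$-block, the $\Phi^{\mathfrak{l}}(e_\infty,e_{-1},e_1)$-block and the one term from $\Phi^{\mathfrak{l}}(e_{-1},e_0,e_\infty)$ whose word starts $e_1\cdots e_{-1}$ once we note $e_0+e_{-1}=-(e_1+e_\infty)$ in the prefactor $(e_0+e_{-1})$. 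Reading the coefficient of $e_1 e_0^{a_0}e_{\eta_1}\cdots e_{\eta_p}e_0^{a_p}e_{-1}$ and translating: one block gives a plain $\zeta^{\mathfrak{l}}$, one gives $(-1)^{w+p}\zeta^{\star\star,\mathfrak{l}}$, and the shifted block gives $(-1)^{p}\zeta^{\star\star,\mathfrak{l}}_{n_p}(n_{p-1},\dots,n_0)$, which is $(ii)$. For $(iii)$: I would combine $(ii)$ with its reverse (obtained by applying $(i)$, or equivalently the reverse-word version of $(ii)$), so that the two $\zeta^{\star\star,\mathfrak{l}}$-terms on the right telescope into the stated difference $\zeta^{\star\star,\mathfrak{l}}_{n_0-1}(\cdots)-\zeta^{\star\star,\mathfrak{l}}_{n_0}(\cdots,n_p\mlq+\mrq1)$; the shuffle regularization $(\ref{eq:shufflereg})$ is what moves the leading $0$ (i.e.\ the subscript) in and out, and it is also what produces the $n_p\mlq+\mrq1$ in the last slot (spreading a $0$ past the final block). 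In effect $(iii)$ is $(ii)$ rewritten after one application of path composition / shuffle regularization, so it should follow formally once $(ii)$ is in hand.

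\textbf{Main obstacle.} The genuinely delicate part is the bookkeeping of \emph{which} associator $\Phi^{\mathfrak{l}}$ contributes to \emph{which} word and with \emph{which sign}, together with the translation between the $\{e_0,e_{\pm1},e_\infty\}$-word indexing (using $e_\infty=-(e_0+e_1+e_{-1})$ to expand the prefactors $e_0$, $e_0+e_{-1}$, $e_1$ in $(\ref{eq:octagonlin})$) and the $\zeta^{\star\star,\mathfrak{l}}$ / $\zeta^{\mathfrak{l}}$ notation with its subscript (the run of leading zeros) and its $\eta_i\leftrightarrow\epsilon_i$ sign conversion. In particular one must be careful that in $\mathcal{L}$ the relation $(\ref{eq:octagon21})$ is only valid after dividing the $\mathcal{F}_\infty$-anti-invariant part by $\mathbb{L}^{\mathfrak{m}}$ and projecting — so every ``$\Phi^{\mathfrak{l}}$'' really means the image in $\mathcal{L}\langle\langle e_0,e_1,e_{-1}\rangle\rangle$, and products of exponentials $e^{\mathbb{L}^{\mathfrak{m}}e_\bullet/2}$ contribute only through their linear term, exactly as in the displayed computation $(\ref{eq:octagon23})$. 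Once that dictionary is fixed, $(i)$–$(iii)$ are each a single coefficient extraction plus one application of $(\ref{eq:shufflereg})$ and $(\ref{eq:antipodeshuffle2})$, so I do not expect any conceptual difficulty beyond the combinatorial care; I would write $(i)$ in full detail, deduce $(ii)$ the same way, and present $(iii)$ as a short corollary of $(ii)$.
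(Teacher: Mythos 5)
Your dictionary step and your treatment of $(i)$ essentially follow the paper's route (coefficient extraction for words beginning and ending with $e_{-1}$, only the two terms $e_{-1}\Phi^{\mathfrak{l}}(e_{\infty},e_{-1},e_{1})-\Phi^{\mathfrak{l}}(e_{\infty},e_{-1},e_{1})e_{-1}$ surviving, then the antipode-$\shuffle$ normalization $(\ref{eq:antipodeshuffle2})$). But your proof of $(ii)$ has a genuine gap. For a word beginning with $e_{1}$ and ending with $e_{-1}$, only \emph{two} of the six terms of $(\ref{eq:octagonlin})$ survive, namely $-e_{1}\Phi^{\mathfrak{l}}(e_{1},e_{\infty},e_{0})$ and $-\Phi^{\mathfrak{l}}(e_{\infty},e_{-1},e_{1})e_{-1}$: the block $-e_{0}\Phi^{\mathfrak{l}}(e_{-1},e_{0},e_{\infty})+\Phi^{\mathfrak{l}}(e_{-1},e_{0},e_{\infty})e_{0}$ produces only words beginning or ending with $e_{0}$, and the prefactor $(e_{0}+e_{-1})$ belongs to the $\Phi^{\mathfrak{l}}(e_{\infty},e_{-1},e_{1})$ block and is a fixed element of $\mathbb{Q}\langle e_{0},e_{1},e_{-1}\rangle$ with no $e_{1}$-component; rewriting it as $-(e_{1}+e_{\infty})$ is not available once $e_{\infty}$ has been eliminated in $(\ref{eq:octagonlin})$, and in any case cannot create a word starting with $e_{1}$. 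So your extraction yields a two-term identity between $\zeta^{\star\star,\mathfrak{l}}$'s --- it is exactly the second sign case of $(i)$, as in part $(ii)$ of the proof of Theorem $\ref{hybrid}$ --- and not the three-term identity $(ii)$. Since $\Phi^{\mathfrak{l}}(e_{-1},e_{0},e_{\infty})$ is the only block whose word-coefficients are plain $\zeta^{\mathfrak{l}}$'s, and it enters $(\ref{eq:octagonlin})$ only through its commutator with $e_{0}$, any proof of $(ii)$ must use words with an $e_{0}$ at one end: the paper takes words beginning with $e_{1}$ (resp.\ $e_{-1}$) and \emph{ending with} $e_{0}$, where the three terms $\Phi^{\mathfrak{l}}(e_{-1},e_{0},e_{\infty})e_{0}-\Phi^{\mathfrak{l}}(e_{\infty},e_{-1},e_{1})e_{0}-e_{1}\Phi^{\mathfrak{l}}(e_{1},e_{\infty},e_{0})$ (resp.\ $+e_{-1}\Phi^{\mathfrak{l}}(e_{\infty},e_{-1},e_{1})$) survive and give $(ii)$ after the substitution $n_{p}\mapsto -n_{p}$ and antipode $\shuffle$.

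Two further points. The parenthetical claim that $(i)$ ``is really the Antipode-$\shuffle$ relation specialized to $\star\star$'' is incorrect: reversing the word of the iterated integral shifts the blocks of zeros, giving $\zeta^{\star\star,\mathfrak{l}}_{n-1}(n_{1},\ldots,n_{p})\equiv(-1)^{w+1}\zeta^{\star\star,\mathfrak{l}}_{\mid n_{p}\mid-1}(n_{p-1},\ldots,n_{1},\pm n)$, not the clean reversal of arguments in $(i)$; the latter is the statement $(\ref{eq:antipodestaresss})$, which the paper deduces \emph{from} Theorem $\ref{hybrid}$, itself proved by the very octagon extraction at stake, so invoking it here would be circular. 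Finally, your plan for $(iii)$ only asserts that it ``should follow formally'' from $(ii)$ via $(\ref{eq:shufflereg})$; this is not substantiated --- the subscripts $n_{0}-1$, $n_{0}$ and the merged last entry $n_{p}\mlq + \mrq 1$ do not telescope out of $(ii)$ in any evident way --- whereas the paper obtains $(iii)$ by a third, independent extraction: the coefficient of $e_{0}^{a_{0}+1}e_{-\eta_{1}}\cdots e_{-\eta_{p}}e_{0}^{a_{p}+1}$, i.e.\ words beginning \emph{and} ending with $e_{0}$, to which only $-e_{0}\Phi^{\mathfrak{l}}(e_{-1},e_{0},e_{\infty})+\Phi^{\mathfrak{l}}(e_{-1},e_{0},e_{\infty})e_{0}+e_{0}\Phi^{\mathfrak{l}}(e_{\infty},e_{-1},e_{1})-\Phi^{\mathfrak{l}}(e_{\infty},e_{-1},e_{1})e_{0}$ contribute. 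You should either carry out that extraction or give a genuine derivation of $(iii)$ from $(ii)$; as written, parts $(ii)$ and $(iii)$ are not proved.
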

\begin{proof}
The sign of $n_{i}$ is denoted $\epsilon_{i}$ as usual. First, we remark that, with $\eta_{i}=\pm 1$, $n_{i}=\epsilon_{i} (a_{i}+1)$, and $\epsilon_{i}= \eta_{i}\eta_{i+1}$:
$$\hspace*{-0.5cm}\begin{array}{ll}
\Phi^{\mathfrak{m}}(e_{\infty}, e_{-1},e_{1}) & = \sum  I^{\mathfrak{m}} \left(0;  (-\omega_{0})^{a_{0}} (-\omega_{-\eta_{1}\star})  (-\omega_{0})^{a_{1}} \cdots (-\omega_{-\eta_{p}\star})  (-\omega_{0})^{a_{p}} ;1 \right)  e_{0}^{a_{0}}e_{\eta_{1}} e_{0}^{a_{1}} \cdots e_{\eta_{p}} e_{0}^{a_{p}}\\
& \\
& = \sum  (-1)^{n+p}\zeta^{\star\star,\mathfrak{m}}_{n_{0}-1} \left( n_{1}, \cdots, n_{p-1}, -n_{p}\right)  e_{0}^{a_{0}}e_{\eta_{1}} e_{0}^{a_{1}} \cdots e_{\eta_{p}} e_{0}^{a_{p}}. \\
\end{array}$$
Similarly, with $ \mu_{i}\mathrel{\mathop:}= \left\lbrace \begin{array}{ll}
\star & \texttt{if } \eta_{i}=1\\
1 & \texttt{if } \eta_{i}=-1
\end{array}  \right. $, applying the homography $\phi_{\tau\sigma}$ to get the second line:
$$\hspace*{-0.5cm}\begin{array}{ll}
\Phi^{\mathfrak{m}}(e_{-1}, e_{0},e_{\infty}) & = \sum I^{\mathfrak{m}} \left(0;  (\omega_{1}-\omega_{-1})^{a_{0}} \omega_{\mu_{1}}  (\omega_{1}-\omega_{-1})^{a_{1}} \cdots \omega_{\mu_{p}} (\omega_{1}-\omega_{-1})^{a_{p}} ;1 \right)  e_{0}^{a_{0}}e_{\eta_{1}} e_{0}^{a_{1}} \cdots e_{\eta_{p}} e_{0}^{a_{p}}\\
& \\
\Phi^{\mathfrak{l}}(e_{-1}, e_{0},e_{\infty}) & = \sum (-1)^{p} I^{\mathfrak{m}} \left(0;  0^{a_{0}} \omega_{-\eta_{1}}  0^{a_{1}} \cdots \omega_{-\eta_{p}} 0^{a_{p}} ;1 \right)  e_{0}^{a_{0}}e_{\eta_{1}} e_{0}^{a_{1}} \cdots e_{\eta_{p}} e_{0}^{a_{p}}\\
& \\
& = \sum  \zeta^{\mathfrak{m}}_{n_{0}-1} \left( n_{1}, \cdots, n_{p-1}, -n_{p}\right)  e_{0}^{a_{0}}e_{\eta_{1}} e_{0}^{a_{1}} \cdots e_{\eta_{p}} e_{0}^{a_{p}}. \\
\end{array}$$
Lastly, still using  $\phi_{\tau\sigma}$, with here $\mu_{i}\mathrel{\mathop:}= \left\lbrace \begin{array}{ll}
\star & \texttt{if } \eta_{i}=1\\
1 & \texttt{if } \eta_{i}=-1
\end{array}  \right. $:
$$\hspace*{-0.5cm}\begin{array}{ll}
\Phi^{\mathfrak{m}}(e_{1}, e_{\infty},e_{0}) & = \sum  I^{\mathfrak{m}} \left(0;  (\omega_{-1}-\omega_{1})^{a_{0}} \omega_{\mu_{1}}  (\omega_{-1}-\omega_{1})^{a_{1}} \cdots \omega_{\mu_{p}} (\omega_{-1}-\omega_{1})^{a_{p}} ;1 \right)  e_{0}^{a_{0}}e_{\eta_{1}} e_{0}^{a_{1}} \cdots e_{\eta_{p}} e_{0}^{a_{p}}\\
& \\
\Phi^{\mathfrak{l}}(e_{1}, e_{\infty},e_{0}) & = \sum  (-1)^{w+1} I^{\mathfrak{m}} \left(0;  0^{a_{0}} \omega_{\eta_{1}\star}  0^{a_{1}} \cdots \omega_{\eta_{p}\star} 0^{a_{p}} ;1 \right)  e_{0}^{a_{0}}e_{\eta_{1}} e_{0}^{a_{1}} \cdots e_{\eta_{p}} e_{0}^{a_{p}}\\
& \\
& = \sum  (-1)^{n+p+1}\zeta^{\star\star,\mathfrak{m}}_{n_{0}-1} \left( n_{1}, \cdots, n_{p-1}, n_{p}\right)  e_{0}^{a_{0}}e_{\eta_{1}} e_{0}^{a_{1}} \cdots e_{\eta_{p}} e_{0}^{a_{p}}. \\
\end{array}$$

\begin{itemize}
\item[$(i)$] 
This case is the one used in Theorem $\ref{hybrid}$. This identity is equivalent to, in terms of iterated integrals, for $X$ any sequence of $\left\lbrace  0, \pm 1 \right\rbrace $ or of $\left\lbrace  0, \pm \star \right\rbrace $:
$$\left\lbrace  \begin{array}{llll}
I^{\mathfrak{l}}(0;0^{k}, \star, X ;1) & = & I^{\mathfrak{l}}(0; X, \star, 0^{k}; 1)  & \text{ if } \prod_{i=0}^{p} \epsilon_{i}=1 \Leftrightarrow \eta_{0}=1\\
I^{\mathfrak{l}}(0;0^{k}, -\star, X ;1) & = & I^{\mathfrak{l}}(0; -X, -\star, 0^{k}; 1)  & \text{ if } \prod_{i=0}^{p} \epsilon_{i}=-1 \Leftrightarrow \eta_{0}=-1\\
\end{array} \right. $$
The first case is deduced from $\ref{eq:octagonlin}$ when looking at the coefficient of a word beginning and ending by $e_{-1}$ (or beginning and ending by $e_{1}$), whereas the second case is obtained from the coefficient of a word beginning by $e_{-1}$ and ending by $e_{1}$, or beginning by $e_{1}$ and ending by $e_{-1}$.

\item[$(ii)$] Let split into two cases, according to the sign of $\prod \epsilon_{i}$:
\begin{itemize}
\item[$\cdot$]  In $\ref{eq:octagonlin}$, when looking at the coefficient of a word beginning by $e_{1}$ and ending by $e_{0}$, only these three terms contribute:
$$ \Phi^{\mathfrak{l}}(e_{-1}, e_{0},e_{\infty})e_{0}- \Phi^{\mathfrak{l}}(e_{\infty}, e_{-1},e_{1})e_{0}- e_{1} \Phi^{\mathfrak{l}}(e_{1}, e_{\infty},e_{0}) .$$
Moreover, the coefficient of $e_{-1} e_{0}^{a_{0}} e_{\eta_{1}} \cdots  e_{\eta_{p}} e_{0}^{a_{p}+1}$ is, using the expressions above for $\Phi^{\mathfrak{l}}(\cdot)$:
\begin{multline}\nonumber
 (-1)^{p} I^{\mathfrak{l}}(0; -1, -X; 1)+ (-1)^{w+1} I^{\mathfrak{l}}(0; -\star, -X_{\star}; 1)+ (-1)^{w}I^{\mathfrak{l}}(0; X_{\star}, 0; 1)=0, \\
\text{where }\begin{array}{l}
X:= \omega_{0}^{a_{0}} \omega_{\eta_{1}} \cdots  \omega_{\eta_{p}} \omega_{0}^{a_{p}}\\
X_{\star}:= \omega_{0}^{a_{0}} \omega_{\eta_{1}\star} \cdots  \omega_{\eta_{p}\star} \omega_{0}^{a_{p}}
\end{array}.
\end{multline}
In terms of motivic Euler sums, it is, with $\prod \epsilon_{i}=1$:
$$ \zeta^{\mathfrak{l}} (n_{0},\cdots, -n_{p}) +(-1)^{w+p} \zeta^{\star\star\mathfrak{l}}(n_{0},\cdots, -n_{p})+(-1)^{w+p} \zeta^{\star\star\mathfrak{l}}_{n_{0}-1}(n_{1},\cdots,n_{p-1}, n_{p}\mlq + \mrq 1)=0.$$
Changing $n_{p}$ into $-n_{p}$, and applying Antipode $\shuffle$ to the last term, it gives, with now $\prod \epsilon_{i}=-1$: 
$$ \zeta^{\mathfrak{l}} (n_{0},\cdots, n_{p}) +(-1)^{w+p} \zeta^{\star\star\mathfrak{l}}(n_{0},\cdots, n_{p})+(-1)^{p} \zeta^{\star\star\mathfrak{l}}_{n_{p}}(n_{p-1},\cdots,n_{1},n_{0})=0.$$ 
\item[$\cdot$] Similarly, for the coefficient of a word beginning by $e_{-1}$ and ending by $e_{0}$, only these three terms contribute:
$$ \Phi^{\mathfrak{l}}(e_{-1}, e_{0},e_{\infty})e_{0}- \Phi^{\mathfrak{l}}(e_{\infty}, e_{-1},e_{1})e_{0}+ e_{-1} \Phi^{\mathfrak{l}}(e_{\infty}, e_{-1},e_{1}) .$$
Similarly than above, it leads to the identity, with $\prod \epsilon_{i}=-1$:
$$ \zeta^{\mathfrak{l}} (n_{0},\cdots, -n_{p}) +(-1)^{w+p} \zeta^{\star\star\mathfrak{l}}(n_{0},\cdots, -n_{p})+(-1)^{w+p} \zeta^{\star\star\mathfrak{l}}_{n_{0}-1}(n_{1},\cdots,n_{p-1},-(n_{p}\mlq + \mrq 1))=0.$$
Changing $n_{p}$ into $-n_{p}$, and applying Antipode $\shuffle$ to the last term, it gives, with now $\prod \epsilon_{i}=1$: 
$$ \zeta^{\mathfrak{l}} (n_{0},\cdots, n_{p}) +(-1)^{w+p+1} \zeta^{\star\star\mathfrak{l}}(n_{0},\cdots, n_{p})+(-1)^{p} \zeta^{\star\star\mathfrak{l}}_{n_{p}}(n_{p-1},\cdots,n_{1},n_{0})=0.$$ 
\end{itemize}

\item[$(iii)$] When looking at the coefficient of a word beginning by $e_{0}$ and ending by $e_{0}$ in $\ref{eq:octagonlin}$, only these three terms contribute:
$$ -e_{0} \Phi^{\mathfrak{l}}(e_{-1}, e_{0},e_{\infty})+ \Phi^{\mathfrak{l}}(e_{-1}, e_{0},e_{\infty})e_{0}+ e_{0} \Phi^{\mathfrak{l}}(e_{\infty}, e_{-1},e_{1})- \Phi^{\mathfrak{l}}(e_{\infty}, e_{-1},e_{1})e_{0}.$$
If we identify the coefficient of the word $ e_{0}^{a_{0}+1} e_{-\eta_{1}} \cdots  e_{-\eta_{p}} e_{0}^{a_{p}+1}$, it leads straight to the identity $(iii)$.
\end{itemize}
\textsc{Remark}: Looking at the coefficient of words beginning by $e_{0}$ and ending by $e_{1}$ or $e_{-1}$ in $\ref{eq:octagonlin}$ would lead to the same identity than the second case.
\end{proof}

\section{Appendix $2$: Missing coefficients}

In Lemma $\ref{lemmcoeff}$, the coefficients $D_{a,b}$ appearing (in $(v)$) are the only one which are not conjectured. Albeit these values are not required for the proof of Theorem $4.4.1$, we provide here a table of values in small weights. Let examine the coefficient corresponding to $\zeta^{\star}(\boldsymbol{2}^{n})$ instead of $\zeta^{\star}(2)^{n}$, which is (by $(i)$ in Lemma $\ref{lemmcoeff}$), with $n=a+b+1$:
\begin{equation}
\widetilde{D}^{a,b}\mathrel{\mathop:}= \frac{(2n)!}{6^{n}\mid B_{2n}\mid (2^{2n}-2)} D^{a,b}  \quad \text{  and   }  \quad \widetilde{D}_{n}\mathrel{\mathop:}= \frac{(2n)!}{6^{n}\mid B_{2n}\mid (2^{2n}-2)}D_{n} .
\end{equation}
We have an expression $(\ref{eq:coeffds})$ for $D_{n}$, albeit not very elegant, which would give:
\begin{equation} \label{eq:coeffdstilde}
\widetilde{D}_{n}= \frac{2^{2n} (2n)!}{(2^{2n}-2)\mid B_{2n}\mid }\sum_{\sum m_{i} s_{i}=n \atop m_{i}\neq m_{j}} \prod_{i=1}^{k} \left( \frac{1}{s_{i}!} \left( \frac{\mid B_{2m_{i}}\mid (2^{2m_{i}-1}-1) } {2m_{i} (2m_{i})!}\right)^{s_{i}}  \right).
\end{equation}
Here is a table of values for $\widetilde{D}_{n}$ and $\widetilde{D}^{k,n-k-1}$ in small weights:\\
\\
  \begin{tabular}{| l || c | c |c | c |}
   \hline
     $\cdot \quad \quad \diagdown  n$ & $2$ &  $3$ & $4$ &  $5$   \\ \hline
     $\widetilde{D_{n}}$ & $\frac{19}{2^{3}-1}$ & $\frac{275}{2^{5}-1}$& $\frac{11813}{3(2^{7}-1)}$ & $\frac{783}{7}$\\
     & & & & \\ \hline
    $\widetilde{D}_{k,n-1-k}$ &$\frac{-12}{7}$ & $\frac{-84}{31}, \frac{160}{31}$& $\frac{1064}{127}, \frac{-1680}{127}, \frac{-9584}{381}$ & $\frac{189624}{2555}$,$\frac{-137104}{2555}$,$\frac{-49488}{511}$,$\frac{-17664}{511}$ \\ 
     & & & &\\
    \hline
     \hline
     $\cdot \quad \quad \diagdown  n$ & $6$ &  $7$ & $8$ &  $9$  \\ \hline
     $\widetilde{D_{n}}  \quad \quad $ & $\frac{581444793}{691(2^{11}-1)}$& $\frac{263101079}{21(2^{13}-1)}$& $\frac{6807311830555}{3617(2^{15}-1)}$& $\frac{124889801445461}{43867(2^{17}-1)}$\\ 
      & & & & \\
     \hline
  \end{tabular} \\
  \\
  \\
The denominators of  $\widetilde{D_{n}},\widetilde{D}_{k,n-1-k}$ can be written as $(2^{2n-1}-1)$ times the numerator of the Bernoulli number $B_{2n}$. No formula has been found yet for their numerators, that should involve binomial coefficients. These coefficients are related since, by shuffle:
$$\begin{array}{lll}
 & \zeta^{\star\star, \mathfrak{m}}_{2} (\boldsymbol{2}^{n})+ \sum_{k=0}^{n-1}\zeta^{\star\star, \mathfrak{m}}_{1} (\boldsymbol{2}^{k},3,\boldsymbol{2}^{n-k-1}) & =0\\
 &  \zeta^{\star\star, \mathfrak{m}}(\boldsymbol{2}^{n+1})-\zeta^{\star, \mathfrak{m}}(\boldsymbol{2}^{n+1}) \sum_{k=0}^{n-1}\zeta^{\star\star, \mathfrak{m}}_{1} (\boldsymbol{2}^{k},3,\boldsymbol{2}^{n-k-1}) & =0.
\end{array}$$
Identifying the coefficients of $\zeta^{\star}(\boldsymbol{2}^{n})$ in formulas $(iii),(v)$ in Lemma $\ref{lemmcoeff}$ leads to:
\begin{equation}\label{eq:coeffdrel}
1-\widetilde{D_{n}}= \sum_{k=0}^{n-1} \widetilde{D}_{k,n-1-k}.
\end{equation}

\chapter{Galois Descents}

\paragraph{\texttt{Contents}: }
The first section gives the general picture (for any $N$), sketching the Galois descent ideas. The second section focuses on the cases $N=2,3,4,\mlq 6\mrq, 8$, defining the filtrations by the motivic level associated to each descent, and displays both results and proofs. Some examples in small depth for are given in the Annexe $\S A.2$.\\
\\
\texttt{Notations}: For a fixed $N$, let $k_{N}\mathrel{\mathop:}=\mathbb{Q}(\xi_{N})$, where $\xi_{N}\in\mu_{N}$ is a primitive $N^{\text{th}}$ root of unity, and $\mathcal{O}_{N}$ is the ring of integers of $k_{N}$. The subscript or exponent $N$ will be omitted when it is not ambiguous. For the general case, the decomposition of $N$ is denoted $N=\prod q_{i}= \prod p_{i}^{\alpha}$.\\

\section{Overview}

\paragraph{Change of field. }
As said in Chapter $3$, for each $N, N'$ with $N' | N$, the Galois action on $\mathcal{H}_{N}$ and $\mathcal{H}_{N'}$ is determined by the coaction $\Delta$. More precisely, let consider the following descent\footnote{More generally, there are Galois descents $(\mathcal{d})=(k_{N}/k_{N'}, M/M')$ from $\mathcal{H}^{\mathcal{MT}\left( \mathcal{O}_{k_{N}} \left[ \frac{1}{M}\right] \right)  }$, to $\mathcal{H}^{\mathcal{MT}\left( \mathcal{O}_{k_{N'}} \left[ \frac{1}{M'}\right]\right)  }$, with $N'\mid N$, $M'\mid M$, with a set of derivations $\mathscr{D}^{\mathcal{d}} \subset \mathscr{D}^{N}$ associated.}, \texttt{assuming $\phi_{N'}$ is an isomorphism of graded Hopf comodules}: \footnote{Conjecturally as soon as $N'\neq p^{r}$, $p\geq 5$. Proven for  $N'=1,2,3,4,\mlq 6\mrq,8$.}
$$\xymatrixcolsep{5pc}\xymatrix{
\mathcal{H}^{N} \ar@{^{(}->}[r]
^{\phi_{N}}_{n.c} & \mathcal{H}^{\mathcal{MT}_{\Gamma_{N}}}  \\
\mathcal{H}^{N'}\ar[u]_{\mathcal{G}^{N/N'}} \ar@{^{(}->}[r]
_{n.c}^{\phi_{N'}\atop \sim} &\mathcal{H}^{\mathcal{MT}_{\Gamma_{N'}}} \ar[u]^{\mathcal{G}^{\mathcal{MT}}_{N/N'}} }$$
Let choose a basis for $gr_{1}\mathcal{L}_{r}^{\mathcal{MT}_{N'}}$, and extend it into a basis of $gr_{1}\mathcal{L}_{r}^{\mathcal{MT}_{N}}$:
$$ \left\lbrace \zeta^{\mathfrak{m}}(r; \eta'_{i,r}) \right\rbrace_{i} \subset \left\lbrace \zeta^{\mathfrak{m}}(r; \eta'_{i,r}) \right\rbrace \cup \left\lbrace \zeta^{\mathfrak{m}}(r; \eta_{i}) \right\rbrace_{1\leq i \leq c_{r}}, $$
$$\text{where } \quad c_{r} =\left\{
\begin{array}{ll} 
a_{N}-a_{N'}= \frac{\varphi(N)-\varphi(N')}{2}+p(N)-p(N') & \text{ if } r=1\\
b_{N}-b_{N'}= \frac{\varphi(N)-\varphi(N')}{2}   & \text{ if } r>1\\
\end{array} \right. .$$
Then, once this basis fixed, let split the set of derivations $\mathscr{D}^{N}$ into two parts (cf. $\S 2.4.4$), one corresponding to $\mathcal{H}^{N'}$:\nomenclature{$\mathscr{D}^{\backslash \mathcal{d}}$ and $\mathscr{D}^{\mathcal{d}} $}{sets of derivations associated to a descent $\mathcal{d}$}
\begin{equation}
\label{eq:derivdescent}
\mathscr{D}^{N} =  \mathscr{D}^{\backslash \mathcal{d}} \uplus \mathscr{D}^{\mathcal{d}}  \quad  \text{ where }\quad  \left\lbrace \begin{array}{l}
\mathscr{D}^{\backslash \mathcal{d}} =\mathscr{D}^{N'}\mathrel{\mathop:}=  \cup_{r}  \left\lbrace  D_{r}^{\eta'_{i,r}} \right\rbrace_{1\leq i \leq c_{r}} \\
\mathscr{D}^{\mathcal{d}}\mathrel{\mathop:}= \cup_{r} \left\lbrace D^{\eta_{i,r}}_{r} \right\rbrace_{1\leq i \leq c_{r}} 
\end{array} \right.  .
\end{equation}

\texttt{Examples:}
\begin{itemize}
\item[$\cdot$] For the descent from $\mathcal{MT}_{3}$ to $\mathcal{MT}_{1}$: $\mathscr{D}^{(k_{3}/\mathbb{Q}, 3/1)}=\left\lbrace  D^{\xi_{3}}_{1}, D^{\xi_{3}}_{2r}, r>0 \right\rbrace $.
\item[$\cdot$] For the descent from $\mathcal{MT}_{8}$ to $\mathcal{MT}_{4}$: $\mathscr{D}^{(k_{8}/k_{4}, 2/2)}=\left\lbrace  D^{\xi_{8}}_{r}-D^{-\xi_{8}}_{r}, r>0 \right\rbrace $.
\item[$\cdot$] For the descent from $\mathcal{MT}_{9}$ to $\mathcal{MT}_{3}$:  $\mathscr{D}^{(k_{9}/k_{3}, 3/3)}=\left\lbrace  D^{\xi_{9}}_{r}-D^{-\xi^{4}_{9}}_{r}, D^{\xi_{9}}_{r}-D^{-\xi^{7}_{9}}_{r} r>0 \right\rbrace $.\footnote{By the relations in depth $1$, since:
$$\zeta^{\mathfrak{a}} \left( r\atop \xi^{3}_{9}\right)= 3^{r-1} \left( \zeta^{\mathfrak{a}} \left( r\atop \xi^{1}_{9}\right) + \zeta^{\mathfrak{a}} \left( r\atop \xi^{4}_{9}\right)+ \zeta^{\mathfrak{a}} \left( r\atop \xi^{7}_{9}\right)  \right)  \quad \text{etc.}$$}
\end{itemize}

\begin{theo}
Let $N'\mid N$ such that $\mathcal{H}^{N'}\cong \mathcal{H}^{\mathcal{MT}_{\Gamma_{N'}}}$.\\
Let $\mathfrak{Z}\in gr^{\mathfrak{D}}_{p}\mathcal{H}_{n}^{N}$, depth graded MMZV relative to $\mu_{N}$.\\
Then $\mathfrak{Z}\in gr^{\mathfrak{D}}_{p}\mathcal{H}^{N'}$, i.e. $\mathfrak{Z}$ is a depth graded MMZV relative to $\mu_{N'}$ modulo smaller depth if and only if:
$$ \left( \forall r<n, \forall D_{r,p}\in\mathscr{D}_{r}^{\mathcal{d}},\quad   D_{r,p}(\mathfrak{Z})=0\right)  \quad \textrm{  and  }  \quad  \left( \forall r<n, \forall D_{r,p} \in\mathscr{D}^{\diagdown\mathcal{d}}, \quad D_{r,p}(\mathfrak{Z})\in gr^{\mathfrak{D}}_{p-1}\mathcal{H}^{N'}\right) .$$ 
\end{theo}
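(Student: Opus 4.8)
The plan is to reduce the statement to the analogous fact on the combinatorial side $H^{N}$ (resp. $H^{N'}$) via the comodule embedding $\phi^{N}$ of $(\ref{eq:phih})$, exactly as in the proof of Theorem $2.4.4$ and Corollary $\ref{kerdn}$, but now keeping track of the depth filtration. First I would recall that, since $\mathcal{H}^{N'}\cong\mathcal{H}^{\mathcal{MT}_{\Gamma_{N'}}}$, the inclusion $\mathcal{H}^{N'}\hookrightarrow\mathcal{H}^{N}$ is a morphism of graded Hopf comodules compatible with the depth filtration (the depth filtration is motivic, cf. $\S 2.4.3$), so it descends to an inclusion $gr^{\mathfrak{D}}_{p}\mathcal{H}^{N'}_{n}\hookrightarrow gr^{\mathfrak{D}}_{p}\mathcal{H}^{N}_{n}$, and under the chosen basis of $gr_{1}\mathcal{L}_{r}^{\mathcal{MT}_{N'}}\subset gr_{1}\mathcal{L}_{r}^{\mathcal{MT}_{N}}$ it identifies $gr^{\mathfrak{D}}_{p}\mathcal{H}^{N'}$ with the sub-comodule of $gr^{\mathfrak{D}}_{p}\mathcal{H}^{N}$ cogenerated by the letters $f^{j}_{r}$ corresponding to $\zeta^{\mathfrak{a}}(r;\eta'_{i,r})$, i.e. those with $j$ in the index set $\{\eta'_{i,r}\}$ rather than $\{\eta_{i,r}\}$ — precisely the kernel of all the $D^{j}_{r}$ with $D^{\eta_{i,r}}_{r}\in\mathscr{D}^{\mathcal{d}}$ (the letters of $\mathscr{D}^{\mathcal{d}}$ do not occur), intersected with the locus where the remaining derivations $D^{j}_{r}\in\mathscr{D}^{\backslash\mathcal{d}}$ output elements that again lie in $gr^{\mathfrak{D}}_{p-1}\mathcal{H}^{N'}$.

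\textbf{Key steps.} The implication ``$\Rightarrow$'' is the easy direction: if $\mathfrak{Z}\in gr^{\mathfrak{D}}_{p}\mathcal{H}^{N'}$, then by definition of $\mathscr{D}^{\backslash\mathcal{d}}=\mathscr{D}^{N'}$ and the fact that $D_{r,p}(\mathcal{H}^{N'})\subset gr^{\mathfrak{D}}_{1}\mathcal{L}_{r}^{\mathcal{MT}_{N'}}\otimes gr^{\mathfrak{D}}_{p-1}\mathcal{H}^{N'}$ (the depth-drop property of $D_{r}$, $\S 2.4.5$, together with the fact that $\mathcal{H}^{N'}$ is a sub-comodule), the second condition holds; and the first condition holds because the $\mathscr{D}^{\mathcal{d}}$-derivations are built from the projections $\pi^{\eta_{i,r}}$ onto basis vectors of $gr_{1}\mathcal{L}_{r}$ not needed for $\mathcal{MT}_{N'}$, which therefore annihilate $D_{r,p}(\mathfrak{Z})$. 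The substantive direction is ``$\Leftarrow$'': I would argue by a double induction on the weight $n$ and on the depth $p$. For $p$ fixed, writing $\mathfrak{Z}$ via $\phi^{N}$ as a polynomial in the $f$-letters tensored with $\mathbb{Q}[g_1]$, the vanishing of $D^{\eta_{i,r}}_{r,p}(\mathfrak{Z})$ for all $\mathscr{D}^{\mathcal{d}}$-derivations forces every leading letter $f^{j}_{r}$ with $j\in\{\eta_{i,r}\}$ to be absent; the second condition, applied to the weight-and-depth-smaller pieces $D^{j}_{r,p}(\mathfrak{Z})$, together with the induction hypothesis, forces the remaining ``tails'' to lie in $gr^{\mathfrak{D}}_{p-1}\mathcal{H}^{N'}$; reassembling via the deconcatenation structure of the coaction on $H^{N}$ (formula $(\ref{eq:derf})$) then shows $\mathfrak{Z}$ itself is a word in $\mathcal{H}^{N'}$-letters modulo depth $<p$, i.e. $\mathfrak{Z}\in gr^{\mathfrak{D}}_{p}\mathcal{H}^{N'}$. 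The base cases $p=0$ (powers of $g_1$, handled by Theorem $2.4.4$) and $p=1$ (the depth-$1$ classification, Lemma $2.4.1$) are immediate.

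\textbf{Main obstacle.} The delicate point is the interplay between the two conditions in the ``$\Leftarrow$'' direction: the $\mathscr{D}^{\mathcal{d}}$-vanishing only controls the \emph{first} letter of each word in the $\phi^{N}$-image, while the $\mathscr{D}^{\backslash\mathcal{d}}$-condition controls the tails but only inductively; one must check carefully that, at depth graded level, the coaction $\Delta^{c}$ restricted to $gr^{\mathfrak{D}}_{p}$ is exactly the deconcatenation on first letters (this is what makes $\mathscr{D}$ detect the descent), using that $\mathcal{F}^{\mathfrak{D}}_{0}\mathcal{L}=0$ so that the left-hand factor of $\Delta'$ always drops the depth by exactly the amount recorded by a single $D_{r}$. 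Equivalently, in the Lie-algebra dual picture one checks that the sub-Lie-coalgebra $gr^{\mathfrak{D}}\mathcal{L}^{N'}$ is cut out inside $gr^{\mathfrak{D}}\mathcal{L}^{N}$ by the $\mathscr{D}^{\mathcal{d}}$-components, which is where the hypothesis $\mathcal{H}^{N'}\cong\mathcal{H}^{\mathcal{MT}_{\Gamma_{N'}}}$ (hence $gr^{\mathfrak{D}}\mathfrak{g}^{N'}$ free on the $\overline{\sigma}^{(\eta'_{i})}_{i}$, $\S 2.4.3$) is essential: without it, $\mathcal{H}^{N'}$ might be a proper sub-comodule of $\mathcal{H}^{\mathcal{MT}_{\Gamma_{N'}}}$ and the combinatorial model $H^{N'}$ would no longer faithfully describe it. I would therefore state the theorem, as is done, under that hypothesis and invoke $\phi^{N'}$ being an isomorphism throughout.
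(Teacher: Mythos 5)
Your proposal is correct and follows essentially the same route as the paper, whose own proof is just the one-line observation that the statement is evident on the $(f_{i})$-side and is transported through $\phi^{N}$ and back using the hypothesis that $\phi_{N'}$ is an isomorphism. Your added detail (induction on weight and depth, deconcatenation controlling first letters, depth filtration being motivic) is a faithful expansion of that same argument rather than a different method.
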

\begin{proof}
In the $(f_{i})$ side, the analogue of this theorem is pretty obvious, and the result can be transported via $\phi$, and back since $\phi_{N'}$ isomorphism by assumption.
\end{proof}
This is a very useful recursive criterion (derivation strictly decreasing weight and depth) to determine if a (motivic) multiple zeta value at $\mu_{N}$ is in fact a (motivic) multiple zeta value at $\mu_{N'}$, modulo smaller depth terms; applying it recursively, it could also take care of smaller depth terms. This criterion applies for motivic MZV$_{\mu_{N}}$, and by period morphism is deduced for MZV$_{\mu_{N}}$.\\

\paragraph{Change of Ramification.} If the descent has just a ramified part, the criterion can be stated in a non depth graded version. Indeed, there, since only weight $1$ matters, to define the derivation space $\mathcal{D}^{\mathcal{d}}$ as above ($\ref{eq:derivdescent}$), we need to choose a basis for $\mathcal{O}_{N}^{\ast}\otimes \mathbb{Q}$, which we complete with $\left\lbrace \xi^{\frac{N}{q_{i}}}_{N}\right\rbrace_{i\in I}$ into a basis for $\Gamma_{N}$. Then, with $N=\prod p_{i}^{\alpha_{i}}=\prod q_{i}$:
\begin{theo}\label{ramificationchange}
Let $\mathfrak{Z}\in \mathcal{H}_{n}^{N}\subset \mathcal{H}^{\mathcal{MT}_{\Gamma_{N}}}$, MMZV relative to $\mu_{N}$.\\
Then $\mathfrak{Z}\in \mathcal{H}^{\mathcal{MT}(\mathcal{O}_{N})}$ unramified if and only if:
$$ \left( \forall i\in I, D^{\xi^{\frac{N}{q_{i}}}}_{1}(\mathfrak{Z})=0\right)  \quad \textrm{  and  }  \quad  \left( \forall r<n, \forall D_{r}\in\mathscr{D}^{\diagdown\mathcal{d}}, \quad D_{r}(\mathfrak{Z})\in \mathcal{H}^{\mathcal{MT}(\mathcal{O}_{N})}\right) .$$ 
\end{theo}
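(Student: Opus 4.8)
\textbf{Proof strategy for Theorem \ref{ramificationchange}.}
The plan is to reduce this statement to the generic $(f_i)$-model via the (non-canonical, but in this case bijective) comodule isomorphism $\phi_{N}: \mathcal{H}^{\mathcal{MT}_{\Gamma_{N}}} \xrightarrow{\sim} H^{N}$ of \eqref{eq:phih}, exactly as in the proof of Theorem 5.1.1, and then to prove the purely combinatorial analogue on the $H^{N}$-side. The first step is to fix compatibly the three pieces of data: a basis of $\mathcal{O}_{N}^{\ast}\otimes\mathbb{Q}$ (the unramified units), completed by $\{\xi_{N}^{N/q_{i}}\}_{i\in I}$ into a basis of $\Gamma_{N}=\operatorname{Ext}^{1}_{\mathcal{MT}_{\Gamma_{N}}}(\mathbb{Q}(0),\mathbb{Q}(1))$, and the corresponding choice of generators $f^{j}_{1}$ in degree $1$ of $H^{N}$; here the generators of $H^{N}$ in degree $1$ split as those ``coming from'' $\mathcal{H}^{\mathcal{MT}(\mathcal{O}_{N})}$ together with the $\nu_N-1$ extra ones attached to the $\xi_{N}^{N/q_i}$. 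Via $\phi_N$ the sub-comodule $\mathcal{H}^{\mathcal{MT}(\mathcal{O}_{N})}\subset\mathcal{H}^{\mathcal{MT}_{\Gamma_{N}}}$ is identified with the sub-comodule $H^{\mathcal{O}_N}\subset H^{N}$ cogenerated by the $f^{\bullet}_{r}$ for $r>1$ together with the ``unramified'' $f^{j}_1$'s — this uses the equality $b_{\Gamma_N}=b_{\mathcal{MT}(\mathcal{O}_N)}$ in degree $>1$ from \eqref{eq:agamma}, so that only degree $1$ generators are added in the ramified direction.

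The heart of the argument is then the statement on $H^{N}$: an element $w\in H^{N}$ lies in $H^{\mathcal{O}_N}$ if and only if $D^{j}_{1}(w)=0$ for each of the ``ramified'' indices $j$ (those attached to $\xi_N^{N/q_i}$, $i\in I$) and $D^{j}_{r}(w)\in H^{\mathcal{O}_N}$ for all $r<n$ and all remaining derivations $D\in\mathscr{D}^{\backslash\mathcal{d}}$. On $H^{N}$ the coproduct is the deconcatenation \eqref{eq:derf}, so $D^{j}_{r}$ simply strips off a leading letter $f^{j}_{r}$; an element $w$ of weight $n$ fails to lie in $H^{\mathcal{O}_N}$ precisely when one of its words contains at least one ramified letter $f^{j}_1$, and one runs an induction on the weight: if $w\notin H^{\mathcal{O}_N}$ but $D^{j}_1(w)=0$ for all ramified $j$, then no word of $w$ \emph{begins} with a ramified letter, while the conditions $D_r(w)\in H^{\mathcal{O}_N}$ for all non-ramified $D_r$ force, by the induction hypothesis applied in weight $n-r$, that stripping any non-ramified leading letter leaves something in $H^{\mathcal{O}_N}$; combining these and using that every word of positive weight begins with \emph{some} letter gives that $w\in H^{\mathcal{O}_N}$, a contradiction. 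The converse direction is immediate since $H^{\mathcal{O}_N}$ is a sub-comodule stable under all the $D_r$. Transporting back through $\phi_N$ and the identification $\pi_{\mathfrak{a},\mathfrak{m}}$ of motivic periods with the $\mathfrak{a}$-version on which $D_r$ is defined (as in Theorem 2.4.4 and Corollary \ref{kerdn}) yields the theorem; the point that $D_1$ on $\mathcal{H}^{N}$ is \emph{defined over} the weight-$1$ part $gr_1^{\mathfrak{D}}\mathcal{L}_1$ and that the projection $\pi^{\eta}$ onto the $\xi_N^{N/q_i}$-components is well-defined once the above basis is fixed is exactly \eqref{eq:derivnp}--\eqref{eq:setdrp}.

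The main obstacle I expect is \emph{not} the combinatorics on $H^{N}$, which is essentially the deconcatenation argument already used for Corollary \ref{kerdn} and Theorem 5.1.1, but rather the bookkeeping needed to guarantee that the choices of basis are genuinely compatible across the three categories $\mathcal{MT}(\mathcal{O}_N)\subsetneq\mathcal{MT}_{\Gamma_N}\subset\mathcal{MT}(\mathcal{O}_N[\tfrac1N])$: one must check that the weight-$1$ relations among the $\zeta^{\mathfrak{a}}\left({1\atop \xi_N^a}\right)$ (the distribution/conjugation relations of Lemma 2.4.1, equivalently the relations among cyclotomic units) are respected, so that the $\nu_N-1$ classes $[\xi_N^{N/q_i}]$ really do give a \emph{complement} to $\mathcal{O}_N^{\ast}\otimes\mathbb{Q}$ inside $\Gamma_N$ — this is where the discussion of orbits and ``loops'' following Lemma 2.4.3 enters and requires some care when $N$ has several prime factors. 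A secondary subtlety is that, unlike the change-of-field criterion (Theorem 5.1.1), here one genuinely wants the \emph{non} depth-graded statement; this is legitimate precisely because the ramified part of the descent is detected in weight $1$ (degree $1$ of the coaction, i.e. $D_1$), so no passage to $gr^{\mathfrak{D}}$ is needed — but one should record that $D_1$ kills all of $\mathcal{H}^{1}$ and more generally behaves well under the depth filtration, as noted in \S 2.4.3, to be sure the recursion on weight does not secretly re-introduce a depth hypothesis.
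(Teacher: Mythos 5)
Your proposal is correct and follows essentially the same route as the paper: as with Theorem 5.1.1, the criterion is transported through the (non-canonical) comodule isomorphism $\phi$ to the $(f_{i})$-side, where the coaction is deconcatenation and the claim is immediate, the only real input being a compatible choice of degree-one generators splitting $\Gamma_{N}$ into $\mathcal{O}_{N}^{\ast}\otimes\mathbb{Q}$ plus the classes of the $\xi_{N}^{N/q_{i}}$, exactly as you describe. (One cosmetic slip: the number of ramified degree-one generators is $\nu(N)$ rather than $\nu(N)-1$, since $\dim \mathcal{O}_{N}^{\ast}\otimes\mathbb{Q}=\tfrac{\varphi(N)}{2}-1$ while $a_{\Gamma_{N}}=\tfrac{\varphi(N)}{2}+\nu(N)-1$; this does not affect your argument.)
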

\texttt{Nota Bene}: Intermediate descents and change of ramification, keeping part of some of the weight $1$ elements $\left\lbrace \xi^{\frac{N}{q_{i}}}_{N}\right\rbrace$ could also be stated.\\
\\
\texttt{Examples}:
\begin{description}
\item[$\boldsymbol{N=2}$:]  As claimed in the introduction, the descent between $\mathcal{H}^{2}$ and $\mathcal{H}^{1}$ is precisely measured by $D_{1}$:\footnote{$\mathscr{D}^{(\mathbb{Q}/\mathbb{Q}, 2/1)}=\left\lbrace  D^{-1}_{1} \right\rbrace $ with the above notations; and $D^{-1}_{1}$ is here simply denoted $D_{1}$ .}
\begin{coro}\label{criterehonoraire}
Let $\mathfrak{Z}\in\mathcal{H}^{2}=\mathcal{H}^{\mathcal{MT}_{2}}$, a motivic Euler sum.\\
Then $\mathfrak{Z}\in\mathcal{H}^{1}=\mathcal{H}^{\mathcal{MT}_{1}}$, i.e. $\mathfrak{Z}$ is a motivic multiple zeta value if and only if:
$$D_{1}(\mathfrak{Z})=0 \quad \textrm{  and  } \quad D_{2r+1}(\mathfrak{Z})\in\mathcal{H}^{1}.$$ 
\end{coro}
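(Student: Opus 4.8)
The plan is to deduce this corollary directly from Theorem \ref{ramificationchange} (equivalently from the change-of-field version of the criterion), specialised to the case $N=2$, $N'=1$. Here $\mathcal{O}_{2}=\mathbb{Z}$, $\Gamma_{2}=\mathbb{Z}[\tfrac{1}{2}]^{\ast}\otimes\mathbb{Q}$, and the only extra generator of $\Gamma_{2}$ beyond $\mathcal{O}_{2}^{\ast}\otimes\mathbb{Q}=\{0\}$ in weight $1$ is $\log^{\mathfrak{m}}2 = -\zeta^{\mathfrak{m}}\binom{1}{-1}$, corresponding to the root of unity $\xi_{2}=-1$. Thus the descent $(\mathcal{d})=(\mathbb{Q}/\mathbb{Q},2/1)$ is purely a change of ramification, its associated derivation set $\mathscr{D}^{\mathcal{d}}$ consists of the single operator $D^{-1}_{1}$, which (since all other iterated integrals of weight $1$ in $\{0,1\}$ vanish by property $(ii)$ of $\S\ref{propii}$) is exactly $D_{1}$. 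The complementary set $\mathscr{D}^{\backslash\mathcal{d}}=\mathscr{D}^{1}$ consists of the operators $D_{2r+1}$ (recall that for $N=1$ only odd weights $>1$ occur, and $D_{2r+1}$ projected onto $gr_{1}\mathcal{L}_{2r+1}=\mathbb{Q}\zeta^{\mathfrak{l}}(2r+1)$). So Theorem \ref{ramificationchange} reads precisely: $\mathfrak{Z}\in\mathcal{H}^{1}$ if and only if $D_{1}(\mathfrak{Z})=0$ and $D_{2r+1}(\mathfrak{Z})\in\mathcal{H}^{1}$ for all $r$ with $2r+1<n$.

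\textbf{Key steps in order.} First I would record that $\phi_{1}:\mathcal{H}^{1}\hookrightarrow H^{1}=\mathbb{Q}\langle f_{3},f_{5},\dots\rangle\otimes\mathbb{Q}[g_{1}]$ and $\phi_{2}:\mathcal{H}^{2}\hookrightarrow H^{2}=\mathbb{Q}\langle f_{1},f_{3},f_{5},\dots\rangle\otimes\mathbb{Q}[g_{1}]$ are isomorphisms of graded Hopf comodules (Brown for $N=1$, Deligne for $N=2$, as recalled after $(\ref{eq:phih})$), so the general criterion of the previous paragraphs applies with $N'=1$. Second, I would transport the statement to the $(f_{i})$-side: there $\mathcal{H}^{1}$ corresponds to the sub-comodule $\mathbb{Q}\langle f_{3},f_{5},\dots\rangle\otimes\mathbb{Q}[g_{1}]\subset H^{2}$, i.e. those elements containing no $f_{1}$. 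The derivation $D_{1}$ on $H^{2}$ is, by $(\ref{eq:derf})$, the operator extracting a leading $f_{1}$; an element of $H^{2}$ lies in $\mathbb{Q}\langle f_{3},f_{5},\dots\rangle\otimes\mathbb{Q}[g_{1}]$ if and only if it is killed by $D_{1}$ and each $D_{2r+1}$ of it (for $2r+1<n$) again lies in that sub-comodule — this is an immediate induction on weight using that $H^{2}$ is cofree, exactly as in the proof of Theorem \ref{kerdn}. Third, pulling back along $\phi_{2}$ (and noting that $D_{1}, D_{2r+1}$ intertwine with their $(f_{i})$-analogues by the defining diagram of $\S\ref{Coaction}$, since the coaction is functorial) yields the corollary. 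The base case of the induction is weight $1$: there $\mathcal{H}^{2}_{1}=\mathbb{Q}\log^{\mathfrak{m}}2$, $\mathcal{H}^{1}_{1}=0$, and $D_{1}(\log^{\mathfrak{m}}2)=\zeta^{\mathfrak{l}}\binom{1}{-1}\otimes 1\neq 0$, consistent with the statement.

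\textbf{Main obstacle.} There is no genuinely hard analytic or combinatorial point here; the corollary is a formal specialisation. The one thing that needs care is the bookkeeping of the derivation set $\mathscr{D}^{\mathcal{d}}$ versus $\mathscr{D}^{\backslash\mathcal{d}}$: one must check that for $N=2$ the change of ramification really absorbs the entire descent, i.e. that there is no residual change-of-field component (trivially true since $k_{2}=k_{1}=\mathbb{Q}$) and that the single weight-$1$ generator $\log^{\mathfrak{m}}2$ is correctly detected by $D_{1}$ alone and not by any $D_{2r+1}$. Equivalently, on the $(f_{i})$-side one must verify that $D_{1}$ together with the $D_{2r+1}$ for all odd $2r+1\geq 3$ generate enough of the coaction to characterise the sub-comodule without $f_{1}$; this is exactly the content of Theorem \ref{kerdn} applied iteratively, so the only work is to make the induction on weight explicit. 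I expect this to be a short argument once the identification with the $(f_{i})$-model is in place.
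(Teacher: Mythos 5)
Your proposal is correct and follows essentially the same route as the paper: there the corollary is stated as the $N=2$, $N'=1$ specialisation of the general descent criteria (Theorems 5.1.1–5.1.2), whose proof is exactly the transport, via the isomorphisms $\phi_{1},\phi_{2}$, of the obvious statement in the $(f_{i})$-model $H^{2}=\mathbb{Q}\langle f_{1},f_{3},f_{5},\dots\rangle\otimes\mathbb{Q}[g_{1}]$, with $\mathscr{D}^{\mathcal{d}}=\{D^{-1}_{1}\}$ and $\mathscr{D}^{\backslash\mathcal{d}}=\{D_{2r+1}\}_{r>0}$ identified just as you do. Your explicit weight induction on the $(f_{i})$-side only spells out what the paper leaves as "pretty obvious", so there is no substantive difference.
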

\item[$\boldsymbol{N=3,4,6}$:]
\begin{coro}\label{ramif346}
Let $N\in \lbrace 3,4,6\rbrace$ and $\mathfrak{Z}\in\mathcal{H}^{\mathcal{MT}(\mathcal{O}_{N} \left[ \frac{1}{N}\right] )}$, a motivic MZV$_{\mu_{N}}$.\\
Then $\mathfrak{Z}$ is unramified, $\mathfrak{Z}\in\mathcal{H}^{\mathcal{MT} (\mathcal{O}_{N})}$ if and only if:
$$D_{1}(\mathfrak{Z})=0 \textrm{  and  } \quad D_{r}(\mathfrak{Z})\in\mathcal{H}^{\mathcal{MT} (\mathcal{O}_{N})}.$$ 
\end{coro}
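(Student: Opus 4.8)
The statement is the special case $(\mathcal{d})=(k_{N}/k_{N},\,N/1)$ of Theorem \ref{ramificationchange}: a pure change of ramification, from $\mathcal{O}_{N}[1/N]$ to $\mathcal{O}_{N}$, with no change of cyclotomic field. The plan is simply to unwind that theorem in this case. First I would record that for $N\in\lbrace 3,4,6\rbrace$ one has $\mathcal{MT}_{\Gamma_{N}}=\mathcal{MT}(\mathcal{O}_{N}[1/N])$, so that the ambient space appearing in the corollary is exactly $\mathcal{H}^{\mathcal{MT}_{\Gamma_{N}}}$: for $N=3,4$ this holds because $N=p^{r}$ is a prime power, whence $\Gamma_{N}=(\mathcal{O}_{N}[1/p])^{\ast}\otimes\mathbb{Q}$ by $(\ref{eq:gamma})$ and the second inclusion $\mathcal{MT}_{\Gamma_{N}}\subset\mathcal{MT}(\mathcal{O}_{N}[1/N])$ is an equality; for $N=6=2\cdot 3$ the prime $2$ is a primitive root modulo $3$ (and the condition at $3$ is vacuous), so the same inert criterion again yields equality.

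Next I would pin down the derivation data of Theorem \ref{ramificationchange}. The field $k_{N}=\mathbb{Q}(\xi_{N})$ is imaginary quadratic for $N=3,4,6$, so its unit rank $r_{1}+r_{2}-1$ vanishes and $\mathcal{O}_{N}^{\ast}\otimes\mathbb{Q}=0$; hence the basis of $\mathcal{O}_{N}^{\ast}\otimes\mathbb{Q}$ chosen in Theorem \ref{ramificationchange} is empty, and the completing family $\lbrace\xi_{N}^{N/q_{i}}\rbrace_{i\in I}$ is actually a full basis of $\Gamma_{N}\cong gr_{1}^{\mathfrak{D}}\mathcal{A}_{1}$. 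Therefore the set $\mathscr{D}^{\mathcal{d}}$ attached to the descent consists precisely of the $\dim_{\mathbb{Q}}\Gamma_{N}$ weight-one derivations $D^{\xi^{N/q_{i}}}_{1}$, which together span the whole of $D_{1}\colon\mathcal{H}\to gr_{1}^{\mathfrak{D}}\mathcal{L}_{1}\otimes\mathcal{H}$. Moreover $\mathrm{Ext}^{1}_{\mathcal{MT}(\mathcal{O}_{N})}(\mathbb{Q}(0),\mathbb{Q}(1))=\mathcal{O}_{N}^{\ast}\otimes\mathbb{Q}=0$, while in every degree $r>1$ the extension groups are unchanged (they all equal $K_{2r-1}(k_{N})\otimes\mathbb{Q}$); so the descent has no component in degrees $>1$, and $\mathscr{D}^{\backslash\mathcal{d}}$ reduces to the family $\lbrace D_{r}\rbrace_{r>1}$.

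With these identifications the two conditions of Theorem \ref{ramificationchange} become (i) $D^{\xi^{N/q_{i}}}_{1}(\mathfrak{Z})=0$ for all $i\in I$, equivalently $D_{1}(\mathfrak{Z})=0$ since these projections exhaust $D_{1}$; and (ii) $D_{r}(\mathfrak{Z})\in\mathcal{H}^{\mathcal{MT}(\mathcal{O}_{N})}$ for every $r<n$ arising from $\mathscr{D}^{\backslash\mathcal{d}}$, i.e. for every $r>1$ — the case $r=1$ being already subsumed in (i) because $gr_{1}^{\mathfrak{D}}\mathcal{L}_{1}$ for $\mathcal{MT}(\mathcal{O}_{N})$ is zero. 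This is exactly the criterion asserted. I would also note, as usual, that the recursion is well founded since $D_{r}$ strictly lowers the weight, so the conditions on $D_{r}(\mathfrak{Z})$ are of the same type in lower weight; and that the period map transports the statement to honorary cyclotomic multiple zeta values. The only genuinely non-formal point — and the main thing to get right — is the bookkeeping of the second paragraph: one must use $\mathcal{O}_{N}^{\ast}\otimes\mathbb{Q}=0$ to see that the \emph{whole} weight-one coaction $D_{1}$ is the obstruction to being unramified, and that nothing new is created in higher degree; everything else is a direct specialisation of Theorem \ref{ramificationchange}.
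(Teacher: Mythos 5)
Your proposal is correct and follows the same route the paper intends: the corollary is a direct specialization of Theorem \ref{ramificationchange} to the pure change-of-ramification descent $(k_{N}/k_{N},\,N/1)$. Your bookkeeping — $\mathcal{O}_{N}^{\ast}\otimes\mathbb{Q}=0$ for these imaginary quadratic fields, so that the completing cyclotomic family spans all of $\Gamma_{N}$ and the whole of $D_{1}$ constitutes $\mathscr{D}^{\mathcal{d}}$, while the extension groups in degree $>1$ are unchanged — is exactly the verification required.
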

\item[$\boldsymbol{N=p^{r}}$:] A basis for $\mathcal{O}^{N}\otimes \mathbb{Q}$ is formed by: $\left\lbrace  \frac{1-\xi^{k}}{1-\xi}  \right\rbrace_{k\wedge p=1 \atop 0<k\leq\frac{N}{2}} $,  which corresponds to 
$$\text{ a basis of }  \mathcal{A}_{1}^{\mathcal{MT}(\mathcal{O}_{N})} \quad : \left\lbrace  \zeta^{\mathfrak{m}}\left( 1 \atop \xi^{k} \right)- \zeta^{\mathfrak{m}}\left( 1 \atop \xi \right) \right\rbrace  _{ k\wedge p=1 \atop 0<k\leq\frac{N}{2}} .$$
It can be completed in a basis of $\mathcal{A}_{1}^{N}$ with $\zeta^{\mathfrak{m}}\left( 1 \atop \xi^{1} \right)$. \footnote{With the previous theorem notations, $\mathcal{D}^{\mathcal{d}}=\lbrace D^{\xi}_{1}\rbrace$ whereas $\mathcal{D}^{\diagdown \mathcal{d}}= \lbrace D^{\xi^{k}}_{1}-D^{\xi}_{1} \rbrace_{k\wedge p=1 \atop 1<k\leq\frac{N}{2} } \cup_{r>1} \lbrace D^{\xi^{k}}_{r}\rbrace_{k\wedge p=1 \atop 0<k\leq\frac{N}{2}}$; where $D^{\xi}_{1}$ has to be understood as the projection of the left side over $\zeta^{\mathfrak{a}}\left( 1 \atop \xi \right)$ in respect to the basis above of $\mathcal{H}_{1}^{\mathcal{MT}(\mathcal{O}_{N})}$ more $\zeta^{\mathfrak{a}}\left( 1 \atop \xi \right)$. This leads to a criterion equivalent to $(\ref{ramifpr})$.} However, if we consider the basis of $\mathcal{A}_{1}^{N}$ formed by primitive roots of unity up to conjugates, the criterion for the descent could also be stated as follows: 
\begin{coro}\label{ramifpr}
Let $N=p^{r}$ and $\mathfrak{Z}\in\mathcal{H}^{\mathcal{MT}_{\Gamma_{N}}}=\mathcal{H}^{\mathcal{MT}(\mathcal{O}_{N} \left[ \frac{1}{p}\right] )}$, relative to $\mu_{N}$\footnote{For instance a MMZV relative to $\mu_{N}$. Beware, for $p>5$, there could be other periods.}.\\
Then $\mathfrak{Z}$ is unramified, $\mathfrak{Z}\in\mathcal{H}^{\mathcal{MT} (\mathcal{O}_{N})}$ if and only if:
$$\sum_{k\wedge p=1 \atop 0<k\leq\frac{N}{2}} D^{\xi^{k}_{N}}_{1}(\mathfrak{Z})=0 \quad \textrm{  and  } \quad \forall \left\lbrace  \begin{array}{l}
 r>1 \\
1<k\leq\frac{N}{2}\\
k\wedge p=1 \\
\end{array} \right.  , \quad D^{\xi^{k}_{N}}_{r}(\mathfrak{Z})\in\mathcal{H}^{\mathcal{MT} (\mathcal{O}_{N})}.$$ 
\end{coro}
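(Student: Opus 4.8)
\textbf{Proof plan for Corollary \ref{ramifpr}.} The statement is a reformulation of Theorem \ref{ramificationchange} in the special case $N=p^r$, where the change of ramification is the only component of the descent $(\mathcal{d})=(k_N/k_N, p/1)$ from $\mathcal{MT}_{\Gamma_N}=\mathcal{MT}(\mathcal{O}_N[\tfrac1p])$ to $\mathcal{MT}(\mathcal{O}_N)$. The plan is to run Theorem \ref{ramificationchange} with the basis of $gr_1^{\mathfrak D}\mathcal{L}_r$ given by primitive roots up to conjugation (which is legitimate by Lemma \ref{dep1results}/Lemma 2.4.1 for $N=p^r$), and then translate the condition ``$D_r^\eta(\mathfrak Z)=0$ for the single derivation in $\mathscr{D}^{\mathcal d}$'' into the symmetrized statement displayed in the corollary.

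First I would recall the depth $1$ structure for $N=p^r$: a $\mathbb{Z}$-basis for $\mathcal{A}_1^{N}$ is $\{\zeta^{\mathfrak a}(1;\xi^k)\}_{k\wedge p=1,\ 0<k\le (p-1)/2\cdot p^{r-1}}$ while in weight $r>1$ the primitive roots $\{\zeta^{\mathfrak a}(r;\xi^k)\}_{k\wedge p=1,\ 0<k\le N/2}$ form a basis of $gr_1^{\mathfrak D}\mathcal{A}_r$, and the corresponding subspace spanned by $\{\zeta^{\mathfrak a}(r;\xi^k)-\zeta^{\mathfrak a}(r;\xi)\}$ (resp. $\{\zeta^{\mathfrak a}(1;\xi^k)-\zeta^{\mathfrak a}(1;\xi)\}$ in weight $1$, using that $\zeta^{\mathfrak a}(1;\xi)$ is killed by the unramified extension group since $1-\xi$ generates $\mathcal{O}_N^\ast$ mod torsion up to the cyclotomic unit relations) gives a basis of $gr_1^{\mathfrak D}\mathcal{L}_r^{\mathcal{MT}(\mathcal O_N)}$. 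So in weight $r>1$ the descent derivation set is $\mathscr{D}^{\mathcal d}_r=\{D_r^{\xi}\}$ (one element), its complement $\mathscr{D}^{\backslash\mathcal d}_r=\{D_r^{\xi^k}-D_r^{\xi}\}_{1<k\le N/2,\ k\wedge p=1}$, and in weight $1$ similarly $\mathscr{D}^{\mathcal d}_1=\{D_1^{\xi}\}$. By Theorem \ref{ramificationchange} (with $I$ a singleton, $q_1=p$, $\xi^{N/q_1}=\xi^{p^{r-1}}$), $\mathfrak Z$ is unramified iff $D_1^{\xi^{p^{r-1}}}(\mathfrak Z)=0$ and $D_r(\mathfrak Z)\in\mathcal{H}^{\mathcal{MT}(\mathcal O_N)}$ for the complement derivations.

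Next I would show the equivalence of the two formulations of the weight-$1$ vanishing condition. Writing $D_1(\mathfrak Z)\in gr_1^{\mathfrak D}\mathcal{L}_1$ in the primitive-root basis, $D_1^{\xi}(\mathfrak Z)$ is by definition the coefficient of $\zeta^{\mathfrak a}(1;\xi)$ after expanding everything in that basis; the statement $D_1^{\xi^{p^{r-1}}}(\mathfrak Z)=0$ (using the basis adapted to $\mathcal O_N^\ast$) and the statement $\sum_{k\wedge p=1,\,0<k\le N/2} D_1^{\xi^k_N}(\mathfrak Z)=0$ must be identified. The point is the distribution relation $\zeta^{\mathfrak a}(1;\xi^{p^{r-1}})\equiv -\sum_{k\wedge p=1} \zeta^{\mathfrak a}(1;\xi^k) \pmod{\text{(lower)}}$ coming from $\sum_{\epsilon^p=\xi^{p^r\cdot?}}$: more precisely $\zeta^{\mathfrak a}(1;1)=-\sum_{\text{all }\xi}\zeta^{\mathfrak a}(1;\xi)$-type identities, combined with conjugation $\zeta^{\mathfrak a}(1;\eta)=\zeta^{\mathfrak a}(1;\eta^{-1})$, show that the linear functional ``coefficient of the ramified generator'' equals (up to a nonzero scalar) the symmetric sum $\sum_k D_1^{\xi^k}$ over primitive roots in a fundamental domain for conjugation. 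I would make this precise by exhibiting the explicit change-of-basis matrix between $\{\zeta^{\mathfrak a}(1;\xi^{p^{r-1}})\}\cup\{\zeta^{\mathfrak a}(1;\xi^k)-\zeta^{\mathfrak a}(1;\xi)\}$ and the primitive-root basis, and checking that the first coordinate functional is $\sum_k(\,\cdot\,)$. The same bookkeeping handles the higher-weight conditions: $D_r(\mathfrak Z)\in\mathcal{H}^{\mathcal{MT}(\mathcal O_N)}$ iff the projection of $D_r(\mathfrak Z)$ onto the ``ramified'' part vanishes, which — since in weight $r>1$ there is no ramified generator at all, $gr_1^{\mathfrak D}\mathcal{A}_r^{N}=gr_1^{\mathfrak D}\mathcal{A}_r^{\mathcal{MT}(\mathcal O_N)}$ — is automatic, so the higher-weight condition reduces to the recursive membership $D_r^{\xi^k_N}(\mathfrak Z)\in\mathcal{H}^{\mathcal{MT}(\mathcal O_N)}$ for the complement derivations, which is exactly what the corollary states.

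\textbf{Main obstacle.} The genuine content is the combinatorial identification in weight $1$: verifying that the single functional $D_1^{\xi^{p^{r-1}}}$ attached to the basis adapted to $\mathcal O_N^\ast\subset\Gamma_N$ coincides (on $D_1(\mathfrak Z)$, modulo the already-imposed conditions) with the symmetric sum $\sum_{k} D_1^{\xi^k_N}$ over primitive roots in $0<k\le N/2$. This requires carefully tracking the distribution and conjugation relations of Lemma 2.4.1 and the precise description of the weight-$1$ basis (Lemma \ref{dep1results}), and checking that the loop phenomena described there do not occur for prime powers, so that the change of basis is unambiguous. Everything else — the reduction to Theorem \ref{ramificationchange}, the triviality of the higher-weight ramified part, the recursion on weight — is formal. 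I would therefore structure the write-up as: (i) cite Theorem \ref{ramificationchange}; (ii) record the weight-$1$ and weight-$>1$ basis facts for $N=p^r$; (iii) do the change-of-basis computation in weight $1$; (iv) conclude.
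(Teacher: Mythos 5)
Your proposal follows essentially the same route as the paper, which treats this corollary as an immediate re-basing of Theorem \ref{ramificationchange}: for $N=p^{r}$ the cyclotomic-unit basis $\{(1-\xi^{k})/(1-\xi)\}$ makes the unramified weight-one subspace the span of the differences $\zeta^{\mathfrak{a}}(1;\xi^{k})-\zeta^{\mathfrak{a}}(1;\xi)$, i.e.\ the kernel of the sum-of-coefficients functional in the primitive-root basis, while for $r>1$ the extension groups of the two categories coincide, so only the recursive membership conditions remain. One small correction to your bookkeeping: for this ramification-only descent the set $\mathscr{D}^{\mathcal{d}}_{r}$ is empty for $r>1$ (not $\{D^{\xi}_{r}\}$, which would wrongly force $D^{\xi}_{r}(\mathfrak{Z})=0$), exactly as your own later observation that there is no ramified generator in weight $r>1$ shows; with that fixed, the reduction and the weight-one change-of-basis computation are precisely the content of the paper's argument.
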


\end{description}

\section{Descents for $\boldsymbol{N=2,3,4,\mlq 6\mrq,8}$.}

\subsection{Depth $\boldsymbol{1}$}

Let start with depth $1$ results, deduced from Lemma $2.4.1$ (from $\cite{De}$), fundamental to initiate the recursion later.
\begin{lemm} The basis for $gr^{\mathfrak{D}}_{1} \mathcal{A}$ is:
$$\left\{ \zeta^{\mathfrak{a}}\left(r;  \xi \right) \text{ such that } \left\{
\begin{array}{ll}  
r>1 \text{ odd  }  & \text{ if }N=1 \\
r \text{ odd  } & \text{ if }N=2 \\
r>0 & \text{ if } N=3,4 \\
r>1 & \text{  if } N=6 \\
\end{array} \right.   \right\rbrace  $$
For $N=8$, the basis for $gr^{\mathfrak{D}}_{1} \mathcal{A}_{r}$ is two dimensional, for all $r>0$:
$$\left\{ \zeta^{\mathfrak{a}}\left(r;  \xi \right), \zeta^{\mathfrak{a}}\left(r;  -\xi \right)\right\rbrace.$$
\end{lemm}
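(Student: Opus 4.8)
The plan is to reduce everything to the depth-$1$ part of Goncharov--Deligne's Lemma~2.4.1, and to the known structure of $gr_1^{\mathfrak D}\mathcal A_r$ as computed from the distribution and conjugation relations. By definition $gr_1^{\mathfrak D}\mathcal A_r$ is spanned by the classes of the $\zeta^{\mathfrak a}\!\left({r\atop \eta}\right)$ for $\eta$ ranging over $\mu_N$, subject (Lemma~2.4.1) only to the conjugation relation $\zeta^{\mathfrak a}\!\left({r\atop \eta}\right)=(-1)^{r-1}\zeta^{\mathfrak a}\!\left({r\atop \eta^{-1}}\right)$ and the distribution relations $\zeta^{\mathfrak a}\!\left({r\atop \eta}\right)=d^{r-1}\sum_{\epsilon^d=\eta}\zeta^{\mathfrak a}\!\left({r\atop \epsilon}\right)$ for $d\mid N$, together with the weight-$1$ subtlety $\zeta^{\mathfrak a}\!\left({1\atop 1}\right)=0$ and the dimension count $b_N=\varphi(N)/2$, $a_N$ as in $(\ref{eq:agamma})$. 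So the whole statement is: a basis of this quotient is obtained by picking one primitive $N$-th root $\xi$ (modulo conjugation) and discarding the non-primitive contributions, with the exceptional degeneracies in low weight and for $N=1$.

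First I would treat each $N\in\{1,2,3,4,6\}$ in turn. For $N=1$: $\mu_1=\{1\}$, conjugation forces the class to vanish when $r$ is even, $\zeta^{\mathfrak a}(1)=0$, and for $r>1$ odd the one-dimensional space is spanned by $\zeta^{\mathfrak a}\!\left({r\atop 1}\right)$; here $\xi=1$ is the (only, hence ``primitive'') root. For $N=2$: $\mu_2=\{\pm1\}$, the distribution relation with $d=2$ expresses $\zeta^{\mathfrak a}\!\left({r\atop 1}\right)$ in terms of $\zeta^{\mathfrak a}\!\left({r\atop -1}\right)$ so only $\xi=-1$ survives; conjugation is vacuous since $\eta=\eta^{-1}$, but for $r$ even $\zeta^{\mathfrak a}\!\left({r\atop -1}\right)$ is still nonzero (it is $(2^{1-r}-1)\zeta^{\mathfrak a}(r)$-type, nonzero in the relevant range), giving exactly one generator $\zeta^{\mathfrak a}\!\left({r\atop \xi_2\right)$ for all $r$ odd — and one must check that the stated basis is ``$r$ odd''. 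Wait: the lemma says ``$r$ odd'' for $N=2$, and for $r$ even the class of $\zeta^{\mathfrak a}\!\left({r\atop -1}\right)$ is a rational multiple of $\zeta^{\mathfrak a}\!\left({r\atop 1}\right)=$ (even weight depth-$1$ MZV) which is $0$ in $\mathcal A$ by $\zeta^{\mathfrak a}(2n)=0$; so indeed the even-weight part dies, confirming ``$r$ odd''. For $N=3,4$: $\mu_N$ has $\varphi(N)=2$ primitive roots, conjugate to each other, so after conjugation one primitive $\xi$ remains and $b_N=1$; the distribution relations kill the non-primitive roots ($1$ for $N=3$; $1,-1$ for $N=4$), and in weight $1$ there is no degeneracy because $\zeta^{\mathfrak a}\!\left({1\atop \xi}\right)=-\log^{\mathfrak a}(1-\xi)\neq0$ — so the basis is $\{\zeta^{\mathfrak a}\!\left({r\atop\xi\right)\}_{r>0}$. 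For $N=6$ (unramified): $b_6=\varphi(6)/2=1$, one primitive $\xi_6$ up to conjugation; in weight $1$ the extra subtlety is that $\mathcal A_1^{\mathcal{MT}(\mathcal O_6)}=0$ (the cyclotomic unit $1-\xi_6$ is a unit but $\log$ of it is torsion, cf.\ the ``$N=6$, $\mathcal{MT}(\mathcal O_6)$'' row of the dimension table with $d_1$ absent), which is exactly why the constraint ``$r>1$'' appears.

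For $N=8$: here $\varphi(8)=4$, the primitive roots are $\xi,-\xi,\xi^{-1},-\xi^{-1}$ (where $\xi=\xi_8$), which fall into two conjugacy pairs $\{\xi,\xi^{-1}\}$ and $\{-\xi,-\xi^{-1}=\xi^{3}\}$, so after conjugation one has two independent classes $\zeta^{\mathfrak a}\!\left({r\atop\xi\right),\zeta^{\mathfrak a}\!\left({r\atop-\xi\right)$; since $b_8=a_8=2$ in every weight $r>0$, these two are a basis, and the distribution relations only serve to eliminate the non-primitive roots $1,-1,\pm i$ (expressing them via the primitive ones), introducing no further relation among the two primitive classes. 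I would verify non-vanishing in weight $1$ from the cyclotomic-unit computation of $\mathcal A_1$ (the $N=8$ row: $2$ generators in every degree $\geq1$, realized by $\{\xi_8,-\xi_8\}$).

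The main obstacle is purely bookkeeping: making sure the distribution relations applied for all $d\mid N$ really collapse the span of $\{\zeta^{\mathfrak a}\!\left({r\atop\eta}\right):\eta\in\mu_N\}$ down to precisely the claimed $b_N$-dimensional (or $a_N$-dimensional in weight $1$) space spanned by primitive roots modulo conjugation, with no accidental extra relation — i.e.\ that the count of independent relations is exactly $|\mu_N|-b_N$ for $r>1$. This is the content of Deligne--Goncharov's analysis in $\cite{DG}$, Theorem~6.8 and the $K$-theory dimension formula $(\ref{dimensionk})$, so strictly speaking the lemma is a direct corollary: the upper bound comes from distribution+conjugation, and the lower bound (that these primitive classes are genuinely independent) is exactly the statement that $\dim gr_1^{\mathfrak D}\mathcal A_r = b_N$ for $r>1$ (resp.\ $a_N$ for $r=1$), which is $(\ref{eq:agamma})$. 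So the proof is: invoke Lemma~2.4.1 to get that distribution and conjugation are the only relations; use them to rewrite every $\zeta^{\mathfrak a}\!\left({r\atop\eta}\right)$ in terms of $\{\zeta^{\mathfrak a}\!\left({r\atop\xi_N\right)\}$ (and $\{\zeta^{\mathfrak a}\!\left({r\atop\pm\xi_8\right)\}$ for $N=8$); observe the spanning set has the right cardinality $b_N$ (resp.\ $a_N$); and conclude it is a basis by the dimension formula $(\ref{eq:agamma})$, inserting the low-weight degeneracies ($r$ even kills everything for $N=1,2$; $r=1$ kills everything for $N=\mlq6\mrq$; $\zeta^{\mathfrak a}\!\left({1\atop1}\right)=0$ always).
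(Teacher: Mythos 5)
Your proposal is correct and follows essentially the same route as the paper: the paper states this lemma as a direct consequence of the Deligne--Goncharov depth-one result (Lemma 2.4.1, only distribution and conjugation relations) combined with the $K$-theoretic dimension count $(\ref{eq:agamma})$, and then records exactly the explicit depth-one relations you use to eliminate the non-primitive roots and to detect the low-weight degeneracies (even $r$ for $N=1,2$; $r=1$ for the unramified $N=\mlq 6\mrq$ case). Your case-by-case bookkeeping, including the two conjugacy classes of primitive roots for $N=8$, matches the paper's treatment.
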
 

Let make these relations explicit in depth $1$ for $N=2,3,4,\mlq 6\mrq,8$, since we would use some $p$-adic properties of the basis elements in our proof:

\begin{description}
\item[\textsc{For $N=2$:}]
The distribution relation in depth 1 is:
$$\zeta^{\mathfrak{a}}\left( {2 r + 1 \atop  1}\right) = (2^{-2r}-1)\zeta^{\mathfrak{a}}\left( {2r+1 \atop -1}\right) .$$
\item[\textsc{For $N=3$:}]
$$  \zeta^{\mathfrak{l}} \left( {2r+1 \atop  1} \right)\left(1-3^{2r}\right)= 2\cdot 3^{2r}\zeta^{\mathfrak{l}}\left({2r+1 \atop \xi}\right) \quad \quad \zeta^{\mathfrak{l}}\left({2r \atop  1}\right)=0 \quad \quad  \zeta^{\mathfrak{l}}\left({r \atop  \xi}\right) =\left(-1\right)^{r-1}  \zeta^{\mathfrak{l}}\left({r \atop  \xi^{-1}}\right). $$
\item[\textsc{For $N=4$:}]
$$\begin{array}{lllllll}
\zeta^{\mathfrak{l}}\left({ r \atop 1} \right) (1-2^{r-1}) & = & 2^{r-1}\cdot \zeta^{\mathfrak{l}}\left( {r\atop -1} \right) \text{  for } r\neq 1 & \quad & \zeta^{\mathfrak{l}}\left({1\atop   1}\right) & = & \zeta^{\mathfrak{l}}\left( {2r\atop -1} \right)=0 \\
\zeta^{\mathfrak{l}}\left({2r+1\atop -1}\right) & = & 2^{2r+1}  \zeta^{\mathfrak{l}}\left( {2r+1\atop \xi} \right) & \quad & \zeta^{\mathfrak{l}}\left( {r \atop \xi} \right) & = & \left(-1\right)^{r-1}  \zeta^{\mathfrak{l}}\left({r\atop  \xi^{-1}}\right).
\end{array}$$
\item[\textsc{For $N=6$:}]
$$\begin{array}{lllllll}
\zeta^{\mathfrak{l}}\left({r\atop  1}\right)\left(1-2^{r-1}\right) & = & 2^{r-1}\zeta^{\mathfrak{l}}\left({ r \atop -1} \right) \text{  for } r\neq 1 & \quad & \zeta^{\mathfrak{l}}\left( {1 \atop 1} \right) & = & \zeta^{\mathfrak{l}}\left({2r\atop  -1}\right)=0\\
\zeta^{\mathfrak{l}}\left(  {2r+1 \atop -1} \right)  & = & \frac{2\cdot 3^{2r}}{1-3^{2r}}  \zeta^{\mathfrak{l}}\left( {2r+1 \atop  \xi} \right)& \quad & \zeta^{\mathfrak{l}}\left( {r \atop \xi^{2}} \right)  & = & \frac{2^{r-1}}{1-(-2)^{r-1}}  \zeta^{\mathfrak{l}}\left( {r \atop  \xi} \right).\\
\zeta^{\mathfrak{l}}\left({r\atop  \xi} \right) &=&\left(-1\right)^{r-1}  \zeta^{\mathfrak{l}}\left( {r \atop \xi^{-1}} \right) &\quad & \zeta^{\mathfrak{l}}\left({r \atop  -\xi} \right)  & = & \left(-1\right)^{r-1}  \zeta^{\mathfrak{l}}\left( {r \atop -\xi^{-1}} \right).
\end{array}$$
\item[\textsc{For $N=8$:}]
$$\begin{array}{lllllll}
  \zeta^{\mathfrak{l}}\left({ r \atop  1} \right)& =& \frac{ 2^{r-1}}{\left(1-2^{r-1}\right)}\zeta^{\mathfrak{l}}\left({r\atop  -1}\right) \text{  for } r\neq 1  &  \quad &  \zeta^{\mathfrak{l}}\left( {1 \atop  1} \right) &=&\zeta^{\mathfrak{l}}\left({2r\atop  -1}\right)=0  \\
 \zeta^{\mathfrak{l}}\left({ r \atop  -i }\right) &=& 2^{r-1}  \left( \zeta^{\mathfrak{l}}\left({r\atop  \xi}\right) + \zeta^{\mathfrak{l}}\left({r\atop  -\xi}\right) \right) &  \quad & \zeta^{\mathfrak{l}}\left( {2r+1 \atop  -1} \right) &=& 2^{2r+1}  \zeta^{\mathfrak{l}}\left({2r+1\atop  i}\right)   \\
\zeta^{\mathfrak{l}}\left({ r\atop  \pm \xi} \right) &=&\left(-1\right)^{r-1}  \zeta^{\mathfrak{l}}\left( {r \atop \pm \xi^{-1} }\right)  &  \quad & \zeta^{\mathfrak{l}}\left( {r \atop  i} \right) &=&\left(-1\right)^{r-1}  \zeta^{\mathfrak{l}}\left( {r \atop  -i}\right) \\
\end{array}$$

\end{description}

\subsection{Motivic Level filtration}

Let fix a descent $(\mathcal{d})=(k_{N}/k_{N'}, M/M')$ from $\mathcal{H}^{\mathcal{MT}\left( \mathcal{O}_{k_{N}} \left[ \frac{1}{M}\right] \right)  }$, to $\mathcal{H}^{\mathcal{MT}\left( \mathcal{O}_{k_{N'}} \left[ \frac{1}{M'}\right]\right)  }$, with $N'\mid N$, $M'\mid M$, among these considered in this section, represented in Figures $\ref{fig:d248}, \ref{fig:d36}$.\\
Let us define a motivic level increasing filtration $\mathcal{F}^{\mathcal{d}}$ associated, from the set of derivations associated to this descent, $\mathscr{D}^{\mathcal{d}} \subset \mathscr{D}^{N}$, defined in $(\ref{eq:derivdescent})$.

\begin{defi}
The filtration by the \textbf{motivic level} associated to a descent $(\mathcal{d})$ is defined recursively on $\mathcal{H}^{N}$ by:
\begin{itemize}
\item[$\cdot$] $\mathcal{F}^{\mathcal{d}} _{-1} \mathcal{H}^{N}=0$.
\item[$\cdot$] $\mathcal{F}^{\mathcal{d}} _{i} \mathcal{H}^{N}$ is the largest submodule of $\mathcal{H}^{N}$ such that $\mathcal{F}^{\mathcal{d}}_{i}\mathcal{H}^{N}/\mathcal{F}^{\mathcal{d}} _{i-1}\mathcal{H}^{N}$ is killed by $\mathscr{D}^{\mathcal{d}}$, $\quad$ i.e. is in the kernel of $\oplus_{D\in \mathscr{D}^{\mathcal{d}}} D$.
\end{itemize}
\end{defi}
It's a graded Hopf algebra's filtration:
$$\mathcal{F} _{i}\mathcal{H}. \mathcal{F}_{j}\mathcal{H} \subset \mathcal{F}_{i+j}\mathcal{H} \text{  ,  } \quad \Delta (\mathcal{F}_{n}\mathcal{H})\subset \sum_{i+j=n} \mathcal{F}_{i}\mathcal{A} \otimes \mathcal{F}_{j}\mathcal{H}.$$
The associated graded is denoted: $gr^{\mathcal{d}}  _{i}$ and the quotients, coalgebras compatible with $\Delta$: 
\begin{equation}
\label{eq:quotienth} \mathcal{H}^{\geq 0}  \mathrel{\mathop:}= \mathcal{H} \text{  ,  } \boldsymbol{\mathcal{H}^{\geq i}}\mathrel{\mathop:}= \mathcal{H}/ \mathcal{F}_{i-1}\mathcal{H} \text{ with the projections :}\quad \quad \forall j\geq i \text{  , } \pi_{i,j}:  \mathcal{H}^{\geq i} \rightarrow  \mathcal{H}^{\geq j}.
\end{equation}
Note that, via the isomorphism $\phi$, the motivic filtration on $\mathcal{H}^{\mathcal{MT}_{N}}$ corresponds to\footnote{In particular, remark that $\dim \mathcal{F}^{\mathcal{d}} _{i} \mathcal{H}_{n}^{\mathcal{MT}_{N}}$ are known.}:
\begin{equation}
\label{eq:isomfiltration}\mathcal{F}^{\mathcal{d}} _{i} \mathcal{H}^{\mathcal{MT}_{N}} \longleftrightarrow \left\langle  x\in H^{N} \mid Deg^{\mathcal{d}} (x) \leq i \right\rangle _{\mathbb{Q}} ,
\end{equation}
where $Deg^{\mathcal{d}}$ is the degree in $\left\lbrace  \lbrace f^{j}_{r} \rbrace_{b_{N'}<j\leq b_{N} \atop r>1} , \lbrace f^{j}_{1} \rbrace_{a_{N'}<j\leq a_{N}} \right\rbrace $, which are the images of the complementary part of $ gr_{1}\mathcal{L}^{\mathcal{MT}_{N'}}$ in the basis of $gr_{1}\mathcal{L}^{\mathcal{MT}_{N}}$.\\
\\
\texttt{Example}: For the descent between $\mathcal{H}^{\mathcal{MT}_{2}}$ and $\mathcal{H}^{\mathcal{MT}_{1}}$, since $gr_{1}\mathcal{L}^{\mathcal{MT}_{2}}= \left\langle \zeta^{\mathfrak{m}}(-1), \left\lbrace  \zeta^{\mathfrak{m}}(2r+1)\right\rbrace  _{r>0}\right\rangle$:
$$\mathcal{F} _{i} \mathcal{H}^{\mathcal{MT}_{2}} \quad \xrightarrow[\sim]{\quad \phi}\quad  \left\langle  x\in \mathbb{Q}\langle f_{1}, f_{3}, \cdots \rangle\otimes \mathbb{Q}[f_{2}]  \mid Deg_{f_{1}} (x) \leq i \right\rangle _{\mathbb{Q}} \text{ , where } Deg_{f_{1}}= \text{ degree in } f_{1}.$$
\\
By definition of these filtrations:
\begin{equation}D_{r,p}^{\eta} \left( \mathcal{F}_{i}\mathcal{H}_{n} \right) \subset \left\lbrace  \begin{array}{ll} 
\mathcal{F}_{i-1}\mathcal{H}_{n-r} & \text{ if }D_{r,p}^{\eta}\in\mathscr{D}^{\mathcal{d}}_{r} \\
\mathcal{F}_{i}\mathcal{H}_{n-r} & \text{ if } D_{r,p}^{\eta}\in\mathscr{D}^{\backslash\mathcal{d}}_{r}
\end{array} \right. . 
\end{equation}
Similarly, looking at $\partial_{n,p}$ (cf. $\ref{eq:pderivnp}$):
\begin{equation} \partial_{n,p}(\mathcal{F}_{i-1}\mathcal{H}_{n}) \subset \oplus_{r<n} \left( gr_{p-1}^{\mathfrak{D}}  \mathcal{F}_{i-2}\mathcal{H}_{n-r}\right)^{\text{ card } \mathscr{D}^{\mathcal{d}}_{r}} \oplus_{r<n} \left( gr_{p-1}^{\mathfrak{D}} \mathcal{F}_{i-1}\mathcal{H}_{n-r}\right) ^{\text{ card } \mathscr{D}^{\backslash\mathcal{d}}_{r}}.
\end{equation}
This allows us to pass to quotients, and define $D^{\eta,i,\mathcal{d}}_{n,p}$ and $\partial^{i,\mathcal{d}}_{n,p}$:\nomenclature{$D^{\eta,i,\mathcal{d}}_{n,p}$ and $\partial^{i,\mathcal{d}}_{n,p}$}{quotient maps}
\begin{equation}
\label{eq:derivinp}
\boldsymbol{D^{\eta,i,\mathcal{d}}_{n,p}}: gr_{p}^{\mathfrak{D}} \mathcal{H}_{n}^{\geq i} \rightarrow \left\lbrace  \begin{array}{ll} 
 gr_{p-1}^{\mathfrak{D}} \mathcal{H}_{n-r}^{\geq i-1} & \text{ if }D_{r,p}^{\eta}\in\mathscr{D}^{\mathcal{d}}_{r} \\
gr_{p-1}^{\mathfrak{D}} \mathcal{H}_{n-r}^{\geq i} & \text{ if } D_{r,p}^{\eta}\in\mathscr{D}^{\backslash\mathcal{d}}_{r}
\end{array} \right. 
\end{equation}
\begin{framed}
\begin{equation}
\label{eq:pderivinp}
\boldsymbol{\partial^{i,\mathcal{d}}_{n,p}}: gr_{p}^{\mathfrak{D}} \mathcal{H}_{n}^{\geq i} \rightarrow  \oplus_{r<n} \left( gr_{p-1}^{\mathfrak{D}} \mathcal{H}_{n-r}^{\geq i-1}\right)^{\text{ card } \mathscr{D}^{\mathcal{d}}_{r}}  \oplus_{r<n} \left( gr_{p-1}^{\mathfrak{D}} \mathcal{H}_{n-r}^{\geq i}\right)^{\text{ card } \mathscr{D}^{\backslash\mathcal{d}}_{r}} . 
\end{equation}
\end{framed}
The bijectivity of this map is essential to the results stated below. 

\subsection{General Results}
In the following results, the filtration considered $\mathcal{F}_{i}$ is the filtration by the motivic level associated to the (fixed) descent $\mathcal{d}$ while the index $i$, in $\mathcal{B}_{n, p, i}$ refers to the level notion for elements in $\mathcal{B}$ associated to the descent $\mathcal{d}$.\footnote{Precisely defined, for each descent in $\S 5.2.5 $.}\\
We first obtain the following result on the depth graded quotients, for all $i\geq 0$, with:
$$\mathbb{Z}_{1[P]} \mathrel{\mathop:}= \frac{\mathbb{Z}}{1+ P\mathbb{Z}}=\left\{ \frac{a}{1+b P}, a,b\in\mathbb{Z} \right\} \text{ with }  \begin{array}{ll}
P=2 & \text{ for } N=2,4,8 \\
P=3 & \text{ for } N=3,6
\end{array} .$$

\begin{lemm}
\begin{itemize}
\item[$\cdot$] $$\mathcal{B}_{n, p, \geq i} \text{  is a linearly free family of  } gr_{p}^{\mathfrak{D}} \mathcal{H}_{n}^{\geq i} \text{ and defines a } \mathbb{Z}_{1[P]} \text{ structure :}$$
Each element $\mathfrak{Z}= \zeta^{\mathfrak{m}}\left( z_{1}, \ldots , z_{p} \atop \epsilon_{1}, \ldots, \epsilon_{p}\right)\in \mathcal{B}_{n,p} $ decomposes in a $\mathbb{Z}_{1[P]}$-linear combination of $\mathcal{B}_{n, p, \geq i}$ elements, denoted $cl_{n,p,\geq i}(\mathfrak{Z})$ in $gr_{p}^{\mathfrak{D}} \mathcal{H}_{n}^{\geq i}$, which defines, in an unique way:
$$cl_{n,p,\geq i}: \langle\mathcal{B}_{n, p, \leq i-1}\rangle_{\mathbb{Q}} \rightarrow \langle\mathcal{B}_{n, p, \geq i}\rangle_{\mathbb{Q}}.$$
\item[$\cdot$]
The following map $\partial^{i,\mathcal{d}}_{n,p}$ is bijective:
$$\partial^{i,\mathcal{d}}_{n,p}: gr_{p}^{\mathfrak{D}} \langle \mathcal{B}_{n, \geq i} \rangle_{\mathbb{Q}} \rightarrow \oplus_{r<n} \left( gr_{p-1}^{\mathfrak{D}} \langle \mathcal{B}_{n-1, \geq i-1} \rangle_{\mathbb{Q}} \right) ^{\oplus \text{ card } \mathcal{D}^{\mathcal{d}}_{r}} \oplus_{r<n} \left( gr_{p-1}^{\mathfrak{D}} \langle \mathcal{B}_{n-2r-1, \geq i} \rangle_{\mathbb{Q}} \right) ^{\oplus \text{ card } \mathcal{D}^{\backslash\mathcal{d}}_{r}}.$$
\end{itemize}
\end{lemm}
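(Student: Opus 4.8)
The plan is to prove the two bullet points of Lemma 5.2.4 simultaneously by a double induction on the weight $n$ and the depth $p$, exactly mirroring the strategy already used for the $\sharp$-family (Theorem 4.3.2) and the Hoffman $\star$-family (Theorem 4.4.1): establish that a suitable combinatorial matrix is invertible over $\mathbb{Z}_{1[P]}$ via $P$-adic valuations of binomial coefficients, deduce injectivity (hence, by a dimension count, bijectivity) of $\partial^{i,\mathcal{d}}_{n,p}$, and read off both linear independence of $\mathcal{B}_{n,p,\geq i}$ and the existence and uniqueness of the class maps $cl_{n,p,\geq i}$. The base cases are $p=0$ (where $gr_0^{\mathfrak{D}}\mathcal{H}_n$ is spanned by the appropriate power of $\pi^{\mathfrak{m}}$ or $i\pi^{\mathfrak{m}}$, handled by Corollary 2.4.5 / the depth-$1$ relations of \S5.2.1) and, for each $p$, the level $i=0$ case, which by definition of $\mathcal{F}_0$ is the descent to $\mathcal{H}^{N'}$ and should be reduced to the statement one level down.

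First I would fix $n,p,i$ and assume the lemma known for all smaller weight, and for weight $n$ with smaller depth. The key object is the formal matrix $\mathbb{M}^{i,\mathcal{d}}_{n,p}$ representing $\partial^{i,\mathcal{d}}_{n,p}$ on the family $\mathcal{B}_{n,p,\geq i}$ in terms of the (by induction, linearly free) families $\mathcal{B}_{n-r,p-1,\geq i-1}$ and $\mathcal{B}_{n-2r-1,p-1,\geq i}$ appearing on the right. Its entries are computed from the depth-graded derivation formula $gr^{\mathfrak{D}}_p D_{r,p}$ of Lemma 2.4.2, after applying the projection $\pi^{\eta}$ onto the chosen depth-$1$ basis elements $\zeta^{\mathfrak{a}}(r;\eta)$ — so the entries are $\mathbb{Q}$-combinations of binomial coefficients times the structure constants $c_{\eta,\epsilon,r}$ read off from the explicit depth-$1$ relations of \S5.2.1. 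I would then exhibit, as in the earlier proofs, that the ``deconcatenation'' columns (the terms of type (d), (d') in Lemma 2.4.2, coming from a cut ending at the last root of unity) are strictly the smallest $P$-adically among all entries in their row once each row $D_{r}$ is rescaled by an appropriate power of $P$; this uses the elementary bound $v_P\binom{2r}{\cdot} < 2r$ (for $P=2$) and its $P=3$ analogue, together with the fact that the relevant structure constants $c_{\eta,\epsilon,r}$ have controlled $P$-adic valuation (e.g. $1-P^{r-1}$, $P^{r-1}$, $\tfrac{P^{r-1}}{1-(-P)^{r-1}}$ etc., all with $v_P$ either $0$ or $r-1$). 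After the rescaling and reduction modulo $P$, the rescaled matrix $\widetilde{\mathbb{M}}^{i,\mathcal{d}}_{n,p}$ becomes triangular with $1$'s on the diagonal for the lexicographic order on exponent tuples, hence $\det\widetilde{\mathbb{M}} \equiv 1 \pmod P$ and $\mathbb{M}^{i,\mathcal{d}}_{n,p}$ is invertible over $\mathbb{Z}_{1[P]}$.

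From invertibility I would conclude, by the usual contradiction argument (apply $\partial^{i,\mathcal{d}}_{n,p}$ to a hypothetical nontrivial relation among $\mathcal{B}_{n,p,\geq i}$, land in the span of lower-depth basis families which are free by induction, get a contradiction; handle mixed-depth relations by descending on the maximal depth occurring), that $\mathcal{B}_{n,p,\geq i}$ is linearly free in $gr^{\mathfrak{D}}_p\mathcal{H}^{\geq i}_n$; bijectivity of $\partial^{i,\mathcal{d}}_{n,p}$ then follows because source and target have equal dimension — the dimension count comes from the $(f_i)$-model $H^N$ of \eqref{eq:phih}, where the analogous deconcatenation-type maps are manifestly bijective, together with the identification \eqref{eq:isomfiltration} of the motivic level with the $Deg^{\mathcal{d}}$-grading. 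The $\mathbb{Z}_{1[P]}$-structure statement and the well-definedness of $cl_{n,p,\geq i}$ are then formal: any $\mathfrak{Z}=\zeta^{\mathfrak{m}}(\cdots)\in\mathcal{B}_{n,p}$ of level $\leq i-1$ expands, modulo $\mathcal{F}_{i-1}$, in the free family $\mathcal{B}_{n,p,\geq i}$ (it is a spanning family by the dimension count), and the coefficients lie in $\mathbb{Z}_{1[P]}$ because they are obtained by inverting $\mathbb{M}^{i,\mathcal{d}}_{n,p}$, whose inverse has entries in $\mathbb{Z}_{1[P]}$ by Cramer's rule and $\det\in 1+P\mathbb{Z}$; uniqueness is immediate from freeness.

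The main obstacle, as in the $N=1$ and $N=2$ cases, is not the $P$-adic triangularity argument itself but the bookkeeping needed to write down the matrix $\mathbb{M}^{i,\mathcal{d}}_{n,p}$ explicitly and verify that, after passing to the level quotients $\mathcal{H}^{\geq i}$, precisely the right terms of Lemma 2.4.2 survive: the terms of type (a),(b),(c) coming from cuts internal to the word contribute entries of strictly larger $P$-valuation (or are killed in the quotient), and one must check that the ``complementary'' derivations $\mathscr{D}^{\backslash\mathcal{d}}$ (which do not lower the level) do not spoil the triangular shape. This requires a careful case analysis for each of $N=2,3,4,\mlq6\mrq,8$ using the explicit depth-$1$ tables of \S5.2.1 and the precise definition of the level on $\mathcal{B}^N$ given in \eqref{eq:base}; I expect this verification — rather than any conceptual difficulty — to be the longest part of the argument, and it is where the hypothesis that $\phi_{N'}$ is an isomorphism (so that the level-$0$ layer really is $\mathcal{H}^{N'}$ and the induction on $N$ can be invoked) is used.
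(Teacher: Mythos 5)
Your proposal follows essentially the same route as the paper: a recursion on weight (then depth and level), writing the matrix of $\partial^{i,\mathcal{d}}_{n,p}$ on $\mathcal{B}_{n,p,\geq i}$ in the inductively free target families, replacing low-level terms on the right via the recursion hypothesis, and showing that after rescaling and reduction modulo $P$ only the deconcatenation terms survive, so the matrix is triangular with units on the diagonal, hence invertible over $\mathbb{Z}_{1[P]}$, which yields freeness, bijectivity and the class maps exactly as in the paper. The only (harmless) deviations are organizational: the paper treats $i=0$ uniformly within the weight recursion rather than as a separate base case, and gets bijectivity directly from invertibility of the square matrix rather than via a dimension count in the $(f_i)$-model.
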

\nomenclature{$cl_{n,p,\geq i}$, or $cl_{n,\leq p,\geq i}$}{maps whose existence is proved in $\S 5.2$}Before giving the proof, in the next section, let present its consequences such as bases for the quotient, the filtration and the graded spaces for each descent considered:
\begin{theo}
\begin{itemize}
\item[$(i)$] $\mathcal{B}_{n,\leq p, \geq i}$ is a basis of $\mathcal{F}_{p}^{\mathfrak{D}} \mathcal{H}_{n}^{\geq i}=\mathcal{F}_{p}^{\mathfrak{D}} \mathcal{H}_{n}^{\geq i, \mathcal{MT}}$.
\item[$(ii)$] 
\begin{itemize}
\item[$\cdot$] $\mathcal{B}_{n, p, \geq i}$ is a basis of $gr_{p}^{\mathfrak{D}} \mathcal{H}_{n}^{\geq i}=gr_{p}^{\mathfrak{D}} \mathcal{H}_{n}^{\geq i, \mathcal{MT}}$ on which it defines a $\mathbb{Z}_{1[P]}$-structure:\\
Each element $\mathfrak{Z}= \zeta^{\mathfrak{m}}\left( z_{1}, \ldots , z_{p} \atop \epsilon_{1}, \ldots, \epsilon_{p}\right)$ decomposes in a $\mathbb{Z}_{1[P]}$-linear combination of $\mathcal{B}_{n, p, \geq i}$ elements, denoted $cl_{n,p,\geq i}(\mathfrak{Z})$ in $gr_{p}^{\mathfrak{D}} \mathcal{H}_{n}^{\geq i}$, which defines in an unique way: 
$$cl_{n,p,\geq i}: \langle\mathcal{B}_{n, p, \leq i-1}\rangle_{\mathbb{Q}} \rightarrow \langle\mathcal{B}_{n, p, \geq i}\rangle_{\mathbb{Q}} \text{ such that } \mathfrak{Z}+cl_{n,p,\geq i}(\mathfrak{Z})\in \mathcal{F}_{i-1}\mathcal{H}_{n}+ \mathcal{F}^{\mathfrak{D}}_{p-1}\mathcal{H}_{n}.$$
\item[$\cdot$] The following map is bijective:
$$\partial^{i, \mathcal{d}}_{n,p}: gr_{p}^{\mathfrak{D}} \mathcal{H}_{n}^{\geq i} \rightarrow  \oplus_{r<n} \left( gr_{p-1}^{\mathfrak{D}} \mathcal{H}_{n-1}^{\geq i-1}\right) ^{\oplus \text{ card } \mathcal{D}^{\mathcal{d}}_{r}} \oplus_{r<n} \left( gr_{p-1}^{\mathfrak{D}} \mathcal{H}_{n-r}^{\geq i}\right) ^{\oplus \text{ card } \mathcal{D}^{\backslash\mathcal{d}}_{r}}. $$
\item[$\cdot$] $\mathcal{B}_{n,\cdot, \geq i} $ is a basis of $\mathcal{H}^{\geq i}_{n} =\mathcal{H}^{\geq i, \mathcal{MT}}_{n}$.
\end{itemize}
	\item[$(iii)$] We have the two split exact sequences in bijection:
$$ 0\longrightarrow \mathcal{F}_{i}\mathcal{H}_{n} \longrightarrow \mathcal{H}_{n} \stackrel{\pi_{0,i+1}} {\rightarrow}\mathcal{H}_{n}^{\geq i+1} \longrightarrow 0$$
$$ 0 \rightarrow \langle \mathcal{B}_{n, \cdot, \leq i} \rangle_{\mathbb{Q}} \rightarrow \langle\mathcal{B}_{n} \rangle_{\mathbb{Q}} \rightarrow \langle \mathcal{B}_{n, \cdot, \geq i+1} \rangle_{\mathbb{Q}} \rightarrow 0 .$$
The following map, defined in an unique way:
	$$cl_{n,\leq p,\geq i}: \langle\mathcal{B}_{n, p, \leq i-1}\rangle_{\mathbb{Q}} \rightarrow \langle\mathcal{B}_{n, \leq p, \geq i}\rangle_{\mathbb{Q}} \text{ such that } \mathfrak{Z}+cl_{n,\leq p,\geq i}(\mathfrak{Z})\in \mathcal{F}_{i-1}\mathcal{H}_{n}.$$
\item[$(iv)$] A basis for the filtration spaces $\mathcal{F}_{i} \mathcal{H}^{\mathcal{MT}}_{n}=\mathcal{F}_{i} \mathcal{H}_{n}$:
$$\cup_{p} \left\{ \mathfrak{Z}+ cl_{n, \leq p, \geq i+1}(\mathfrak{Z}), \mathfrak{Z}\in \mathcal{B}_{n, p, \leq i} \right\}.$$
\item[$(v)$] A basis for the graded space $gr_{i} \mathcal{H}^{\mathcal{MT}}_{n}=gr_{i} \mathcal{H}_{n}$:
$$\cup_{p} \left\{ \mathfrak{Z}+ cl_{n, \leq p, \geq i+1}(\mathfrak{Z}), \mathfrak{Z}\in \mathcal{B}_{n, p, i} \right\}.$$
\end{itemize}
\end{theo}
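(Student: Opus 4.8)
The whole theorem follows from the preceding Lemma once one knows that the comodule embedding $\phi^{N}$ of $(\ref{eq:phih})$ is an isomorphism for $N=2,3,4,\mlq 6\mrq,8$ (from $\cite{Br2}$, $\cite{De}$, or Chapter~5), so that $\mathcal{H}^{N}=\mathcal{H}^{\mathcal{MT}_{N}}$ and the dimension count from Lemma~$2.3.1$ is available. The key input is the bijectivity of $\partial^{i,\mathcal{d}}_{n,p}$ on $gr_{p}^{\mathfrak{D}}\langle\mathcal{B}_{n,\geq i}\rangle_{\mathbb{Q}}$, together with the freeness of $\mathcal{B}_{n,p,\geq i}$, both already granted by the Lemma. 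The plan is to run a double induction on the weight $n$ and then on the depth $p$, transporting at each stage the combinatorial statements about the $(f_{i})$-alphabet through $\phi$, and closing by counting dimensions.

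\emph{Step 1 (identification of the abstract and concrete filtrations).} First I would establish that under $\phi$ the motivic level filtration $\mathcal{F}^{\mathcal{d}}_{i}\mathcal{H}^{N}$ matches $\langle x\in H^{N}\mid \mathrm{Deg}^{\mathcal{d}}(x)\leq i\rangle_{\mathbb{Q}}$, which is essentially $(\ref{eq:isomfiltration})$: the derivations $D^{\eta}_{r}$ with $D^{\eta}_{r}\in\mathscr{D}^{\mathcal{d}}$ correspond, in the $f$-alphabet, to the operators stripping a leading $f^{j}_{r}$ for $j$ in the complementary index set, so being killed by all of them modulo $\mathcal{F}^{\mathcal{d}}_{i-1}$ is exactly the condition $\mathrm{Deg}^{\mathcal{d}}\leq i$. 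This gives at once $\mathcal{F}_{p}^{\mathfrak{D}}\mathcal{H}_{n}^{\geq i}=\mathcal{F}_{p}^{\mathfrak{D}}\mathcal{H}_{n}^{\geq i,\mathcal{MT}}$ and, in particular, that all the relevant dimensions are known.

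\emph{Step 2 (spanning, by induction and a dimension count).} Using the Lemma, $\mathcal{B}_{n,p,\geq i}$ is linearly free in $gr_{p}^{\mathfrak{D}}\mathcal{H}_{n}^{\geq i}$; to get that it is a basis it suffices to check the cardinality of $\mathcal{B}_{n,\cdot,\geq i}$ equals $\dim\mathcal{H}_{n}^{\geq i,\mathcal{MT}}$. This is a purely combinatorial computation on the $f$-alphabet (the level on $\mathcal{B}^{N}$ was designed precisely so that it counts $\mathrm{Deg}^{\mathcal{d}}$), together with the dimension recursion of Lemma~$2.3.1$; freeness plus equality of dimensions yields $(i)$, $(ii)$. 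The $\mathbb{Z}_{1[P]}$-structure and the maps $cl_{n,p,\geq i}$, $cl_{n,\leq p,\geq i}$ come from the Lemma's explicit $\mathbb{Z}_{1[P]}$-decomposition, lifted from $gr^{\mathfrak{D}}$ to the filtered level by the usual induction on depth (the obstruction to lifting a class from $gr_{p}^{\mathfrak{D}}$ to $\mathcal{F}_{p}^{\mathfrak{D}}$ vanishes because the relevant $\partial$-map is surjective, and uniqueness because it is injective).

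\emph{Step 3 (exact sequences, bases for $\mathcal{F}_{i}$ and $gr_{i}$).} The short exact sequence $0\to\mathcal{F}_{i}\mathcal{H}_{n}\to\mathcal{H}_{n}\xrightarrow{\pi_{0,i+1}}\mathcal{H}_{n}^{\geq i+1}\to0$ is the definition of the quotient; its splitting in bijection with $0\to\langle\mathcal{B}_{n,\cdot,\leq i}\rangle\to\langle\mathcal{B}_{n}\rangle\to\langle\mathcal{B}_{n,\cdot,\geq i+1}\rangle\to0$ follows from $(i)$, $(ii)$ and the fact that $\mathcal{B}_{n}=\mathcal{B}_{n,\cdot,\leq i}\sqcup\mathcal{B}_{n,\cdot,\geq i+1}$ as sets (level $\leq i$ or $\geq i+1$). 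Then for $(iv)$: given $\mathfrak{Z}\in\mathcal{B}_{n,p,\leq i}$, the element $\mathfrak{Z}+cl_{n,\leq p,\geq i+1}(\mathfrak{Z})$ lies in $\mathcal{F}_{i}\mathcal{H}_{n}$ by construction of $cl$; these elements are linearly independent because their images in $gr^{\geq i+1}=\mathcal{H}/\mathcal{F}_{i}$ together with $\mathcal{B}_{n,\cdot,\geq i+1}$ exhaust a basis of $\mathcal{H}_{n}$, and a cardinality count against $\dim\mathcal{F}_{i}\mathcal{H}_{n}$ (known by Step~1) shows they span. Statement $(v)$ is the graded version, obtained the same way with $\mathcal{B}_{n,p,i}$ in place of $\mathcal{B}_{n,p,\leq i}$ and $gr_{i}=\mathcal{F}_{i}/\mathcal{F}_{i-1}$.

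\emph{Main obstacle.} The genuinely delicate point is not in the present theorem but is imported: the bijectivity of $\partial^{i,\mathcal{d}}_{n,p}$, proved in the Lemma via the $2$- or $3$-adic valuation argument on the binomial coefficients appearing in the coaction (the deconcatenation terms being strictly smallest $P$-adically, so the matrix is triangular-unipotent mod $P$). Granting that, the remaining difficulty here is purely bookkeeping: one must check, descent by descent (the five cases $N=2,3,4,\mlq 6\mrq,8$ and the two kinds of arrows — change of field, change of ramification), that the level notion on $\mathcal{B}^{N}$ defined in $\S5.2.5$ really does correspond under $\phi$ to $\mathrm{Deg}^{\mathcal{d}}$, and that the set-theoretic partition of $\mathcal{B}_{n}$ by level is compatible with the filtration; this is where one has to be careful with the parity constraints ($N=2$), the sign conditions ($N=8$), and the $x_{i}=1$ versus $x_{i}$ even distinction ($N=4$, descent to $\mathcal{H}^{1}$), but each is a finite verification with no new idea.
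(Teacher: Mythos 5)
Your overall skeleton coincides with the paper's proof: everything is reduced to the preceding Lemma (freeness of $\mathcal{B}_{n,p,\geq i}$ in the depth-graded pieces and bijectivity of $\partial^{i,\mathcal{d}}_{n,p}$), linear independence of the depth-mixed family is obtained by applying $\partial$, the generating property follows from comparing cardinalities with the known dimensions on the $\mathcal{MT}$-side, and $(iii)$--$(v)$ are the formal consequences (splitting because $\mathcal{B}_{n,\cdot,\geq i+1}\subset\mathcal{B}_{n}$, correction terms $cl$ built from the projection $\pi_{0,i}$, then independence plus counting).

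There is, however, a genuine logical problem in your opening sentence and Step 1. You take as input that $\phi^{N}$ of $(\ref{eq:phih})$ is an isomorphism, i.e. $\mathcal{H}^{N}=\mathcal{H}^{\mathcal{MT}_{N}}$, and deduce ``at once'' $\mathcal{F}_{p}^{\mathfrak{D}}\mathcal{H}_{n}^{\geq i}=\mathcal{F}_{p}^{\mathfrak{D}}\mathcal{H}_{n}^{\geq i,\mathcal{MT}}$ from the identification $(\ref{eq:isomfiltration})$. But that identification concerns the motivic-level filtration on $\mathcal{H}^{\mathcal{MT}_{N}}$; transporting it to $\mathcal{H}^{N}$ already presupposes the isomorphism, and the isomorphism is precisely one of the conclusions the theorem is designed to deliver (it reappears immediately afterwards as the corollary giving the new proof of Deligne's result). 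Citing Chapter $5$ for it is circular; citing $\cite{De}$ makes the argument formally valid but defeats the purpose of the theorem and is in any case unnecessary: the paper's proof only uses the a priori inclusion $\mathcal{H}^{N}\subseteq\mathcal{H}^{\mathcal{MT}_{N}}$ (compatible with the depth and motivic-level filtrations, since these are cut out by the same derivations) together with the upper bound on dimensions coming from K-theory, and the equalities $\mathcal{F}_{p}^{\mathfrak{D}}\mathcal{H}_{n}^{\geq i}=\mathcal{F}_{p}^{\mathfrak{D}}\mathcal{H}_{n}^{\geq i,\mathcal{MT}}$, $\mathcal{H}_{n}^{\geq i}=\mathcal{H}_{n}^{\geq i,\mathcal{MT}}$ fall out at the end of the card-versus-dimension comparison. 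Note that your own Step 2 never actually uses the Step 1 assumption --- a free family inside a subspace of a space of known dimension, with cardinality equal to that dimension, already gives the basis statements and the equalities of spaces --- so the repair is simply to drop Step 1 (retaining only the dimension formulas for the $\mathcal{MT}$-side) and let the identifications be conclusions rather than hypotheses, which is exactly the route taken in the paper.
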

The proof is given in $\S 5.2.4$, and the notion of level resp. motivic level, some consequences and specifications for $N=2,3,4,\mlq 6\mrq,8$ individually are provided in  $\S 5.2.5$. Some examples in small depth are displayed in Appendice $A.2$.\\
\\
\\
\texttt{{\large Consequences, level $i=0$:}} 
\begin{itemize}
\item[$\cdot$] The level $0$ of the basis elements $\mathcal{B}^{N}$ forms a basis of $\mathcal{H}^{N} = \mathcal{H}^{\mathcal{MT}_{N}}$, for $N=2,3,4,\mlq 6 \mrq, 8$. This gives a new proof (dual) of Deligne's result (in $\cite{De}$).\\
The level $0$ of this filtration is hence isomorphic to the following algebras:\footnote{The equalities of the kind $\mathcal{H}^{\mathcal{MT}_{N}}= \mathcal{H}^{N}$ are consequences of the previous theorem for $N=2,3,4,\mlq 6 \mrq,8$, and by F. Brown for $N=1$ (cf. $\cite{Br2}$). Moreover, we have inclusions of the kind $\mathcal{H}^{\mathcal{MT}_{N'}} \subseteq \mathcal{F}_{0}^{k_{N}/k_{N'},M/M'}\mathcal{H}^{\mathcal{MT}_{N}}$ and we deduce the equality from dimensions at fixed weight.}
$$ \mathcal{F}_{0}^{k_{N}/k_{N'},M/M'}\mathcal{H}^{\mathcal{MT}_{N}}=\mathcal{F}_{0}^{k_{N}/k_{N'},M/M'}\mathcal{H}^{N}=\mathcal{H}^{\mathcal{MT}_{N',M'}}="\mathcal{H}^{N',M'}" .$$
Hence the inclusions in the following diagram are here isomorphisms: 
$$\xymatrix{
\mathcal{F}_{0}^{k_{N}/k_{N'},M/M'}\mathcal{H}^{\mathcal{MT}_{N}}  &  \mathcal{H}^{\mathcal{MT}_{N'}}  \ar@{^{(}->}[l]\\
\mathcal{F}_{0}^{k_{N}/k_{N'},M/M'}\mathcal{H}^{N} \ar@{^{(}->}[u]  &  \mathcal{H}^{N'}  \ar@{^{(}->}[l] \ar@{^{(}->}[u]}.$$
\item[$\cdot$] It gives, considering such a descent $(k_{N}/k_{N'},M/M')$, a basis for $\mathcal{F}^{0}\mathcal{H}^{N}= \mathcal{H}^{\mathcal{MT}_{N',M'}}$ in terms of the basis of $\mathcal{H}^{N}$. For instance, it leads to a new basis for motivic multiple zeta values in terms of motivic Euler sums, or motivic MZV$_{\mu_{3}}$.\\
Some other $0$-level such as  $\mathcal{F}_{0}^{k_{N}/k_{N},P/1}$, $N=3,4$ which should reflect the descent from $\mathcal{MT}(\mathcal{O}_{N}\left[ \frac{1}{P}\right] )$ to $\mathcal{MT}(\mathcal{O}_{N})$ are not known to be associated to a fundamental group, but the previous result enables us to reach them. We obtain a basis for:
\begin{itemize}
\item[$\bullet$] $\boldsymbol{\mathcal{H}^{\mathcal{MT}(\mathbb{Z}\left[\frac{1}{3}\right])}}$ in terms of the basis of $\mathcal{H}^{\mathcal{MT}(\mathcal{O}_{3}[\frac{1}{3}])}$.
\item[$\bullet$] $\boldsymbol{\mathcal{H}^{\mathcal{MT}(\mathcal{O}_{3})}}$ in terms of the basis of $\mathcal{H}^{\mathcal{MT}(\mathcal{O}_{3}[\frac{1}{3}])}$.
\item[$\bullet$] $\boldsymbol{\mathcal{H}^{\mathcal{MT}(\mathcal{O}_{4})}}$ in terms of the basis of $\mathcal{H}^{\mathcal{MT}(\mathcal{O}_{4}[\frac{1}{4}])}$.
\\
\end{itemize}
\end{itemize}

\subsection{Proofs}

As proved below, Theorem $5.2.4$ boils down to the Lemma $5.2.3$. Remind the map $\partial^{i,\mathcal{d}}_{n,p}$:
$$\partial^{i,\mathcal{d}}_{n,p}: gr_{p}^{\mathfrak{D}} \mathcal{H}_{n}^{\geq i} \rightarrow  \oplus_{r<n} \left( gr_{p-1}^{\mathfrak{D}} \mathcal{H}_{n-r}^{\geq i-1}\right)^{\text{ card } \mathscr{D}^{\mathcal{d}}_{r}}  \oplus_{r<n} \left( gr_{p-1}^{\mathfrak{D}} \mathcal{H}_{n-r}^{\geq i}\right)^{\text{ card } \mathscr{D}^{\backslash\mathcal{d}}_{r}}.$$
We will look at its image on $\mathcal{B}_{n,p,\geq i} $  and prove both the injectivity of $\partial^{i,\mathcal{d}}_{n,p}$ as considered in Lemma $5.2.3$, and the linear independence of these elements $\mathcal{B}_{n,p,\geq i}$.

\paragraph{\Large { Proof of Lemma $\boldsymbol{5.2.3}$ for $\boldsymbol{N=2}$:}}

The formula $(\ref{Drp})$ for $D^{-1}_{2r+1,p}$ on $\mathcal{B}$ elements:\footnote{Using identity: $\zeta^{\mathfrak{a}}(\overline{2 r + 1 })= (2^{-2r}-1)\zeta^{\mathfrak{a}}(2r+1 )$. Projection on $\zeta^{\mathfrak{l}}(\overline{2r+1})$ for the left side.}
\begin{multline} \label{Deriv2} D^{-1}_{2r+1,p} \left(\zeta^{\mathfrak{m}} (2x_{1}+1, \ldots , \overline{2x_{p}+1}) \right) =  \\
 \frac{2^{2r}}{1-2^{2r}}\delta_{r =x_{1}}  \cdot \zeta^{\mathfrak{m}} (2 x_{2}+1, \ldots, \overline{2x_{p}+1})  \\
\frac{2^{2r}}{1-2^{2r}} \left\lbrace \begin{array}{l}
\sum_{i=1}^{p-2} \delta_{x_{i+1}\leq r < x_{i}+ x_{i+1} } \binom{2r}{2x_{i+1}}   \\
-\sum_{i=1}^{p-1}  \delta_{x_{i}\leq r < x_{i}+ x_{i+1}} \binom{2r}{2x_{i}} 
\end{array} \right.  \cdot \zeta^{\mathfrak{m}} \left( \cdots ,2x_{i-1}+1, 2 (x_{i}+x_{i+1}-r) +1, 2 x_{i+2}+1, \cdots \right)   \\
\textrm{\textsc{(d) }} +\delta_{x_{p} \leq r \leq x_{p}+ x_{p-1}} \binom{2r}{2x_{p}}  \cdot\zeta^{\mathfrak{m}} \left( \cdots ,2x_{p-2}+1, \overline{2 (x_{p-1}+x_{p}-r) +1}\right)   
\end{multline}
Terms of type \textsc{(d)} play a particular role since they correspond to deconcatenation for the coaction, and will be the terms of minimal $2$-adic valuation.\\
$D^{-1}_{1,p}$ acts as a deconcatenation on this family:
\begin{equation} \label{Deriv21} D^{-1}_{1,p} \left(\zeta^{\mathfrak{m}} (2x_{1}+1, \ldots , \overline{2x_{p}+1}) \right) = \left\{
\begin{array}{ll}
  0 & \text{ if } x_{p}\neq 0 \\
  \zeta^{\mathfrak{m}} (2x_{1}+1, \ldots , \overline{2x_{p-1}+1}) & \text{ if } x_{p}=0 .\\
\end{array}
\right. \end{equation}
For $N=2$, $\partial^{i}_{n,p}$ ($\ref{eq:pderivinp}$) is simply:
\begin{equation}\label{eq:pderivinp2}
\partial^{i}_{n,p}: gr_{p}^{\mathfrak{D}} \mathcal{H}_{n}^{\geq i} \rightarrow  gr_{p-1}^{\mathfrak{D}} \mathcal{H}_{n-1}^{\geq i-1} \oplus_{1<2r+1\leq n-p+1} gr_{p-1}^{\mathfrak{D}} \mathcal{H}_{n-2r-1}^{\geq i}. 
\end{equation}
Let prove all statements of Lemma $5.2.3$, recursively on the weight, and then recursively on depth and on the level, from $i=0$.
\begin{proof}
By recursion hypothesis, weight being strictly smaller, we assume that:	
	$$\mathcal{B}_{n-1,p-1,\geq i-1} \oplus_{1<2r+1\leq n-p+1} \mathcal{B}_{n-2r-1,p-1,\geq i} \text{ is a  basis of  } $$
	$$gr_{p-1}^{\mathfrak{D}} \mathcal{H}_{n-1}^{\geq i-1,\mathcal{B}} \oplus_{1<2r+1\leq n-p+1} gr_{p-1}^{\mathfrak{D}} \mathcal{H}_{n-2r-1}^{\geq i,\mathcal{B}}. $$
	\begin{center}
\textsc{Claim:}	The matrix $M^{i}_{n,p}$ of $\left(\partial^{i,\mathcal{d}}_{n,p} (z) \right)_{z\in \mathcal{B}_{n, p, \geq i}}$ on these spaces is invertible.
	\end{center}
\texttt{Nota Bene:} Here $D^{-1}_{1}(z)$, resp. $D^{-1}_{2r+1,p}(z)$ are expressed in terms of $\mathcal{B}_{n-1,p-1,\geq i-1} $ resp. $\mathcal{B}_{n-2r-1,p-1,\geq i}$.\\
It will prove both the bijectivity of $\partial^{i,\mathcal{d}}_{n,p}$ as considered in the lemma and the linear independence of $\mathcal{B}_{n, p, \geq i}$. Let divide $M^{i}_{n,p}$ into four blocks, with the first column corresponding to elements of $\mathcal{B}_{n, p, \geq i}$ ending by $1$:
\begin{center}
  \begin{tabular}{ l || c | c ||}
     & $x_{p}=0$ &  $x_{p}>0$ \\ \hline
     $D_{1,p}$ & M$1$ & M$2$ \\
    $D_{>1,p}$ & M$3$ & M$4$ \\
    \hline
  \end{tabular}
\end{center}
According to ($\ref{Deriv21}$), $D^{-1}_{1,p}$ is zero on the elements not ending by 1, and acts as a deconcatenation on the others. Therefore, M$3=0$, so $M^{i}_{n,p}$ is lower triangular by blocks, and the left-upper-block M$1$ is diagonal invertible. It remains to prove the invertibility of the right-lower-block $\widetilde{M}\mathrel{\mathop:}=M4$, corresponding to $D^{-1}_{>1,p}$ and to the elements of $\mathcal{B}_{n, p, \geq i}$ not ending by 1.\\
\\
Notice that in the formula $(\ref{Deriv2})$ of $D_{2r+1,p}$, applied to an element of $\mathcal{B}_{n, p, \geq i}$, most of terms appearing have a number of $1$ greater than $i$ but there are also terms in $\mathcal{B}_{n-2r-1,p-1,i-1}$, with exactly $(i-1)$ \say{$1$} for type $\textsc{a,b,c}$ only. We will make disappear the latter modulo $2$, since they are $2$-adically greater. \\
More precisely, using recursion hypothesis (in weight strictly smaller), we can replace them in $gr_{p-1} \mathcal{H}^{\geq i}_{n-2r-1}$ by a $\mathbb{Z}_{\text{odd}}$-linear combination of elements in $\mathcal{B}_{n-2r-1, p-1, \geq i}$, which does not lower the $2$-adic valuation. It is worth noticing that the type \textsc{d} elements considered are now always in $\mathcal{B}_{n-2r-1,p-1,\geq i}$, since we removed the case $x_{p}= 0$.\\

Once done, we can construct the matrix $\widetilde{M}$ and examine its entries.\\
Order elements of $\mathcal{B}$ on both sides by lexicographic order of its \say{reversed} elements:
\begin{center}
$(x_{p},x_{p-1},\cdots, x_{1})$ for the colums, $(r,y_{p-1},\cdots, y_{1})$ for the rows.
\end{center}
Remark that, with such an order, the diagonal corresponds to the deconcatenation terms: $r=x_{p}$ and $x_{i}=y_{i}$.\\
Referring to $(\ref{Deriv2})$, and by the previous remark, we see that $\widetilde{M}$ has all its entries of 2-adic valuation positive or equal to zero, since the coefficients in $(\ref{Deriv2})$ are in $2^{2r}\mathbb{Z}_{\text{odd}}$ (for types \textsc{a,b,c}) or of the form $\mathbb{Z}_{\text{odd}}$ for types \textsc{d,d'}. If we look only at the terms with $2$-adic valuation zero, (which comes to consider $\widetilde{M}$ modulo $2$), it only remains in $(\ref{Deriv2})$ the terms of type \textsc{(d,d')}, that is:
\begin{multline}\nonumber
 D_{2r+1,p} (\zeta^{\mathfrak{m}}(2x_{1}+1, \ldots, \overline{2x_{p}+1}))  \equiv \delta_{ r = x_{p}+ x_{p-1}} \binom{2r}{2x_{p}}  \zeta^{\mathfrak{m}} (2x_{1}+1, \ldots ,2x_{p-2}+1, \overline{1})  \\
 + \delta_{x_{p} \leq r < x_{p}+ x_{p-1}} \binom{2r}{2x_{p}} \zeta^{\mathfrak{m}} (2x_{1}+1, \ldots ,2x_{p-2}+1, \overline{2 (x_{p-1}+x_{p}-r) +1})  \pmod{ 2}.
 \end{multline}
Therefore, modulo 2, with the order previously defined, it remains only an upper triangular matrix ($\delta_{x_{p}\leq r}$), with 1 on the diagonal ($\delta_{x_{p}= r}$, deconcatenation terms). Thus, $\det\widetilde{M}$ has a 2-adic valuation equal to zero, and in particular can not be zero, that's why $\widetilde{M}$ is invertible.\\

The $\mathbb{Z}_{odd}$ structure is easily deduced from the fact that the determinant of $\widetilde{M}$ is odd, and the observation that if we consider $D_{2r+1,p} (\zeta^{\mathfrak{m}} (z_{1}, \ldots, z_{p}))$, all the coefficients are integers.
\end{proof}

\paragraph{{\Large Proof of Lemma $\boldsymbol{5.2.3}$ for other $\boldsymbol{N}$.}}

These cases can be handled in a rather similar way than the case $N=2$, except that the number of generators is different and that several descents are possible, hence there will be several notions of level and filtrations by the motivic level, one for each descent. Let fix a descent $\mathcal{d}$ and underline the differences in the proof:

\begin{proof}
In the same way, we prove by recursion on weight, depth and level, that the following map is bijective:
$$\partial^{i,\mathcal{d}}_{n,p}: gr_{p}^{\mathfrak{D}} \langle \mathcal{B}_{n, \geq i} \rangle_{\mathbb{Q}} \rightarrow \oplus_{r<n} \left( gr_{p-1}^{\mathfrak{D}} \langle \mathcal{B}_{n-1, \geq i-1} \rangle_{\mathbb{Q}} \right) ^{\oplus \text{ card } \mathscr{D}^{\mathcal{d}}_{r}} \oplus_{r<n} \left( gr_{p-1}^{\mathfrak{D}} \langle \mathcal{B}_{n-2r-1, \geq i} \rangle_{\mathbb{Q}} \right) ^{\oplus \text{ card } \mathscr{D}^{\backslash\mathcal{d}}_{r}}.$$
\begin{center}
I.e the matrix $M^{i}_{n,p}$ of $\left(\partial^{i}_{n,p} (z) \right)_{z\in \mathcal{B}_{n, p, \geq i}}$ on $\oplus_{r<n} \mathcal{B}_{n-r,p-1,\geq i-1}^{\text{ card } \mathscr{D}^{\mathcal{d}}_{r}} \oplus_{r<n} \mathcal{B}_{n-r,p-1,\geq i}^{\text{ card } \mathscr{D}^{\backslash\mathcal{d}}_{r}}$ \footnote{Elements in arrival space are linearly independent by recursion hypothesis.} is invertible.
\end{center}
 As before, by recursive hypothesis, we replace elements of level $\leq i$ appearing in $D^{i}_{r,p}$, $r\geq 1$ by $\mathbb{Z}_{1[P]}$-linear combinations of elements of level $\geq i$ in the quotient $gr_{p-1}^{\mathfrak{D}} \mathcal{H}_{n-r}^{\geq i}$, which does not decrease the $P$-adic valuation.\\
Now looking at the expression for $D_{r,p}$ in Lemma $2.4.3$, we see that on the elements considered, \footnote{i.e. of the form $\zeta^{\mathfrak{m}} \left({x_{1}, \ldots , x_{p} \atop \epsilon_{1}, \ldots ,\epsilon_{p-1}, \epsilon_{p}\xi_{N} }\right)$, with $\epsilon_{i}\in \pm 1$ for $N=8$, $\epsilon_{i}=1$ else.} the left side is:
\begin{center}
Either $\zeta^{\mathfrak(l)}\left(  r\atop 1 \right) $ for type $\textsc{a,b,c} \qquad    $      Or $\zeta^{\mathfrak(l)}\left(  r\atop \xi \right) $ for Deconcatenation terms.
\end{center}
Using results in depth $1$ of Deligne and Goncharov (cf. $\S 2.4.3$), the deconcatenation terms are $P$-adically smaller. \\
\texttt{For instance}, for $N=\mlq 6 \mrq$, $r$ odd:
$$\zeta^{\mathfrak{l}}\left( r;  1\right) =\frac{2\cdot 6^{r-1}}{(1-2^{r-1})(1-3^{r-1})} \zeta^{\mathfrak{l}}(r;  \xi) , \quad \text{ and } \quad v_{3} \left( \frac{2\cdot 6^{r-1}}{(1-2^{r-1})(1-3^{r-1})}\right)  >0 .$$
\texttt{Nota Bene:} For $N=8$, $D_{r}$ has two independent components, $D_{r}^{\xi}$ and $D_{r}^{-\xi}$. We have to distinguish them, but the statement remains similar since the terms appearing in the left side are either $\zeta^{\mathfrak(l)}\left( r\atop \pm 1 \right)$, or deconcatenation terms, $\zeta^{\mathfrak(l)}\left(  r\atop \pm \xi \right)$, $2$-adically smaller by $\S 4.1$.\\
Thanks to congruences modulo $P$, only the deconcatenation terms remain:\\
$$D_{r,p} \left(\zeta^{\mathfrak{m}} \left({x_{1}, \ldots , x_{p} \atop \epsilon_{1}, \ldots ,\epsilon_{p-1},\epsilon_{p} \xi }\right)\right) = $$
$$  \delta_{ x_{p} \leq r \leq x_{p}+ x_{p-1}-1} (-1)^{r-x_{p}} \binom{r-1}{x_{p}-1} \zeta ^{\mathfrak{l}} \left( r\atop  \epsilon_{p}\xi \right) \otimes \zeta^{\mathfrak{m}} \left({ x_{1}, \ldots, x_{p-2}, x_{p-1}+x_{p}-r\atop  \epsilon_{1},  \cdots, \epsilon_{p-2}, \epsilon_{p-1}\epsilon_{p}\xi} \right) \pmod{P}.$$
As in the previous case, the matrix being modulo $P$ triangular with $1$ on the diagonal, has a determinant congruent at $1$ modulo $P$, and then, in particular, is invertible.
\\
\end{proof}


\paragraph{{\Large \texttt{EXAMPLE for} $\boldsymbol{N=2}$}:} Let us illustrate the previous proof by an example, for weight $n=9$, depth $p=3$, level $i=0$, with the previous notations.\\
Instead of $\mathcal{B}_{9, 3, \geq 0}$, we will restrict to the subfamily (corresponding to $\mathcal{A}$):
$$\mathcal{B}_{9, 3, \geq 0}^{0}\mathrel{\mathop:}= \left\{ \zeta^{\mathfrak{m}}(2a+1,2b+1,\overline{2c+1}) \text{ of weight } 9 \right\} \subset$$
$$ \mathcal{B}_{9, 3, \geq 0}\mathrel{\mathop:}= \left\{ \zeta^{\mathfrak{m}}(2a+1,2b+1,\overline{2c+1})\zeta^{\mathfrak{m}}(2)^{s}\text{ of weight } 9 \right\}$$
Note that $\zeta^{\mathfrak{m}}(2)$ being trivial under the coaction, the matrix $M_{9,3}$ is diagonal by blocks following the different values of $s$ and we can prove the invertibility of each block separately; here we restrict to the block $s=0$. The matrix $\widetilde{M}$ considered represents the coefficients of:
$$\zeta^{\mathfrak{m}}(\overline{2r+1})\otimes \zeta^{\mathfrak{m}}(2x+1,\overline{2y+1})\quad \text{ in }\quad  D_{2r+1,3}(\zeta^{\mathfrak{m}}(2a+1,2b+1,\overline{2c+1})).$$
The chosen order for the columns, resp. for the rows \footnote{I.e. for $\zeta^{\mathfrak{m}}(2a+1,2b+1,2c+1)$ resp. for $(D_{2r+1,3}, \zeta^{\mathfrak{m}}(2x+1,\overline{2y+1}))$.} is the lexicographic order applied to $(c,b,a)$ resp. to $(r,y,x)$. Modulo $2$, it only remains the terms of type \textsc{d,d'}, that is:
$$  D_{2r+1,3} (\zeta^{\mathfrak{m}}(2a+1, 2b+1, \overline{2c+1}))  \equiv \delta_{c \leq r \leq  b+c} \binom{2r}{2c} \zeta^{\mathfrak{m}} (2a+1, \overline{2 (b+c-r) +1}) \text{  }  \pmod{ 2}.$$
With the previous order, $\widetilde{M}_{9,3}$ is then, modulo $2$:\footnote{Notice that the first four rows are exact: no need of congruences modulo $2$ for $D_{1}$ because it acts as a deconcatenation on the base.}\\
\\
\begin{tabular}{c|c|c|c|c|c|c|c|c|c|c}
   $D_{r}, \zeta\backslash$ $\zeta$& $7,1,\overline{1}$ & $5,3,\overline{1}$ & $3,5,\overline{1}$& $1,7,\overline{1}$& $5,1,\overline{3}$&$3,3,\overline{3}$&$1,5,\overline{3}$&$3,1,\overline{5}$&$1,3,\overline{5}$ & $1,1,\overline{7}$ \\
  \hline
  $D_{1},\zeta^{\mathfrak{m}}(7,\overline{1})$ & $1$ & $0$ &$0$ &$0$ &$0$ &$0$ &$0$ &$0$ &$0$ &$0$ \\
   $D_{1},\zeta^{\mathfrak{m}}(5,\overline{3})$ & $0$ & $1$ &$0$ &$0$ &$0$ &$0$ &$0$ &$0$ &$0$ &$0$ \\
  $D_{1},\zeta^{\mathfrak{m}}(3,\overline{5})$ & $0$ & $0$ &$1$ &$0$ &$0$ &$0$ &$0$ &$0$ &$0$ &$0$ \\
  $D_{1},\zeta^{\mathfrak{m}}(1,\overline{7})$ & $0$ & $0$ &$0$ &$1$ &$0$ &$0$ &$0$ &$0$ &$0$ &$0$ \\
  $D_{3},\zeta^{\mathfrak{m}}(5,\overline{7})$ & $0$ & $0$ &$0$ &$0$ &$1$ &$0$ &$0$ &$0$ &$0$ &$0$ \\
  $D_{3},\zeta^{\mathfrak{m}}(3,\overline{3})$ & $0$ & $0$ &$0$ &$0$ &$0$ &$1$ &$0$ &$0$ &$0$ &$0$ \\
  $D_{3},\zeta^{\mathfrak{m}}(1,\overline{5})$ & $0$ & $0$ &$0$ &$0$ &$0$ &$0$ &$1$ &$0$ &$0$ &$0$ \\
  $D_{5},\zeta^{\mathfrak{m}}(3,\overline{1})$ & $0$ & $0$ &$0$ &$0$ &$0$ &$\binom{4}{2}$ &$0$ &$1$ &$0$ &$0$ \\
  $D_{5},\zeta^{\mathfrak{m}}(1,\overline{3})$ & $0$ & $0$ &$0$ &$0$ &$0$ &$0$ &$\binom{4}{2}$ &$0$ &$1$ &$0$ \\
  $D_{7},\zeta^{\mathfrak{m}}(1,\overline{1})$ & $0$ & $0$ &$0$ &$0$ &$0$ &$0$ &$\binom{6}{2}$ &$0$ &$\binom{6}{4}$ &$1$ \\
  \\
\end{tabular}.
As announced, $\widetilde{M}$ modulo $2$ is triangular with $1$ on the diagonal, thus obviously invertible.

\paragraph{ {\Large Proof of the Theorem  $\boldsymbol{5.2.4}$}.}
\begin{proof}
This Theorem comes down to the Lemma $5.2.3$ proving the freeness of $\mathcal{B}_{n, p, \geq i}$ in $gr_{p}^{\mathfrak{D}} \mathcal{H}_{n}^{\geq i}$ defining a $\mathbb{Z}_{odd}$-structure:
\begin{itemize}
 \item[$(i)$] By this Lemma, $\mathcal{B}_{n, p, \geq i}$ is linearly free in the depth graded, and $\partial^{i,\mathcal{d}}_{n,p}$, which decreases strictly the depth, is bijective on $\mathcal{B}_{n, p, \geq i}$. The family $\mathcal{B}_{n, \leq p, \geq i}$, all depth mixed is then linearly independent on $\mathcal{F}_{p}^{\mathfrak{D}} \mathcal{H}_{n}^{\geq i}\subset \mathcal{F}_{p}^{\mathfrak{D}} \mathcal{H}_{n}^{\geq i, \mathcal{MT}}$: easily proved by application of $\partial^{i,\mathcal{d}}_{n,p}$.\\
 By a dimension argument, since $\dim \mathcal{F}_{p}^{\mathfrak{D}} \mathcal{H}_{n}^{\geq i, \mathcal{MT}}= \text{ card } \mathcal{B}_{n, \leq p, \geq i}$, we deduce the generating property.
	\item[$(ii)$] By the lemma, this family is linearly independent, and by $(i)$ applied to depth $p-1$, 
	$$gr_{p}^{\mathfrak{D}} \mathcal{H}_{n}^{\geq i}\subset gr_{p}^{\mathfrak{D}} \mathcal{H}_{n}^{\geq i, \mathcal{MT}}.$$
	Then, by a dimension argument, since $\dim gr_{p}^{\mathfrak{D}} \mathcal{H}_{n}^{\geq i, \mathcal{MT}} = \text{ card } \mathcal{B}_{n, p, \geq i}$ we conclude on the generating property. The $\mathbb{Z}_{odd}$ structure has been proven in the previous lemma.\\
	By the bijectivity of $\partial_{n,p}^{i,\mathcal{d}}$ (still previous lemma), which decreases the depth, and using the freeness of the elements of a same depth in the depth graded, there is no linear relation between elements of  $\mathcal{B}_{n,\cdot, \geq i}$ of different depths in $\mathcal{H}_{n}^{\geq i} \subset \mathcal{H}^{\geq i \mathcal{MT}}_{n}$. The family considered is then linearly independent in $\mathcal{H}_{n}^{\geq i}$. Since $\text{card } \mathcal{B}_{n,\cdot, \geq i} =\dim \mathcal{H}^{\geq i, \mathcal{MT}}_{n}$, we conclude on the equality of the previous inclusions.
	\item[$(iii)$] The second exact sequence is obviously split since $ \mathcal{B}_{n, \cdot,\geq i+1}$ is a subset of $\mathcal{B}_{n}$. We already know that $\mathcal{B}_{n}$ is a basis of $\mathcal{H}_{n}$ and $\mathcal{B}_{n, \cdot, \geq i+1}$ is a basis of $\mathcal{H}_{n}^{\geq i+1}$. Therefore, it gives a map $\mathcal{H}_{n} \leftarrow\mathcal{H}_{n}^{\geq i+1}$ and split the first exact sequence. \\
	The construction of $cl_{n,\leq p, \geq i}(x)$, obtained from $cl_{n,p, \geq i}(x)$ applied repeatedly, is the following: 
\begin{center}
	$x\in\mathcal{B}_{n, \cdot, \leq i-1} $ is sent on $\bar{x}\in \mathcal{H}_{n}^{\geq i} \cong \langle\mathcal{B}_{n, \leq p, \geq i}\rangle_{\mathbb{Q}} $ by the projection $\pi_{0,i}$ and so $x -\bar{x} \in \mathcal{F}_{i-1}\mathcal{H}$.
\end{center}
Notice that the problem of making $cl(x)$ explicit boils down to the problem of describing the map $\pi_{0,i}$ in the bases $\mathcal{B}$. 
	\item[$(iv)$] By the previous statements, these elements are linearly independent in $\mathcal{F}_{i} \mathcal{H}^{MT}_{n}$. Moreover, their cardinal is equal to the dimension of $\mathcal{F}_{i} \mathcal{H}^{MT}_{n}$. It gives the basis announced, composed of elements $x\in \mathcal{B}_{n, \cdot, \leq i}$, each corrected by an element denoted $cl(x)$ of $ \langle\mathcal{B}_{n, \cdot, \geq i+1}\rangle_{\mathbb{Q}}$.
	\item[$(v)$] By the previous statements, these elements are linearly independent in $gr_{i} \mathcal{H}_{n}$, and by a dimension argument, we can conclude.
\end{itemize}
\end{proof}

\subsection{Specified Results}

\subsubsection{\textsc{The case } $N=2$.}
Here, since there is only one Galois descent from $\mathcal{H}^{2}$ to $\mathcal{H}^{1}$, the previous exponents for level filtrations can be omitted, as the exponent $2$ for $\mathcal{H}$ the space of motivic Euler sums. Set $\mathbb{Z}_{\text{odd}}= \left\{ \frac{a}{b} \text{ , } a\in\mathbb{Z}, b\in 2 \mathbb{Z}+1  \right\}$, rationals having a $2$-adic valuation positive or infinite. Let us define particular families of motivic Euler sums, a notion of level and of motivic level. 
\begin{defi}
\begin{itemize}
	\item[$\cdot$] $\mathcal{B}^{2}\mathrel{\mathop:}=\left\{\zeta^{\mathfrak{m}}(2x_{1}+1, \ldots, 2 x_{p-1}+1,\overline{2 x_{p}+1}) \zeta(2)^{\mathfrak{m},k}, x_{i} \geq 0, k \in \mathbb{N} \right\}.$\\
Here, the level is defined as the number of $x_{i}$ equal to zero.
	\item[$\cdot$] The filtration by the motivic ($\mathbb{Q}/\mathbb{Q},2/1$)-level, 
$$\mathcal{F}_{i}\mathcal{H}\mathrel{\mathop:}=\left\{ \mathfrak{Z} \in \mathcal{H}, \textrm{ such that } D^{-1}_{1}\mathfrak{Z} \in \mathcal{F}_{i-1} \mathcal{H} \text{ , } \forall r>0, D^{1}_{2r+1}\mathfrak{Z} \in \mathcal{F}_{i}\mathcal{H} \right\}.$$
\begin{center}
I.e. $\mathcal{F}_{i}$ is the largest submodule such that $\mathcal{F}_{i} / \mathcal{F}_{i-1}$ is killed by $D_{1}$.
\end{center}
\end{itemize}
\end{defi}
This level filtration commutes with the increasing depth filtration.\\
\\
\textsc{Remarks}: The increasing or decreasing filtration defined from the number of 1 appearing in the motivic multiple zeta values is not preserved by the coproduct, since the number of 1 can either decrease or increase (by at the most 1) and is therefore not \textit{motivic}.\\
\\
Let list some consequences of the results in $\S 5.2.3$, which generalize in particular a result similar to P. Deligne's one (cf. $\cite{De}$):
\begin{coro} The map $\mathcal{G}^{\mathcal{MT}} \rightarrow \mathcal{G}^{\mathcal{MT}'}$ is an isomorphism.\\
The elements of $\mathcal{B}_{n}$, $\zeta^{\mathfrak{m}}(2x_{1}+1, \ldots, \overline{2 x_{p}+1}) \zeta(2)^{k}$ of weight $n$, form a basis of motivic Euler sums of weight $n$, $\mathcal{H}^{2}_{n}=\mathcal{H}^{\mathcal{MT}_{2}}_{n}$, and define a $\mathbb{Z}_{odd}$-structure on the motivic Euler sums.
\end{coro}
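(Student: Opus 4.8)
The plan is to reduce this corollary to the general Theorem $5.2.4$, applied to the single Galois descent $(\mathcal{d})=(\mathbb{Q}/\mathbb{Q},2/1)$ with the sets of derivations $\mathscr{D}^{\mathcal{d}}=\{D_{1}^{-1}\}$ and $\mathscr{D}^{\backslash\mathcal{d}}=\{D_{2r+1}^{1}\}_{r>0}$, and the level filtration $\mathcal{F}_{\bullet}\mathcal{H}^{2}$ just defined, matching the notion of level on the family $\mathcal{B}^{2}$ (the number of $x_{i}$ equal to $0$). The whole content of the corollary is the instance $i=0$ of that theorem: indeed, $\mathcal{F}_{-1}\mathcal{H}^{2}=0$ forces $\mathcal{H}^{\geq 0}=\mathcal{H}^{2}$, and Theorem $5.2.4(ii)$ applied with $i=0$ says exactly that $\mathcal{B}_{n,\cdot,\geq 0}=\mathcal{B}_{n}$ is a basis of $\mathcal{H}^{\geq 0,\mathcal{MT}}_{n}=\mathcal{H}^{\mathcal{MT}_{2}}_{n}$ and that it defines a $\mathbb{Z}_{1[2]}=\mathbb{Z}_{\mathrm{odd}}$-structure on the depth-graded pieces; combining the depth-graded freeness with the bijectivity of $\partial^{0,\mathcal{d}}_{n,p}$ (which strictly decreases depth) gives the all-depths-mixed linear independence, and a dimension count against the upper bound $d^{2}_{n}$ from Lemma $2.3.1$ (Fibonacci numbers) gives the spanning property, hence $\mathcal{H}^{2}_{n}=\mathcal{H}^{\mathcal{MT}_{2}}_{n}$.

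Next I would address the remaining assertion, that $\mathcal{G}^{\mathcal{MT}}\to\mathcal{G}^{\mathcal{MT}'}$ is an isomorphism, i.e. that the comodule embedding $\phi^{2}\colon\mathcal{H}^{2}\hookrightarrow\mathcal{H}^{\mathcal{MT}_{2}}$ of $(\ref{eq:phih})$ is an isomorphism. This is equivalent to the equality of dimensions $\dim\mathcal{H}^{2}_{n}=\dim\mathcal{H}^{\mathcal{MT}_{2}}_{n}$ in every weight, which is precisely what the basis statement above establishes. Concretely: $\mathcal{U}^{N}$ is by construction the quotient of $\mathcal{U}^{\mathcal{MT}}$ acting faithfully on ${}_{0}\Pi_{1}$ (Nota Bene after $(\ref{eq:Amt})$–type discussion in $\S 2.2$), so $\mathcal{G}^{\mathcal{MT}}\twoheadrightarrow\mathcal{G}^{\mathcal{MT}'}$ is a quotient map of pro-algebraic groups; it is an isomorphism iff the induced $\mathcal{A}^{\mathcal{MT}'}\hookrightarrow\mathcal{A}^{\mathcal{MT}}$ is an equality, which follows once $\mathcal{H}^{2}\hookrightarrow\mathcal{H}^{\mathcal{MT}_{2}}$ is onto, as graded vector spaces of the same finite dimension in each weight.

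The structure of the proof is therefore: (1) specialize Theorem $5.2.4$ to $N=2$, $i=0$, obtaining that $\mathcal{B}_{n}$ is a basis of $\mathcal{F}_{p}^{\mathfrak{D}}\mathcal{H}^{\geq 0}_{n}$ for each $p$ and, passing to the limit in $p$, of $\mathcal{H}^{2}_{n}$; (2) record the $\mathbb{Z}_{\mathrm{odd}}$-structure on $gr^{\mathfrak{D}}_{p}\mathcal{H}_{n}$, which is just the $\mathbb{Z}_{1[P]}$-structure of the theorem with $P=2$; (3) deduce $\mathcal{H}^{2}_{n}=\mathcal{H}^{\mathcal{MT}_{2}}_{n}$ and hence the isomorphism of motivic Galois groups. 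The one genuine input that is not bookkeeping is, of course, Theorem $5.2.4$ itself, whose proof for $N=2$ was carried out in $\S 5.2.4$ via the invertibility modulo $2$ of the matrix $\widetilde{M}_{n,p}$ of the map $\partial_{n,p}$ — the deconcatenation terms being $2$-adically minimal. So at the level of this corollary there is no real obstacle beyond invoking that theorem; the only point demanding a little care is checking that the notion of level on $\mathcal{B}^{2}$ (number of vanishing $x_{i}$) coincides with the motivic level of the filtration $\mathcal{F}_{\bullet}$, which is exactly what the matrix computation with $D_{1}^{-1}$ acting as a deconcatenation (formula $(\ref{Deriv21})$) shows: $D_{1}^{-1}$ kills an element of $\mathcal{B}^{2}$ precisely when it does not end in $\overline{1}$, and strictly lowers the number of vanishing $x_{i}$ otherwise.
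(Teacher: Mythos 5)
Your proposal is correct and follows essentially the same route as the paper: the corollary is read off from Theorem $5.2.4$ (hence Lemma $5.2.3$, i.e.\ the $2$-adic invertibility of the deconcatenation-dominated matrix) at level $i=0$ for the descent $(\mathbb{Q}/\mathbb{Q},2/1)$, with the dimension count against $\dim\mathcal{H}^{\mathcal{MT}_{2}}_{n}$ giving $\mathcal{H}^{2}_{n}=\mathcal{H}^{\mathcal{MT}_{2}}_{n}$ and hence the isomorphism of Galois groups via the comodule embedding $\phi^{2}$. Your check that the level (number of vanishing $x_{i}$) matches the motivic level through the deconcatenation action of $D_{1}$ is exactly the point the paper relies on.
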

\noindent
The period map, $\text{per}: \mathcal{H} \rightarrow \mathbb{C}$, induces the following result for the Euler sums:
\begin{center}
Each Euler sum is a $\mathbb{Z}_{odd}$-linear combination of Euler sums \\
$\zeta(2x_{1}+1, \ldots, \overline{2 x_{p}+1}) \zeta(2)^{k}, k\geq 0, x_{i} \geq 0$ of the same weight.
\end{center}
\newpage
\noindent
Here is the result on the $0^{\text{th}}$ level of the Galois descent from $\mathcal{H}^{1}$ to $\mathcal{H}^{2}$:
\begin{coro}
$$\mathcal{F}_{0}\mathcal{H}^{\mathcal{MT}_{2}}=\mathcal{F}_{0}\mathcal{H}^{2}=\mathcal{H}^{\mathcal{MT}_{1}}=\mathcal{H}^{1} .$$
A basis of motivic multiple zeta values in weight $n$, is formed by terms of $\mathcal{B}_{n}$ with $0$-level each corrected by linear combinations of elements of $\mathcal{B}_{n}$ of level $1$: 
\begin{multline}\nonumber
\mathcal{B}_{n}^{1}\mathrel{\mathop:}=\left\{   \zeta^{\mathfrak{m}}(2x_{1}+1, \ldots, \overline{2x_{p}+1})\zeta^{\mathfrak{m}}(2)^{s} + \sum_{y_{i} \geq 0 \atop \text{at least one } y_{i} =0} \alpha_{\textbf{x} , \textbf{y}} \zeta^{\mathfrak{m}}(2y_{1}+1, \ldots, \overline{2y_{p}+1})\zeta^{\mathfrak{m}}(2)^{s} + \right.\\
\left. \sum_{\text{lower depth } q<p, z_{i}\geq 0 \atop \text{ at least one } z_{i} =0} \beta_{\textbf{x}, \textbf{z}} \zeta^{\mathfrak{m}}(2 z_{1}+1, \ldots, \overline{2 z_{q}+1})\zeta^{\mathfrak{m}}(2)^{s}, x_{i}>0 , \alpha_{\textbf{x} , \textbf{y}} , \beta_{\textbf{x} , \textbf{z}} \in\mathbb{Q},\right\}_{\sum x_{i}= \sum y_{i}=\sum z_{i}= \frac{n-p}{2} -s}.
\end{multline}
\end{coro}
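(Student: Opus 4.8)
The target statement has two parts: first the equality of $\mathbb{Q}$-algebras $\mathcal{F}_{0}\mathcal{H}^{\mathcal{MT}_{2}} = \mathcal{F}_{0}\mathcal{H}^{2} = \mathcal{H}^{\mathcal{MT}_{1}} = \mathcal{H}^{1}$, and second the explicit description of a basis of $\mathcal{H}^{1}_{n}$ obtained from the level-$0$ elements of $\mathcal{B}_{n}$ corrected by $\mathbb{Q}$-linear combinations of level-$\geq 1$ elements. The plan is to obtain both as the $i=0$ specialization of the general machinery of $\S 5.2.3$--$5.2.4$ (Theorem $5.2.4$ and Lemma $5.2.3$), applied to the unique descent $\mathcal{d}=(\mathbb{Q}/\mathbb{Q},2/1)$ with associated derivation set $\mathscr{D}^{\mathcal{d}}=\{D_{1}\}$ (here $D_{1}$ means $D^{-1}_{1}$) and $\mathscr{D}^{\backslash\mathcal{d}}=\{D_{2r+1}\}_{r>0}$. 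So the real content is already proved; what remains is to unwind the level-$0$ case and identify the filtration step with $\mathcal{H}^{1}$.

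\textbf{Step 1: the equality $\mathcal{F}_{0}\mathcal{H}^{2}=\mathcal{H}^{1}$.} First I would recall the inclusion $\mathcal{H}^{1}\subseteq \mathcal{F}_{0}\mathcal{H}^{2}$: since all motivic iterated integrals in $0,1$ of length $1$ vanish by the properties of $\S 2.4.3$, the left-hand factor of $D_{1}$ always vanishes on $\mathcal{H}^{1}$, hence $D_{1}(\mathcal{H}^{1})=0$; as $\mathcal{H}^{1}$ is a sub-comodule this gives $\mathcal{H}^{1}\subseteq \mathcal{F}_{0}\mathcal{H}^{2}$ (this is exactly Corollary $\ref{criterehonoraire}$ read in one direction, together with $\mathcal{F}_{0}$ being the largest submodule killed by $D_{1}$). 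For the reverse inclusion I would use the isomorphism $\phi\colon\mathcal{H}^{2}\xrightarrow{\sim}\mathcal{H}^{\mathcal{MT}_{2}}$ of graded Hopf comodules (valid since $N=2$), under which $\mathcal{F}_{0}^{\mathcal{d}}$ corresponds by $(\ref{eq:isomfiltration})$ to $\langle x\in H^{2}\mid \mathrm{Deg}_{f_{1}}(x)\leq 0\rangle = \mathbb{Q}\langle f_{3},f_{5},\dots\rangle\otimes\mathbb{Q}[f_{2}]$, which is precisely the image of $\mathcal{H}^{\mathcal{MT}_{1}}=\mathcal{H}^{1}$ under the compatible choice of $\phi$. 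Then a dimension count settles the equality: by Theorem $5.2.4(ii)$ applied with $i=0$, $\mathcal{B}_{n,\cdot,\geq 0}=\mathcal{B}_{n}$ is a basis of $\mathcal{H}^{\geq 0}_{n}=\mathcal{H}^{2}_{n}$, while $\mathcal{B}_{n,\cdot,\leq 0}$ (the level-$0$ elements: all $x_{i}>0$, i.e. $\zeta^{\mathfrak{m}}(2x_{1}+1,\dots,\overline{2x_{p}+1})\zeta^{\mathfrak{m}}(2)^{s}$ with $x_{i}>0$) has cardinality $\dim\mathcal{F}_{0}\mathcal{H}^{\mathcal{MT}_{2}}_{n}=\dim\mathcal{H}^{1}_{n}=d_{n}^{1}$ by the Hilbert series $\tfrac{1}{1-t^{2}-t^{3}}$; combined with $\mathcal{H}^{1}\subseteq\mathcal{F}_{0}\mathcal{H}^{2}$ we conclude $\mathcal{F}_{0}\mathcal{H}^{2}=\mathcal{H}^{1}$. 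The equality $\mathcal{F}_{0}\mathcal{H}^{2}=\mathcal{F}_{0}\mathcal{H}^{\mathcal{MT}_{2}}$ is just $\mathcal{H}^{2}=\mathcal{H}^{\mathcal{MT}_{2}}$, itself the $i=0$ statement of the same theorem.

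\textbf{Step 2: the explicit basis.} This is Theorem $5.2.4(iv)$ with $i=0$: a basis of $\mathcal{F}_{0}\mathcal{H}_{n}=\mathcal{H}^{1}_{n}$ is $\bigcup_{p}\{\mathfrak{Z}+cl_{n,\leq p,\geq 1}(\mathfrak{Z}),\ \mathfrak{Z}\in\mathcal{B}_{n,p,\leq 0}\}$, where $cl_{n,\leq p,\geq 1}\colon\langle\mathcal{B}_{n,p,\leq 0}\rangle_{\mathbb{Q}}\to\langle\mathcal{B}_{n,\leq p,\geq 1}\rangle_{\mathbb{Q}}$ is the unique map with $\mathfrak{Z}+cl_{n,\leq p,\geq 1}(\mathfrak{Z})\in\mathcal{F}_{-1}\mathcal{H}=0$... no: into $\mathcal{F}_{0}\mathcal{H}_{n}$, i.e. the correction that makes $\mathfrak{Z}$ unramified. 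Unwinding: $\mathcal{B}_{n,p,\leq 0}$ consists of the $\zeta^{\mathfrak{m}}(2x_{1}+1,\dots,\overline{2x_{p}+1})\zeta^{\mathfrak{m}}(2)^{s}$ with all $x_{i}>0$; the correction term $cl(\mathfrak{Z})$ is a $\mathbb{Q}$-linear combination of elements of $\mathcal{B}_{n}$ of level $\geq 1$, that is, Euler sums (of depth $q\leq p$, same weight) having at least one index equal to $0$ among the $y_{i}$ — which, after $\shuffle$-regularization to re-express $\zeta^{\mathfrak{m}}$ with a leading $1$, are exactly the terms written out in $\mathcal{B}_{n}^{1}$ in the statement. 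I would then just write the translation of $(iv)$ into the displayed form $\mathcal{B}_{n}^{1}$ and note that uniqueness of $cl$ is part of $(iii)$.

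\textbf{Main obstacle.} There is essentially no new obstacle — the corollary is a bookkeeping specialization — but the one point requiring care is the \emph{compatibility of the non-canonical isomorphism} $\phi_{2}\colon\mathcal{H}^{2}\hookrightarrow\mathcal{H}^{\mathcal{MT}_{2}}$ with the embedding $\mathcal{H}^{1}\hookrightarrow\mathcal{H}^{\mathcal{MT}_{1}}\subset\mathcal{H}^{\mathcal{MT}_{2}}$: one must choose $\phi_{2}$ so that it restricts to the chosen $\phi_{1}$ on $\mathcal{H}^{1}$ (sending $\zeta^{\mathfrak{m}}(2r+1)\mapsto f_{2r+1}$, $\zeta^{\mathfrak{m}}(2)\mapsto$ a multiple of $g_{1}^{2}$, and $\zeta^{\mathfrak{m}}(\overline{2r+1})$ to the remaining degree-$1$ generator's images), which is possible precisely because $\phi_{2}$ is chosen compatibly with weight and coaction as in $(\ref{eq:phih})$. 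Once that compatibility is fixed, $\mathrm{Deg}_{f_{1}}\leq 0$ cuts out exactly $\phi_{2}(\mathcal{H}^{1})$ and everything follows; the dimension count via the Hilbert series closes the argument without needing to make $cl$ explicit (which, as remarked after the theorem, motivic methods cannot do beyond the maximal-depth part anyway).
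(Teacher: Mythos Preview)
Your proposal is correct and follows the same route as the paper: the corollary is not given an independent proof there but is presented as the $i=0$, $N=2$, $N'=1$ specialization of Theorem~5.2.4 (together with the equality $\mathcal{H}^{2}=\mathcal{H}^{\mathcal{MT}_{2}}$ from the same theorem and the dimension identification $\dim\mathcal{F}_{0}\mathcal{H}^{\mathcal{MT}_{2}}_{n}=\dim\mathcal{H}^{\mathcal{MT}_{1}}_{n}$ via the free comodule $H^{N}$). Your observation about the compatibility of the non-canonical isomorphisms $\phi_{1},\phi_{2}$ is exactly the caveat the paper flags in Figure~\ref{fig:paralleldescent}; the self-correction in Step~2 (the correction $cl$ lands in $\mathcal{F}_{0}\mathcal{H}_{n}$, not $\mathcal{F}_{-1}$) should of course be cleaned up in a final write-up.
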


\paragraph{Honorary.}
About the first condition in $\ref{criterehonoraire}$ to be honorary:
\begin{lemm}\label{condd1}
Let $\zeta^{\mathfrak{m}}(n_{1},\cdots,n_{p}) \in\mathcal{H}^{2}$, a motivic Euler sum, with $n_{i}\in\mathbb{Z}^{\ast}$, $ n_{p}\neq 1$. Then:
$$\forall i \text{ ,  } n_{i}\neq -1 \Rightarrow D_{1}(\zeta^{\mathfrak{m}}(n_{1},\cdots,n_{p}))=0 $$
\end{lemm}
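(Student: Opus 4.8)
The plan is to work entirely at the level of motivic iterated integrals and use the explicit formula for $D_1$ extracted from Goncharov's coaction (Theorem~\ref{eq:coaction}), together with the cancellation properties of $I^{\mathfrak{l}}$ of length $1$. Recall from property~(i) in \S\ref{propii} that $I^{\mathfrak{m}}(a_0;a_1)=1$, so in the coalgebra $\mathcal{L}$ any length-one iterated integral $I^{\mathfrak{l}}(a;b;c)$ is a combination of the trivial ones $I^{\mathfrak{l}}(a_{p-1};a_p;a_{p+1})=\log^{\mathfrak{l}}(a_{p+1}-a_p)-\log^{\mathfrak{l}}(a_{p-1}-a_p)$, which in the coalgebra is the class of a logarithm of a unit. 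The key observation is that for $N=2$, i.e.\ arguments in $\{0,\pm1\}$, the only way a length-one cut $I^{\mathfrak{l}}(a_{p-1};a_p;a_{p+1})$ can contribute a \emph{ramified} (non-torsion, non-trivial) class is when $\{a_{p-1},a_{p+1}\}$ contains both $1$ and $-1$ (giving $\log(2)$ up to torsion), or one of them equals $a_p$ shifted appropriately; all the other single cuts produce classes that are zero modulo products (logs of $\pm1$, $\pm2$ reducing to $\log 2$ only in the $1/-1$ adjacent case). So the first step is to write $D_1$ on $I^{\mathfrak{m}}(0;\eta_1,\boldsymbol{0}^{|n_1|-1},\ldots,\eta_p,\boldsymbol{0}^{|n_p|-1};1)$ using \eqref{eq:Der} with $r=1$, and observe that the sum ranges over single interior cuts $I^{\mathfrak{l}}(a_p;a_{p+1};a_{p+2})$.

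Next I would enumerate the possible consecutive triples $(a_{p},a_{p+1},a_{p+2})$ occurring in the word $\eta_1\boldsymbol{0}^{|n_1|-1}\cdots\eta_p\boldsymbol{0}^{|n_p|-1}$ together with the endpoints $0$ and $1$. Since $I^{\mathfrak{l}}(a;b;c)$ vanishes when $a=c$, and equals a difference of logarithms otherwise, the surviving contributions are: cuts of the form $I^{\mathfrak{l}}(0;\epsilon;0)$ (zero), $I^{\mathfrak{l}}(0;\epsilon;0)$ inside a block of zeros (zero by $a=c$), and, crucially, cuts $I^{\mathfrak{l}}(\epsilon;\epsilon';0)$ or $I^{\mathfrak{l}}(\epsilon;\epsilon';1)$ where $\epsilon,\epsilon'\in\{\pm1\}$ are adjacent roots of unity (i.e.\ $n_{j}=\pm1$ for some interior $j$). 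The arithmetic then is: $\log(0-\epsilon')-\log(\epsilon-\epsilon')$, which modulo products and torsion is $\log 2$ precisely when $\epsilon=-\epsilon'$, and is trivial when $\epsilon=\epsilon'$ or when one endpoint is $0$ adjacent to a single $\pm1$ flanked by zeros. The hypothesis $n_i\neq-1$ for all $i$ means no argument equals $-1$, hence in the iterated integral no $\eta_i$ can be $-1$ — wait, one must be careful: $\eta_i=(\epsilon_i\cdots\epsilon_p)^{-1}$, so $n_i\neq-1$ translates into a condition on the $\epsilon_i$'s, and I need to check that it forbids exactly the adjacency patterns $\{0,\epsilon,-\epsilon\}$ and $\{\epsilon,-\epsilon,0\}$ that Lemma~\ref{condd1} rules out. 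This translation between the $(n_i,\epsilon_i)$ notation and the $\eta_i$ notation is the bookkeeping heart of the proof.

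Concretely, the steps I would carry out are: (1) write down $D_1$ via \eqref{eq:Der} restricted to $r=1$; (2) classify the non-vanishing single cuts into the types $T_{0,0}, T_{0,\epsilon}, T_{\epsilon,0}, T_{\epsilon,\epsilon'}$ using properties (i)--(v) of \S\ref{propii}; (3) show $T_{0,0}$ and same-sign $T_{\epsilon,\epsilon'}$ contributions vanish in $\mathcal{L}$ because they are logs of $\pm1$ (torsion) or produce $a=c$; (4) show that the only potentially surviving contributions come from adjacent opposite-sign roots, i.e.\ from a subword $0\,\epsilon\,(-\epsilon)$, $(-\epsilon)\,\epsilon\,0$ (or their reverses, via path reversal property (v)), or more generally a $\pm1$ argument sitting next to a $\mp1$; (5) translate the condition ``some such pattern occurs'' into ``some $n_i=-1$'' in the Euler sum notation via $\eta_i=(\epsilon_i\cdots\epsilon_p)^{-1}$ and the fact that a sign change $\eta_i\neq\eta_{i+1}$ corresponds to $\epsilon_i=-1$, hence $n_i=-1$ when moreover $|n_i|=1$; and conclude that if no $n_i=-1$ then every single cut is trivial in $\mathcal{L}$, so $D_1=0$. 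The main obstacle I anticipate is step (5): correctly matching the combinatorics of adjacent arguments in the iterated-integral word (where a $\pm1$ argument produces an $\eta_j$ and the intervening zeros come from $|n_j|-1$) with the sign conditions on the $n_i$, and making sure that the "boundary" cuts involving the fixed endpoints $0$ and $1$ (which could a priori produce $\log 2$ from $1-(-1)=2$) are correctly accounted for — in particular the cut nearest the final $1$ when $n_p$ itself is $\pm1$, though the standing admissibility hypothesis $n_p\neq1$ together with $n_p\neq-1$ handles that endpoint, and the cut nearest the initial $0$ is controlled because $I^{\mathfrak{l}}(0;\eta_1;\cdot)$ only gives $\log 2$ if the next argument has opposite sign, i.e.\ if $n_1=-1$ after unwinding $\eta_1,\eta_2$.
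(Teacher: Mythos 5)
Your proposal is correct and follows essentially the same route as the paper's proof: classify the length-one motivic iterated integrals $I^{\mathfrak{l}}(a;b;c)$ with $a,b,c\in\{0,\pm1\}$, observe that the only ones nonzero in $\mathcal{L}$ (all equal to $\pm\log 2$ up to torsion) require a consecutive $\{1,-1\}$ pair in the word, and note that such an adjacency of opposite-sign roots of unity occurs precisely when some $|n_i|=1$ with $\epsilon_i=-1$, i.e.\ some $n_i=-1$. Your extra care with the $\eta_i$-to-$n_i$ translation and the endpoint cuts only makes explicit what the paper leaves implicit.
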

\begin{proof}
Looking at all iterated integrals of length $1$ in $\mathcal{L}$, $I^{\mathfrak{l}}(a;b;c)$, $a,b,c\in \lbrace 0,\pm 1\rbrace$: the only non zero ones are these with a consecutive $\lbrace 1,-1\rbrace$ or $\lbrace -1,1\rbrace$ sequence in the iterated integral, with the condition that extremities are different, that is:
$$I(0;1;-1), I(0;-1;1), I(1;-1;0), I(-1;+1;0), I(-1;\pm 1;1), I(1;\pm 1;-1).$$
Moreover, they are all equal to $\pm \log^{\mathfrak{a}} (2)$ in the Hopf algebra $\mathcal{A}$. Consequently, if there is no $-1$ in the Euler sums notation, it implies that $D_{1}$ would be zero.
\end{proof}

\paragraph{Comparison with Hoffman's basis. } Let compare:
\begin{itemize}
\item[$(i)$] The Hoffman basis of $\mathcal{H}^{1}$ formed by motivic MZV with only $2$ and $3$ ($\cite{Br2}$)
$$\mathcal{B}^{H}\mathrel{\mathop:}= \left\{\zeta^{\mathfrak{m}} (x_{1}, \ldots, x_{k}), \text{  where } x_{i}\in\left\{2,3\right\} \right\}.$$
\item[$(ii)$] 
$\mathcal{B}^{1}$, the base of $\mathcal{H}^{1}$ previously obtained (Corollary $5.2.7$).
\end{itemize}

Beware, the index $p$ for $\mathcal{B}^{H}$ indicates the number of 3 among the $x_{i}$, whereas for $\mathcal{B}^{1}$, it still indicates the depth; in both case, it can be seen as the \textit{motivic depth} (cf. $\S 2.4.3$):

\begin{coro}
$\mathcal{B}^{1}_{n,p}$ is a basis of $gr_{p}^{\mathfrak{D}} \langle\mathcal{B}^{H}_{n,p}\rangle_{\mathbb{Q}}$ and defines a $\mathbb{Z}_{\text{odd}}$-structure.\\
I.e. each element of the Hoffman basis of weight $n$ and with $p$ three, $p>0$, decomposes into a $\mathbb{Z}_{\text{odd}}$-linear combination of $\mathcal{B}^{1}_{n,p}$ elements plus terms of depth strictly less than $p$.
\end{coro}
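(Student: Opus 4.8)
The plan is to derive the statement formally from Theorem $5.2.4$ (case $N=2$, level $i=0$) together with F.~Brown's theorem that $\mathcal B^{H}$ is a basis of $\mathcal H^{1}$ and that the number of $3$'s of a Hoffman element equals its motivic depth (cf. \cite{Br2}, \cite{Br1}). The only genuinely new content is then a change of basis between two already known bases of one and the same filtered vector space.

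First I would fix the filtration that both indexings refer to, namely the increasing filtration $\mathcal F^{\mathrm{mot}}_{\bullet}$ by the \emph{motivic depth}, which under the comodule isomorphism $\phi^{1}\colon\mathcal H^{1}\xrightarrow{\sim}H^{1}=\mathbb Q\langle f_{3},f_{5},\dots\rangle\otimes\mathbb Q[g_{1}]$ of \eqref{eq:phih} is the filtration by degree in the letters $f_{2i+1}$ (as noted in the remark preceding the corollary, the ``$gr^{\mathfrak D}_{p}$'' of the statement is to be read as this motivic-depth graded). For $\mathcal B^{H}$, Brown's result gives $\langle\mathcal B^{H}_{n,\le p}\rangle_{\mathbb Q}=\mathcal F^{\mathrm{mot}}_{p}\mathcal H^{1}_{n}$ and that $\mathcal B^{H}_{n,p}$ projects to a $\mathbb Q$-basis of $gr^{\mathrm{mot}}_{p}\mathcal H^{1}_{n}$. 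For $\mathcal B^{1}$ (the basis of $\mathcal H^{1}=\mathcal F_{0}\mathcal H^{2}$ of Corollary $5.2.7$), the depth indexing coincides with the motivic depth ($p_{\mathfrak m}=p_{c}$ for $N=2$, Nota Bene of $\S2.4.3$), so Theorem $5.2.4$(ii),(iv) at $i=0$ gives that $\mathcal B^{1}_{n,\le p}$ is a $\mathbb Z_{\mathrm{odd}}$-basis of $\mathcal F^{\mathrm{mot}}_{p}\mathcal H^{1}_{n}$ and $\mathcal B^{1}_{n,p}$ a $\mathbb Z_{\mathrm{odd}}$-basis of $gr^{\mathrm{mot}}_{p}\mathcal H^{1}_{n}$.

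The comparison is then immediate: since $\langle\mathcal B^{H}_{n,p}\rangle_{\mathbb Q}\subseteq\mathcal F^{\mathrm{mot}}_{p}\mathcal H^{1}_{n}$ and its image spans $gr^{\mathrm{mot}}_{p}\mathcal H^{1}_{n}$, one has $gr^{\mathrm{mot}}_{p}\langle\mathcal B^{H}_{n,p}\rangle_{\mathbb Q}=gr^{\mathrm{mot}}_{p}\mathcal H^{1}_{n}$, which has $\mathcal B^{1}_{n,p}$ as a basis; this is the first assertion. For the explicit decomposition of $h\in\mathcal B^{H}_{n,p}$: being in $\mathcal F^{\mathrm{mot}}_{p}\mathcal H^{1}_{n}=\langle\mathcal B^{1}_{n,\le p}\rangle_{\mathbb Q}$, the element $h$ is a $\mathbb Q$-combination of $\mathcal B^{1}_{n,\le p}$; subtracting its $gr^{\mathrm{mot}}_{p}$-component (a combination of $\mathcal B^{1}_{n,p}$ only) leaves an element of $\mathcal F^{\mathrm{mot}}_{p-1}\mathcal H^{1}_{n}=\langle\mathcal B^{1}_{n,\le p-1}\rangle_{\mathbb Q}$, i.e. a combination of $\mathcal B^{1}$-elements of strictly smaller depth, which is exactly the claimed shape.

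Finally, to see that the top-depth coefficients lie in $\mathbb Z_{\mathrm{odd}}$ I would compute them through the depth-graded derivation map $\partial_{n,p}=\bigoplus_{1<2r+1<n}D^{-1}_{2r+1,p}$, which by Lemma $5.2.3$ (the $N=2$, $i=0$ case) is a $\mathbb Z_{\mathrm{odd}}$-linear isomorphism from $gr^{\mathrm{mot}}_{p}\mathcal H^{1}_{n}$ onto $\bigoplus_{r}gr^{\mathrm{mot}}_{p-1}\mathcal H^{1}_{n-2r-1}$: evaluating $\partial_{n,p}(h)$ from the combinatorial coaction of Theorem~\ref{eq:coaction} (equivalently Lemma~\ref{Drp}) one sees that all structure constants are integer binomial coefficients and that the only denominators introduced are the odd integers $2^{2r}-1$ coming from the depth-one normalisation $\zeta^{\mathfrak a}(\overline{2r+1})=(2^{-2r}-1)\zeta^{\mathfrak a}(2r+1)$; hence $\partial_{n,p}(h)$, and therefore $h$ itself in $gr^{\mathrm{mot}}_{p}\mathcal H^{1}_{n}$, has $\mathbb Z_{\mathrm{odd}}$-coordinates on $\mathcal B^{1}_{n,p}$ (alternatively one may invoke directly the $\mathbb Z_{1[2]}$-structure of Theorem $5.2.4$(ii) after the reduction to $gr^{\mathrm{mot}}_{p}$). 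The one delicate ingredient, which I would quote rather than reprove, is the identification ``number of $3$'s $=$ motivic depth'' and the ensuing equality $\langle\mathcal B^{H}_{n,\le p}\rangle_{\mathbb Q}=\mathcal F^{\mathrm{mot}}_{p}\mathcal H^{1}_{n}$: this is the substance of Brown's argument in \cite{Br2}, \cite{Br1}, and it is the only real obstacle here; granting it, the corollary is a formal consequence of Theorem $5.2.4$.
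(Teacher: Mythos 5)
Your overall route is the one the paper intends: its own proof is the one-line ``deduced from the previous results, with the $\mathbb{Z}_{\text{odd}}$ structure of the basis for Euler sums'', and your reduction — read both indexings as the motivic depth, quote Brown for $\langle\mathcal{B}^{H}_{n,\le p}\rangle_{\mathbb{Q}}=\mathcal{F}^{\mathrm{mot}}_{p}\mathcal{H}^{1}_{n}$, use Theorem $5.2.4$/Corollary $5.2.7$ for the $\mathcal{B}^{1}$ side, and compare the two bases in the graded — is exactly that deduction spelled out; the vanishing of depth $>p$ terms and the identification of the top-depth coefficients are fine.

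The weak point is your justification of the $\mathbb{Z}_{\text{odd}}$ claim. First, the fallback ``invoke directly the $\mathbb{Z}_{1[2]}$-structure of Theorem $5.2.4$(ii) after the reduction to $gr^{\mathrm{mot}}_{p}$'' is not available: that statement concerns an Euler sum whose \emph{written} depth equals the grading index, whereas a Hoffman element $\zeta^{\mathfrak{m}}(\boldsymbol{2}^{a_{0}},3,\dots,3,\boldsymbol{2}^{a_{p}})$ has written depth $p+\sum a_{i}>p$; its class in the graded piece of its written depth is zero, so $5.2.4$(ii) gives no control on its depth-$p$ coordinates. Second, in your main route the asserted fact that ``all structure constants are integer binomial coefficients and the only denominators introduced are the odd integers $2^{2r}-1$'' is not correct as stated: when one computes $gr_{p}D_{2r+1}$ on a $2$--$3$ word, the surviving left-hand factors are not single letters but elements such as $\zeta^{\mathfrak{l}}_{k}(\boldsymbol{2}^{m})$ and $\zeta^{\mathfrak{l}}(\boldsymbol{2}^{a},3,\boldsymbol{2}^{b})$, and their reduction to multiples of $\zeta^{\mathfrak{l}}(2r+1)$ (Zagier's evaluation $\cite{Za}$, the analytic ingredient of Brown's proof) carries $2$-power denominators — e.g. $\zeta^{\mathfrak{l}}(3,2)=-\tfrac{11}{2}\zeta^{\mathfrak{l}}(5)$. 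The coordinates only land in $\mathbb{Z}_{\text{odd}}$ after the conversion $\zeta^{\mathfrak{l}}(2r+1)=\tfrac{2^{2r}}{1-2^{2r}}\zeta^{\mathfrak{l}}(\overline{2r+1})$, so a genuine $2$-adic estimate (nonnegative valuation of the converted coefficients, in the spirit of the valuation bookkeeping in Lemma $5.2.3$ and $\S 4.4$) must be carried out; moreover your induction only closes because the $2$--$3$ family is stable under $D_{2r+1}$ (Brown), so that the right-hand factors stay inside the family where the inductive hypothesis applies — a fact you use implicitly but should state. With these two repairs the argument is complete and coincides with what the paper leaves to the reader.
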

\begin{proof}
Deduced from the previous results, with the $\mathcal{Z}_{odd}$ structure of the basis for Euler sums.
\end{proof}

\subsubsection{\textsc{The cases } $N=3,4$.}

For $N=3,4$ there are a generator in each degree $\geq 1$ and two Galois descents. \\
\begin{defi}
\begin{itemize}
	\item[$\cdot$] \textbf{Family:}  $\mathcal{B}\mathrel{\mathop:}=\left\{\zeta^{\mathfrak{m}}\left({x_{1}, \ldots,x_{p}\atop  1, \ldots , 1, \xi }\right) (2i \pi)^{s,\mathfrak{m}}, x_{i} \geq 1, s \geq 0 \right\}$. 
	\item[$\cdot$] \textbf{Level:} 
$$\begin{array}{lll}
\text{ The $(k_{N}/k_{N},P/1)$-level } & \text{ is defined as } & \text{ the number of $x_{i}$ equal to 1 }\\
\text{ The $(k_{N}/\mathbb{Q},P/P)$-level } & \text{  } & \text{ the number of  $x_{i}$ even }\\
\text{ The $(k_{N}/\mathbb{Q},P/1)$-level } & \text{  } & \text{ the number of even $x_{i}$ or equal to $1$ }
\end{array}	$$
	\item[$\cdot$]  \textbf{Filtrations by the motivic level:} 
$\mathcal{F}^{\mathcal{d}} _{-1} \mathcal{H}^{N}=0$ and $\mathcal{F}^{\mathcal{d}} _{i} \mathcal{H}^{N}$ is the largest submodule of $\mathcal{H}^{N}$ such that $\mathcal{F}^{\mathcal{d}}_{i}\mathcal{H}^{N}/\mathcal{F}^{\mathcal{d}} _{i-1}\mathcal{H}^{N}$ is killed by $\mathscr{D}^{\mathcal{d}}$, where

$$\mathscr{D}^{\mathcal{d}}	= \begin{array}{ll}
\lbrace D^{\xi}_{1} \rbrace &  \text{ for } \mathcal{d}=(k_{N}/k_{N},P/1)\\
\lbrace(D^{\xi}_{2r})_{r>0} \rbrace & \text{ for } \mathcal{d}=(k_{N}/\mathbb{Q},P/P)\\
\lbrace D^{\xi}_{1},(D^{\xi}_{2r})_{r>0} \rbrace &  \text{ for } \mathcal{d}=(k_{N}/\mathbb{Q},P/1) \\
\end{array}.
$$
\end{itemize}
\end{defi}
\textsc{Remarks}: 
\begin{itemize}
\item[$\cdot$] As before, the increasing, or decreasing, filtration that we could define by the number of 1 (resp. number of even) appearing in the motivic multiple zeta values is not preserved by the coproduct, since the number of 1 can either diminish or increase (at most 1), so is not motivic. 
\item[$\cdot$] An effective way of seeing those motivic level filtrations, giving a recursive criterion:
$$\hspace*{-0.5cm}\mathcal{F}_{i}^{k_{N}/\mathbb{Q},P/P }\mathcal{H}= \left\{ \mathfrak{Z} \in \mathcal{H}, \textrm{ s. t. } \forall r > 0 \text{  , }  D^{\xi}_{2r}(\mathfrak{Z}) \in \mathcal{F}_{i-1}^{k_{N}/\mathbb{Q},P/P}\mathcal{H} \text{  , } \forall r \geq 0 \text{  , }  D^{\xi}_{2r+1}(\mathfrak{Z}) \in \mathcal{F}_{i}^{ k_{N}/\mathbb{Q},P/P}\mathcal{H} \right\}.$$
\end{itemize}
\noindent
We deduce from the result in $\S 5.2.3$ a result of P. Deligne ($i=0$, cf. $\cite{De}$):
\begin{coro}
The elements of $\mathcal{B}^{N}_{n,p, \geq i}$ form a basis of $gr_{p}^{\mathfrak{D}} \mathcal{H}_{n}/ \mathcal{F}_{i-1} \mathcal{H}_{n}$.\\
In particular the map $\mathcal{G}^{\mathcal{MT}_{N}} \rightarrow \mathcal{G}^{\mathcal{MT}_{N}'}$ is an isomorphism.
The elements of $\mathcal{B}_{n}^{N}$, form a basis of motivic multiple zeta value relative to $\mu_{N}$,  $\mathcal{H}_{n}^{N}$.
\end{coro}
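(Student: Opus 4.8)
The statement to prove is the Corollary at the very end: for $N=3,4$, the family $\mathcal{B}^{N}_{n,p,\geq i}$ is a basis of $gr^{\mathfrak{D}}_{p}\mathcal{H}_{n}/\mathcal{F}_{i-1}\mathcal{H}_{n}$ for each descent $\mathcal{d}$, the map $\mathcal{G}^{\mathcal{MT}_{N}}\to\mathcal{G}^{\mathcal{MT}'_{N}}$ is an isomorphism, and $\mathcal{B}^{N}_{n}$ is a basis of $\mathcal{H}^{N}_{n}$. The plan is to deduce everything from the general Theorem~$5.2.4$ of the chapter, which itself reduces to Lemma~$5.2.3$; so the real content is to verify the hypotheses of that machinery in the cases $N=3,4$. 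First I would record the relevant numerical data: for $N=3,4$ there is $a_{N}=b_{N}=1$ generator in each degree $\ge 1$, $gr^{\mathfrak{D}}_{1}\mathcal{L}_{r}=\mathbb{Q}\,\zeta^{\mathfrak{l}}(r;\xi_{N})$ for all $r\ge 1$ (Lemma~$2.4.1$), and the depth-$1$ distribution/conjugation relations listed in $\S 5.2.1$. In particular I note $P=2$ for $N=4$ and $P=3$ for $N=3$, and the sets $\mathscr{D}^{\mathcal{d}}$ are one-dimensional in each weight: $\{D^{\xi}_{1}\}$, $\{(D^{\xi}_{2r})_{r>0}\}$, or $\{D^{\xi}_{1},(D^{\xi}_{2r})_{r>0}\}$ depending on the descent.

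Second, I would carry out the key inductive step, which is exactly the ``other $N$'' case of the proof of Lemma~$5.2.3$ sketched in $\S 5.2.4$. Fixing a descent $\mathcal{d}$ and arguing by recursion on weight $n$, then on depth $p$, then on level $i$, I would form the matrix $M^{i}_{n,p}$ of $\partial^{i,\mathcal{d}}_{n,p}$ evaluated on the conjectural basis elements $\mathcal{B}^{N}_{n,p,\geq i}=\{\zeta^{\mathfrak{m}}\left({x_{1},\dots,x_{p}\atop 1,\dots,1,\xi}\right)(2i\pi)^{s,\mathfrak{m}}\}$ with the appropriate level constraint, using the explicit formula for $D_{r,p}$ from Lemma~$2.4.3$ and its projection $D^{\eta}_{r,p}$. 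By the induction hypothesis, all terms of level $\le i$ appearing on the right side of $D^{i}_{r,p}$ can be rewritten as $\mathbb{Z}_{1[P]}$-combinations of level-$\ge i$ basis elements in $gr^{\mathfrak{D}}_{p-1}\mathcal{H}^{\ge i}_{n-r}$, which does not lower the $P$-adic valuation. Then the crucial observation: on the elements considered, the left-hand factor of $D_{r,p}$ is either $\zeta^{\mathfrak{l}}(r;1)$ (for terms of type $\textsc{a},\textsc{b},\textsc{c}$) or $\zeta^{\mathfrak{l}}(r;\xi)$ (for the deconcatenation terms of type $\textsc{d},\textsc{d'}$); by the depth-$1$ relations of $\S 5.2.1$ the former is a $P$-adically \emph{larger} multiple of the latter (e.g. $\zeta^{\mathfrak{l}}(r;1)=\tfrac{2^{r-1}}{1-2^{r-1}}\zeta^{\mathfrak{l}}(r;-1)$ for $N=4$, and $\zeta^{\mathfrak{l}}(2r+1;1)=\tfrac{2\cdot 3^{2r}}{1-3^{2r}}\zeta^{\mathfrak{l}}(2r+1;\xi)$ for $N=3$), so that modulo $P$ only the deconcatenation terms survive. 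Ordering the basis elements on both sides lexicographically in $(x_{p},\dots,x_{1})$ versus $(r,y_{p-1},\dots,y_{1})$ so that the diagonal corresponds to $r=x_{p}$ and $x_{i}=y_{i}$ for $i<p$, and noting the binomial coefficient $\binom{r-1}{x_{p}-1}$ on the diagonal equals $1$, the matrix $M^{i}_{n,p}$ is modulo $P$ upper-triangular with $1$'s on the diagonal, hence $\det M^{i}_{n,p}\equiv 1\pmod P$. This proves simultaneously that $\partial^{i,\mathcal{d}}_{n,p}$ is bijective on these elements, that $\mathcal{B}^{N}_{n,p,\geq i}$ is linearly independent in $gr^{\mathfrak{D}}_{p}\mathcal{H}^{\geq i}_{n}$, and that it defines a $\mathbb{Z}_{1[P]}$-structure; one must treat the base cases $p=0$ (powers of $(2i\pi)^{\mathfrak{m}}$ and $\zeta^{\mathfrak{m}}(1;\xi_{N})^{p}$, trivially handled) and $i=0$ separately.

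Third, with Lemma~$5.2.3$ in hand for $N=3,4$, Theorem~$5.2.4$ applies verbatim: part $(ii)$ gives that $\mathcal{B}^{N}_{n,p,\geq i}$ is a basis of $gr^{\mathfrak{D}}_{p}\mathcal{H}^{\geq i}_{n}=gr^{\mathfrak{D}}_{p}\mathcal{H}^{\geq i,\mathcal{MT}}_{n}$ (the equality of dimensions with the $\mathcal{MT}$-side coming from the $K$-theory bounds of $\S 2.3$, which are sharp here since $\phi_{N}$ is forced to be an isomorphism once the generating property is established), whence $\mathcal{B}^{N}_{n}$ is a basis of $\mathcal{H}^{N}_{n}$ by the depth-mixing argument of $(ii)$. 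The isomorphism $\mathcal{G}^{\mathcal{MT}_{N}}\to\mathcal{G}^{\mathcal{MT}'_{N}}$ then follows because the faithful action of $\mathcal{U}^{N}$ on $_{0}\Pi_{1}$, together with the fact that $\mathcal{B}^{N}$ already spans all of $\mathcal{H}^{\mathcal{MT}_{N}}$, forces $\mathcal{MT}'_{N}=\mathcal{MT}_{N}$ and hence $\mathcal{U}^{N}=\mathcal{U}^{\mathcal{MT}_{N}}$, i.e. the surjection of motivic Galois groups is an isomorphism. \textbf{Main obstacle.} The delicate point is entirely the $P$-adic bookkeeping in the inductive step: one must check that rewriting the ``low-level'' correction terms via the induction hypothesis genuinely preserves (does not decrease) the $P$-adic valuation, and that for \emph{every} $r$ occurring — including the awkward boundary indices $i=0$ and $i=p$ in the sum of Lemma~$2.4.3$, and the case $r$ even versus $r$ odd (where for $N=3$ the even derivations $D^{\xi}_{2r}$ must also be handled, unlike $N=2$) — the non-deconcatenation terms indeed vanish modulo $P$. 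This requires the explicit $v_{P}$-estimates of the coefficients $\tfrac{2\cdot 6^{r-1}}{(1-2^{r-1})(1-3^{r-1})}$-type expressions appearing in $\S 5.2.1$, which is routine but must be done uniformly in $r$.
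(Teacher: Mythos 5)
Your proposal is correct and follows essentially the same route as the paper: the corollary is deduced directly from Theorem~$5.2.4$, whose content is Lemma~$5.2.3$, proved exactly as you describe — recursion on weight, depth and level, rewriting low-level terms via the induction hypothesis without lowering the $P$-adic valuation, and observing that modulo $P$ only the deconcatenation terms (left factor $\zeta^{\mathfrak{l}}(r;\xi)$, diagonal coefficient $\binom{r-1}{x_{p}-1}=1$) survive, so the matrix $M^{i}_{n,p}$ is triangular with $1$'s on the diagonal and hence invertible. The concluding steps (dimension count against the $K$-theoretic bound and the resulting isomorphism of Galois groups) also match the paper's argument.
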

The level $0$ of the filtrations considered for $N' \vert N\in \left\lbrace 3,4 \right\rbrace $ gives the Galois descents:
\begin{coro}
 A basis of $\mathcal{H}_{n}^{N'} $ is formed by elements of $\mathcal{B}_{n}^{N}$ of level $0$ each corrected by linear combination of elements  $\mathcal{B}_{n}^{N}$ of level $ \geq 1$. In particular, with $\xi$ primitive:
\begin{itemize}
	\item[$\cdot$] \textbf{Galois descent} from $N'=1$ to $N=3,4$: A basis of motivic multiple zeta values: 
$$\hspace*{-0.5cm}\mathcal{B}^{1 ; N}  \mathrel{\mathop:}=  \left\{  \zeta^{\mathfrak{m}}\left({2x_{1}+1, \ldots, 2x_{p}+1\atop  1, \ldots, 1, \xi} \right)  \zeta^{\mathfrak{m}}(2)^{s}   + \sum_{y_{i} \geq 0 \atop \text{ at least one $y_{i}$ even or } = 1} \alpha_{\textbf{x},\textbf{y}} \zeta^{\mathfrak{m}} \left({y_{1}, \ldots, y_{p}\atop 1, \ldots, 1, \xi } \right)\zeta^{\mathfrak{m}}(2)^{s}  \right.$$
$$ \left. + \sum_{\text{ lower depth } q<p,  \atop \text{ at least one even or } = 1} \beta_{\textbf{x},\textbf{z}}  \zeta^{\mathfrak{m}}\left({z_{1}, \ldots, z_{q}\atop 1, \ldots, 1, \xi } \right)\zeta^{\mathfrak{m}}(2)^{s} \text{ ,  } x_{i}>0 , \alpha_{\textbf{x},\textbf{y}} , \beta_{\textbf{x},\textbf{z}} \in\mathbb{Q} \right\}. $$
	\item[$\cdot$]  \textbf{Galois descent} from $N'=2$ to $N=4$: A basis of motivic Euler sums:
$$\hspace*{-0.5cm}\mathcal{B}^{2; 4}\mathrel{\mathop:}= \left\{  \zeta^{\mathfrak{m}} \left({2x_{1}+1, \ldots, 2x_{p}+1\atop  1, \ldots, 1, \xi_{4}} \right)\zeta^{\mathfrak{m}}(2)^{s} + \sum_{y_{i}>0 \atop \text{at least one even}} \alpha_{\textbf{x},\textbf{y}} \zeta^{\mathfrak{m}}\left({y_{1}, \ldots, y_{p}\atop 1, \ldots, 1, \xi_{4} } \right)\zeta^{\mathfrak{m}}(2)^{s} \right.$$
$$ \left.   +\sum_{\text{lower depth } q<p \atop z_{i}>0, \text{at least one even}} \beta_{\textbf{x},\textbf{z}} \zeta^{\mathfrak{m}}\left( z_{1}, \ldots, z_{q} \atop 1, \ldots, 1, \xi_{4} \right) \zeta^{\mathfrak{m}}(2)^{s}   \text{  ,  }  x_{i}\geq 0 , \alpha_{\textbf{x},\textbf{y}}, \beta_{\textbf{x},\textbf{z}}\in\mathbb{Q}  \right\} .$$
\item[$\cdot$] Similarly, replacing $\xi_{4}$ by $\xi_{3}$ in $\mathcal{B}^{2; 4}$, this gives a basis of:
$$\mathcal{F}^{k_{3}/\mathbb{Q},3/3}_{0} \mathcal{H}_{n}^{3}=\boldsymbol{\mathcal{H}_{n}^{\mathcal{MT}(\mathbb{Z}[\frac{1}{3}])}}.$$
\item[$\cdot$] A basis of $\mathcal{F}^{k_{N}/k_{N},P/1}_{0} \mathcal{H}_{n}^{N}=\boldsymbol{\mathcal{H}_{n}^{\mathcal{MT}(\mathcal{O}_{N})}}$, with $N= 3,4$:
$$\hspace*{-0.5cm}\mathcal{B}^{N \text{ unram}}\mathrel{\mathop:}= \left\{  \zeta^{\mathfrak{m}} \left({x_{1}, \ldots, x_{p} \atop  1, \ldots, 1, \xi} \right)\zeta^{\mathfrak{m}}(2)^{s} + \sum_{y_{i}>0 \atop \text{at least one } 1} \alpha_{\textbf{x},\textbf{y}} \zeta^{\mathfrak{m}}\left({y_{1}, \ldots, y_{p}\atop 1, \ldots, 1, \xi} \right)\zeta^{\mathfrak{m}}(2)^{s} \right.$$
$$ \left.   +\sum_{\text{lower depth } q<p \atop z_{i}>0, \text{at least one } 1} \beta_{\textbf{x},\textbf{z}} \zeta^{\mathfrak{m}}\left( z_{1}, \ldots, z_{q} \atop 1, \ldots, 1, \xi \right) \zeta^{\mathfrak{m}}(2)^{s}   \text{  ,  }  x_{i} > 0 , \alpha_{\textbf{x},\textbf{y}}, \beta_{\textbf{x},\textbf{z}}\in\mathbb{Q}  \right\} .$$
\end{itemize}
\end{coro}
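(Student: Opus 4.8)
The statement to prove is the final Corollary, which gives explicit bases for $\mathcal{H}^{N'}$ (and for the non-fundamental-group spaces such as $\mathcal{H}^{\mathcal{MT}(\mathbb{Z}[\frac 13])}$, $\mathcal{H}^{\mathcal{MT}(\mathcal{O}_N)}$) for the cases $N=3,4$, as level-$0$ pieces of the motivic level filtrations attached to the various descents $\mathcal{d}$.

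\medskip

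The plan is to derive this Corollary as a direct specialization of Theorem 5.2.4 (parts $(iii)$ and $(iv)$), which itself rests on Lemma 5.2.3. So the first step is simply to record, for each descent $\mathcal{d}$ among $(k_N/k_N,P/1)$, $(k_N/\mathbb{Q},P/P)$ and $(k_N/\mathbb{Q},P/1)$ with $N\in\{3,4\}$, the associated derivation set $\mathscr{D}^{\mathcal{d}}$ as displayed in Definition 5.2.x, and check that the ``level'' of a basis element (number of $x_i$ equal to $1$, number of even $x_i$, or the sum of the two) indeed matches the $Deg^{\mathcal{d}}$-degree under the non-canonical isomorphism $\phi$ of $(\ref{eq:isomfiltration})$. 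Concretely: the images in the $(f_i)$-alphabet of the complementary generators of $gr_1\mathcal{L}^{\mathcal{MT}_{N'}}$ inside $gr_1\mathcal{L}^{\mathcal{MT}_N}$ are exactly the $f_1$ (for the change of ramification $P/1$, coming from $\zeta^{\mathfrak m}(1;\xi)$) and the even-weight $f_{2r}$ (for the change of field $k_N/\mathbb{Q}$); so $Deg^{(k_N/k_N,P/1)}$ counts occurrences of $f_1$, i.e. trailing/interior $1$'s in depth-graded language, and $Deg^{(k_N/\mathbb{Q},P/P)}$ counts even arguments. This is a bookkeeping check, using only the depth-$1$ distribution/conjugation relations of $\S 5.2.1$ to identify which $\zeta^{\mathfrak m}(r;\eta)$ survive.

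\medskip

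The second step is to invoke Lemma 5.2.3 for these $N$ and these $\mathcal{d}$: the proof given in $\S 5.2.4$ (``Proof of Lemma 5.2.3 for other $N$'') applies verbatim, the key point being that for elements of $\mathcal{B}^N$ the infinitesimal coaction $D_{r,p}$ has left-hand side either $\zeta^{\mathfrak l}(r;1)$ (types $\textsc{a,b,c}$) or $\zeta^{\mathfrak l}(r;\xi)$ (deconcatenation), and the depth-$1$ relations force the non-deconcatenation coefficients to be $P$-adically strictly larger, so the relevant matrix $M^i_{n,p}$ is triangular with $1$ on the diagonal modulo $P$, hence invertible; for $N=3,6$ one takes $P=3$, for $N=4,8$ one takes $P=2$. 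This gives freeness of $\mathcal{B}_{n,p,\geq i}$ in $gr^{\mathfrak D}_p\mathcal{H}_n^{\geq i}$, the $\mathbb{Z}_{1[P]}$-structure, and bijectivity of $\partial^{i,\mathcal{d}}_{n,p}$. Feeding this into Theorem 5.2.4 yields: $\mathcal{B}_{n,\leq p,\geq i}$ is a basis of $\mathcal{F}^{\mathfrak D}_p\mathcal{H}_n^{\geq i}$; the two split exact sequences of $(iii)$ give the maps $cl_{n,\leq p,\geq i}$; and part $(iv)$ gives the basis $\cup_p\{\mathfrak Z+cl_{n,\leq p,\geq i+1}(\mathfrak Z),\ \mathfrak Z\in\mathcal B_{n,p,\leq i}\}$ of $\mathcal F_i\mathcal H_n$. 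Setting $i=0$ and reading off what ``level $0$'' and ``level $\geq 1$'' mean for each $\mathcal{d}$ produces precisely the four families $\mathcal B^{1;N}$, $\mathcal B^{2;4}$, the $\xi_4\mapsto\xi_3$ variant, and $\mathcal B^{N\text{ unram}}$ listed in the Corollary; one also needs the identifications $\mathcal F^{k_N/k_N,P/1}_0\mathcal H^N=\mathcal H^{\mathcal{MT}(\mathcal O_N)}$ and $\mathcal F^{k_3/\mathbb Q,3/3}_0\mathcal H^3=\mathcal H^{\mathcal{MT}(\mathbb Z[1/3])}$, which follow from Theorem 5.1.2 (ramification criterion) / Theorem 5.1.1 (field-change criterion) together with the fact that $\phi_{N'}$ is an isomorphism in all the relevant cases, comparing dimensions at fixed weight via the $K$-theory upper bound of $(\ref{dimensionk})$.

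\medskip

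The main obstacle is not conceptual but structural: everything hinges on the $P$-adic valuation estimate inside Lemma 5.2.3, namely that in the explicit $D_{r,p}$-formula (Lemma 2.4.3, specialized) the non-deconcatenation coefficients $\zeta^{\mathfrak l}(r;1)$ become, after expressing them via $\zeta^{\mathfrak l}(r;\xi)$ using the $\S 5.2.1$ relations, divisible by a positive power of $P$ while the deconcatenation coefficient $\binom{r-1}{x_p-1}$ stays a $P$-adic unit on the diagonal. For $N=3,4$ this is exactly the computation $v_3\big(\tfrac{2\cdot 3^{2r}}{1-3^{2r}}\big)>0$, resp. $v_2\big(\tfrac{2^{2r-1}}{1-2^{2r-1}}\big)>0$ and $v_2(2^{2r+1})>0$, which I would state cleanly and then note that the triangularity-mod-$P$ argument, already written out in the $N=2$ example matrix in $\S 5.2.4$, transfers without change. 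A secondary subtlety to flag is that the $cl$ maps, hence the coefficients $\alpha_{\mathbf x,\mathbf y},\beta_{\mathbf x,\mathbf z}$, are only abstractly determined (uniqueness from the split exact sequence), not explicitly computable beyond their maximal-depth part; the Corollary is stated accordingly, so no extra work is needed there.
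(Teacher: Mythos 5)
Your proposal is correct and follows the same route as the paper: the Corollary is obtained there exactly as the level-$0$ specialization of Theorem $5.2.4$ (parts $(iii)$--$(iv)$), resting on the $P$-adic triangularity argument of Lemma $5.2.3$ ($P=3$ for $N=3$, $P=2$ for $N=4$), the identification of the levels of Definition $5.2.9$ with the $Deg^{\mathcal{d}}$-degrees, and the identification of the $0$-level filtration spaces with $\mathcal{H}^{\mathcal{MT}(\mathcal{O}_{N})}$, $\mathcal{H}^{\mathcal{MT}(\mathbb{Z}[\frac{1}{3}])}$ via the general criteria of $\S 5.1$ and the $K$-theoretic dimension count. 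Your remarks on the only-abstract determination of the correction coefficients $\alpha_{\mathbf{x},\mathbf{y}},\beta_{\mathbf{x},\mathbf{z}}$ also match the paper's stance.
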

\noindent
\texttt{Nota Bene:} Notice that for the last two level $0$ spaces, $\mathcal{H}_{n}^{\mathcal{MT}(\mathcal{O}_{N})}$, $N=3,4$ and $\mathcal{H}_{n}^{\mathcal{MT}(\mathbb{Z}[\frac{1}{3}])}$, we still do not have another way to reach them, since those categories of mixed Tate motives are not simply generated by a motivic fundamental group.

\subsubsection{\textsc{The case } $N=8$.}

For $N=8$ there are two generators in each degree $\geq 1$ and three possible Galois descents: with $\mathcal{H}^{4}$, $\mathcal{H}^{2}$ or $\mathcal{H}^{1}$.\\

\begin{defi}
\begin{itemize}
	\item[$\cdot$] \textbf{Family:} $\mathcal{B}\mathrel{\mathop:}=\left\{\zeta^{\mathfrak{m}}\left( {x_{1}, \ldots,x_{p}\atop  \epsilon_{1}, \ldots , \epsilon_{p-1},\epsilon_{p} \xi }\right)(2i \pi)^{s,\mathfrak{m}}, x_{i} \geq 1, \epsilon_{i}\in \left\{\pm 1\right\} s \geq 0 \right\}$. 
		\item[$\cdot$] \textbf{Level}, denoted $i$: 
$$\begin{array}{lll}
 \text{ The $(k_{8}/k_{4},2/2)$-level } &  \text{ is the number of } &  \text{  $\epsilon_{j}$ equal to $-1$ } \\
  \text{ The $(k_{8}/\mathbb{Q},2/2)$-level } &  \text{  } &  \text{ $\epsilon_{j}$ equal to $-1$ $+$ even $x_{j}$ } \\
   \text{ The $(k_{8}/\mathbb{Q},2/1)$-level } &  \text{  } &  \text{ $\epsilon_{j}$ equal to $-1$, $+$ even $x_{j}$ $+$ $x_{j}$ equal to $1$. } 
\end{array}$$

\item[$\cdot$]  \textbf{Filtrations by the motivic level:} 
$\mathcal{F}^{\mathcal{d}} _{-1} \mathcal{H}^{8}=0$ and $\mathcal{F}^{\mathcal{d}} _{i} \mathcal{H}^{8}$ is the largest submodule of $\mathcal{H}^{8}$ such that $\mathcal{F}^{\mathcal{d}}_{i}\mathcal{H}^{8}/\mathcal{F}^{\mathcal{d}} _{i-1}\mathcal{H}^{8}$ is killed by $\mathscr{D}^{\mathcal{d}}$, where
$$\mathscr{D}^{\mathcal{d}}	= \begin{array}{ll}
\left\lbrace (D^{\xi}_{r}- D^{-\xi}_{r})_{r>0} \right\rbrace  &  \text{ for } \mathcal{d}=(k_{8}/k_{4},2/2)\\
\left\lbrace  (D^{\xi}_{2r+1}- D^{-\xi}_{2r+1})_{r\geq 0}, (D^{\xi}_{2r})_{r>0},( D^{-\xi}_{2r})_{r>0}  \right\rbrace  & \text{ for } \mathcal{d}=(k_{8}/\mathbb{Q},2/2)\\
\left\lbrace  (D^{\xi}_{2r+1}- D^{-\xi}_{2r+1})_{r> 0}, D^{\xi}_{1},  D^{-\xi}_{1}, (D^{\xi}_{2r})_{r>0},( D^{-\xi}_{2r})_{r>0}  \right\rbrace  &  \text{ for } \mathcal{d}=(k_{8}/\mathbb{Q},2/1) \\
\end{array}.
$$			
\end{itemize}
\end{defi}

\begin{coro}
 A basis of $\mathcal{H}_{n}^{N'} $ is formed by elements of $\mathcal{B}_{n}^{N}$ of level $0$ each corrected by linear combination of elements  $\mathcal{B}_{n}^{N}$ of level $ \geq 1$. In particular, with $\xi$ primitive:
\begin{description}
\item[$\boldsymbol{8 \rightarrow 1} $:] A basis of MMZV:
$$\hspace*{-0.5cm}\mathcal{B}^{1;8}\mathrel{\mathop:}= \left\{ \zeta^{\mathfrak{m}}\left( 2x_{1}+1, \ldots, 2x_{p}+1 \atop  1, \ldots, 1, \xi \right)\zeta^{\mathfrak{m}}(2)^{s} + \sum_{y_{i} \text{at least one even or } =1 \atop { or  one } \epsilon_{i}=-1 } \alpha_{\textbf{x},\textbf{y}} \zeta^{\mathfrak{m}}\left( y_{1}, \ldots, y_{p} \atop \epsilon_{1}, \ldots, \epsilon_{p-1}, \epsilon_{p}\xi  \right)\zeta^{\mathfrak{m}}(2)^{s} \right.$$
$$\left. + \sum_{q<p \text{ lower depth, level } \geq 1} \beta_{\textbf{x},\textbf{z}} \zeta^{\mathfrak{m}}\left(z_{1}, \ldots, z_{q} \atop  \widetilde{\epsilon}_{1}, \ldots, \widetilde{\epsilon}_{q}\xi \right)\zeta^{\mathfrak{m}}(2)^{s}  \text{  , }x_{i}>0 , \alpha_{\textbf{x},\textbf{y}}, \beta_{\textbf{x},\textbf{z}}\in\mathbb{Q}\right\}.$$
\item[$\boldsymbol{8 \rightarrow 2 } $:] A basis of motivic Euler sums:
$$\hspace*{-0.5cm}\mathcal{B}^{2;8} \mathrel{\mathop:}=  \left\{  \zeta^{\mathfrak{m}} \left(2x_{1}+1, \ldots, 2x_{p}+1 \atop  1, \ldots, 1, \xi\right)\zeta^{\mathfrak{m}}(2)^{s}  +\sum_{y_{i} \text{ at least one even} \atop \text{or one }\epsilon_{i}=-1} \alpha_{\textbf{ x},\textbf{y}} \zeta^{\mathfrak{m}}\left( y_{1}, \ldots, y_{p} \atop \epsilon_{1}, \ldots, \epsilon_{p-1}, \epsilon_{p}\xi  \right)\zeta^{\mathfrak{m}}(2)^{s}  \right.$$
$$\left. + \sum_{\text{lower depth} q<p \atop \text{with level} \geq 1 } \beta_{\textbf{x},\textbf{z}} \zeta^{\mathfrak{m}}\left(z_{1}, \ldots, z_{q} \atop \widetilde{\epsilon}_{1}, \ldots, \widetilde{\epsilon}_{q}\xi\right)\zeta^{\mathfrak{m}}(2)^{s} \text{  ,  }x_{i}\geq 0,\alpha_{\textbf{x},\textbf{y}}, \beta_{\textbf{x},\textbf{y}} \in\mathbb{Q} \right\}.$$
	\item[$\boldsymbol{ 8 \rightarrow 4 } $:] A basis of MMZV relative to $\mu_{4}$:
$$\hspace*{-0.5cm}\mathcal{B}^{4;8}\mathrel{\mathop:}= \left\{ \zeta^{\mathfrak{m}}\left( x_{1}, \ldots, x_{p} \atop  1, \ldots, 1, \xi \right)(2i\pi)^{s}  + \sum_{\text{ at least one }\epsilon_{i}=-1} \alpha_{\textbf{x}, \textbf{y}} \zeta^{\mathfrak{m}}\left(y_{1}, \ldots, y_{p} \atop \epsilon_{1}, \ldots, \epsilon_{p-1},\epsilon_{p} \xi \right)(2 i \pi)^{s} \right.$$
$$ \left. + \sum_{\text{lower depth, level } \geq 1} \beta_{\textbf{x},\textbf{z}} \zeta^{\mathfrak{m}}\left( z_{1}, \ldots,z_{q} \atop  \widetilde{\epsilon}_{1}, \ldots, \widetilde{\epsilon}_{q}\xi  \right)(2i \pi)^{s} \alpha_{\textbf{x},\textbf{y}}, \beta_{\textbf{x},\textbf{z}}\in\mathbb{Q} \right\}.$$
\end{description}
\end{coro}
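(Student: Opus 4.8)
The strategy is to apply the general machinery of Theorem 5.2.4 specialised to the three descents $\mathcal{d}$ from $\mathcal{H}^{8}$ to $\mathcal{H}^{4}$, $\mathcal{H}^{2}$, $\mathcal{H}^{1}$, with the motivic level filtrations $\mathcal{F}^{\mathcal{d}}_{\bullet}$ and the derivation sets $\mathscr{D}^{\mathcal{d}}$ defined in Definition 5.2.11. First I would verify that $\mathcal{B}^{8}$, as defined in $(\ref{eq:base})$, is genuinely the specialisation of the abstract basis family appearing in Theorem 5.2.4: that is, check that the three notions of level just introduced coincide with the degree $Deg^{\mathcal{d}}$ of $(\ref{eq:isomfiltration})$ under the non-canonical comodule isomorphism $\phi^{8}\colon\mathcal{H}^{8}\xrightarrow{\sim}H^{8}$. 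Concretely, the two generators $f^{1}_{r},f^{2}_{r}$ in each degree correspond (after a suitable choice of $\phi^{8}$) to $\zeta^{\mathfrak{m}}\!\left({r\atop\xi_{8}}\right)$ and $\zeta^{\mathfrak{m}}\!\left({r\atop-\xi_{8}}\right)$ up to conjugation and depth-$1$ relations; the descent to $\mathcal{H}^{4}$ keeps the symmetric combination $D^{\xi}_{r}+D^{-\xi}_{r}$ (i.e. $\zeta^{\mathfrak m}({r\atop i})$, a period of $\mathcal{MT}_{4}$) and kills the antisymmetric one $D^{\xi}_{r}-D^{-\xi}_{r}$, which is exactly $\mathscr{D}^{(k_{8}/k_{4},2/2)}$. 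Once this dictionary is in place, Theorem 5.2.4 $(iv)$ applied with $i=0$ gives immediately a basis of $\mathcal{F}_{0}^{\mathcal{d}}\mathcal{H}^{8}_{n}$ as $\bigcup_{p}\{\mathfrak{Z}+cl_{n,\le p,\ge 1}(\mathfrak{Z}):\mathfrak{Z}\in\mathcal{B}^{8}_{n,p,0}\}$, and by the ``level $0$'' consequences stated after the theorem, $\mathcal{F}_{0}^{\mathcal{d}}\mathcal{H}^{8}=\mathcal{H}^{N'}$ for the corresponding $N'\in\{1,2,4\}$; spelling out $\mathcal{B}^{8}_{n,p,0}$ and writing the correction term $cl$ as a $\mathbb{Q}$-linear combination of level-$\ge 1$ elements yields precisely the three displayed families $\mathcal{B}^{1;8}$, $\mathcal{B}^{2;8}$, $\mathcal{B}^{4;8}$.

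The substantive input that must be checked, and which I expect to be the main obstacle, is Lemma 5.2.3 for $N=8$ — i.e. the bijectivity of $\partial^{i,\mathcal{d}}_{n,p}$ on the family $\mathcal{B}^{8}_{n,p,\ge i}$ and the resulting $\mathbb{Z}_{1[2]}$-structure. The proof follows the template already carried out for $N=2,3,4$ in $\S 5.2.4$: a triple induction on weight, depth and level; replace, via the induction hypothesis in strictly smaller weight, the level-$\le i$ terms occurring in $D^{\eta}_{r,p}(\mathfrak{Z})$ by $\mathbb{Z}_{1[2]}$-combinations of level-$\ge i$ elements without lowering $2$-adic valuation; then read off the matrix $M^{i}_{n,p}$ of $(\partial^{i,\mathcal{d}}_{n,p}(z))_{z\in\mathcal{B}^{8}_{n,p,\ge i}}$ and show it is invertible by exhibiting it as upper-triangular with units on the diagonal modulo $2$, the diagonal corresponding to the deconcatenation terms. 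The genuinely new feature here is that $D_{r}$ has \emph{two} independent components $D^{\xi}_{r}$ and $D^{-\xi}_{r}$, so $\mathscr{D}_{r}$ is $2$-dimensional and the arrival space is correspondingly doubled; one must check that the deconcatenation terms — which by Lemma 2.4.3 terms $\textsc{(d,d')}$ produce a left factor $\zeta^{\mathfrak l}({r\atop\epsilon_{p}\xi})$ with $\epsilon_{p}\xi$ a primitive $8$th root, hence $2$-adically smaller than the type $\textsc{(a,b,c)}$ terms whose left factors are $\zeta^{\mathfrak l}({r\atop\pm 1})$ by the depth-$1$ relations for $N=8$ recalled in $\S 5.2.1$ — still control the whole matrix when both components are present. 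The two components $D^{\xi}_{r},D^{-\xi}_{r}$ act on $\epsilon_{p}$ and $-\epsilon_{p}$ respectively (roughly: the residue at $\xi_{8}$ versus $-\xi_{8}$), so the block of $M^{i}_{n,p}$ indexed by which primitive root one cuts to is itself, modulo $2$, block-diagonal, and each block is handled exactly as for $N=4$ once the extra sign $\epsilon_{i}\in\{\pm 1\}$ bookkeeping is sorted.

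Having Lemma 5.2.3 for $N=8$, Theorem 5.2.4 applies verbatim, and the corollary is obtained by the dimension-counting argument already used: the freeness of $\mathcal{B}^{8}_{n,\le p,\ge i}$ in $\mathcal{F}_{p}^{\mathfrak{D}}\mathcal{H}^{8}_{n}$ together with $\operatorname{card}\mathcal{B}^{8}_{n,\le p,\ge i}=\dim\mathcal{F}_{p}^{\mathfrak D}\mathcal{H}^{8,\mathcal{MT}}_{n,\ge i}$ (the latter being computable from the Hilbert series $h_{8}(t)=\tfrac{1}{1-3t}$ of Lemma 2.3.1 and the known dimensions of $\mathcal{H}^{\mathcal{MT}_{N'}}$) forces the basis property, hence $\mathcal{F}_{0}^{\mathcal{d}}\mathcal{H}^{8}=\mathcal{H}^{N'}$ for $N'=1,2,4$, and the explicit description of the correction terms $cl$ as $\mathbb{Q}$-combinations of level-$\ge 1$ basis elements — split further by depth exactly as in $\mathcal{B}^{2;4}$ and $\mathcal{B}^{1;N}$ in Corollary 5.2.10 — gives the three stated families. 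The only genuinely delicate point, besides the doubled derivation space above, is a careful choice of $\phi^{8}$ compatible with the chain of inclusions $\mathcal{H}^{1}\subset\mathcal{H}^{2}\subset\mathcal{H}^{4}\subset\mathcal{H}^{8}$ so that all three level filtrations are simultaneously realised as $f$-degree filtrations on a single $H^{8}$; this is where I would spend the most care, but it is a bookkeeping matter rather than a new difficulty.
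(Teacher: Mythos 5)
Your proposal is correct and follows essentially the same route as the paper: the corollary is obtained by specialising Theorem 5.2.4 (level $i=0$) to the three descents of Definition 5.2.11, with the substantive input being Lemma 5.2.3 for $N=8$, proved exactly as you describe — triple induction on weight, depth and level, replacing lower-level terms via the induction hypothesis, and invertibility of $M^{i}_{n,p}$ because modulo $2$ only the deconcatenation terms survive (the type $\textsc{a,b,c}$ left factors $\zeta^{\mathfrak l}\left({r\atop\pm1}\right)$ being $2$-adically smaller than $\zeta^{\mathfrak l}\left({r\atop\pm\xi_{8}}\right)$ by the depth-$1$ relations), with the two components $D^{\xi}_{r},D^{-\xi}_{r}$ handled by the same triangularity argument. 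No gap to report.
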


\subsubsection{\textsc{The case } $N=\mlq 6 \mrq$.}

For the unramified category $\mathcal{MT}(\mathcal{O}_{6})$, there is one generator in each degree $>1$ and one Galois descent with $\mathcal{H}^{1}$.\\
\\
First, let us point out this sufficient condition for a MMZV$_{\mu_{6}}$ to be unramified:
\begin{lemm}
$$\text{Let  } \quad  \zeta^{\mathfrak{m}} \left( n_{1},\cdots,n_{p} \atop \epsilon_{1}, \ldots, \epsilon_{p} \right)  \in\mathcal{H}^{\mathcal{MT}(\mathcal{O}_{6} \left[ \frac{1}{6}\right] )} \text{ a motivic MZV} _{\mu_{6}}, \quad \text{ such that : \footnote{In the iterated integral notation, the associated roots of unity are $\eta_{i}\mathrel{\mathop:}= (\epsilon_{i}\cdots \epsilon_{p})^{-1}$.}}$$ 
$$\begin{array}{ll}
 & \text{ Each } \eta_{i} \in \lbrace 1, \xi_{6} \rbrace    \\
\textsc{ or }&  \text{ Each } \eta_{i} \in \lbrace 1, \xi^{-1}_{6} \rbrace
\end{array}  \quad \quad \text{ Then, } \quad \zeta^{\mathfrak{m}} \left( n_{1},\cdots,n_{p} \atop \epsilon_{1}, \ldots, \epsilon_{p} \right)  \in \mathcal{H}^{\mathcal{MT}(\mathcal{O}_{6})}$$
\end{lemm}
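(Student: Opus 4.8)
The statement to prove is that a motivic $\mathrm{MZV}_{\mu_6}$ whose associated roots of unity $\eta_i=(\epsilon_i\cdots\epsilon_p)^{-1}$ all lie in $\{1,\xi_6\}$ (or all lie in $\{1,\xi_6^{-1}\}$) is automatically unramified, i.e.\ lies in $\mathcal{H}^{\mathcal{MT}(\mathcal{O}_6)}$. The natural tool is Corollary \ref{ramif346}: a motivic $\mathrm{MZV}_{\mu_6}$ in $\mathcal{H}^{\mathcal{MT}(\mathcal{O}_6[\frac16])}$ is unramified if and only if $D_1(\mathfrak{Z})=0$ and $D_r(\mathfrak{Z})\in\mathcal{H}^{\mathcal{MT}(\mathcal{O}_6)}$ for all $r>1$. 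So the plan is a recursion on the weight $n$, using this criterion. It suffices to check the two conditions, and crucially that the class of integrals satisfying the hypothesis is \emph{stable} under the $D_r$ (so that the recursion closes).

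\textbf{First step: stability of the hypothesis class under $D_r$.} Using the explicit formula for $D_r$ on motivic iterated integrals (Theorem \ref{eq:coaction} and formula \eqref{eq:Der}, or Lemma \ref{drz} in $\mathrm{MZV}_{\mu_N}$-notation), I would examine the right-hand factor $I^{\mathfrak{m}}(a_0;a_1,\dots,a_p,a_{p+r+1},\dots,a_n;a_{n+1})$ of each term. Deleting a consecutive block from the word $0^k\eta_1 0^{n_1-1}\cdots\eta_p 0^{n_p-1}$ and re-reducing to $\mathrm{MZV}_{\mu_6}$-form produces new roots of unity which are \emph{products} of consecutive old $\eta_i$'s (this is exactly the mechanism visible in the worked example $\zeta^{\mathfrak{m}}(-1,3)$ and in the structure of Lemma \ref{Drp}, terms of type \textsc{(a)}--\textsc{(d')}). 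The key arithmetic fact is that $\{1,\xi_6\}$ is closed under multiplication inside $\mu_6$ only in the weak sense $\xi_6\cdot\xi_6=\xi_6^2=\xi_3$, which is \emph{not} in $\{1,\xi_6\}$ --- so a literal stability claim fails. I would therefore instead track the finer invariant: every $\eta_i$ in the iterated integral is a power $\xi_6^{a_i}$ with all $a_i$ of the \emph{same sign} (all $\geq 0$ giving the $\{1,\xi_6,\xi_3\}$ side, all $\leq 0$ giving the conjugate side), which \emph{is} multiplicatively stable. The hypothesis "$\eta_i\in\{1,\xi_6\}$" is the special case with $a_i\in\{0,1\}$. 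So the proof really proceeds by proving the (a priori stronger, but self-reproducing) statement for integrals with $\eta_i\in\langle\xi_6\rangle_{\geq0}$, i.e.\ $\eta_i\in\{1,\xi_6,\xi_3\}$, then noting the hypothesis of the lemma is subsumed.

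\textbf{Second step: the two conditions of the criterion.} For $D_1(\mathfrak{Z})=0$: by Lemma \ref{condd1}'s analogue for $\mu_6$ (the same reasoning --- the only length-one motivic iterated integrals in $\mathcal{L}$ that are nonzero come from a consecutive pair $\{\eta,\eta'\}$ with $\eta\neq\eta'$ both roots of unity, contributing $\log^{\mathfrak{a}}(1-\eta/\eta')$-type terms), I need that no such "ramified" $D_1$-term survives. Since all $\eta_i$ lie in $\{1,\xi_6,\xi_3\}\subset\mathcal{O}_6^\times$-relevant cyclotomic units and $1-\xi_6,1-\xi_3$ are units in $\mathcal{O}_6$ (indeed $\mathcal{O}_6=\mathbb Z[\xi_6]$ and $6$ is the "unramified" case), these $D_1$-contributions already land in $\mathcal{H}^{\mathcal{MT}(\mathcal{O}_6)}$; more carefully one checks the left-hand factor $\zeta^{\mathfrak{l}}\binom{1}{\eta_i/\eta_j}$ is $\log^{\mathfrak{l}}$ of a $6$-unit, and the relevant projection $D_1$ (onto $\zeta^{\mathfrak{a}}\binom{1}{\xi^{N/q_i}}$ in the notation of Theorem \ref{ramificationchange}) vanishes because no $\eta_i/\eta_j$ equals the ramified class. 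For the condition $D_r(\mathfrak{Z})\in\mathcal{H}^{\mathcal{MT}(\mathcal{O}_6)}$ with $r>1$: each term of $D_r(\mathfrak{Z})$ has right-hand factor again an integral of the same type (by the stability of Step 1, at strictly smaller weight $n-r$), hence unramified by the induction hypothesis; and its left-hand factor lives in $gr_1^{\mathfrak{D}}\mathcal{L}_r$, where for $r>1$ the depth-$1$ motivic $\mathrm{MZV}_{\mu_6}$ at $\xi_6$ (resp.\ $\xi_3$, via distribution relations) are automatically in the unramified coalgebra. The base case $n=1$ is trivial ($\zeta^{\mathfrak{m}}\binom{1}{1}=0$ and $\zeta^{\mathfrak{m}}\binom{1}{\xi_6}=-\log^{\mathfrak{m}}(1-\xi_6)$, a unit, hence unramified).

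\textbf{Expected main obstacle.} The delicate point is the $D_1$ condition, i.e.\ ruling out ramification "in weight $1$". One must be sure that when a cut in $D_r$ (for $r$ possibly $>1$ but producing length-$1$ sub-pieces internally, i.e.\ the terms of type \textsc{(c)} and \textsc{(d')} in Lemma \ref{Drp}) generates a factor like $\zeta^{\mathfrak{l}}\binom{1}{\eta}$ with $\eta=\eta_i\eta_j^{-1}$, this $\eta$ is never a primitive $6$th root of unity contributing ramification at $2$ or $3$ --- equivalently that $1-\eta$ is a unit in $\mathcal{O}_6$. For $\eta\in\{1,\xi_6^{\pm1},\xi_3^{\pm1}\}$ one has $1-\xi_6$ and $1-\xi_3$ both units of $\mathbb Z[\xi_6]$ (this is precisely why $N=6$ is the exceptional unramified case, cf.\ \eqref{eq:gamma} and the discussion of $\Gamma_6$), so this works --- but one also needs to exclude $\eta=-1$, which would appear only if some $\eta_i\eta_j^{-1}=-1$, impossible when all $\eta_i$ are in a "same-sign" sector of $\langle\xi_6\rangle$ since $-1=\xi_6^3$ requires mixing. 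Making this case analysis airtight, organised by the parity/sign bookkeeping of the exponents $a_i$, together with the verification that the hypothesis class "$\eta_i\in\{1,\xi_6\}$" is correctly enlarged to the self-reproducing class "$\eta_i\in\{1,\xi_6,\xi_3\}$ all on the same side", is the crux; the rest is a routine induction driven by Corollary \ref{ramif346}.
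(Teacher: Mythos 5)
Your overall strategy is the paper's one (the unramifiedness criterion of Corollary \ref{ramif346}, induction on the weight, stability of the class under the coaction), but the execution breaks precisely at the point you flag. The claim that ``a literal stability claim fails'' because $\xi_6\cdot\xi_6=\xi_3$ rests on a confusion between the two bookkeepings: in the right-hand factor of $D_r$ (formula \eqref{eq:Der}) one simply deletes a consecutive block of letters from the word $0,\eta_1 0^{n_1-1}\cdots\eta_p 0^{n_p-1},1$, so the roots of unity appearing there form a \emph{subsequence} of the original $\eta_i$ --- no products of $\eta$'s ever occur. Products occur only when the result is rewritten in the $\epsilon$-notation (the $\prod_k\epsilon_k$ of Lemma \ref{drz}); the hypothesis of the lemma is phrased in terms of the $\eta_i$ exactly because that class \emph{is} literally stable. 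This is why the paper's proof is one line: stability is immediate, $D_1$ vanishes because every length-one integral with entries in $\{0,1,\xi_6\}$ is the logarithm of a cross-ratio which is a root of unity (indeed $1-\xi_6=\xi_6^{-1}$), and for $r>1$ the right factors of $D_r$ lie in the same class in strictly smaller weight, so the induction closes.

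Your repair --- enlarging the class to $\eta_i\in\{1,\xi_6,\xi_3\}$ --- is not only unnecessary, it is false, and the error is the arithmetic assertion that ``$1-\xi_6$ and $1-\xi_3$ are both units of $\mathbb{Z}[\xi_6]$'': one has $N(1-\xi_3)=3$, so $1-\xi_3$ generates the prime above $3$ and $\log^{\mathfrak{l}}(1-\xi_3)$ is a nonzero (ramified) class in $\mathcal{A}_1^{\mathcal{MT}(\mathcal{O}_6[\frac{1}{6}])}$, while $\mathcal{A}_1^{\mathcal{MT}(\mathcal{O}_6)}=0$. Consequently the enlarged class contains ramified elements: for instance $I^{\mathfrak{m}}(0;\xi_3,1,0;1)=\zeta^{\mathfrak{m}}\left(1,2 \atop \xi_3^{-1},1\right)$ has
$$D_1\left(\zeta^{\mathfrak{m}}\left(1,2 \atop \xi_3^{-1},1\right)\right)=\log^{\mathfrak{l}}(1-\xi_3)\otimes\left(\zeta^{\mathfrak{m}}\left(2 \atop \xi_3^{-1}\right)-\zeta^{\mathfrak{m}}(2)\right)\neq 0,$$
so it is not in $\mathcal{H}^{\mathcal{MT}(\mathcal{O}_6)}$, and your strengthened induction statement fails at the $D_1$ step (note also that for $N=6$ the condition is the full vanishing of $D_1$, not merely that its contributions be unramified, since the unramified weight-one space is zero). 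The fix is simply to drop the enlargement: run the induction on the class as stated, all $\eta_i\in\{1,\xi_6\}$ (resp.\ all in $\{1,\xi_6^{-1}\}$), for which both the stability and the vanishing of $D_1$ are immediate; the ``same-sign sector'' detour is what introduces the ramification at $3$ that the lemma's hypothesis is designed to avoid.
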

\begin{proof}
Immediate, by Corollary, $\ref{ramif346}$, and with the expression of the derivations $(\ref{drz})$ since these families are stable under the coaction. 
\end{proof}

\begin{defi}
\begin{itemize}
	\item[$\cdot$]  \textbf{Family}: $\mathcal{B}\mathrel{\mathop:}=\left\{\zeta^{\mathfrak{m}}\left( {x_{1}, \ldots,x_{p}\atop  1, \ldots , 1,\xi) } \right)(2i \pi)^{s,\mathfrak{m}}, x_{i} > 1, s \geq 0 \right\}$. 
			\item[$\cdot$] \textbf{Level:} The $(k_{6}/\mathbb{Q},1/1)$-level, denoted $i$, is defined as the number of even $x_{j}$.
				\item[$\cdot$] \textbf{Filtration by the motivic }   $(k_{6}/\mathbb{Q},1/1)$-\textbf{level}:
\begin{center}
	$\mathcal{F}^{(k_{6}/\mathbb{Q},1/1)} _{-1} \mathcal{H}^{6}=0$ and $\mathcal{F}^{(k_{6}/\mathbb{Q},1/1)} _{i} \mathcal{H}^{6}$ is the largest submodule of $\mathcal{H}^{6}$ such that $\mathcal{F}^{(k_{6}/\mathbb{Q},1/1)}_{i}\mathcal{H}^{6}/\mathcal{F}^{(k_{6}/\mathbb{Q},1/1)} _{i-1}\mathcal{H}^{6}$ is killed by $\mathscr{D}^{(k_{6}/\mathbb{Q},1/1)}=\left\lbrace D^{\xi}_{2r} , r>0 \right\rbrace $.
	\end{center}	
\end{itemize}
\end{defi}

\begin{coro} Galois descent from $N'=1$ to $N=\mlq 6 \mrq$ unramified. A basis of MMZV:
$$\mathcal{B}^{1;6} \mathrel{\mathop:}= \left\{  \zeta^{\mathfrak{m}}\left( 2x_{1}+1, \ldots, 2x_{p}+1 \atop  1, \ldots, 1, \xi \right)\zeta^{\mathfrak{m}}(2)^{s} + \sum_{y_{i} \text{ at least one even}} \alpha_{\textbf{x},\textbf{y}}\zeta^{\mathfrak{m}}\left( y_{1}, \ldots, y_{p} \atop 1, \ldots, 1, \xi \right)\zeta^{\mathfrak{m}}(2)^{s} \right.$$ 
$$\left.   +\sum_{\text{lower depth, level } \geq 1}\beta_{\textbf{x},\textbf{z}} \zeta^{\mathfrak{m}} \left( z_{1}, \ldots, z_{q} \atop 1, \ldots, 1, \xi  \right)\zeta^{\mathfrak{m}}(2)^{s} \text{  , } \alpha_{\textbf{x},\textbf{y}}, \beta_{\textbf{x},\textbf{z}}\in \mathbb{Q},  x_{i}>0 \right\}.$$
\end{coro}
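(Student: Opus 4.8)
The statement to prove is Corollary 5.2.16, which exhibits an explicit basis $\mathcal{B}^{1;6}$ of $\mathcal{H}^{1}$ (motivic multiple zeta values) in terms of motivic multiple zeta values relative to $\mu_{6}$, namely the level-$0$ elements of the family $\mathcal{B}^{6}$ corrected by linear combinations of higher-level elements. My plan is to obtain this as the $i=0$ instance of Theorem 5.2.4 applied to the descent $\mathcal{d}=(k_{6}/\mathbb{Q},1/1)$ from $\mathcal{MT}(\mathcal{O}_{6})$ to $\mathcal{MT}(\mathbb{Z})$. So the whole argument reduces to checking that the hypotheses under which Theorem 5.2.4 was stated are met in the case $N=\mlq 6\mrq$, and then unwinding what part $(iv)$ of that theorem says at level $0$.

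First I would fix the data of the descent: the family $\mathcal{B}^{6}=\{\zeta^{\mathfrak{m}}\binom{x_1,\dots,x_p}{1,\dots,1,\xi}(2i\pi)^{s,\mathfrak{m}}:x_i>1,s\ge 0\}$, the notion of level (number of even $x_j$), the derivation set $\mathscr{D}^{\mathcal{d}}=\{D^{\xi}_{2r}:r>0\}$ (note this is the ``change of field'' type data, as $N'=1$ uses only odd-weight derivations $D^{\xi}_{2r+1}$ and the power of $i\pi^{\mathfrak{m}}$ is replaced by $\zeta^{\mathfrak{m}}(2)$ by the $\mathcal{F}_\infty$-invariance convention), and the motivic level filtration $\mathcal{F}^{\mathcal{d}}_{\bullet}$ defined recursively. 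The key input is Lemma 5.2.3: I would verify its two assertions for $N=\mlq 6\mrq$ by the recursion on weight, then depth, then level, exactly as in the proof given in \S 5.2.4 for ``other $N$''. The crucial point there is the $P$-adic ($P=3$) estimate: in Lemma 2.4.3 the left-hand side of $D_{r,p}$ applied to an element of $\mathcal{B}^{6}$ is either $\zeta^{\mathfrak{l}}\binom{r}{1}$ (for the non-deconcatenation terms, types $\textsc{a},\textsc{b},\textsc{c}$) or $\zeta^{\mathfrak{l}}\binom{r}{\xi}$ (for the deconcatenation terms $\textsc{d},\textsc{d}'$); and by the depth-$1$ relations recalled in \S 5.2.1,
$$\zeta^{\mathfrak{l}}\binom{r}{1}=\frac{2\cdot 6^{r-1}}{(1-2^{r-1})(1-3^{r-1})}\,\zeta^{\mathfrak{l}}\binom{r}{\xi},$$
whose coefficient has strictly positive $3$-adic valuation. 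Hence modulo $3$ only the deconcatenation terms survive, and after ordering the bases of $gr_{p}^{\mathfrak{D}}$ and its image lexicographically (on $(x_p,\dots,x_1)$ and on $(r,y_{p-1},\dots,y_1)$ as in the $N=2$ example worked out in the excerpt) the relevant matrix becomes upper-triangular with $1$'s on the diagonal modulo $3$, hence invertible; this also yields the $\mathbb{Z}_{1[3]}$-structure. Here I must be careful that before taking the matrix I replace, using the recursion hypothesis, the lower-level terms of smaller weight appearing on the right of $D^{\xi}_{2r}$ by $\mathbb{Z}_{1[3]}$-combinations of level-$\ge i$ elements, which does not lower $3$-adic valuations.

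Once Lemma 5.2.3 is in hand, Theorem 5.2.4 applies verbatim: part $(ii)$ gives that $\mathcal{B}^{6}_{n,p,\ge i}$ is a basis of $gr_p^{\mathfrak D}\mathcal H_n^{\ge i}$ (with the equality $\mathcal H^{\ge i}=\mathcal H^{\ge i,\mathcal{MT}}$ coming from dimension count against the K-theory upper bound of Lemma 2.3.1, and the fact $\mathcal{MT}'_{6}=\mathcal{MT}(\mathcal O_6)$), and part $(iv)$ gives a basis of $\mathcal F_{0}\mathcal H^{\mathcal{MT}_6}_n$ consisting of $\{\mathfrak Z+cl_{n,\le p,\ge 1}(\mathfrak Z):\mathfrak Z\in\mathcal B^{6}_{n,p,0}\}$. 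I would then identify $\mathcal F_0^{(k_6/\mathbb Q,1/1)}\mathcal H^{\mathcal{MT}_6}$ with $\mathcal H^{\mathcal{MT}(\mathbb Z)}=\mathcal H^1$: the inclusion $\mathcal H^{\mathcal{MT}_1}\subseteq \mathcal F_0\mathcal H^{\mathcal{MT}_6}$ is clear (MMZV are killed by all $D^{\xi}_{2r}$, since they involve only $0,1$ and the $\xi$-component of the coaction vanishes on them — this is where one invokes that the $\mu_6$-ramified derivations kill $\mathcal H^1$), and equality follows by comparing dimensions at fixed weight (both sides have Hilbert series $1/(1-t^2-t^3)$ by Lemma 2.3.1). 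Spelling out $\mathcal B^{6}_{n,p,0}$ as $\zeta^{\mathfrak{m}}\binom{2x_1+1,\dots,2x_p+1}{1,\dots,1,\xi}\zeta^{\mathfrak{m}}(2)^s$ with $x_i>0$ (the level-$0$ condition: all $x_j$ odd, i.e.\ all arguments odd, after absorbing even Tate twists into $\zeta^{\mathfrak{m}}(2)^s$), and describing $cl$ as a $\mathbb Q$-combination of level-$\ge 1$ elements of the same weight (with a lower-depth tail, via the iterated application of $cl_{n,p,\ge 1}$ from part $(iii)$), recovers precisely the asserted basis $\mathcal B^{1;6}$.

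\textbf{Main obstacle.} The genuine work is entirely in verifying Lemma 5.2.3 for $N=\mlq 6\mrq$, and within that, the delicate bookkeeping of the recursion: one needs to know, before forming the matrix of $\partial^{i,\mathcal{d}}_{n,p}$, that the arrival spaces $gr_{p-1}^{\mathfrak D}\langle\mathcal B_{n-r,\ge i-1}\rangle$ and $gr_{p-1}^{\mathfrak D}\langle\mathcal B_{n-r,\ge i}\rangle$ already have $\mathcal B$ as a basis (recursion on weight), so that the matrix is well-defined, and that the substitution of smaller-weight lower-level elements by higher-level ones preserves the $3$-adic triangularity. The $3$-adic valuation computation for $\zeta^{\mathfrak{l}}\binom{r}{1}$ versus the deconcatenation coefficient $\binom{r-1}{x_p-1}$ is the heart of the matter; everything else — the dimension counts, the identification of $\mathcal F_0$ with $\mathcal H^1$, the passage from depth-graded freeness to honest freeness and to the filtration/graded bases — is formal, following the template already executed for $N=2,3,4,8$ in \S 5.2.4–\S 5.2.5.
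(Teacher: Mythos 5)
Your proposal follows the paper's own route: the corollary is obtained exactly as the level-$0$ instance of Theorem 5.2.4 for the descent $(k_{6}/\mathbb{Q},1/1)$, with Lemma 5.2.3 verified by the $3$-adic argument (only deconcatenation terms survive modulo $3$ thanks to $\zeta^{\mathfrak{l}}\left({r \atop 1}\right)=\frac{2\cdot 6^{r-1}}{(1-2^{r-1})(1-3^{r-1})}\zeta^{\mathfrak{l}}\left({r \atop \xi}\right)$), and the identification $\mathcal{F}_{0}\mathcal{H}^{6}=\mathcal{H}^{1}$ by the obvious inclusion plus the dimension count against the Hilbert series $1/(1-t^{2}-t^{3})$. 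This is essentially identical to the paper's argument, including the treatment of the recursion and of the correction terms $cl_{n,\leq p,\geq 1}$.
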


\chapter{Miscellaneous uses of the coaction}

\section{Maximal depth terms, $\boldsymbol{gr^{\mathfrak{D}}_{\max}\mathcal{H}_{n}}$}

The coaction enables us to compute, by a recursive procedure, the coefficients of the terms of \textit{maximal depth}, i.e. the projection on the graded $\boldsymbol{gr^{\mathfrak{D}}_{\max}\mathcal{H}_{n}}$. In particular, let look at:
\begin{itemize}
\item[$\cdot$] For $N=1$, when weight is a multiple of $3$ ($w=3d$), such as depth $p>d$:
$$gr^{\mathfrak{D}}_{p}\mathcal{H}_{3d} =\mathbb{Q} \zeta^{\mathfrak{m}}(3)^{d}.$$
\item[$\cdot$] Another simple case is for $N=2,3,4$, when weight equals depth, which is referred to as the \textit{diagonal comodule}:
$$gr^{\mathfrak{D}}_{p}\mathcal{H}_{p} =\mathbb{Q} \zeta^{\mathfrak{m}}\left( 1 \atop \xi_{N}\right) ^{p}.$$
\end{itemize}
The space $gr^{\mathfrak{D}}_{\max}\mathcal{H}^{N}_{n}$ is usually more than $1$ dimensional, but the methods presented below could generalize.

\subsection{MMZV, weight $\boldsymbol{3d}$.}

\paragraph{Preliminaries: Linearized Ihara action.}
The linearisation of the map $\circ:  U\mathfrak{g} \otimes U\mathfrak{g} \rightarrow  U\mathfrak{g}$ induced by Ihara action (cf. $\S 2.4$) can be defined recursively on words by, with $\eta\in\mu_{N}$:\nomenclature{$\underline{\circ}$}{linearized Ihara action}
\begin{equation}\label{eq:circlinear}
\begin{array}{lll}
 \underline{\circ}: \quad   U\mathfrak{g} \otimes U\mathfrak{g}  \rightarrow  U\mathfrak{g}:  & a \underline{\circ} e_{0}^{n} & = e_{0}^{n} a \\
 & a \underline{\circ} e_{0}^{n}e_{\eta} w & = e_{0}^{n} ([\eta].a)  e_{\eta}w + e_{0}^{n} e_{\eta } ([\eta].a)^{\ast} w + e_{0}^{n} e_{\eta} (a\underline{\circ} w), \\
\end{array}
\end{equation}
where ${\ast}$ stands for the following involution: 
$$(a_{1} \cdots a_{n})^{\ast}\mathrel{\mathop:}=(-1)^{n}a_{n} \cdots a_{1}.$$
For this paragraph, from now, let $N=1$ and let use the \textit{commutative polynomial setting}, introducing the isomorphism:\nomenclature{ $\rho$}{isomorphism used to pass to a commutative polynomial setting}
\begin{align}\label{eq:rho}
 \rho: U \mathfrak{g} & \longrightarrow  \mathbb{Q} \langle Y\rangle\mathrel{\mathop:}=\mathbb{Q} \langle y_{0}, y_{1}, \ldots, y_{n},\cdots \rangle  \\
 e_{0}^{n_{0}}e_{1} e_{0}^{n_{1}} \cdots e_{1} e_{0}^{n_{p}} & \longmapsto  y_{0}^{n_{0}} y_{1}^{n_{1}} \cdots y_{p}^{n_{p}} \nonumber
 \end{align}
Remind that if $\Phi\in U \mathfrak{g}$ satisfies the linearized $\shuffle$ relation, it means that $\Phi$ is primitive for $\Delta_{\shuffle}$, and equivalently that $\phi_{u \shuffle v}=0$, with $\phi_{w}$ the coefficient of $w$ in $\Phi$. In particular, this is verified for $\Phi$ in the motivic Lie algebra $\mathfrak{g}^{\mathfrak{m}}$.\\
This property implies for $f=\rho(\Phi)$ a translation invariance (cf. $6.2$ in $\cite{Br3}$)
\begin{equation} \label{eq:translationinv} 
f(y_{0},y_{1},\cdots, y_{p})= f(0,y_{1}-y_{0}, \ldots, y_{p}-y_{0}).
\end{equation} 
Let consider the map:
\begin{align} \label{eq:fbar} 
\mathbb{Q} \langle Y\rangle & \longrightarrow\mathbb{Q} \langle X\rangle = \mathbb{Q} \langle x_{1}, \ldots, x_{n},\cdots\rangle  , & \\
\quad \quad f & \longmapsto \overline{f} & \text{  where } \overline{f}(x_{1},\cdots, x_{p})\mathrel{\mathop:}=f(0,x_{1},\cdots, x_{p}).\nonumber
\end{align}
If $f$ is translation invariant, $f(y_{0}, y_{1}, \ldots, y_{p})=\overline{f}(y_{1}-y_{0},\cdots, y_{p}-y_{0})$.\\
The image of $\mathfrak{g}^{\mathfrak{m}}$ under $\rho$ is contained in the subspace of polynomial in $y_{i}$ invariant by translation. Hence we can consider alternatively in the following $\phi\in\mathfrak{g}^{\mathfrak{m}}$, $f=\rho(\phi)$ or $\overline{f}$.\\
\\
Since the linearized action $\underline{\circ}$ respects the $\mathcal{D}$-grading, it defines, via the isomorphism $\rho: gr^{r}_{\mathfrak{D}} U \mathfrak{g} \rightarrow \mathbb{Q}[y_{0}, \ldots, y_{r}]$, graded version of $(\ref{eq:rho})$, a map:
$$\underline{\circ}: \mathbb{Q}[y_{0}, \ldots, y_{r}]\otimes \mathbb{Q}[y_{0}, \ldots, y_{s}] \rightarrow \mathbb{Q}[y_{0}, \ldots, y_{r+s}] \text{ , which in the polynomial representation is:}$$
\begin{multline}\label{eq:circpolynom}
f\underline{\circ} g (y_{0}, \ldots, y_{r+s})=\sum_{i=0}^{s} f(y_{i}, \ldots, y_{i+r})g(y_{0}, \ldots, y_{i}, y_{i+r+1}, \ldots,  y_{r+s}) \\
+ (-1)^{\deg f+r}\sum_{i=1}^{s} f(y_{i+r}, \ldots, y_{i})g(y_{0}, \ldots, y_{i-1}, y_{i+r}, \ldots,  y_{r+s}).
\end{multline}
Or via the isomorphism $\overline{\rho}: gr^{r}_{\mathfrak{D}} U \mathfrak{g} \rightarrow \mathbb{Q}[x_{1}, \ldots, x_{r}]$, graded version of $(\ref{eq:fbar})\circ (\ref{eq:rho}) $:
\begin{multline}\label{eq:circpolynomx}
f\underline{\circ} g (x_{1}, \ldots, x_{r+s})=\sum_{i=0}^{s} f(x_{i+1}-x_{i}, \ldots, x_{i+r}-x_{i})g(y_{1}, \ldots, x_{i}, x_{i+r+1}, \ldots,  x_{r+s}) \\
+ (-1)^{\deg f+r}\sum_{i=1}^{s} f(x_{i+r-1}-x_{i+r}, \ldots, x_{i}-x_{i+r})g(x_{1}, \ldots, x_{i-1}, x_{i+r}, \ldots,  x_{r+s}).
\end{multline}

\paragraph{Coefficient of $\boldsymbol{\zeta(3)^{d}}$.}
If the weight $w$ is divisible by $3$, for motivic multiple zeta values, it boils down to compute the coefficient of $\zeta^{\mathfrak{m}}(3)^{\frac{w}{3}}$ and a recursive procedure is given in Lemma $6.1.1$.\\
\\
Since $gr_{d}^{\mathfrak{D}} \mathcal{H}_{3d}^{1} $ is one dimensional, generated by $\zeta^{\mathfrak{m}}(3)^{d}$, we can consider the projection:
\begin{equation}
\vartheta : gr_{d}^{\mathfrak{D}} \mathcal{H}_{3d}^{1} \rightarrow \mathbb{Q}.
\end{equation}
Giving a motivic multiple zeta value $\zeta^{\mathfrak{m}}(n_{1}, \ldots, n_{d})$, of depth $d$ and weight $w=3d$, there exists a rational $\alpha_{\underline{\textbf{n}}}= \vartheta(\zeta(n_{1}, \ldots, n_{d}))$ such that:
\begin{framed}
\begin{equation}
\zeta^{\mathfrak{m}}(n_{1}, \ldots, n_{d}) = \frac{\alpha_{\underline{\textbf{n}}}} {d!} \zeta^{\mathfrak{m}}(3)^{d} + \text{ terms of depth strictly smaller than } d. \footnote{The terms of depth strictly smaller than $d$ can be expressible in terms of the Deligne basis for instance.}
\end{equation} 
\end{framed}
\noindent
In the depth graded in depth 1, $\partial \mathfrak{g}^{\mathfrak{m}}_{1}$, the generators are:
$$\overline{\sigma}_{2i+1}= (-1)^{i} (\text{ad} e_{0})^{2i} (e_{1}) .$$
We are looking at, in the depth graded:
\begin{equation} \label{eq:expcirc3}
\exp_{\circ}(\overline{\sigma_{3}})\mathrel{\mathop:}=\sum_{n=0}^{n}\frac{1}{n!} \overline{\sigma_{3}} \circ \cdots \circ \overline{\sigma_{3}}= \sum_{n=0}^{n}\frac{1}{n!}  (\text{ad}(e_{0})^{2} (e_{1}))^{\underline{\circ} n}.
\end{equation} 
In the commutative polynomial representation, via $\overline{\rho}$, since $ \overline{\rho}(\overline{\sigma}_{2n+1})= x_{1}^{2n}$, it becomes:
$$\sum_{n=0}^{n}\frac{1}{n!} x_{1}^{2} \underline{\circ} (x_{1}^{2}  \underline{\circ}( \cdots (x_{1}^{2}  \underline{\circ}x_{1}^{2}  ) \cdots )).$$

\begin{lemm}
The coefficient of $\zeta^{\mathfrak{m}}(3)^{p}$ in $\zeta^{\mathfrak{m}}(n_{1}, \ldots, n_{p})$ of weight $3p$ is given recursively:
\begin{multline} \label{coeffzeta3}
\alpha_{n_{1}, \ldots, n_{p}}= \delta_{n_{p}=3} \alpha_{n_{1}, \ldots, n_{p-1}}\\
+\sum_{k=1 \atop n_{k}=1 }^{p} \left(  \delta_{n_{k-1}\geq 3} \alpha_{n_{1}, \ldots, n_{k-1}-2,n_{k+1}, \ldots, n_{p}}  -\delta_{n_{k+1}\geq 3} \alpha_{n_{1}, \ldots, n_{k-1},n_{k+1}-2, \ldots, n_{p}} \right) \\
+2 \sum_{k=1 \atop n_{k}=2 }^{p} \left(-\delta_{n_{k-1}\geq 3} \alpha_{n_{1}, \ldots, n_{k-1}-2,n_{k+1}, \ldots, n_{p}} + \delta_{n_{k+1}\geq 3} \alpha_{n_{1}, \ldots, n_{k-1},n_{k+1}-2, \ldots, n_{p}}  \right) .
\end{multline}
\end{lemm}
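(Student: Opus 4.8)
The plan is to compute the coaction on $\zeta^{\mathfrak{m}}(n_1,\dots,n_p)$, extract the part landing in maximal depth, and read off a recursion for the projection $\vartheta$. Since $gr^{\mathfrak D}_d\mathcal H_{3d}^1$ is one-dimensional, generated by $\zeta^{\mathfrak m}(3)^d$, the quantity $\alpha_{n_1,\dots,n_p}=\vartheta(\zeta^{\mathfrak m}(n_1,\dots,n_p))$ is well defined. The essential observation is that I only need to understand how $D_{3,p}$ (the weight-$3$, depth-graded derivation) acts on the maximal-depth part, because any element of $\ker D_{<3d}$ lying in $gr^{\mathfrak D}_d\mathcal H_{3d}$ is a rational multiple of $\zeta^{\mathfrak m}(3)^d$ by Corollary \ref{kerdn}, and the coefficient of $\zeta^{\mathfrak m}(3)$ in $\zeta^{\mathfrak m}(3)$ is $1$; iterating $D_{3,p}$ down to weight $3$ then recovers $\alpha$ up to the combinatorial factor $d!$ coming from $D_3^{\circ d}/d!$, equivalently $\exp_\circ(\overline\sigma_3)$ as in \eqref{eq:expcirc3}.

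\textbf{Key steps.} First I would specialize Lemma \ref{Drp}, the explicit formula for $D_{r,p}$, to $r=3$ and to the case $N=1$ (so all $\epsilon_i=1$, and the conjugation/product terms simplify). The terms of type \textsc{(a0)} contribute only when $n_1=3$ (a deconcatenation of the first block of weight $3$), giving the $\delta_{n_p=3}\alpha_{n_1,\dots,n_{p-1}}$ term after reversing orientation via the path-reversal/antipode relations of \S\ref{propii}; the terms of type \textsc{(a),(b),(c),(d),(d')} correspond to cutting a subword of weight $3$ adjacent to an argument equal to $1$ or $2$. Concretely, a cut that removes an interior piece $n_k,0^{n_k-1}$ of weight $3$ with $n_k\in\{1,2\}$ merges $n_k$ into a neighbour $n_{k\pm1}$ which must therefore have $n_{k\pm1}\ge 3$ for the result to still have the right shape in maximal depth; the binomial coefficients $\binom{r-1}{r-n_k}=\binom{2}{2-n_k}$ evaluate to $1$ when $n_k=1$ and $2$ when $n_k=2$, which produces exactly the factors $1$ and $2$ in \eqref{coeffzeta3}, and the signs $(-1)^{r-n_i}=(-1)^{3-n_k}$ distribute as $+$ for $n_k=1$, $-$ for $n_k=2$ on a left-merge, and the opposite on a right-merge, matching the displayed signs. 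Second, I would check that the left-hand tensor factor in each surviving term is $\zeta^{\mathfrak l}(3)$ (up to the scalar just described) rather than a genuinely lower or higher weight class — this is automatic since we restrict to weight-$3$ cuts — so that after applying $\pi\colon\mathbb Q\zeta^{\mathfrak l}(3)\to\mathbb Q$ we get precisely the linear map $\partial_{3,p}$ whose matrix entries are the bracketed expression. Third, I would verify the base case: for weight $3$, $\alpha_3=1$ and $\alpha_{2,1}=\alpha_{1,2}=\alpha_{1,1,1}$ are forced by the shuffle regularization and the known value $\zeta^{\mathfrak m}(1,2)=\zeta^{\mathfrak m}(3)$, which the recursion reproduces. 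Finally, an induction on $p$ using that $\oplus_{r<3p}D_r$ is injective on $gr^{\mathfrak D}_d\mathcal H_{3d}/(\text{lower depth})$ — equivalently, that iterating $D_{3,p}$ detects the $\zeta^{\mathfrak m}(3)^p$-coefficient faithfully — shows that the recursion \eqref{coeffzeta3} indeed computes $\alpha$, because applying $D_{3,p}$ to $\tfrac{\alpha}{p!}\zeta^{\mathfrak m}(3)^p$ yields $\tfrac{\alpha}{(p-1)!}\zeta^{\mathfrak l}(3)\otimes\zeta^{\mathfrak m}(3)^{p-1}$, matching the recursion's left-hand side after the projection.

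\textbf{Main obstacle.} The delicate point is bookkeeping the boundary terms — the first block (type \textsc{(a0)} versus type \textsc{(a)} with $i=1$) and the last block (types \textsc{(d),(d')}) — and making sure the orientation reversals induced by the antipode/path-composition relations are applied consistently, so that a cut ending at the final $1$ really contributes $\delta_{n_p=3}\alpha_{n_1,\dots,n_{p-1}}$ with coefficient $+1$ and not, say, a $\delta_{n_p=1}$ or $\delta_{n_p=2}$ term with an extra sign. I also need to confirm that no weight-$3$ cut can simultaneously lower the depth by more than one (which would land outside $gr^{\mathfrak D}_{p-1}$ and be invisible), and that cuts producing an argument equal to $1$ or $2$ in a non-extremal slot don't spuriously re-enter maximal depth; both follow from the shape constraints but deserve an explicit check. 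Once this combinatorial accounting is pinned down, the identity \eqref{coeffzeta3} is just the matrix of $\partial_{3,p}$ written out, and the lemma follows by the induction sketched above.
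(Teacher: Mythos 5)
Your route is not the paper's: the paper proves this lemma on the dual side, computing $P_{n+1}=x_1^2\,\underline{\circ}\,P_n$ in the commutative polynomial model of the linearized Ihara action \eqref{eq:circpolynom} and reading the recursion off the coefficients $c^{\mathbf{i}}$ of $P_p$; your proposal works instead with the depth-graded derivation $D_{3,p}$ of Lemma \ref{Drp} and the one-dimensionality of $gr^{\mathfrak{D}}_{p}\mathcal{H}_{3p}$. That alternative is legitimate — the paper itself remarks that "another proof is possible using the dual point of view with $D_{3,p}$", and this is essentially how the generalization \eqref{coeffzeta3g} is proved — and your reduction is sound in outline: once $\vartheta$ is well defined and one knows $D_{3}(\mathcal{F}^{\mathfrak{D}}_{p-1})\subset\mathcal{L}_{3}\otimes\mathcal{F}^{\mathfrak{D}}_{p-2}$, comparing the $\zeta^{\mathfrak{l}}(3)\otimes\zeta^{\mathfrak{m}}(3)^{p-1}$-components of $D_{3,p}$ applied to both sides of \eqref{eq:zeta3d} gives the recursion directly; the closing appeal to injectivity of $\oplus_{r<3p}D_r$ is not needed for that.

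The genuine gap is in the boundary bookkeeping, precisely the point you flag as delicate. The term $\delta_{n_p=3}\,\alpha_{n_1,\dots,n_{p-1}}$ does \emph{not} come from the type \textsc{(a0)} cut "after reversing orientation": \textsc{(a0)} removes the \emph{first} block and produces a $\delta_{n_1=3}\,\zeta^{\mathfrak{l}}(3)\otimes\zeta^{\mathfrak{m}}(n_2,\dots,n_p)$ contribution, and path reversal does not convert removal of $n_1$ into removal of $n_p$ (the recursion is genuinely asymmetric: it contains $\delta_{n_p=3}$ but no $\delta_{n_1=3}$). The deconcatenation term comes from the type \textsc{(d)} cuts ending at the final $1$, where no cancellation is possible; at the left end, by contrast, the \textsc{(a0)} contribution must be shown to cancel against the interior cut removing $0^{n_1-1}\eta_2$ when $n_2\geq 2$ (check on $\zeta^{\mathfrak{m}}(3,3)$: the initial cut and the cut $I^{\mathfrak{l}}(1;0,0,1;0)$ contribute $+1$ and $-1$, and only the final deconcatenation survives, giving $\alpha_{3,3}=1$), while for $n_2=1$ that interior cut vanishes (equal endpoints $1,1$) and the surviving \textsc{(a0)} term is exactly what reproduces the $k=2$, $\delta_{n_1\geq 3}$ term of \eqref{coeffzeta3} since then $n_1-2=n_2$. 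Without this cancellation analysis your recursion would acquire a spurious $\delta_{n_1=3}\alpha_{n_2,\dots,n_p}$ term and fail already in weight $6$. Two smaller slips: the binomial is $\binom{r-1}{r-n_k}=\binom{2}{3-n_k}$, not $\binom{2}{2-n_k}$ (your stated values $1$ for $n_k=1$ and $2$ for $n_k=2$ are the correct ones, coming from $\zeta^{\mathfrak{l}}_{2}(1)=\zeta^{\mathfrak{l}}(3)$ and $\zeta^{\mathfrak{l}}_{1}(2)=-2\zeta^{\mathfrak{l}}(3)$); and you should record explicitly how much of the neighbouring block each cut consumes, since that — not the headline formula — is what guarantees the resulting index has weight $3(p-1)$.
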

\noindent
\textsc{Remarks}:
\begin{itemize}
\item[$\cdot$] This is proved for motivic multiple zeta values, and by the period map, it also applies to multiple zeta values.
\item[$\cdot$] This lemma (as the next one, more precise) could be generalized for unramified motivic Euler sums.
\item[$\cdot$] All the coefficients $\alpha$ are all integers.
\end{itemize}
\begin{proof}
Recursively, let consider:
\begin{equation}
P_{n+1} (x_{1},\cdots, x_{n+1})\mathrel{\mathop:}=x_{1}^{2} \underline{\circ}P_{n} (x_{1},\cdots, x_{n}).
\end{equation} 
By the definition of the linearized Ihara action $(\ref{eq:circpolynom})$:
\begin{multline} \nonumber
P_{n+1} (x_{1},\cdots, x_{n+1}) =\sum_{i=0}^{n} (x_{i+1}-x_{i})^{2} P_{n} (x_{1},\cdots, x_{i}, x_{i+2}, \ldots, x_{n+1}) \\
-  \sum_{i=1}^{n} (x_{i+1}-x_{i})^{2} P_{n} (x_{1},\cdots, x_{i-1}, x_{i+1}, \ldots, x_{n+1})\\
=  (x_{n+1}-x_{n})^{2} P_{n}(x_{1}, \ldots, x_{n})+ \sum_{i=0}^{n-1} (x_{i}-x_{i+2})(x_{i}+x_{i+2}-2x_{i+1}) P_{n} (x_{1},\cdots, x_{i}, x_{i+2}, \ldots, x_{n+1}).
\end{multline}
Turning now towards the coefficients $c^{\textbf{i}}$ defined by:
$$ P_{p} (x_{1},\cdots, x_{p})= \sum c^{\textbf{i}} x_{1}^{i_{1}}\cdots x_{p}^{i_{p}}, \quad \text{ we deduce: } $$
\begin{multline} \nonumber
c^{i_{1}, \ldots, i_{p}}= -\delta_{i_{1}=0 \atop i_{2} \geq 2} c^{i_{2}-2,i_{3}, \ldots, i_{p}} + \delta_{i_{p}=2 } c^{i_{1},\cdots, i_{p-1}}  + \delta_{i_{n}=0 \atop i_{p-1} \geq 2} c^{i_{1}, \ldots, i_{p-2},i_{p-1}-2}  - 2 \delta_{i_{p}=1 \atop i_{p-1} \geq 2} c^{i_{1}, \ldots, i_{p-2}, i_{p-1}-1}  \\
+ \sum_{k=2 \atop i_{k}=0 }^{p-1} \left( \delta_{i_{k-1}\geq 2} c_{i_{1}, \ldots, i_{k-1}-2,i_{k+1}, \ldots, i_{p}} -\delta_{i_{k+1}\geq 2} c_{i_{1}, \ldots, i_{k-1},i_{k+1}-2, \ldots, i_{p}} \right)\\
+ 2 \sum_{k=2 \atop i_{k}=1 }^{p-1} \left( -\delta_{i_{k-1}\geq 1} c_{i_{1}, \ldots, i_{k-1}-2,i_{k+1}, \ldots, i_{p}} + \delta_{i_{k+1}\geq 1} c_{i_{1}, \ldots, i_{k-1},i_{k+1}-2, \ldots, i_{p}}  \right) ,
\end{multline}
which gives the recursive formula of the lemma.
\end{proof}

\paragraph{Generalization. }

Another proof of the previous lemma is possible using the dual point of view with the depth-graded derivations $D_{3,p}$, looking at cuts of length $3$ and depth $1$.\footnote{The coefficient $\alpha$ indeed emerges when computing $(D_{3,p})^{\circ p}$.}\\
A motivic multiple zeta value of weight $3d$ and of depth $p>d$ could also be expressed as:
\begin{equation}\label{eq:zeta3d}
\zeta^{\mathfrak{m}}(n_{1}, \ldots, n_{p}) = \frac{\alpha_{\underline{\textbf{n}}}} {d!} \zeta^{\mathfrak{m}}(3)^{d} + \text{ terms of depth strictly smaller than } d.
\end{equation} 
However, to compute this coefficient $\alpha_{\underline{\textbf{n}}}$, we could not work as before in the depth graded; i.e. this time, we have to consider all the possible cuts of length $3$. Then, the coefficient emerges when computing $\boldsymbol{(D_{3})^{\circ d}}$.
\begin{lemm}
The coefficient of $\zeta^{\mathfrak{m}}(3)^{d}$ in $\zeta^{\mathfrak{m}}(n_{1}, \ldots, n_{p})$ of weight $3d$ such that $p>d$,  is given recursively:
\begin{multline}  \label{coeffzeta3g}
\alpha_{n_{1}, \ldots, n_{p}}= \delta_{n_{p}=3} \alpha_{n_{1}, \ldots, n_{p-1}}\\
+\sum_{k=1 \atop n_{k}=1 }^{p} \left(  \delta_{n_{k-1}\geq 3  \atop k\neq 1} \alpha_{n_{1}, \ldots, n_{k-1}-2,n_{k+1}, \ldots, n_{p}}  -\delta_{n_{k+1}\geq 3  \atop k\neq p} \alpha_{n_{1}, \ldots, n_{k-1},n_{k+1}-2, \ldots, n_{p}} \right) \\
+2 \sum_{k=1 \atop n_{k}=2 }^{p} \left(-\delta_{n_{k-1}\geq 3} \alpha_{n_{1}, \ldots, n_{k-1}-2,n_{k+1}, \ldots, n_{p}} + \delta_{n_{k+1}\geq 3 \atop k\neq p} \alpha_{n_{1}, \ldots, n_{k-1},n_{k+1}-2, \ldots, n_{p}}  \right) \\
+ \sum_{k=1 \atop n_{k}=1, n_{k+1}=1 }^{p-1} \left(-\delta_{n_{k-1}\geq 3  \atop k\neq 1} \alpha_{n_{1}, \ldots, n_{k-1}-1,n_{k+2}, \ldots, n_{p}} + \delta_{n_{k+2}\geq 3} \alpha_{n_{1}, \ldots, n_{k-1},n_{k+2}-1, \ldots, n_{p}}  \right)\\
+ \sum_{k=1 \atop n_{k}=1, n_{k+1}=2 }^{p-1} \left( \delta_{n_{k-1}\geq 3 \atop \text{ or } k=1 } \alpha_{n_{1}, \ldots, n_{k-1},n_{k+2}, \ldots, n_{p}} +2 \delta_{n_{k+2}\geq 2 \atop k\neq p-1} \alpha_{n_{1}, \ldots, n_{k-1},n_{k+2}, \ldots, n_{p}}  \right)\\
+ \sum_{k=1 \atop n_{k}=2, n_{k+1}=1 }^{p-1} \left(-2\delta_{n_{k-1}\geq 2 \atop \text{ or } k=1} \alpha_{n_{1}, \ldots, n_{k-1},n_{k+2}, \ldots, n_{p}} - \delta_{n_{k+2}\geq 3 \atop k\neq p-1} \alpha_{n_{1}, \ldots, n_{k-1},n_{k+2}, \ldots, n_{p}}  \right).\\
\\
\end{multline}
\end{lemm}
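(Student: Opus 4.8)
The plan is to proceed exactly as in Lemma 6.1.2, but now keeping track of all cuts of length $3$ rather than only the depth-$1$ cuts that survive in the depth-graded quotient. The object to compute is $(D_3)^{\circ d}$ applied to $\zeta^{\mathfrak{m}}(n_1,\ldots,n_p)$, and the coefficient $\alpha_{\underline{\mathbf{n}}}$ of $\zeta^{\mathfrak{m}}(3)^d$ emerges from iterating the weight-graded derivation $D_3$. First I would recall the explicit formula for $D_3$ on a motivic iterated integral from $(\ref{eq:Der})$: here $r=3$, so each term corresponds to picking a window $a_p,a_{p+1},a_{p+2},a_{p+3}$ of interior length $3$ inside the word $0;\eta_1,\mathbf{0}^{n_1-1},\ldots,\eta_p,\mathbf{0}^{n_p-1};1$, producing $I^{\mathfrak{l}}$ of the sub-word of length $3$ tensored with $I^{\mathfrak{m}}$ of the contracted word. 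Since the target $\zeta^{\mathfrak{m}}(3)$ is Galois-trivial (its only nonzero $D_3$ is $\zeta^{\mathfrak{l}}(3)\otimes 1$), only those cuts whose left-hand $I^{\mathfrak{l}}$ is a nonzero multiple of $\zeta^{\mathfrak{l}}(3)$ contribute in the end; but unlike in Lemma 6.1.2, the \emph{right-hand} contracted integral need not drop the depth by exactly one, so depth-$2$ cuts (those spanning two adjacent blocks $n_k,n_{k+1}$ with $n_k$ or $n_{k+1}$ small) must be retained. This is precisely the source of the four extra sums in $(\ref{coeffzeta3g})$ compared to $(\ref{coeffzeta3})$, indexed by the patterns $(n_k,n_{k+1})=(1,1),(1,2),(2,1)$.

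Concretely, I would organise the cuts by where the window of length $3$ sits relative to the block structure. A window lying entirely inside one block $\mathbf{0}^{n_k-1}$ gives a divergent $I^{\mathfrak{l}}(0;0,0,0;0)$-type term which vanishes; a window of the form $\eta_k,\mathbf{0}^{j}$ with $j=2$, i.e. $n_k=3$, at the end gives the deconcatenation term $\delta_{n_p=3}\alpha_{n_1,\ldots,n_{p-1}}$; a window straddling the junction between block $k-1$ and block $k$, i.e. containing $\eta_k$ and some zeros on each side, gives the terms with $n_k=1$ or $n_k=2$ (two zeros on one side, matching $\binom{2}{0}$ or $\binom{2}{1}=2$ from $(\ref{Drp})$, explaining the factor $2$ on the $n_k=2$ sums); and finally a window straddling \emph{two} junctions, containing both $\eta_k$ and $\eta_{k+1}$, forces $n_k+n_{k+1}\le 3$ hence the patterns $(1,1),(1,2),(2,1)$, producing the last three sums. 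In each case I would read off the coefficient of $\zeta^{\mathfrak{l}}(3)$ in the corresponding length-$3$ motivic iterated integral $I^{\mathfrak{l}}(0;\bullet;1)$ — using the depth-$1$ evaluations $\zeta^{\mathfrak{l}}_{3-a}(a)=(-1)^{a+1}\binom{2}{a-1}\zeta^{\mathfrak{l}}(3)$ and the path-reversal/antipode relations of $\S 4.2$ to simplify junctions between two roots of unity — and then the contracted right-hand integral gives a lower instance $\alpha_{\ldots}$ of the same invariant, so that applying $(D_3)^{\circ d}$ and induction on $d$ (equivalently on the weight) yields the claimed recursion. The boundary $\delta$-conditions (the annotations $k\ne 1$, $k\ne p$, $k\ne p-1$, "or $k=1$") come from the fact that at the extreme left the word begins with $\eta_1$ directly after the base point $0$ (no zeros before $\eta_1$ in the convergent normalisation, so a left-junction cut at $k=1$ behaves differently), and at the extreme right the word ends at $1$ so a deconcatenation is forced rather than an internal contraction; I would treat these border windows separately and check they account exactly for the modified Kronecker deltas.

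The main obstacle I anticipate is the careful bookkeeping of signs and binomial factors at the \emph{two-junction} cuts — the last three sums — because there the length-$3$ subword has the shape $\eta_k\,\mathbf{0}^{n_k-1}\,\eta_{k+1}\,\mathbf{0}^{n_{k+1}-1}$ with $\eta_k\neq\eta_{k+1}$ in general, so extracting its $\zeta^{\mathfrak{l}}(3)$-component requires the full terms of type $\textsc{(c)}$ and $\textsc{(d')}$ in Lemma $\ref{Drp}$ together with the homothety and path-reversal relations to rewrite $I^{\mathfrak{l}}$ at mixed arguments; moreover one must check that when both blocks are short the contracted integral genuinely lands in depth $\ge d-1$ and thus does contribute, whereas in Lemma 6.1.2 these were discarded. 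A secondary subtlety is verifying that no new terms appear beyond those six sums: one must argue that windows touching three or more junctions are impossible for length $3$ (immediate, since three $\eta$'s already use $3$ letters leaving no room for the interior zeros unless all intervening $n_i=1$, a case subsumed in the $(1,1)$ pattern applied twice across successive applications of $D_3$), and that windows lying strictly inside a block contribute nothing. Once these verifications are complete the formula $(\ref{coeffzeta3g})$ follows by the same induction as in the proof of Lemma 6.1.2, and by the period map it holds for the numerical multiple zeta values as well.
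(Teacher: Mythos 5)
Your proposal follows the paper's own proof: you enumerate all length-$3$ cuts of $D_{3}$ in the iterated-integral word, evaluate each left factor as a rational multiple of $\zeta^{\mathfrak{l}}(3)$ via the weight-three identities $\zeta^{\mathfrak{l}}_{1}(2)=-2\zeta^{\mathfrak{l}}(3)$, $\zeta^{\mathfrak{l}}(1,2)=\zeta^{\mathfrak{l}}(3)$, $\zeta^{\mathfrak{l}}(2,1)=-2\zeta^{\mathfrak{l}}(3)$, $\zeta^{\mathfrak{l}}_{1}(1,1)=\zeta^{\mathfrak{l}}(3)$, identify the two-junction cuts (patterns $(1,1),(1,2),(2,1)$) as the source of the extra sums, and conclude by iterating $D_{3}$ with induction on the weight — exactly the argument given in the paper. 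Note only that for $N=1$ all $\eta_{i}=1$, so your anticipated complications with mixed roots of unity, homothety and the type \textsc{(c)}/\textsc{(d')} terms of Lemma \ref{Drp} never arise, and the all-ones window $1,1,1$ is not impossible but simply contributes nothing, its left factor being a shuffle-regularized product and hence zero in $\mathcal{L}$.
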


\begin{proof}
Let list first all the possible cuts of length $3$ and depth $1$ in a iterated integral with $\lbrace 0,1 \rbrace$:\\
\includegraphics[]{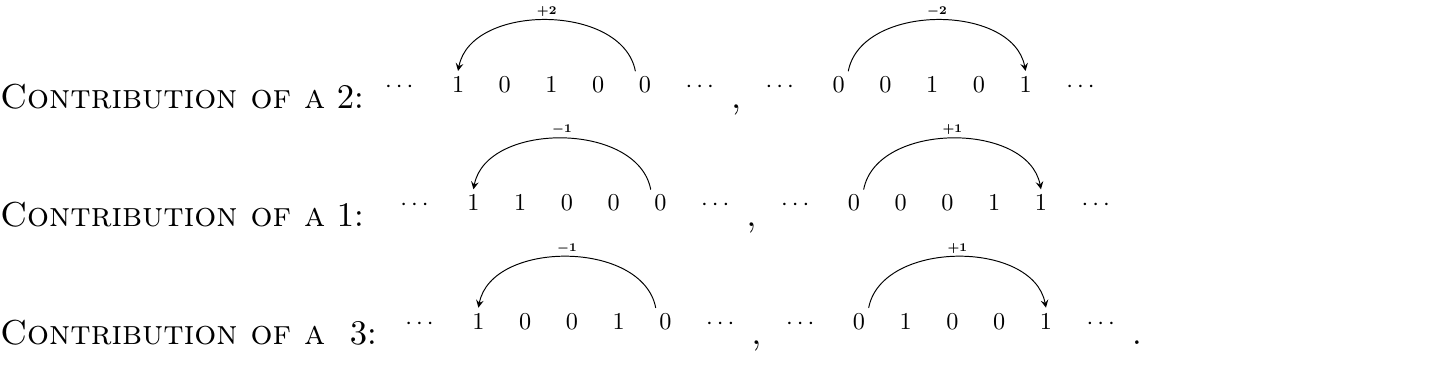}\\
The coefficient above the arrow is the coefficient of $\zeta^{\mathfrak{m}}(3)$ in $I^{\mathfrak{m}}(cut)$, using that:
$$\zeta^{\mathfrak{m}}_{1}(2)=-2\zeta^{\mathfrak{m}}(3), \quad \zeta^{\mathfrak{m}}(1,2)=\zeta^{\mathfrak{m}}(3), \quad \zeta^{\mathfrak{m}}(2,1)=-2\zeta^{\mathfrak{m}}(3), \quad \zeta^{\mathfrak{m}}_{1}(1,1)=\zeta^{\mathfrak{m}}(3).$$
Therefore, when there is a $1$ followed or preceded by something greater than $4$, the contribution is $\pm 1$, while when there is a $2$ followed or preceded by something greater than $3$, the contribution is $\pm 2$ as claimed in the lemma above. The contributions of a $3$ in the third line when followed of preceded by something greater than $2$ get simplified (except if there is a $3$ at the very end); when a $3$ is followed resp. preceded by a $1$ however, we assimilate it to the contribution of a $1$ preceded resp. followed by a $3$; which leads exactly to the penultimate lemma.\\
Additionally to the cuts listed above:\\
\includegraphics[]{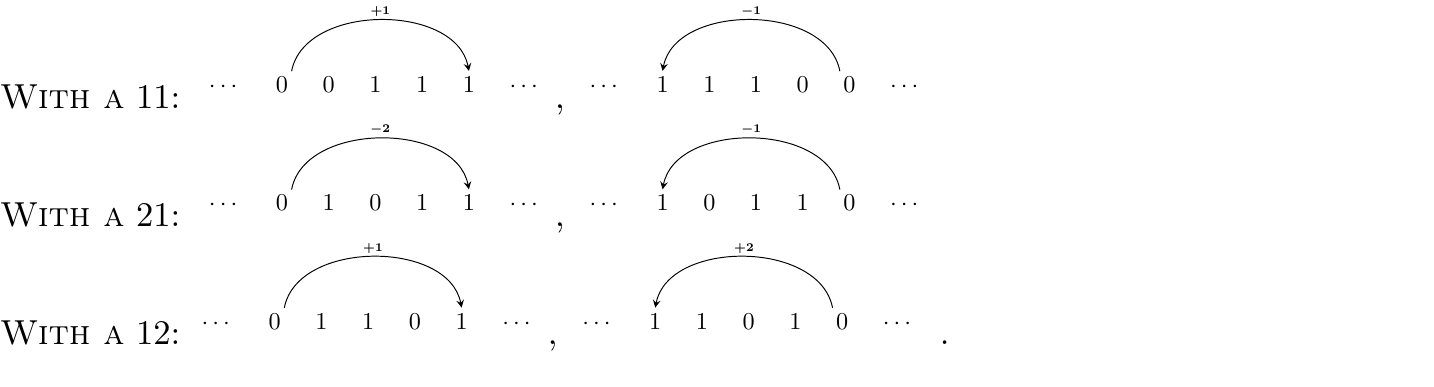}
This analysis leads to the given formula.\\
\end{proof}
In particular, a sequence of the type $\boldsymbol{Y12X}$ resp. $\boldsymbol{X21Y}$ ($X \geq 2, Y \geq 3$) will imply a $(3)$ resp. $(-3)$ times the coefficient of the same sequence without $\boldsymbol{ 12}$, resp. $\boldsymbol{ 21}$.\\
\\
\\
\texttt{{\Large Examples:}} Let list a few families of multiple zeta values for which we have computed explicitly the coefficient $\alpha$ of maximal depth:\\
\\
\begin{tabular}{| c | c | c | }
\hline
Family & Recursion relation & Coefficient $\alpha$\\
\hline
$\zeta^{\mathfrak{m}}(\lbrace 3\rbrace^{p})$ & $\alpha_{\lbrace 3\rbrace^{ p}}=\alpha_{\lbrace 3\rbrace^{ p-1}}$ & $1$\\
$\zeta^{\mathfrak{m}}(\lbrace 1,2\rbrace^{p})$ & $\alpha_{\lbrace 1,2 \rbrace^{ p}}=\alpha_{\lbrace 1,2 \rbrace^{ p-1}}$ & $1$\\
$\zeta^{\mathfrak{m}}(2,4,3\lbrace 3\rbrace^{p})$ & $\alpha_{2,4,\lbrace 3\rbrace^{p}}=\alpha_{2,4,\lbrace 3\rbrace^{p-1}}+2 \alpha_{\lbrace 3\rbrace^{p+1}} $ & $2(p+1)$\\
$\zeta^{\mathfrak{m}}(4,2,\lbrace 3\rbrace^{p})$ & $\alpha_{4,2,\lbrace 3\rbrace^{p}}=3\alpha_{4,2,\lbrace 3\rbrace^{p-1}}-2 \alpha_{\lbrace 3\rbrace^{p+1}} $ & $-3^{p+1}+1$\\
$\zeta^{\mathfrak{m}}(\lbrace 3\rbrace^{p},4,2)$ & $\alpha_{\lbrace 3\rbrace^{p},4,2}=-2\alpha_{\lbrace 3\rbrace^{p+1}} $ & $-2$\\
$\zeta^{\mathfrak{m}}(\lbrace 3\rbrace^{p},2,4)$ & $\alpha_{\lbrace 3\rbrace^{p},2,4}=2\alpha_{\lbrace 3\rbrace^{p-1},2,4}-2 \alpha_{\lbrace 3\rbrace^{p+1}} $ & $(-2)^{p}\frac{4}{3}+\frac{2}{3}$\\
$\zeta^{\mathfrak{m}}(2,\lbrace 3\rbrace^{p},4)$ & $\alpha_{2,\lbrace 3\rbrace^{p},4}=2\alpha_{2,\lbrace 3\rbrace^{p-1},4} $ & $2^{p+1}$\\
$\zeta^{\mathfrak{m}}(4,\lbrace 3\rbrace^{p},2)$ & $\alpha_{4,\lbrace 3\rbrace^{p},2}=-2\alpha_{4,\lbrace 3\rbrace^{p-1},2} $ & $(-2)^{p+1}$\\
$\zeta^{\mathfrak{m}}(1,5,\lbrace 3\rbrace^{p})$ & $\alpha_{1,5,\lbrace 3\rbrace^{p}}=\alpha_{1,5,\lbrace 3\rbrace^{p-1}}-1 $ & $-(p+1)$\\
$\zeta^{\mathfrak{m}}(\lbrace 2\rbrace^{p},\lbrace 4\rbrace^{p})$ & $\alpha_{\lbrace 2\rbrace^{p},\lbrace 4\rbrace^{p}}=  4 \alpha_{\lbrace 2\rbrace^{p-1},\lbrace 4\rbrace^{p-1}}$ & $2^{2p-1}$\\
$\zeta^{\mathfrak{m}}(\lbrace 2\rbrace^{p},\lbrace 3\rbrace^{a} \lbrace 4\rbrace^{p})$ & $\alpha_{\lbrace 2\rbrace^{p},\lbrace 3\rbrace^{a} \lbrace 4\rbrace^{p}}= 2^{a}\alpha_{\lbrace 2\rbrace^{p},\lbrace 4\rbrace^{p}}$ & $2^{a+2p-1}$\\
$\zeta^{\mathfrak{m}}(\lbrace 2\rbrace^{p},p+3)$ & $\alpha _{\lbrace 2\rbrace^{p},p+3}= 2\alpha _{\lbrace 2\rbrace^{p-1},p+1}$  & $2^{p}$\\
$\zeta^{\mathfrak{m}}(2,3,4,\lbrace 3\rbrace^{p})$ & $\alpha_{2,3,4,\lbrace 3\rbrace^{p}}=\alpha_{2,3,4,\lbrace 3\rbrace^{p-1}}+ 2 \alpha_{2,4,\lbrace 3\rbrace^{p}}$ & $2(p+1)(p+2)$\\
$\zeta^{\mathfrak{m}}(2,1,5,4,\lbrace 3\rbrace^{p})$ & $\alpha_{2,1,5,4,\lbrace 3\rbrace^{p}}=\alpha_{2,1,5,4,\lbrace 3\rbrace^{p-1}}- \alpha_{2,3,4,\lbrace 3\rbrace^{p}}$ & $-\frac{2(p+1)(p+2)(p+3)}{3}$\\
$\zeta^{\mathfrak{m}}(\lbrace 2\rbrace^{a}, a+3, \lbrace 3\rbrace^{b})$ & $\alpha_{a; b}=2 \alpha_{a-1;b}+\alpha_{a;b-1 }$ & $2^{a}\binom{a+b}{a} $\\
$\zeta^{\mathfrak{m}}(\lbrace 5,1\rbrace^{n})\text{ with } 3$\footnotemark[1] & $\alpha= \sum_{i=1}^{2p-1} (-1)^{i-1} \alpha_{\lbrace 5,1\rbrace^{p \text{ or } p-1} \text{with } 3} $ \footnotemark[2] & 1\\ \hline
\end{tabular}\\
\footnotetext[1]{Any $\zeta^{\mathfrak{m}}(\lbrace 5,1\rbrace^{p})$ where we have inserted some $3$ in the subsequence.}
\footnotetext[2]{Either a $3$ has been removed, either a $5,1$ resp. $1,5$ has been converted into a $3$ (with a sign coming from if we consider the elements before or after a $1$). If it ends with $3$, the contribution of a $3$ cancel with the contribution of the last $1$.}
\\
\\
\textsc{For instance}, for the coefficient $\alpha_{a;b;c}$ associated to $\zeta^{\mathfrak{m}}(\lbrace 3\rbrace^{a},2, \lbrace 3\rbrace^{b}, 4, \lbrace 3\rbrace^{c} )$, the recursive relation is:
\begin{equation}
\alpha_{a;b;c}=\alpha_{a;b;c-1}+2\alpha_{a;b-1;c}-2\alpha_{a-1;b;c}, \quad \text{ which leads to the formula:}
\end{equation}
\begin{multline}\nonumber 
\alpha_{a;b;c}= (-2)^{a} \sum_{l=0}^{b-1} \sum_{m=0}^{c-1} 2^{l}\frac{(a+l+m-1) !}{(a-1) ! l ! m! } \alpha_{0;b-l;c-m}  + 2^{b} \sum_{k=0}^{a-1} \sum_{m=0}^{c-1} (-2)^{k}  \frac{(b+k+m-1) !}{k !(b-1) ! m! } \alpha_{a-k;0;c-m}\\
 + \sum_{l=0}^{b-1} \sum_{k=0}^{a-1} (-2)^{k} 2^{l} \frac{(k+l+c-1) !}{k! l ! (c-1)! } \alpha_{a-k;b-l;0}.
\end{multline}
Besides, we can also obtain very easily:
$$\hspace*{-0.5cm}\alpha_{a;0;0}= (-2)^{a}\frac{4}{3}+\frac{2}{3} \text{,} \quad \alpha_{0;b;0}=2^{b+1}, \quad  \alpha_{0;0;c}= 2(c+1), \quad \text{ and }  \quad \alpha_{0;b;c}=2^{b+1}\binom{b+c+1}{c} $$
Indeed, using $\sum_{k=0}^{n}\binom{a+k}{a}= \binom{n+a+1}{a+1}$, and $\sum_{k=0}^{n}(n-k) \binom{k}{a}= \binom{n+1}{a+2}$, we deduce:
$$\begin{array}{lll}
\alpha_{0;b;c} &=  2\alpha_{0;b-1;c}+\alpha_{0;b;c-1}& = 2^{b+1} \left(  \sum_{k=0}^{c-1} \binom{b+k-1}{b-1}(c-k+1) + \sum_{k=0}^{b-1}\binom{c+k-1}{c-1} \right)\\
& & =2^{b+1} \left( \binom{b+c+1}{b+1}-\binom{b+c-1}{b-1} + \binom{b+c-1}{b-1}\right)  = 2^{b+1}\binom{b+c+1}{c}.
\end{array}$$
\\
\texttt{Conjectured examples:} \\
\\
\begin{center}
\begin{tabular}{| c | c | }
\hline
Family  & Conjectured coefficient $\alpha$\\
\hline
$\zeta^{\mathfrak{m}}(\lbrace 2, 4 \rbrace^{p })$ & $\alpha_{p}$ such that $ 1-\sqrt{cos(2x)}=\sum \frac{\alpha_{p}(-1)^{p+1} x^{2p}}{(2p)!}$ \\
$\zeta^{\mathfrak{m}}(\lbrace 1, 5 \rbrace^{p})$ &  Euler numbers:  $\frac{1}{cosh(x)}=\sum \frac{\alpha_{p} x^{2p}}{(2p)!}$ \\
$\zeta^{\mathfrak{m}}(\lbrace 1, 5 \rbrace^{p}, 3)$ & $(-1)^{p} \text{Euler ZigZag numbers } E_{2p+1} $ $= 2^{2p+2}(2^{2p+2}-1)\frac{ B_{2p+2}}{2p+2} $ \\ \hline
\end{tabular}
\end{center}

\subsection{$\boldsymbol{N>1}$, The diagonal algebra.}

For $N=2,3,4$, $gr^{\mathfrak{D}}_{d} \mathcal{H}_{d}$ is $1$ dimensional, generated by $\zeta^{\mathfrak{m}}({ 1 \atop \xi})$, where $\xi\in\mu_{N}$ primitive fixed, which allows us to consider the projection:
\begin{equation}
\vartheta^{N} : gr_{d}^{\mathfrak{D}} \mathcal{H}_{d}^{1} \rightarrow \mathbb{Q}.
\end{equation}
Giving a motivic multiple zeta value relative to $\mu_{N}$, of weight $d$, depth $d$, there exists a rational such that:
\begin{framed}
\begin{equation}\label{eq:zeta1d}
\zeta^{\mathfrak{m}}\left(1, \ldots, 1 \atop \epsilon_{1} , \ldots, \epsilon_{d} \right) = \frac{\alpha_{\boldsymbol{\epsilon}}} {n!} \zeta^{\mathfrak{m}}\left( 1 \atop \xi \right) ^{d} + \text{ terms of depth strictly smaller than } d.
\end{equation} 
\end{framed}
The coefficient $\alpha$ being calculated recursively, using depth $1$ results:
\begin{lemm}
$$ \hspace*{-0.5cm}\alpha_{\epsilon_{1}, \ldots, \epsilon_{d}}= \left\lbrace \begin{array}{ll}
1 & \text{if } \boldsymbol{N\in \lbrace 2,3 \rbrace}.\\
 \sum_{k=1 \atop \epsilon_{k}\neq 1 }^{d} \beta_{\epsilon_{k}}\left(  \delta_{\epsilon_{k-1}\epsilon_{k}\neq 1} \alpha_{\epsilon_{1}, \ldots, \epsilon_{k-1}\epsilon_{k},\epsilon_{k+1}, \ldots, \epsilon_{d}}  -\delta_{\epsilon_{k+1}\epsilon_{k}\neq 1 \atop k < d} \alpha_{\epsilon_{1}, \ldots, \epsilon_{k-1},\epsilon_{k+1}\epsilon_{k}, \ldots, \epsilon_{d}} \right) & \text{if } \boldsymbol{N=4}.
\end{array} \right. $$
\begin{flushright}
with $\beta_{\epsilon_{k}}= \left\lbrace \begin{array}{ll}
2 & \text{ if } \epsilon_{k}=-1\\
1 & \text{ else}
\end{array}\right. $.
\end{flushright}
\end{lemm}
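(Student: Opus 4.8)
The statement computes the coefficient $\alpha_{\boldsymbol{\epsilon}}$ of the top-depth term $\zeta^{\mathfrak{m}}\left( 1 \atop \xi\right)^{d}/d!$ in $\zeta^{\mathfrak{m}}\left( {1,\ldots,1}\atop{\epsilon_1,\ldots,\epsilon_d}\right)$ for $N=2,3,4$. The plan is to proceed exactly as in the weight-$3d$ case (Lemmas $6.1.1$ and $6.1.2$): project onto $gr_d^{\mathfrak{D}}\mathcal{H}_d$, which is one-dimensional, and track the iterated action of the depth-graded derivations. Since every $n_i=1$, the only derivations that can act without immediately killing the element (for weight reasons) are the $D_{1,p}$'s, more precisely the components $D^{\eta}_{1,p}$ for $\eta$ ranging over a basis of $gr_1^{\mathfrak{D}}\mathcal{L}_1$. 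So the recursion will come out of computing $D_{1,p}$ on a depth-$d$ motivic iterated integral with entries in $\{0,\mu_N\}$, using the explicit formula of Lemma $\ref{Drp}$ specialized to $r=1$ and all $n_i=1$.

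\textbf{Key steps.} First I would write down $D_{1,p}\left(\zeta^{\mathfrak{m}}\left({1,\ldots,1}\atop{\epsilon_1,\ldots,\epsilon_d}\right)\right)$ using Lemma $\ref{Drp}$: with $n_i=1$ for all $i$ and $r=1$, the terms of types \textsc{(a),(b),(d)} collapse (the binomial constraints $\delta_{n_i\le r<n_i+n_{i\pm1}-1}$ are vacuous when $n_i=n_{i\pm1}=1$), and the surviving contributions are the \textsc{(c)}- and \textsc{(d')}-type deconcatenation-at-a-root terms, i.e. cuts between two consecutive roots of unity $\epsilon_{k-1},\epsilon_k$ with $\epsilon_{k-1}\epsilon_k\neq 1$, each producing $\pm\,\zeta^{\mathfrak{l}}\left({1}\atop{\epsilon_{k-1}^{-1} \text{ or } \epsilon_k}\right)\otimes\zeta^{\mathfrak{m}}\left({\cdots,1,\cdots}\atop{\cdots,\epsilon_{k-1}\epsilon_k,\cdots}\right)$. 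Second, I would use the depth-$1$ relations (stated in \S2.4.3 and made explicit in \S5.2.1 for $N=2,3,4$) to express each $\zeta^{\mathfrak{l}}\left({1}\atop{\eta}\right)$ as a rational multiple of the chosen generator $\zeta^{\mathfrak{l}}\left({1}\atop{\xi}\right)$; this introduces the weights $\beta_{\epsilon_k}$. For $N=2$ there is a single root $-1$, the conjugation relation in weight $1$ is trivial, and the two terms of each cut both land on $\zeta^{\mathfrak{l}}\left({1}\atop{-1}\right)$, so one gets a clean recursion whose solution is $\alpha\equiv 1$ (checked directly by induction on $d$). For $N=3$, the two weight-$1$ generators $\xi$ and $\xi^{-1}$ are identified by conjugation ($\zeta^{\mathfrak{l}}\left({1}\atop{\xi}\right)=\zeta^{\mathfrak{l}}\left({1}\atop{\xi^{-1}}\right)$, since $(-1)^{r-1}=1$ for $r=1$), and again the recursion telescopes to $\alpha\equiv 1$. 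For $N=4$, the primitive and non-primitive roots are genuinely different, $-1$ gets the factor $\beta_{-1}=2$ from $\zeta^{\mathfrak{l}}\left({1}\atop{-1}\right)=2\,\zeta^{\mathfrak{l}}\left({1}\atop{i}\right)$ (weight-$1$ distribution relation for $N=4$), and one obtains the displayed recursive formula. Third, I would assemble these: iterating $D_{1,d}$ down to depth $0$ computes $(D_{1})^{\circ d}$ applied to the element, which by the kernel-of-$D_{<n}$ theorem (Theorem $2.4.4$, or rather its depth-graded consequence) exactly extracts $d!$ times the coefficient of $\zeta^{\mathfrak{m}}\left({1}\atop{\xi}\right)^{d}$; hence the recursion for $\alpha_{\boldsymbol\epsilon}$ has the stated form, and the $N=2,3$ cases are settled by the elementary induction giving $\alpha\equiv 1$. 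Finally I would note the analogue passes to complex MZV via the period map, as in the previous lemmas.

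\textbf{Main obstacle.} The routine but delicate point is the sign bookkeeping in specializing Lemma $\ref{Drp}$: one must carefully match, for a cut between positions $k-1$ and $k$, which term carries $\zeta^{\mathfrak{l}}\left({1}\atop{\epsilon_{k-1}^{-1}}\right)$ versus $\zeta^{\mathfrak{l}}\left({1}\atop{\epsilon_k}\right)$ and with which sign $(-1)^{n_k}=(-1)^{1}=-1$ resp. $(-1)^{n_{k-1}-1}=1$, and then fold the conjugation relation (which is sign-free in weight $1$) and the distribution relation back in. The second subtlety is justifying that all other derivations $D_{r,d}$ with $r\ge 2$, and the non-root cuts of $D_1$, contribute nothing in the depth-graded top-degree projection — this follows because such cuts strictly lower the depth by more than one or produce a left-hand factor that is not of depth $1$, hence vanishes modulo $\mathcal{F}^{\mathfrak{D}}_{d-1}$ after projection, but it should be spelled out. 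Once these two points are handled, the recursion and its closed-form solution for $N=2,3$ are immediate, and the $N=4$ recursion is simply recorded as stated.
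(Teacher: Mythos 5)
Your overall strategy is exactly what the paper intends: the paper's "proof" of this lemma is a one-line deferral to the method of the preceding section ($N=1$, $w=3d$), i.e. specialize the depth-graded derivation formula of Lemma $\ref{Drp}$ to $r=1$, $n_{i}=1$, use the weight-one depth-one relations ($\zeta^{\mathfrak{l}}\left(1\atop 1\right)=0$, conjugation, and $\zeta^{\mathfrak{l}}\left(1\atop -1\right)=2\zeta^{\mathfrak{l}}\left(1\atop \xi_{4}\right)$ for $N=4$, whence $\beta_{-1}=2$) to project onto the generator, and iterate $D_{1}$ through the one-dimensional space $gr^{\mathfrak{D}}_{d}\mathcal{H}_{d}$. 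So there is no divergence of method to report.

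One concrete slip in your write-up, though: you assert that only the type \textsc{(c)} and \textsc{(d')} cuts survive, i.e. cuts merging two consecutive roots $\epsilon_{k-1},\epsilon_{k}$ with $\epsilon_{k-1}\epsilon_{k}\neq 1$. The initial term \textsc{(a0)} of Lemma $\ref{Drp}$ also survives (here $r=n_{1}=1$): it is the cut $I^{\mathfrak{l}}(0;\eta_{1};\eta_{2})$ removing the first root, contributing $\zeta^{\mathfrak{l}}\left(1\atop \epsilon_{1}\right)\otimes\zeta^{\mathfrak{m}}\left(1,\ldots,1\atop\epsilon_{2},\ldots,\epsilon_{d}\right)$, i.e. the $k=1$ boundary term $\beta_{\epsilon_{1}}\delta_{\epsilon_{1}\neq 1}\,\alpha_{\epsilon_{2},\ldots,\epsilon_{d}}$ of the stated recursion. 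This term is not optional: for $N=2$ with all $\epsilon_{k}=-1$ every consecutive product $\epsilon_{k-1}\epsilon_{k}$ equals $1$, so all \textsc{(c)}/\textsc{(d')} cuts vanish and your version of the recursion would give $\alpha=0$ instead of $1$; it is precisely the \textsc{(a0)} term that drives the induction $\alpha\equiv 1$ for $N=2,3$. Relatedly, your $N=2$ remark that "the two terms of each cut both land on $\zeta^{\mathfrak{l}}\left(1\atop -1\right)$" is not right: in a surviving interior cut one of the two roots is $1$ and its contribution $\zeta^{\mathfrak{l}}\left(1\atop 1\right)$ vanishes. Both points are repaired automatically if you carry out your own first step literally (specialize Lemma $\ref{Drp}$ term by term), so the gap is in the bookkeeping, not in the approach.
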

\begin{proof}
In regards to redundancy, the proof being in the same spirit than the previous section ($N=1, w=3d$), is left to the reader.\footnote{The cases $N=2,3$ correspond to the case $N=4$ with $\beta$ always equal to $1$.} 
\end{proof}
\textsc{Remarks:} 
\begin{itemize}
\item[$\cdot$] For the following categories, the space $gr^{\mathfrak{D}}_{d} \mathcal{H}_{d}$ is also one dimensional:
$$\mathcal{MT}\left(  \mathcal{O}_{6}\left[ \frac{1}{2}\right] \right) ,\quad \mathcal{MT}\left( \mathcal{O}_{6}\left[ \frac{1}{3}\right] \right) , \quad \mathcal{MT}\left( \mathcal{O}_{5}\right) , \mathcal{MT}\left( \mathcal{O}_{10}\right) , \quad \mathcal{MT}\left( \mathcal{O}_{12}\right) .$$
The recursive method to compute the coefficient of $\zeta^{\mathfrak{m}}\left( 1 \atop \eta\right) ^{d}$ would be similar, except that we do not know a proper basis for these spaces.
\item[$\cdot$] For $N=1$, and $w\equiv 2 \mod 3$ for instance, $gr^{\mathfrak{D}}_{\max}\mathcal{H}_{n}$ is generated by the elements of the Euler $\sharp$ sums basis:\\
$\zeta^{\sharp, \mathfrak{m}} (1, \boldsymbol{s}, \overline{2})$ with $\boldsymbol{s}$ composed of $3$'s and one $5$, $\zeta^{\sharp, \mathfrak{m}} (3, 3 , \ldots, 3, \overline{2})$ and $\zeta^{\sharp, \mathfrak{m}} (1, 3 , \ldots, 3, \overline{4})$.
\end{itemize}

\section{Families of unramified Euler sums.}

The proof relies upon the criterion $\ref{criterehonoraire}$,, which enables us to construct infinite families of unramified Euler sums with parity patterns by iteration on the depth, up to depth $5$.\\
\\
\texttt{Notations:} The occurrences of the symbols $E$ or $O$ can denote arbitrary even or odd integers, whereas every repeated occurrence of symbols $E_{i}$  (respectively $O_{i}$) denotes the same positive even (resp. odd) integer. The bracket $\left\{\cdot, \ldots, \cdot \right\}$ means that every permutation of the listed elements is allowed.
\begin{theo}
The following motivic Euler sums are unramified, i.e. motivic MZV:\footnote{Beware, here, $\overline{O}$ and $\overline{n}$ must be different from $\overline{1}$, whereas $O$ and $n$ may be 1. There is no $\overline{1}$ allowed in these terms if not explicitly written.} \\
\\
\hspace*{-0.5cm} \begin{tabular}{| c | l | l | }
           \hline  
     &  \textsc{Even Weight} & \textsc{Odd Weight}  \\       \hline
\textsc{Depth } 1  &  \text{ All } &  \text{ All }\footnotemark[1] \\    
         \hline    
\textsc{Depth } 2   & $\zeta^{\mathfrak{m}}(\overline{O},\overline{O}),  \zeta^{\mathfrak{m}}(\overline{E},\overline{E})$ & \text{ All }   \\        
         \hline              
\multirow{2}{*}{ \textsc{Depth } 3 } & $\zeta^{\mathfrak{m}}(\left\{E,\overline{O},\overline{O}\right\}), \zeta^{\mathfrak{m}}(O,\overline{E},\overline{O}), \zeta^{\mathfrak{m}}(\overline{O},\overline{E}, O)$  &  $\zeta^{\mathfrak{m}}(\left\{\overline{E},\overline{E},O\right\}), \zeta^{\mathfrak{m}}(\overline{E},\overline{O},E), \zeta^{\mathfrak{m}}(E,\overline{O},\overline{E})$ \\  
 & $ \zeta^{\mathfrak{m}}(\overline{O_{1}}, \overline{E},\overline{O_{1}}), \zeta^{\mathfrak{m}}(O_{1}, \overline{E},O_{1}), \zeta^{\mathfrak{m}}(\overline{E_{1}}, \overline{E},\overline{E_{1}}) .$ & \\
         \hline              
\multirow{2}{*}{ \textsc{Depth } 4 }   &  $\zeta^{\mathfrak{m}}(E,\overline{O},\overline{O},E),\zeta^{\mathfrak{m}}(O,\overline{E},\overline{O},E), $ &   \\   
& $\zeta^{\mathfrak{m}}(O,\overline{E},\overline{E},O), \zeta^{\mathfrak{m}}(E,\overline{O},\overline{E},O)$ & \\
         \hline              
 \textsc{Depth } 5     &   & $\zeta^{\mathfrak{m}}(O_{1}, \overline{E_{1}},O_{1},\overline{E_{1}}, O_{1}).$  \\     
    \hline
  \end{tabular}
Similarly for these linear combinations, in depth $2$ or $3$:
  $$\zeta^{\mathfrak{m}}(n_{1},\overline{n_{2}}) +  \zeta^{\mathfrak{m}}(\overline{n_{2}},n_{1}) , \zeta^{\mathfrak{m}}(n_{1},\overline{n_{2}}) +  \zeta^{\mathfrak{m}}(\overline{n_{1}},n_{2}), \zeta^{\mathfrak{m}}(n_{1},\overline{n_{2}}) -  \zeta^{\mathfrak{m}}(n_{2}, \overline{n_{1}}) .$$
		$$(2^{n_{1}}-1) \zeta^{\mathfrak{m}}(n_{1},\overline{1}) +  (2^{n_{1}-1}-1) \zeta^{\mathfrak{m}}(\overline{1},n_{1}).$$
				$$ \zeta^{\mathfrak{m}}(n_{1},n_{2},\overline{n_{3}}) + (-1)^{n_{1}-1}  \zeta^{\mathfrak{m}}(\overline{n_{3}},n_{2},n_{1}) \text{ with } n_{2}+n_{3} \text{ odd }.$$
\end{theo}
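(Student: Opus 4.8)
The plan is to apply Criterion~\ref{criterehonoraire} to each of the listed motivic Euler sums: a motivic Euler sum $\mathfrak{Z}\in\mathcal{H}^2$ is a motivic MZV if and only if $D_1(\mathfrak{Z})=0$ and $D_{2r+1}(\mathfrak{Z})\in\mathcal{H}^1$ for all $r$. The first condition will be handled once and for all by Lemma~\ref{condd1}: since none of the sequences in the iterated-integral representations contains a $\overline{1}$ (the only way a $\lbrace 0,\epsilon,-\epsilon\rbrace$ or $\lbrace \epsilon,-\epsilon,0\rbrace$ pattern can arise in these forms is through an argument equal to $-1$, and we have excluded $\overline{1}$ throughout), we get $D_1(\mathfrak{Z})=0$ immediately. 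So the entire weight of the argument sits in the second condition, which I would prove by induction on the depth $p$, using the explicit formula for $D_{r,p}$ in Lemma~\ref{Drp} (or equivalently the depth-graded version): the point is that each term appearing in $D_{2r+1}(\mathfrak{Z})$ is, up to a depth-$1$ coalgebra factor $\zeta^{\mathfrak{l}}(2r+1;\epsilon)$ (which lands in $gr_1^{\mathfrak{D}}\mathcal{A}$ and is always a multiple of $\zeta^{\mathfrak{l}}(2r+1)$, hence "$\mathcal{H}^1$-type" on the left), a motivic Euler sum of strictly smaller depth. One then checks that the parity pattern is such that every one of those smaller-depth terms again lies in the list (or is a depth-$1$ object, hence trivially in $\mathcal{H}^1$), so the induction closes.

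\textbf{Order of steps.} First I would dispose of depth $1$ (all depth-$1$ motivic Euler sums not equal to $\zeta^{\mathfrak{m}}(\overline{1})$ are covered: $\zeta^{\mathfrak{m}}(\overline{2n})$ is rational times $(\pi^{\mathfrak{m}})^{2n}$ by Lemma~\ref{lemmcoeff}$(o)$ and $\zeta^{\mathfrak{m}}(\overline{2n+1})=(2^{-2n}-1)\zeta^{\mathfrak{m}}(2n+1)$, and $\zeta^{\mathfrak{m}}(2n+1)\in\mathcal{H}^1$ by definition). Then, for depth $2$: apply $D_{2r+1}$ via Lemma~\ref{Drp}; the surviving terms on the right are depth-$1$ Euler sums, and the parity constraints (e.g. $\overline{O},\overline{O}$ forces, under a cut, an argument whose sign and parity keep it in the depth-$1$ unramified set, while a mixed-parity pair like $(O,\overline{E})$ would produce a forbidden $\overline{1}$ or ramified depth-$1$ term) are exactly what the listed entries are engineered to guarantee. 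Then depth $3$, $4$, $5$ in turn, each reducing to cases already established at lower depth. The linear-combination statements are handled the same way: one computes $D_1$ and $D_{2r+1}$ of the combination and checks the combination is chosen so that the ramified pieces cancel — these are precisely the relations coming from Lemma~\ref{lemmlor} (the Antipode and hybrid relations in $\mathcal{L}$, specialized to small depth), so I would cite those rather than recompute.

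\textbf{The main obstacle.} The genuinely delicate part is the bookkeeping of parities under the derivations $D_{2r+1}$: a cut of length $2r+1$ between positions $i$ and $j$ in $\zeta^{\mathfrak{m}}(n_1,\dots,n_p\,;\,\epsilon_1,\dots,\epsilon_p)$ replaces two consecutive entries $n_i,n_{i+1}$ (say) by $n_i+n_{i+1}-(2r+1)$ with sign $\epsilon_i\epsilon_{i+1}$, and one must verify that for every admissible $r$ and every position, the resulting $(p-1)$-tuple of (signed) integers either again matches a listed pattern or is one of the trivially-unramified degenerate cases (power of $\pi^{\mathfrak m}$, or depth $1$). The parity of $n_i+n_{i+1}-(2r+1)$ is the parity of $n_i+n_{i+1}+1$, and whether the new sign is "even-type $\overline{\;}$" or not depends on $\epsilon_i\epsilon_{i+1}$ and on that parity — so the verification is a finite but somewhat intricate case analysis, depth by depth. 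The key structural fact making it tractable is that every family in the table is \emph{stable} under the coaction modulo lower-depth and modulo $\mathcal{H}^1$-trivial terms; once that stability is observed (which is the real content, and is checked most cleanly in the depth-graded picture of Lemma~\ref{Drp} where $T_{0,0}$-type and $T_{0,\epsilon}$-type cuts drop out), the induction is automatic. I would also remark that, exactly as the text notes, the phenomenon stops at depth $5$ precisely because the stability fails beyond that — there is no deeper principle to extract, so the proof is complete once the five depths are checked.
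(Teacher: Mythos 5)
Your overall strategy (criterion \ref{criterehonoraire}, $D_{1}=0$ via Lemma \ref{condd1}, then recursion on depth through the coaction) is indeed the paper's strategy, but your proposal has a genuine gap: you never address the cuts of type $T_{\epsilon,-\epsilon}$ (the terms \textsc{(c)} in the formula used in the paper's proof, coming from Lemma \ref{lemmt}), namely the cuts over a consecutive block $(n_{i},\ldots,n_{j})$ of \emph{even} weight with $\prod_{k=i}^{j}\epsilon_{k}=-1$. These produce on the right-hand side of $D_{2r+1}$ an Euler sum containing a $\overline{1}$, and such a single Euler sum is never unramified; no amount of parity bookkeeping makes these terms land back in your list, so the induction does not close unless they \emph{cancel}. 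The paper's proof hinges exactly on this point: the entries of the table are designed so that every such block is palindromic, and then the two contributions $+\zeta^{\mathfrak{l}}_{\mid n_{i}\mid-1}(n_{i+1},\ldots,n_{j})$ and $-\zeta^{\mathfrak{l}}_{\mid n_{j}\mid-1}(n_{j-1},\ldots,n_{i})$ of the same cut cancel pairwise (condition \textsc{c}4 of the sufficient criterion). This is precisely why the repeated-index families $\zeta^{\mathfrak{m}}(\overline{O_{1}},\overline{E},\overline{O_{1}})$, $\zeta^{\mathfrak{m}}(O_{1},\overline{E},O_{1})$, $\zeta^{\mathfrak{m}}(\overline{E_{1}},\overline{E},\overline{E_{1}})$ and the depth-$5$ family $\zeta^{\mathfrak{m}}(O_{1},\overline{E_{1}},O_{1},\overline{E_{1}},O_{1})$ appear with equal entries: for them the dangerous blocks do occur, and only the symmetry kills them. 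Your proposal, which attributes everything to parity constraints plus ``stability'', cannot prove these entries, nor the linear combinations, where the paper's point is that the type-\textsc{(c)} cuts of the two summands simplify \emph{together} (citing Lemma \ref{lemmlor} is not what is needed there: that lemma lives in $\mathcal{L}$, i.e.\ concerns the left factors, whereas the issue is the right factor of the coaction).

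Two secondary inaccuracies would also need repair. First, the criterion requires the \emph{full} $D_{2r+1}(\mathfrak{Z})$ to have right side in $\mathcal{H}^{1}$, not just its depth-graded part: cuts collapse an arbitrary consecutive block $(n_{i},\ldots,n_{j})$ into a single entry $\prod\epsilon_{k}\cdot y$ (terms \textsc{a},\textsc{b}), and there is the initial cut \textsc{a0} whose right side $\zeta^{\mathfrak{m}}(n_{i+1},\ldots,n_{p})$ must itself be in the list (this is condition \textsc{c}2, and it genuinely constrains the families); your description in which only two adjacent entries merge, i.e.\ working only with $D_{r,p}$ of Lemma \ref{Drp}, understates what must be verified. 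Second, the remark that the left factor is a multiple of $\zeta^{\mathfrak{l}}(2r+1)$ is irrelevant to the criterion and does nothing for the argument. Once the $\overline{1}$-producing cuts and the symmetry cancellation are incorporated, the rest of your induction is the paper's case analysis and goes through.
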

\texttt{Examples}:
These motivic Euler sums are motivic multiple zeta values: 
$$\zeta^{\mathfrak{m}}(\overline{25}, 14,\overline{17}),\zeta^{\mathfrak{m}}(17, \overline{14},17), \zeta^{\mathfrak{m}}(\overline{24}, \overline{14},\overline{24}), \zeta^{\mathfrak{m}}(6, \overline{23}, \overline{17}, 10) , \zeta^{\mathfrak{m}}(13, \overline{24}, 13,\overline{24}, 13).$$
\textsc{Remarks:}
\begin{itemize}
\item[$\cdot$] This result for motivic ES implies the analogue statement for ES.
\item[$\cdot$] Notice that for each honorary MZV above, the reverse sum is honorary too, which was not obvious a priori, since the condition $\textsc{c}1$ below is not symmetric. 
\end{itemize}
\begin{proof} The proof amounts to the straight-forward control that $D_{1}(\cdot)=0$ (immediate) and that all the elements appearing in the right side of $D_{2r+1}$ are unramified, by recursion on depth: here, these elements satisfy the sufficient criteria given below. Let only point out a few things, referring to the expression $(\ref{eq:derhonorary})$:
\begin{description}
\item[\texttt{Terms} $\textsc{c}$:] The symmetry condition $(\textsc{c}4)$, obviously true for these single elements above, get rid of these terms. For the few linear combinations of MES given, the cuts of type (\textsc{c}) get simplified together.
\item[\texttt{Terms} $\textsc{a,b}$:] Checking that the right sides are unramified is straightforward by depth-recursion hypothesis, since only the (previously proven) unramified elements of lower depth emerge. For example, the possible right sides (not canceled by a symmetric cut and up to reversal symmetry) are listed below, for some elements from depth 3. \\
 \hspace*{-1cm} \begin{tabular}{| c | l | l | }
           \hline  
     &  Terms \textsc{a0} & Terms  \textsc{a,b}  \\       \hline
  $(O,\overline{E},\overline{E}) $  &   & $(\overline{E},\overline{E})$ ,$(O,O)$  \\   
  $(\overline{E},O,\overline{E}) $  &  / &   $(\overline{E},\overline{E})$ \\   
 $(E,\overline{O},\overline{E}) $  &  / &  $(\overline{E},\overline{E}), (E,E)$ \\  
         \hline              
    $(E,\overline{O},\overline{O},E) $  &  $(\overline{O},E)$ & $(\overline{E},\overline{O},E), (E,O,E),(E,\overline{O},\overline{E}), (E,O),(O,E)$ \\   
   $(O,\overline{E},\overline{O},E) $ &  $(\overline{E},\overline{O},E),(\overline{O},E)$ & $(\overline{E},\overline{O},E),(O,E,E),(O,\overline{E},\overline{E}), (O,E)$ \\
  $(O,\overline{E},\overline{E},O) $&   $(\overline{E},\overline{E},O) ,(\overline{E},O)$ & $(\overline{E},\overline{E},O), (O,O,O), (O,\overline{E},\overline{E}), (O,E), (E,O)$ \\
    $(\overline{E}, O_{1},\overline{E},O_{1}) $& $(\overline{E},O)$ & $(\overline{E},\overline{E},O) , (\overline{E},O,\overline{E}) ,(\overline{E},\overline{O}), (E,O)$  \\
$(\overline{E_{1}}, \overline{E_{2}},\overline{E_{1}}, \overline{E_{2}}) $& / & $(O,\overline{E},\overline{E}) ,(\overline{E},O,\overline{E}) ,(\overline{E},\overline{E},O) ,(\overline{E},\overline{O}),(\overline{O},\overline{E})$ \\
         \hline    
    $(O_{1}, \overline{E_{1}},O_{1},\overline{E_{1}}, O_{1})$ & $(\overline{E_{1}},O_{1},\overline{E_{1}}, O_{1}),$ & $(\overline{E},O_{1},\overline{E}, O_{1}), (O_{1}, \overline{E},\overline{E}, O_{1}), (O_{1}, \overline{E},O_{1},\overline{E}),$ \\
    & $(O_{1},\overline{E}, O_{1})$ & $(\overline{O},\overline{E},O),(O,\overline{E},\overline{O}), (O,O)$\\
    \hline
  \end{tabular}
\end{description}
It refers to the expression of the derivations $D_{2r+1}$ (from Lemma $\ref{drz}$$\ref{drz}$):
\begin{multline}\label{eq:derhonorary}
 D_{2r+1} \left(\zeta^{\mathfrak{m}} \left(n_{1}, \ldots , n_{p} \right)\right) = \textsc{(a0) }  \delta_{2r+1 = \sum_{k=1}^{i} \mid n_{k} \mid} \zeta^{\mathfrak{l}} (n_{1}, \ldots , n_{i}) \otimes \zeta^{\mathfrak{m}} (n_{i+1},\cdots n_{p}) \\
\textsc{(a,b) }   \sum_{1\leq i < j \leq p \atop 2r+1=\sum_{k=i}^{j} \mid n_{k}\mid  - y } \left\lbrace  \begin{array}{l}
  -\delta_{2\leq y \leq \mid n_{j}\mid } \zeta_{\mid n_{j}\mid -y}^{\mathfrak{l}} (n_{j-1}, \ldots ,n_{i+1}, n_{i}) \\
  +\delta_{2\leq y \leq \mid n_{i}\mid} \zeta_{\mid n_{i}\mid -y}^{\mathfrak{l}} (n_{i+1},  \cdots ,n_{j-1}, n_{j})
 \end{array} \right.  \otimes \zeta^{\mathfrak{m}} (n_{1}, \ldots, n_{i-1},\prod_{k=i}^{j}\epsilon_{k} \cdot y,n_{j+1},\cdots n_{p}). \\
\textsc{(c)  } + \sum_{1\leq i < j \leq p\atop {2r+2=\sum_{k=i}^{j} \mid n_{k}\mid} } \delta_{ \prod_{k=i}^{j} \epsilon_{k} \neq 1}  \left\lbrace  \begin{array}{l} 
+  \zeta_{\mid n_{i}\mid -1}^{\mathfrak{l}} (n_{i+1},  \cdots ,n_{j-1}, n_{j}) \\
- \zeta_{\mid n_{j}\mid -1}^{\mathfrak{l}} (n_{j-1},  \cdots ,n_{i+1}, n_{i})
 \end{array} \right. \otimes \zeta^{\mathfrak{m}} (n_{1}, \ldots, n_{i-1},\overline{1},n_{j+1},\cdots n_{p}).
\end{multline}
\end{proof}

\paragraph{Sufficient condition. }\label{sufficientcondition}
Let $\mathfrak{Z}= \zeta^{\mathfrak{m}}(n_{1}, \ldots, n_{p})$ a motivic Euler sum. These four conditions are \textit{sufficient} for $\mathfrak{Z}$ to be unramified:
\begin{description}
	\item [\textsc{c}1]: No $\overline{1}$ in $\mathfrak{Z}$.
	\item [\textsc{c}2]: For each $(n_{1}, \ldots, n_{i})$ of odd weight, the MES $\zeta^{\mathfrak{m}}(n_{i+1}, \ldots, n_{p})$ is a MMZV.
	\item [\textsc{c}3]: If a cut removes an odd-weight part (such that there is no symmetric cut possible), the remaining MES (right side in terms \textsc{a,b}), is a MMZV.
	\item [\textsc{c}4]: Each sub-sequence $(n_{i}, \ldots, n_{j})$ of even weight such that $\prod_{k=i}^{j} \epsilon_{k} \neq 1$ is symmetric.
\end{description}
\begin{proof}
The condition $\textsc{c}1$ implies that $D_{1}(\mathfrak{Z})=0$; conditions $\textsc{c}2$, resp. $\textsc{c}3$ take care that the right side of terms \textsc{a0}, resp. \textsc{a,b} are unramified, while the condition $\textsc{c}4$ cancels the (disturbing) terms \textsc{c}: indeed, a single ES with an $\overline{1}$ can not be unramified.\\
Note that a MES $\mathfrak{Z}$ of depth $2$, weight $n$ is unramified if and only if $ \left\lbrace \begin{array}{l}
D_{1}(\mathfrak{Z})=0\\
D_{n-1}(\mathfrak{Z})=0
\end{array}\right.$. 
\end{proof}
\noindent
\texttt{Nota Bene:} This criterion is not \textit{necessary}: it does not cover the unramified $\mathbb{Q}$-linear combinations of motivic Euler sums, such as those presented in section $4$, neither some isolated (\textit{symmetric enough}) examples where the unramified terms appearing in $D_{2r+1}$ could cancel between them. However, it embrace general families of single Euler sums which are unramified.\\
\\
Moreover, 
\begin{framed}
\emph{If we \textit{comply with these conditions}, the \textit{only} general families of single MES obtained are the one listed in Theorem $6.2.1$.}
\end{framed}
\begin{proof}[Sketch of the proof]
Notice first that the condition \textsc{c}$4$ implies in particular that there are no consecutive sequences of the type (since it would create type $\textsc{c}$ terms):
$$\textsc{ Seq. not allowed :  }  O\overline{O}, \overline{O}O, \overline{E}E, E\overline{E}.$$
It implies, from depth $3$, that we can't have the sequences (otherwise one of the non allowed depth $2$ sequences above appear in $\textsc{a,b}$ terms):
$$\textsc{ Seq. not allowed : }  \overline{E}\overline{E}\overline{O}, \overline{E}\overline{E}\overline{E}O, \overline{E}\overline{E}O\overline{E}, E\overline{O}\overline{E},EE\overline{O}, \overline{O}EE, \overline{E}OE, \overline{E}\overline{O}\overline{E}, \overline{O}\overline{O}\overline{O}.$$
Going on in this recursive way, carefully, leads to the previous theorem.\\
\texttt{For instance,} let $\mathfrak{Z}$ a MES of even weight, with no $\overline{1}$, and let detail two cases:
\begin{description}
\item[\texttt{Depth} $3$:]  The right side of $D_{2 r+1}$ has odd weight and depth smaller than $2$, hence is always MMZV if there is no $\overline{1}$ by depth $2$ results. It boils down to the condition $\textsc{c}4$: $\mathfrak{Z}$ must be either symmetric (such as $O_{1}E O_{1}$ or $E_{1}EE_{1}$ with possibly one or three overlines) either have exactly two overlines.  Using the analysis above of the allowed sequences in depth $2$ and $3$ for condition $\textsc{c3,4}$ leads to the following:
$$(E,\overline{O},\overline{O}),(\overline{O},\overline{O},E), (O,\overline{E},\overline{O}), (\overline{O},\overline{E}, O), (\overline{O},E,\overline{O}), (\overline{O_{1}}, \overline{E},\overline{O_{1}}), (O_{1}, \overline{E},O_{1}), (\overline{E_{1}}, \overline{E},\overline{E_{1}}) .$$
\item[\texttt{Depth} $4$:] Let $\mathfrak{Z}=\zeta^{\mathfrak{m}}\left( n_{1}, \ldots, n_{4}\right) $, $\epsilon_{i}=sign(n_{i})$. To avoid terms of type $
\textsc{c}$ with a right side of depth $1$: if $\epsilon_{1}\epsilon_{2}\epsilon_{3}\neq 1$, either $n_{1}+n_{2}+n_{3}$ is odd, or $n_{1}=n_{3}$ and $\epsilon_{2}=-1$;  if $\epsilon_{2}\epsilon_{3}\epsilon_{4}\neq 1$, either $n_{2}+n_{3}+n_{4}$ is odd, or $n_{2}=n_{4}$ and $\epsilon_{3}=-1$. The following sequences are then not allowed:
$$ (\overline{E}, O,O,\overline{E}), (\overline{E}, \overline{O},\overline{O},\overline{E}), (\overline{E}, \overline{O},E,O), (\overline{E}, \overline{E},O,O), (O,O,\overline{E}, \overline{E}).$$
\end{description}
\end{proof}

\section{Motivic Identities}

As we have seen above, in particular in Lemma $\ref{lemmcoeff}$, the coaction enables us to prove some identities between MMZV or MES, by recursion on the depth, up to one rational coefficient at each step. This coefficient can be deduced then, if we know the analogue identity for MZV, resp. Euler sums. Nevertheless, a \textit{motivic identity} between MMZV (resp. MES) is stronger than the corresponding relation between real MZV (resp. Euler sums); it may hence require several relations between MZV in order to lift an identity to motivic MZV. An example of such a behaviour occurs with some Hoffman $\star$ elements, ($(iv)$ in Lemma $\ref{lemmcoeff}$).\\
\\
In this section, we list a few examples of identities, picked from the zoo of existing identities, that we are able to lift easily from Euler sums to motivic Euler sums: \textit{Galois trivial} elements (action of the unipotent part of the Galois group being trivial), sums identities, etc. \\
\\
\texttt{Nota Bene}: For other cyclotomic MMZV, we could somehow generalize this idea, but there would be several unknown coefficients at each step, as stated in Theorem $2.4.4$. For $N=3$ or $4$, we have to consider all $D_{r}, 0<r<n$, and there would be one resp. two (if weight even) unknown coefficients at each step ; for $N=\mlq 6 \mrq$, if unramified, considering $D_{r}, r>1$, there would be also one or two unknown coefficients at each step. \\
\\
\texttt{Example:} Here is an identity known for Euler sums, proven at the motivic level by recursion on $n$ via the coaction for motivic Euler sums (and using the analytic identity):
\begin{equation}
\zeta^{\mathfrak{m}}(\lbrace 3 \rbrace^{n})= \zeta^{\mathfrak{m}}(\lbrace 1,2 \rbrace^{n}) = 8^{n} \zeta^{\mathfrak{m}}(\lbrace 1, \overline{2} \rbrace^{n}).
\end{equation}
\begin{proof}
These three families are stable under the coaction:
$$\begin{array}{lllll}
D_{2r+1} (\zeta^{\mathfrak{m}}(\lbrace 3 \rbrace^{n})) & = & \delta_{2r+1=3s} \zeta^{\mathfrak{a}}(\lbrace 3 \rbrace^{s})  & \otimes & \zeta^{\mathfrak{m}}(\lbrace 3 \rbrace^{n-s}) .\\
D_{2r+1} (\zeta^{\mathfrak{m}}(\lbrace 1,2 \rbrace^{n})) & = & \delta_{2r+1=3s} \zeta^{\mathfrak{a}}(\lbrace 1,2 \rbrace^{s}) & \otimes & \zeta^{\mathfrak{m}}(\lbrace 1,2 \rbrace^{n-s}) .\\
D_{2r+1} (\zeta^{\mathfrak{m}}(\lbrace 1,\overline{2} \rbrace^{n})) & = & \delta_{2r+1=3s} \zeta^{\mathfrak{a}}(\lbrace 1,\overline{2} \rbrace^{s}) & \otimes & \zeta^{\mathfrak{m}}(\lbrace 1,\overline{2} \rbrace^{n-s}) .
\end{array}$$
Indeed, in both case, in the diagrams below, cuts $(3)$ and $(4)$ are symmetric and get simplified by reversal, as cuts $(1)$ and $(2)$, except for last cut of type $(1)$ which remains alone:\\
\includegraphics[]{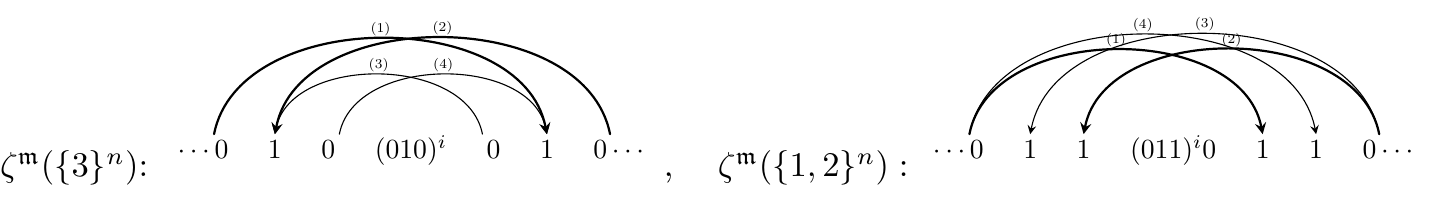}\\
Similarly for $\zeta^{\mathfrak{m}}(\lbrace 1,\overline{2} \rbrace^{n})$: cuts of type $(3)$, $(4)$ resp. $(1), (2)$ get simplified together, except the first one, when $\epsilon=\epsilon'$ in the diagram below. The other possible cuts of odd length would be $(5)$ and $(6)$ below, when $\epsilon=-\epsilon'$, but each is null since antisymmetric.\\
\includegraphics[]{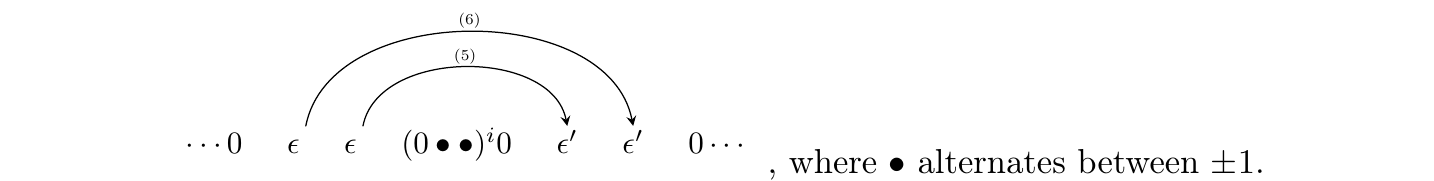}
\end{proof}

\paragraph{Galois trivial.}
The Galois action of the unipotent group $\mathcal{U}$ is trivial on $\mathbb{Q}[\mathbb{L}^{\mathfrak{m}, 2n}]= \mathbb{Q}[\zeta^{\mathfrak{m}}(2)^{n}]$. To prove an element of $\mathcal{H}_{2n}$ is a rational multiple of $\mathbb{L}^{\mathfrak{m},2n}$, it is equivalent to check it is in the kernel of the derivations $D_{2r+1}$, for $1\leq 2r+1<2n$, by Corollary $\ref{kerdn}$. We have to use the (known) analogue identities for MZV to conclude on the value of such a rational.\\
\\
\texttt{Example:} 
\begin{itemize}
\item[$\cdot$] Summing on all the possible ways to insert n $\boldsymbol{2}$'s.
\begin{equation}
\zeta^{\mathfrak{m}}(\left\lbrace 1,3 \right\rbrace^{p} \text{with n } \boldsymbol{2} \text{ inserted } )= \binom{2p+n}{n} \frac{\pi^{4p+2n, \mathfrak{m}}}{(2n+1) (4p+2n+1)!}.
\end{equation}
\item[$\cdot$] More generally, with fixed $(a_{i})$ such that  $\sum a_{i}=n$: \footnote{Both appears also in Charlton's article.$\cite{Cha}$.}
\begin{equation}
\sum_{\sigma\in\mathfrak{S}_{2p}} \zeta^{\mathfrak{m}}(2^{a_{\sigma(0)}} 1 , 2^{a_{\sigma(1)}}, 3, 2^{a_{\sigma(2)}}, \ldots, 1, 2^{a_{\sigma(2p-1)}}, 3, 2^{a_{\sigma(2p)}} )\in\mathbb{Q} \pi^{4p+2n, \mathfrak{m}}.
\end{equation}
\end{itemize}

\begin{proof}
In order to justify why all the derivations $D_{2r+1}$ cancel, the possible cuts of odd length are, with $X= \lbrace 01 \rbrace^{a_{2i+2}+1} \lbrace 10 \rbrace^{a_{2i+3}+1} \cdots \lbrace 01 \rbrace^{a_{2j-2}} \lbrace 10 \rbrace^{a_{2j-1}} $:\\
\includegraphics[]{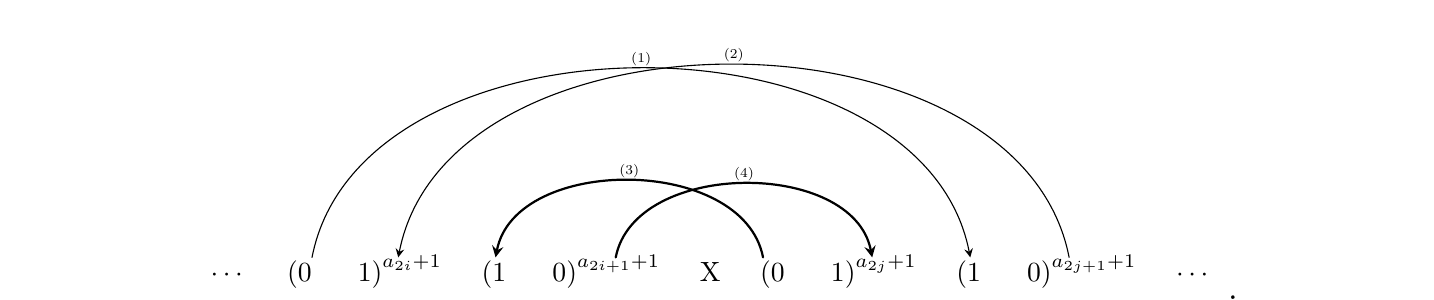}
All the cuts get simplified by \textsc{Antipode} $\shuffle$ $\ref{eq:antipodeshuffle2}$, which proves the result, as follows:
\begin{itemize}
\item[$\cdot$] Cut $(1)$ for $(a_{0}, \ldots, a_{2p})$ with Cut $(2)$ for $(a_{0}, \ldots, a_{2i-1}, a_{2j+1} \cdots, a_{2i}, a_{2j+2 },\cdots,  a_{2p})$.
\item[$\cdot$] Similarly between $(3)$ and $(4)$, which get simplified considering the sequence where $(a_{2i+1}, \ldots, a_{2j})$ is reversed.
\end{itemize}
\end{proof}

\paragraph{Polynomial in simple zetas.} A way to prove that a family of (motivic) MZV are polynomial in simple (motivic) zetas, by recursion on depth:
\begin{lemm}
Let $\mathfrak{Z}\in\mathcal{H}^{1}_{n}$ a motivic multiple zeta value of depth $p$. \\
If the following conditions hold,  $\forall \quad 1<2r+1<n$, $m\mathrel{\mathop:}=\lfloor\frac{n}{2}\rfloor-1$:
\begin{itemize}
\item[$(i)$] $D_{2r+1,p}(\mathfrak{Z})= P^{\mathfrak{Z}}_{r}(\zeta^{\mathfrak{m}}(3),\zeta^{\mathfrak{m}}(5), \ldots, \zeta^{\mathfrak{m}}(2m+1), \zeta^{\mathfrak{m}}(2)),$
$$\text{with } P^{\mathfrak{Z}}_{r}(X_{1},\cdots, X_{m}, Y )= \sum_{2s+\sum (2k+1)\cdot a_{k}=n-2r-1 } \beta^{r}_{a_{1}, \ldots, a_{m}, s} X_{1}^{a_{1}} \cdots X_{m}^{a_{m}} Y^{s}.$$
\item[$(ii)$] For $ a_{k},a_{r}>0 \text{  : } \frac{ \beta^{r}_{a_{1}, \ldots, a_{r}-1,\cdots, a_{m},s}}{a_{r}+1} =\frac{ \beta^{k}_{a_{1}, \ldots, a_{k}-1, \ldots, a_{m},s}}{a_{k}}.$
\end{itemize}
Then, $\mathfrak{Z}$ is a polynomial in depth $1$ MMZV:
$$ \mathfrak{Z}= \alpha \zeta^{\mathfrak{m}}(n)+ \sum_{2s+\sum (2k+1)a_{k}=n} \alpha_{a_{1}, \ldots, a_{m},s} \zeta^{\mathfrak{m}}(3)^{a_{1}} \cdots \zeta^{\mathfrak{m}}(2m+1)^{a_{m}}  \zeta^{\mathfrak{m}}(2)^{s}. \footnote{In particular, $\alpha_{a_{1}, \ldots, a_{m},s} =\frac{\beta^{r}_{a_{1}, \ldots,a_{r}-1, \ldots, a_{m}, s}}{a_{r}}$  for  $a_{r}\neq 0$. }$$
\end{lemm}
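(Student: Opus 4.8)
The plan is to reduce the statement, via Corollary \ref{kerdn} (which for $N=1$ gives $\ker D_{<n}\cap\mathcal{H}^{1}_{n}=\mathbb{Q}\,\zeta^{\mathfrak{m}}(n)$), to the construction of a single polynomial $P$ in the simple motivic zeta values $\zeta^{\mathfrak{m}}(2),\zeta^{\mathfrak{m}}(3),\ldots,\zeta^{\mathfrak{m}}(2m+1)$ such that $\mathfrak{Z}-P\in\ker D_{<n}$; then $\mathfrak{Z}-P$ is a rational multiple $\alpha\,\zeta^{\mathfrak{m}}(n)$ and the asserted identity follows. Since $D_{1}$ vanishes identically on $\mathcal{H}^{1}$, only the derivations $D_{2r+1}$ with $1<2r+1<n$ need to be killed on $\mathfrak{Z}-P$.

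First I would record how the $D_{2r+1}$ act on simple zetas. They are derivations (Leibniz rule, $\S 2.4$), on a simple odd zeta they deconcatenate, $D_{2r+1}\zeta^{\mathfrak{m}}(2k+1)=\delta_{r=k}\,\zeta^{\mathfrak{l}}(2k+1)\otimes 1$ (from the formula of Lemma \ref{drz} in depth $1$), and $D_{2r+1}\zeta^{\mathfrak{m}}(2)=0$ because $2r+1$ is odd (equivalently $\zeta^{\mathfrak{a}}(2)=0$). Hence on a monomial $\mu_{\mathbf{a},s}:=\prod_{k}\zeta^{\mathfrak{m}}(2k+1)^{a_{k}}\cdot\zeta^{\mathfrak{m}}(2)^{s}$ one gets $D_{2r+1}\mu_{\mathbf{a},s}=a_{r}\,\zeta^{\mathfrak{l}}(2r+1)\otimes\mu_{\mathbf{a}-\mathbf{e}_{r},s}$. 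Writing the candidate as $P=\sum_{\mathbf{a},s}\alpha_{\mathbf{a},s}\mu_{\mathbf{a},s}$ and comparing $D_{2r+1}P$ with the hypothesis $D_{2r+1,p}\mathfrak{Z}=\zeta^{\mathfrak{l}}(2r+1)\otimes P^{\mathfrak{Z}}_{r}$ (using $gr_{1}^{\mathfrak{D}}\mathcal{L}_{2r+1}=\mathbb{Q}\,\zeta^{\mathfrak{l}}(2r+1)$), one is forced, monomial by monomial, to $(a_{r}+1)\alpha_{\mathbf{a}+\mathbf{e}_{r},s}=\beta^{r}_{\mathbf{a},s}$, i.e. $\alpha_{\mathbf{a},s}=\beta^{r}_{\mathbf{a}-\mathbf{e}_{r},s}/a_{r}$ whenever $a_{r}>0$ — precisely the formula of the footnote. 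Condition $(ii)$ is exactly the compatibility that makes $\alpha_{\mathbf{a},s}$ independent of the choice of $r$ with $a_{r}>0$, so $P$ is well defined; the only monomial not reached this way is the pure power $\zeta^{\mathfrak{m}}(2)^{n/2}$ (when $n$ is even), whose coefficient stays free and is harmless since $\zeta^{\mathfrak{m}}(n)\in\mathbb{Q}\,\zeta^{\mathfrak{m}}(2)^{n/2}$ then.

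With this $P$ one has $D_{2r+1,p}(\mathfrak{Z}-P)=0$ for all $1<2r+1<n$, and by the very choice of the $\alpha_{\mathbf{a},s}$ the top-depth part of $D_{2r+1}(\mathfrak{Z}-P)$ is annihilated. The remaining work is to upgrade this to $\mathfrak{Z}-P\in\ker D_{<n}$, which I would do by a downward recursion on the depth, in the spirit of the proof of Lemma \ref{lemmcoeff} in $\S 4.4.3$: the vanishing of $D_{2r+1,p}(\mathfrak{Z}-P)$ together with the depth-graded form of Theorem $2.4.4$ places $\mathfrak{Z}-P$ in $\mathcal{F}^{\mathfrak{D}}_{p-1}\mathcal{H}^{1}_{n}$ (modulo the harmless $\zeta^{\mathfrak{m}}(2)^{n/2}$); one then repeats on the next depth with the next batch of coaction data, the explicit expression of $D_{2r+1}$ from Lemma \ref{drz} reducing each step to a finite linear-algebra identification with a single undetermined rational, pinned down at the bottom by the depth-$1$ classification (and by the period map when an analytic analogue is available). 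Collecting all the correction monomials produces $P$ together with the final multiple $\alpha\,\zeta^{\mathfrak{m}}(n)$.

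The main obstacle is this last recursion: the hypotheses of the lemma record only the top (depth $p-1$) part of the coaction of $\mathfrak{Z}$, whereas promoting the depth-graded matching to an honest identity requires controlling the lower-depth parts as well. In practice the families to which one applies this lemma (for instance those in the examples following it) have a coaction stable enough that the analogues of $(i)$ and $(ii)$ hold automatically at each successive depth, so the recursion feeds on itself; I would therefore present the argument so that the genuinely new input is the well-definedness of the coefficient assignment encoded by $(ii)$, with the coaction bookkeeping handled exactly as in the recursions of Chapter $4$.
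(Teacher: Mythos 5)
Your first two paragraphs are exactly the paper's proof, which is essentially a one-liner: one forms $P=\sum_{\mathbf{a},s}\alpha_{\mathbf{a},s}\,\zeta^{\mathfrak{m}}(3)^{a_{1}}\cdots\zeta^{\mathfrak{m}}(2m+1)^{a_{m}}\zeta^{\mathfrak{m}}(2)^{s}$, uses that the $D_{2r+1}$ are derivations killing $\zeta^{\mathfrak{m}}(2)$ and sending each monomial to $a_{r}$ times the monomial with one factor $\zeta^{\mathfrak{m}}(2r+1)$ removed (tensored with $\zeta^{\mathfrak{l}}(2r+1)$), observes that condition $(ii)$ is precisely what makes the assignment $\alpha_{\mathbf{a},s}=\beta^{r}_{\mathbf{a}-e_{r},s}/a_{r}$ independent of $r$, and concludes from Corollary \ref{kerdn} that $\mathfrak{Z}-P\in\mathbb{Q}\,\zeta^{\mathfrak{m}}(n)$ (the free coefficient of the pure power of $\zeta^{\mathfrak{m}}(2)$ being absorbed there when $n$ is even, as you note).

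The \emph{main obstacle} of your last paragraph, however, is not in the paper and comes from reading $(i)$ as a statement about the top depth-graded piece only. The hypothesis is meant as an identity in $\mathcal{L}_{2r+1}\otimes\mathcal{H}_{n-2r-1}$: the full infinitesimal coaction of $\mathfrak{Z}$ is $\zeta^{\mathfrak{l}}(2r+1)\otimes P^{\mathfrak{Z}}_{r}$, the right factor being an honest element of $\mathcal{H}$ (a polynomial in simple zetas), not a class modulo lower depth. Under that reading $D_{2r+1}(\mathfrak{Z}-P)=0$ on the nose for all $1<2r+1<n$, Corollary \ref{kerdn} applies immediately, and no downward recursion on depth is needed; your appeal to extra stability properties of "the families in practice" imports data not contained in the hypotheses and should be removed. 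If one insists on the literal depth-graded reading, then the recursion you sketch is a genuine gap — it is neither carried out nor derivable from $(i)$ and $(ii)$ alone. In applications (e.g. $\zeta^{\mathfrak{m}}(\{1\}^{n},m)$) it is the verification of $(i)$ itself, done by recursion on weight within the family, that supplies the full coaction; the proof of the lemma then requires nothing beyond your first two paragraphs.
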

\begin{proof}
Immediate with Corollary $\ref{kerdn}$ since:
$$D_{2r+1,p } \left( \zeta^{\mathfrak{m}}(3)^{a_{1}} \cdots \zeta^{\mathfrak{m}}(2m+1)^{a_{m}}  \zeta^{\mathfrak{m}}(2)^{s}\right) = a_{r} \zeta^{\mathfrak{m}}(3)^{a_{1}} \cdots  \zeta^{\mathfrak{m}}(2r+1)^{a_{r}} \cdots \zeta^{\mathfrak{m}}(2m+1)^{a_{m}} \zeta^{\mathfrak{m}}(2)^{s}. $$
\end{proof}
\noindent
\texttt{Example}: Some examples were given in the proof of Lemma $\ref{lemmcoeff}$; the following family is polynomial in zetas \footnote{Proof method: with recursion hypothesis on coefficients, using:
$$D_{2r+1}(\zeta^{\mathfrak{m}} (\left\lbrace 1 \right\rbrace ^{n}, m))= - \sum_{j=\max(0,2r+2-m)}^{\min(n-1,2r-1)} \zeta^{\mathfrak{l}} (\left\lbrace 1 \right\rbrace ^{j}, 2r+1-j) \otimes \zeta^{\mathfrak{m}} (\left\lbrace 1 \right\rbrace ^{n-j-1}, m-2r+j).$$}:
$$\zeta^{\mathfrak{m}} (\left\lbrace 1 \right\rbrace ^{n}, m).$$

\paragraph{Sum formulas.}
Here are listed a few examples of the numerous \textit{sum identities} known for Euler sums\footnote{Usually proved considering the generating function, and expressing it as a hypergeometric function.} which we can lift to motivic Euler sums, via the coaction. For these identities, as we see through the proof, the action of the Galois group is trivial; the families being stable under the derivations, we are able to lift the identity to its motivic version via a simple recursion.

\begin{theo}
Summations, if not precised are done over the admissible multi-indices, with $w(\cdot)$, resp. $d(\cdot)$, resp. $h(\cdot)$ indicating the weight, resp. the depth, resp. the height:
\begin{itemize}
\item[(i)] With fixed even (possibly negative) $\left\lbrace a_{i}\right\rbrace _{1 \leq i \leq p}$ of sum $2n$:\footnote{This would be clearly also true for MMZV$^{\star}$.}
$$\sum_{\sigma\in\mathfrak{S}_{p}} \zeta^{\mathfrak{m}}(a_{\sigma(1)}, \ldots, a_{\sigma(p)}) \in \mathbb{Q} \pi^{2n, \mathfrak{m}}.$$
In particular:\footnote{The precise coefficient is given in $\cite{BBB1}$, $(48)$ and can then be deduced also for the motivic identity.}
$$\zeta^{\mathfrak{m}}(\left\lbrace 2n \right\rbrace^{p} ) , \zeta^{\mathfrak{m}}(\left\lbrace \overline{2n} \right\rbrace^{p} ) \in \mathbb{Q} \pi^{2np, \mathfrak{m}}.$$
More precisely, with Hoffman $\cite{Ho}$ \footnotemark[6] 
\begin{multline}\nonumber
\sum_{\sum n_{i}= 2n} \zeta^{\mathfrak{m}}\left( 2n_{1}, \ldots, 2n_{k}\right) =\\
\frac{1}{2^{2(k-1)}} \binom{2k-1}{k}  \zeta^{\mathfrak{m}}(2n) - \sum_{j=1}^{\lfloor\frac{k-1}{2}\rfloor} \frac{1}{2^{2k-3}(2j+1) B_{2j}} \binom{2k-2j-1}{k} \zeta^{\mathfrak{m}}(2j) \zeta^{\mathfrak{m}}(2n-2j) .
\end{multline}
\item[(ii)]  With Granville $\cite{Gra}$, or Zagier $\cite{Za1}$ \footnotemark[6] 
$$ \sum_{w(\textbf{k})=n, d(\textbf{k})=d } \zeta^{\mathfrak{m} }(\textbf{k})= \zeta^{\mathfrak{m}}(n). $$
\item[(iii)]  With Aoki, Ohno $\cite{AO}$\footnotemark[6] \footnotemark[2]
\begin{align*}
\sum_{w(\textbf{k})=n, d(\textbf{k})=d } \zeta^{\star,\mathfrak{m}}(\textbf{k}) & =  \binom{n-1}{d-1} \zeta^{\mathfrak{m}}(n).\\
 \sum_{w(\textbf{k})=n, h(\textbf{k})=s } \zeta^{\star,\mathfrak{m}}(\textbf{k})& =  2\binom{n-1}{2s-1} (1-2^{1-n}) \zeta^{\mathfrak{m}}(n). 
\end{align*}
\item[(iv)]  With Le, Murakami$\cite{LM}$\footnotemark[6]  $$\sum_{w(\textbf{k})=n, h(\textbf{k})=s } (-1)^{d(\textbf{k})}\zeta^{\mathfrak{m}}(\textbf{k})=\left\lbrace  \begin{array}{ll}
0 & \text{ if } n \text{ odd} . \\ 
\frac{(-1)^{\frac{n}{2}} \pi^{\mathfrak{m},n}}{(n+1)!} \sum_{k=0}^{\frac{n}{2}-s}\binom{n+1}{2k} (2-2^{2k})B_{2k} & \text{ if } n \text{ even} .\\ 
\end{array} \right. $$
\item[(v)] With S. Belcher (?)\footnotemark[6]
$$\hspace*{-0.5cm}\begin{array}{llll}
\sum_{w(\cdot)=2n \atop d(\cdot)=2p } \zeta^{\mathfrak{m}}(odd, odd>1, odd, \ldots, odd, odd>1)& =&  \alpha^{n,p} \zeta^{\mathfrak{m}} (2)^{n}, & \alpha^{n,p}  \in \mathbb{Q}\\
\sum_{w(\cdot)=2n+1 \atop d(\cdot)=2p+1} \zeta^{\mathfrak{m}}(odd, odd>1, odd, \ldots, odd>1, odd)&=& \sum_{i=1}^{n} \beta^{n,p}_{i} \zeta^{\mathfrak{m}}(2i+1) \zeta^{\mathfrak{m}}(2)^{n-i} , & \beta^{n,p}_{i}\in\mathbb{Q}\\
\sum_{w(\cdot)=2n+1 \atop d(\cdot)=2p+1} \zeta^{\mathfrak{m}}(odd>1, odd, \ldots, odd, odd>1)&=& \sum_{i=1}^{n} \gamma^{n,p}_{i} \zeta^{\mathfrak{m}}(2i+1) \zeta^{\mathfrak{m}}(2)^{n-i}, &  \gamma^{n,p}_{i}\in\mathbb{Q} 

\end{array}$$
\end{itemize}
\footnotetext[6]{The person(s) at the origin of the analytic equality for MZV, used in the proof for motivic MZV.}
\end{theo}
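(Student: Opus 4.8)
The plan is to prove each family in Theorem 6.3.6 by the same two-step mechanism used throughout Chapter 6: first establish the \emph{motivic} coaction identity by induction on the weight, exploiting Corollary~\ref{kerdn} (for $N=1$ the kernel of $D_{<n}$ in weight $n$ is $\mathbb{Q}\zeta^{\mathfrak{m}}(n)$ plus $\mathbb{Q}(\pi^{\mathfrak{m}})^n$ in even weight), and then pin down the single remaining rational coefficient using the corresponding \emph{analytic} sum formula quoted in the footnotes (Hoffman, Granville--Zagier, Aoki--Ohno, Le--Murakami, Belcher). Concretely, for a fixed family $\mathcal F$ I would form the sum $S^{\mathfrak{m}}_n=\sum_{\mathbf k\in\mathcal F, w(\mathbf k)=n}c_{\mathbf k}\zeta^{\mathfrak{m}}(\mathbf k)$ (with the signs/weights prescribed in each item), compute $D_{2r+1}(S^{\mathfrak{m}}_n)$ via Lemma~\ref{drz} (or the $\star$ version via Lemma $A.1.2$ for item (iii)), and show that after regrouping, the left-hand tensor factors arrange themselves so that $D_{2r+1}(S^{\mathfrak{m}}_n)$ equals the \emph{same} kind of expression in weight $n-2r-1$ times a $D_{2r+1}$-image of a lower-weight object of the family, i.e.\ the family is stable under the coaction. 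This is exactly the "Galois action is trivial'' phenomenon flagged in the remark after the miscellaneous results: once stability holds, induction on the weight shows $S^{\mathfrak{m}}_n$ lies in $\ker D_{<n}$, hence is a rational multiple of $\zeta^{\mathfrak{m}}(n)$ (odd $n$, items (ii),(iii)) or a rational combination of $(\pi^{\mathfrak{m}})^{2k}\zeta^{\mathfrak{m}}(2j+1)$-type products / a rational multiple of $(\pi^{\mathfrak{m}})^{2n}$ in the even-weight cases (items (i),(iv),(v)).

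The order I would carry this out is: (1) item (i), since the permutation-sum over even arguments is the most transparent --- here the simplifications are the reversal/antipode cancellations of Lemma $4.2.1$ ($\shuffle$-antipode) already used for $\zeta^{\mathfrak{m}}(\{3\}^n)$, so $D_{2r+1}$ of the full symmetric sum vanishes identically and one lands in $\mathbb{Q}(\pi^{\mathfrak{m}})^{2n}$; the explicit coefficient (Hoffman's closed form with Bernoulli numbers) is then read off from the analytic identity. (2) item (ii) (Granville--Zagier): the sum over all compositions of fixed weight and depth; $D_{2r+1}$ splits a composition of $(n,d)$ into the analogous sum over $(n-2r-1, d-1)$-compositions tensored with the depth-$1$ piece $\zeta^{\mathfrak{l}}_{\,\cdot}(\,\cdot\,)$, whose total over the relevant index range is a rational multiple of $\zeta^{\mathfrak{l}}(2r+1)$ by the depth-$1$ relations; recursion plus the analytic formula gives $\zeta^{\mathfrak{m}}(n)$. (3) item (iii), the $\star$-analogues: identical but using the $\star\star$-renormalization bookkeeping of $\S4.1$ and Lemma $A.1.2$, plus the Aoki--Ohno analytic identities. (4) items (iv),(v): these need the parity/height constraints to be tracked carefully under $D_{2r+1}$, exactly as in the proof of Theorem $6.2.1$ where patterns of odd/even entries control which cuts survive; here I would check that a cut of odd length $2r+1$ again produces a sum of the same shape over the residual weight, so that the even-weight ones land in $\mathbb{Q}(\pi^{\mathfrak{m}})^{2n}$ resp.\ in $\bigoplus_j\mathbb{Q}\,\zeta^{\mathfrak{m}}(2j+1)(\pi^{\mathfrak{m}})^{2(n-j)}$ via Corollary~\ref{kerdn}, and then match coefficients against Le--Murakami / Belcher.

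The main obstacle I expect is the bookkeeping of \emph{which} sub-sums of products $\zeta^{\mathfrak{l}}_{k}(\cdots)$ appearing on the left of $D_{2r+1}$ actually collapse to a scalar multiple of $\zeta^{\mathfrak{l}}(2r+1)$ --- i.e.\ verifying that the family is genuinely stable under the coaction and not merely stable modulo products. For items (i),(ii) this is essentially the symmetric-group cancellation already demonstrated for $\zeta^{\mathfrak{m}}(\{3\}^n)$ and $\zeta^{\mathfrak{m}}(\{1,2\}^n)$, so it should go through cleanly. For the height-restricted sums (iii),(iv) and the Belcher-type alternating-parity sums (v), the combinatorics of the cut is more delicate: a cut can merge two entries, change depth and height simultaneously, and one must show the resulting terms reorganize into the \emph{same} constrained family in lower weight. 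If stability fails on the nose, the fallback is to enlarge each family to a coaction-stable span (allowing products with powers of $\zeta^{\mathfrak{m}}(2)$ and lower-depth members), prove the identity there by the same induction, and specialize --- this is exactly the strategy signalled in the footnote to the final remark of $\S6.3$. In all cases the analytic input is taken as given (it is the content of the cited references), so no genuinely new transcendence or analytic work is required; the entire burden is the combinatorial verification of coaction-stability plus one coefficient comparison per weight.
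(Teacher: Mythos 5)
Your proposal is correct and follows essentially the same route as the paper: compute $D_{2r+1}$ on each full sum, let the reversal/antipode (\textsc{Antipode} $\shuffle$) cancellations and the symmetry of the index set kill or reorganize the cuts so that either the derivation vanishes outright (items (i),(ii), where in (ii) the initial cuts telescope against the cuts ending in the last entry) or it reproduces the same constrained family in lower weight (items (iii)--(v)), then conclude via Corollary~\ref{kerdn} and fix the remaining rational coefficient with the cited analytic identity. The only cosmetic difference is that the paper handles (iii) by expanding $\zeta^{\star}$ into non-star sums and reducing to (ii), whereas you propose the direct $\star\star$ coaction bookkeeping; both work.
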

\noindent
\textsc{Remark}: The permutation identity $(i)$ would in particular imply that all sum of MZV at even arguments times a symmetric function of these same arguments are rational multiple of power of $\mathbb{L}^{\mathfrak{m}}$. \\
Many specific identities, in small depth have been already found (as Machide in $\cite{Ma}$, resp. Zhao, Guo, Lei in $\cite{GLZ}$, etc.), and can be directly deduced for motivic MZV, such as:
\begin{align*}
\hspace*{-2.5cm}\sum_{k=1}^{n-1} \zeta(2k, 2n-2k) \quad\quad  & \left\lbrace  \begin{array}{lll}
1 & =& \frac{3}{4} \zeta(2n)\\
4^{k}+4^{n-k} &=& (n+\frac{4}{3}+\frac{4^{n}}{6})\zeta(2n) \\
(2k-1)(2n-2k-1) &=& \frac{3}{4} (n-3) \zeta(2n)
\end{array} \right. \\
 \hspace*{-0.3cm}\sum \zeta(2i, 2j, 2n-2i-2j) & \left\lbrace  \begin{array}{lll}
1 & =& \frac{5}{8} \zeta(2n)- \frac{1}{4}  \zeta(2n-2) \zeta(2)\\
ij +jk+ki &=& \frac{5n}{64} \zeta(2n)+(4n-\frac{9}{10})  \zeta(2n-2) \zeta(2) \\
ijk &=& \frac{n}{128} (n-3) \zeta(2n)-\frac{1}{32} \zeta(2n-2) \zeta(2)+\frac{2n-5}{8} \zeta(2n-4) \zeta(4)\\
\end{array} \right.\\
 &  \\
\end{align*}

\begin{proof}
We refer to the formula of the derivations $D_{r}$ in Lemma $\ref{lemmt}$. For many of these equalities, when summing over all the permutations of a certain subset, most of the cuts will get simplified two by two as followed:
\begin{equation}\label{eq:termda}
\zeta^{\mathfrak{m}}\left( k_{1}, \ldots, k_{i}, k_{i+1}, \ldots, k_{j}, k_{j+1}, \cdots k_{d}\right)  \text{ : }   0; \cdots  1 0^{k_{i}-1} \boldsymbol{1 } 0^{k_{i+1}-1} \cdots 0^{k_{j-1}-1} 1 \boldsymbol{0^{k_{j}-1}} 1 0^{k_{j+1}-1}\cdots ; 1 .
\end{equation}
\begin{equation}\label{eq:termdb}
\zeta^{\mathfrak{m}}(k_{1},\cdots, k_{i}, k_{j},  \cdots, k_{i+1}, k_{j+1}, \ldots, k_{d}) \text{ :  } 0; \cdots  1 0^{k_{i}-1} 1 \boldsymbol{0^{k_{j}-1}} \cdots 0^{k_{i+2}-1} 1 0^{k_{i+1}-1} \boldsymbol{1} 0^{k_{j+1}-1}\cdots  ; 1.
\end{equation}
It remains only the first cuts, beginning with the first $0$, such as:
\begin{equation}\label{eq:termd1}
\delta_{2r+1= \sum_{j=1}^{i} k_{j}} \zeta^{\mathfrak{m}}\left( k_{1}, \ldots, k_{i})\otimes \zeta^{\mathfrak{m}}(k_{i+1}, \ldots, k_{d}\right) ,
\end{equation}
and possibly the cuts from a $k_{i}=1$ to $k_{d}$, if the sum is over admissible MMZV: \footnote{There, beware, the MZV at the left side can end by $1$.}
\begin{equation}\label{eq:termdr}
-\delta_{2r+1< \sum_{j=i+1}^{d} k_{j}} \zeta^{\mathfrak{m}}\left( k_{i+1}, \ldots, k_{d-1}, 2r+1- \sum_{j=i+1}^{d-1} k_{j}\right) \otimes \zeta^{\mathfrak{m}}\left( k_{1}, \ldots, k_{i-1}, \sum_{j=i+1}^{d} k_{j} -2r\right)  .
\end{equation}
\begin{itemize}
\item[(i)] From the terms above in $D_{2r+1}$, $(\ref{eq:termda})$, and $(\ref{eq:termdb})$ get simplified together, and there are no terms $(\ref{eq:termd1})$ since the $a_{i}$ are all even. Therefore, it is in the kernel of $\oplus_{2r+1<2n} D_{2r+1}$ with even weight, hence Galois trivial.\\
For instance, for $\zeta^{\mathfrak{m}}(\left\lbrace \overline{2n} \right\rbrace^{p} ) $, with $\epsilon, \epsilon'\in \lbrace\pm 1\rbrace$:\\
\includegraphics[]{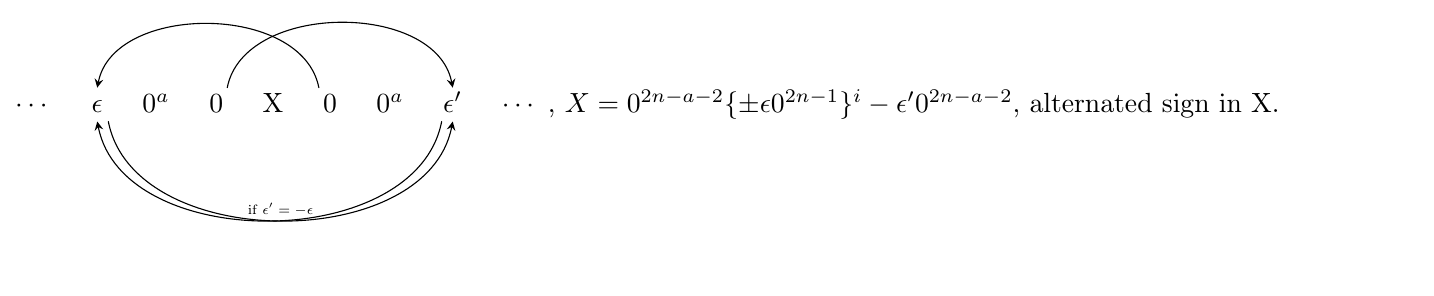}\\
Either, $\epsilon=\epsilon'$ and $X$ is symmetric, and by reversal of path (cf. $\S A.1.1$), cuts above get simplified, or $\epsilon=-\epsilon'$ and $X$ is antisymmetric, and the cuts above still get simplified since $I^{\mathfrak{m}}(\epsilon;0^{a+1} X;0)=-I^{\mathfrak{m}}(0;\widetilde{X} 0^{a+1};\epsilon)=-I^{\mathfrak{m}}(0;X 0^{a+1};-\epsilon)$. 
\item[(ii)] Let us denote this sum $G(n,d)$, and $G_{1}(n,d)$ the corresponding sum where a $1$ at the end is allowed. As explained in the proof's preamble, the remaining cuts being the first ones and the one from a $k_{i}=1$ to the last $k_{d}$:
$$\hspace*{-0.5cm}D_{2r+1}(G (n,d))= \sum_{i=0}^{d-1}  G^{\mathfrak{l}}_{1}(2r+1,i) \otimes G(n-2r-1,d-i) -\sum_{i=0}^{d-1}  G^{\mathfrak{l}}_{1}(2r+1,i) \otimes G(n-2r-1,d-i) =0 .$$

\item[(iii)] This can be proven also computing the coaction, or noticing that it can be deduced from Euler relation above, turning a MZV$^{\star}$ into a sum of MZV of smaller depth, it turns to be:
$$\sum_{i=1}^{d} \sum_{w(\boldsymbol{k})= n, d(\boldsymbol{k})=i}  \binom{n-i-1}{d-i} \zeta^{\mathfrak{m}} (\boldsymbol{k}).$$
For the Aoki-Ohno identity, using the formula for MZV $\star$, and with recursion hypothesis, we could  similarly prove that the coaction is zero on these elements, and conclude with the result for MZV.
\item[(iv)] Let us denote this sum $G_{-}(n,s)$ and $G_{-,(1)}(n,s)$ resp. $G_{-,1}(n,s)$ the analogue sums with possibly a $1$ at the end, resp. with necessarily a $1$ at the end. Looking at the derivations, since we sum over all the permutations of the admissible indices, all the cuts get simplified with its symmetric cut as said above, and it remains only the beginning cut (with the first $0$), and the cut from a $k_{j}=1$ to the last $k_{d}$, which leads to:
\begin{multline}\nonumber
\hspace*{-1cm}D_{2r+1}(G_{-}(n,s))= \sum_{i=0}^{s-1} \left( G^{\mathfrak{l}}_{-,(1)}(2r+1,i) -G^{\mathfrak{l}}_{-}(2r+1,i+1)- G^{\mathfrak{l}}_{-,1}(2r+1,i)\right) \otimes G_{-}(n-2r-1,s-i) \\
= \sum_{i=0}^{s-1} (G^{\mathfrak{l}}_{-}(2r+1,i) -G^{\mathfrak{l}}_{-}(2r+1,i+1))\otimes G_{-}(n-2r-1,s-i).
\end{multline}
Using recursion hypothesis, it cancels, and thus, $G_{alt}(n,s)\in\mathbb{Q} \zeta^{\mathfrak{m}}(n)$. Using the analogue analytic equality, we conclude.

\item[(v)] For odd sequences with alternating constraints ($>1$ or $\geq 1$ for instance), cuts between $k_{i}$ and $k_{j}$ will get simplified with some symmetric terms in the sum, except possibly (when odd length), the first (i.e. from the first $1$ to a first $0$) and the last (i.e. from a last $0$ to the very last $1$) one. More precisely, with $O$ any odd integer, possibly all different:
\begin{itemize}
\item[$\cdot$]
\begin{small}
\begin{multline}\nonumber
\hspace*{-1cm}D_{2r+1} \left( \sum_{w(\cdot)=2n \atop d(\cdot)=2p } \zeta^{\mathfrak{m}}(O, O>1,  \cdots, O, O>1) \right)  \\
 = \sum_{i=0}^{p-1} \left( \sum_{w(\cdot)=2r+1 \atop d(\cdot)=2i+1 } 
\begin{array}{l}
+  \zeta^{\mathfrak{l}}(O, O>1,  \cdots, O>1, O)\\
-\zeta^{\mathfrak{l}}(O, O >1, \ldots, O>1, O)
\end{array}  \right) \otimes  \sum_{w(\cdot)=2n-2r-1 \atop d(\cdot)=2p-2i-1 } \zeta^{\mathfrak{m}}(O, O>1, \ldots, O, O>1)=0.
 \end{multline}
  \end{small}
\item[$\cdot$]
\begin{small}
\begin{multline}\nonumber 
\hspace*{-1cm}D_{2r+1} \left( \sum_{w(\cdot)=2n+1 \atop d(\cdot)=2p+1 } \zeta^{\mathfrak{m}}(O>1, O, \ldots, O, O>1) \right)  \\
= \sum_{i=0}^{p-1} \left( \sum_{w(\cdot)=2r+1 \atop d(\cdot)=2i+1 } \zeta^{\mathfrak{l}}(O>1, O, \ldots, O>1) \right) \otimes  \sum_{w(\cdot)=2n-2r \atop d(\cdot)=2p-2i-1 } \zeta^{\mathfrak{m}}(O, O>1,  \cdots, O, O>1).
 \end{multline}
  \end{small}
By the previous identity, the right side is in $\mathbb{Q}\pi^{2n-2r}$, which proves the result claimed; it gives also the recursion for the coefficients: $\beta^{n,p}_{r}= \sum_{i=0}^{p-1} \beta^{r,i}_{r} \alpha^{n-r,p-i} $.
\item[$\cdot$]
\begin{small}
\begin{multline}\nonumber 
\hspace*{-1cm}D_{2r+1} \left( \sum_{w(\cdot)=2n+1 \atop d(\cdot)=2p+1 } \zeta^{\mathfrak{m}}(O, O>1, \ldots, O>1, O) \right) =\\
  \begin{array}{l}
+\sum_{i=0}^{p-1} \left( \sum_{w(\cdot)=2r+1 \atop d(\cdot)=2i+1 } \zeta^{\mathfrak{m}}(O, O>1, \ldots, O) \right) \otimes  \sum_{w(\cdot)=2n-2r \atop d(\cdot)=2p-2i-1 } \zeta^{\mathfrak{m}}(O, O>1,  \cdots, O>1)\\
+\sum_{i=0}^{p-1} \left( \sum_{w(\cdot)=2r+1,  \atop d(\cdot)=2i+1 } \begin{array}{l}
+ \zeta^{\mathfrak{m}}(O, O>1, \ldots, O)\\
 - \zeta^{\mathfrak{m}}(O, O>1, \ldots,  O)
\end{array}  \right)  \otimes  \sum_{w(\cdot)=2n-2r \atop d(\cdot)=2p-2i-1 } \zeta^{\mathfrak{m}}(O>1, O,  \cdots, O>1, O)
\end{array} 
 \end{multline}
 \end{small}
As above, by recursion hypothesis, the right side of the first sum is in $\mathbb{Q}\pi^{2n-2r}$, which proves the result claimed, the second sum being $0$; the rational coefficients $\gamma$ are given by a recursive relation.
\end{itemize}
\end{itemize}
\end{proof}

\appendix
\chapter{}
\section{Coaction}

The coaction formula given by Goncharov and extended by Brown for motivic iterated integrals applies to the $\star$, $\star\star$, $\sharp$ or $\sharp\sharp$ version by linearity \footnote{Recall the identities $\ref{eq:miistarsharp}$ to turn a $\star$ (resp. $\sharp$) into a $1$ (resp. two times a $1$) minus a $0$.}. Here is the version obtained for MMZV $\star,\star\star$, $\sharp$ or $\sharp\sharp$:\footnote{\textit{For purpose of stability}: if there is a $\pm 1$ at the beginning, as for $\star$  or $\sharp$ versions, the cut with this first $\pm 1$ will be let as a $T_{\pm 1, 0}$ term (and not converted into a $T_{\epsilon, 0}$ less a $T_{0,0}$), in order to still have a $\pm 1$ at the beginning; whereas, if there is no $\pm 1$ at the beginning, as for $\star\star$ or $\sharp\sharp$ version even the first cut (first line) has to be converted into a $T_{0, \epsilon}$ less a $T_{0,0}$, in order to still have a $\epsilon$ at the beginning.}
\begin{lemm}\label{lemmt}
$L$ being a sequence in $\lbrace 0, \pm\star\rbrace$ resp. $\lbrace 0,\pm\sharp\rbrace$, with possibly $1$ at the beginning,  $\epsilon \in \lbrace \pm\star \rbrace$ resp. $\in \lbrace\pm\sharp\rbrace$, and $s_{\epsilon}\mathrel{\mathop:}=sign(\epsilon)$.
$$D_{r} I^{\mathfrak{m}}_{s}\left(0;L;1 \right)=\delta_{ L= A \epsilon B \atop w(A)=r}   I^{\mathfrak{l}}_{k}\left(0;A ;s_{\epsilon} \right)  \otimes  I^{\mathfrak{m}}_{s-k}\left(0;s_{\epsilon}, B ;1 \right) $$
$$+ \sum_{L=A \epsilon B 0 C \atop w(B)=r}  I^{\mathfrak{l}}\left(s_{\epsilon};B ;0 \right) \otimes \left( \underbrace{I^{\mathfrak{m}}_{s}\left(0;A, \epsilon, 0, C ;1\right)}_{T_{\epsilon,0}} + \underbrace{I^{\mathfrak{m}}_{s}(0;A,0,0,C ;1)} _{T_{0,0}} \right)$$
$$+ \sum_{L=A 0 B \epsilon C \atop w(B)=r}  I^{\mathfrak{l}}\left(0;B, s_{\epsilon}\right) \otimes \left( \underbrace{I^{\mathfrak{m}}_{s}\left(0;A,0, \epsilon, C ;1 \right)}_{T_{0, \epsilon}} + \underbrace{I^{\mathfrak{m}}_{s}(0;A,0,0,C ;1)}_{T_{0,0}} \right)$$
$$+ \sum_{L=A \epsilon B \epsilon C\atop w(B)=r}  I^{\mathfrak{l}}\left(0;B, s_{\epsilon}\right) \otimes \left( \underbrace{I^{\mathfrak{m}}_{s}(0;A,\epsilon,0,C ;1)}_{T_{\epsilon,0}} - \underbrace{I^{\mathfrak{m}}_{s}\left(0;A,0, \epsilon, C ;1 \right)}_{T_{0,\epsilon}}  \right)$$

\begin{multline}\nonumber
+ \sum_{L=A  \epsilon B -\epsilon C\atop w(B)=r} \left[  I^{\mathfrak{l}}\left(0;B; -s_{\epsilon} \right) \otimes \underbrace{I^{\mathfrak{m}}_{s}(0;A,\epsilon, 0,C ;1)}_{T_{\epsilon,0}} + I^{\mathfrak{l}}\left(s_{\epsilon};B;0\right)\otimes \underbrace{I^{\mathfrak{m}}_{s}(0; A,0,-\epsilon, C ;1)}_{T_{0,\epsilon}} \right.  \\
\left. I^{\mathfrak{l}}\left(s_{\epsilon};B; -s_{\epsilon} \right) \otimes  \underbrace{I^{\mathfrak{m}}_{s}\left(0;A, \epsilon, - \epsilon, C ;1 \right)}_{T_{\epsilon, -\epsilon}}  \right] .
\end{multline}
\end{lemm}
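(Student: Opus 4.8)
The statement is a direct specialization of Goncharov's combinatorial coaction formula (Theorem \ref{eq:coaction}), or rather of its weight-graded avatar $D_r$ given in \eqref{eq:Der}, to iterated integrals whose entries live in $\{0,\pm\star\}$ (resp.\ $\{0,\pm\sharp\}$) with possibly a leading $\pm 1$. The whole point is bookkeeping: $D_r$ extracts, from an iterated integral $I^{\mathfrak m}(a_0;a_1,\dots,a_n;a_{n+1})$, exactly the terms indexed by a single subinterval $\{a_p,a_{p+1},\dots,a_{p+r+1}\}$ of ``interior length'' $r$, producing $I^{\mathfrak l}(a_p;a_{p+1},\dots,a_{p+r};a_{p+r+1})\otimes I^{\mathfrak m}(a_0;\dots,a_p,a_{p+r+1},\dots;a_{n+1})$. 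So the plan is: (1) start from \eqref{eq:Der} applied to $I^{\mathfrak m}_s(0;L;1)$ where $L$ is a word in $\{0,\pm\star,\pm\sharp\}$ with an optional leading $\pm 1$; (2) expand every $\pm\star$ resp.\ $\pm\sharp$ occurring \emph{inside} the cut-out subword and inside the surviving word via the linearity relations \eqref{eq:miistarsharp}, $I^{\mathfrak m}(A,\pm\star,B)=I^{\mathfrak m}(A,\pm1,B)-I^{\mathfrak m}(A,0,B)$ and the analogous one for $\sharp$; (3) classify the resulting elementary cuts according to the pair (left endpoint, right endpoint) of the cut interval, which can each be $0$, $+s_\epsilon$, or $-s_\epsilon$ after one passes to the $\pm1$ normal form, and collect like terms.

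First I would treat the left-hand endpoint being the very first entry of the word: by property (ii) of \S\ref{propii}, $I^{\mathfrak m}(a_0;\cdots;a_{n+1})=0$ when $a_0=a_{n+1}$, so a length-$r$ cut starting at position $0$ and landing on some $a_{r+1}$ only contributes when $a_{r+1}\ne 0$; writing $L=A\,\epsilon\,B$ with $w(A)=r$ gives the first displayed term $I^{\mathfrak l}_k(0;A;s_\epsilon)\otimes I^{\mathfrak m}_{s-k}(0;s_\epsilon,B;1)$, where $s_\epsilon$ is the $\pm1$ attached to the $\star$- or $\sharp$-letter $\epsilon$ (here one uses that $\epsilon$ expands as $s_\epsilon\cdot 1$ minus a multiple of $0$, and the $0$-piece is killed because the left endpoint equals the right endpoint). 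Next I would handle interior cuts: these are indexed by a factorization $L=A\,u\,B\,v\,C$ where $u$ is the letter just before the cut window and $v$ the letter just after, $w(B)=r$, and the cut interval is $(u;B;v)$ with the convention that $u,v$ get read as $0$, $+s_\epsilon$ or $-s_\epsilon$. Running over the four essentially distinct endpoint-type combinations — $(\epsilon,0)$ and its mirror $(0,\epsilon)$, the ``same sign'' case $(\epsilon,\epsilon)$, and the ``opposite sign'' case $(\epsilon,-\epsilon)$ — and using $I^{\mathfrak l}(a;w;b)\equiv(-1)^{w}I^{\mathfrak l}(b;\widetilde w;a)$ (path reversal, property (v), valid mod products in $\mathcal L$, cf.\ \eqref{eq:antipodeshuffle2}) to normalize the left factors, one recovers precisely the four remaining sums, the last one carrying the three sub-terms $T_{\epsilon,0},T_{0,\epsilon},T_{\epsilon,-\epsilon}$ because an opposite-sign window has a genuine three-term expansion after substituting $\epsilon=s_\epsilon\cdot 1-(\text{multiple of }0)$ on both ends. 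Throughout, the surviving ($\mathfrak m$-)factor is left in the mixed $\{0,\pm1,\pm\star,\pm\sharp\}$ alphabet exactly as the statement records, i.e.\ one does \emph{not} re-expand it, which is the source of the $T_{0,0}$ corrections appearing additively.

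The only genuinely delicate point — and the main obstacle — is the endpoint normalization, i.e.\ being careful about which cuts get ``a $\star$ at the beginning'' kept versus converted: as the footnote to the Lemma warns, for stability reasons a cut involving the \emph{leading} $\pm1$ must be retained as a $T_{\pm1,0}$-type term rather than split into $T_{\epsilon,0}-T_{0,0}$, whereas for a word with no leading $\pm1$ even the first-position cut must be split to preserve an $\epsilon$ at the front. Making this consistent requires carefully distinguishing the role of the optional leading letter in $L$ from all other letters, and checking that property (i) ($I^{\mathfrak m}(a_0;a_1)=1$) and the path-composition property (iv) correctly annihilate the degenerate windows (length $0$ or windows abutting equal endpoints). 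Once that case analysis is organized — essentially a finite table indexed by (endpoint type left)$\times$(endpoint type right) $\times$ (is this the leading position?) — the identity follows by assembling the pieces and invoking linearity of $D_r$; no further input beyond \eqref{eq:Der}, \eqref{eq:miistarsharp}, and the elementary properties of \S\ref{propii} is needed.
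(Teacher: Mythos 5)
Your proposal is correct and follows essentially the same route as the paper's own proof: apply the weight-graded coaction \eqref{eq:Der} after expanding each $\pm\star$ (resp.\ $\pm\sharp$) via \eqref{eq:miistarsharp}, classify the cuts by the endpoint types of the excised window, and use linearity again on the surviving factor to reinstate the $\epsilon$-alphabet, which is exactly where the $T_{\epsilon,0}$, $T_{0,\epsilon}$, $T_{0,0}$ and $T_{\epsilon,-\epsilon}$ terms come from. The only cosmetic differences (invoking path reversal to normalize left factors, and the slight tension between your step (2) and the later remark about not re-expanding the surviving word) do not affect the argument.
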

\newpage
\noindent
\textsc{Remarks}:
\begin{itemize}
\item[$\cdot$]  We will refer to these different terms $T$ for each cut in the whole appendix when using the coaction. 
\item[$\cdot$] The expression of $D_{r}$ for specific MMZV $\star$ and Euler $\sharp$ sums is simplified below thanks to antipodal and hybrid relations, and is fundamentally used in the proofs of Chapter $4$.
\end{itemize}
\begin{proof}
The proof is straightforward from $\eqref{eq:Der}$, using the linearity (with $\ref{eq:miistarsharp}$) in both directions:
\begin{itemize}
\item[$(i)$] First, to turn $\epsilon$ into a difference of $\pm 1$ minus $0$ in order to use $\eqref{eq:Der}$.
\item[$(ii)$] Then, in the right side, a $\pm 1$ appeared inside the iterated integral when looking at the usual coaction formula which is turned into a sum of a term with $\epsilon$ (denoted $T_{\epsilon,0}$ or $T_{0,\epsilon}$) and a term with $0$ (denoted $T_{0,0}$) by linearity of the iterated integrals and in order to end up only with $0,\epsilon$ in the right side.
\end{itemize}
Listing now the different cuts leads to the expression of the lemma, since:
\begin{itemize}
\item[$\cdot$] The first line corresponds to the initial cut (from the $s+1$ first $0$).
\item[$\cdot$] The second line corresponds to a cut either from $\pm\epsilon$ to $0$; the $\pm\epsilon$ being $\pm 1$.
\item[$\cdot$] The third line corresponds to a cut from $0$ to $\pm\epsilon$.
\item[$\cdot$] The fourth line corresponds to cut from $\epsilon$ to $\epsilon$, with two choices: a $\epsilon$ being fixed to $0$, the other one fixed to $1$. Replacing $1$ by $(\epsilon)+(0)$, this leads to a $T_{0,\epsilon}$, a $T_{\epsilon,0}$ and two $T_{0,0}$ terms which get simplified together.
\item[$\cdot$] The last lines correspond to cuts from $\epsilon$ to $-\epsilon$, with three possibilities: one being fixed to $0$, the other one fixed to $\pm 1$, or the first being $1$, the second $-1$. This leads to a $T_{\epsilon,0}$, a $T_{0, -\epsilon}$ and a $T_{\epsilon,-\epsilon}$, since the $T_{0,0}$ terms get simplified.
\end{itemize}
\end{proof}

\subsection{Simplification rules}
This section is devoted on the simplification of the coaction, in the case of motivic Euler sums: we gather terms in $D_{2r+1}$ according to their right side, using relations ($§ 4.2$) between motivic iterated integrals $I^{\mathfrak{l}}\in\mathcal{L}$ to simplify the left side.\\
\\
\texttt{Notations:} We use the notation of the iterated integrals inner sequences and represent a term of a cut in $D_{2r+1}$ (referring to Lemma $4.4.2$) by arrows between two elements of this sequence. The weight of the cut (which is the length of the subsequence in the iterated integral) would always be considered odd here.\footnote{Since we are here only interested in motivic Euler sums, the non zero weight graded parts in the coaction are these corresponding to odd weights: $D_{2r+1}$, $r\geq 0$.} The diagrams show which terms get simplified together: i.e. these which have same right side, but opposite left side, by the relation considered in the coalgebra $\mathcal{L}$. \\
\begin{description}
\item[\textsc{ Composition }:]  The composition rule (cf. $§ 1.6$) in the coalgebra $\mathcal{L}$ boils down to:
\begin{equation} 
I^{\mathfrak{l}}(a; X; b)\equiv -I^{\mathfrak{l}}(b; X; a), \quad  \text{ with $X$ any sequence of  }  0, \pm 1, \pm \star, \pm \sharp.
\end{equation}
It allows us to switch the two extremities of the integral if we multiply by $-1$ the integral: this exchange is considerably used below, without mentioning.
\item[\textsc{ Antipode }  $\shuffle$:] It corresponds to a reversal of path for iterated integrals (cf. $\ref{eq:antipodeshuffle2}$):
\begin{center}
$I^{\mathfrak{l}}(a; X; b)\equiv(-1)^{w}I^{\mathfrak{l}}(b; \widetilde{X}; a)$ for any X sequence of $0, \pm 1, \pm \star, \pm \sharp$. Hence:
\end{center}
\begin{itemize}
\item[$\cdot$] If X symmetric, i.e.  $\widetilde{X}=X$, these two cuts get simplified,  \includegraphics[]{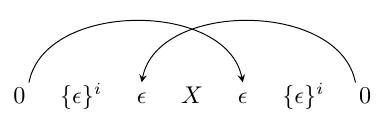}
since  $ I^{\mathfrak{l}}(\epsilon; X \epsilon^{i+1}; 0) \equiv - I^{\mathfrak{l}}(0;  \epsilon^{i+1}\widetilde{X} ; \epsilon) \equiv - I^{\mathfrak{l}}(0;  \epsilon^{i+1} X ; \epsilon)$.
\item[$\cdot$] If X antisymmetric, i.e. $\widetilde{X}=-X$,the cut \includegraphics[]{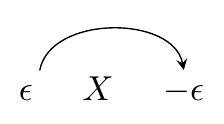} is zero since:\\
$ \begin{array}{ll}
I^{\mathfrak{l}}(\epsilon; X; -\epsilon) & \equiv I^{\mathfrak{l}}(\epsilon; X; 0)+ I^{\mathfrak{l}}(0; X; -\epsilon)\\
&\equiv I^{\mathfrak{l}}(\epsilon; X; 0)- I^{\mathfrak{l}}(-\epsilon; \widetilde{X}; 0)  \\
&\equiv I^{\mathfrak{l}}(\epsilon; X; 0)- I^{\mathfrak{l}}(-\epsilon; -X; 0)\\
&\equiv 0
\end{array}$.
\end{itemize}

\item[\textsc{ Shift }] For MES $\star\star$ and, when weight and depth odd for Euler $\sharp\sharp$ sums:
\begin{equation}\label{eq:shift} \textsc{(Shift) } \zeta^{\bullet}_{n-1} (n_{1},\cdots, n_{p})= \zeta^{\bullet}_{n_{1}-1} (n_{2},\cdots, n_{p},n)
\end{equation}
\includegraphics[]{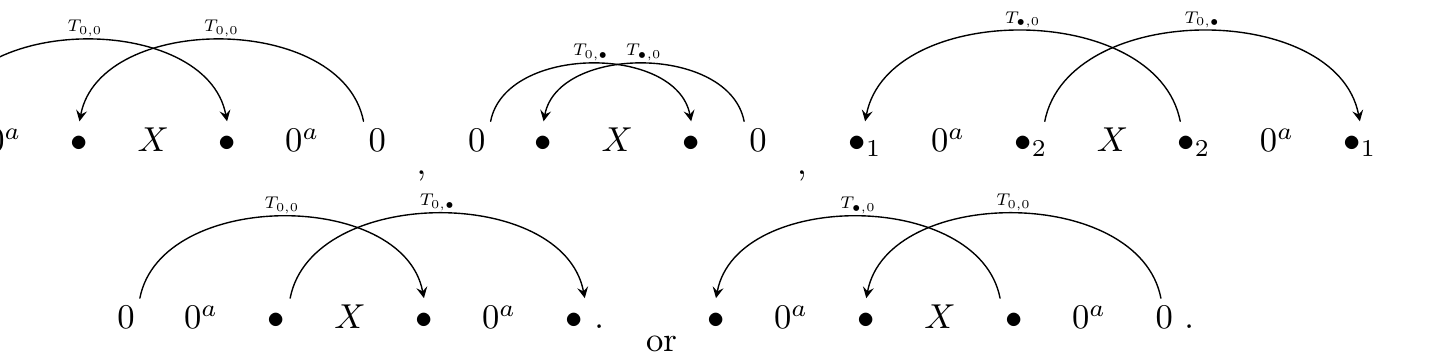}
A dot belongs to $\lbrace \pm\star,\pm\sharp\rbrace$, and two dots with a same index, $\bullet_{i}$ shall be identical.
\item[\textsc{ Cut }] For ES $\sharp\sharp$, with even depth\footnotemark[1], odd weight:\footnotetext[1]{Note that the depth considered here needed to be even is the depth of the bigger cut.}\\
\begin{equation}\label{eq:cut}
\includegraphics[]{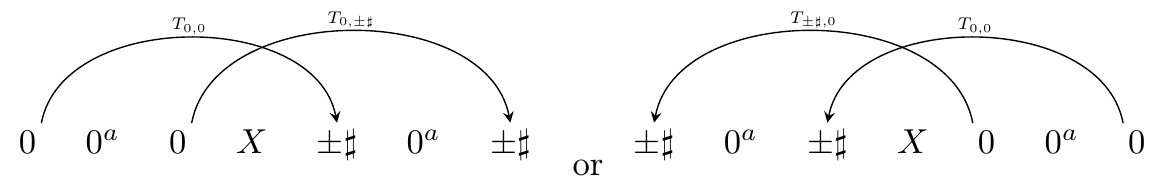}
\end{equation}
\item[\textsc{ Cut Shifted }]: For ES $\sharp\sharp$, with even depth\footnotemark[1], odd weight, composing Cut with Shift: 
\begin{equation}\label{eq:cutshifted} 
\includegraphics[]{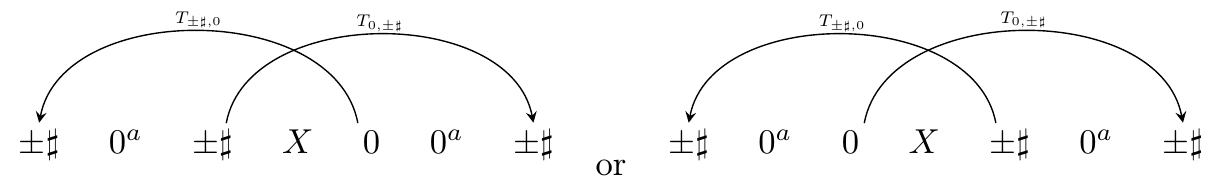}
\end{equation}
\item[\textsc{ Minus }] For ES $\sharp\sharp$, with even depth, odd weight:
\begin{equation}\label{eq:minus}  
\includegraphics[]{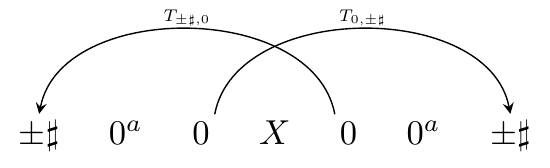}
\end{equation}
\item[\textsc{ Sign }] For ES $\sharp\sharp$ with even depth, odd weight, i.e. $X\in \lbrace 0, \pm \sharp\rbrace^{\times}$:
\begin{equation}\label{eq:sign} 
\includegraphics[]{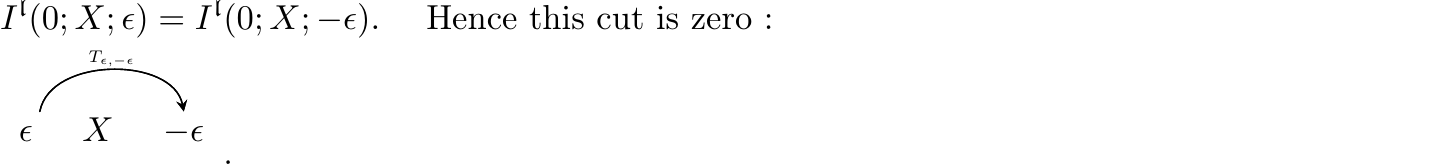}
\end{equation}
\textsc{ Sign } hence also means that the $\pm$ sign at one end of a cut does not matter.
\end{description}

\subsection{MMZV $\star$}

Let express each MMZV$^{\star}$ as:
$$\zeta^{\star, \mathfrak{m}} (2^{a_{0}},c_{1},\cdots,c_{p}, 2^{a_{p}}).$$
As we will see below, this writing is suitable for the coaction expression, since most of the cuts from a $2$ to another $2$ get simplified by the rules above. The iterated integral corresponding:\\
\begin{equation}\label{eq:iistar}
I\left( 0; 1, 0, \left(\star, 0\right)^{a_{0}-1}, \star \cdots 0^{c_{i}-1} \left( \star 0 \right)^{a_{i}} \star , \ldots, 0^{c_{j}-1}  \left(\star 0 \right)^{a_{j}} \star, \ldots, 0^{c_{p}-1}  \left( \star 0 \right)^{a_{p}} ; 1\right) .
\end{equation}
Considering $D_{2r+1}$ after some simplifications:\footnote{Here $\delta_{r}$ underlines that the left side must have a weight equal to $2r+1$.} 
\begin{lemm}
\begin{equation} \label{eq:DerivStar} D_{2r+1} \left(   \zeta^{\star, \mathfrak{m}} (2^{a_{0}},c_{1},\cdots,c_{p}, 2^{a_{p}})\right)  =  
\end{equation}
$$\delta_{r} \sum_{i<j} \left[    \delta_{3\leq \alpha \leq c_{i+1}-1 \atop 0\leq \beta \leq a_{j}} \zeta^{\star, \mathfrak{l}}_{c_{i+1}-\alpha} (2^{a_{j}-\beta}, \ldots, 2^{ a_{i+1}}) \otimes  \zeta^{\star, \mathfrak{m}} (\cdots,2^{a_{i}}, \alpha, 2^{\beta}, c_{j+1}, \cdots) \right.$$
$$\left( -\delta_{c_{i+1}>3} \zeta^{\star\star, \mathfrak{l}}_{2} (2^{a_{j}-\beta-1}, \ldots, 2^{ a_{i+1}})  + \delta_{c_{j+1}>3} \zeta^{\star\star, \mathfrak{l}}_{2} (2^{a_{j}}, \ldots, 2^{ a_{i+1}-\beta -1}) + \right. $$
$$ - \delta_{c_{i+1}=1} \zeta^{\star\star, \mathfrak{l}} (2^{a_{j}-\beta}, \ldots, 2^{ a_{i+1}})  + \delta_{c_{j+1}=1} \zeta^{\star\star, \mathfrak{l}} (2^{a_{i+1}-\beta}, \ldots, 2^{ a_{j}}) $$
$$+ \delta_{c_{i+2}=1 \atop \beta>a_{i+1}} \zeta^{\star\star, \mathfrak{l}}_{1} (2^{a_{j}+a_{i+1}-\beta}, \ldots, 2^{ a_{i+2}})  - \delta_{c_{j}=1 \atop \beta>a_{j}} \zeta^{\star\star, \mathfrak{l}}_{1} (2^{a_{i+1}+a_{j}-\beta}, \ldots, 2^{ a_{j-1}}) .$$
$$ \left. +\delta_{\beta > a_{i+1}}\zeta^{\star\star, \mathfrak{l}}_{c_{i+2}-2} (2^{a_{i+1}+a_{j}-\beta+1}, \ldots, 2^{ a_{i+2}})  - \delta_{\beta > a_{j}} \zeta^{\star\star, \mathfrak{l}}_{c_{j}-2} (2^{a_{i+1}+a_{j}-\beta+1}, \ldots, 2^{ a_{j-1}}) \right) $$
$$\otimes  \zeta^{\star, \mathfrak{m}} (\cdots,2^{a_{i}}, c_{i+1}, 2^{\beta}, c_{j+1}, \cdots)$$
$$\left. - \delta_{3\leq \alpha \leq c_{j}-1 \atop 0\leq \beta \leq a_{i}} \zeta^{\star, \mathfrak{l}}_{c_{j}-\alpha} (2^{a_{i}-\beta}, \ldots, 2^{ a_{j-1}}) \otimes  \zeta^{\star, \mathfrak{m}} (\cdots, c_{i}, 2^{\beta}, \alpha, 2^{a_{j}}, \cdots)\right] . $$
\end{lemm}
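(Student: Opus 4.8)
\textbf{Proof strategy for Lemma (the simplified $D_{2r+1}$ on MMZV$^\star$).} The plan is to start from the general coaction formula of Lemma~A.1.1 (our \texttt{Lemma~\ref{lemmt}}) applied to the iterated integral $(\ref{eq:iistar})$ representing $\zeta^{\star,\mathfrak{m}}(\boldsymbol{2}^{a_0},c_1,\dots,c_p,\boldsymbol{2}^{a_p})$, restrict to odd-weight cuts (these are the only ones producing a nonzero left factor in $\mathcal{L}^2$, since length-$1$ integrals and even-weight subwords vanish or reduce), and then group the resulting terms according to their right factor, using the antipodal and shift relations of $\S4.2$ and $\S A.1.1$ to cancel the vast majority of them. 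First I would classify the cuts by the positions of their two endpoints among the letters $1,0,\star$ of $(\ref{eq:iistar})$: each endpoint is either one of the $\star$'s lying in a block of $2$'s, or one of the $\star$'s attached to a $c_i$, or one of the interior $0$'s, or (at the left) the initial $1$. This produces finitely many families of cuts $T_{0,0}, T_{\epsilon,0}, T_{0,\epsilon}, T_{\epsilon,-\epsilon}$ in the notation of $\S A.1$.

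The core of the argument is the systematic cancellation, and I would organise it exactly as the ``simplification rules'' of $\S A.1.1$ dictate. The $T_{0,0}$ terms appear with opposite signs for adjacent cut-positions and telescope. The cuts running from a $2$ to a $2$ (i.e.\ both endpoints inside $2$-blocks) come in pairs whose left factors are reverses of each other; since the corresponding inner subword is a palindromic string of $\star,0$'s, \textsc{Antipode}~$\shuffle$ $(\ref{eq:antipodeshuffle2})$ forces these to cancel, and this is precisely why the chosen presentation $\zeta^{\star,\mathfrak{m}}(\boldsymbol{2}^{a_0},c_1,\dots,c_p,\boldsymbol{2}^{a_p})$ is the convenient one. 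What survives is: (i) cuts whose left factor ends or begins at one of the $c_i$-letters — these give the $\zeta^{\star,\mathfrak{l}}_{c_{i+1}-\alpha}(\cdots)$ and $\zeta^{\star\star,\mathfrak{l}}_{\bullet}(\cdots)$ terms with the binomial-free coefficients recorded in the statement; (ii) the boundary cuts involving $a_0$ (the very first $1$) and $a_p$ (the trailing block), which after applying \textsc{Shift} $(\ref{eq:shift})$ to convert $\zeta^{\star\star,\mathfrak{l}}$'s into shifted form produce the remaining listed terms. At each step one must keep careful track of the involution $\ast$ (reversal with sign $(-1)^n$) coming from $(\ref{eq:Der})$ when the cut is ``flipped'', and of the identities $\zeta^{\star,\mathfrak{m}}=\zeta^{\star\star,\mathfrak{m}}-\zeta^{\star\star,\mathfrak{m}}_{|n_1|}(\cdots)$ relating the $\star$ and $\star\star$ normalisations, so that every right factor is re-expressed as a genuine $\zeta^{\star,\mathfrak{m}}$ of the shape $\zeta^{\star,\mathfrak{m}}(\cdots,\boldsymbol{2}^{a_i},\alpha,\boldsymbol{2}^\beta,c_{j+1},\cdots)$ or $\zeta^{\star,\mathfrak{m}}(\cdots,\boldsymbol{2}^{a_i},c_{i+1},\boldsymbol{2}^\beta,c_{j+1},\cdots)$ appearing in $(\ref{eq:DerivStar})$.

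Concretely the steps are: (1) write out $D_{2r+1}$ via \texttt{Lemma~\ref{lemmt}} for $(\ref{eq:iistar})$; (2) discard $T_{0,0}$-families by telescoping; (3) pair up and kill all $2$-to-$2$ cuts by \textsc{Antipode}~$\shuffle$, noting the palindromic structure; (4) for the surviving cuts with an endpoint on a $c_i$ or $c_j$, read off the left factor — it is a (possibly $\star\star$-regularised) iterated integral on a subword $\boldsymbol{2}^{\bullet},c_{i+1},\dots,c_j,\boldsymbol{2}^{\bullet}$ or its reverse — and collect the $\delta$-conditions on the admissible ranges of $\alpha$ and $\beta$; (5) treat the two genuinely asymmetric boundary contributions at $a_0$ and $a_p$ separately, applying \textsc{Shift} to normalise; (6) assemble and check signs against $(\ref{eq:DerivStar})$. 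I expect the main obstacle to be purely bookkeeping: ensuring the index ranges in the $\delta$-symbols (e.g.\ $3\le\alpha\le c_{i+1}-1$, $0\le\beta\le a_j$, $\beta>a_{i+1}$, etc.) are exactly right at the edge cases — the first block $\boldsymbol{2}^{a_0}$ preceded by the isolated $1$, the last block $\boldsymbol{2}^{a_p}$ with no trailing letter, and the degenerate situations $a_i=0$ or $c_i=1$ — since these are where the $\star$ versus $\star\star$ regularisation and the ``deconcatenation'' (boundary) terms interact, and where an off-by-one error would change the coefficients. Everything else is a routine, if lengthy, consequence of the relations already established in $\S4.2$ and $\S A.1.1$.
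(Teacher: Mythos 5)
Your overall skeleton is indeed the paper's: apply Lemma \ref{lemmt} to the word $(\ref{eq:iistar})$, cancel cuts in pairs using the rules of $\S$ A.1.1, then gather the survivors according to their right factor and rewrite the left factors (using the $\zeta^{\star}$/$\zeta^{\star\star}$ conversion and antipodes, which you do anticipate). However, the step you rely on for the bulk of the cancellation is wrong, and it is exactly the step this lemma is about. You claim that every cut running from a $2$-block to a $2$-block dies by \textsc{Antipode} $\shuffle$ because its inner subword is a palindrome in $\star,0$. That is only true for cuts confined to a single alternating block $(\star 0)^{a_i}$; as soon as the cut spans one of the strings $0^{c_k-1}$ the inner word is no longer palindromic, and in fact a large class of such $2$-to-$2$ cuts \emph{must} survive: they are precisely the source of the whole middle block of $(\ref{eq:DerivStar})$, namely the terms $\zeta^{\star\star,\mathfrak{l}}_{2}(\cdots)$, $\zeta^{\star\star,\mathfrak{l}}_{1}(\cdots)$, $\zeta^{\star\star,\mathfrak{l}}_{c-2}(\cdots)$ tensored with $\zeta^{\star,\mathfrak{m}}(\cdots,\boldsymbol{2}^{a_i},c_{i+1},\boldsymbol{2}^{\beta},c_{j+1},\cdots)$, whose $\delta$-conditions ($c_{i+1}>3$, $c_{i+1}=1$, $\beta>a_{i+1}$, etc.) record where the two endpoints sit relative to the junctions. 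Executing your step (3) literally would erase this block and yield a wrong formula; likewise your criterion that ``survivors have an endpoint at a $c_i$-letter'' mischaracterizes these terms.

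The cancellation that actually occurs is driven not by \textsc{Antipode} $\shuffle$ but by the \textsc{Shift} rule $(\ref{eq:shift})$, i.e.\ by the hybrid relation of Theorem \ref{hybrid}: the $T_{0,0}$ resp.\ $T_{0,\star}$ terms of a cut are paired with the $T_{0,0}$ resp.\ $T_{\star,0}$ terms of the cut translated by one letter to the right, the left factors agreeing in $\mathcal{L}$ by \textsc{Shift}; this pairing telescopes, and what remains are exactly the boundary cuts above together with the cuts having an endpoint inside a $0^{c-1}$ string, which give the $\zeta^{\star,\mathfrak{l}}_{c_{i+1}-\alpha}$ terms after rewriting the left factor via $A_{\shuffle}\circ A_{\ast}\circ A_{\shuffle}$ and the definition of $\zeta^{\star}$ in terms of $\zeta^{\star\star}$. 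So the missing ingredient is the hybrid/\textsc{Shift} relation as the pairing mechanism: \textsc{Antipode} $\shuffle$ alone only handles the genuinely (anti)symmetric inner words and cannot produce the reduction to $(\ref{eq:DerivStar})$. Once this is corrected, the grouping by right factor and the edge-case bookkeeping you describe are indeed how the proof is completed.
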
 

\begin{proof}
We look at cuts of odd interior length between two elements of the sequence inside $\ref{eq:iistar}$. By \textsc{Shift}, the following cuts get simplified:\\
\includegraphics[]{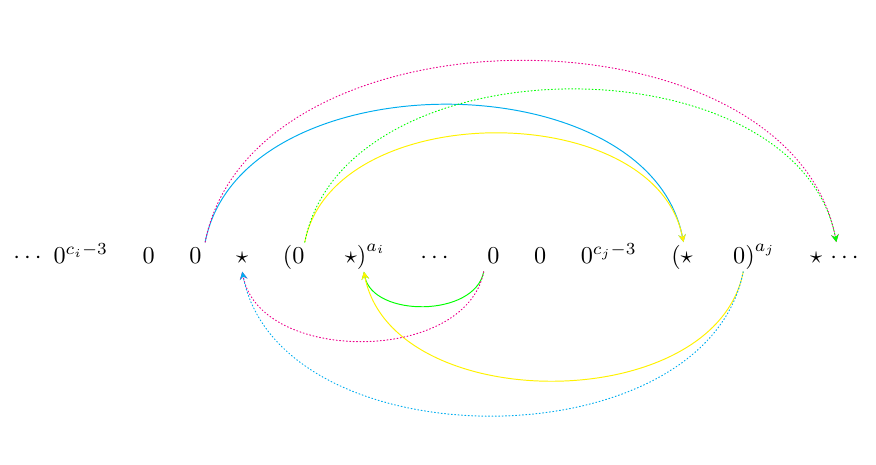}\
More precisely, $T_{0,0}$ resp. $T_{0,\star}$ above get simplified with $T_{0,0}$ resp. $T_{\star,0}$ below (shifted by one at the right), by colors, two by two. The dotted arrows mean that in the particular case where $c_{i}=1$ resp. $c_{j}=1$, only $T_{0,0}$ get simplified.\\
The following arrows get simplified by $\textsc{Shift}$ $(\ref{eq:shift})$, still above with below and by colors:\\
\includegraphics[]{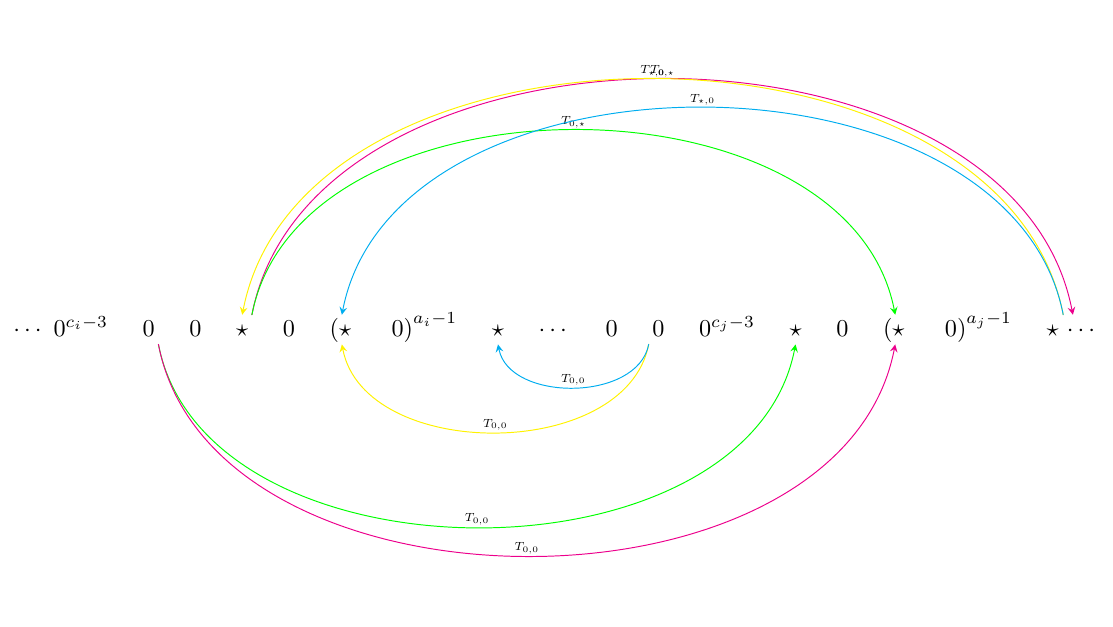}\\
\includegraphics[]{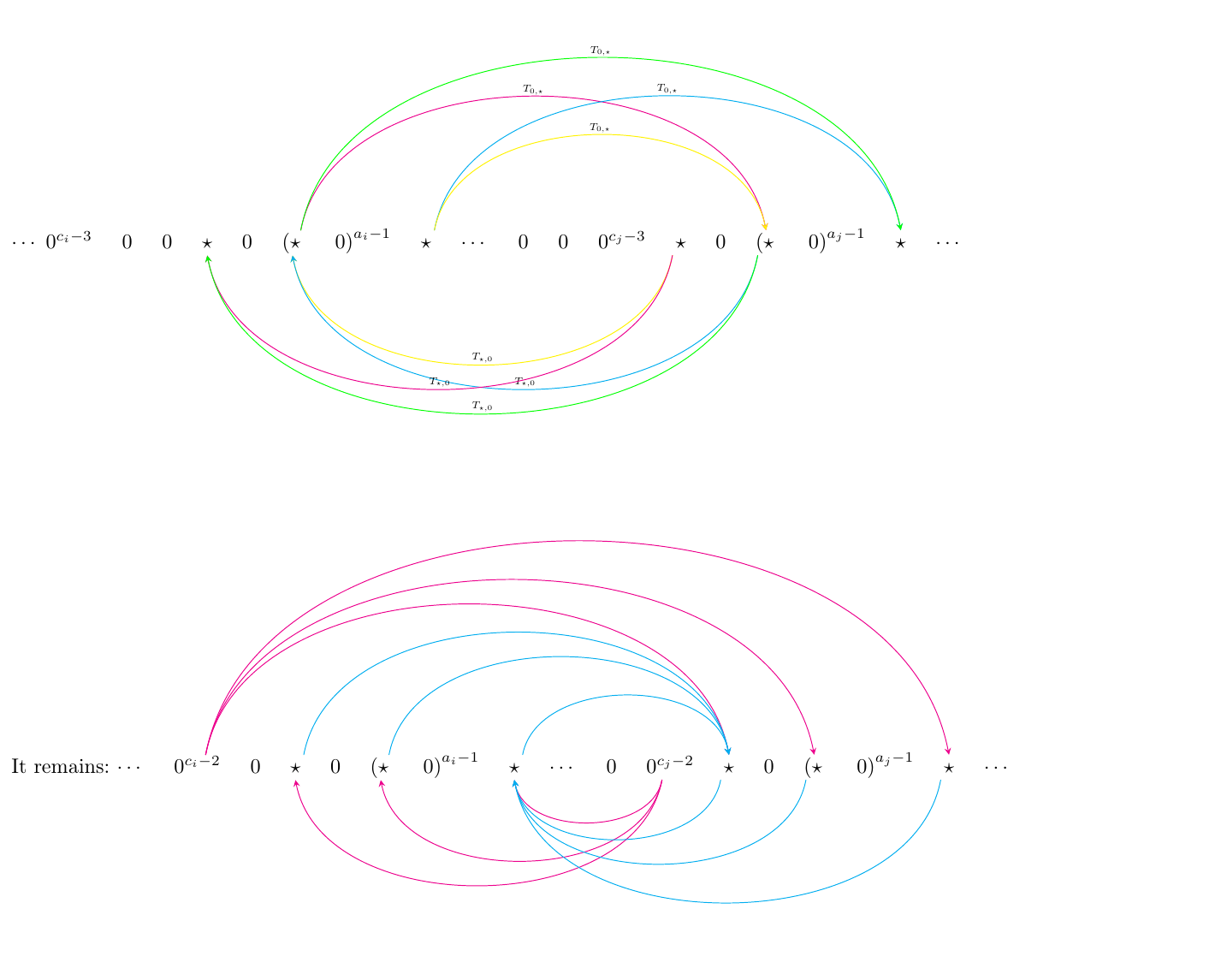}\\
 Cyan arrows above resp. below are $T_{0,\star}$ resp. $T_{0,\star}$ terms; magenta ones above resp. below stand for $T_{0,0}$ and $T_{0,\star}$ resp. for $T_{0,0}$ and $T_{\star,0}$ terms. \\
If $c_{i}=1$ (the case $c_{j}=1$, antisymmetric, is omitted), it remains also the following cuts ($T_{\star,0}$ for black ones or $T_{0,\star}$ for cyan ones):\\
\includegraphics[]{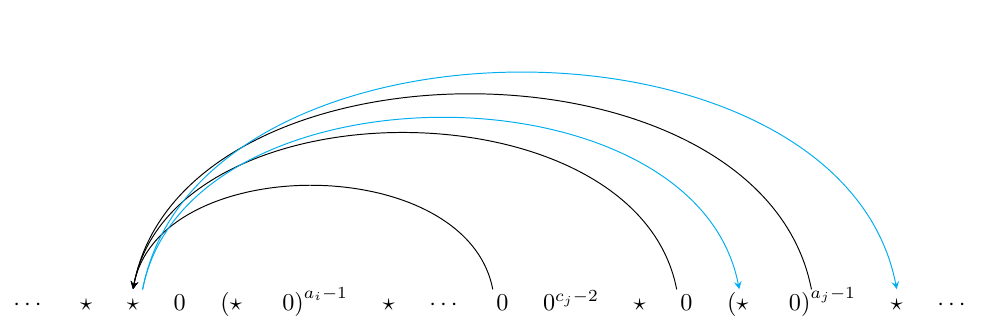}\\
 
Gathering the remaining cuts in this diagram, according the right side:\\
\begin{enumerate}
\item For $\zeta^{\star, \mathfrak{m}} (\cdots,2^{a_{i}}, \boldsymbol{\alpha, 2^{\beta}}, c_{j+1}, \cdots)$:
$$ \left( \textcolor{magenta}{\delta_{3\leq \alpha < c_{i+1} \atop 0\leq \beta < a_{j}}} \zeta^{\star\star, \mathfrak{l}}_{c_{i+1}-\alpha} (2^{a_{i+1}}, \ldots, 2^{ a_{j}-\beta}) - \left( \textcolor{magenta}{\delta_{4\leq \alpha < c_{i+1} \atop 0\leq \beta < a_{j}}} + \textcolor{cyan}{\delta_{\alpha=3 \atop 0\leq \beta < a_{j}}} \right) \zeta^{\star\star, \mathfrak{l}}_{c_{i+1}-\alpha+2} (2^{a_{i+1}}, \ldots, 2^{ a_{j}-\beta-1}) \right).$$
Using \textsc{Shift} $(\ref{eq:shift})$ for the first term and then using the definition of $\zeta^{\star}$ it turns into:
$$ \delta_{3\leq \alpha < c_{i+1} \atop 0\leq \beta <a_{j}} \left(  \zeta^{\star\star, \mathfrak{l}}_{1} (c_{i+1}-\alpha+1, 2^{a_{i+1}}, \ldots, 2^{ a_{j}-\beta-1}) -\zeta^{\star\star, \mathfrak{l}}_{c_{i+1}-\alpha+2} (2^{a_{i+1}}, \ldots, 2^{ a_{j}-\beta-1}) \right) $$
$$=  \delta_{3\leq \alpha < c_{i+1} \atop 0\leq \beta < a_{j}}  \zeta^{\star, \mathfrak{l}}_{1} (c_{i+1}-\alpha+1, 2^{a_{i+1}}, \ldots, 2^{ a_{j}-\beta-1}). $$
Applying antipodes $A_{\shuffle} \circ A_{\ast} \circ A_{\shuffle}$:
$$ =\delta_{3\leq \alpha < c_{i+1} \atop 0\leq \beta < a_{j}}  \zeta^{\star, \mathfrak{l}}_{c_{i+1}-\alpha} (2^{ a_{j}-\beta}, \ldots, 2^{a_{i+1}}),$$
which gives the first line in $\eqref{eq:DerivStar}$.

\item For $\zeta^{\star, \mathfrak{m}} (\cdots,2^{a_{i}}, \boldsymbol{\alpha}, 2^{a_{j}}, c_{j+1}, \cdots)$, the corresponding left sides are:
$$ -\left( \textcolor{magenta}{\delta_{c_{j}+2\leq \alpha \leq c_{i+1}}} + \textcolor{cyan}{\delta_{\alpha=c_{j}+1\atop c_{i+1}>c_{j}} } \right)  \zeta^{\star\star, \mathfrak{l}}_{c_{i+1}+ c_{j}-\alpha} (2^{a_{i+1}}, \ldots, 2^{a_{j-1}}) $$
$$ +\left( \textcolor{magenta}{ \delta_{c_{i+1}+2 \leq \alpha \leq c_{j}}} + \textcolor{cyan}{ \delta_{\alpha = c_{i+1}+1 \atop c_{j}> c_{i+1}}} \right)  \zeta^{\star\star, \mathfrak{l}}_{c_{i+1}+c_{j}-\alpha} (2^{a_{j-1}}, \ldots, 2^{a_{i+1}})  $$
$$+\delta_{3\leq \alpha < c_{i+1}}  \zeta^{\star\star, \mathfrak{l}}_{c_{i+1}-\alpha} (2^{a_{i+1}}, \ldots, c_{j}) -\delta_{3\leq \alpha <c_{j}}\zeta^{\star\star, \mathfrak{l}}_{c_{j}-\alpha} (2^{a_{j-1}}, \ldots,c_{i+1})$$
Using Antipode $\ast$ and turning some $\epsilon$ into $'1+0'$:
$$=+\delta_{3\leq \alpha < c_{i+1}}  \zeta^{\star\star, \mathfrak{l}}_{c_{i+1}-\alpha} (2^{a_{i+1}}, \ldots, c_{j}) -\delta_{3\leq \alpha < c_{j}}\zeta^{\star\star, \mathfrak{l}}_{c_{j}-\alpha} (2^{a_{j-1}}, \ldots,c_{i+1})$$
$$+  (-1)^{{c_{j}<c_{i+1}}} \delta_{\min(c_{j},c_{i+1}) < \alpha \leq \max(c_{j},c_{i+1})}  \zeta^{\star\star, \mathfrak{l}}_{c_{i+1}+ c_{j}-\alpha} (2^{a_{i+1}}, \ldots, 2^{a_{j-1}}) $$
$$= \delta_{3\leq \alpha < c_{i+1}} \zeta^{\star, \mathfrak{l}}_{c_{i+1}-\alpha} (c_{j}, \ldots, 2^{ a_{i+1}}) - \delta_{3\leq \alpha < c_{j} } \zeta^{\star, \mathfrak{l}}_{c_{j}-\alpha} (c_{i+1}, \ldots, 2^{ a_{j-1}})$$
This gives exactly the same expression than the first and fourth case for $\beta=a_{i}$ or $a_{j}$, and are integrated to them in $\eqref{eq:DerivStar}$.

\item For $\zeta^{\star, \mathfrak{m}} (\cdots, c_{i+1},\boldsymbol{2^{\beta}}, c_{j+1}, \cdots)$: \footnote{It includes the case $\alpha=2$.}
$$-\delta_{c_{i+1}>3 \atop 0 \leq \beta < a_{j}}\zeta^{\star\star, \mathfrak{l}}_{2} (2^{a_{j}-\beta-1}, \ldots, 2^{ a_{i+1}})  + \delta_{c_{j+1}>3 \atop 0 \leq \beta < a_{i+1}}  \zeta^{\star\star, \mathfrak{l}}_{2} (2^{a_{j}}, \ldots, 2^{ a_{i+1}-\beta -1}) $$
$$ +\delta_{\beta > a_{i+1} \atop c_{i+1}>3 }\zeta^{\star\star, \mathfrak{l}}_{c_{i+2}-2} (2^{a_{i+1}+a_{j}-\beta+1}, \ldots, 2^{ a_{i+2}})  - \delta_{\beta > a_{j}} \zeta^{\star\star, \mathfrak{l}}_{c_{j}-2} (2^{a_{i+1}+a_{j}-\beta+1}, \ldots, 2^{ a_{j-1}}) $$
$$- \delta_{c_{i+1}=1 \atop 1\leq \beta < a_{j}} \zeta^{\star\star, \mathfrak{l}} (2^{a_{j}-\beta}, \ldots, 2^{ a_{i+1}})  + \delta_{c_{j+1}=1 \atop 1 \leq \beta <a_{i+1}} \zeta^{\star\star, \mathfrak{l}} (2^{a_{i+1}-\beta}, \ldots, 2^{ a_{j}}) $$
$$+ \delta_{c_{i+2}=1 \atop \beta>a_{i+1}} \zeta^{\star\star, \mathfrak{l}}_{1} (2^{a_{j}+a_{i+1}-\beta}, \ldots, 2^{ a_{i+2}})  - \delta_{c_{j}=1 \atop \beta>a_{j}} \zeta^{\star\star, \mathfrak{l}}_{1} (2^{a_{i+1}+a_{j}-\beta}, \ldots, 2^{ a_{j-1}}) .$$
\item For $ \zeta^{\star, \mathfrak{m}} (\cdots, c_{i}, \boldsymbol{2^{\beta}, \alpha}, 2^{a_{j}}, \cdots)$, antisymmetric to 1:
$$ \left( - \textcolor{magenta}{\delta_{2\leq \alpha \leq c_{j}-1 \atop 0\leq \beta \leq a_{i}}} \zeta^{\star\star, \mathfrak{l}}_{c_{j}-\alpha} (2^{a_{j-1}}, \ldots, 2^{ a_{i}-\beta}) + \left(\textcolor{magenta}{\delta_{4\leq \alpha \leq c_{j}+1 \atop 0\leq \beta \leq a_{i}-1}} + \textcolor{cyan}{\delta_{\alpha=3 \atop 0\leq \beta \leq a_{i}-1}} \right) \zeta^{\star\star, \mathfrak{l}}_{c_{j}-\alpha+2} (2^{a_{j-1}}, \ldots, 2^{ a_{i}-\beta-1}) \right) $$
\end{enumerate}
This leads to the lemma, with the second case incorporated in the first and last line.
\end{proof}

\subsection{Euler $\sharp$ sums with $\boldsymbol{\overline{even}}, \boldsymbol{odd}$}

Let us consider the following family:
$$\zeta^{\sharp, \mathfrak{m}}\left( \lbrace\boldsymbol{\overline{even}}, \boldsymbol{odd}\rbrace^{\times} \right) , \text{i.e.  negative even and positive odd integers}$$
which, in terms of iterated integrals corresponds to, with $\epsilon\in \lbrace\pm \sharp\rbrace$:
\begin{equation}\label{eq:iisharp}
I^{ \mathfrak{m}} \left( 0; \left\lbrace \begin{array}{l}
1,  \boldsymbol{0}^{odd} ,-\sharp \\
1,  \boldsymbol{0}^{even} , \sharp
\end{array}\right\rbrace  ,  \cdots, \quad \left\lbrace 
\begin{array}{l}
\epsilon, \boldsymbol{0}^{odd}, -\epsilon \\
\epsilon, \boldsymbol{0}^{even}, \epsilon 
\end{array}\right\rbrace \quad, \cdots; 1 \right) .
\end{equation}

\begin{lemm}
The family $\zeta^{\sharp \mathfrak{m}}\left( \lbrace\overline{even}, odd\rbrace^{\times} \right) $ is stable under the coaction.
\end{lemm}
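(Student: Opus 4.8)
\textit{Proof plan.}
The plan is to compute $D_{2r+1}$ on a general element of the family by means of the explicit coaction formula of Lemma $\ref{lemmt}$, and to verify, cut by cut, that every contribution lies in $\mathcal{L}_{2r+1}$ tensored with the span of the \emph{same} family. I work throughout in the iterated integral representation $(\ref{eq:iisharp})$: an element of the family is $I^{\mathfrak{m}}(0;L;1)$ where $L$ is a concatenation of blocks of the two admissible shapes $\epsilon\,\boldsymbol{0}^{2a}\,\epsilon$ (positive odd argument) and $\epsilon\,\boldsymbol{0}^{2b+1}\,(-\epsilon)$ (negative even argument), with $\epsilon\in\{\pm\sharp\}$ and an initial $1$ in place of the very first $\pm\sharp$. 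The three \emph{forbidden} patterns at block boundaries are $\epsilon\,\boldsymbol{0}^{2a+1}\,\epsilon$ (positive even), $\epsilon\,\boldsymbol{0}^{2b}\,(-\epsilon)$ (negative odd) and $\epsilon,-\epsilon$ (the $\overline{1}$). Since this last pattern never occurs in $L$, $D_1$ vanishes on the family by the argument of Lemma $\ref{condd1}$; so it suffices to handle $D_{2r+1}$ for $r\ge1$.

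The first, routine, half of the argument treats the \emph{stable} cuts, namely those whose excised subsequence is itself a concatenation of blocks of the two admissible shapes. Here the key numerical fact, already used in the proof of Theorem $4.3.1$, is that for any such subsequence of odd interior length $2r+1$ one has $w\equiv p-s\pmod 2$ relating its weight $w$, depth $p$ and number $s$ of sign changes; hence its two endpoints carry equal signs when its depth is odd and opposite signs when its depth is even. Joining the two endpoints in the $\mathcal{H}$-factor then re-creates a block of one of the two admissible shapes (the parity of the number of zeros removed being exactly what is needed), so the resulting $I^{\mathfrak{m}}$ stays in the family; the $\mathcal{L}$-factor is automatically in $\mathcal{L}_{2r+1}$.

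The substantive part — and the expected obstacle — is the \emph{unstable} cuts, those creating or excising a pattern $\epsilon\,\boldsymbol{0}^{2a+1}\,\epsilon$, $\epsilon\,\boldsymbol{0}^{2b}\,(-\epsilon)$, or (for the $T_{\epsilon,-\epsilon}$ terms of Lemma $\ref{lemmt}$) a $\overline{1}$. The strategy is to rewrite the left-hand factors $I^{\mathfrak{l}}(\cdot)$ of these terms in the coalgebra $\mathcal{L}$ using the relations of $\S4.2$: the hybrid relation (Theorem $\ref{hybrid}$), Antipode $\shuffle$ and Antipode $\ast$, and the induced rules \textsc{Reverse}, \textsc{Shift} $(\ref{eq:shift})$, \textsc{Cut} $(\ref{eq:cut})$, \textsc{Minus} $(\ref{eq:minus})$, \textsc{Sign} $(\ref{eq:sign})$ for motivic Euler $\sharp\sharp$ sums, the crucial consequence being $I^{\mathfrak{l}}(-1;W;1)\equiv0$ for every word $W$ in $\{0,\pm\sharp\}$ with an odd number of zeros. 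Using the parity constraint $w\equiv p-s\pmod2$, the shapes of unstable cuts compatible with an odd-length excision are finite in number, and one checks case by case that each unstable left-hand factor either is of this vanishing form, or pairs off (after a \textsc{Shift} or \textsc{Reverse}) with a second unstable cut so that the two terms cancel in $\mathcal{L}\otimes\mathcal{H}$. This bookkeeping — identifying the canceling partner of each unstable cut and checking that the relevant $\S4.2$ identity applies with the correct sign and shift — is the delicate point; it is precisely the computation performed in the companion lemma that records the simplified expression of $D_{2r+1}$ on this family.

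Collecting all the cuts, the $\mathcal{H}$-factor of $D_{2r+1}$ is always a $\mathbb{Q}$-linear combination of $\zeta^{\sharp,\mathfrak{m}}$'s with negative even and positive odd arguments, so $D_{2r+1}$ maps the weight-$n$ span of the family into $\mathcal{L}_{2r+1}$ tensored with the weight-$(n-2r-1)$ span of the family, which is the asserted stability.
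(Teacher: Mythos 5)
Your plan follows the paper's own route: enumerate the odd-length cuts of $(\ref{eq:iisharp})$ via Lemma $\ref{lemmt}$, use the parity constraint $w\equiv p-s\pmod 2$ of the family to control the signs at the endpoints of any excised subsequence, discard the $T_{\epsilon,-\epsilon}$ terms by \textsc{Sign} $(\ref{eq:sign})$ (equivalently $I^{\mathfrak{l}}(\mp1;W;\pm1)\equiv 0$ for $W$ with an odd number of zeros), and then eliminate the cuts whose $\mathcal{H}$-factor would leave the family by pairing them off through the hybrid-derived rules \textsc{Shift} $(\ref{eq:shift})$, \textsc{Cut} $(\ref{eq:cut})$, \textsc{Cut Shifted} $(\ref{eq:cutshifted})$ and \textsc{Minus} $(\ref{eq:minus})$. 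These are exactly the ingredients of the paper's proof, so the strategy is sound.

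The problem is that what you give is a plan and not a proof: the entire substance of the argument is precisely the case analysis you label "the delicate point" and then do not perform. Concretely, one has to list the finitely many cut types (the paper distinguishes six), determine for each the condition (parity of the excised depth, relative signs of the two boundary letters) under which its right side falls outside the family, and for each such unstable cut exhibit the neighbouring cut with the \emph{same} right side whose left factor cancels it in $\mathcal{L}$, checking that the relevant rule applies with the correct subscript and sign (the partner and the rule change according to cases, e.g.\ \textsc{Minus} versus \textsc{Cut} versus \textsc{Cut Shifted}). Deferring this bookkeeping to "the companion lemma that records the simplified expression of $D_{2r+1}$" is circular: in the paper that explicit formula (Lemma $A.1.4$) is derived only \emph{after} the unstable cuts have been eliminated by the present lemma, so it cannot be invoked to establish it. A smaller imprecision: stability is a statement about the right-hand ($\mathcal{H}$) factors only — for $N=2$ every left factor is automatically a multiple of $\zeta^{\mathfrak{l}}(2r+1)$ — so a cut whose \emph{excised} word violates the block pattern is harmless in itself; the pairing must be organized by equal right sides with left factors cancelling in the coalgebra, which is what the missing table has to verify.
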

\begin{proof}
Looking at the possible kinds of cuts, and gathering them according the right side:
\includegraphics[]{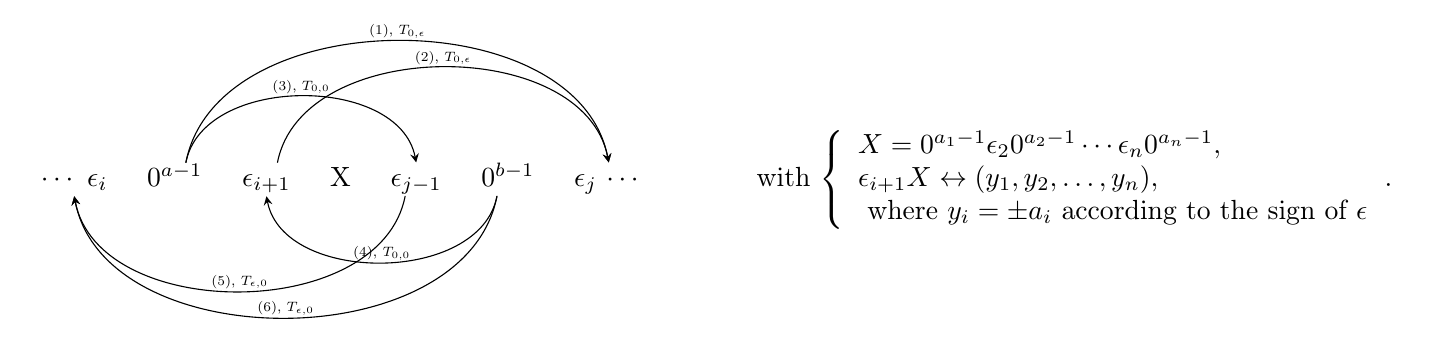}\\
These cuts have the same form for the right side in the coaction:
$$I^{\mathfrak{m}}(0; \cdots, \boldsymbol{\epsilon_{i} 0^{\alpha}  \epsilon_{j}}, \cdots ;1).$$
Notice there would be no term $T_{\epsilon, -\epsilon}$ in a cut from $\epsilon$ to $-\epsilon$ because of \textsc{Sign} $(\ref{eq:sign})$ identity, therefore you have there all the possible cuts pictured.\\
A priori, cuts can create in the right side a sequence $\left\lbrace  \epsilon, \boldsymbol{0}^{even}, -\epsilon \right\rbrace $ or $\left\lbrace \epsilon, \boldsymbol{0}^{odd}, \epsilon\right\rbrace $ inside the iterated integral; these cuts are the \textit{unstable} ones, since they are out of the considered family. However, by coupling these cuts two by two, and using the rules listed at the beginning of the Annexe, the unstable cuts would all get simplified.\\
Indeed, let examine each of the terms $(1-6)$\footnote{There is no remaining cuts between $\epsilon$ and $\epsilon$. Notice also that the left sides of the remaining terms have an even depth.}:\\
\\
\begin{tabular}{c | c | l | l}
Term & Left side & Unstable if & Simplified with \\
\hline
\multirow{4}{*}{$(1)$}   & $\zeta^{ \sharp\sharp,\mathfrak{l}}_{a-1-\alpha} (a_{1}, \ldots, a_{n},b)$, & $n$ even & the previous cut: \\
 &  with $\alpha<a$.  & & either $(6)$ by \textsc{Minus}   \\
 & & & or $(5)$ by \textsc{Cut}  \\
 & & &  or $(3)$ by \textsc{Cut} \footnotemark[1]\\ \hline
\multirow{4}{*}{$(2)$} & $\zeta^{ \sharp\sharp,\mathfrak{l}}_{b-1} (a_{n}, \ldots, a_{1})$ & $\epsilon_{i+1}=\epsilon_{j}$ & the previous cut: \\
 & with $\alpha=a$. & &   either with $(5)$ by \textsc{Shift} \\
  & & &  or with $(6)$ by \textsc{Cut Shifted}  \\
   & & & or with $(3)$ by \textsc{Shift}.\\ \hline
\multirow{4}{*}{$(3)$}  & $-\zeta^{ \sharp\sharp,\mathfrak{l}}_{a+b-\alpha-1} (a_{1}, \ldots, a_{n})$ &  $n$ odd & the following cut:\\ 
 & with $\alpha>b$. & &  either $(1)$ by \textsc{Cut} \\
  & & &  or $(2)$ by \textsc{Shift}  \\
   & & & or $(4)$ by \textsc{Shift}. \\ \hline
   \multirow{4}{*}{$(4)$} & $\zeta^{ \sharp\sharp,\mathfrak{l}}_{a+b-\alpha-1}(a_{n}, \ldots, a_{1})$ &  $n$ odd & the previous cut:\\
 & with $\alpha>a$ .& & either $(6)$ by \textsc{Cut Shifted}  \\
  & & &  or with $(5)$ by \textsc{Shift}  \\
   & & & or with $(3)$ by \textsc{Shift}. \\ \hline
   \multirow{4}{*}{$(5)$}& $-\zeta^{ \sharp\sharp,\mathfrak{l}}_{a-1} (a_{1}, \ldots, a_{n})$ & $\epsilon_{i}=\epsilon_{j-1}$ & the following cut:\\
 & with $\alpha=b$. & &  either  with $(1)$ by \textsc{Cut}, \\
  & & &   or with $(2)$ by \textsc{Shift},  \\
   & & & or with $(4)$ by \textsc{Shift}. \\ \hline
   \multirow{4}{*}{$(6)$} & $-\zeta^{ \sharp\sharp,\mathfrak{l}}_{b-1-\alpha} (a_{n}, \cdots a_{1}, a)$ & $n$ even & the following cut:\\
 & & &  either $(1)$ by \textsc{Minus} \\
  & & &   or with $(2)$ by \textsc{Cut Shifted}  \\
   & & & or with $(4)$ by \textsc{Cut Shifted} \\ \hline
\end{tabular} \\
\footnotetext[1]{It depends on the sign of $b+1-\alpha$ here for instance.}
\end{proof}
\noindent
\paragraph{Derivations.}
Let use the writing of the Conjecture $\ref{conjcoeff}$:
\begin{equation}\label{eq:essharpgather}
 \zeta^{\sharp,\mathfrak{m}}(B_{0}, 1^{\gamma_{1}}, \ldots, 1^{\gamma_{p}}, B_{p}) \text{ with } B_{i}<0 \text{ if and only if } B_{i} \text{ even }.
\end{equation}
\texttt{Nota Bene:} Beware, for instance $B_{i}$ may be equal to $1$, which implies that $\gamma_{i}= \gamma_{i+1}=0$. Indeed, we look at the indices corresponding to a sequence $(2^{a_{0}}, c_{1}, \ldots, c_{p}, 2^{a_{p}})$ as in the Conjecture $\ref{conjcoeff}$:
$$\begin{array}{l}
B_{i}= 2a_{i}+3 - \delta_{c_{i}}- \delta_{c_{i+1}}\\
B_{0}= 2a_{0}+1 - \delta_{c_{1}}\\
B_{p}= 2a_{p}+2 - \delta_{c_{p}}\\
\end{array}, \gamma_{i}\mathrel{\mathop:}= c_{i}-3 +2  \delta_{c_{i}}, \quad  \text{ where }   \left\lbrace  \begin{array}{l} a_{i} \geq 0 \\ c_{i}>0,c_{i}\neq 2 \\
\delta_{c}\mathrel{\mathop:}= \left\lbrace \begin{array}{ll}
1 & \text{ if } c=1\\
0 & \text{ else }.
\end{array}\right.
\end{array}.  \right.  $$
 
\begin{lemm}
\begin{equation}
D_{2r+1}\left( \zeta^{\sharp,\mathfrak{m}}(B_{0}, 1^{\gamma_{1}}, \ldots, 1^{\gamma_{p}}, B_{p})\right) =\footnotemark[2] 
\end{equation}
$$\delta_{r} \left[  -\delta_{{2 \leq B \leq B_{j}+1 \atop 0\leq\gamma\leq\gamma_{i+1}-1 }}\zeta^{\sharp,\mathfrak{l}}(B_{j}-B+1, 1^{\gamma_{j}}, \ldots, 1^{\gamma_{i+1}-\gamma-1})\otimes\zeta^{\sharp,\mathfrak{m}}(B_{0} \cdots, B_{i}, \textcolor{magenta}{1^{\gamma}, B}, 1^{\gamma_{j+1}}, \ldots, B_{p}) \right. $$

$$\left[  
\begin{array}{l}
+ \delta_{B_{i+1}< B}\zeta^{\sharp\sharp,\mathfrak{l}}_{B_{i+1}+B_{j}-B}(1^{\gamma_{j}}, \ldots, 1^{\gamma_{i+2}})\\
- \delta_{B_{j}< B}\zeta^{\sharp\sharp,\mathfrak{l}}_{B_{i+1}+B_{j}-B}(1^{\gamma_{i+2}}, \ldots, 1^{\gamma_{j}})\\
 + \zeta^{\sharp\sharp,\mathfrak{l}}_{B_{i+1}-B}(1^{\gamma_{i+2}}, \ldots, B_{j}) - \zeta^{\sharp\sharp,\mathfrak{l}}_{B_{j}-B}(1^{\gamma_{j}}, \ldots, B_{i+1})
\end{array} \right] \otimes\zeta^{\sharp,\mathfrak{m}}(B_{0} \cdots, B_{i}, 1^{\gamma_{i+1}}, \textcolor{green}{B}, 1^{\gamma_{j+1}}, \ldots, B_{p}) $$

$$\left.  \delta_{{1 \leq B \leq B_{i+1}+1 \atop 0\leq\gamma\leq\gamma_{j+1}-1}}\zeta^{\sharp,\mathfrak{l}}(B_{i+1}-B+1, 1^{\gamma_{i+2}}, \ldots, 1^{\gamma_{j+1}-\gamma-1})\otimes\zeta^{\sharp,\mathfrak{m}}(B_{0} \cdots, 1^{\gamma_{i+1}},\textcolor{cyan}{ B, 1^{\gamma}}, B_{j+1}, \ldots, B_{p}) \right],$$
where $B$ is positive if odd, negative if even.
\end{lemm}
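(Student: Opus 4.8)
The plan is to deduce this formula from the general coaction expression of Lemma~\ref{lemmt} (applied to the $\sharp$ version by linearity via $(\ref{eq:miistarsharp})$), organizing the cuts exactly as in the proof of the stability Lemma above, but now keeping track of the surviving left-hand sides and their coefficients rather than merely checking cancellation. First I would pass from the $\sharp$-notation $\zeta^{\sharp,\mathfrak{m}}(B_{0}, 1^{\gamma_{1}}, \ldots, 1^{\gamma_{p}}, B_{p})$ to its iterated integral $(\ref{eq:iisharp})$: a word in $\lbrace 0, \pm\sharp \rbrace$ built from blocks $\epsilon\, 0^{2a}\,\epsilon$ (an $\overline{\text{even}}$) and $\epsilon\, 0^{2a+1}\,(-\epsilon)$ (an odd), with a leading $\pm 1$. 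The key structural observation, already used in the proof of Theorem~4.3.1, is the parity constraint $w\equiv p-s \pmod 2$ on any odd-weight subword ($s$ being the number of sign changes among the $\pm\sharp$): a cut $\epsilon_0,\ldots,\epsilon_{q+1}$ of odd interior weight either has even depth with $\epsilon_0=-\epsilon_{q+1}$, or odd depth with $\epsilon_0=\epsilon_{q+1}$. This dichotomy dictates which of the surviving left-hand sides is a $\zeta^{\sharp\sharp,\mathfrak{l}}$ (even depth, an $\overline{\text{even}}$-type pair of endpoints) versus a $\zeta^{\sharp,\mathfrak{l}}$ (odd depth), and it also forces the sign convention $B<0 \iff B$ even for the collapsed argument.

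Next I would list the cuts grouped by the shape of their right-hand side. There are exactly three shapes, matching the three terms in the statement: a cut that swallows the block $B_{i+1}$ and part of $B_j$ (producing the magenta insertion $1^{\gamma}, B$ after $B_i$), a cut between two interior blocks leaving both $\gamma_{i+1}$ and $\gamma_{j+1}$ intact (producing the green $B$), and the mirror image of the first (producing the cyan $B, 1^{\gamma}$). Within each group, the $T_{0,0}$, $T_{0,\epsilon}$, $T_{\epsilon,0}$, $T_{\epsilon,-\epsilon}$ contributions that would leave the family — i.e. create an $\epsilon\,0^{\text{even}}\,(-\epsilon)$ or $\epsilon\,0^{\text{odd}}\,\epsilon$ pattern in the right side — cancel in pairs using Antipode~$\shuffle$ $(\ref{eq:antipodeshuffle2})$ together with the $\sharp\sharp$ identities \textsc{Shift} $(\ref{eq:shift})$, \textsc{Cut} $(\ref{eq:cut})$, \textsc{Cut Shifted} $(\ref{eq:cutshifted})$, \textsc{Minus} $(\ref{eq:minus})$, \textsc{Sign} $(\ref{eq:sign})$; this is precisely the pairwise bookkeeping carried out in the table of the stability proof, and crucially \textsc{Sign} kills the $T_{\epsilon,-\epsilon}$ terms so that no $\overline{1}$ ever appears. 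What remains, after applying \textsc{Antipode}~$\ast$ to rewrite the surviving $\zeta^{\sharp\sharp,\mathfrak{l}}$'s in the displayed order, are exactly the terms of the lemma, with the $\delta$'s recording the weight condition $w=2r+1$ ($\delta_r$), the admissible range of $B$, and the range $0\le\gamma\le\gamma_{i+1}-1$ (resp. $\gamma_{j+1}-1$) of the remaining string of $1$'s.

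The main obstacle will be the boundary and degenerate cases rather than the generic combinatorics. Cuts involving the leading $\pm 1$ must be treated as $T_{\pm1,0}$ terms so as to preserve the leading letter; cuts touching the final block $B_p$ give the deconcatenation-type contributions and have slightly different range constraints; and blocks with $B_i=1$ — i.e. $c_i=1$, $\gamma_i=\gamma_{i+1}=0$ — cause several arrows to degenerate, so one has to check that the pairwise cancellations still line up and that no spurious term survives. One must also confirm that the surviving coefficients are all $\pm 1$ (in contrast with the $\star$ case, where binomials appear), which reflects the fact that each block contributes only a single $\omega_{\pm\sharp}$ endpoint; this has to be followed through the $\sharp\leftrightarrow\sharp\sharp$ conversions. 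Once the generic computation and these finitely many boundary verifications are completed, the stated formula follows.
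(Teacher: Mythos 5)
Your plan is essentially the paper's own proof: start from the general coaction formula of Lemma~A.1.1, kill the unstable cuts using the parity constraint together with \textsc{Shift}, \textsc{Cut}, \textsc{Minus}, \textsc{Sign} and the antipodes (as in the stability lemma), then gather the surviving cuts by the shape of their right-hand side and read off the left-hand coefficients, treating the boundary cases separately. The one imprecision is your claim that there are ``exactly three shapes'' of right side: cuts whose two endpoints both lie among the strings of $1$'s give a fourth shape, $\zeta^{\sharp,\mathfrak{m}}(\ldots,B_{i},1^{\gamma},B_{j},\ldots)$, which does not introduce a new argument $B$; in the paper these terms are shown to merge with the magenta family at $B=B_{j}$ and the cyan family at $B=B_{i+1}$, and it is precisely this absorption that accounts for the extended ranges $2\leq B\leq B_{j}+1$ and $1\leq B\leq B_{i+1}+1$ in the statement, so your bookkeeping must include this group rather than discard it.
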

\begin{proof}
\footnotetext[2]{Here $\delta_{r}$ indicates that left side has to be of weight $2r+1$.}
\texttt{Nota Bene}: For the left side, we only look at odd weight $w$, and the parity of the depth $d$ is fundamental since the relations stated above depend on the parity of $w-d$. For instance, for such a sequence $(\boldsymbol{1}^{\gamma_{i}},B_{i}, \ldots,B_{j-1},\boldsymbol{1}^{\gamma_{j}})$ (with the previous notations), $weight- depth$ has the same parity than $\delta_{c_{i}}+\delta_{c_{j}}$. \\
\\
The following cuts get simplified, with \textsc{Shift}, since depth is odd ($B_{i}$ odd if $c_{i},c_{i+1}\neq 1$):\\
\includegraphics[]{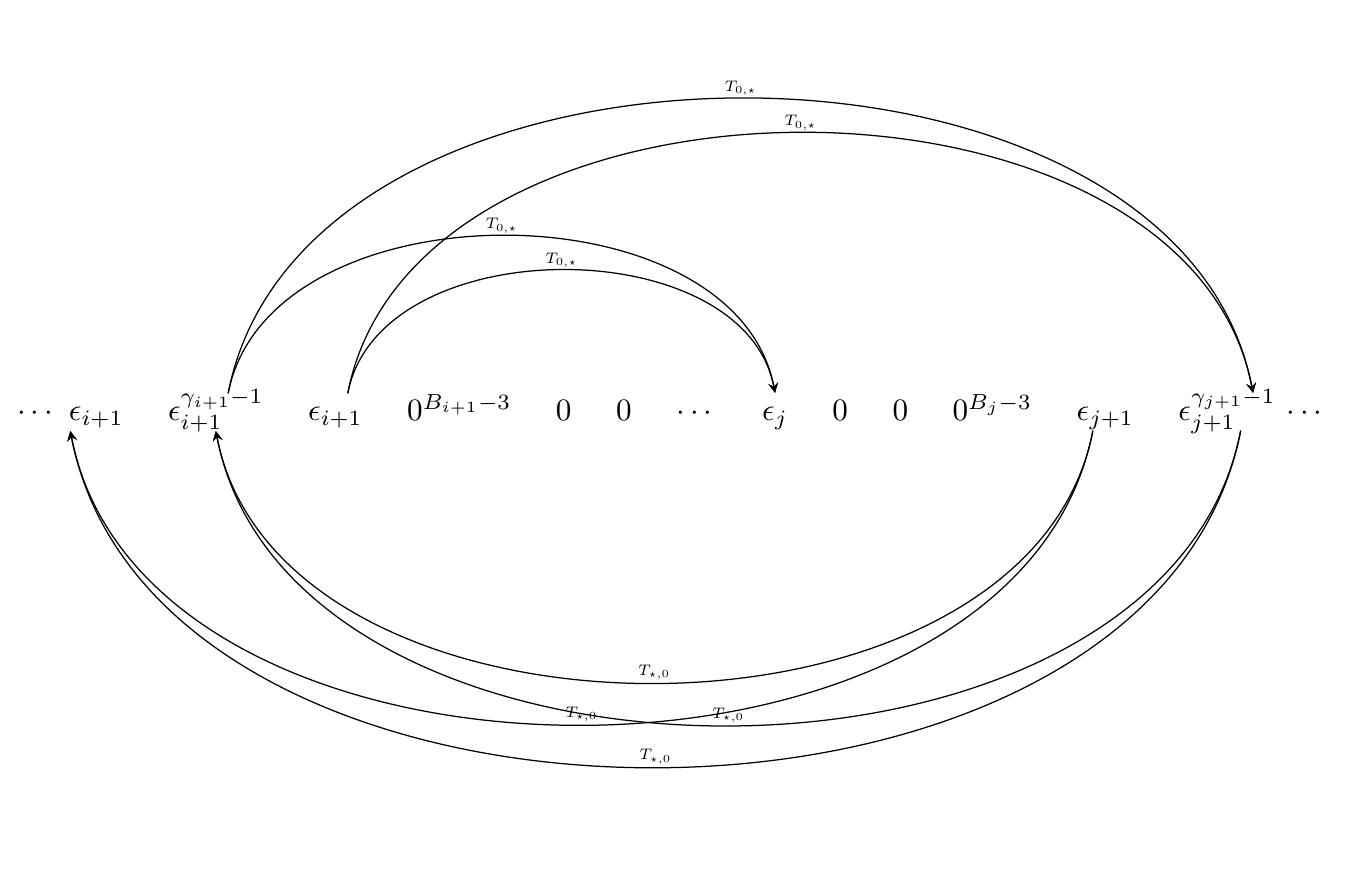}\\ 
It remains, where all the unstable cuts are simplified by the Lemma $A.1.4$, cuts that we can gather into four groups, according to the right side of the coaction:
\begin{itemize}
\item[$(i)$]  $\zeta^{\sharp,\mathfrak{m}}(B_{0} \cdots, B_{i}, \textcolor{magenta}{1^{\gamma}, B}, 1^{\gamma_{j+1}}, \ldots, B_{p}) $.
\item[$(ii)$]  $\zeta^{\sharp,\mathfrak{m}}(B_{0} \cdots, B_{i}, \textcolor{yellow}{1^{\gamma}}, B_{j}, \ldots, B_{p}) $.
\item[$(iii)$]  $\zeta^{\sharp,\mathfrak{m}}(B_{0} \cdots, B_{i}, 1^{\gamma_{i+1}}, \textcolor{green}{B}, 1^{\gamma_{j+1}}, \ldots, B_{p}) $.
\item[$(iv)$]  $\zeta^{\sharp,\mathfrak{m}}(B_{0} \cdots, 1^{\gamma_{i+1}},\textcolor{cyan}{ B, 1^{\gamma}}, B_{j+1}, \ldots, B_{p}) $.
\end{itemize}
It remains, where $(iv)$ terms, antisymmetric of $(i)$ ones, are omitted to lighten the diagrams: \\
\includegraphics[]{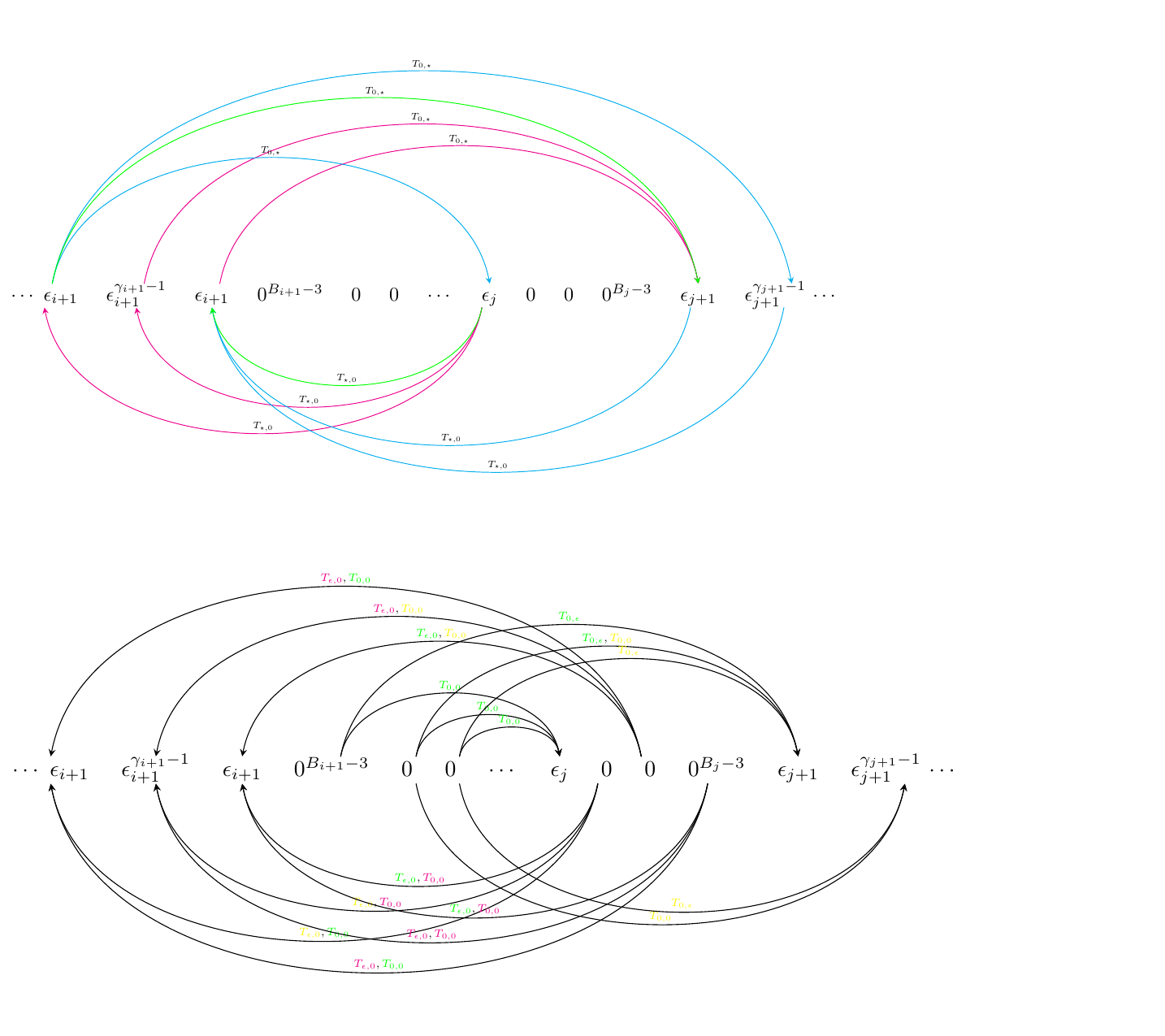}\\
Now, let list these remaining terms, gathered according to their right side as above:
\begin{itemize}
\item[$(i)$] Looking at the magenta terms, with $2 \leq B \leq B_{j}-1$ or $B=B_{j}+1$ and $0\leq\gamma\leq\gamma_{i+1}-1$:
$$\zeta^{\sharp\sharp,\mathfrak{l}}_{B_{j}-B+1}(1^{\gamma_{j}}, \ldots, 1^{\gamma_{i+1}-\gamma-1})-\zeta^{\sharp\sharp,\mathfrak{l}}_{B_{j}-B}(1^{\gamma_{j}}, \ldots, 1^{\gamma_{i+1}-\gamma}) =-\zeta^{\sharp,\mathfrak{l}}(B_{j}-B+1, 1^{\gamma_{j}}, \ldots, 1^{\gamma_{i+1}-\gamma-1})$$
With $even$ depth for the first term and $odd$ for the second since otherwise the cuts would be unstable and simplified by $\textsc{Cut}$; here also $c_{i+1}\neq 1$. 
\item[$(ii)$]  These match exactly with the left side of $(i)$ for $B=B_{j}$ and $(iv)$ terms for $B=B_{i}$.
\item[$(iii)$]  The following cuts:
$$\delta_{B_{i+1}\geq B}\zeta^{\sharp\sharp,\mathfrak{l}}_{B_{i+1}-B}(1^{\gamma_{i+2}}, \ldots, B_{j}) -\delta_{B_{j}\geq B} \zeta^{\sharp\sharp,\mathfrak{l}}_{B_{j}-B}(1^{\gamma_{j}}, \ldots, B_{i+1})+$$ 
$$\delta_{B_{i+1}< B}\zeta^{\sharp\sharp,\mathfrak{l}}_{B_{i+1}+B_{j}-B}(1^{\gamma_{j}}, \ldots, 1^{\gamma_{i+2}}) - \delta_{B_{j}< B}\zeta^{\sharp\sharp,\mathfrak{l}}_{B_{i+1}+B_{j}-B}(1^{\gamma_{i+2}}, \ldots, 1^{\gamma_{j}}) .$$
The parity of $weight-depth$ for the first line is equal to the parity of $\delta_{c_{i+1}}+ \delta_{c_{j+1}}+B$. Notice that if this is even, the first line has odd depth whereas the second line has even depth, and by $\textsc{Cut}$ and $\textsc{Antipode} \ast$, all terms got simplified. Hence, we can restrict to $B$ written as $2\beta+3- \delta_{c_{i+1}}- \delta_{c_{j+1}}$, the first line being of even depth, the second line of odd depth.

\item[$(iv)$]  Antisymmetric of $(i)$.
\end{itemize}
\end{proof}

\section{Galois descent in small depths, $N=2,3,4,\mlq 6 \mrq,8$}

\subsection{$\boldsymbol{N=2}$: Depth $\boldsymbol{2,3}$}
Here we have to consider only one Galois descent, from $\mathcal{H}^{2}$ to $\mathcal{H}^{1}$. \\
In depth $1$ all the $\zeta^{\mathfrak{m}}(\overline{s})$, $s>1$ are MMZV. Let us detail the case of depth 2 and 3 as an application of the results of Chapter $5$. In depth 2, coefficients are explicit:
\begin{lemm} 
The depth $2$ part of the basis of the motivic multiple zeta values is:
$$\left\{ \zeta^{\mathfrak{m}}(2a+1, \overline{2b+1})- \binom {2(a+b)}{2b}  \zeta^{\mathfrak{m}}(1,\overline{2(a+b)+1}), a,b> 0 \right\}.$$
\end{lemm}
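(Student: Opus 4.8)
The plan is to derive the statement from the general Galois-descent results of Chapter~$5$ applied to the descent $\mathcal{H}^{2}\to\mathcal{H}^{1}$, together with an explicit computation of the derivations $D_{1}$ and $D_{n-1}$. First I would place the claimed elements in the framework of Theorem~$5.2.4$ (and its corollaries): a basis of $\mathcal{H}^{1}_{n}$ is obtained from the level-$0$ elements of $\mathcal{B}^{2}$ of weight $n$, each corrected by the \emph{unique} $\mathbb{Q}$-linear combination of level-$\geq 1$ elements of $\mathcal{B}^{2}$ of no larger depth that places it inside $\mathcal{H}^{1}=\mathcal{F}_{0}\mathcal{H}^{2}$. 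In depth exactly $2$ the level-$0$ elements of weight $n$ are precisely the $\zeta^{\mathfrak{m}}(2a+1,\overline{2b+1})$ with $a,b>0$, $2a+2b+2=n$ (the $\zeta^{\mathfrak{m}}(2)$-powers are Galois-trivial and may be carried along untouched), while $\zeta^{\mathfrak{m}}(1,\overline{2(a+b)+1})$ is a level-$1$, depth-$2$ element of $\mathcal{B}^{2}$. Hence, by uniqueness of the correction, the lemma reduces to showing that
$$V_{a,b}\ :=\ \zeta^{\mathfrak{m}}(2a+1,\overline{2b+1})-\binom{2(a+b)}{2b}\,\zeta^{\mathfrak{m}}(1,\overline{2(a+b)+1})\ \in\ \mathcal{H}^{1}.$$

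To prove $V_{a,b}\in\mathcal{H}^{1}$ I would invoke the criterion recorded just after Corollary~\ref{criterehonoraire}: a weight-$n$ motivic Euler sum of depth $2$ is a motivic multiple zeta value if and only if it is annihilated by $D_{1}$ and by $D_{n-1}$. This rests on the observation that for a depth-$2$ element and $r\geq 1$ the right-hand factor of $D_{2r+1}$ has depth $\leq 1$ and odd weight $n-2r-1$, hence lies in $\mathcal{H}^{1}$ by the distribution relation whenever that weight exceeds $1$, the sole exception being $r=(n-2)/2$, where the right factor is proportional to $\zeta^{\mathfrak{m}}(\overline 1)\notin\mathcal{H}^{1}$, and this is exactly what $D_{n-1}$ controls. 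Now $D_{1}(V_{a,b})=0$ is immediate from Lemma~\ref{condd1}, since none of the argument strings involved contains $\overline 1$. For $D_{n-1}$, applying the explicit derivation formula (Lemma~\ref{drz}, equivalently Lemma~\ref{Drp}) together with the shuffle regularization $\eqref{eq:shufflereg}$ in the coalgebra $\mathcal{L}$, I expect, with $c:=a+b$,
$$D_{n-1}\big(\zeta^{\mathfrak{m}}(2a+1,\overline{2b+1})\big)=\Big(\binom{2c}{2a}\,\zeta^{\mathfrak{l}}(\overline{2c+1})-\binom{2c}{2b}\,\zeta^{\mathfrak{l}}(2c+1)\Big)\otimes\zeta^{\mathfrak{m}}(\overline 1),$$
$$D_{n-1}\big(\zeta^{\mathfrak{m}}(1,\overline{2c+1})\big)=\big(\zeta^{\mathfrak{l}}(\overline{2c+1})-\zeta^{\mathfrak{l}}(2c+1)\big)\otimes\zeta^{\mathfrak{m}}(\overline 1).$$
Since $\binom{2c}{2a}=\binom{2c}{2b}$ (because $2c=2a+2b$), subtracting $\binom{2c}{2b}$ times the second identity from the first yields $D_{n-1}(V_{a,b})=0$; combined with $D_{1}(V_{a,b})=0$ this gives $V_{a,b}\in\mathcal{H}^{1}$, and the first step then finishes the proof. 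Counting, the number of such $V_{a,b}$ in weight $n$ matches $\dim gr^{\mathfrak{D}}_{2}\mathcal{H}^{1}_{n}$, but this need not be checked directly, as Theorem~$5.2.4$ already guarantees the corrected family is a basis.

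The only genuine work is the $D_{n-1}$ computation: one must identify which of the length-$(n-1)$ cuts survive, reduce the resulting depth-$1$ terms $\zeta^{\mathfrak{l}}_{k}(\,\cdot\,)$ to multiples of $\zeta^{\mathfrak{l}}(2c+1)$ and $\zeta^{\mathfrak{l}}(\overline{2c+1})$ via $\eqref{eq:shufflereg}$, and check that the two binomial coefficients agree. This is precisely where the coefficient $\binom{2(a+b)}{2b}$ is forced; everything else is a formal appeal to Theorem~$5.2.4$ (basis and uniqueness of the correction), to Lemma~\ref{condd1} (for $D_{1}$), and to the depth-$2$ unramifiedness criterion of $\S6.2$.
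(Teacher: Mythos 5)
Your proposal is correct and follows essentially the same route as the paper: both reduce the statement to the uniqueness of the level-$\geq 1$ correction provided by the Galois-descent results of Chapter~$5$, kill $D_{1}$ via the absence of $\overline{1}$, and identify the single obstruction coming from the right factor $\zeta^{\mathfrak{m}}(\overline{1})$ in the top derivation, which cancels precisely for the coefficient $\binom{2(a+b)}{2b}$ (your formulas for $D_{n-1}$ agree with the paper's coefficient $(2^{-2r}-2)\binom{2(a+b)}{2b}$ at $r=a+b$ after applying the distribution relation $\zeta^{\mathfrak{l}}(\overline{2c+1})=(2^{-2c}-1)\zeta^{\mathfrak{l}}(2c+1)$ and $\binom{2c}{2a}=\binom{2c}{2b}$). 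The only cosmetic difference is that the paper records all $D_{2r+1,2}$ and observes that only $r=a+b$ leaves $\mathcal{F}_{0}\mathcal{H}$ in the quotient $\mathcal{H}^{\geq 1}$, while you isolate $D_{n-1}$ directly via the depth-$2$ criterion; this is the same argument.
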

\begin{proof}
Indeed, we have if $a,b>0$, $D_{1}(\zeta^{\mathfrak{m}}(2a+1, \overline{2b+1}))=0$ and for $r>0$:
\begin{multline}\nonumber
D_{2r+1,2}(\zeta^{\mathfrak{m}}(2a+1, \overline{2b+1}))= \zeta^{\mathfrak{l}}(2r+1)\otimes \zeta^{\mathfrak{m}}(\overline{2(a+b-r)+1})\\
\left(-\delta_{a \leq r < a+b} \binom{2r}{2a} + \delta_{r=a} + \delta_{b\leq r < a+b} \binom{2r}{2b}(2^{-2r}-1)+ \delta_{r=a+b}(2^{-2r}-2)\binom{2(a+b)}{2b}\right).
\end{multline}
There is only the case $r=a+b$ where a term ($\zeta^{\mathfrak{m}}(\overline{1}))$ which does not belong to $\mathcal{F}_{0}\mathcal{H}$ appears:
$$D_{2r+1,2}(\zeta^{\mathfrak{m}}(2a+1, \overline{2b+1}))\equiv \delta_{r=a+b}(2^{-2r}-2)\binom{2(a+b)}{2b}  \zeta^{\mathfrak{l}}(2r+1)\otimes \zeta^{\mathfrak{m}}(\overline{1}) \text{ in the quotient } \mathcal{H}^{\geq 1}.$$
Referring to the previous results, we can correct $\zeta^{\mathfrak{m}}(2a+1, \overline{2b+1})$ with terms of the same weight, same depth, and with at least one $1$ (not at the end), which here corresponds only to $\zeta^{\mathfrak{m}}(1,\overline{2(a+b)+1})$.\\
Furthermore, the last equality being true in the quotient $\mathcal{H}^{\geq 1}$:
\begin{align*}
D_{2r+1,2}(\zeta^{\mathfrak{m}}(1, \overline{2(a+b)+1})) & = \zeta^{\mathfrak{l}}(2r+1)\otimes (-\delta_{r < a+b}+ \delta_{r=a+b}(2^{-2r}-2)) \zeta^{\mathfrak{m}}(\overline{2(a+b-r)+1})\\
& \equiv \delta_{r=a+b}(2^{-2r}-2) \zeta^{\mathfrak{l}}(2r+1)\otimes \zeta^{\mathfrak{m}}(\overline{1}) .
\end{align*}
According to these calculations of infinitesimal coactions:
$$\zeta^{\mathfrak{m}}(2a+1, \overline{2b+1})- \binom {2(a+b)}{2b}  \zeta^{\mathfrak{m}}(1,\overline{2(a+b)+1}) \text{ belongs to } \mathcal{F}_{0}\mathcal{H} \text{ , i.e. is a MMZV.}$$
\end{proof}
\noindent
\texttt{Examples:} Here are some motivic multiple zeta values:
$$\zeta^{\mathfrak{m}}(3, \overline{3})-6 \zeta^{\mathfrak{m}}(1,\overline{5}) \text{ , } \zeta^{\mathfrak{m}}(3, \overline{5})-15 \zeta^{\mathfrak{m}}(1,\overline{7}) \text{ , }\zeta^{\mathfrak{m}}(5, \overline{3})-15 \zeta^{\mathfrak{m}}(1,\overline{7}) \text{ ,  } \zeta^{\mathfrak{m}}(5, \overline{7})-210 \zeta^{\mathfrak{m}}(1,\overline{11}) .$$
\\
\textsc{Remarks:}
\begin{itemize}
	\item[$\cdot$] The corresponding Euler sums $\left\{ \zeta(2a+1, \overline{2b+1})- \binom {2(a+b)}{2b}  \zeta(1,\overline{2(a+b)+1}), a,b> 0 \right\}$ are a generating family of MZV in depth $2$.
	\item[$\cdot$] Similarly, we can prove that the following elements are (resp. motivic) MZV, if no $\overline{1}$:
	$$\begin{array}{l|ll}
	\text{ } \zeta(\overline{A}, \overline{B}) & \zeta(A,\overline{B}) +\zeta(\overline{A},B) & \text{ if } A,B \text{ odd } \\
	\left. \begin{array}{l}
	 \zeta(A, \overline{B})\\
	 \zeta(\overline{A}, B)
	\end{array} \right\rbrace \text{ if }  A+B \text{ odd } & \zeta(A, \overline{B}) + (-1)^{A} \binom{A+B-2}{A-1} \zeta(1,\overline{A+B-1}) & \text{ if } A+B \text{ even} \\
\begin{array}{l}
\zeta(\overline{1},\overline{1}) -\frac{1}{2}\zeta(\overline{1})^{2} \\
 \zeta(1,\overline{1}) -\frac{1}{2}\zeta(\overline{1})^{2}
\end{array}  & 	\zeta(\overline{A}, B)- (-1)^{A} \binom{A+B-2}{A-1} \zeta(1,\overline{A+B-1}) & \text{ if } \left\lbrace \begin{array}{l}
 A+B \text{ even } \\
 A,B\neq 1  
\end{array}\right. 
	\end{array}$$
\end{itemize}

\begin{lemm} 
The depth $2$ part of the basis of $\mathcal{F}_{1}\mathcal{H}$ is:
$$\left\{ \zeta^{\mathfrak{m}}(2a+1, \overline{2b+1}) , (a,b)\neq(0, 0) \right\}.$$
\end{lemm}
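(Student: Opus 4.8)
The plan is to derive this from Theorem $5.2.4$ together with the recursive description of the motivic level filtration $\mathcal{F}_\bullet\mathcal{H}$ given in $\S 5.2.5$, the point being to show that the correction terms $cl$ vanish identically in depth $2$.

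First I would recall the set-up. By Theorem $5.2.4\,(iv)$ applied with $i=1$, a basis of $\mathcal{F}_{1}\mathcal{H}_{n}$ is
$$\bigcup_{p}\left\{ \mathfrak{Z}+cl_{n,\leq p,\geq 2}(\mathfrak{Z}),\ \mathfrak{Z}\in\mathcal{B}_{n,p,\leq 1}\right\},$$
so its depth $2$ part is indexed by $\mathcal{B}_{n,2,\leq 1}$, the set of $\zeta^{\mathfrak{m}}(2x_{1}+1,\overline{2x_{2}+1})\,\zeta^{\mathfrak{m}}(2)^{s}$ of weight $n$ having at most one $x_{i}$ equal to $0$, i.e. $(x_{1},x_{2})\neq(0,0)$. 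Moreover $cl_{n,\leq 2,\geq 2}(\mathfrak{Z})\in\langle\mathcal{B}_{n,\leq 2,\geq 2}\rangle_{\mathbb{Q}}$; since a depth $\leq 1$ basis element has only one index it is automatically of level $\leq 1$, hence $\mathcal{B}_{n,\leq 2,\geq 2}$ reduces to the level $2$ elements $\zeta^{\mathfrak{m}}(1,\overline{1})\,\zeta^{\mathfrak{m}}(2)^{\bullet}$. Since $\zeta^{\mathfrak{m}}(2)$ is Galois-trivial (killed by every $D_{r}$), it therefore suffices to prove $\zeta^{\mathfrak{m}}(2a+1,\overline{2b+1})\in\mathcal{F}_{1}\mathcal{H}$ whenever $(a,b)\neq(0,0)$: by the uniqueness of $cl$ in Theorem $5.2.4\,(iii)$ this forces $cl_{n,\leq 2,\geq 2}=0$ on $\mathcal{B}_{n,2,\leq 1}$, and since these elements belong to $\mathcal{B}^{2}$ they are linearly independent (indeed already modulo lower depth, by Theorem $5.2.4$), which yields the stated basis.

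Then I would verify $\zeta^{\mathfrak{m}}(2a+1,\overline{2b+1})\in\mathcal{F}_{1}\mathcal{H}$ for $(a,b)\neq(0,0)$ using the criterion of $\S 5.2.5$: one needs the $\mathcal{H}$-component of $D_{1}$ to lie in $\mathcal{F}_{0}\mathcal{H}=\mathcal{H}^{1}$, and the $\mathcal{H}$-component of $D_{2r+1}$ ($r>0$) to lie in $\mathcal{F}_{1}\mathcal{H}$. By Lemma $\ref{drz}$ (equivalently $\ref{Drp}$) each $D_{r}$ strictly lowers the depth, so on a depth $2$ element it produces only terms $\zeta^{\mathfrak{l}}(\cdots)\otimes\mathfrak{W}$ with $\mathfrak{W}$ of depth $\leq 1$ and weight $2(a{+}b{+}1)-r$. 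If $\mathfrak{W}$ has even weight it is a rational multiple of a power of $\pi^{\mathfrak{m}}$, hence in $\mathcal{H}^{1}$; if it has odd weight $>1$, being a depth $\leq 1$ motivic Euler sum it is a rational multiple of a simple motivic zeta value $\zeta^{\mathfrak{m}}(\mathrm{odd})$ (the depth $1$ distribution relation of $\S 5.2.1$ identifying $\zeta^{\mathfrak{m}}(\overline{\mathrm{odd}})$ with a multiple of $\zeta^{\mathfrak{m}}(\mathrm{odd})$ in odd weight, where there is no $\pi$-ambiguity), hence again in $\mathcal{H}^{1}$; the only remaining possibility $\mathfrak{W}\in\mathbb{Q}\,\zeta^{\mathfrak{m}}(\overline{1})$ occurs only in weight $1$, and since $D_{1}$ lowers weight by exactly $1$ this would need $2(a{+}b{+}1)-1=1$, i.e. $(a,b)=(0,0)$, which is excluded. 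Thus for $(a,b)\neq(0,0)$ the $\mathcal{H}$-component of $D_{1}$ lands in $\mathcal{H}^{1}=\mathcal{F}_{0}\mathcal{H}$, while that of each $D_{2r+1}$, $r>0$, lands in $\mathcal{H}^{1}+\mathbb{Q}\,\zeta^{\mathfrak{m}}(\overline{1})\subset\mathcal{F}_{1}\mathcal{H}$ (recall $\zeta^{\mathfrak{m}}(\overline 1)$ is a level $1$ basis element, hence in $\mathcal{F}_{1}\mathcal{H}$); the criterion of $\S 5.2.5$ is met.

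The one delicate point — and essentially the only one — is the bookkeeping of the $D_{1}$ terms in the boundary cases $b=0$ and $a=0$, i.e. for $\zeta^{\mathfrak{m}}(2a+1,\overline{1})$ and $\zeta^{\mathfrak{m}}(1,\overline{2b+1})$, where a $\overline{1}$ could a priori enter (Lemma $\ref{condd1}$ does not apply directly since an argument equals $-1$). The weight/parity count above shows it cannot, so the argument is uniform over all $(a,b)\neq(0,0)$. With $cl_{n,\leq 2,\geq 2}=0$ established, Theorem $5.2.4\,(iv)$ gives exactly $\{\zeta^{\mathfrak{m}}(2a+1,\overline{2b+1}),\ (a,b)\neq(0,0)\}$ as the depth $2$ part of a basis of $\mathcal{F}_{1}\mathcal{H}$.
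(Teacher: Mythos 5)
Your proof is correct, and it sits in the same framework as the paper's: both reduce the statement to Theorem $5.2.4\,(iv)$ and to showing that the corrections $cl_{n,\leq 2,\geq 2}$ vanish on the depth $2$, level $\leq 1$ elements. The difference is in how that vanishing is justified. The paper disposes of it in one line, by observing that the target of $cl$, namely $\langle\mathcal{B}_{n,\leq 2,\geq 2}\rangle_{\mathbb{Q}}$, is (up to the trivial $\zeta^{\mathfrak{m}}(2)$-blocks) empty — the only candidate being $\zeta^{\mathfrak{m}}(1,\overline{1})$ times powers of $\zeta^{\mathfrak{m}}(2)$ — so no correction is even possible and the uncorrected elements must already lie in $\mathcal{F}_{1}\mathcal{H}$. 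You instead prove the membership $\zeta^{\mathfrak{m}}(2a+1,\overline{2b+1})\in\mathcal{F}_{1}\mathcal{H}$ directly from the recursive criterion of $\S 5.2.5$, via the depth-lowering of the $D_{r}$ and the depth $1$ identifications (odd weight $\geq 3$ lands in $\mathbb{Q}\zeta^{\mathfrak{m}}(\mathrm{odd})\subset\mathcal{H}^{1}$, weight $1$ lands in $\mathbb{Q}\zeta^{\mathfrak{m}}(\overline{1})\subset\mathcal{F}_{1}\mathcal{H}$, and the latter is excluded for $D_{1}$ exactly when $(a,b)\neq(0,0)$), and then use the uniqueness of $cl$ to force $cl=0$. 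This is a slightly longer but self-contained route; it buys an explicit check of the boundary cases $a=0$ or $b=0$ (where Lemma $\ref{condd1}$ does not apply verbatim) and it handles cleanly the $\zeta^{\mathfrak{m}}(1,\overline{1})\zeta^{\mathfrak{m}}(2)^{s}$ multiples that the paper's parenthetical glosses over, whereas the paper's argument is purely structural and shorter. Both are valid proofs of the lemma.
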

\begin{proof}
No need of correction ($\mathcal{B}_{n,2,\geq 2}$ is empty for $n\neq 2$), these elements belong to $\mathcal{F}_{1}\mathcal{H}$.
\end{proof}

\begin{lemm} The depth $3$ part of the basis of motivic multiple zeta values is:
\begin{multline}
 \left\{\zeta^{\mathfrak{m}}(2a+1,2b+1,\overline{2c+1})-\sum_{k=1}^{a+b+c}\alpha_{k}^{a,b,c}\zeta^{\mathfrak{m}}(1,2(a+b+c-k)+1, \overline{2k+1}) \right. \\ 
\left. -\binom {2(b+c)}{2c}\zeta^{\mathfrak{m}}(2a+1,1,\overline{2(b+c)+1}),a,b,c>0 \right\}.
 \end{multline}
 where $\alpha_{k}^{a,b,c} \in\mathbb{Z}_{\text{odd}}$ are solutions of $M_{3}X=A^{a,b,c}$. With 
$A^{a,b,c}$ such that $r^{\text{th}}-$coefficient is:
$$\delta_{b \leq r < a+b} \binom{2(n-r)}{2c}\binom{2r}{2b} - \delta_{a < r< a+b} \binom{2(n-r)}{2c}\binom{2r}{2a}-\delta_{b\leq r < b+c}\binom{2(n-r)}{2a}\binom{2r}{2b} $$
$$- \delta_{r\leq a}\binom{2(n-r)}{2(b+c)}\binom{2(b+c)}{2c} + \delta_{r<b+c}\binom{2(n-r)}{2a}\binom{2(b+c)}{2c} + \delta_{c\leq r < b+c}\binom{2r}{2c}\binom{2(n-r)}{2a}(2^{-2r}-1).$$ 
$M_{3}$ the matrix whose $(r,k)^{\text{th}}$ coefficient is:
$$\delta_{r=a+b+c}(2^{-2r}-2)\binom{2n}{2k}+ \delta_{k \leq r < n} \binom{2r}{2k}(2^{-2r}-1) - \delta_{r<n-k} \binom{2(n-r)}{2k} - \delta_{n-k \leq r<n} \binom{2r}{2(n-k)}. $$
\end{lemm}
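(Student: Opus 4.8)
The statement is the depth-$3$ instance of the explicit Galois descent from $\mathcal{H}^{2}$ to $\mathcal{H}^{1}$, and the plan is to deduce it from the recursive criterion of Corollary~\ref{criterehonoraire} together with the already-established depth-$\leq 2$ facts (the two preceding lemmas, the depth-$1$ relations, and the fact that $\zeta^{\mathfrak{m}}(\overline{s})\in\mathcal{H}^{1}$ for $s>1$). Write $n=a+b+c$ and let $\mathfrak{Z}$ denote the proposed combination
\[
\mathfrak{Z}=\zeta^{\mathfrak{m}}(2a+1,2b+1,\overline{2c+1})-\sum_{k=1}^{n}\alpha_{k}^{a,b,c}\,\zeta^{\mathfrak{m}}(1,2(n-k)+1,\overline{2k+1})-\binom{2(b+c)}{2c}\zeta^{\mathfrak{m}}(2a+1,1,\overline{2(b+c)+1}).
\]
By Corollary~\ref{criterehonoraire}, $\mathfrak{Z}\in\mathcal{H}^{1}$ as soon as $D_{1}(\mathfrak{Z})=0$ and $D_{2r+1}(\mathfrak{Z})\in\mathcal{H}^{1}$ for all $r>0$; the latter is handled by recursion on depth, so the proof reduces to a bookkeeping of the depth-graded derivations.

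First I would check $D_{1}(\mathfrak{Z})=0$. None of the three families of iterated integrals occurring in $\mathfrak{Z}$ contains a subword of the shape $\{0,\epsilon,-\epsilon\}$ or $\{\epsilon,-\epsilon,0\}$ producing a $\overline{1}$: the only sign change comes from the lone $\overline{2c+1}$ (resp.\ $\overline{2k+1}$, resp.\ $\overline{2(b+c)+1}$), and it is never adjacent, through a string of $0$'s, to another root of unity. Hence Lemma~\ref{condd1} gives $D_{1}=0$ on each term, so on $\mathfrak{Z}$.

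Next, for $r>0$, I would compute $D_{2r+1,3}$ on each of the three families via Lemma~\ref{drz}/Lemma~\ref{Drp}, and sort the resulting terms according to whether the right-hand MMZV factor is unramified. Working in the quotient $\mathcal{H}^{\geq 1}=\mathcal{H}/\mathcal{F}_{0}$, every term whose right-hand factor is a depth-$\leq 2$ motivic MZV dies (using the depth-$2$ results and the $\shuffle$-regularisation $(\ref{eq:shufflereg})$ to identify the "stable" part); what survives are the deconcatenation-type cuts producing a trailing $\overline{1}$, i.e.\ terms $c\cdot\zeta^{\mathfrak{l}}(2r+1)\otimes(\text{a level-}{\geq}1\text{ depth-}2\text{ element})$. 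The coefficients of these obstructions for $\zeta^{\mathfrak{m}}(2a+1,2b+1,\overline{2c+1})$, once the correction $\binom{2(b+c)}{2c}\zeta^{\mathfrak{m}}(2a+1,1,\overline{2(b+c)+1})$ (forced by the depth-$2$ identity of the first appendix lemma applied "from the right") has been subtracted, assemble into the vector $A^{a,b,c}$ of the statement, while the obstructions of $\zeta^{\mathfrak{m}}(1,2(n-k)+1,\overline{2k+1})$ assemble into $M_{3}$. Imposing $D_{2r+1}(\mathfrak{Z})\equiv 0$ in $\mathcal{H}^{\geq 1}$ for every $r$ is exactly the system $M_{3}X=A^{a,b,c}$; once it has a solution $X=(\alpha_{k}^{a,b,c})$, Corollary~\ref{criterehonoraire} (and the recursion, to see the surviving depth-$2$ factors are genuinely MMZV) gives $\mathfrak{Z}\in\mathcal{H}^{1}$.

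Finally I would establish solvability and integrality: after rescaling each row by the appropriate power of $2$, $M_{3}$ is upper-triangular modulo $2$ with $1$'s on the diagonal — the diagonal being the deconcatenation terms $\delta_{r=n-k}\binom{2n}{2k}\equiv 1$, all off-diagonal binomials in Lemma~\ref{drz} having positive $2$-adic valuation after the $2^{-2r}$-type factor is extracted; this is the $N=2$ instance of the argument of Lemma~5.2.3. Hence $\det M_{3}$ is odd, the system has a unique solution $X\in\mathbb{Z}_{\text{odd}}^{\,n}$, and the same triangularity shows $\partial_{n,3}$ is injective on this family, so the elements are linearly independent; comparing with $\dim gr^{\mathfrak{D}}_{3}\mathcal{H}^{1}_{n}$ then yields that they span the depth-$3$ part, as in Theorem~5.2.4 and Corollary~\ref{kerdn}. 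The main obstacle is the bookkeeping of the previous paragraph: correctly enumerating which cuts in $D_{2r+1,3}$ of each of the three families produce a $\overline{1}$ versus a genuine MMZV, and verifying that the stable parts of the three combinations cancel against one another (via the depth-$2$ lemmas and $\shuffle$-regularisation), so that the entries of $M_{3}$ and $A^{a,b,c}$ come out exactly as displayed.
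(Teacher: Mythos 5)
Your proposal is correct and follows essentially the same route as the paper: the paper also works in the quotient $\mathcal{H}^{\geq 1}$, first subtracts $\binom{2(b+c)}{2c}\zeta^{\mathfrak{m}}(2a+1,1,\overline{2(b+c)+1})$ to kill the $\zeta^{\mathfrak{m}}(2a+1,\overline{1})$ obstruction (equivalently $D_{1}\circ D_{2r+1}$), then determines the $\alpha_{k}^{a,b,c}$ from the system $M_{3}X=A^{a,b,c}$, whose solvability and $\mathbb{Z}_{\text{odd}}$-integrality follow from the mod-$2$ triangularity of $M_{3}$ established in Chapter 5. Your phrasing via Corollary~\ref{criterehonoraire} versus the paper's direct use of the level filtration $\mathcal{F}_{0}\mathcal{H}^{2}=\mathcal{H}^{1}$ is only a cosmetic difference.
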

\begin{proof}
Let $\zeta^{\mathfrak{m}}(2a+1,2b+1,\overline{2c+1})$, $a,b,c >0$ fixed, and substract elements of the same weight, of depth 3 until it belongs to $gr_{3} \mathcal{F}_{0}\mathcal{H}$.\\
Let calculate infinitesimal coproducts referring to the formula ($\ref{Deriv2}$) in the quotient $\mathcal{H}^{\geq 1}$ and use previous results for depth 2, with $n=a+b+c$:
\begin{small}
$$D_{2r+1,3}(\zeta^{\mathfrak{m}}(2a+1,2b+1,\overline{2c+1}))\equiv \zeta^{\mathfrak{l}}(2r+1)\otimes  \left[ \delta_{r=b+c} \binom{2(b+c)}{2c}(2^{-2r}-2) \zeta^{\mathfrak{m}}(2a+1, \overline{1}) \right.$$ 
$$\left. + \zeta^{\mathfrak{m}}(1, \overline{2(n-r)+1}) \left( \delta_{a = r} \binom{2(n-r)}{2c}+  \delta_{b \leq r < a+b} \binom{2r}{2b}\binom{2(n-r)}{2c} - \delta_{a\leq r<a+b} \binom{2r}{2a}\binom{2(n-r)}{2c} \right. \right.$$
$$\left. \left. - \delta_{b\leq r<b+c} \binom{2r}{2b}\binom{2(n-r)}{2a}  +\delta_{c \leq r <b+c} \binom{2r}{2c} \binom{2(n-r)}{2a}(2^{-2r}-1)\right) \right].$$
\end{small}
At first, let substract $\binom {2(b+c)}{2c}\zeta(2a+1,1,\overline{2(b+c)+1})$ such that the $D^{-1}_{1,2}  D^{1}_{2r+1,3} $ are equal to zero, which comes to eliminate the term $\zeta^{\mathfrak{m}}(2a+1, \overline{1})$ appearing (case $r=b+c$).\\
So, we are left to substract a linear combination 
$$\sum_{k=1}^{a+b+c} \alpha_{k}^{a,b,c}  \zeta^{\mathfrak{m}}(1,2(a+b+c-k)+1, \overline{2k+1})$$
 such that the coefficients $\alpha_{k}^{a,b,c}$ are solutions of the system $M_{3}X=A^{a,b,c}$ where $A^{a,b,c}= (A^{a,b,c}_{r})_{r}$ satisfying in $\mathcal{H}^{\geq 1}$:
\begin{small}
\begin{multline}\nonumber
D_{2r+1,3} \left( \zeta^{\mathfrak{m}}(2a+1,2b+1,\overline{2c+1})- \binom {2(b+c)}{2c}\zeta(2a+1,1,\overline{2(b+c)+1})\right) \equiv\\
A^{a,b,c}_{r}\zeta^{\mathfrak{l}}(2r+1)\otimes \zeta^{\mathfrak{m}}(1, \overline{2(n-r)+1}),
\end{multline}
\end{small}
and $M_{3}= (m_{r,k})_{r,k}$ matrix such that: 
$$D_{2r+1,3}( \zeta^{\mathfrak{m}}(1,2(a+b+c-k)+1, \overline{2k+1}))= m_{r, k}   \zeta^{\mathfrak{l}}(2r+1)\otimes \zeta^{\mathfrak{m}}(1, \overline{2(n-r)+1}).$$
This system has solutions since, according to Chapter $5$ results, the matrix $M_{3}$ is invertible.\footnote{Indeed, modulo 2, $M_{3}$ is an upper triangular matrix with $1$ on diagonal.}\\
Then, the following linear combination will be in $\mathcal{F}_{0}\mathcal{H}$:
\begin{small}
$$\zeta^{\mathfrak{m}}(2a+1,2b+1,\overline{2c+1})-\sum_{k=1}^{a+b+c} \alpha_{k}^{a,b,c}  \zeta^{\mathfrak{m}}(1,2(a+b+c-k)+1, \overline{2k+1})-\binom {2(b+c)}{2c}\zeta(2a+1,1,\overline{2(b+c)+1}).$$ 
\end{small}
The coefficients $\alpha_{k}^{a,b,c}$ belong to $\mathbb{Z}_{\text{odd}}$ since coefficients are integers, and $\det(M_{3})$ is odd. Referring to the calculus of infinitesimal coactions, $A^{a,b,c}$ and $M_{3}$ are as claimed in lemma.
\end{proof}
\noindent
\texttt{Examples:}
\begin{itemize}
\item[$\cdot$] By applying this lemma, with $a=b=c=1$ we obtain the following MMZV:
$$\zeta^{\mathfrak{m}}(3,3,\overline{3})+ \frac{774}{191} \zeta^{\mathfrak{m}}(1,5, \overline{3})  - \frac{804}{191} \zeta^{\mathfrak{m}}(1,3, \overline{5})  + \frac{450}{191}\zeta^{\mathfrak{m}}(1,1, \overline{7})  -6 \zeta^{\mathfrak{m}}(3,1,\overline{5}).$$
Indeed, in this case, with the previous notations:
$$M_{3}=\begin{pmatrix}
\frac{27}{4}&-1&-1 \\
-\frac{53}{8}&-\frac{111}{16}&-1\\
-\frac{1905}{64}&-\frac{1905}{64}&-\frac{127}{64}
\end{pmatrix} \text{ , } \quad A^{1,1,1}=\begin{pmatrix}
\frac{51}{2} \\
0\\
0
\end{pmatrix} .$$
\item[$\cdot$] Similarly, we obtain the following motivic multiple zeta value:
$$\hspace*{-1.2cm}\zeta^{\mathfrak{m}}(3,3,\overline{5})+ \frac{850920}{203117}\zeta^{\mathfrak{m}}(1,7, \overline{3}) +\frac{838338}{203117}\zeta^{\mathfrak{m}}(1,5, \overline{5}) -\frac{3673590}{203117}\zeta^{\mathfrak{m}}(1,3, \overline{7})+ \frac{20351100}{203117} \zeta^{\mathfrak{m}}(1,1, \overline{9}) -15\zeta^{\mathfrak{m}}(3,1,\overline{7}).$$
$$\hspace*{-2cm}\text{ There: } \quad \quad M_{3}=\begin{pmatrix}
-\frac{63}{4}& 15 & -1& -1\\
-\frac{93}{8}&-\frac{31}{16}&-6&-1 \\
-\frac{1009}{64}&-\frac{1905}{64}&-\frac{1023}{64}&-1\\
-\frac{3577}{64}&-\frac{17885}{128}&-\frac{3577}{64}&-\frac{511}{256}
\end{pmatrix} \text{ , } \quad \quad A^{1,1,2}=\begin{pmatrix}
210\\
\frac{387}{8} \\
0\\
0
\end{pmatrix} .$$
\end{itemize}

\begin{lemm} 
The depth $3$ part of the basis of $\mathcal{F}_{1}\mathcal{H}$ is:
$$\left\{\zeta^{\mathfrak{m}}(2a+1,2b+1,\overline{2c+1})-\delta_{a=0 \atop\text{ or } c=0} (-1)^{\delta_{c=0}} \binom{2(a+b+c)}{2b} \zeta^{\mathfrak{m}}(1,1, \overline{2(a+b+c)+1}) \right.$$
$$ \left. - \delta_{c=0} \binom{2(a+b)}{2b} \zeta(1,2(a+b)+1,\overline{1}), \text{ at most one of } a,b,c \text{ equals zero }\right\}.$$
\end{lemm}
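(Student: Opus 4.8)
The plan is to mimic exactly the proof of the depth $3$ part of the basis of motivic multiple zeta values (the previous lemma), except that now we are computing a basis of the level $\leq 1$ filtered piece $\mathcal{F}_{1}\mathcal{H}$ rather than the level $0$ piece $\mathcal{H}^{1}=\mathcal{F}_{0}\mathcal{H}$. Recall from Theorem $5.2.4$(iv) that a basis for $\mathcal{F}_{i}\mathcal{H}_{n}$ is given by $\cup_{p}\{\mathfrak{Z}+cl_{n,\leq p,\geq i+1}(\mathfrak{Z}),\ \mathfrak{Z}\in\mathcal{B}_{n,p,\leq i}\}$, where $\mathcal{B}_{n,p,i}$ denotes the subset of the basis $\mathcal{B}^{2}$ of weight $n$, depth $p$ and level $i$, the level here being the number of $x_{i}$ equal to zero, i.e. the number of arguments equal to $\pm1$ in $\zeta^{\mathfrak{m}}(2x_{1}+1,\ldots,\overline{2x_{p}+1})$. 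So first I would identify $\mathcal{B}_{n,3,\leq 1}$: these are the $\zeta^{\mathfrak{m}}(2a+1,2b+1,\overline{2c+1})$ (times a power of $\zeta^{\mathfrak{m}}(2)$, which I suppress as it is Galois-trivial) with \emph{at most one} of $a,b,c$ equal to zero. For each such element I must produce the correction term $cl_{n,\leq 3,\geq 2}(\mathfrak{Z})\in\langle\mathcal{B}_{n,\leq 3,\geq 2}\rangle_{\mathbb{Q}}$ — a linear combination of basis elements of level $\geq2$, i.e. with at least two arguments equal to $\pm1$ — such that $\mathfrak{Z}+cl(\mathfrak{Z})\in\mathcal{F}_{1}\mathcal{H}$; by the Nota Bene after Theorem $5.2.4$ this correction is unique.

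The computational heart is the same as before: apply the depth-graded derivations $D_{2r+1,3}$ using formula $(\ref{Deriv2})$ and work in the quotient $\mathcal{H}^{\geq 2}$ (i.e. modulo $\mathcal{F}_{1}\mathcal{H}$), keeping track only of the terms whose right-hand factor has level $\geq 2$. When $a,b,c$ are all positive, the right-hand side of $D_{2r+1,3}$ consists of depth-$2$ motivic Euler sums; by the already-established depth-$2$ description (Lemma for $\mathcal{F}_1\mathcal{H}$ in depth $2$: the $\zeta^{\mathfrak{m}}(2a+1,\overline{2b+1})$ with $(a,b)\neq(0,0)$ form the depth-$2$ part of the basis of $\mathcal{F}_1\mathcal{H}$), the only way to leave $\mathcal{F}_{1}\mathcal{H}$ is to produce a depth-$2$ term of level $\geq 2$, namely a rational multiple of $\zeta^{\mathfrak{m}}(1,\overline{1})$ — which, by the depth $2$ filtration lemma, is genuinely of level $2$ and the unique such — arising in the deconcatenation-type cuts. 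Tracking this, exactly as in the level-$0$ case one finds that the term $\zeta^{\mathfrak{m}}(2a+1,2b+1,\overline{2c+1})$ produces in $D_{2r+1,3}$ (in the quotient $\mathcal{H}^{\geq 2}$) only terms of the form $\zeta^{\mathfrak{l}}(2r+1)\otimes(\text{depth }2,\ \text{level}\geq 2)$, and since for $a,b,c>0$ the subsequences $(2a+1)$, $(2b+1)$, $(2c+1)$ never force a $\overline{1}$ or a $1$ at the start of a depth-$2$ right factor, one checks directly that no such term occurs: hence $\zeta^{\mathfrak{m}}(2a+1,2b+1,\overline{2c+1})$ is already in $\mathcal{F}_{1}\mathcal{H}$ when $a,b,c>0$, needing no correction, which is why the formula in the statement exhibits a correction only in the cases $a=0$ or $c=0$.

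Next I would treat the boundary cases where exactly one of $a,b,c$ vanishes. When $c=0$, the element is $\zeta^{\mathfrak{m}}(2a+1,2b+1,\overline{1})$ and $D_{1}$ no longer vanishes but acts as deconcatenation $(\ref{Deriv21})$, producing $\zeta^{\mathfrak{m}}(2a+1,2b+1)$ on the right; more importantly the deconcatenation-type cut $\textsc{(d)}$ at $r=a+b$ produces (in $\mathcal{H}^{\geq 2}$) a term proportional to $\zeta^{\mathfrak{l}}(2(a+b)+1)\otimes\zeta^{\mathfrak{m}}(2a+1,\overline{1})$, which has level $1$ but whose further derivative $D_{1}$ lands in level $2$. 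Subtracting $\binom{2(a+b)}{2b}\zeta^{\mathfrak{m}}(1,2(a+b)+1,\overline{1})$ kills this, since $D_{2r+1,3}$ of the latter produces, after the same computation, exactly $\binom{2(a+b)}{2b}$ times the matching term; and one then checks that the remaining discrepancy is a multiple of $\zeta^{\mathfrak{l}}(2r+1)\otimes\zeta^{\mathfrak{m}}(1,1,\overline{2(a+b+c)+1})$-type terms which must be absorbed by a further correction $\binom{2(a+b+c)}{2b}\zeta^{\mathfrak{m}}(1,1,\overline{2(a+b+c)+1})$ with the sign $(-1)^{\delta_{c=0}}$ recorded in the statement — the sign coming from the Antipode $\shuffle$/path-reversal relation $(\ref{eq:antipodeshuffle2})$ when the $\overline{1}$ sits at the end rather than the front. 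The case $a=0$ is handled symmetrically (by path reversal, property (v) of $\S\ref{propii}$), yielding the $\delta_{a=0}$ term with sign $+$. In each case the correcting coefficient lies in $\mathbb{Z}_{\text{odd}}$ by the same argument as before: the relevant matrix (here essentially $M_{3}$ reduced modulo the level-$\leq1$ terms) is triangular with $\pm1$ on the diagonal modulo $2$, so its determinant is odd and the coefficients are integral over $\mathbb{Z}_{\text{odd}}$. Finally, linear independence and the generating property follow formally from Theorem $5.2.4$(iv) applied with $i=1$ and $p=3$, once the explicit correction terms are verified to land the elements in $\mathcal{F}_{1}\mathcal{H}$; a dimension count in fixed weight closes the argument. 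The main obstacle I anticipate is the careful bookkeeping of which cuts in $D_{2r+1,3}$ genuinely escape $\mathcal{F}_1\mathcal{H}$ — i.e. distinguishing level $\geq 2$ contributions from level $1$ contributions in the depth-$2$ right factors — and getting the signs $(-1)^{\delta_{c=0}}$ and the binomial coefficients right in the two boundary cases; the all-positive case is essentially immediate once one observes no $\pm1$-ending depth-$2$ term can appear.
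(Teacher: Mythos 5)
Your overall strategy is the paper's: use the recursive criterion for the motivic level together with the depth-graded derivations and the already-established depth-$2$ lemmas, and your treatment of the interior case $a,b,c>0$ is essentially right. But the boundary cases, which are the entire content of the statement, are justified by a mechanism that does not work. Membership in $\mathcal{F}_{1}\mathcal{H}$ requires $D_{1}(\mathfrak{Z})\in\mathcal{F}_{0}\mathcal{H}$, not merely $\mathcal{F}_{1}\mathcal{H}$, and your write-up never imposes this condition. For $c=0$ one has $D_{1,3}\left(\zeta^{\mathfrak{m}}(2a+1,2b+1,\overline{1})\right)=\zeta^{\mathfrak{m}}(2a+1,\overline{2b+1})$ (you wrote it without the bar), which lies in $\mathcal{F}_{1}$ but not in $\mathcal{F}_{0}$; it is exactly the depth-$2$ lemma for $\mathcal{F}_{0}$ (correction by $\binom{2(a+b)}{2b}\zeta^{\mathfrak{m}}(1,\overline{2(a+b)+1})$) that forces the subtraction of $\binom{2(a+b)}{2b}\zeta^{\mathfrak{m}}(1,2(a+b)+1,\overline{1})$, whose $D_{1}$ is precisely that correcting term. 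You instead attribute this correction to a deconcatenation cut of $\zeta^{\mathfrak{m}}(2a+1,2b+1,\overline{1})$ at $r=a+b$ with right factor $\zeta^{\mathfrak{m}}(2a+1,\overline{1})$ ``whose further derivative $D_{1}$ lands in level $2$'': no such cut exists (for $(x_{1},x_{2},x_{3})=(a,b,0)$ with $a,b>0$ every cut in the depth-graded formula has $r\leq a+b-1$, and the factor $\zeta^{\mathfrak{m}}(2a+1,\overline{1})$ occurs at $r=b$), and in any case $D_{1}\left(\zeta^{\mathfrak{m}}(2a+1,\overline{1})\right)=\zeta^{\mathfrak{m}}(\overline{2a+1})\in\mathcal{F}_{0}$ for $a>0$, so this factor is no obstruction at all.

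Similarly, the level-$2$ factor $\zeta^{\mathfrak{m}}(1,\overline{1})$ that the term $\zeta^{\mathfrak{m}}(1,1,\overline{2(a+b+c)+1})$ must absorb does not arise where you say: for $a=0$ it comes from the deconcatenation cut of the element itself at $r=b+c$ (coefficient $\binom{2(b+c)}{2c}$), whereas for $c=0$ it comes only from the correction $\zeta^{\mathfrak{m}}(1,2(a+b)+1,\overline{1})$ you have just added (its deconcatenation cut at $r=a+b$, coefficient $1$); this asymmetry, and not any antipode or path-reversal symmetry (which does not exchange these two basis families), is what produces the sign $(-1)^{\delta_{c=0}}$ and the binomial coefficients. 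With the bookkeeping as you describe it, the corrections in the statement cannot be derived. The paper's proof is the same two-step cancellation — first arrange $D_{1}(\mathfrak{Z})\in\mathcal{F}_{0}$ using the depth-$2$ $\mathcal{F}_{0}$-lemma, then kill $D^{-1}_{1,2}\circ D^{1}_{2r+1,3}$ modulo $\mathcal{F}_{0}$ — but with these computations carried out correctly; note also that no matrix inversion (your appeal to $M_{3}$) is needed here, since each boundary case requires at most two explicit correction terms.
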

\begin{proof} Let $\zeta^{\mathfrak{m}}(2a+1,2b+1,\overline{2c+1})$ with at most one $1$.\\
\begin{center}
 \emph{Our goal is to annihilate $D^{-1}_{1,3}$ and $\lbrace D^{-1}_{1,3} \circ D^{1}_{2r+1}\rbrace_{r>0}$, in the quotient $\mathcal{H}^{\geq 1}$.}
 \end{center}
Let first cancel $D^{-1}_{1,3}$: if $c\neq 0$, it is already zero; otherwise, for $c=0$, in $\mathcal{H}^{\geq 1}$, according to the results in depth $2$ for $\mathcal{F}_{0}$, we can substract $\binom{2(a+b)}{2a} \zeta(1,2(a+b)+1,\overline{1})$ since:
\begin{small}
$$D_{1,3} (\zeta^{\mathfrak{m}}(2a+1,2b+1,\overline{1}))\equiv \binom{2(a+b)}{2a}\zeta^{\mathfrak{m}}(1,\overline{2(a+b)+1})\equiv \binom{2(a+b)}{2a} D_{1,3} (\zeta^{\mathfrak{m}}(1,2(a+b)+1,\overline{1})).$$
\end{small}
Furthermore, with $\equiv$ standing for an equality in $\mathcal{H}^{\geq 1}$:
\begin{small}
\begin{align*}
D^{-1}_{1,2} D^{1}_{2r+1,3} (\zeta^{\mathfrak{m}}(2a+1,2b+1,\overline{2c+1})) & = \delta_{r=b+c} \binom{2r}{2c}(2^{-2r}-2)\zeta^{\mathfrak{m}}(\overline{2a+1}) \\
 & \equiv \delta_{r=b+c \atop a=0} \binom{2(b+c)}{2c}(2^{-2(b+c)}-2)\zeta^{\mathfrak{m}}(\overline{1}). \\
 D^{-1}_{1,2} D^{1}_{2r+1,3} (\zeta^{\mathfrak{m}}(1,1,\overline{2(a+b+c)+1})) & = \delta_{r=a+b+c} (2^{-2(a+b+c)}-2)\zeta^{\mathfrak{m}}(\overline{1}).\\
 D^{-1}_{1,2} D^{1}_{2r+1,3} (\zeta^{\mathfrak{m}}(1,2(a+b+c)+1, \overline{1})) & = \delta_{r=a+b+c} (2^{-2(a+b+c)}-2)\zeta^{\mathfrak{m}}(\overline{1}).
\end{align*}
\end{small}
Therefore, to cancel $D^{-1}_{1,2}\circ D^{1}_{2r+1,3}$:
\begin{itemize}
\item[$\cdot$] If $a=0$ we substract $\binom{2(b+c)}{2c}\zeta^{\mathfrak{m}}(1,1,\overline{2(b+c)+1}) $.
\item[$\cdot$] If $c=0$, we add $\binom{2(b+c)}{2c}\zeta^{\mathfrak{m}}(1,1,\overline{2(a+b)+1})$.
\end{itemize}
\end{proof}

\paragraph{\textsc{Depth }$\boldsymbol{4}$.} The simplest example in depth $4$ of MMZV obtained by this way, with $\alpha_{i}\in\mathbb{Q}$:
$$-\zeta^{\mathfrak{m}}(3, 3, 3, \overline{3})-\frac{3678667587000}{4605143289541}\zeta^{\mathfrak{m}}(1, 1, 1, \overline{9})+\frac{9187768536750}{4605143289541}\zeta^{\mathfrak{m}}(1, 1, 3, \overline{7})+\frac{41712466500}{4605143289541}\zeta^{\mathfrak{m}}(1, 1, 5, \overline{5})$$
$$-\frac{9160668717750}{4605143289541} \zeta^{\mathfrak{m}}(1, 1, 7, \overline{3})+\frac{11861255103300}{4605143289541}\zeta^{\mathfrak{m}}(1, 3, 1, \overline{7})+\frac{202283196216}{4605143289541}\zeta^{\mathfrak{m}}(1, 3, 3, \overline{5})$$
$$-\frac{993033536436}{4605143289541}\zeta^{\mathfrak{m}}(1, 3, 5, \overline{3})+\frac{8928106562124}{4605143289541}\zeta^{\mathfrak{m}}(1, 5, 1, \overline{5})-\frac{1488017760354}{4605143289541}\zeta^{\mathfrak{m}}(1, 5, 3, \overline{3})$$
$$-\frac{450}{191}\zeta^{\mathfrak{m}}(3, 1, 1, \overline{7})+\frac{804}{191}\zeta^{\mathfrak{m}}(3, 1, 3, \overline{5})-\frac{774}{191}\zeta^{\mathfrak{m}}(3, 1, 5, \overline{3})+6\zeta^{\mathfrak{m}}(3, 3, 1, \overline{5})$$ 
$$+ \alpha_{1} \zeta^{\mathfrak{m}}(1,-11)+ \alpha_{2} \zeta^{\mathfrak{m}}(1,-9)\zeta^{\mathfrak{m}}(2)+ \alpha_{3} \zeta^{\mathfrak{m}}(1,-7)\zeta^{\mathfrak{m}}(2)^{2}+ \alpha_{4} \zeta^{\mathfrak{m}}(1,-5)\zeta^{\mathfrak{m}}(2)^{3}+ \alpha_{5} \zeta^{\mathfrak{m}}(1,-3)\zeta^{\mathfrak{m}}(2)^{4}.$$
$$\quad $$

\subsection{ $\boldsymbol{N=3,4}$: Depth $\boldsymbol{2}$}

Let us detail the case of depth 2 as an application of the results in Chapter $5$ and start by defining some coefficients appearing in the next examples:
\begin{defi}
Set $\alpha^{a,b}_{k}\in\mathbb{Z}$ such that $M(\alpha^{a,b}_{k})_{b+1 \leq k \leq \frac{n}{2}-1 }= A^{a,b}$ with $n=2(a+b+1)$:
$$\hspace*{-0.5cm}M\mathrel{\mathop:}= \left( \binom{2r-1}{2k-1} \right)_{b+1 \leq r,k \leq \frac{n}{2}-1}; A^{a,b}\mathrel{\mathop:}=\left(-\binom{2r-1}{2b}\right)_{b+1 \leq r \leq \frac{n}{2}-1}; \beta^{a,b}\mathrel{\mathop:}= \binom{n-2}{2b} + \sum_{k=b+1}^{a+b} \alpha_{k} \binom{n-2}{2k-1}.$$
\end{defi}
\noindent
\texttt{Nota Bene}: The matrix $M$ having integers as entries and determinant equal to $1$, and $A$ having integer components, the coefficients $\alpha^{a,b}_{k}$ are obviously integers; the matrix $M$ and its inverse are lower triangular with $1$ on the diagonal. Furthermore:\footnote{There $c_{i}\in\mathbb{N} $ does not depend neither on $b$ nor on $a$.}:
\begin{multline}\nonumber
\alpha^{a,b}_{b+i}= (-1)^{i} \binom{2b+2i-1}{2i-1} c_{i},\\
\alpha^{a,b}_{b+1}=-(2b+1), \quad \alpha^{a,b}_{b+2}=2\binom{2b+3}{3}, \quad \alpha^{a,b}_{b+3}=-16\binom{2b+5}{5}, \quad  \alpha^{a,b}_{b+4}=272\binom{2b+7}{7}.\\
\end{multline}

\begin{lemm} 
The depth $2$ part of the basis of MMZV, for even weight $n=2(a+b+1)$, is:
\begin{small}
$$\left\{ \zeta^{\mathfrak{m}}\left( 2a+1, 2b+1 \atop 1, \xi \right)- \beta^{a,b} \zeta^{\mathfrak{m}}\left(1,n-1 \atop 1, \xi \right) - \sum_{k=b+1}^{\frac{n}{2}-1} \alpha^{a,b}_{k} \zeta^{\mathfrak{m}}\left( n-2k, 2k \atop 1, \xi \right), a,b> 0 \right\}.$$
\end{small}
\end{lemm}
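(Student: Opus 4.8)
The strategy is the standard one used throughout Chapter 5: work in the quotient $\mathcal{H}^{\geq 1}$ (relative to the descent $\mathcal{d}=(k_{N}/\mathbb{Q}, N/N)$ from $\mathcal{H}^{N}$ to $\mathcal{H}^{1}$), and exhibit, for each fixed pair $(a,b)$ with $a,b>0$, a $\mathbb{Q}$-linear combination of depth-$2$ motivic MZV$_{\mu_{N}}$ of weight $n=2(a+b+1)$ which is killed by every derivation in $\mathscr{D}^{\mathcal{d}}=\{D^{\xi}_{1},(D^{\xi}_{r})_{r}\}$ modulo smaller level, hence lies in $\mathcal{F}_{0}\mathcal{H}^{N}=\mathcal{H}^{1}$ by Corollary~\ref{ramif346} (the criterion for being unramified). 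First I would compute $D_{1,2}$ and $D_{r,2}$ on $\zeta^{\mathfrak{m}}\left( 2a+1,2b+1\atop 1,\xi\right)$ using Lemma~\ref{Drp}, projecting the left-hand factor onto the chosen basis of $gr_{1}^{\mathfrak{D}}\mathcal{L}_{r}$; the only ``bad'' contributions are those whose left factor is $\zeta^{\mathfrak{l}}\left(r\atop 1\right)$ rather than $\zeta^{\mathfrak{l}}\left(r\atop\xi\right)$ — by the depth-$1$ distribution relations of $\S 5.2.1$ these are $P$-adically controlled, but more to the point, in the quotient $\mathcal{H}^{\geq 1}$ they are exactly the terms we must cancel by subtracting correction terms of level $\geq 1$, i.e.\ depth-$2$ MMZV$_{\mu_{N}}$ with a $1$ among the arguments, namely $\zeta^{\mathfrak{m}}\left(1,n-1\atop 1,\xi\right)$ and the $\zeta^{\mathfrak{m}}\left(n-2k,2k\atop 1,\xi\right)$.

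\textbf{Key steps.} (1) Write out $D_{r,2}\left(\zeta^{\mathfrak{m}}\left(2a+1,2b+1\atop 1,\xi\right)\right)$ explicitly from Lemma~\ref{Drp}: the deconcatenation term (type \textsc{d,d'}) produces $\zeta^{\mathfrak{l}}\left(r\atop\xi\right)\otimes(\cdots)$ which stays in $\mathcal{H}^{\geq 0}$ harmlessly, while the type \textsc{a,b,c} terms produce binomial coefficients $\binom{2r-1}{2a}$, $\binom{2r-1}{2b}$ times $\zeta^{\mathfrak{l}}\left(r\atop 1\right)\otimes\zeta^{\mathfrak{m}}\left(n-r\atop\xi\right)$ or similar, and it is these we must kill. (2) Compute $D_{r,2}$ of the candidate correction terms $\zeta^{\mathfrak{m}}\left(1,n-1\atop 1,\xi\right)$ and $\zeta^{\mathfrak{m}}\left(n-2k,2k\atop 1,\xi\right)$ in $\mathcal{H}^{\geq 1}$; these also produce $\zeta^{\mathfrak{l}}\left(r\atop 1\right)\otimes(\cdots)$ terms, governed by the matrix $M=\left(\binom{2r-1}{2k-1}\right)$ and the vector $A^{a,b}=\left(-\binom{2r-1}{2b}\right)$ as in Definition~\ref{defi...}. (3) Solve the linear system $M\,(\alpha^{a,b}_{k})=A^{a,b}$: since $M$ is lower-triangular with $1$'s on the diagonal (a Pascal-type matrix), it is invertible over $\mathbb{Z}$, giving integer coefficients $\alpha^{a,b}_{k}$, and the remaining $D_{1,2}$-contribution forces the coefficient $\beta^{a,b}=\binom{n-2}{2b}+\sum_{k}\alpha_{k}\binom{n-2}{2k-1}$ of $\zeta^{\mathfrak{m}}\left(1,n-1\atop 1,\xi\right)$. (4) Conclude by Corollary~\ref{ramif346} that the corrected combination is unramified, and by Theorem~\ref{...} (the main descent theorem, part $(iv)$/$(v)$) that the family so obtained is a basis of the depth-$2$, even-weight part of $\mathcal{H}^{1}$; linear independence follows from the bijectivity of $\partial^{0,\mathcal{d}}_{n,2}$ already established, together with the fact that these corrected elements project to the free family $\mathcal{B}_{n,2,0}$ in $gr_{2}^{\mathfrak{D}}\mathcal{H}^{1}_{n}$.

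\textbf{Main obstacle.} The genuinely computational heart is step (1)--(3): one must carefully track which of the four term-types \textsc{(a0),(a),(b),(c),(d),(d')} in Lemma~\ref{Drp} actually contribute when the arguments are $(2a+1,2b+1)$ with the last root $\xi$ primitive, keep only the $\zeta^{\mathfrak{l}}\left(r\atop 1\right)$ pieces (the others being automatically fine in $\mathcal{H}^{\geq 1}$ since the family $\{\zeta^{\mathfrak{m}}(\cdots,\xi)\}$ is stable under the coaction — cf.\ the stability remark after Lemma~\ref{drz}), and verify that the resulting system is precisely $M X=A^{a,b}$ with $M$ the Pascal matrix. The bookkeeping of $\delta$-functions (the ranges $n_i\le r< n_i+n_{i\pm1}-1$) and of the borderline cases $r=a+b+1$, $r=a$, $r=b$ is where sign and index errors creep in; once that is done the invertibility of $M$ and the integrality of the solution are immediate, and the periods statement follows by applying $\mathrm{per}$. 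I do not expect any conceptual difficulty beyond Chapter 5's machinery — this is an explicit instance, in depth $2$, of the general phenomenon, with the virtue that here every coefficient ($\alpha^{a,b}_{k}$, $\beta^{a,b}$) can be made completely explicit.
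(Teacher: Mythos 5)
Your global strategy (subtract corrections so that the derivations detecting the descent vanish, solve a unitriangular system, then invoke the descent criterion and the main theorem for linear independence) is the same as the paper's, and you even quote the correct system $M\alpha^{a,b}=A^{a,b}$ and the correct $\beta^{a,b}$. But the mechanism you describe is backwards, and followed literally it would not prove the lemma. For $N=3,4$ the descent from $\mathcal{H}^{N}$ to $\mathcal{H}^{1}$ is measured by $\mathscr{D}^{(k_{N}/\mathbb{Q},P/1)}=\lbrace D^{\xi}_{1},(D^{\xi}_{2r})_{r>0}\rbrace$, i.e.\ by the weight-one and the \emph{even}-weight derivations, whose left factors are the $\zeta^{\mathfrak{l}}\left(2r \atop \xi\right)$; the odd derivations $D_{2r+1}$, whose left factor is proportional to $\zeta^{\mathfrak{l}}(2r+1)$, lie in $\mathscr{D}^{\backslash\mathcal{d}}$ and need not be killed. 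Moreover, in even weight the terms of type \textsc{(a0),(a),(b)} of Lemma 2.4.3 have left factor $\zeta^{\mathfrak{l}}\left(2r \atop 1\right)=0$ by the depth-one relations of \S 5.2.1, so there is nothing at all to cancel there: on a depth-two element $\zeta^{\mathfrak{m}}\left(x_{1},x_{2}\atop 1,\xi\right)$ the whole of $D_{2r}$ reduces to the single deconcatenation term $(-1)^{x_{2}}\binom{2r-1}{x_{2}-1}\,\zeta^{\mathfrak{l}}\left(2r\atop \xi\right)\otimes\zeta^{\mathfrak{m}}\left(n-2r\atop \xi\right)$. Your proposal declares exactly these $\xi$-deconcatenation terms ``harmless'' and proposes instead to kill the (identically vanishing) $\zeta^{\mathfrak{l}}\left(r\atop 1\right)$-terms; with that reading the linear system you would set up is empty, the genuine obstruction $D^{\xi}_{2r}$ is never cancelled, and the corrected element is not shown to lie in $\mathcal{F}_{0}\mathcal{H}^{N}=\mathcal{H}^{1}$. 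In the paper, $M=\left(\binom{2r-1}{2k-1}\right)$ and $A^{a,b}=\left(-\binom{2r-1}{2b}\right)$ arise precisely from the deconcatenation terms of $\zeta^{\mathfrak{m}}\left(n-2k,2k\atop 1,\xi\right)$ and of $\zeta^{\mathfrak{m}}\left(2a+1,2b+1\atop 1,\xi\right)$ under $D_{2r}$, for $b+1\le r\le \frac{n}{2}-1$.

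A second, related gap concerns $\beta^{a,b}$: you say ``the remaining $D_{1,2}$-contribution forces $\beta^{a,b}$'', but $D_{1}$ vanishes identically on every element involved (no argument equals $1$ except the first entry of $\zeta^{\mathfrak{m}}\left(1,n-1\atop 1,\xi\right)$, whose weight-one cuts carry $\zeta^{\mathfrak{l}}\left(1\atop 1\right)=0$), so $D_{1}$ alone forces nothing. What determines $\beta^{a,b}$ is the recursive part of the criterion for the single odd derivation $D_{n-1}$: its right factor is $\zeta^{\mathfrak{m}}\left(1\atop \xi\right)$, which is not a motivic MZV, and this is detected by $D_{1}\circ D_{n-1}$; the correction $\beta^{a,b}\zeta^{\mathfrak{m}}\left(1,n-1\atop 1,\xi\right)$ --- which does not disturb the first step since its $D_{2r}$ vanish in the relevant range --- is chosen to cancel exactly that contribution. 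This is how the paper argues. (Also note that the corrections $\zeta^{\mathfrak{m}}\left(n-2k,2k\atop 1,\xi\right)$ have level $\ge 1$ because both arguments are even, not because they contain a $1$.) Your step (4), deducing linear independence from the bijectivity of $\partial^{0,\mathcal{d}}_{n,2}$ and Theorem 5.2.4, is fine and matches the paper; the defect is entirely in the identification of which coaction terms constitute the obstruction and hence in why the system $M\alpha^{a,b}=A^{a,b}$ and the coefficient $\beta^{a,b}$ appear.
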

\begin{proof}\footnote{We omit the exponent $\xi$ indicating the projection on the second factor of the derivations $D_{r}$, to lighten the notations.} Let $Z=\zeta^{\mathfrak{m}}(2a+1, \overline{2b+1})$ fixed, with $a,b>0$.\\
First we substract a linear combination of $\zeta^{\mathfrak{m}}\left(n-2k, 2k \atop 1, \xi \right)$ in order to cancel $\lbrace D_{2r}\rbrace$. It is possible since in depth 2, because $\zeta^{\mathfrak{l}}\left( 2r \atop  1\right) =0$:
\begin{small}
$$ D_{2r} (\zeta^{\mathfrak{m}}(x_{1}, \overline{x_{2}}))= \delta_{x_{2} \leq 2r \leq x_{1}+x_{2}-1} (-1)^{x_{2}} \binom{2r-1}{x_{2}-1} \zeta^{\mathfrak{l}}\left( 2 r \atop  \xi\right)\otimes \zeta^{\mathfrak{m}}\left( x_{1}+x_{2}-r\atop  \xi \right).$$
\end{small}
Hence it is sufficient to choose $\alpha_{k}$ such that $M\alpha^{a,b}= A^{a,b}$ as in Definition $A.2.5$.\\
Now, it remains to satisfy $D_{1}\circ D_{2r+1}(\cdot)=0$ (for $r=n-1$ only) in order to have an element of $\mathcal{F}^{k_{N}/\mathbb{Q},P/1}_{0}\mathcal{H}_{n}$. In that purpose, let substract $\beta^{a,b} \zeta^{\mathfrak{m}}(1,n-1 ;  1, \xi)$ with $\beta^{a,b}$ as in Definition $A.2.5$) according to the calculation of $D_{1}\circ D_{2r+1}(\cdot)$, left to the reader.\\
\end{proof}
\noindent
\texttt{Examples}: 
The following are motivic multiple zeta values:
\begin{itemize}
\item[$\cdot$] $\zeta^{\mathfrak{m}}\left(5,3 \atop 1, \xi \right)  -75 \zeta^{\mathfrak{m}}\left( 1,7 \atop 1, \xi \right) + 3 \zeta^{\mathfrak{m}}\left(4, 4 \atop 1, \xi \right) - 20 \zeta^{\mathfrak{m}}\left( 2, 6 \atop 1, \xi \right).$
\item[$\cdot$] $\zeta^{\mathfrak{m}}\left(3,5 \atop 1, \xi \right) +15 \zeta^{\mathfrak{m}}\left( 1, 7 \atop 1, \xi \right) + 5 \zeta^{\mathfrak{m}}\left(6, 2 \atop 1, \xi \right) .$
\item[$\cdot$] $\zeta^{\mathfrak{m}}\left(5,5 \atop 1, \xi \right) -350 \zeta^{\mathfrak{m}}\left( 1, 9 \atop 1, \xi \right) + 5 \zeta^{\mathfrak{m}}\left( 4, 6 \atop 1, \xi \right) -70 \zeta^{\mathfrak{m}}\left( 2, 8 \atop 1, \xi \right).$
\item[$\cdot$] $\zeta^{\mathfrak{m}}\left( 7, 5 \atop 1, \xi \right) +12810 \zeta^{\mathfrak{m}}\left( 1, 11 \atop 1, \xi \right) + 5 \zeta^{\mathfrak{m}}\left( 6, 6 \atop 1, \xi \right) -70 \zeta^{\mathfrak{m}}\left( 4,8 \atop 1, \xi \right)+ 2016 \zeta^{\mathfrak{m}}\left( 2, 10 \atop 1, \xi \right).$

\item[$\cdot$]$\zeta^{\mathfrak{m}}\left(9, 5 \atop 1, \xi \right) -685575 \zeta^{\mathfrak{m}}\left( 1, 13 \atop 1, \xi \right) + 5 \zeta^{\mathfrak{m}}\left( 8, 6 \atop 1, \xi \right) -70 \zeta^{\mathfrak{m}}\left( 6, 8 \atop 1, \xi \right)+ 2016 \zeta^{\mathfrak{m}}\left( 4, 10 \atop 1, \xi \right)- 89760 \zeta^{\mathfrak{m}}\left( 2, 12 \atop 1, \xi \right).$
\end{itemize}

\begin{lemm} 
The depth $2$ part of the basis of $\mathcal{F}^{k_{N}/\mathbb{Q},P/1}_{1}\mathcal{H}_{n}$ is for even $n$:
\begin{small}
$$\left\{ \zeta^{\mathfrak{m}}\left(2a+1, 2b+1 \atop  1, \xi\right)- \sum_{k=b+1}^{\frac{n}{2}-1} \alpha^{a,b}_{k} \zeta^{\mathfrak{m}}\left(n-2k, 2k \atop 1, \xi \right), a,b\geq 0, (a,b)\neq(0,0) \right\},$$
\end{small}
For odd $n$, the part in depth $2$ of the basis of $\mathcal{F}^{k_{N}/\mathbb{Q},P/1}_{1}\mathcal{H}_{n}$ is:
\begin{small}
$$\left\{ \zeta^{\mathfrak{m}}\left(x_{1}, x_{2} \atop 1, \xi \right)+ (-1)^{x_{2}+1}  \binom{n-2}{x_{2}-1} \zeta^{\mathfrak{m}}\left(1, n-1 \atop 1, \xi \right), x_{1},x_{2} >1, \text{ one even, the other odd } \right\}.$$
\end{small}
\end{lemm}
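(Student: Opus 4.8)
The plan is to derive the statement from the general machinery of Theorem $5.2.4$ together with an explicit description of the correction map $cl$ in depth $2$, closely parallel to the proof of Lemma A.2.6. Throughout we fix $N\in\{3,4\}$, $\xi=\xi_N$ primitive, and the descent $\mathcal{d}=(k_N/\mathbb{Q},P/1)$, for which $\mathscr{D}^{\mathcal{d}}=\{D^{\xi}_1\}\cup\{D^{\xi}_{2r}\}_{r>0}$ while $\mathscr{D}^{\backslash\mathcal{d}}=\{D^{\xi}_{2r+1}\}_{r>0}$. Since the factor $(2i\pi)^{s,\mathfrak{m}}$ is Galois-trivial, the coaction is block-diagonal in $s$ and it suffices to treat the ``core'' depth-$2$ elements $\zeta^{\mathfrak{m}}(x_1,x_2;1,\xi)$; I work systematically in the quotient $\mathcal{H}^{\geq 1}=\mathcal{H}^N/\mathcal{F}_0\mathcal{H}^N=\mathcal{H}^N/\mathcal{H}^1$. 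First I would identify $\mathcal{B}_{n,2,\leq 1}$: since the $(k_N/\mathbb{Q},P/1)$-level of $\zeta^{\mathfrak{m}}(x_1,x_2;1,\xi)$ is $\#\{i:x_i\text{ even}\}+\#\{i:x_i=1\}$, the level-$\leq 1$ depth-$2$ elements are exactly $\zeta^{\mathfrak{m}}(2a+1,2b+1;1,\xi)$ with $a+b+1=n/2$, $(a,b)\neq(0,0)$, for $n$ even, and $\zeta^{\mathfrak{m}}(x_1,x_2;1,\xi)$ with $x_1,x_2>1$, one even one odd, $x_1+x_2=n$, for $n$ odd — precisely the index sets in the statement. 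By Theorem $5.2.4$(iv), it then remains to show that for each such $\mathfrak{Z}$ the displayed element $\mathfrak{Z}+cl(\mathfrak{Z})$ lies in $\mathcal{F}_1\mathcal{H}_n$, where $cl(\mathfrak{Z})$ is a combination of level-$\geq 2$ depth-$\leq 2$ elements; in depth $2$ these are the $(\mathrm{even},\mathrm{even})$ elements (even weight) or $\zeta^{\mathfrak{m}}(1,n-1;1,\xi),\zeta^{\mathfrak{m}}(n-1,1;1,\xi)$ (odd weight), and depth $1$ contributes nothing since no depth-$1$ element has level $\geq 2$. Uniqueness of $cl$ in Theorem $5.2.4$, plus a cardinality count against $\dim\mathcal{F}_1\mathcal{H}_n$ in depth $2$, then finishes the identification.

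For the membership $\mathfrak{Z}+cl(\mathfrak{Z})\in\mathcal{F}_1\mathcal{H}_n$ I would use the effective criterion following the definition of the motivic-level filtration. Since $D_r$ maps depth-$2$ elements into depth $\leq 1$, and every depth-$\leq 1$ element of $\mathcal{H}^N$ already has level $\leq 1$ and hence lies in $\mathcal{F}_1$, the conditions $D^{\xi}_{2r+1}(\cdot)\in\mathcal{F}_1$ for $r>0$ are automatic; the only conditions to check are $D^{\xi}_1(\cdot)\equiv 0$ and $D^{\xi}_{2r}(\cdot)\equiv 0$ in $\mathcal{H}^{\geq 1}$. I would compute these from the explicit formula for $D_{r,p}$ (Lemma $2.4.3$) and reduce modulo $\mathcal{H}^1$ using the depth-$1$ relations of $\S 5.2.1$: a depth-$1$ class $\zeta^{\mathfrak{m}}(m;\xi)$ survives in $\mathcal{H}^{\geq 1}$ exactly when $m$ is even or $m=1$ (for $m$ odd $>1$ it is a rational multiple of a motivic MZV), so $D^{\xi}_{2r}(\cdot)\equiv 0$ reduces to the vanishing of the coefficient of $\zeta^{\mathfrak{m}}(n-2r;\xi)$ when $n-2r$ is even or $1$, and similarly for $D^{\xi}_1$. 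For $n$ even this is precisely the linear system $M\cdot(\alpha^{a,b}_k)=A^{a,b}$ of Definition A.2.5, whose matrix $M=\bigl(\binom{2r-1}{2k-1}\bigr)$ is lower-triangular with $1$'s on the diagonal, hence invertible over $\mathbb{Z}$, yielding the $\alpha^{a,b}_k\in\mathbb{Z}$; crucially, compared to Lemma A.2.6 the extra term $\beta^{a,b}\zeta^{\mathfrak{m}}(1,n-1;1,\xi)$ disappears, because reaching $\mathcal{F}_1$ (rather than $\mathcal{F}_0$) requires only $D^{\xi}_{2r+1}(\cdot)\in\mathcal{F}_1$, which is automatic, whereas $D^{\xi}_{2r+1}(\cdot)\in\mathcal{F}_0$ was exactly what forced the $\beta$-term in the $\mathcal{F}_0$ computation.

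For $n$ odd, $D^{\xi}_{2r}(\cdot)$ has output of odd weight $n-2r$, so it gives a constraint only when $n-2r=1$, i.e.\ at $r=(n-1)/2$ (the coefficient of $\zeta^{\mathfrak{m}}(1;\xi)$ must vanish), while $D^{\xi}_1(\cdot)$, with output of even weight $n-1$, gives the constraint that the coefficient of $\zeta^{\mathfrak{m}}(n-1;\xi)$ vanishes. I would evaluate the deconcatenation-type terms of $D^{\xi}_{n-1}$ and of $D^{\xi}_1$ on $\zeta^{\mathfrak{m}}(x_1,x_2;1,\xi)$ and on $\zeta^{\mathfrak{m}}(1,n-1;1,\xi)$, and check that the single coefficient $(-1)^{x_2+1}\binom{n-2}{x_2-1}$ cancels both; by uniqueness of $cl$ the coefficient of $\zeta^{\mathfrak{m}}(n-1,1;1,\xi)$ is then forced to be $0$, so it does not appear. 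The main obstacle is combinatorial bookkeeping rather than conceptual: correctly enumerating the contributing cuts of $D_{r,p}$ on these depth-$2$ elements with all the $\delta$-ranges, signs and binomials of Lemma $2.4.3$, and then verifying the cancellations after reduction modulo $\mathcal{H}^1$. In the even-weight case this cancellation is by definition the identity $M\cdot(\alpha^{a,b})=A^{a,b}$, so the only real content is the (immediate) invertibility of $M$; in the odd-weight case the delicate point is the compatibility of the two constraints with a single free coefficient, which is where the value $(-1)^{x_2+1}\binom{n-2}{x_2-1}$ arises and must be matched against a binomial identity.
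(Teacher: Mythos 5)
Your proposal is correct and follows essentially the same route as the paper's own proof: for even $n$ you cancel the $D^{\xi}_{2r}$ via the system $M\cdot(\alpha^{a,b}_k)=A^{a,b}$ of Definition A.2.5, with the $\beta^{a,b}$-term of the $\mathcal{F}_0$ case rightly dropped because $D^{\xi}_{2r+1}(\cdot)\in\mathcal{F}_1$ is automatic, and for odd $n$ the single constraint at $2r=n-1$ (the coefficient of $\zeta^{\mathfrak{m}}\left(1 \atop \xi\right)$, i.e.\ the paper's $D_1\circ D_{2r}$ condition) fixes the coefficient $(-1)^{x_2+1}\binom{n-2}{x_2-1}$. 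The only slight overstatement is your anticipated ``compatibility of two constraints'' in the odd case: the $D^{\xi}_1$ condition is in fact vacuous, since $D_1$ annihilates both $\zeta^{\mathfrak{m}}\left(x_1,x_2 \atop 1,\xi\right)$ with $x_1,x_2>1$ and the correction $\zeta^{\mathfrak{m}}\left(1,n-1 \atop 1,\xi\right)$ (every weight-one left factor that occurs is $\zeta^{\mathfrak{l}}\left(1 \atop 1\right)=0$), so only the $D_{n-1}$ constraint survives, exactly as in the paper.
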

\begin{proof}
\begin{itemize}
\item[$\cdot$] For even $n$, we need to cancel $D_{2r}$ (else $D_{2s}\circ D_{2r}(\cdot) \neq 0$), so we substract the same linear combination than in the previous lemma.
\item[$\cdot$] For odd $n$, we need to cancel $D_{1}\circ D_{2r}$. Since $D_{1}\circ D_{2r}(Z)= (-1)^{x_{2}} \binom{n-2}{x_{2}-1}$, we substract $(-1)^{x_{2}+1}  \binom{n-2}{x_{2}-1} \zeta^{\mathfrak{m}}(1, \overline{n-1})$.
\end{itemize}
\end{proof}

\begin{lemm} 
The depth $2$ part of the basis of $\mathcal{F}^{k_{N}/\mathbb{Q},P/P}_{0}\mathcal{H}_{n}$ ($=\mathcal{H}_{n}^{\mathcal{MT}_{2}}$ if $N=4$) is:
\begin{small}
$$\left\{ \zeta^{\mathfrak{m}}\left(2a+1, 2b+1 \atop 1, \xi \right)- \sum_{k=b+1}^{\frac{n}{2}-1} \alpha^{a,b}_{k} \zeta^{\mathfrak{m}}\left(n-2k, 2k \atop 1, \xi \right), a,b \geq 0 \right\}.$$
\end{small}
\end{lemm}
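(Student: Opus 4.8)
The statement to be proved is the depth $2$ part of the basis of $\mathcal{F}^{k_{N}/\mathbb{Q},P/P}_{0}\mathcal{H}_{n}$ for $N=3,4$; this is the $0$-th level of the change-of-field descent $(k_N/\mathbb{Q}, P/P)$, whose associated set of derivations is $\mathscr{D}^{\mathcal{d}} = \{(D^{\xi}_{2r})_{r>0}\}$. So the goal is to exhibit, for each weight $n=2(a+b+1)$, a family of corrected motivic MZV relative to $\mu_N$ which spans $\mathcal{F}_{0}\mathcal{H}_n$, equivalently which, after passing to the depth graded, is a basis killed by all the even derivations $D^{\xi}_{2r}$. The overall strategy is entirely parallel to the proofs of Lemmas $A.2.6$ and $A.2.8$: first establish a vanishing statement (here for the even derivations only, not the $D_1$ component), then deduce the spanning property by a dimension count using the general Theorem $5.2.4$ applied to the descent $\mathcal{d}=(k_N/\mathbb{Q},P/P)$.

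\textbf{Key steps.} First I would recall from Lemma $2.4.3$ (or the reduced form $\ref{Drp}$) the explicit formula for $D_{2r}$ acting on $\zeta^{\mathfrak{m}}\left({x_1,x_2\atop 1,\xi}\right)$ in depth $2$: because $\zeta^{\mathfrak{l}}\left({2r\atop 1}\right)=0$ by the depth $1$ relations of $\S 5.2.1$, the only surviving term of $D_{2r}$ is the one landing on $\zeta^{\mathfrak{l}}\left({2r\atop \xi}\right)$, namely
$$D_{2r}\left(\zeta^{\mathfrak{m}}\left({x_1,x_2\atop 1,\xi}\right)\right) = \delta_{x_2\le 2r\le x_1+x_2-1}(-1)^{x_2}\binom{2r-1}{x_2-1}\zeta^{\mathfrak{l}}\left({2r\atop \xi}\right)\otimes\zeta^{\mathfrak{m}}\left({x_1+x_2-r\atop \xi}\right).$$
Second, I would solve the linear system forcing the coefficient of $\zeta^{\mathfrak{l}}\left({2r\atop\xi}\right)\otimes\zeta^{\mathfrak{m}}\left({n-2r\atop\xi}\right)$ in $D_{2r}$ to vanish for all $r$: this is exactly the system $M(\alpha^{a,b}_k)=A^{a,b}$ from Definition $A.2.5$, whose matrix $M=\left(\binom{2r-1}{2k-1}\right)$ is unitriangular, hence invertible over $\mathbb{Z}$, giving integer coefficients $\alpha^{a,b}_k$. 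Thus
$$\zeta^{\mathfrak{m}}\left({2a+1,2b+1\atop 1,\xi}\right)-\sum_{k=b+1}^{\frac{n}{2}-1}\alpha^{a,b}_k\,\zeta^{\mathfrak{m}}\left({n-2k,2k\atop 1,\xi}\right)\in\ker\Big(\bigoplus_{r>0}D^{\xi}_{2r}\Big)=\mathcal{F}^{\mathcal{d}}_0\mathcal{H}_n.$$
(Note that one does \emph{not} impose the $D_1$ vanishing here — that is the extra condition distinguishing this descent from the $(k_N/\mathbb{Q},P/1)$ one of Lemma $A.2.6$, which is why no $\beta^{a,b}\zeta^{\mathfrak{m}}\left({1,n-1\atop1,\xi}\right)$ correction appears.) Third, I would observe that these corrected elements are all of level $0$ in the sense of Definition $5.2.9$ for the descent $(k_N/\mathbb{Q},P/P)$ (the level being the number of even $x_i$; in the depth graded the corrections vanish and the level-$0$ representatives $\zeta^{\mathfrak{m}}\left({2a+1,2b+1\atop1,\xi}\right)$ are precisely those with all arguments odd), so they are exactly the depth $2$ subfamily $\mathcal{B}^N_{n,2,0}$ corrected by $cl_{n,\le 2,\ge 1}$. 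Finally, by Theorem $5.2.4$ $(iv)$ applied with $i=0$, the union over depths of $\{\mathfrak{Z}+cl_{n,\le p,\ge 1}(\mathfrak{Z}):\mathfrak{Z}\in\mathcal{B}^N_{n,p,0}\}$ is a basis of $\mathcal{F}_0\mathcal{H}_n$; restricting to depth $2$ and checking that $cl_{n,\le 2,\ge 1}$ of such an all-odd element is exactly the $-\sum\alpha^{a,b}_k\zeta^{\mathfrak{m}}(n-2k,2k)$ computed above (uniqueness of $cl$, per the Nota Bene after Theorem $5.2.4$) yields the claimed family. For $N=4$ the identity $\mathcal{F}^{k_4/\mathbb{Q},P/P}_0\mathcal{H}=\mathcal{H}^{\mathcal{MT}_2}$ follows from the consequences listed in $\S 5.2.3$ (level $0$ of the $k_4/\mathbb{Q}$ descent is $\mathcal{H}^{\mathcal{MT}_{2}}$).

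\textbf{Main obstacle.} The computational heart — and the step needing care — is verifying that the linear system from Definition $A.2.5$ genuinely captures \emph{all} the constraints coming from the even derivations simultaneously, i.e. that killing the single coefficient in front of $\zeta^{\mathfrak{l}}\left({2r\atop\xi}\right)\otimes\zeta^{\mathfrak{m}}\left({n-2r\atop\xi}\right)$ for every $r$ suffices to land in $\mathcal{F}_0\mathcal{H}_n$, rather than merely in the kernel of $\bigoplus D^{\xi}_{2r}$ modulo lower depth. This is where one invokes that the depth filtration is motivic and that $\partial^{0,\mathcal{d}}_{n,2}$ (the bijection from Theorem $5.2.4$, whose even components are precisely the $D^{\xi}_{2r}$) is injective on $gr^{\mathfrak{D}}_2\mathcal{H}^{\ge 0}_n$ — so vanishing of the depth-graded image forces the element into $\mathcal{F}^{\mathfrak{D}}_1 + \mathcal{F}_0\mathcal{H}_n$, and then the depth $1$ piece is absorbed exactly as in the proof of Lemma $A.2.6$; since in odd weight there is no nonzero depth $1$ even-derivation obstruction for these particular arguments (and in even weight the depth $1$ elements $\zeta^{\mathfrak{m}}\left({n\atop\xi}\right)$ are already in $\mathcal{F}_0$), the correction stays within depth $\le 2$. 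The rest is a routine determinant-is-$1$ observation for $M$ and a bookkeeping check that the integer coefficients $\alpha^{a,b}_k$ match those of $cl_{n,\le 2,\ge 1}$.
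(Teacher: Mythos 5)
Your proposal is correct and follows essentially the same route as the paper: the paper's proof of this lemma is a one-line remark that one subtracts the same linear combination $\sum_{k}\alpha^{a,b}_{k}\zeta^{\mathfrak{m}}\left(n-2k,2k\atop 1,\xi\right)$ as in the previous lemma to cancel the even derivations $D_{2r}$ (using $\zeta^{\mathfrak{l}}\left(2r\atop 1\right)=0$ and the unitriangular system of Definition $A.2.5$), with no $D_{1}$-correction needed for this descent. Your additional appeal to Theorem $5.2.4$ and the level-$0$ identification only makes explicit what the paper leaves implicit.
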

\begin{proof}
To cancel $D_{2r}$, we substract the same linear combination than above. 
\end{proof}

\begin{lemm} 
The depth $2$ part of the basis of $\mathcal{F}^{k_{N}/\mathbb{Q},P/P}_{1}\mathcal{H}_{n}$ is for even $n$:
\begin{small}
$$\left\{ \zeta^{\mathfrak{m}}\left(2a+1, 2b+1 \atop 1, \xi \right)- \sum_{k=b+1}^{\frac{n}{2}-1} \alpha^{a,b}_{k} \zeta^{\mathfrak{m}}\left(n-2k, 2k \atop 1, \xi \right), a,b\geq 0, \right\},$$
\end{small}
And for odd $n$, the part in depth $2$ of the basis of $\mathcal{F}^{k_{N}/\mathbb{Q},P/P}_{1}\mathcal{H}_{n}$ is:
\begin{small}
$$\left\{ \zeta^{\mathfrak{m}}\left( x_{1}, x_{2} \atop 1, \xi \right), x_{1},x_{2} \geq 1 \text{, one even, the other odd } \right\}.$$
\end{small}
\end{lemm}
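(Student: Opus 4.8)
The strategy is to read this off from the structure theorem of \S 5.2 (Theorem $5.2.4$) applied to the descent $\mathcal{d}=(k_{N}/\mathbb{Q},P/P)$, for which $\mathscr{D}^{\mathcal{d}}=\{(D^{\xi}_{2r})_{r>0}\}$ and the level of a basis element $\zeta^{\mathfrak{m}}\left(x_{1},\ldots,x_{p}\atop 1,\ldots,1,\xi\right)(2\pi i)^{s,\mathfrak{m}}$ is its number of even $x_{i}$. By Theorem $5.2.4$, part $(iv)$, a basis of the depth-$2$ part of $\mathcal{F}_{1}\mathcal{H}_{n}$ is $\{\mathfrak{Z}+cl_{n,\leq 2,\geq 2}(\mathfrak{Z}):\mathfrak{Z}\in\mathcal{B}_{n,2,\leq 1}\}$, where $cl_{n,\leq 2,\geq 2}(\mathfrak{Z})\in\langle\mathcal{B}_{n,\leq 2,\geq 2}\rangle_{\mathbb{Q}}$ is the unique level-$\geq 2$ correction that puts $\mathfrak{Z}$ into $\mathcal{F}_{1}\mathcal{H}$; since $(2\pi i)^{\mathfrak{m}}$ has trivial coaction we may and do suppress the $(2\pi i)^{s,\mathfrak{m}}$-factor throughout. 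The first step is the parity bookkeeping: in depth $2$ one has $x_{1}+x_{2}\equiv n\pmod 2$, so for odd $n$ every depth-$2$ basis element has level exactly $1$ (one even, one odd argument) and there is no level-$\geq 2$ element of depth $\leq 2$, whereas for even $n$ the level-$\leq 1$ depth-$2$ elements are exactly the $\zeta^{\mathfrak{m}}\left(2a+1,2b+1\atop 1,\xi\right)$ and the level-$\geq 2$ ones exactly the $\zeta^{\mathfrak{m}}\left(n-2k,2k\atop 1,\xi\right)$.

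For odd $n$ this settles everything at once: $cl_{n,\leq 2,\geq 2}$ vanishes on depth-$2$ elements for want of level-$\geq 2$ room, so it only remains to verify that each $z=\zeta^{\mathfrak{m}}\left(x_{1},x_{2}\atop 1,\xi\right)$ with one even and one odd argument already lies in $\mathcal{F}_{1}\mathcal{H}$. For this I compute $D^{\xi}_{2r}(z)$ from the formula for $D_{r,p}$ (Lemma $\ref{Drp}$), projecting the left-hand factor on the $\xi$-component via the depth-$1$ relations of Lemma $2.4.1$: since $n-2r$ is odd, every right-hand factor occurring is $\zeta^{\mathfrak{m}}\left(n-2r\atop \xi\right)$ (or, in the degenerate cut, $\zeta^{\mathfrak{m}}\left(1\atop\xi\right)$), which is killed by all $D^{\xi}_{2s}$ and hence lies in $\mathcal{F}_{0}\mathcal{H}$. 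Thus $D^{\xi}_{2r}(z)\in\mathcal{L}_{2r}\otimes\mathcal{F}_{0}\mathcal{H}$ for every $r$, i.e. $z\in\mathcal{F}_{1}\mathcal{H}$, as claimed.

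For even $n$, the remaining task is to identify $cl_{n,\leq 2,\geq 2}$ on $\mathfrak{Z}=\zeta^{\mathfrak{m}}\left(2a+1,2b+1\atop 1,\xi\right)$. I compute $D^{\xi}_{2r}$ on $\mathfrak{Z}$ and on each $\zeta^{\mathfrak{m}}\left(n-2k,2k\atop 1,\xi\right)$ exactly as in the proofs of Lemmas $A.2.6$--$A.2.9$: now $n-2r$ is even, so the right-hand factor of every term is $\zeta^{\mathfrak{m}}\left(n-2r\atop \xi\right)$, which has level $1$ and therefore is not in $\mathcal{F}_{0}\mathcal{H}$. Consequently $\mathfrak{Z}-\sum_{k=b+1}^{n/2-1}\alpha_{k}\,\zeta^{\mathfrak{m}}\left(n-2k,2k\atop 1,\xi\right)$ lies in $\mathcal{F}_{1}\mathcal{H}$ if and only if, for each $r$ with $b+1\leq r\leq n/2-1$, the coefficient of $\zeta^{\mathfrak{m}}\left(n-2r\atop\xi\right)$ in $D^{\xi}_{2r}$ of this combination vanishes; reading off the binomial coefficients --- $\binom{2r-1}{2b}$ contributed by $\mathfrak{Z}$ and $\binom{2r-1}{2k-1}$ by $\zeta^{\mathfrak{m}}\left(n-2k,2k\atop 1,\xi\right)$ --- this is precisely the system $M(\alpha_{k})_{k}=A^{a,b}$ defining the $\alpha^{a,b}_{k}$, which has a unique solution because $M$ is unitriangular over $\mathbb{Z}$. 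Linear independence and the fact that the resulting family spans the depth-$2$ part of $\mathcal{F}_{1}\mathcal{H}_{n}$ are then part of Theorem $5.2.4$, and the whole argument is untouched by a $(2\pi i)^{s,\mathfrak{m}}$-factor.

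The only genuinely computational step is the even-weight case: one must run the depth-graded coaction $D^{\xi}_{2r,2}$ carefully enough to extract the exact binomial coefficients and recognize the defining system of the $\alpha^{a,b}_{k}$ --- everything else is a formal consequence of Theorem $5.2.4$, the parity constraint $x_{1}+x_{2}\equiv n$, and the depth-$1$ relations of Lemma $2.4.1$.
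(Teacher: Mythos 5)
Your proof is correct and takes essentially the same route as the paper: for even $n$ you kill the derivations $D^{\xi}_{2r}$ with the same correction $\sum_{k}\alpha^{a,b}_{k}\,\zeta^{\mathfrak{m}}\left(n-2k,2k \atop 1,\xi\right)$ given by the system $M\alpha=A^{a,b}$ of Definition $A.2.5$ (exactly as in the preceding lemmas), and for odd $n$ you note the elements already lie in $\mathcal{F}_{1}$ because every right-hand factor of $D^{\xi}_{2r}$ is a depth-one value of odd weight at $\xi$, hence in $\mathcal{F}_{0}$. The paper's two-line proof says precisely this; you merely make explicit the parity bookkeeping and the appeal to Theorem $5.2.4$ that the paper leaves implicit.
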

\begin{proof}
If $n$ is even, to cancel $\lbrace D_{2r}\rbrace$, we use the same linear combination than above.\\
If $n$ is odd, we already have $\zeta^{\mathfrak{m}}(x_{1}, x_{2};  1, \xi)\in \mathcal{F}^{k_{N}/\mathbb{Q},P/P}_{1}\mathcal{H}_{n}$.\\
\end{proof}

\subsection{$\boldsymbol{N=8}$: Depth $\boldsymbol{2}$}

Let us illustrate the results for the depth 2; proofs being similar (albeit longer) as in the previous sections are left to the reader; same notations than the previous case. 

\begin{lemm} 
\begin{itemize}
	\item[$\cdot$] The depth $2$ part of the basis of MMZV$_{\mu_{4}}$ is:
{\small $$\left\{ \zeta^{\mathfrak{m}}\left(x_{1}, x_{2} \atop  1, \xi\right)+ \zeta^{\mathfrak{m}}\left(x_{1},x_{2} \atop -1, -\xi\right)+ \zeta^{\mathfrak{m}}\left(x_{1},x_{2} \atop 1, -\xi\right)+ \zeta^{\mathfrak{m}}\left(x_{1},x_{2} \atop -1, \xi\right), x_{i} \geq 1 \right\}.$$}
	\item[$\cdot$] The depth $2$ part of the basis of motivic Euler sums is:
\begin{small}
$$\left\{ \zeta^{\mathfrak{m}}\left(2a+1, 2b+1 \atop  1, \xi\right)+ \zeta^{\mathfrak{m}}\left(2a+1, 2b+1 \atop  -1, -\xi\right) + \zeta^{\mathfrak{m}}\left(2a+1, 2b+1 \atop  1, -\xi\right)+ \zeta^{\mathfrak{m}}\left(2a+1, 2b+1 \atop  -1, \xi\right) \right. $$
$$\left. - \sum_{k=b+1}^{\frac{n}{2}-1} \alpha^{a,b}_{k} \left( \zeta^{\mathfrak{m}}\left(n-2k, 2k \atop 1, \xi\right) + \zeta^{\mathfrak{m}}\left(n-2k, 2k \atop -1, -\xi\right) + \zeta^{\mathfrak{m}}\left(n-2k, 2k \atop 1, -\xi\right) + \zeta^{\mathfrak{m}}\left(n-2k, 2k \atop -1, \xi\right)\right)\right\}_{a,b \geq 0}$$
\end{small}
	\item[$\cdot$] The depth $2$ part of the basis of MMZV is:
	\begin{small}
$$\left\{ \zeta^{\mathfrak{m}}\left(2a+1, 2b+1\atop  1, \xi\right) + \zeta^{\mathfrak{m}}\left(2a+1, 2b+1\atop  -1, -\xi\right)  + \zeta^{\mathfrak{m}}\left(2a+1, 2b+1\atop  1, -\xi\right)  + \zeta^{\mathfrak{m}}\left(2a+1, 2b+1\atop  -1, \xi\right) \right.$$
$$ \left. - \sum_{k=b+1}^{\frac{n}{2}-1} \alpha^{a,b}_{k} \left( \zeta^{\mathfrak{m}}\left(n-2k, 2k\atop 1, \xi\right)+ \zeta^{\mathfrak{m}}\left(n-2k, 2k\atop -1,-\xi\right)  + \zeta^{\mathfrak{m}}\left(n-2k, 2k\atop 1, -\xi\right)  + \zeta^{\mathfrak{m}}\left(n-2k, 2k\atop -1, \xi\right)   \right) \right.$$
$$\left. - \beta^{a,b} \left( \zeta^{\mathfrak{m}}\left(1,n-1\atop  1, \xi\right)+ \zeta^{\mathfrak{m}}\left(1,n-1\atop  -1, \xi\right)+ \zeta^{\mathfrak{m}}\left(1,n-1\atop  1, -\xi\right)+ \zeta^{\mathfrak{m}}\left(1,n-1\atop  -1, -\xi\right) \right),  a,b> 0 \right\} $$ 
\end{small}
\end{itemize}
\end{lemm}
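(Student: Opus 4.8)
The plan is to run, for each of the three descents $8\to4$, $8\to2$, $8\to1$, the same two-step argument already used for $N=2,3,4$ in the preceding subsections, the only genuinely new feature being that the derivation space in each degree is now two-dimensional, spanned by $D_r^{\xi}$ and $D_r^{-\xi}$ with $\xi=\xi_8$. By Theorem $5.2.4$ it is enough to exhibit, in depth $2$ and each weight $n$, the right number of elements lying in the relevant level-$0$ space $\mathcal{F}_0^{\mathcal{d}}\mathcal{H}_n$; linear independence and the generating property then follow from the bijectivity of $\partial^{0,\mathcal{d}}_{n,2}$ established in the proof of that theorem.

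First I would record the depth-$1$ relations for $N=8$ from §$5.2.1$; the two facts used throughout are $\zeta^{\mathfrak{l}}\!\left({2r\atop1}\right)=\zeta^{\mathfrak{l}}\!\left({2r\atop-1}\right)=0$ and the distribution relations rewriting $\zeta^{\mathfrak{l}}\!\left({r\atop\pm1}\right)$ and $\zeta^{\mathfrak{l}}\!\left({r\atop-i}\right)$ in terms of $\zeta^{\mathfrak{l}}\!\left({r\atop\pm\xi}\right)$. Then, from Lemma $2.4.3$, I would write out $D_r^{\pm\xi}$ on a depth-$2$ element $\zeta^{\mathfrak{m}}\!\left({x_1,x_2\atop\epsilon_1,\epsilon_2\xi}\right)$ with $\epsilon_i\in\{\pm1\}$. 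The key elementary remark is that the four-term symmetrisation $\Sigma_{x_1,x_2}:=\sum_{\epsilon_1,\epsilon_2\in\{\pm1\}}\zeta^{\mathfrak{m}}\!\left({x_1,x_2\atop\epsilon_1,\epsilon_2\xi}\right)$ is invariant under $\mathrm{Gal}(k_8/k_4)$, whose nontrivial element sends $\xi$ to $\xi^{5}=-\xi$; hence $\Sigma_{x_1,x_2}$ is killed by $\mathscr{D}^{k_8/k_4,2/2}=\{D_r^{\xi}-D_r^{-\xi}\}_{r>0}$ (concretely, in the coaction the cuts producing $D_r^{\xi}$ and those producing $D_r^{-\xi}$ are interchanged by $\epsilon_2\mapsto-\epsilon_2$ on the right-hand factor, so the four summands cancel in pairs). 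This already yields the MMZV$_{\mu_4}$ statement: for $x_i\ge1$, $\Sigma_{x_1,x_2}$ is exactly $\mathfrak{Z}+cl_{n,2,\ge1}(\mathfrak{Z})$ with $\mathfrak{Z}=\zeta^{\mathfrak{m}}\!\left({x_1,x_2\atop1,\xi}\right)$, and no further correction is needed in depth $2$, since every level-$\ge1$ depth-$2$ element of $\mathcal{B}^8$ already occurs in $\Sigma_{x_1,x_2}$.

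For the Euler-sum descent $8\to2$ one must in addition kill the even derivations $D_{2r}^{\pm\xi}$. Because $\zeta^{\mathfrak{l}}\!\left({2r\atop1}\right)=0$, the situation is, after the depth-$1$ projection, literally the one treated for $N=3,4$: applying $D_{2r}$ to $\Sigma_{2a+1,2b+1}$ and to the $\Sigma_{n-2k,2k}$ (with $n=2(a+b+1)$) produces the same binomial matrix $M$ as in the definition of $\alpha^{a,b}_k$ for the $N=3,4$ case, because the symmetrisation commutes with the coaction formula and only the $\zeta^{\mathfrak{l}}\!\left({\cdot\atop\xi}\right)$-part of the left factor survives. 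Subtracting $\sum_{k=b+1}^{n/2-1}\alpha^{a,b}_k\,\Sigma_{n-2k,2k}$ therefore removes all $D_{2r}$-components. For the MMZV descent $8\to1$ one finally also kills $D_1\circ D_{2r+1}$, which by weight is nonzero only for $2r+1=n-1$, by subtracting $\beta^{a,b}\,\Sigma_{1,n-1}$, with $\beta^{a,b}$ the same constant as before; this is the same computation as in the $N=3,4$ MMZV lemma. The resulting corrected sums, with the ranges on $a,b$ dictated by which of ``$\epsilon=-1$'', ``argument even'' and ``argument $=1$'' counts towards the level of the descent under consideration, are the three claimed depth-$2$ families, and Theorem $5.2.4$ concludes.

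The only real difficulty here is organisational rather than conceptual: one has to carry $D_r^{\xi}$ and $D_r^{-\xi}$ through the computation simultaneously and verify that $\mathrm{Gal}(k_8/k_4)$-symmetrisation cleanly decouples the change-of-field part, handled for free by passing to $\Sigma_{x_1,x_2}$, from the change-of-ramification part, handled by the coefficients $\alpha^{a,b}_k$ and $\beta^{a,b}$ exactly as for $N=3,4$, so that no genuinely new linear system appears. Once this decoupling is made explicit, every remaining step is verbatim one already performed for $N=3,4$.
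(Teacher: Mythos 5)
Your proposal is correct and is essentially the argument the paper intends: for this lemma the paper explicitly leaves the proof to the reader as "similar (albeit longer)" to the $N=2$ and $N=3,4$ cases, and your two-step scheme — four-term $\operatorname{Gal}(k_8/k_4)$-symmetrisation to kill the differences $D_r^{\xi}-D_r^{-\xi}$ (using that $\zeta^{\mathfrak{l}}\left({r\atop\pm1}\right)$ is symmetric in $\xi\leftrightarrow-\xi$ by the depth-$1$ relations, so only deconcatenation cuts see the difference), then the same $\alpha^{a,b}_k$, $\beta^{a,b}$ systems as for $N=3,4$, and Theorem $5.2.4$ for independence — is exactly that execution. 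The one assertion you leave unchecked, that the $N=8$ depth-$1$ coefficients do not change $\beta^{a,b}$, does hold: the single linear condition at $2r+1=n-1$ reduces to $\beta(X-1)=\beta^{a,b}(X-1)$ for the constant $X$ coming from the depth-$1$ relations, thanks to $\binom{n-2}{2a}=\binom{n-2}{2b}$ and $\binom{n-2}{n-2k-1}=\binom{n-2}{2k-1}$.
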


\begin{lemm} 
\begin{itemize}
	\item[$\cdot$] The depth $2$ part of the basis of $\mathcal{F}^{k_{8}/k_{4},2/2}_{1}\mathcal{H}_{n}$ is, for even $n$:
	\begin{small}
$$\left\{ \zeta^{\mathfrak{m}}\left( x_{1}, x_{2}\atop   1, \xi\right)+ \zeta^{\mathfrak{m}}\left( x_{1},x_{2} \atop  -1, -\xi\right), \zeta^{\mathfrak{m}}\left( x_{1},x_{2} \atop  1, -\xi\right)- \zeta^{\mathfrak{m}}\left( x_{1},x_{2} \atop  -1, -\xi\right), \zeta^{\mathfrak{m}}\left( x_{1},x_{2} \atop  -1, \xi\right)+ \zeta^{\mathfrak{m}}\left( x_{1},x_{2} \atop  -1, -\xi\right),  x_{i} \geq 1 \right\}.$$ 
\end{small}
	\item[$\cdot$] The depth $2$ part of the basis of $\mathcal{F}^{k_{8}/\mathbb{Q},2/2}_{1}\mathcal{H}_{n}$ is for odd $n$:
	\begin{small}
$$\left\{ \zeta^{\mathfrak{m}}\left( x_{1}, x_{2}\atop   1, \xi\right)+ \zeta^{\mathfrak{m}}\left( x_{1}, x_{2}\atop   -1, -\xi\right) + \zeta^{\mathfrak{m}}\left( x_{1}, x_{2}\atop   1, -\xi\right)+ \zeta^{\mathfrak{m}}\left( x_{1}, x_{2}\atop   -1, \xi\right) , \text{ exactly one even } x_{i} \right\}.$$ 
\end{small}
The depth $2$ part of the basis of $\mathcal{F}^{k_{8}/\mathbb{Q},2/2}_{1}\mathcal{H}_{n}$ is for even $n$:
\begin{small}
\begin{multline}\nonumber
\left\{ \zeta^{\mathfrak{m}}\left( 2a+1, 2b+1\atop   -1, \xi\right)+ \zeta^{\mathfrak{m}}\left( 2a+1, 2b+1\atop   -1, -\xi\right) - \sum_{k=b+1}^{\frac{n}{2}-1} \alpha^{a,b}_{k} \left( \zeta^{\mathfrak{m}}\left( n-2k, 2k\atop  -1, \xi\right) + \zeta^{\mathfrak{m}}\left( n-2k, 2k\atop  -1, -\xi\right) \right)\right\}_{a,b\geq 0}\\
\cup \left\{ \zeta^{\mathfrak{m}}\left( 2a+1, 2b+1\atop   1, -\xi\right)- \zeta^{\mathfrak{m}}\left( 2a+1, 2b+1\atop   -1, -\xi\right)- \sum_{k=b+1}^{\frac{n}{2}-1} \alpha^{a,b}_{k} \left( \zeta^{\mathfrak{m}}\left( n-2k, 2k\atop  1, -\xi\right) - \zeta^{\mathfrak{m}}\left( n-2k, 2k\atop  -1, -\xi\right) \right)\right\}_{ a,b\geq 0}.
\end{multline}
\end{small}
	\item[$\cdot$]  The depth $2$ part of the basis of $\mathcal{F}^{k_{8}/\mathbb{Q},2/1}_{1}\mathcal{H}_{n}$ is for odd $n$:
	\begin{small}
\begin{multline}\nonumber
	\left\{ \zeta^{\mathfrak{m}}\left( x_{1}, x_{2}\atop   1, \xi\right)+ \zeta^{\mathfrak{m}}\left( x_{1}, x_{2}\atop   -1, -\xi\right)+ \zeta^{\mathfrak{m}}\left( x_{1}, x_{2}\atop   1, -\xi\right) + \zeta^{\mathfrak{m}}\left( x_{1}, x_{2}\atop   -1, \xi\right) \right. \\
 \left. - \gamma^{x_{1},x_{2}} \left( \zeta^{\mathfrak{m}}\left( 1, n-1\atop   1, \xi\right) + \zeta^{\mathfrak{m}}\left( 1, n-1\atop   -1, -\xi\right)+ \zeta^{\mathfrak{m}}\left( 1, n-1\atop   -1, \xi\right)+ \zeta^{\mathfrak{m}}\left( 1, n-1\atop   1, -\xi\right) \right), \text{ exactly one even } x_{i} \right\}.
 \end{multline}
\end{small}
In even weight $n$, depth $2$ part of the basis of $\mathcal{F}^{k_{8}/\mathbb{Q},2/1}_{1}\mathcal{H}_{n}$ is:
\begin{small}
\begin{multline}\nonumber
\left\{ \zeta^{\mathfrak{m}}\left( 1, n-1\atop   1, \xi\right)+ \zeta^{\mathfrak{m}}\left( 1, n-1\atop   -1,-\xi \right)+ \zeta^{\mathfrak{m}}\left( 1, n-1\atop   1,-\xi \right) + \zeta^{\mathfrak{m}}\left( 1, n-1\atop   -1,\xi \right) \right\}  \\
\cup \left\{ \zeta^{\mathfrak{m}}\left(  n-1,1\atop   1, \xi\right)+ \zeta^{\mathfrak{m}}\left(  n-1,1\atop   -1,-\xi \right)+ \zeta^{\mathfrak{m}}\left( n-1,1\atop   1,-\xi \right) + \zeta^{\mathfrak{m}}\left( n-1,1\atop   -1,\xi \right) + \right.\\
\left. -\sum_{k=1}^{\frac{n}{2}-1} \alpha^{0,\frac{n}{2}-1}_{k} \left( \zeta^{\mathfrak{m}}\left(  n-2k, 2k\atop   1, \xi\right)  + \zeta^{\mathfrak{m}}\left(  n-2k, 2k\atop   -1, -\xi\right) + \zeta^{\mathfrak{m}}\left(  n-2k, 2k\atop   1, -\xi\right)+ \zeta^{\mathfrak{m}}\left(  n-2k, 2k\atop   -1, \xi\right)  \right)\right\}\\
\cup \left\{ \zeta^{\mathfrak{m}}\left( 2a+1, 2b+1\atop   \epsilon_{1}, \epsilon_{2}\xi\right)+ \epsilon_{2} \zeta^{\mathfrak{m}}\left( 2a+1,2b+1\atop   -1, -\xi\right) \right. - \beta^{a,b} \left( \zeta^{\mathfrak{m}}\left( 1, n-1\atop   \epsilon_{1}, \epsilon_{2} \xi\right) + \epsilon_{2} \zeta^{\mathfrak{m}}\left( 1,n-1\atop   -1, -\xi\right)  \right)\\
\left. -\sum_{k=b+1}^{\frac{n}{2}-1} \alpha^{a,b}_{k} \left( \zeta^{\mathfrak{m}}\left( n-2k, 2k\atop   \epsilon_{1}, \epsilon_{2}\xi \right) + \epsilon_{2} \zeta^{\mathfrak{m}}\left( n-2k, 2k\atop   -1, -\xi \right)  \right), a,b >0 , \epsilon_{i}\in\left\{\pm 1\right\}, \epsilon_{1}=- \epsilon_{2}  \right\} .
\end{multline}
\end{small}
Where $\gamma^{x_{1},x_{2}}=(-1)^{x_{2}} \binom{2r-1}{2r-x_{2}}$.
\end{itemize}
\end{lemm}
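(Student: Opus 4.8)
The statement to prove is the depth $2$ description of the bases for the various level filtrations in the $N=8$ descents (from $\mathcal{H}^8$ down to $\mathcal{H}^4$, $\mathcal{H}^2$, $\mathcal{H}^1$). The plan is to imitate, in depth $2$, the general machinery of Theorem $5.2.4$ and its concrete unwindings already carried out for $N=2,3,4$ in $\S A.2.1$--$\S A.2.2$, the only genuine new feature being that $gr_1^{\mathfrak{D}}\mathcal{L}_r$ is now two-dimensional for every $r>0$, spanned by $\zeta^{\mathfrak{a}}\!\left(r\atop\xi\right)$ and $\zeta^{\mathfrak{a}}\!\left(r\atop-\xi\right)$, so that each $D_r$ has two independent scalar components $D_r^{\xi}$ and $D_r^{-\xi}$, and accordingly each descent has its own set $\mathscr{D}^{\mathcal{d}}$ of derivations (as listed in Definition $5.2.14$). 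First I would write down, using Lemma $2.4.3$ (the explicit formula for $D_{r,2}$) together with the depth $1$ relations for $N=8$ recalled in $\S 5.2.1$, the action of $D_r^{\pm\xi}$ on a depth $2$ generator $\zeta^{\mathfrak{m}}\!\left(x_1,x_2\atop\epsilon_1,\epsilon_2\xi\right)$; in depth $2$ the coaction is essentially a deconcatenation plus the ``$\textsc{(a0)}$'' initial cut, so the left-hand side entries are explicit binomial coefficients times $\zeta^{\mathfrak{l}}\!\left(r\atop\eta\right)$ with $\eta\in\{\pm1,\pm\xi\}$, which one then re-expresses in the chosen basis of $gr_1^{\mathfrak{D}}\mathcal{L}_r$ via the depth $1$ distribution/conjugation relations of $\S 5.2.1$.

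\textbf{Key steps.} (1) Fix the basis $\mathcal{B}^8$ of $\mathcal{H}^8$ from $(\ref{eq:base})$, restricted to depth $2$: the $\zeta^{\mathfrak{m}}\!\left(x_1,x_2\atop\epsilon_1,\epsilon_2\xi\right)(2i\pi)^{s,\mathfrak{m}}$. Because $(2\pi i)^{\mathfrak{m}}$ is Galois-trivial under $\mathcal{U}$, the matrices computed below are block-diagonal in $s$ and it suffices to treat $s=0$. (2) For each of the three descents $\mathcal{d}$, identify $\mathscr{D}^{\mathcal{d}}$ from Definition $5.2.14$ and hence the projectors onto $\mathcal{B}^{N'}$. (3) Invoke the already-proved Lemma $5.2.3$ (in the form proved for general $N$ in $\S 5.2.4$): $\mathcal{B}_{n,2,\geq i}$ is $\mathbb{Z}_{1[2]}$-linearly free in $gr_2^{\mathfrak{D}}\mathcal{H}_n^{\geq i}$ and the associated $\partial_{n,2}^{i,\mathcal{d}}$ is bijective; this is the substance and it is \emph{given}. (4) Then the only remaining work is explicit: for a level-$0$ element $\zeta^{\mathfrak{m}}\!\left(2a+1,2b+1\atop 1,\xi\right)$ (or the appropriate level-$0$ representative for the descent at hand), compute $D_r^{\mathcal{d}}$ and solve the resulting triangular linear system over $\mathbb{Z}_{1[2]}$ to produce the correcting linear combination $cl_{n,\leq 2,\geq 1}$; the triangular structure (diagonal $=$ deconcatenation terms, $2$-adically smallest by the depth $1$ relations of $\S 5.2.1$) guarantees a unique solution with coefficients in $\mathbb{Z}_{1[2]}$, exactly as in Lemmas $A.2.3$ and $A.2.6$. (5) Match the combinatorics: the coefficients $\alpha_k^{a,b}$, $\beta^{a,b}$, $\gamma^{x_1,x_2}$ appearing in the statement are precisely those of Definition $A.2.5$ and $\S A.2.2$, because the $D_{2r}$-part of the coaction for $N=8$ on the ``sum over the four $\epsilon$-choices'' packets is formally identical to the $N=4$ computation (the $\zeta^{\mathfrak{l}}\!\left(2r\atop 1\right)=0$ vanishing is the same), while the new $D_r^{\xi}-D_r^{-\xi}$ component controls the change of field $k_8/k_4$ and acts by deconcatenation modulo $2$. (6) Assemble the five displayed families (MMZV$_{\mu_4}$, motivic Euler sums, MMZV, $\mathcal{F}_1^{k_8/k_4,2/2}$, $\mathcal{F}_1^{k_8/\mathbb{Q},2/2}$, $\mathcal{F}_1^{k_8/\mathbb{Q},2/1}$) by reading off, in each case, which $D$'s must be killed and which correcting terms (level $\geq 1$) must be subtracted, then conclude by the dimension count of $(\ref{dimensionk})$--$(\ref{eq:agamma})$ that freeness plus correct cardinality forces these to be bases.

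\textbf{Main obstacle.} The conceptually hard input — bijectivity of $\partial_{n,2}^{i,\mathcal{d}}$ and the $\mathbb{Z}_{1[2]}$-structure — is already supplied by Lemma $5.2.3$ / Theorem $5.2.4$, so no new idea is needed there. The real labor, and the place where care is required, is the bookkeeping of the \emph{two} components $D_r^{\xi}$, $D_r^{-\xi}$ simultaneously: one must check that on the symmetric combinations $\zeta^{\mathfrak{m}}\!\left(x_1,x_2\atop1,\xi\right)+\zeta^{\mathfrak{m}}\!\left(x_1,x_2\atop-1,-\xi\right)+\zeta^{\mathfrak{m}}\!\left(x_1,x_2\atop1,-\xi\right)+\zeta^{\mathfrak{m}}\!\left(x_1,x_2\atop-1,\xi\right)$ the ``anti-symmetric'' components $D_r^{\xi}-D_r^{-\xi}$ vanish identically while $D_r^{\xi}+D_r^{-\xi}$ reproduces the $N=4$ pattern; this is a finite check using the conjugation relation $\zeta^{\mathfrak{l}}\!\left(r\atop\pm\xi\right)=(-1)^{r-1}\zeta^{\mathfrak{l}}\!\left(r\atop\pm\xi^{-1}\right)$ and the distribution relations $\zeta^{\mathfrak{l}}\!\left(r\atop-1\right)=2^r\zeta^{\mathfrak{l}}\!\left(r\atop\xi\right)$ etc. from $\S 5.2.1$. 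I expect the parity splitting (odd versus even weight $n$, which changes which weight-$1$ term $\zeta^{\mathfrak{m}}\!\left(1,n-1\atop\cdots\right)$ survives, as already visible in the $N=4$ lemmas) and the precise signs $\epsilon_2$ in front of the $\zeta^{\mathfrak{m}}\!\left(1,n-1\atop-1,-\xi\right)$ corrections in the last family to be the fiddly points, but these are routine once the $D_r^{\pm\xi}$ formulas are in hand; no structural difficulty remains.
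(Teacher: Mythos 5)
Your proposal follows essentially the same route the paper intends: the paper explicitly leaves this proof to the reader as "similar (albeit longer)" to the $N=2,3,4$ computations of \S A.2.1--A.2.2, i.e. exactly your plan of computing the two components $D_r^{\xi},D_r^{-\xi}$ on depth-2 elements via Lemma 2.4.3 and the depth-1 relations of \S 5.2.1, killing the derivations in $\mathscr{D}^{\mathcal{d}}$ of Definition 5.2.14 by correction terms with the coefficients $\alpha_k^{a,b},\beta^{a,b},\gamma^{x_1,x_2}$ of Definition A.2.5, and invoking Lemma 5.2.3/Theorem 5.2.4 for freeness and the dimension count. Your identification of the only genuinely new bookkeeping (the vanishing of the antisymmetric component on the symmetrized packets, and the parity splitting of the weight-1 corrections) is the right place to focus, so no gap beyond carrying out that finite check.
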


\subsection{$\boldsymbol{N=\mlq 6 \mrq}$: Depth $\boldsymbol{2}$}

In depth 2, coefficients are explicit as previously:
      
\begin{lemm} 
The depth $2$ part of the basis of MMZV, for even weight $n$ is:
$$\left\{ \zeta^{\mathfrak{m}}\left(2a+1, 2b+1 \atop 1, \xi \right)-  \sum_{k=b+1}^{\frac{n}{2}-1} \alpha^{a,b}_{k} \zeta^{\mathfrak{m}}\left(n-2k, 2k\atop 1, \xi\right), a,b> 0 \right\},$$
\end{lemm}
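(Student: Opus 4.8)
The statement is an instance, in depth $2$, of the general Theorem~5.2.4 for the unramified descent $\mathcal{d}=(k_6/\mathbb{Q},1/1)$ of $\mathcal{H}^{\mlq 6\mrq}=\mathcal{H}^{\mathcal{MT}(\mathcal{O}_6)}$, whose associated set of derivations is $\mathscr{D}^{\mathcal{d}}=\{D^{\xi}_{2r}:r>0\}$ (the even-weight infinitesimal coactions, since $\zeta^{\mathfrak{l}}\bigl({2r\atop 1}\bigr)=0$ in depth $1$). So the plan is simply to exhibit, for each pair $a,b>0$ with $n=2(a+b+1)$, an explicit correction of $\zeta^{\mathfrak{m}}\bigl({2a+1,2b+1\atop 1,\xi}\bigr)$ by depth-$2$ elements of even weight which lands in $\mathcal{F}^{\mathcal{d}}_0\mathcal{H}^{\mlq 6\mrq}=\mathcal{H}^{\mathcal{MT}(\mathbb{Q})}=\mathcal{H}^1$, and to check these corrected elements form part of the basis $\mathcal{B}^{1;6}$ from $\S5.2.5$ (the level-$0$ part of $\mathcal{B}^{\mlq 6\mrq}$).

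\textbf{Key steps.} First I would write out $D_{2r}$ on $\zeta^{\mathfrak{m}}\bigl({x_1,x_2\atop 1,\xi}\bigr)$ using Lemma~\ref{drz} specialised to depth $2$; because $\zeta^{\mathfrak{l}}\bigl({2r\atop 1}\bigr)=0$, only the terms of type \textsc{(d,d')} survive on the left, namely a single term
$$D_{2r}\Bigl(\zeta^{\mathfrak{m}}\bigl({x_1,x_2\atop 1,\xi}\bigr)\Bigr)=\delta_{x_2\le 2r\le x_1+x_2-1}\,(-1)^{x_2}\binom{2r-1}{x_2-1}\,\zeta^{\mathfrak{l}}\bigl({2r\atop\xi}\bigr)\otimes\zeta^{\mathfrak{m}}\bigl({x_1+x_2-2r\atop\xi}\bigr).$$
Second, one notes that $\mathrm{gr}^{\mathfrak{D}}_1\mathcal{L}_{2r}$ is one-dimensional spanned by $\zeta^{\mathfrak{l}}\bigl({2r\atop\xi}\bigr)$ (Lemma~2.4.1 / the depth-$1$ list in $\S5.2.1$), and the right-hand factor $\zeta^{\mathfrak{m}}\bigl({m\atop\xi}\bigr)$ is automatically in $\mathcal{H}^{\mathcal{MT}(\mathcal{O}_6)}$. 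So $D^{\xi}_{2r}$ applied to the family $\{\zeta^{\mathfrak{m}}\bigl({n-2k,2k\atop 1,\xi}\bigr)\}_{b+1\le k\le n/2-1}$ is governed by the matrix $M=\bigl(\binom{2r-1}{2k-1}\bigr)_{b+1\le r,k\le n/2-1}$ of Definition~A.2.5, which is unitriangular hence invertible over $\mathbb{Z}$; and $D^{\xi}_{2r}$ of the distinguished element is the vector $A^{a,b}=\bigl(-\binom{2r-1}{2b}\bigr)_r$. Solving $M\alpha^{a,b}=A^{a,b}$ (integer solutions, by unitriangularity) gives coefficients $\alpha^{a,b}_k$ such that
$$\zeta^{\mathfrak{m}}\bigl({2a+1,2b+1\atop 1,\xi}\bigr)-\sum_{k=b+1}^{n/2-1}\alpha^{a,b}_k\,\zeta^{\mathfrak{m}}\bigl({n-2k,2k\atop 1,\xi}\bigr)$$
is killed by all $D^{\xi}_{2r}$, $r>0$; since there are no odd-weight derivations in $\mathscr{D}^{\mathcal{d}}$ and the even-weight ones strictly decrease depth while all lower-depth and higher-weight derivation outputs already land in $\mathcal{H}^{\mathcal{MT}(\mathcal{O}_6)}$ (depth-$1$ case and the sufficient condition Lemma~A.4.1-type argument), Corollary~\ref{ramif346} (via Theorem~\ref{ramificationchange}) shows this corrected element is unramified, i.e. lies in $\mathcal{H}^1$. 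Third, one invokes Theorem~5.2.4(iv) for $i=0$: the collection $\{\mathfrak{Z}+cl_{n,\le p,\ge 1}(\mathfrak{Z}):\mathfrak{Z}\in\mathcal{B}^{\mlq 6\mrq}_{n,p,0}\}$ is a basis of $\mathcal{F}_0\mathcal{H}^{\mlq 6\mrq}_n=\mathcal{H}^1_n$; restricting to depth $2$ and identifying $cl$ with the explicit combination just produced (uniqueness of $cl$, Theorem~5.2.4 Nota Bene) yields exactly the asserted list.

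\textbf{Main obstacle.} The conceptual content (stability under the coaction, invertibility of the relevant matrix mod the prime $3$, dimension count) is all packaged in Theorem~5.2.4 and Lemma~5.2.3, so the genuine work here is purely the explicit depth-$2$ computation: carrying out the $D_{2r}$ expansion cleanly, checking that indeed \emph{only} the deconcatenation-type term survives (one must verify the type \textsc{(a),(b),(c)} contributions vanish because their left factor is $\zeta^{\mathfrak{l}}$ of even weight in depth $1$, which is zero, and that no $\overline{1}$-type \textsc{(c)} term appears since $\epsilon_i=1$ for all but the last slot), and solving the triangular system to recognise the coefficients $\alpha^{a,b}_k$ and $\beta^{a,b}$ of Definition~A.2.5. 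The only subtlety is bookkeeping at the boundary index $k=n/2-1$ and confirming that no correction term with a leading $1$ (i.e. a $\beta^{a,b}$-term, needed in odd weight or for the full descent) is required here because the weight $n$ is even and $\zeta^{\mathfrak{l}}\bigl({2r\atop 1}\bigr)=0$ already forces $D_1\circ D_{2r}=0$ on the corrected element.
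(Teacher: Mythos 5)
Your proof is correct and is essentially the paper's: the paper's own justification is ``similar to the cases $N=3,4$'', i.e. exactly your computation --- in depth two only the deconcatenation cut survives in $D_{2r}$ because $\zeta^{\mathfrak{l}}\left(2r \atop 1\right)=0$, one solves the unitriangular system $M\alpha^{a,b}=A^{a,b}$ of Definition A.2.5, and the basis statement is the depth-two, level-$0$ instance of Theorem 5.2.4 for the $(k_{6}/\mathbb{Q},1/1)$-filtration, which you invoke. One citation should be fixed: Corollary \ref{ramif346} and Theorem \ref{ramificationchange} govern the change of ramification $M/M'$ and do not apply here (your element already lives in $\mathcal{H}^{\mathcal{MT}(\mathcal{O}_{6})}$, so ``unramified'' is not the issue); membership in $\mathcal{H}^{1}$ comes from $\mathcal{F}^{(k_{6}/\mathbb{Q},1/1)}_{0}\mathcal{H}^{6}=\mathcal{H}^{1}$ together with the recursive criterion on the odd derivations, which is what your third step via Theorem 5.2.4 actually supplies. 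Relatedly, the absence of a $\beta^{a,b}\,\zeta^{\mathfrak{m}}\left(1,n-1 \atop 1,\xi\right)$ correction is not a parity phenomenon nor a consequence of ``$D_{1}\circ D_{2r}=0$'': it reflects that $\mathcal{MT}(\mathcal{O}_{6})$ has no weight-one generators ($\mathcal{O}_{6}^{\ast}\otimes\mathbb{Q}=0$), so this descent has no ramification component and no $D_{1}$-type condition, in contrast with the $(k_{N}/\mathbb{Q},P/1)$ descents for $N=3,4$, where the $\beta$-term is required even in even weight.
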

\begin{proof}
Proof being similar than the cases $N=3,4$ is left to the reader.\\
\\
\end{proof}

\section{Homographies of $\boldsymbol{\mathbb{P}^{1}\diagdown \lbrace 0, \mu_{N}, \infty\rbrace}$}

The homographies of the projective line $\mathbb{P}^{1}$ which permutes $\lbrace 0, \mu_{N}, \infty \rbrace$, induce automorphisms $\mathbb{P}^{1}\diagdown \lbrace 0, \mu_{N}, \infty\rbrace \rightarrow \mathbb{P}^{1}\diagdown \lbrace 0, \mu_{N}, \infty\rbrace$. The projective space $\mathbb{P}^{1} \diagdown \lbrace 0, \mu_{N}, \infty \rbrace$ has a dihedral symmetry, the dihedral group $Di_{N}= \mathbb{Z}\diagup 2 \mathbb{Z} \ltimes \mu_{N}$ acting with $x \mapsto x^{-1}$ and $x\mapsto \eta x$. In the special case of $N=1,2,4$, and for these only, the group of homographies is bigger than the dihedral group, due to particular symmetries of the points $\mu_{N}\cup \lbrace 0, \infty\rbrace$ on the Riemann sphere. Let specify these cases:
\begin{itemize}
\item[For $N=1:$] The homography group is the anharmonic group generated by $z\mapsto\frac{1}{z}$ and $z \mapsto 1-z$, and corresponds to the permutation group $\mathfrak{S}_{3}$. Precisely, projective transformations of $\mathbb{P}^{1}\diagdown \lbrace 0, 1, \infty\rbrace$ are:
$$\begin{array}{lll}\label{homography1}
\phi_{\tau}: & t \mapsto 1-t : &  \left\lbrace \begin{array}{l} 
(0,1,\infty)\mapsto (1,0,\infty)\\
(\omega_{0},\omega_{1}, \omega_{\star}, \omega_{\sharp}) \mapsto (\omega_{1},\omega_{0}, -\omega_{\star}, \omega_{0}-\omega_{\star}).
\end{array} \right. \\
\phi_{c}: &  t \mapsto \frac{1}{1-t}  :&  \left\lbrace \begin{array}{l} 
0\mapsto 1 \mapsto \infty \mapsto 0\\
(\omega_{0},\omega_{1}, \omega_{\star}, \omega_{\sharp}) \mapsto (\omega_{\star},-\omega_{0}, -\omega_{1}, -\omega_{0}-\omega_{1})
\end{array} \right. \\
\phi_{\tau c} : &  t \mapsto \frac{t}{t-1} :& \left\lbrace \begin{array}{l}
 (0,1,\infty)\mapsto (0,\infty,1)\\
 (\omega_{0},\omega_{1}, \omega_{\star}) \mapsto (-\omega_{\star},-\omega_{1}, -\omega_{0})
\end{array} \right.\\
\phi_{c\tau}: &  t \mapsto \frac{1}{t} : & \left\lbrace \begin{array}{l} 
(0,1,\infty)\mapsto (\infty,1,0)\\
(\omega_{0},\omega_{1}, \omega_{\star},\omega_{\sharp}) \mapsto (-\omega_{0},\omega_{\star}, \omega_{1}, \omega_{\sharp})
\end{array} \right. \\
 \phi_{c^{2}}: &  t \mapsto \frac{t-1}{t}  : & \left\lbrace \begin{array}{l} 
0\mapsto \infty \mapsto 1 \mapsto 0\\
(\omega_{0},\omega_{1}, \omega_{\star}) \mapsto (-\omega_{1},-\omega_{\star}, \omega_{0})
\end{array} \right. \\
\end{array}$$
Remark that hexagon relation ($\ref{fig:hexagon}$) corresponds to a cycle $c$ whereas the reflection relation corresponds to a transposition $\tau$, and :
$$\mathfrak{S}_{3}= \langle c, \tau \mid c^{3}=id, \tau^{2},c\tau c =\tau \rangle= \lbrace 1, c, c^{2}, \tau, \tau c, c\tau\rbrace.$$
\item[For $N=2:$] Here, $(0,\infty, 1,-1)$ has a cross ratio $-1$ (harmonic conjugates) and there are $8$ permutations of $(0,\infty, 1,-1)$ preserving its cross ratio. The homography group corresponds indeed to the group of automorphisms of a square with consecutive vertices $(0, 1, \infty, -1)$, i.e. the dihedral group of degree four $Di_{4}$ defined by the presentation $\langle \sigma, \tau \mid \sigma^{4}= \tau^{2}=id, \sigma\tau \sigma= \tau \rangle$:
$$\begin{array}{lll}\label{homography2}
\phi_{\tau}: & t \mapsto \frac{1}{t} :& \left\lbrace \begin{array}{l} 
\pm 1\mapsto \pm 1  \quad 0 \leftrightarrow \infty\\
(\omega_{0},\omega_{1}, \omega_{\star},\omega_{-1}, \omega_{-\star}, \omega_{\pm\sharp}) \mapsto (-\omega_{0},\omega_{\star}, \omega_{1},\omega_{-\star},\omega_{-1}, \omega_{\pm\sharp})
\end{array} \right.\\
\\
 \phi_{\sigma}: &  t \mapsto \frac{1+t}{1-t}  : & \left\lbrace \begin{array}{l} 
-1\mapsto 0\mapsto 1\mapsto \infty\mapsto -1\\
(\omega_{0},\omega_{1},\omega_{\star},\omega_{-1}, \omega_{-\star}) \mapsto (\omega_{-1}- \omega_{1}, -\omega_{-1}, - \omega_{1}, - \omega_{-\star}, - \omega_{\star})\\
(\omega_{\sharp}, \omega_{-\sharp})  \mapsto  (-\omega_{1}-\omega_{-1}, -\omega_{\star}-\omega_{-\star})
\end{array} \right. \\
\\
 \phi_{\sigma^{2}\tau}: &  t \mapsto -t:& \left\lbrace \begin{array}{l} 
-1 \leftrightarrow 1 \\
(\omega_{0},\omega_{ 1}, \omega_{-1}, \omega_{ \pm \ast}, \omega_{\pm \sharp}) \mapsto (\omega_{0},\omega_{-1}, \omega_{1},\omega_{\mp \ast}, \omega_{\mp \sharp})
\end{array} \right. \\
\\
\phi_{\sigma^{2}}: & t \mapsto \frac{-1}{t} : &  \left\lbrace \begin{array}{l}
0 \leftrightarrow \infty \quad -1 \leftrightarrow 1\\
(\omega_{0},\omega_{1},\omega_{\star},\omega_{-1}, \omega_{-\star}, \omega_{\pm \sharp}) \mapsto (-\omega_{0}, \omega_{-\star}, - \omega_{-1}, \omega_{\star}, \omega_{1}, \omega_{\mp\sharp})
\end{array} \right.\\
\\
\phi_{\sigma^{-1}}: & t \mapsto \frac{t-1}{1+t} : & \left\lbrace \begin{array}{l}
0 \mapsto -1 \mapsto \infty \mapsto 1 \mapsto 0 \\
(\omega_{0}, \omega_{1}, \omega_{-1}, \omega_{\star}, \omega_{-\star}) \mapsto (\omega_{-1}-\omega_{1}, - \omega_{\star}, -\omega_{1}, -\omega_{-\star}, -\omega_{-1}) \\
(\omega_{\sharp}, \omega_{-\sharp})  \mapsto  ( -\omega_{\star}-\omega_{-\star}, -\omega_{1}-\omega_{-1})
\end{array} \right.\\
\\
\phi_{\tau \sigma}: &  t \mapsto \frac{1-t}{1+t} : & \left\lbrace \begin{array}{l}
-1 \leftrightarrow \infty \quad 0 \leftrightarrow 1 \\
(\omega_{0},\omega_{1},\omega_{\star},\omega_{-1}, \omega_{-\star}) \mapsto (\omega_{1}-\omega_{-1},-\omega_{-\star},-\omega_{\star},-\omega_{-1}, -\omega_{1}) \\
(\omega_{\sharp}, \omega_{-\sharp})  \mapsto  ( -\omega_{\star}-\omega_{-\star}, -\omega_{1}-\omega_{-1})
\end{array} \right.\\
\\
\phi_{\sigma \tau}: & t \mapsto \frac{1+t}{t-1} : & \left\lbrace \begin{array}{l}
-1 \leftrightarrow 0  \quad 1 \leftrightarrow \infty\\
(\omega_{0},\omega_{1},\omega_{\star},\omega_{-1}, \omega_{-\star}) \mapsto  (\omega_{1}-\omega_{-1},-\omega_{1},-\omega_{-1},-\omega_{\star}, -\omega_{-\star}) \\
(\omega_{\sharp}, \omega_{-\sharp})  \mapsto  ( -\omega_{1}-\omega_{-1}, -\omega_{\star}-\omega_{-\star})
\end{array} \right.
\end{array} $$
Remark that the octagon relation ($\ref{fig:octagon}$) comes from the cycle $\sigma$ of order $4$; the other permutations above could also leads to relations.
\item[For $N=4:$] $\mathbb{P}^{1}\diagdown \lbrace 0, 1,-1,i,-i, \infty\rbrace$ has an octahedral symmetry, and the homography group is the group of automorphisms of this octahedron placed on the Riemann sphere of vertices $(0,1,i,-1, -i,\infty)$.\footnote{Zhao showed this octahedral symmetry allows to reach the \say{non standard} relations which appeared in weight $3$, $4$ for $N=4$; non standard relations are these which do not come from distribution, conjugation, and regularised double shuffle relation, cf. $\cite{Zh1}$.} It is composed by 48 transformations, corresponding to 24 rotational symmetries, and a reflection.
\end{itemize}
We could also look at other projective transformations: $\mathbb{P}^{1}\diagdown \lbrace 0, \mu_{N}, \infty\rbrace \rightarrow \mathbb{P}^{1}\diagdown \lbrace 0, \mu_{N'}, \infty\rbrace  _{N'\mid N}$.\\
\\
\texttt{Examples:}
\begin{itemize}
\item[$\cdot$] $\mathbb{P}^{1}\diagdown \lbrace 0, -1, \infty\rbrace \rightarrow \mathbb{P}^{1}\diagdown \lbrace 0, +1, \infty\rbrace  \text{  , }  t\mapsto 1+t $. 
\item[$\cdot$] $\mathbb{P}^{1}\diagdown \lbrace 0, -1, \infty\rbrace \rightarrow \mathbb{P}^{1}\diagdown \lbrace 0, +1, \infty\rbrace  \text{  , }  t\mapsto \frac{1}{1+t} $.
\item[$\cdot$] $\mathbb{P}^{1}\diagdown \lbrace 0, \pm 1, \infty\rbrace \rightarrow \mathbb{P}^{1}\diagdown \lbrace 0, 1, \infty \rbrace  \text{  , }  t\mapsto t^{2} $.  
\end{itemize}

\newpage
\section{Hybrid relation for MMZV}

The commutative polynomial setting is briefly introduced in $\S 6.1.1$. \\
Let consider the following involution, which represents the Antipode $\shuffle$ as seen in $\S 4.2.1$:\\
\begin{equation} \label{eq:sigma} 
\boldsymbol{\sigma}: \quad \mathbb{Q} \langle Y\rangle  \rightarrow \mathbb{Q} \langle Y\rangle \text{  ,   } \quad f(y_{0},y_{1},\cdots, y_{p}) \mapsto (-1)^{w} f(y_{p},y_{p-1},\cdots, y_{1}),
\end{equation}
with $w$ the weight, equal to the degree of $f$ plus $p$. In particular, for $f\in\rho(\mathfrak{g}^{\mathfrak{m}})$:
\begin{equation} \label{eq:fantipodesh} \textsc{ Antipode } \shuffle\text{ :  } f+\sigma(f)=0.
\end{equation}
Note that $f^{(p)}$ denotes the part of $f$ involving $y_{0},\cdots, y_{p}$. We can also consider:
\begin{equation} \label{eq:tau} \boldsymbol{\tau}: \quad \mathbb{Q} \langle X\rangle  \rightarrow \mathbb{Q} \langle X\rangle\text{ ,    }\quad  \overline{f}^{(p)}(x_{1},\cdots, x_{p}) \mapsto (-1)^{p}\overline{f}^{(p)}(x_{p}\cdots, x_{1}).
\end{equation}
The Antipode stuffle corresponds to $\tau(\overline{f}^{\star})$, where $\overline{f}^{\star}$ is defined by:
\begin{equation} \label{eq:fstar}  \overline{f}^{\star}(x_{1}, \ldots, x_{p})\mathrel{\mathop:}= \sum_{s \leq p, i_{k} \atop p=\sum i_{k}} f(\lbrace x_{1} \rbrace ^{i_{1}}, \ldots, \lbrace x_{s} \rbrace ^{i_{s}}) (-1)^{d-1} \prod_{k=1}^{s} x^{i_{k}-1}_{k} .
\end{equation}
It corresponds naturally to the Euler sums $\star$ version. Then, for $\overline{f}\in \overline{\rho} (\mathfrak{g}^{\mathfrak{m}})$:
\begin{equation} \label{eq:fantipodest}  \textsc{ Antipode  } \ast\text{: } \overline{f}+ \tau(\overline{f}^{\star})=0 .
\end{equation}
\\
The \textit{hybrid relation} (Theorem $\ref{hybrid}$) for motivic multiple zeta values is equivalent to, in this setting of commutative polynomials to the following, already in some notes of F. Brown:
\begin{theom}[\textsc{F. Brown}]
For $\overline{f}\in \overline{\rho} (\mathfrak{g}^{\mathfrak{m}})$, the 6 terms relation holds:
\begin{multline}\nonumber
\overline{f}^{(p)} (x_{1},\cdots, x_{p}) +  \frac{\overline{f}^{(p-1)} (x_{2}-x_{1}, \ldots, x_{p}-x_{1}) - \overline{f}^{(d-1)} (x_{2},\cdots, x_{p})}{x_{1}} \\
=(-1)^{w+1} \left(  \overline{f}^{(p)} (x_{p},\cdots, x_{1}) +  \frac{\overline{f}^{(p-1)} (x_{p-1}-x_{p}, \ldots, x_{1}-x_{p}) - \overline{f}^{(p-1)} (x_{p-1},\cdots, x_{1})}{x_{p}} \right).
\end{multline}
\end{theom}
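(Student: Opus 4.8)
The statement is an equivalent reformulation of Theorem \ref{hybrid} (the case $N=1$, $k=0$), passed through the commutative polynomial dictionary. So the first thing I would do is make the translation precise: under the isomorphism $\overline{\rho}\colon \mathfrak{g}^{\mathfrak m}\to\mathbb Q\langle X\rangle$, the quantity $\overline{f}^{(p)}(x_1,\dots,x_p)$ encodes $\zeta^{\mathfrak l}(n_1,\dots,n_p)$, while the ``difference quotient'' terms $\bigl(\overline f^{(p-1)}(x_2-x_1,\dots,x_p-x_1)-\overline f^{(p-1)}(x_2,\dots,x_p)\bigr)/x_1$ encode $\zeta^{\mathfrak l}_{n_0+k}(n_1,\dots,n_p)$ (the regularized shift, spreading the leading zeros), using the $\shuffle$-regularization formula $(\ref{eq:shufflereg})$ and translation invariance $(\ref{eq:translationinv})$. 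Once this dictionary is checked, the identity to be proved is literally the $k=0$ hybrid relation of Theorem \ref{hybrid} after dividing by the appropriate leading block; the generalization to $k>0$ was already reduced to $k=0$ in the excerpt, so nothing more is needed on that front.

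\textbf{Main argument.} Granting the translation, I would simply invoke Theorem \ref{hybrid}: the $\mathcal F_\infty$-anti-invariant linearized part of the octagon relation (Figure \ref{fig:octagon2}) yields, in $\mathcal L$,
\[
\zeta^{\mathfrak l}_k(n_0,n_1,\dots,n_p)+\zeta^{\mathfrak l}_{\mid n_0\mid+k}(n_1,\dots,n_p)\equiv(-1)^{w+1}\Bigl(\zeta^{\mathfrak l}_k(n_p,\dots,n_0)+\zeta^{\mathfrak l}_{k+\mid n_p\mid}(n_{p-1},\dots,n_0)\Bigr),
\]
and reading off the coefficient of a word $e_0^{a_1}e_1\cdots e_1 e_0^{a_p}$ — equivalently, dividing the polynomial identity by the monomial in the leading variable — produces exactly the six-term relation in $x_1,\dots,x_p$. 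The $(-1)^{w+1}$ sign and the reversal $x_i\mapsto x_{p+1-i}$ on the right correspond to the action of $\sigma$ resp. $\tau$ as in $(\ref{eq:sigma})$, $(\ref{eq:tau})$. Alternatively, and this is the route the remark in $\S 4.2$ hints at for $N=1$, I would give a direct proof entirely inside $\mathcal L$ using only the stuffle relation and the $\shuffle$-antipode: write $\overline f^\star$ as in $(\ref{eq:fstar})$, use Antipode $\ast$ $(\ref{eq:fantipodest})$ to replace $\tau(\overline f^\star)$ by $-\overline f$, use Antipode $\shuffle$ $(\ref{eq:fantipodesh})$ to relate $\overline f(x_1,\dots,x_p)$ and $\overline f(x_p,\dots,x_1)$, and check that the cross terms coming from the $\star$-summation reorganize into the difference quotients. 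The bookkeeping here is the translation-invariance identity $f(y_0,\dots,y_p)=\overline f(y_1-y_0,\dots,y_p-y_0)$ applied to the ``shifted'' blocks.

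\textbf{Where the difficulty lies.} The conceptual content is already in Theorem \ref{hybrid}; the real work in this corollary is purely combinatorial/notational: (i) checking that the difference quotient $\bigl(\overline f^{(p-1)}(x_2-x_1,\dots)-\overline f^{(p-1)}(x_2,\dots)\bigr)/x_1$ is genuinely a polynomial and equals the regularized object $\zeta^{\mathfrak l}_{\mid n_0\mid}(n_1,\dots,n_p)$ — this uses that $\overline f$ is a polynomial and the geometric-series expansion hidden in the $\shuffle$-regularization — and (ii) keeping the signs straight through $\sigma$, $\tau$ and the reversal of words. I expect step (i), matching the two a priori different ``shift'' descriptions (spreading zeros in the iterated integral versus the polynomial difference quotient), to be the main obstacle, though it is a finite check rather than a deep point. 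The stuffle-plus-antipode route is slightly more self-contained but then the obstacle moves to verifying that the $\star$-cross terms telescope correctly; either way the key lemma to isolate and prove carefully is the polynomial identity expressing the regularized shift as a difference quotient.
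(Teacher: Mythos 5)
Your proposal is correct, but neither of your two routes coincides with the paper's own proof, and the route you actually commit to is the less informative of the two. Your main argument — translate via $\overline{\rho}$ and invoke Theorem \ref{hybrid} — is logically sound: Theorem \ref{hybrid} is proved earlier (via the octagon relation, for Euler sums, hence in particular for MMZV), and the dictionary identifying the difference quotient with the shuffle-regularized $\zeta^{\mathfrak{l}}_{n_{0}}(n_{1},\ldots,n_{p-1})$ is exactly the coefficient computation the paper carries out just before stating the theorem. But this makes the six-term relation a corollary of the octagon machinery, whereas the point of this appendix (see the remark in $\S 4.2$) is the opposite direction: to give a proof independent of the octagon, yielding a \emph{new} proof of the hybrid relation for $N=1$. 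The paper's argument is self-contained in the polynomial setting: it sets $I(y_{0},\ldots,y_{p}):=f^{(p)}(y_{0},y_{1},\ldots,y_{p})+(-1)^{w}f^{(p)}(y_{0},y_{p},\ldots,y_{1})$, rewrites it using the $\shuffle$-antipode $(\ref{eq:fantipodesh})$, encodes the stuffle not through the $\star$-antipode but through the linearized depth-dropping identity $(\ref{eq:stf})$ for $St(y_{0},y_{1}\shuffle y_{2}\cdots y_{p})$, observes that the theorem is equivalent to the identity $(\Join)$, namely $I(y_{0},\ldots,y_{p})=(-1)^{w+1}St(y_{p},y_{0}\shuffle y_{p-1}\cdots y_{1})-St(y_{1},y_{0}\shuffle y_{2}\cdots y_{p})$, and then proves $(\Join)$ by a short chain of shuffle and cyclic-rotation manipulations using translation invariance $(\ref{eq:translationinv})$. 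Your sketched alternative is in this spirit but substitutes the $\star$-antipode $(\ref{eq:fantipodest})$ and $(\ref{eq:fstar})$ for the linearized stuffle, and it leaves precisely the telescoping of the $\star$-cross terms — the only nontrivial step — unverified; in the paper that combinatorial work is absorbed into $(\ref{eq:stf})$ and the verification of $(\Join)$, which is where you should direct your effort if you want the self-contained version rather than the reduction to Theorem \ref{hybrid}.
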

\noindent
Before giving the proof, to be convinced these statements are equivalent, let just write $\overline{f}$ as:
$$\overline{f}=\sum \alpha_{n_{1}, \ldots, n_{k}} x_{1}^{n_{1}-1} \cdots x_{k}^{n_{k}-1}.$$
Then:
\begin{flushleft}
$\frac{\overline{f}^{(p-1)} (x_{2}-x_{1}, \ldots, x_{p}-x_{1}) - \overline{f}^{(p-1)} (x_{2},\cdots, x_{p})}{x_{1}} $
\end{flushleft}
\begin{align*}
\quad \quad = & \sum \alpha_{n_{1}, \ldots, n_{p-1}}  \frac{(x_{2}-x_{1})^{n_{1}-1} \cdots (x_{p}-x_{1})^{n_{p}-1} -x_{2}^{n_{1}-1} \cdots x_{p}^{n_{p}-1} }{x_{1}}  \\
\quad\quad = & \sum \alpha_{n_{1}, \ldots, n_{p-1}}  \sum_{1 \leq k_{i} \leq n_{i} \atop k\mathrel{\mathop:}=\sum n_{i}-k_{i}>0}  (-1)^{k}x_{1}^{k-1}  \prod_{i=1}^{d-1}\left(  \binom{n_{i}-1}{k_{i}-1} x_{i+1}^{k_{i}-1} \right)  \\
 \quad \quad = & \sum_{\sum i_{j}=k}  \alpha_{k_{1}+i_{1}, \ldots, k_{p-1}+i_{p-1}} \binom{k_{1}+i_{1}-1}{k_{1}-1} \cdots \binom{k_{p-1}+i_{p-1}-1}{k_{p-1}-1}  (-1)^{k}x_{1}^{k-1} x_{2}^{k_{1}-1} \cdots x_{p}^{k_{p-1}-1} 
\end{align*}
This, according to the shuffle regularization $(\ref{eq:shufflereg})$, matches exactly with the definition of $\zeta^{\mathfrak{m}}_{k}(k_{1}, \ldots, k_{p-1})$.

\begin{proof}[Proof of the previous theorem]
The proof combines the shuffle relation (using that $f$ is translation invariant notably), the linearized stuffle relation (giving a relation between depth $p$ and depth $p-1$) and the antipode $\shuffle$. \\
Let's take $f$ in $\rho (\mathfrak{g}^{\mathfrak{m}})$ and let consider the difference
\begin{equation} \label{eq:if} I(y_{0},y_{1},\cdots, y_{p})\mathrel{\mathop:}=  f^{(p)} (y_{0}, y_{1},\cdots, y_{p})+(-1)^{w} f^{(p)} (y_{0}, y_{p},\cdots, y_{1}) 
\end{equation}
$$= f^{(p)} (y_{0}, y_{1},\cdots, y_{p})- f^{(p)} (y_{1},\cdots, y_{p}, y_{0}). $$
Consider also the relation given by the linearized stuffle relation (in $\mathcal{L})$, between depth $p$ and depth $p-1$, defining $St$:
\begin{equation} \label{eq:stf0}
St(y_{0}, y_{1} \shuffle y_{2} \cdots y_{p})\mathrel{\mathop:}= f^{(p)} (y_{0}, y_{1} \shuffle y_{2}\cdots y_{p}),
\end{equation}
Where $St$ can then be expressed by $f^{(p-1)}$ using stuffle:
\begin{equation} \label{eq:stf} St(y_{0}, y_{1} \shuffle y_{2} \cdots y_{p})= \sum \frac{1}{y_{i}-y_{1}} \left(f^{(p-1)} (y_{0},y_{2},,\cdots y_{i-1},y_{1},y_{i+1}, \ldots, y_{p}) - f^{(p-1)} (y_{0}, y_{2},\cdots, y_{p})\right).
\end{equation}
The theorem is then equivalent to the following identity
$$ (\Join) \text{  } I(y_{0},y_{1},\cdots, y_{p}) = (-1)^{w+1}St(y_{p}, y_{0} \shuffle y_{p-1} \cdots y_{1})- St(y_{1}, y_{0} \shuffle y_{2} \cdots y_{p}).$$
Indeed, looking at the previous definition ($\ref{eq:stf}$ ), most of the terms of $St$ in the right side of $(\Join)$ get simplified together, and it remains only:
$$ (-1)^{w+1} \frac{f^{(p-1)} (y_{p}, \ldots, y_{2}, y_{0}) - f^{(p-1)} (y_{p},\cdots, y_{1})}{y_{1}-y_{0}}  -  \frac{f^{(p-1)} (y_{1}, \ldots, y_{p-1}, y_{0}) - f^{(p-1)} (y_{1},\cdots, y_{p})}{y_{p}-y_{0}}. $$
Passing to the $x_{i}$ variables, we conclude that $ (\Join)$ is equivalent to the theorem's statement; let now prove $(\Join) $. By definition: 
$$St(y_{1},y_{0}\shuffle y_{2} \cdots y_{p})= f^{(p)}(y_{1},y_{0}\shuffle y_{2} \cdots y_{p})= f^{(p)}(y_{1},y_{0}\shuffle y_{2} \cdots y_{p-1}, y_{p})+ f^{(p)}(y_{1}, y_{2} , \ldots, y_{p}, y_{0}) .$$
Doing a right shift, using the definition of $I$:
\begin{equation} \label{eq:a} 
St(y_{1},y_{0}\shuffle y_{2} \cdots y_{p})
\end{equation}
$$=f^{(p)}(y_{p}, y_{1},y_{0}\shuffle y_{2} \cdots y_{p-1}) - I(y_{p}, y_{1},y_{0}\shuffle y_{2} \cdots y_{p-1}) + f^{(p)}(y_{0},y_{1}, y_{2} \cdots, y_{p}) -I(y_{0},y_{1}, y_{2} \cdots, y_{p}).$$
Since:
\begin{align*}
f^{(p)}(y_{p}, y_{1},y_{0}\shuffle y_{2} \cdots y_{p-1}) & = St(y_{p}, y_{0}\shuffle y_{1} y_{2} \cdots y_{p-1})- f^{(p)}(y_{p}, y_{0}, y_{1}, y_{2}, \ldots, y_{p-1}) \\
f^{(p)}(y_{0},y_{1}, y_{2} \cdots, y_{p}) & = -I (y_{p}, y_{0}, y_{1}, y_{2}, \ldots, y_{p-1})+ f^{(p)}(y_{p}, y_{0}, y_{1}, y_{2}, \ldots, y_{p-1}).
\end{align*}
Then, $\eqref{eq:a}$ becomes:
\begin{multline}\nonumber
St(y_{1},y_{0}\shuffle y_{2} \cdots y_{p})- St(y_{p}, y_{0}\shuffle y_{1} y_{2} \cdots y_{p-1}) \\
=-I (y_{p}, y_{0}, y_{1}, y_{2}, \ldots, y_{p-1}) - I(y_{p}, y_{1},y_{0}\shuffle y_{2} \cdots y_{p-1})  -I(y_{0},y_{1}, y_{2} \cdots, y_{p}).
\end{multline}
The sum of the first two $I$ is $I(y_{p}, y_{0}\shuffle y_{1} \cdots y_{p-1})$ which gives:
\begin{multline} \label{eq:b} 
I(y_{0},y_{1}, y_{2} \cdots, y_{p})= - St(y_{1},y_{0}\shuffle y_{2} \cdots y_{p})+ St(y_{p}, y_{0}\shuffle y_{1} y_{2} \cdots y_{p-1})-I(y_{p}, y_{0}\shuffle y_{1} \cdots y_{p-1}) \\
=- St(y_{1},y_{0}\shuffle y_{2} \cdots y_{p}) + (-1)^{w+1}St(y_{p}, y_{0} \shuffle y_{p-1} \cdots y_{1}).
\end{multline}
The identity $(\Join)$ holds, and the identity of the theorem follows.
\end{proof}

\newpage

 \printnomenclature
 \newpage

\end{document}